\journal{Linear Algebra and its Applications}
\newtheorem{theorem}{Theorem}
\newtheorem{lemma}[theorem]{Lemma}
\newtheorem{example}[theorem]{Example}
\newtheorem{definition}[theorem]{Definition}
\newtheorem{claim}{Claim}
\newtheorem{remark}{Remark}
\newtheorem{question}{Question}
\newtheoremstyle{algstyle}%
  {10mm}       % measure of space to leave above the theorem. E.g.: 3pt
  {10mm}       % measure of space to leave below the theorem. E.g.: 3pt
  {\tt}   % name of font to use in the body of the theorem
  {0pt}        % measure of space to indent
  {\bfseries}  % name of head font
  {\newline}   % punctuation between head and body
  {10mm}       % space after theorem head
  {\thmname{#1}\thmnumber{ #2}\thmnote{ (#3)}}          
\theoremstyle{algstyle}
\newtheorem{algorithm}{Algorithm}
\newtheoremstyle{algdashstyle}%
  {10mm}       % measure of space to leave above the theorem. E.g.: 3pt
  {10mm}       % measure of space to leave below the theorem. E.g.: 3pt
  {\tt}   % name of font to use in the body of the theorem
  {0pt}        % measure of space to indent
  {\bfseries}  % name of head font
  {\newline}   % punctuation between head and body
  {10mm}       % space after theorem head
  {\thmname{#1}\thmnumber{ #2}$'$\thmnote{ (#3)}}          % Manually specify head
\theoremstyle{algdashstyle}
\newcommand{\nw}[1]{%
\textbf{#1}%
}
\newcommand{\mnw}[1]{%
\boldsymbol{#1}%
}
\newcommand{\bbmatrix}[1]{%
\begin{bmatrix} #1 \end{bmatrix}%
}
\newcommand{\ppmatrix}[1]{%
\begin{pmatrix} #1 \end{pmatrix}%
}
\newcommand{\mydot}[1]{%
%\accentset{\bullet}{#1}%
\stackrel{\text{\Large .}}{#1}%
%\boldsymbol{\dot{#1}}%
}
\newcommand{\lrar}{\leftrightarrow}
\newcommand{\subseteqn}{\hspace{0.1cm}\subseteq \hspace{0.1cm}}
\newcommand{\supseteqn}{\hspace{0.1cm}\supseteq \hspace{0.1cm}}
\newcommand{\equaln}{\hspace{0.1cm} = \hspace{0.1cm}}
\newcommand{\plusn}{\hspace{0.1cm} + \hspace{0.1cm}}
\newcommand{\inn}{\hspace{0.1cm} \in \hspace{0.1cm}}
\newcommand{\equivn}{\hspace{0.1cm} \equiv  \hspace{0.1cm}}
\newcommand{\lrarn}{\hspace{0.1cm} \lrar  \hspace{0.1cm}}
\newcommand{\V}{\mbox{$\cal V$}} 
\newcommand{\F}{\mbox{$\cal F$}} 
\newcommand{\Vm}{{\mathcal V}_{M}}            			%new \cal V_1
\newcommand{\Va}{{\cal V}_{A}}              			%new  \cal V_A
\newcommand{\Vabt}{{\cal V}_{AB}^T}
\newcommand{\Vbct}{{\cal V}_{BC}^T}
\newcommand{\transp}{{^T}}
\newcommand{\Vap}{{\cal V}_{AP}}            
  \newcommand{\Fx}{{\cal F}_{X}} 
  \newcommand{\Fy}{{\cal F}_{Y}}
    \newcommand{\0}{{\mathbf 0}}        
 \newcommand{\Vab}{{\cal V}_{AB}}   			%new  \cal V_EP
 \newcommand{\Vcd}{{\cal V}_{CD}}   			%new 
 \newcommand{\Vaadash}{{\cal V}_{AA'}}   			%new  \cal V_EP
 \newcommand{\Vadashbdash}{{\cal V}_{A'B'}}   			%new  \cal V_EP
\newcommand{\Vhab}{\hat{{\cal V}}_{AB}}   			%new  \cal V_EP
\newcommand{\Vhatwodashbtwodash}{\hat{{\cal V}}_{A"B"}}   
\newcommand{\Vs}{{\cal V}_{S}}             
\newcommand{\Ipp}{{ I}_{PP'}}           			%new  \cal V_S 
\newcommand{\Iaa}{{ I}_{AA'}}           			%new  \cal V_S 
\newcommand{\Iww}{{ I}_{WW'}}           			%new  \cal V_S 
\newcommand{\Iwminusw}{{ I}_{\dw(-\mydot{W'})}}           			%new  \cal V_S 
\newcommand{\Iwdw}{{I}_{W\mydot{W}}}  			%new  \cal V_T_2 P_2 
\newcommand{\Vsp}{{\cal V}_{SP}}           			%new  \cal V_S 
\newcommand{\Vsq}{{\cal V}_{SQ}}    
\newcommand{\Vps}{{\cal V}_{PS}} 			%new  \cal V_S 
\newcommand{\Vp}{{\cal V}_{P}}              
\newcommand{\Vbc}{{\cal V}_{BC}}              
\newcommand{\Vabperp}{{\cal V}_{AB}^{\perp}}  
\newcommand{\Vpq}{{\cal V}_{PQ}}            			%new  \cal V_PQ 
\newcommand{\Vb}{{\cal V}_{B}}              			%new  \cal V_Q 
\newcommand{\Vwdwm}{{\cal V}_{W\mydot{W}M}}  			%new  \cal V_T_2 P_2 
\newcommand{\Vadjwdw}{{\cal V}^a_{W'\mydot{W'}}}  			%new  \cal V_T_2 P_2 
\newcommand{\Vpdwmu}{{\cal V}_{P\mydot{W}M_u}}  			%new  \cal V_T_2 P_2 
\newcommand{\Xwdw}{{\cal X}_{W\mydot{W}}}  			%new  \cal V_T_2 P_2 
\newcommand{\Ywdw}{{\cal Y}_{W\mydot{W}}}  			%new  \cal V_T_2 P_2 
\newcommand{\Vdwmy}{{\cal V}_{\mydot{W}M_y}}  			%new  \cal V_T_2 P_2 
\newcommand{\Vtdwmy}{{\tilde{\cal V}}_{\mydot{W'}M_y'}}  			%new  \cal V_T_2 P_2 
\newcommand{\Vtdpmy}{{\tilde{\cal V}}_{\mydot{P'}M_y'}}  			%new  \cal V_T_2 P_2 
\newcommand{\Vwdwmumy}{{\cal V}_{W\mydot{W}M_uM_y}}  			%new  \cal V_T_2 P_2 
\newcommand{\Vwdwmu}{{\cal V}_{W\mydot{W}M_u}}  			%new  \cal V_T_2 P_2 
\newcommand{\Vdpmy}{{\cal V}_{\mydot{P}M_y}}  			%new  \cal V_T_2 P_2 
\newcommand{\Vpmu}{{\cal V}_{PM_u}}  			%new  \cal V_T_2 P_2 
\newcommand{\Vpdpmumy}{{\cal V}_{P\mydot{P}M_uM_y}}  			%new  \cal V_T_2 P_2 
\newcommand{\Vpdpmu}{{\cal V}_{P\mydot{P}M_u}}  			%new  \cal V_T_2 P_2 
\newcommand{\Vtwdw}{{\tilde{\cal V}}_{W'\mydot{W'}}}  			%new  \cal V_T_2 P_2 
\newcommand{\Vtwdwm}{{\tilde{\cal V}}_{W'\mydot{W'}M_u'M_y'}}  			%new  \cal V_T_2 P_2 
\newcommand{\Vtpdp}{{\tilde{\cal V}}_{P'\mydot{P'}}}  			%new  \cal V_T_2 P_2 
\newcommand{\Vtpdpm}{{\tilde{\cal V}}_{P'\mydot{P'}M_u'M_y'}}  			%new  \cal V_T_2 P_2 
\newcommand{\Vwmu}{{\cal V}_{WM_u}}  			%new  \cal V_T_2 P_2 
\newcommand{\Vmydw}{{\cal V}_{\mydot{W}M_y}}  			%new  \cal V_T_2 P_2 
\newcommand{\Vdwdp}{{\cal V}_{\mydot{W}\mydot{P}}}  			%new  \cal V_T_2 P_2 
\newcommand{\Vonedwdp}{{\cal V}^1_{\mydot{W}\mydot{P}}}  			%Pnew  \cal V_T_2 P_2 
\newcommand{\Vonetildedwdp}{{\cal V}^1_{\mydot{\tilde{W}}\mydot{{P}}}}  			%new  \cal V_T_2 P_2 
\newcommand{\Vtwotildedwdp}{{\cal V}^2_{\mydot{\tilde{W}}\mydot{{P}}}}  			%new  \cal V_T_2 P_2 
\newcommand{\tildedwdp}{{\mydot{\tilde{W}}\mydot{{P}}}}  			%new  \cal V_T_2 P_2 
\newcommand{\wtildedw}{{W\mydot{\tilde{W}}}}  			%new  \cal V_T_2 P_2 
\newcommand{\tildedw}{{\mydot{\tilde{W}}}}  			%new  \cal V_T_2 P_2 
\newcommand{\Vtwodwdp}{{\cal V}^2_{\mydot{W}\mydot{P}}}  			%new  \cal V_T_2 P_2 
\newcommand{\Vtwowp}{{\cal V}^2_{{W}{P}}}  			%new  \cal V_T_2 P_2 
\newcommand{\Vtwodwdq}{{\cal V}^2_{\mydot{W}\mydot{Q}}}  			%new  \cal V_T_2 P_2 
\newcommand{\Vtwodpdq}{{\cal V}^2_{\mydot{P}\mydot{Q}}}  			%new  \cal V_T_2 P_2 
\newcommand{\Vwp}{{\cal V}_{WP}}  			%new  \cal V_T_2 P_2 
\newcommand{\Vonewp}{{\cal V}^1_{WP}}  			%new  \cal V_T_2 P_2 
\newcommand{\Vonepq}{{\cal V}^1_{PQ}}  			%new  \cal V_T_2 P_2 
\newcommand{\Vonewq}{{\cal V}^1_{WQ}}  			%new  \cal V_T_2 P_2 
\newcommand{\Vtonewp}{{\tilde{\cal V}}^1_{W'P'}}  			%new  \cal V_T_2 P_2 
\newcommand{\Vttwodwdp}{{\tilde{\cal V}}^2_{\dwd\dPd}}  			%new  \cal V_T_2 P_2 
\newcommand{\Vw}{{\cal V}_{W}}  			%new  \cal V_T_2 P_2 
\newcommand{\Vtildedw}{{\cal V}_{\mydot{\tilde{W}}}}  			%new  \cal V_T_2 P_2 
\newcommand{\Vwdw}{{\cal V}_{W\mydot{W}}}  			%new  \cal V_T_2 P_2 
\newcommand{\Vonewdw}{{\cal V}^u_{W\mydot{W}}}  			%new  \cal V_T_2 P_2 
\newcommand{\oVonewdw}{{\cal V}^1_{W\mydot{W}}}  			%new  \cal V_T_2 P_2 
\newcommand{\Vtwowdw}{{\cal V}^l_{W\mydot{W}}}  			%new  \cal V_T_2 P_2 
\newcommand{\oVtwowdw}{{\cal V}^2_{W\mydot{W}}}  			%new  \cal V_T_2 P_2 
\newcommand{\wdw}{{W\mydot{W}}}  			%new  \cal V_T_2 P_2 
\newcommand{\Vwdwk}{({\cal V}_{W\mydot{W}})^{(k)}}  			%new  \cal V_T_2 P_2 
\newcommand{\Vdw}{{\cal V}_{\mydot{W}}}  			%new  \cal V_T_2 P_2 
\newcommand{\Vpdpm}{{\cal V}_{P\mydot{P}M}}  			%new  \cal V_T_2 P_2 
\newcommand{\Vqdqm}{{\cal V}_{Q\mydot{Q}M}}  			%new  \cal V_T_2 P_2 
\newcommand{\Vpdp}{{\cal V}_{P\mydot{P}}}  			%new  \cal V_T_2 P_2 
\newcommand{\Vqdq}{{\cal V}_{Q\mydot{Q}}}  			%new  \cal V_T_2 P_2 
\newcommand{\Vpdpk}{({\cal V}_{P\mydot{P}})^{(k)}}  			%new  \cal V_T_2 P_2 
\newcommand{\dw}{{\mydot{W}}}  			%new  \cal V_T_2 P_2 
\newcommand{\dws}{{\mydot{w}}}  			%new  \cal V_T_2 P_2 
\newcommand{\dwd}{{\mydot{W'}}}  			%new  \cal V_T_2 P_2 
\newcommand{\dW}{{\mydot{w}}}  			%new  \cal V_T_2 P_2 
\newcommand{\dP}{{\mydot{P}}}  			%new  \cal V_T_2 P_2 
\newcommand{\dPd}{{\mydot{P'}}}  			%new  \cal V_T_2 P_2 
\newcommand{\dQ}{{\mydot{Q}}}  			%new  \cal V_T_2 P_2 
\newcommand{\dwdp}{{\mydot{W}\mydot{P}}}  			%new  \cal V_T_2 P_2 
\newcommand{\dotp}{{\mydot{P}}}  			%new  \cal V_T_2 P_2 
\newcommand{\dwsmall}{{\mydot{w}}}  			%new  \cal V_T_2 P_2 
\newcommand{\E}{\mbox{$\cal E$}} 
\newcommand{\KSP}{\mbox{${\cal K}_{SP}$}}    		%new     \cal K_SP
\newcommand{\KSQ}{\mbox{${\cal K}_{SQ}$}}    		%new     \cal K_SQ
\newcommand{\KPQ}{\mbox{${\cal K}_{PQ}$}}    		%new     \cal K_PQ
\newcommand{\M}{\mbox{$\cal M$}}
\newcommand{\W}[0]{{\cal W}}                       %new     \cal W
\newcommand{\fS}{\mbox{${\bf f}_{S}$}}  				%new      \bf f_S
\newcommand{\fP}{\mbox{${\bf f}_{P}$}}  				%new      \bf f_P
\newcommand{\fQ}{\mbox{${\bf f}_{Q}$}}  				%new      \bf f_Q
\newcommand{\pa}{\mbox{${+}_{a}$}}      				%new      p_1
\newcommand{\pb}{\mbox{${+}_{b}$}}      				%new      p_1
\newcommand{\pc}{\mbox{${+}_{c}$}}      				%new      p_1
\newcommand{\pw}{\mbox{${+}_{w}$}}      				%new      p_1
\newcommand{\pdw}{\mbox{${+}_{\mydot{w}}$}}      				%new      p_1
\newcommand{\pdwd}{\mbox{${+}_{\mydot{w}'}$}}      				%new      p_1
\newcommand{\pdp}{\mbox{${+}_{\mydot{p}}$}}      				%new      p_1
\newcommand{\Vy}[1]{{\cal V} _Y #1}
\newcommand{\VX}[1]{{\cal V} _X #1}
\newcommand{\ldw}{\lambda ^{\mydot{w}}}
\newcommand{\ldws}{\lambda ^{\mydot{w}}}
\newcommand{\ldwd}{\lambda ^{\mydot{w}'}}
\newcommand{\lw}{\lambda ^{w}}
\newcommand{\lb}{\lambda ^{b}}
\newcommand{\laa}{\lambda ^{a}}
\newcommand{\ldp}{\lambda ^{\mydot{p}}}
\begin{document}

\begin{frontmatter}

%% Title, authors and addresses

%% use the tnoteref command within \title for footnotes;
%% use the tnotetext command for the associated footnote;
%% use the fnref command within \author or \address for footnotes;
%% use the fntext command for the associated footnote;
%% use the corref command within \author for corresponding author footnotes;
%% use the cortext command for the associated footnote;
%% use the ead command for the email address,
%% and the form \ead[url] for the home page:
%%
%% \title{Title\tnoteref{label1}}
%% \tnotetext[label1]{}
%% \author{Name\corref{cor1}\fnref{label2}}
%% \ead{email address}
%% \ead[url]{home page}
%% \fntext[label2]{}
%% \cortext[cor1]{}
%% \address{Address\fnref{label3}}
%% \fntext[label3]{}

%\title{Analysis of Linear Dynamical Systems without using State Space Representation}
\title{Implicit Linear  Algebra and its Applications}
%% use optional labels to link authors explicitly to addresses:
%% \author[label1,label2]{<author name>}
%% \address[label1]{<address>}
%% \address[label2]{<address>}

\author{H. Narayanan\corref{cor1}}
\ead{hn@ee.iitb.ac.in}
\cortext[cor1]{Corresponding author}

\address{Department of Electrical Engineering, Indian Institute of Technology Bombay}

\begin{abstract}
Linear systems often involve, as a basic building block, solutions of 
equations of the form 
\begin{align*}
A_Sx_S&+A_Px_P =0\\ 
A'_Sx_S & =0,
\end{align*}
where our primary interest might be in the vector variable $x_P.$
Usually, neither $x_S$ nor $x_P$ can be written as a function of the other
but they are linked through the linear relationship, that  of 
$(x_S,x_P) $ belonging to $\Vsp,$ the solution space
of the first of the two equations. If $\Vs$  is the solution space of the second equation,
we may 
regard the final space of solutions $\Vp$ as derived from the other two spaces 
by an operation, say,  `$\Vp=\Vsp\lrar \Vs.$' This operation, together with
linear relationships, can be used to build a version of linear algebra
which we call `implicit linear algebra'. 
There are two basic results - an `implicit inversion theorem' which describes
when  $\Vs$ can be obtained from $\Vp$ and $\Vsp,$
and an `implicit duality theorem' which says $\Vp^{\perp}=\Vsp^{\perp}\lrar \Vs^{\perp}.$
These notions originally arose 
in the building of circuit simulators (\cite{HNarayanan1986a}, \cite{HNarayanan1997}).
They have been reinterpreted for the present purpose.
Using them, we  develop an algorithmic version of linear multivariable
control theory, avoiding the computationally expensive idea of state equations.
We  replace
them by `emulators', which are easy to build but can achieve most of whatever
can be done with state equations.
We define  the notions of generalized autonomous systems and generalized operators,
and develop a primitive spectral theory for the latter.
Using these ideas, we develop the usual controllability - observability duality,
state and output feedback, pole placement etc. Throughout, we illustrate the theory
using electrical circuits. This allows us to stress that linkages, that are needed
for representing practical dynamical systems, usually
have a topological origin which can be exploited in consonance with  {\it implicit} 
rather than {\it classical} linear algebra.
%For the two cases, single input-multi output, multi input-single output,
%we give an algorithm for checking whether a given polynomial can be made into an annihilating
%polynomial for a system obtainable through output feedback from the original system.
%If the answer is `yes' we also produce the output feedback required.
\end{abstract}

\begin{keyword}
Linear Dynamical Systems, State Space Theory, Behavioural Systems Theory,  Controlled and Conditioned Invariant Spaces, Duality in Adjoint System, Implicit Linear Algbra, Implicit Duality Theorem
%% keywords here, in the form: keyword \sep keyword

%% MSC codes here, in the form: \MSC code \sep code
%% or \MSC[2008] code \sep code (2000 is the default)
\MSC  15A03, 15A04, 15A18

\end{keyword}

\end{frontmatter}

%%
%% Start line numbering here if you want
%%
%\linenumbers

%% main text
\section{Introduction}
The foundations of the work described in this paper originally arose in the study of electrical
network topology with the aim of building efficient circuit simulators
\cite{HNarayanan}, \cite{HNarayanan1986a}, \cite{narayanan1987topological},\cite{HNarayanan1997} (open 2nd edition available at \cite{HNarayanan2009}). 
An electrical network is defined by 
{\it topological constraints}, namely Kirchhoff's current and voltage laws
and {\it device characteristic constraints}. Our approach, towards building circuit simulators, was to get maximum 
advantage out of manipulating the topological - rather than the device characteristic, constraints.  Some fundamental questions here are:
how to connect subnetworks, how to decompose networks into multiports,
what is the minimum number of variables through which blocks of a partition of 
edges of the network interact, how to make minimal changes  in the topology 
of the network for faster simulation etc. 

From the point of view of application of linear algebra to electrical networks, it is more useful to deal with subspaces than with transformations.  The variables of interest  are currents and voltages associated with edges
of the graph of the network and the injected currents and potentials at nodes.
The spaces, that arise, can be regarded as subspaces of a `global' space with 
a specified basis - in a vector $(x_1,\cdots, x_n), x_i$ refers to the variable
associated with say the $i^{th}$ edge or node.
A characteristic feature of the approach is, as far as possible, to avoid 
speaking of networks in terms of matrices but rather in terms of other derived networks. The variables  of interest would often be buried in a much larger set
of network variables and we have to deal with the former {\it implicitly}
without trying to {\it eliminate} the  latter. 

When one connects subnetworks, taking the device independent point of view,
there is no {\it direction} to the flow of `information', merely {\it interaction,} which could be regarded as working both ways. 
To capture this situation, linear transformations are not suitable, 
rather one needs linear relations. One is naturally led to the question whether
parts of linear algebra can be developed based on linear relations and, more importantly,
whether there are direct applications. As mentioned before, connections of networks
provide such instances. However, to carry the similarity to standard linear algebra
to its limit, 
one needs to introduce direction to the flow of information. A beginning was made in \cite{HNPS2013}, where
it was shown that linear time invariant systems can be regarded as indexed vector spaces, and, when this is done, Kalman dual  etc. have simple descriptions. Some basic results
and algorithms of multivariable  control come through neatly and in a computationally
efficient manner. The present paper carries this work farther. A primitive `spectral' theory
is developed for `generalized operators'. A generalization  of state variables which we call
an `emulator' is used to describe relationships between essentially identical dynamical systems
in an implicit way. The processes involved are more refined than similarity transformations,
capable of linking systems with different numbers of variables
and yielding more information about the interrelationship between the systems.

We now give a brief account of the basic ideas, which fortunately are just a few.\\
A {\it {vector}} ${f}$ on $X$ over $\mathbb{F}$ is a function $f:X\rightarrow \mathbb{F}$ where $\mathbb{F}$ is a field. It would usually be represented as $f_X.$ A collection of such vectors closed under addition and scalar multiplication is a {\it vector space} denoted by $\V_X.$ 

A {\it linkage} $\V_{S_1\cdots S_k}$ is a vector space on $S_1\cup\cdots \cup S_k$ with a specified partition $\{S_1,\cdots , S_k\}$ of $S_1\cup\cdots \cup S_k.$ 

The usual {\it sum} and {\it intersection} of vector spaces are given extended meanings as follows.
$$\V_X+\V_Y\equiv (\VX\oplus \ \0_{Y\setminus X})+ (\Vy\oplus \ \0_{X\setminus Y}),$$
$$\V_X\cap \V_Y\equiv (\VX\oplus \F_{Y\setminus X})\bigcap (\Vy\oplus \F_{X\setminus Y}),$$
where $\0_Z$ represents the space containing only the $0$ vector on $Z$ and $ \F_Z$ represents the collection of all vectors on $Z.$

Given $\V_S,T\subseteq S,$ the {\it restriction} of $\V_S$ to $T,$ denoted by 
$\V_S\circ T,$ is the 
collection of all $f_T,$ where $(f_T,f'_{S\setminus T}),$ for some $f'_{S\setminus T},$
belongs to $\V_S.$ 

The {\it contraction} of $\V_S$ to $T$  denoted by 
$\V_S\times T,$  is the
collection of all $f_T,$ where $(f_T,0_{S\setminus T}),$ 
belongs to $\V_S.$ 

The {\it matched composition} of $\Vsp,\Vpq,$ with  $S,P,Q$ pairwise disjoint, is denoted by $\Vsp\lrar \Vpq$ and is defined to be the collection of all
$(f_S,h_Q)$ such that there exists some $g_P$ with \\
$(f_S,g_P)\in \Vsp,
(g_P,h_Q)\in \Vpq .
$ 

The  
 {\it skewed composition} of $\Vsp,\Vpq,$ with $ S,P,Q$ pairwise disjoint,
 is denoted by $\Vsp\rightleftharpoons \Vpq$ and is defined to be the collection of all
$(f_S,h_Q)$ such that there exists some $g_P$ with \\
$(f_S,-g_P)\in \Vsp,
(g_P,h_Q)\in \Vpq.$ 

The {\it dot product} of $f_S, g_S $ is denoted by $ \langle f_S, g_S \rangle $ and is defined to be $\sum_{e\in S}f(e)g(e).$ 

The {\it complementary orthogonal space} to $\V_S$ 
is denoted by 
${\V_S}^\perp $ and is defined to be\\
 $\{ g_S: \langle f_S, g_S \rangle = 0,\ f_S\in \V_S \}.$

The two basic results (\cite{HNarayanan1986a}) are as follows.

(IIT) The {\it implicit inversion theorem} states that the linkage equation
$$ \Vsp\lrar \Vpq=\Vsq,$$ with  specified $ \Vsp, \Vsq,$ but $\Vpq$ as unknown,  has a solution iff $\Vsp\circ S\supseteq \Vsq \circ S,
\Vsp\times S\subseteq \Vsq \times S$\\
 and, further, under the additional conditions
$\Vsp\circ P\supseteq \Vpq \circ P,
\Vsp\times P\subseteq \Vpq \times P,$ it has a unique solution.

IIT could be regarded as a generalization of the usual existence-uniqueness result
for the solution of the equation $ Ax=b.$ In the context of this paper, it is very much more useful. 

(IDT) The {\it implicit duality theorem} states that 
$$(\Vsp\lrar \Vpq)^{\perp}=(\Vsp^{\perp}\rightleftharpoons \Vpq^{\perp}).$$
IDT was a folklore result in electrical network theory
probably known informally to G.Kron (\cite{kron39},\cite{kron63}). An equivalent result  is stated
without proof in \cite{belevitch68}.
It can be regarded as a generalization 
of the result $(AB)^T=B^TA^T,$

A central theme of topological network theory  is that of 
dealing efficiently with vector spaces of the kind $\Vsp \cap \Vs$ 
where $\Vsp$ is in some way topological, for  instance it could be 
the direct sum $\V_{S'P'}\oplus \V_{S"P"}$ \\ ($S=S'\cup S",  \ P=P'\cup P",
\ \ S', S", P', P", \ \ \textup{being\  pairwise\  disjoint}$),
where the two spaces that make up the direct sum are the current and voltage
spaces of a graph. The space $\Vs$ would usually have the form $\V_{S_1}\oplus \cdots \oplus\V_{S_k}$ where the $S_i$ are very small (usually of size not more than
$10$). Topological spaces have the characteristic feature that their contractions
and restrictions remain topological. There are powerful ways of dealing with them
which use only topology - for instance `multiport decomposition' (see Section \ref{subsec:MultiportDecomposition}) and `topological transformations' (\cite{narayanan1987topological}). The first  half of the book
\cite{HNarayanan1997} deals with these ideas and can be regarded
as suggesting the need for {\it implicit linear algebra }
by bringing out two facts: 
\begin{itemize}
\item
that many practical
dynamical systems occur naturally as a composition of linkages,
with the more complicated linkages having a topological origin,
\item that the topological linkages can be simplified efficiently
through topological methods within the ambit of implicit linear algebra.
\end{itemize}

The present paper also exploits topological ideas but is closer to standard linear algebra in the sense that a primitive implicit spectral theory of generalized operators (defined below) is developed.
For a description of these ideas we need to define the notion of 
\nw{copies} of sets and \nw{copies} of collections of vectors 
on copies of sets.  We say sets $X$, $X'$ are \nw{copies of each other} iff they are disjoint and there is a bijection, usually clear from the context, mapping  $e\in X$ to $e'\in X'$. 
We need this operation in order to talk, for instance, of the vector $(v,i)$ where $v$ is a voltage vector on the edge set of a graph while $i$ is  a current vector on the same set. We handle this by building a copy $E'$ of the edge set $E$ and say $v$ is on $E$ and $i$ is on $E'$.
Let $A,A'$ be two sets which are copies of each other and let $e'\in A'$ denote the copy of $e\in A.$ We say  vectors $f_A,f_{A'}$ are copies of each other iff
%whenever $e\in A, e'\in A'$
%correspond to each other, 
we have $f_{A}(e)=f_{A'}(e'), \ e\in A.$
By $(\V_A)_{A'}$ we mean the  copy of $\V_A$ obtained by collecting  copies $f_{A'}$ of $f_{A}\in \V_A .$

 While there is no direction to interaction in a linkage, to deal with
operators implicitly we have to bring in such direction. A {\it generalized dynamical system} $\Vwdwm$ is on the set $W\uplus \dw\uplus M,$
where $\uplus$ denotes disjoint union, $W,\dw$ are disjoint copies of each other and $M $ is a set disjoint from $W,\dw,$ intended to take care of manifest (visible) variables.
{\it A generalized autonomous system}
({\bf genaut}) $\Vwdw$ is on the set $W\uplus \dw.$ 
It is called a {\it generalized operator} ({\bf genop}) if, additionally, 
it satisfies the conditions $$(\Vwdw  \circ W)_{\dw} \supseteq \Vwdw\circ \dw,\ \ \ 
(\Vwdw  \times W )_{\dw}\supseteq \Vwdw\times \dw$$

%Here $(\Vwdw\circ W)_{\dw}$ is the copy of $\Vwdw\circ W$ obtained by collecting  copies $f_{\dw}$ of $f_{W}\in \Vwdw\circ W$

% (if $e\in W, e'\in \dw$ 
%correspond to each other, then $f_{\dw}(e')= f_{W}(e)$).

An `ordinary' operator $A$ which takes $w^T$ to $\mydot{w}^T$ through 
$\mydot{w}^T=w^TA$ can be treated as the collection of all $(w^T,\mydot{w}^T)$
which satisfy the equation and can be seen to be  a genop.

Genauts and genops differ from ordinary operators  in the following respects.
\begin{itemize}
\item
The domain for an operator can be regarded as $\F_W,$ the collection of all vectors on $W.$
For the more general case of a genaut, the role of the domain is played by $\Vwdw\circ W.$ 
In the case of  an operator, $0$ maps to $0.$
In the more general case of a  genaut, $0$ is linked to $ \Vwdw\times \dw.$

\item
In the case of  an operator,
the range is a subspace of the domain. The analogous idea in the case of the genop
is the condition $(\Vwdw  \circ W)_{\dw} \supseteqn \Vwdw\circ \dw.$
\item
In the case of  an operator, $0$ maps to $0.$
The analogous idea in the case of the genop is\\
$(\Vwdw  \times W)_{\dw} \supseteqn \Vwdw\times \dw.$
This has essentially the same power as  $0$ mapping to $0.$
\end{itemize}
Most controllability-observability results can be stated in terms  of genauts
which satisfy one of the two conditions that make up a genop.

An {\it upper semi genop} ({\bf USG}) $\Vwdw$ 
satisfies $(\Vwdw  \circ W)_{\dw} \supseteq \Vwdw\circ \dw,
$
while a {\it lower semi genop} ({\bf LSG}) $\Vwdw$
 satisfies $(\Vwdw  \times W)_{\dw} \supseteq \Vwdw\times \dw.
 $

For instance, if we start with a dynamical system defined by state equations
and keep input and output free, then we would get a USG, while if we set both
input and output to zero, then we would get an LSG. 

Practically everything that needs to be done with operators computationally
can be done  with genops, but with the added advantage that things can
be done implicitly, exploiting, for instance, topology in the case of an electrical
network. 
%$$\Vab \pb \Vhab \equiv \{(f_A,f_B):(f_A,f^1_B)\in \Vab , (f_A,f^2_B) \in \Vhab, f_B= f^1_B+f^2_B \}.$$
The notion corresponding to sum of operators is the `intersection-sum $\pdw$' defined  by
$$\oVonewdw\  \pdw \ \oVtwowdw \equivn  \{(f_W,g_{\dw}):(f_W,g^1_{\dw})\in \oVonewdw ,\ \  (f_W,g^2_{\dw}) \in \oVtwowdw,\ \  g_{\dw}= g^1_{\dw}+g^2_{\dw} \}.$$
Scalar multiplication is defined by
$$\ldw\Vwdw \equiv  \{(f_W,\lambda g_{\dw}): (f_W, g_{\dw})\in \Vwdw \}+ \Vwdw\times {\dw}.$$
Operator multiplication is defined by
$$\Vwdw *\Vwdw \equiv (\Vwdw)_{WW_1}\lrar (\Vwdw)_{W_1\dw}.$$
Here $(\Vwdw)_{WW_1}$ means that we build a copy of $W\uplus \dw$ and call it $W\uplus W_1,$
$\dw$ being copied onto $W_1$ and $(\Vwdw)_{W_1\dw}$ means $W\uplus \dw$ is copied onto
$W_1\uplus \dw,$
$W_1$ being a copy of $W .$ The three subsets  $W,\dw,W_1$ are pairwise disjoint.

Polynomials of genops can then be defined in the obvious manner and these 
behave similarly to polynomials of ordinary operators - for instance, there
is a unique minimal annihilating polynomial. Here we say $p(s)$ annihilates
$\Vwdw$ iff $p(\Vwdw)$ is decoupled, i.e., is of the form $\Vw^1 \oplus \V_{\dw}^2.$
As is required, when ordinary operators are treated as genops, their
annihilation agrees with the generalized definition.

In control theory, controllability, observability, pole placement
through state feedback, output injection play fundamental roles. 
We show how to deal with these ideas implicitly using genauts and genops.

As an illustration of the power of implicit linear algebra we define
emulators for dynamical systems and genauts.
We say $\Vwdwm,\Vpdpm$ are emulators of each other iff there are  linkages
$\Vonewp,\Vtwodwdp$ such that $$\Vpdpm\equaln\Vonewp\lrar \Vwdwm\lrar\Vtwodwdp,\ \ \ \ 
\Vwdwm\equaln\Vonewp\lrar \Vpdpm\lrar\Vtwodwdp.$$
We  note that each of  $\Vwdwm,\Vpdpm, \Vonewp,\Vtwodwdp$ would usually be
written as an expression involving other more basic linkages associated for instance
with elementary dynamical systems and graphs linking them.
The pair of linkages $\Vonewp,\Vtwodwdp$ behave like a similarity transformation but 
have greater flexibility in dealing implicitly with systems of differing numbers of
dynamical variables. They can also be built implicitly in linear time using purely
topological methods in the case of electrical networks.
One can do all computations associated with control  including feedback, pole placement, observer 
building etc. using emulators.
This approach can be regarded as being consistent with the point of view
of behaviourists (\cite{willems1991paradigms},\cite{Willems1997}) while retaining the computational power of the state variable approach. The theory developed yields a unified description of both standard linear dynamical systems and linear singular systems (or linear descriptor systems) (see for e.g.,  \cite{gantmacher1959theoryVol2}, \cite{lewis1986survey}).
There is an additional simplification over standard linear algebra-
we deal only with subspaces which arise through operations of the kind 
$\Vsp\lrar\Vp,$ sums and intersections  and not with
quotient spaces.

Throughout, the theory  that we build respects duality perfectly.
At a primitive level, one may take the dual of a linkage $\Vab$
to be $\Vab^{\perp}.$ If one wants the analogy with standard linear algebra
stronger we could define the transpose $\Vab^T$ to be $(\Vab^{\perp})_{(-A)B},$
where this latter space is obtained by changing every $(f_A,f'_B)$
in $\Vab^{\perp}$ to  $(-f_A,f'_B).$  It will then turn out by IDT, that
$$(\Vab\lrar \Vbc)^T
 \equaln  \Vab ^T\lrar \Vbc ^T,$$
$$(\oVonewdw \pdw \oVtwowdw)^T\equaln (\oVonewdw )^T\pw (\oVtwowdw) ^T;\ \ \ \  (\ldw\Vwdw)^T\equaln\lw\Vwdw^T.$$
The adjoints of dynamical systems are defined through similar ideas
and we get the usual controllability - observability duality.

We now present an outline of this paper.

Section \ref{sec:Preliminaries} is on preliminary definitions and results.
Among others, matched and skewed compositions  are defined.
An important role is played by the operations of building copies, of sets
and of vector spaces on copies of index sets, in increasing the effectiveness of these operations.
A very general version of the implicit inversion theorem (IIT) is described with its proof. The implicit duality theorem (IDT) is stated and proved.
A minimal set of definitions from graph theory required for this paper  is presented. Multiport decomposition is described along with an algorithm 
for minimal decomposition.

Section \ref{sec:link} presents definitions on linkages. A simple example that
relates IIT to standard existence - uniqueness of solution of linear equations
is given. It is also shown how to make the `$\lrar$' operation associative
by the judicious use of copies of index sets. A natural variation (pseudoidentity) 
of the notion of identity is presented. The special case of two block
linkages is described and the notion of transpose derived for it.
The IDT is rewritten in terms of the notion of transpose.
The intersection-sum notion ($\Vab \pb \Vhab$) for two block linkages is defined
and, using IDT, it is shown that $$(\Vab \pb \Vhab )^{\transp}=\Vab^{\transp} \pa \Vhab^{\transp}.$$ 
The notion of scalar multiplication $\lb\Vab$ is defined.
It is shown, again using IDT, that $$(\lb\Vab)^T= \laa \Vab^T.$$
A useful result on the distributivity of the intersection-sum operation
over matched composition and scalar multiplication is presented.
This result is true under certain conditions which are stated.
Fortunately, these are sufficient for  defining factorisation of polynomials of genops later in Section \ref{sec:genoppoly}.

Section \ref{subsec:DynSys} presents  basic definitions about
dynamical systems such as regular dynamical systems, generalized autonomous
systems (genauts), generalized operators (genops). The ideas are illustrated using
an electrical circuit with resistors, capacitors, inductors and sources.

Section \ref{sec:Duality1} describes the construction of dual statements
to a given statement of the form
\begin{align*}
 \epsilon(\V_1,\ldots,\V_k,+,\cap,\oplus ,\circ , \times ,\leftrightarrow,\rightleftharpoons,\supseteq,\subseteq,=).
\end{align*}
The adjoint of a dynamical system is defined and a basic result on adjoints
is proved.

Section \ref{sec:Invsub} describes conditioned and controlled invariant subspaces of a genaut. The duality between the two subspaces is brought out. It is shown that if we start 
with a USG $\Vwdw^u$ and a conditioned invariant subspace $\Vw$ in it, then 
$\Vwdw^u +\Vw$ is a genop. Dually if we start
with a LSG $\Vwdw^l$ and a controlled invariant subspace $\Vw$ in it, then
$\Vwdw^l \cap \Vw$ is a genop.  
These genops are analogous to uncontrollable and unobservable spaces respectively.

Section \ref{sec:feedback_injection} is on $wm_u-$feedback and $m_y\mydot{w}-$injection. These are  analogues of state feedback and output injection
in our framework.
We prove that the two notions are dual to each other and give necessary and sufficient
conditions for a genop $\Vwdw$ to be obtainable from a genaut $\Vwdw '$ by 
 $wm_u-$ feedback or $m_y\mydot{w}-$ injection.

Section \ref{sec:genoppoly}
is on polynomials of genops. These are defined in general for genauts but for genops we show 
that the minimal annihilating polynomial is unique. Here annihilation is interpreted as resulting
in a space of the kind $\Vw^1\oplus \Vdw^2.$
Proceeding analogously to multivariable control theory, given a USG $\Vwdw',$
we determine what polynomials 
can be annihilating polynomials of a genop $\Vwdw\subseteq \Vwdw'$ that satisfies  $\Vwdw'\circ W= \Vwdw\circ W.$
We also state the dual and indicate its proof.

Section \ref{sec:eigen_ann} is a description of the usual pole placement theory and algorithm
in our framework. When the desired annihilating polynomial $p(s)$ satisfies conditions derived in Section \ref{sec:genoppoly}, we solve the problem of 
constructing state feedback and output injection, which, when imposed on the original dynamical 
system, result in a system with $p(s)$ as its annihilating polynomial.
%We also solve the output feedback problem partially.
%We give an algorithm for checking whether a given polynomial can be made an annihilating
%polynomial for a system obtainable through output feedback from the original system
%in the single input many output and many input single output cases.
%If the answer is in the affirmative, we also produce the output feedback required.
In \ref{sec:ppa_dualization},
the algorithms of this section have been recast in a `line by line dualizable form'.
The rule is simply to work only with vector spaces (not with vectors), avoid the `belongs to' relation
and use `contained in' instead.
In this case the vectors are converted to one dimensional  subspaces and their duals to 
$|W|-1$ dimensional subspaces.

Section \ref{sec:emu}
is on emulators which have been defined earlier in this section. The idea is motivated through an example on linear 
dynamical circuits. The linear time topological procedure of multiport decomposition when applied to a capacitor $+$ inductor $+$
static circuit ($RLC$ circuit) is shown to yield an emulator.  
 Theorem \ref{thm:emulatorpoly}
shows that if $\Vwdw,\Vpdp$ are emulators of each other, so are $p(\Vwdw),p(\Vpdp),$
provided the polynomial $p(s)$ has no constant term. Thus if $p(s)$ annihilates one of them,
it will also annihilate the other. The `no constant term' condition is shown to be easy to get around, since
it means merely that eigenvectors corresponding to zero eigenvalues have to be handled separately.
Theorem \ref{thm:emuadjoint} shows that adjoints of emulators can be obtained by constructing 
emulators for the original dynamical system.
Theorem \ref{thm:invlinking} shows that invariant space computations can be performed on an
emulator and the results transferred to the original space.
Theorem \ref{thm:controlobservelink}  repeats this in the case of controllability-observability and
Theorems \ref{thm:wmfeedback},
\ref{thm:dwminjection}, respectively, for state feedback and output injection.
%Theorem \ref{thm:emuadjoint} shows that adjoints of emulators can be obtained by constructing 
%emulators for the original dynamical system.

\ref{app:ComputingBasicOperations} discusses the computation of basic operations explicitly and implicitly.
The operations considered include vector space sum, vector space intersection and the $\lrar$ operation. 

\ref{sec:ppa_dualization}
has been described a couple of paragraphs earlier.

\ref{sec:implicitemulatormultiport} discusses, through an example, how to build an emulator implicitly for an  $RLC$ circuit 
using multiport decomposition.

\ref{sec:exadjoint}
discusses,  through an example, the construction of the adjoint  of 
an $RLC$ circuit and the adjoint of its
multiport 
decomposition based emulator.

In addition, the appendices also contain some routine proofs of theorems stated in the main body of the paper.

\section{Preliminaries}
\label{sec:Preliminaries}
The preliminary results and the notation used are from \cite{HNarayanan1997} (open 2nd edition available at \cite{HNarayanan2009}) and more specifically from
\cite{HNPS2013}. 
A \nw{vector} $\mnw{f}$ on $X$ over $\mathbb{F}$ is a function $f:X\rightarrow \mathbb{F}$ where $\mathbb{F}$ is a field. In this paper we work primarily with the real field.  When $X$, $Y$ are disjoint, $X\uplus Y$ denotes the disjoint 
union of $X$ and $Y.$ A vector $f_{X\uplus  Y}$ on $X\uplus Y$ would be written as $f_{XY}$ and would often be written as $(f_X,f_Y)$ during operations dealing with such vectors. The {\bf sets} on which vectors are defined will  always be {\bf finite}. When a vector $x$ figures in an equation, we will use the 
convention that $x$ denotes a column vector and $x^T$ denotes a row vector such as
in $Ax=b,x^TA=b^T.$ Let $f_Y$ be a vector on $Y$ and let $X \subseteq Y$ . Then the \textbf{restriction} of $f_Y$ to $X$ is defined as follows:
\begin{align*}
f_Y/X \equiv g_X, \textrm{ where } g_X(e) = f_Y(e), e\in X.
\end{align*}
When $f$ is on $X$ over $\mathbb{F}$, $\lambda \in \mathbb{F}$ then  $\mnw{\lambda f}$ is on $X$ and is defined by $(\lambda f)(e) \equiv \lambda [f(e)]$, $e\in X$. When $f$ is on $X$ and $g$ on $Y$ and both are over $\mathbb{F}$, we define $\mnw{f+g}$ on $X\cup Y$ by 
\begin{align*}
 (f+g)(e) &\equiv \left\{ \begin{matrix}
                          f(e) + g(e),& e\in X \cap Y\\
			    f(e),& e\in X \setminus Y\\
			    g(e),& e\in Y \setminus X.
                         \end{matrix}
\right.
\end{align*}
When $X\cap Y = \emptyset$  then $f_X+g_Y$ is written as $\mnw{f_X\oplus g_Y}$ or as $\mnw{(f_X, g_Y)}.$ When $f,g$ are on $X$ over $\mathbb{F}$ the \textbf{dot product} $\langle f, g \rangle$ of $f$ and $g$ is defined by 
\begin{align*}
 \langle f,g \rangle \equiv \sum_{e\in X} f(e)g(e).
\end{align*}
We say $f$, $g$ are \textbf{orthogonal} iff $\langle f,g \rangle = 0$.

An \nw{arbitrary  collection} of vectors on $X$ with $0_X$ as a member would be denoted by $\mnw{\mathcal{K}_X}$. $\mathcal{K}$ on $X$ is a \nw{vector space} iff $f,g\in \mathcal{K}$ implies $\lambda f + \sigma g \in \mathcal{K}$ for $\lambda, \sigma \in \mathbb{F}$. We will use the symbol $\mnw{\V_X}$ for vector space on $X$ as opposed to $\mathcal{K}_X$ for arbitrary collections of vectors on $X$ with a zero vector as a member. The symbol $\mnw{\mathcal{F}_X}$ refers to the \nw{collection of all vectors} on $X$ and $\mnw{0_X}$ to the \nw{zero vector space} on $X$ as well as the \nw{zero vector} on $X$. 

When the index set is clear from the context, to improve readability, we sometimes
represent a vector space as  $\{[K]\}$ or as $\{Kx=0\}.$ In the former case the vector
space referred to is the row space of $K$ and in the latter  case, it is the solution space of $Kx=0.$ 

When $X$, $Y$ are disjoint we usually write $\mathcal{K}_{XY}$ in place of $\mathcal{K}_{X\uplus Y}$.\\
The collection
$\{ (f_{X},\lambda f_Y) : (f_{X},f_Y)\in \mathcal{K}_{XY} \}$
is denoted by
$ \mathcal{K}_{X(\lambda Y)}.
$ 
When $\lambda = -1$ we would write more simply $\mathcal{K}_{X(-Y)}.$
Observe that $(\mathcal{K}_{X(-Y)})_{X(-Y)}=\mathcal{K}_{XY}.$\\ 
%$\Ipp $ is defined to be  the vector space $ \{ (f_P,f_P'):f_P\in \mathscr{F}_P\}.$ 
%\end{align*}

\subsection{Basic operations}
The basic operations we use in this paper are as follows:
\subsubsection{Building Copies}
An operation that occurs often during the course of this paper is that of building \nw{copies} of sets and \nw{copies} of collections of vectors on copies of sets. We say set $X$, $X'$ are \nw{copies of each other} iff they are disjoint and there is a bijection, usually clear from the context, mapping  $e\in X$ to $e'\in X'$. 
%We need this operation in order to talk, for instance, of the vector $(v,i)$ where $v$ is a voltage vector on the edge set of a graph while $i$ is  a current vector on the same set. We handle this by building a copy $E'$ of the edge set $E$ and say $v$ is on $E$ and $i$ is on $E'$. 

When $X,X'$ are copies of each other, the vectors $f_X$ and $f_{X'}$ are said to be copies of each other, i.e., $f_{X'}(e') = f_X(e).$ If the vectors on $X$ and $X'$ are not copies of each other, they  would be distinguished using   notation, such as,  $f_X$, $\widehat{f}_{X'}$ etc. Vectors without subscripts like,  $x$, $\mydot{x}$ and $w$, $\mydot{w}$   are not necessarily copies of each other.

When $X$ and $X'$ are copies of each other, the collections of vectors $\mathcal{K}_X$, $\mathcal{K}_{X'}$  always represent copies of each other, i.e., 
\begin{align*}
 \mathcal{K}_{X'} \equiv \{ f_{X'} : f_{X'}(e') = f_X(e), f_X\in \mathcal{K}_X \}.
\end{align*}
This holds also for vector spaces
$\V_X$, $\V_{X'}.$
  If they are not copies they would be clearly distinguished from each other by the notation, for instance, $\mathcal{K}_X$, $\widehat{\mathcal{K}}_{X'}$, $\V_{X}$, $\widehat{\V}_{X'}$, etc. 

When $X$ and $X'$ are copies of each other, the notation for interchanging the positions of variables $X$ and $X'$ in a collection $\mathcal{K}_{XX'Y}$ is given by $\mnw{(\mathcal{K}_{XX'Y})_{X'XY}}$, that is 
$$(\mathcal{K}_{XX'Y})_{X'XY}$$
$$ \equiv \{(g_X,f_{X'},h_Y)\ |\ (f_X,g_{X'},h_Y) \in \mathcal{K}_{XX'Y},\ g_X\textrm{ being copy of }g_{X'},\ f_{X'}\textrm{ being copy of }f_X  \}.$$
When $P, P'$ are copies of each other, 
$\mnw{\Ipp} $ is defined to be  the vector space $ \{ (f_P,f_P'):f_P\in \mathcal{F}_P\}.$ 
\subsubsection{Sum and Intersection}
Let $\mathcal{K}_X$, $\mathcal{K}_Y$ be collections of vectors on sets $X$, $Y$ respectively. The \nw{sum} $\mnw{\mathcal{K}_X+\mathcal{K}_Y}$ of $\mathcal{K}_X$, $\mathcal{K}_Y$ is defined over $X\cup Y$ as follows:
\begin{align*}
 \mathcal{K}_X + \mathcal{K}_Y &\equiv  \{  (f_X,0_{Y\setminus X}) + (0_{X\setminus Y},g_Y), \textrm{ where } f_X\in \mathcal{K}_X, f_Y\in \mathcal{K}_Y \},
 \end{align*}
When $X$, $Y$ are disjoint, $\mathcal{K}_X + \mathcal{K}_Y$ is usually written as $\mnw{\mathcal{K}_X \oplus \mathcal{K}_Y}$ and is called the \nw{direct sum}.
Thus,
\begin{align*}
\mathcal{K}_X + \mathcal{K}_Y &\equiv (\mathcal{K}_X \oplus \0_{Y\setminus X}) + (\0_{X\setminus Y} \oplus \mathcal{K}_Y).
\end{align*}
%\subsubsection{Intersection}

The \nw{intersection} $\mnw{\mathcal{K}_X \cap \mathcal{K}_Y}$ of $\mathcal{K}_X$, $\mathcal{K}_Y$ is defined over $X\cup Y$ as follows:
\begin{align*}
\begin{split}
\mathcal{K}_X \cap \mathcal{K}_Y \equiv \{ f_{X\cup Y} &: f_{X\cup Y} = (f_X,x_{Y\setminus X}),\qquad f_{X\cup Y} = (y_{X\setminus Y},f_Y), \\& \textrm{ where } f_X\in\mathcal{K}_X,\qquad f_Y\in\mathcal{K}_Y,   \qquad x_{Y\setminus X},\ y_{X\setminus Y} \\&\textrm{ are arbitrary vectors on }  Y\setminus X,\ X\setminus Y \textrm{ respectively} \}.
\end{split}
\end{align*}
Thus,
\begin{align*}
\mathcal{K}_X \cap \mathcal{K}_Y\equiv (\mathcal{K}_X \oplus \mathcal{F}_{Y\setminus X}) \cap (\mathcal{F}_{X\setminus Y} \oplus \mathcal{K}_Y).
\end{align*}

\subsubsection{Matched and Skewed Composition}
The \nw{matched composition} $\mnw{\mathcal{K}_X \leftrightarrow \mathcal{K}_Y}$ is on $(X\setminus Y)\uplus (Y \setminus X)$ and is defined as follows:
\begin{align*}
 \mathcal{K}_X \leftrightarrow \mathcal{K}_Y &\equiv \{
                f : f=(g_{X\setminus Y} , h_{Y \setminus X}), \textrm{ where } g\in \mathcal{K}_X, h\in \mathcal{K}_Y \textrm{ \& } g_{X\cap Y} = h_{Y \cap X}
\}.
\end{align*}
Matched composition is referred to as matched sum in \cite{HNarayanan1997}.
In the special case where $Y\subseteq X$, matched composition is called 
\nw{generalized minor} operation (generalized minor of $\mathcal{K}_X $
with respect to $\mathcal{K}_Y$). 

%\subsubsection{Skewed Composition}
The \nw{skewed composition} $\mnw{\mathcal{K}_X \rightleftharpoons \mathcal{K}_Y}$ is on $(X\setminus Y)\uplus (Y \setminus X)$ and is defined as follows:
%\begin{align*}
$$ \mathcal{K}_X \rightleftharpoons \mathcal{K}_Y \equiv  \mathcal{K}_{(X\setminus Y)(-X\cap Y)}\lrar \mathcal{K}_Y$$
$$=\{
                f :f= (g_{X\setminus Y},  h_{Y \setminus X}), \textrm{ where } g\in \mathcal{K}_X, h\in \mathcal{K}_Y \textrm{ \& } g_{X\cap Y} = -h_{Y \cap X}
\}.$$
%\end{align*}
When $X$, $Y$ are disjoint, the matched and skewed composition both correspond to direct sum.
The \nw{generalized minor} of $\mathcal{K}_{XY}$ relative to $\mathcal{K}_Y$ is $\mnw{\mathcal{K}_{XY}\leftrightarrow \mathcal{K}_Y}$. When $\mathcal{K}_{XY}$, $\mathcal{K}_Y$ are vector spaces,   observe that $\mathcal{K}_{XY}\leftrightarrow \mathcal{K}_Y = \mathcal{K}_{XY}\rightleftharpoons \mathcal{K}_Y.$ The operations $\mnw{\mathcal{K}_{XY} \leftrightarrow \mathcal{F}_Y}$, $\mnw{\mathcal{K}_{XY}\leftrightarrow \0_Y}$ are called the \nw{restriction and contraction} of $\mathcal{K}_{XY}$ and are also denoted by $\mnw{\mathcal{K}_{XY}\circ X}$, $\mnw{\mathcal{K}_{XY} \times X}$, respectively.
Here again $\mnw{\mathcal{K}_{XYZ}\circ XY}$, $\mnw{\mathcal{K}_{XYZ} \times XY}$, respectively
when $X,Y,Z$ are pairwise disjoint, would denote  $\mnw{\mathcal{K}_{XYZ}\circ X\uplus Y}$, $\mnw{\mathcal{K}_{XYZ} \times X \uplus Y}.$
\subsubsection{Vector Space Results}
\label{ssec:vspaceresults}
If the collections of vectors, which go to make up the expression, are all closed under addition, it is clear that $+,\cap, \leftrightarrow, \rightleftharpoons ,$ yield collections
which are closed under addition.
 It is also clear that $+,\cap, \leftrightarrow, \rightleftharpoons ,$ all yield vector spaces when they operate on vector spaces. 

If $\mathcal{K}$ is on $X$, then $\mnw{\mathcal{K}^\perp}$ is defined by
\begin{align*}
 \mathcal{K}^\perp = \{ g: \langle f, g \rangle = 0,f\in \mathcal{K} \}.
\end{align*}
  Clearly $\mathcal{K}^\perp$ is a vector space whether or not  $\mathcal{K}$ is. When $\V$ is a vector space on a finite set $S$, it can be shown that $(\V^\perp)^\perp = \V$. This is fundamental for the results of the present work. The following results are easy to see
\begin{align*}
 (\V_X + \widehat{\V}_{X})^\perp & = \V_X^\perp \cap \widehat{\V}_{X}^\perp, \qquad X \textrm{ finite } \\
(\V_X \cap \widehat{\V}_X)^\perp & = \V_X^\perp + \widehat{\V}_X^\perp,\qquad X \textrm{ finite }.
\end{align*}
When $X$, $Y$ are disjoint, it is easily verified that $(\V_X \oplus \V_Y)^\perp =\V_X^\perp \oplus \V_Y^\perp$. When $X$, $Y$ are not disjoint, $\V_X + \V_Y \equiv (\V_X \oplus \0_{Y\setminus X}) + (\V_Y \oplus \0_{X\setminus Y}) $. So 
\begin{align*}
 (\V_X + \V_Y)^\perp &= (\V_X^\perp \oplus \mathcal{F}_{Y\setminus X}) \cap (\V_Y^\perp \oplus \mathcal{F}_{X\setminus Y}) \\
&= \V_X^\perp \cap \V_Y^\perp
\end{align*}
by the definition of intersection of vector spaces on two distinct sets. 

Using $(\V^\perp)^\perp = \V$, we have $(\V_X \cap \V_Y)^\perp  = \V_X^\perp + \V_Y^\perp$. 
%The above pair of equalities will be referred to as the \textbf{intersection-sum duality} in subsequent sections.

The following results can also be easily verified:
\begin{align*}
 (\V_{XY}\circ X )^\perp &= \V_{XY}^\perp \times X \\
 (\V_{XY}\times X )^\perp & = \V_{XY}^\perp \circ X \textrm{ using } (\V^\perp)^\perp = \V. 
\end{align*}
The above pair of results will be referred to as 
the \textbf{dot-cross duality}.  

\subsubsection{Results on Generalized Minor}
The results in this subsection were originally derived in \cite{HNarayanan,HNarayanan1986a,narayanan1987topological}. A comprehensive account is in 
\cite{HNarayanan1997}.

Let $X$, $Y$ and $Z$ be disjoint sets, then
\begin{align*}
(\mathcal{V}_{XYZ} \leftrightarrow  \mathcal{V}_{X}) \leftrightarrow  \mathcal{V}_{Y} = (\mathcal{V}_{XYZ} \leftrightarrow  \mathcal{V}_{Y}) \leftrightarrow  \mathcal{V}_{X} = \mathcal{V}_{XYZ} \leftrightarrow  (\mathcal{V}_{X} \oplus  \mathcal{V}_{Y}).
\end{align*}
%We will omit the brackets in such cases.

When $ \mathcal{V}_{X}\equiv \0_X, \mathcal{V}_{Y}\equiv \F_Y,$ the above reduces to 
$$ \mathcal{V}_{XYZ}\times YZ\circ Z\equaln \mathcal{V}_{XYZ}\circ XZ\times Z.$$
%\begin{comment}
The next lemma is a slightly modified version of a result (Problem 7.5) in \cite{HNarayanan1997}. 
\begin{lemma}
\label{lem:Kgenminor}
\begin{enumerate}
\item Let $\KSP,\KSQ $ be collections of vectors on $S \uplus P,S\uplus Q $  respectively.

Then there exists a collection of vectors
$\KPQ$ on $P\uplus Q$ s.t. ${\bf 0}_{PQ} \in \KPQ $ and $\KSP \lrar \KPQ = \KSQ,$ 

only if
$\KSP\circ S \supseteq \KSQ \circ S$
and
$ \KSP \times S \subseteq \KSQ \times S.$
\vspace{0.25cm}
\item Let $\KSP$ be a collection of vectors closed under subtraction  on $S \uplus P$ 

and let  $\KSQ$ be a collection of vectors on $S\uplus Q,$
closed under addition. 

Further let $ \KSP \times S \subseteq \KSQ \times S$ and $\KSP\circ S \supseteq \KSQ \circ S.$

Then the collection of vectors
$\KSP \lrar \KSQ,$ is closed under addition, with ${\bf 0}_{PQ}$ as a member 

and further we have that $\KSP \lrar (\KSP\lrar \KSQ)= \KSQ.$
\vspace{0.25cm}
\item Let $\KSP$ be a collection of vectors closed under subtraction, 
and let $\KSQ$ satisfy
the conditions, 

closure under addition, 
${\bf 0}_{SQ}\in \KSQ, \KSP \times S \subseteq \KSQ \times S$ and $\KSP\circ S \supseteq \KSQ \circ S.$

Then the equation
$$\KSP\lrar \KPQ = \KSQ,$$
where $\KPQ$ has to satisfy
closure under addition, 
has a unique solution under the condition
$$\KSP \times P \subseteq \KPQ \times P \ and\  \KSP\circ P \supseteq \KPQ \circ P.$$

If the solution $\KPQ$ does not satisfy these conditions, then there exists another solution,
i.e., $$\KSP\lrar \KSQ,$$ that satisfies these conditions.
\end{enumerate}
\end{lemma}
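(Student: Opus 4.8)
The plan is to prove the three parts in order, since each builds on the previous. For part (1), I would argue contrapositively via the definitions of $\circ$, $\times$, and $\lrar$. Suppose $\KPQ$ is a collection with $\0_{PQ}\in\KPQ$ and $\KSP\lrar\KPQ=\KSQ$. To see $\KSQ\circ S\subseteq\KSP\circ S$: take $f_S\in\KSQ\circ S$, so $(f_S,h_Q)\in\KSQ$ for some $h_Q$; by definition of $\lrar$ there is $g_P$ with $(f_S,g_P)\in\KSP$ and $(g_P,h_Q)\in\KPQ$, hence $f_S\in\KSP\circ S$. For $\KSP\times S\subseteq\KSQ\times S$: take $f_S$ with $(f_S,0_P)\in\KSP$; since $\0_{PQ}\in\KPQ$ we get $(0_P,0_Q)\in\KPQ$, so $(f_S,0_Q)\in\KSP\lrar\KPQ=\KSQ$, i.e.\ $f_S\in\KSQ\times S$. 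This part is essentially bookkeeping with the definitions.

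For part (2), set $\KPQ\equiv\KSP\lrar\KSQ$ (where the composition identifies the common set $S$). First I would check $\0_{PQ}\in\KPQ$: since $0_S\in\KSP$ and $0_S\in\KSQ$, the pair $(0_P,0_Q)$ qualifies. Closure under addition of $\KSP\lrar\KSQ$ follows from closure under addition of $\KSQ$ together with closure under subtraction of $\KSP$ (given two witnesses in $\KSP$ over the same $f_S$, their difference lies in $\KSP$, which lets the two $S$-components be reconciled — this is the standard trick and is why subtraction-closure, not just addition-closure, is assumed on $\KSP$). The substantive claim is $\KSP\lrar(\KSP\lrar\KSQ)=\KSQ$. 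The inclusion $\subseteq$: if $(f_S,h_Q)$ is in the left side, there is $g_P$ with $(f_S,g_P)\in\KSP$ and $(g_P,h_Q)\in\KSP\lrar\KSQ$, so there is $f'_S$ with $(f'_S,g_P)\in\KSP$ and $(f'_S,h_Q)\in\KSQ$; then $f_S-f'_S$ pairs with $0_P$ in $\KSP$ (using subtraction-closure), so $f_S-f'_S\in\KSP\times S\subseteq\KSQ\times S$, and adding this to $(f'_S,h_Q)\in\KSQ$ gives $(f_S,h_Q)\in\KSQ$. The reverse inclusion $\supseteq$: given $(f_S,h_Q)\in\KSQ$, I use $\KSP\circ S\supseteq\KSQ\circ S$ to find $g_P$ with $(f_S,g_P)\in\KSP$; then $(g_P,h_Q)\in\KSP\lrar\KSQ$ by definition, so $(f_S,h_Q)\in\KSP\lrar(\KSP\lrar\KSQ)$.

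For part (3), existence is immediate from part (2): $\KPQ\equiv\KSP\lrar\KSQ$ solves $\KSP\lrar\KPQ=\KSQ$ and is closed under addition with $\0_{PQ}$ a member. For uniqueness under the conditions $\KSP\times P\subseteq\KPQ\times P$ and $\KSP\circ P\supseteq\KPQ\circ P$, I would suppose $\KPQ$ and $\KPQ'$ are two such solutions and show each is contained in the other by the same cancellation argument as in part (2), now applied on the $P$-side: if $(g_P,h_Q)\in\KPQ$, pick via $\KSP\circ P\supseteq\KPQ\circ P$ (applied to $\KPQ'$? no — here I need a symmetric handle) — the clean route is to show any solution $\KPQ$ satisfying the two $P$-conditions equals $\KSP\lrar\KSQ$, by running the argument of part (2) with the roles of $S$ and $P$ swapped, i.e.\ showing $\KSP\lrar(\KSP\lrar\KPQ)=\KPQ$ and hence $\KPQ=\KSP\lrar(\KSP\lrar\KPQ)=\KSP\lrar\KSQ$. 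The final sentence of the statement — that if the particular solution fails the $P$-conditions then $\KSP\lrar\KSQ$ still satisfies them — I would get by direct verification: $\KSP\lrar\KSQ\circ P\subseteq\KSP\circ P$ and $\KSP\times P\subseteq(\KSP\lrar\KSQ)\times P$ follow from the definitions together with $\0_{SQ}\in\KSQ$ and $\KSP\circ S\supseteq\KSQ\circ S$, exactly paralleling part (1).

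The main obstacle is the uniqueness argument in part (3): one must be careful that the $P$-side conditions are genuinely enough to pin down the solution, and the cleanest way is to reduce to part (2) by a role-swap rather than to re-derive everything, checking that the hypotheses of part (2) transfer (closure-under-subtraction is automatic for the vector-space / addition-closed collections in play here, or must be tracked if only one-sided closure is assumed). The rest is careful but routine manipulation of the composition, restriction, and contraction definitions.
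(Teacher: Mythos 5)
Your proposal is correct and follows essentially the same route as the paper: part (1) by definition-chasing with $\0_{PQ}\in\KPQ$, part (2) by setting $\KPQ=\KSP\lrar\KSQ$ and using subtraction-closure of $\KSP$ with the contraction hypothesis to cancel the two $S$-witnesses, and part (3) by applying part (2) with the roles of $S$ and $P$ swapped to force any solution satisfying the $P$-conditions to equal $\KSP\lrar\KSQ$. The only differences are cosmetic (e.g.\ the paper derives $\0_{SQ}\in\KSQ$ from $\KSP\times S\subseteq\KSQ\times S$ rather than invoking it directly), so no changes are needed.
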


\begin{proof}
\begin{enumerate}
\item
Suppose $\KSP \lrar \KPQ = \KSQ$ and 
${\bf 0}_{PQ} \in \KPQ .$

It is clear
from the definition of the matched composition operation that
$\KSP \circ S \supseteq \KSQ\circ S.$  

Since 
${\bf 0}_{PQ} \in \KPQ,  $  if 
%every vector $\fS$ s.t. 
$\fS \oplus {\bf 0}_{P} \in \KSP,$ we must have that $\fS \oplus {\bf 0}_{Q} \in \KSQ.$

Thus, $\KSP \times S \subseteq \KSQ\times S.$ 
\vspace{0.5cm}
\item
On the other hand suppose
$ \KSP \times S \subseteq \KSQ \times S$ and $\KSP\circ S \supseteq \KSQ \circ S.$
\\
Let $\KPQ \equiv \KSP \lrar \KSQ,$ i.e., $\KPQ$ is the collection of all vectors $\fP\oplus \fQ $ s.t. for some
vector $\fS,$ 
$\fS\oplus \fQ \in \KSQ$, $\fS \oplus \fP \in \KSP.$  

Since $\KSP$ is closed under subtraction,
it  contains the zero vector, the negative of every vector in it and is closed
under addition. 

Since  ${\bf 0}_S \oplus{\bf 0}_P\in \KSP,$ we must have that   ${\bf 0}_S \in \KSP \times S$ and therefore   ${\bf 0}_S \in \KSQ\times S.$
It follows that 
${\bf 0}_S \oplus {\bf 0}_{Q} \in \KSQ. $ 

Hence, by definition of $\KPQ,
{\bf 0}_{PQ} \in \KPQ.$ Further, since both $\KSP, \KSQ,$ are closed under addition, so is $\KPQ$ since $\KPQ= \KSP \lrar \KSQ.$\\
Let $\fS\oplus \fQ \in \KSQ.$

Since $\KSP\circ S \supseteq \KSQ \circ S,$ for some $\fP,$ we must have that $\fS\oplus \fP \in \KSP.$
By the definition of $\KPQ,$ we have that $\fP\oplus \fQ \in \KPQ.$ 

Hence,
$\fS\oplus \fQ \in \KSP \lrar \KPQ.$ 

Thus,
$\KSP \lrar \KPQ \supseteq \KSQ.$

\vspace{0.5cm}
Next, let $\fS\oplus \fQ\in \KSP \lrar \KPQ,$ i.e., for some $\fP, \fS \oplus \fP \in \KSP$ and $\fP\oplus \fQ \in \KPQ.$

We know, by the definition of $\KPQ,$ that there exists $\fS'\oplus \fQ \in \KSQ$ s.t. $\fS' \oplus \fP \in \KSP.$

Since $\KSP$ is closed under subtraction, we must have, $(\fS - \fS') \oplus
{\bf 0}_{P} \in \KSP.$  

Hence, $\fS - \fS' \in \KSP \times S
\subseteq \KSQ\times S.$  

Hence $(\fS - \fS') \oplus
{\bf 0}_{Q} \in \KSQ.$ 

Since $\KSQ$ is closed under addition and
$\fS'\oplus \fQ \in \KSQ$,

it follows that $(\fS - \fS')\oplus {\bf 0}_{Q} + \fS'\oplus \fQ = \fS \oplus \fQ$ also
belongs to $\KSQ$.  

Thus, $\KSP \lrar \KPQ \subseteq \KSQ.$

\vspace{0.5cm}
\item From parts (1) and (2) above, the equation can be satisfied by some $\KPQ$ if and only if 
$\KSP \circ S \supseteq \KSQ\circ S$ and  $\KSP \times S \subseteq \KSQ\times S.$ 

Next, let $\KPQ$ satisfy the equation  $\KSP\lrar \KPQ =\KSQ $ and be closed under addition. 

From part (2), we know that if $\KPQ$ satisfies $\KSP \circ P \supseteq \KPQ\circ P$ and  $\KSP \times P \subseteq \KPQ\times P,$

then $\KSP\lrar (\KSP\lrar \KPQ) =\KPQ.$ But $\KPQ$ satisfies $\KSP\lrar \KPQ =\KSQ.$

It follows that for any such $\KPQ,$ we have $\KSP\lrar \KSQ=\KPQ.$ 

This proves that $\KSP\lrar \KSQ$
is the only solution to the equation $\KSP\lrar \KPQ =\KSQ, $ under the condition $\KSP \circ P \supseteq \KPQ\circ P$ and  $\KSP \times P \subseteq \KPQ\times P.$ 

\end{enumerate}
\end{proof}
\begin{remark}
We note that the  collections of vectors in Lemma \ref{lem:Kgenminor} can be over rings rather than over fields- in particular over the ring of integers. Also only $\KSP$ has to be closed over subtraction.
The other two  collections $\KPQ,\KSQ$ are only closed over addition with a zero vector as a member. In particular $\KSP$ could be a vector space over rationals while
 $\KPQ,\KSQ$  could be cones over rationals. In all these cases the proof would go through 
 without change.
\end{remark}
The following theorem is a simple consequence.
%One can similarly prove the following theorem (Exercise 7.7 (solution given)) of \cite{HNarayanan1997}).
We will call this the {\bf implicit 
inversion theorem}(IIT).
\begin{theorem}
\label{thm:inverse}
Consider the equation $\Vsp \lrar \Vpq =\Vsq $. Then 
\begin{enumerate}
\item given $\Vsp, \Vsq,  \exists \Vpq, $ satisfying the equation iff $\Vsp\circ S\supseteq \Vsq \circ S$ and $\Vsp\times S\subseteq \Vsq \times S;$
\item  given $\Vsp, \Vpq,\Vsq$ satisfying the equation, we have $\Vsp \lrar \Vsq =\Vpq $ iff
$\Vsp\circ P\supseteq \Vpq \circ P$ and $\Vsp\times P\subseteq \Vpq \times P.$
\item given $\Vsp, \Vsq,$ assuming that the equation  $\Vsp \lrar \Vpq =\Vsq $
is satisfied by some $\Vpq $ it is unique under the condition $\Vsp\circ P\supseteq \Vpq \circ P$ and $\Vsp\times P\subseteq \Vpq \times P.$
\end{enumerate}

\end{theorem}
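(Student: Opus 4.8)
The plan is to derive all three parts directly from Lemma~\ref{lem:Kgenminor} by specialising its collections $\KSP,\KPQ,\KSQ$ to be vector spaces $\Vsp,\Vpq,\Vsq$. The only preliminary observation needed is that a vector space is closed under subtraction, contains the zero vector, and that (as recorded in Subsection~\ref{ssec:vspaceresults}) the matched composition of two vector spaces is again a vector space. Consequently every closure hypothesis occurring in Lemma~\ref{lem:Kgenminor} is automatically satisfied in the present setting, and the side conditions of the form ``$\0\in\mathcal{K}$'' are vacuous.

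For part~(1): the ``only if'' direction is immediate from Lemma~\ref{lem:Kgenminor}(1), since any solution $\Vpq$ is a vector space and hence contains $\0_{PQ}$. For the ``if'' direction, assuming $\Vsp\circ S\supseteq\Vsq\circ S$ and $\Vsp\times S\subseteq\Vsq\times S$, I would set $\Vpq\equiv\Vsp\lrar\Vsq$; this is a vector space on $P\uplus Q$, and Lemma~\ref{lem:Kgenminor}(2), applied with $\KSP\to\Vsp$ and $\KSQ\to\Vsq$, yields $\Vsp\lrar(\Vsp\lrar\Vsq)=\Vsq$, i.e.\ $\Vsp\lrar\Vpq=\Vsq$, exhibiting a solution.

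For part~(2), I would invoke the same two lemma parts but with $P$ in the role that $S$ plays in the lemma: that is, regard $\Vsp$ as a space with ``private'' index $P$ and ``matched'' index $S$, and $\Vsq$ as a space with matched index $S$ and private index $Q$. If $\Vsp\lrar\Vsq=\Vpq$, then Lemma~\ref{lem:Kgenminor}(1) in this relabelled form (using $\0_{SQ}\in\Vsq$) gives $\Vsp\circ P\supseteq\Vpq\circ P$ and $\Vsp\times P\subseteq\Vpq\times P$. Conversely, if these two $P$-conditions hold, Lemma~\ref{lem:Kgenminor}(2) in the same relabelled form gives $\Vsp\lrar(\Vsp\lrar\Vpq)=\Vpq$, and substituting the hypothesis $\Vsp\lrar\Vpq=\Vsq$ gives $\Vsp\lrar\Vsq=\Vpq$. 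Part~(3) is then immediate: if $\Vpq$ satisfies the equation together with $\Vsp\circ P\supseteq\Vpq\circ P$ and $\Vsp\times P\subseteq\Vpq\times P$, then part~(2) forces $\Vpq=\Vsp\lrar\Vsq$, a space determined entirely by the given data $\Vsp,\Vsq$, whence uniqueness. I do not anticipate any genuine difficulty here, since all the content lies in Lemma~\ref{lem:Kgenminor}; the only point demanding care is the bookkeeping of which index set plays the role of the ``matched'' set when the lemma is applied for the $P$-side statements in parts~(2) and~(3).
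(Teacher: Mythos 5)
Your proposal is correct and is essentially the paper's own route: the paper states Theorem \ref{thm:inverse} as "a simple consequence" of Lemma \ref{lem:Kgenminor}, obtained precisely by specialising the collections to vector spaces (so all closure and zero-vector hypotheses hold automatically) and, for the $P$-side statements in parts (2) and (3), applying the lemma with $P$ playing the role of the matched index set. Your bookkeeping of that relabelling and the derivation of uniqueness from the relabelled part (2) are exactly the intended argument.
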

It can be shown that the special case where $Q=\emptyset$ is equivalent to the above result. For convenience we state it below.
\begin{theorem}
\label{thm:Vgenminor}
Let $\Vs = \Vsp \lrar \Vp.$  Then, $\Vp = \Vsp \lrar \Vs$ iff
$\Vsp \circ P \supseteq \Vp$ and $\Vsp \times P \subseteq \Vp.$
\end{theorem}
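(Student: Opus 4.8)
The statement to prove is Theorem~\ref{thm:Vgenminor}: given $\Vs = \Vsp \lrar \Vp$, we have $\Vp = \Vsp \lrar \Vs$ if and only if $\Vsp \circ P \supseteq \Vp$ and $\Vsp \times P \subseteq \Vp$. The plan is to derive this as the special case $Q=\emptyset$ of the implicit inversion theorem (Theorem~\ref{thm:inverse}), or equivalently of Lemma~\ref{lem:Kgenminor}, by a relabelling trick. The obstacle to applying those results directly is that they are stated with three pairwise disjoint index sets $S,P,Q$, whereas here $Q$ has collapsed to the empty set and the roles of ``$S$'' and ``$P$'' need to be matched up carefully; so the first job is to set up the dictionary.

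First I would observe that since $\Vsp$ is a vector space it is closed under subtraction, and $\Vp$ (being a vector space) is closed under addition with $0_P$ as a member, so the hypotheses of Lemma~\ref{lem:Kgenminor} are met once we identify the index sets correctly. In the lemma, relabel its ``$S$'' as our $S$, its ``$P$'' as our $P$, and take its ``$Q$'' to be empty; then its $\KSP$ is our $\Vsp$, its $\KSQ$ becomes a space on $S$ alone, namely $\Vs$, and its $\KPQ$ becomes a space on $P$ alone, namely $\Vp$. With $Q=\emptyset$ the condition ``$\Vsp\circ S\supseteq \Vs\circ S$'' is simply $\Vsp\circ S\supseteq\Vs$ and ``$\Vsp\times S\subseteq\Vs\times S$'' becomes $\Vsp\times S\subseteq\Vs$; but since $\Vs=\Vsp\lrar\Vp$ is given to be realized, part~(1) of the lemma guarantees these automatically, so no extra assumption is needed on the $S$ side.

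Next I would invoke part~(3) of Lemma~\ref{lem:Kgenminor} (equivalently part~(2) of Theorem~\ref{thm:inverse}) with this relabelling: the unique solution to $\Vsp\lrar\X=\Vs$ among spaces satisfying $\Vsp\circ P\supseteq\X$ and $\Vsp\times P\subseteq\X$ is $\Vsp\lrar\Vs$, and moreover $\Vsp\lrar\Vs$ itself always satisfies these two containments. Hence, if $\Vp$ satisfies $\Vsp\circ P\supseteq\Vp$ and $\Vsp\times P\subseteq\Vp$, then by uniqueness $\Vp=\Vsp\lrar\Vs$, which is the ``if'' direction. Conversely, if $\Vp=\Vsp\lrar\Vs$, then since the right-hand side always obeys the two containments (this is exactly the last sentence of part~(3)), $\Vp$ obeys them too, giving the ``only if'' direction.

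I expect the only real care needed — the ``main obstacle'' — is bookkeeping: making sure that with $Q=\emptyset$ the matched composition $\Vsp\lrar\Vp$ on $S$ and $\Vsp\lrar\Vs$ on $P$ are literally the generalized-minor operations $\Vsp\lrar\Vp = \{f_S : (f_S,g_P)\in\Vsp \text{ for some } g_P\in\Vp\}$ and $\Vsp\lrar\Vs = \{g_P : (f_S,g_P)\in\Vsp \text{ for some } f_S\in\Vs\}$, matching the definitions in Section~\ref{sec:Preliminaries}, and that the hypotheses of Lemma~\ref{lem:Kgenminor} (closure under subtraction of $\Vsp$; closure under addition and membership of the zero vector for $\Vp$ and $\Vs$) hold trivially because all three are vector spaces. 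Once that dictionary is in place the proof is a one-line citation of the lemma; there is no genuinely new computation.
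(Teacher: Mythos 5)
Your proposal is correct and follows essentially the same route as the paper, which offers no separate argument for Theorem \ref{thm:Vgenminor} beyond noting it is the $Q=\emptyset$ specialization of Theorem \ref{thm:inverse} (itself a consequence of Lemma \ref{lem:Kgenminor}); your relabelling with $Q=\emptyset$, the use of part (1) to dispose of the $S$-side conditions, and the uniqueness/containment facts from part (3) are exactly the intended bookkeeping.
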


\subsubsection{Implicit Duality Theorem}
\label{ssec:idt}
%Let 
%\begin{align*}
%$$ \mathcal{V}_{(-B)Z} \equiv \{
 %   g_{BZ}, \exists f_{BZ}\in\mathcal{V}_{BZ} \textrm{ s.t. } g_{BZ}/B = -f_{BZ} / B,\\ g_{BZ}/Z = f_{BZ}/Z
%\}.$$
%\end{align*}
From the definition of matched  composition, when $(X,Y,Z)$ are disjoint, 
\begin{align*}
\mathcal{V}_{XY}\leftrightarrow \mathcal{V}_{YZ} = (\mathcal{V}_{XY}+ \mathcal{V}_{(-Y)Z}) \times (X\cup Z)
\end{align*}
and also equal to 
\begin{align*}
 (\mathcal{V}_{XY}\cap \mathcal{V}_{YZ})\circ (X\cup Z).
\end{align*}
Similarly from the definition of skewed composition,
\begin{align*}
 \mathcal{V}_{XY} \rightleftharpoons \mathcal{V}_{YZ} = (\mathcal{V}_{XY}+ \mathcal{V}_{YZ}) \times (X\cup Z)
\end{align*}
and also equal to 
\begin{align*}
 (\mathcal{V}_{XY}\cap \mathcal{V}_{(-Y)Z})\circ (X\cup Z).
\end{align*}
Hence we have 
\begin{align*}
 (\mathcal{V}_{XY}\leftrightarrow \mathcal{V}_{YZ})^\perp 
& = [(\mathcal{V}_{XY} + \mathcal{V}_{(-Y)Z}) \times (X\cup Z)]^\perp \\
& = (\mathcal{V}_{XY} + \mathcal{V}_{(-Y)Z})^\perp \circ (X\cup Z) \\
& =(\mathcal{V}_{XY}^\perp \cap \mathcal{V}_{(-Y)Z}^\perp) \circ(X\cup Z)\\
& = \ \ \  \mathcal{V}_{XY}^\perp \rightleftharpoons \mathcal{V}_{YZ}^\perp 
\end{align*}
In particular,
\begin{align*}
 (\mathcal{V}_{XY}\leftrightarrow \mathcal{V}_{Y})^\perp &\equaln   
\mathcal{V}_{XY}^\perp \leftrightarrow \mathcal{V}_{Y}^\perp
\end{align*}
since $\mathcal{V}_Y = \mathcal{V}_{(-Y)}$.
To summarize
\begin{theorem}
\label{thm:idt}
$(\mathcal{V}_{XY}\leftrightarrow \mathcal{V}_{YZ})^\perp 
\ \equaln\ \mathcal{V}_{XY}^\perp \rightleftharpoons \mathcal{V}_{YZ}^\perp 
.$ In particular,
$(\mathcal{V}_{XY}\leftrightarrow \mathcal{V}_{Y})^\perp \ \equaln\ \mathcal{V}_{XY}^\perp \leftrightarrow \mathcal{V}_{Y}^\perp
.$
\end{theorem}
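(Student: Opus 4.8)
The plan is to prove the Implicit Duality Theorem by reducing the matched- and skewed-composition operations to the more primitive operations of sum, intersection, restriction ($\circ$) and contraction ($\times$), for which the duality behaviour has already been established in the subsection on vector space results. Concretely, I would first record the two identities
\begin{align*}
\mathcal{V}_{XY}\leftrightarrow \mathcal{V}_{YZ} &= (\mathcal{V}_{XY}+ \mathcal{V}_{(-Y)Z}) \times (X\cup Z),\\
\mathcal{V}_{XY}\rightleftharpoons \mathcal{V}_{YZ} &= (\mathcal{V}_{XY}+ \mathcal{V}_{YZ}) \times (X\cup Z),
\end{align*}
valid when $X,Y,Z$ are pairwise disjoint, together with their dual forms expressing the same operations as $(\cdot\cap\cdot)\circ(X\cup Z)$. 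Each of these is a direct unwinding of the definitions: a vector $(g_{X},h_{Z})$ lies in the left side iff there is a common $Y$-part (respectively, a sign-flipped common $Y$-part), which is exactly the condition for membership in the right side after projecting away the $Y$-coordinates.

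Next I would chain the perp through these identities. Starting from $(\mathcal{V}_{XY}\leftrightarrow \mathcal{V}_{YZ})^\perp = [(\mathcal{V}_{XY}+ \mathcal{V}_{(-Y)Z})\times(X\cup Z)]^\perp$, apply the dot-cross duality $(\V\times T)^\perp = \V^\perp\circ T$ to move the contraction to a restriction on the perp, then apply $(\V+\widehat{\V})^\perp = \V^\perp\cap\widehat{\V}^\perp$ to distribute the perp over the sum, obtaining $(\mathcal{V}_{XY}^\perp\cap \mathcal{V}_{(-Y)Z}^\perp)\circ(X\cup Z)$. Recognizing the last expression via the dual presentation of skewed composition gives $\mathcal{V}_{XY}^\perp\rightleftharpoons \mathcal{V}_{YZ}^\perp$, which is the claim. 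The special case $Z=\emptyset$ (i.e. $\mathcal{V}_{YZ}=\mathcal{V}_Y$) then follows immediately, since for a vector space $\mathcal{V}_Y = \mathcal{V}_{(-Y)}$, so skewed and matched composition coincide and we get $(\mathcal{V}_{XY}\leftrightarrow \mathcal{V}_Y)^\perp = \mathcal{V}_{XY}^\perp\leftrightarrow \mathcal{V}_Y^\perp$.

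The main obstacle — and the only place where care is genuinely needed — is the bookkeeping around the embedding conventions for $+$ and $\cap$ on non-disjoint index sets, and the precise handling of the sign twist $(-Y)$. One must check that $(\mathcal{V}_{(-Y)Z})^\perp = \mathcal{V}_{(-Y)Z}^\perp$ is compatible with the convention $(\mathcal{K}_{X(-Y)})_{X(-Y)} = \mathcal{K}_{XY}$, so that the sign flip commutes with taking perps (it does, since negating a coordinate is an orthogonal involution). I would also verify that the restriction/contraction to $X\cup Z$ is the appropriate $\circ$/$\times$ in the sense of $\leftrightarrow$ with $\mathcal{F}$ or $\0$ on the complementary set $Y$, so that the already-proved identity $(\V_{XY}\circ X)^\perp = \V_{XY}^\perp\times X$ applies verbatim. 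Once these conventions are pinned down, the computation is a four-line chain of equalities with no further content; the theorem statement in the excerpt already exhibits exactly this chain, so the proof is essentially the display that precedes Theorem \ref{thm:idt}.
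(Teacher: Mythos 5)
Your proposal is correct and follows essentially the same route as the paper: express $\mathcal{V}_{XY}\leftrightarrow \mathcal{V}_{YZ}$ as $(\mathcal{V}_{XY}+\mathcal{V}_{(-Y)Z})\times (X\cup Z)$, then push the perp through using $(\V\times T)^\perp=\V^\perp\circ T$ and $(\V_1+\V_2)^\perp=\V_1^\perp\cap\V_2^\perp$, and recognize the result as $\mathcal{V}_{XY}^\perp\rightleftharpoons\mathcal{V}_{YZ}^\perp$, with the special case following from $\mathcal{V}_Y=\mathcal{V}_{(-Y)}$. Your remarks on the sign-twist commuting with perps and on the non-disjoint conventions for $+$ and $\cap$ are exactly the bookkeeping the paper's displayed chain relies on.
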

The above result will be referred to as the \nw{implicit duality theorem} (IDT).

Observe that the dot-cross duality is also a consequence of the implicit duality theorem since
\begin{align*}
(\mathcal{V}_{XY}\times X )^\perp = (\mathcal{V}_{XY}\leftrightarrow 0_X )^\perp = \mathcal{V}_{XY}^\perp \leftrightarrow \mathcal{F}_X = \mathcal{V}_{XY}^\perp\circ X.
\end{align*}

Implicit duality theorem and its applications are dealt with in detail in \cite{HNarayanan1986a},  \cite{narayanan1987topological}, \cite{HNarayanan}, \cite{HNarayanan1997}. 
There has been recent interest in the applications of this result in \cite{Forney2004},
in the context of `Pontryagin duality'.
The above proof is based on the one in  \cite{HNarayanan}. Versions for other contexts (such as orthogonality being replaced by polarity, or by the constraint of
dot product being an integer) are available in 
\cite{HNarayanan1997}, for operators in \cite{HN2000}, \cite{HN2000a}, for matroids in \cite{STHN2014}. 

Computational techniques for the basic operations are discussed in \ref{app:ComputingBasicOperations}.

\subsection{Graphs}

The motivation for the ideas in this  paper arose from the attempt to understand the topological processes of electrical circuit theory
and exploit them efficiently. So we require a few definitions from graph theory to state results from electrical networks. We follow \cite{HNarayanan1997}.

An \nw{undirected graph} is a triple $(V,E,f^u)$, where $V$ is the set of \nw{vertices}, $E$ is the set of \nw{edges}, and $f^u$ is the \nw{incidence function} which associates  a pair of vertices with each edge. 
%The graph $\mathcal{G}$ in \figref{fig:ExampleNetworkUndirectedGraph} is an undirected graph and has the vertex  set $\{a,b,c,d,f,g,h,j,k,l,m \}$ and the edge set $\{e_1, e_2, \ldots, e_{18} \}$. 
The \nw{incidence function} defines the end vertices of the edges. 
%For example, $f^u(e_1) = \{a,b\}$. This means that the edge $e_1$ is incident on vertices $a$ and $b$.

Similarly, a \nw{directed graph} is also a triplet $(V,E,f^d)$, where  $f^d$ is the \nw{incidence function} which associates  with each edge an ``ordered pair'' of vertices and the sets $V$, $E$ define the \nw{vertices},   \nw{edges}, respectively of the graph. 
%The graph $\mathcal{G}'$ in \figref{fig:ExampleNetworkGraph} is an example of a directed graph having the vertex  set $\{a,b,c,d,f,g,h,j,k,l,m \}$ and the edge set $\{e_1, e_2, \ldots, e_{18} \}$.
%\begin{figure}[h!]
%\psfrag{a}[][][0.7]{a}
%\psfrag{b}[][][0.7]{b}
%\psfrag{c}[][][0.7]{c}
%\psfrag{d}[][][0.7]{d}
%\psfrag{f}[][][0.7]{f}
%\psfrag{g}[][][0.7]{g}
%\psfrag{h}[][][0.7]{h}
%\psfrag{j}[][][0.7]{j}
%\psfrag{k}[][][0.7]{k}
%\psfrag{l}[][][0.7]{$\ell$}
%\psfrag{m}[][][0.7]{m}
%
%
%\psfrag{1}[][][0.7]{$e_1$}
%\psfrag{2}[][][0.7]{$e_2$}
%\psfrag{3}[][][0.7]{$e_3$}
%\psfrag{4}[][][0.7]{$e_4$}
%\psfrag{5}[][][0.7]{$e_5$}
%\psfrag{6}[][][0.7]{$e_6$}
%\psfrag{7}[][][0.7]{$e_7$}
%\psfrag{8}[][][0.7]{$e_8$}
%\psfrag{9}[][][0.7]{$e_9$}
%\psfrag{10}[][][0.7]{$e_{10}$}
%\psfrag{11}[][][0.7]{$e_{11}$}
%\psfrag{12}[][][0.7]{$e_{12}$}
%\psfrag{13}[][][0.7]{$e_{13}$}
%\psfrag{14}[][][0.7]{$e_{14}$}
%\psfrag{15}[][][0.7]{$e_{15}$}
%\psfrag{16}[][][0.7]{$e_{16}$}
%\psfrag{17}[][][0.7]{$e_{17}$}
%\psfrag{18}[][][0.7]{$e_{18}$}
%
%\begin{center}
% \includegraphics[scale=1.3]{figs/ExampleNetworkGraph.eps}
% \caption{An example of a directed graph ($\mathcal{G}'$) for the network $\mathcal{N}$}
% \label{fig:ExampleNetworkGraph}
%\end{center}
%
%\end{figure}
The \nw{incidence function} defines the end vertices and also the direction of arrow for the edges. 
%For example, $f^d(e_1) = (a,b)$. This means that the edge $e_1$ is incident on vertices $a$ and $b$ and the arrow is from vertex $a$ to vertex $b$. Note that $\mathcal{G}'$ is similar to $\mathcal{G}$ except for the arrows. The arrows are used to indicate the direction in which the currents and voltages are measured. In this paper the same arrow is used for measuring voltage and current as shown in \figref{fig:ArrowConvention}.
%\begin{figure}[h!]
%\psfrag{Cj}[][][0.7]{$C_j$}
%\psfrag{Lj}[][][0.7]{$L_j$}
%\psfrag{Rj}[][][0.7]{$R_j$}

%\psfrag{v_lj}[][][0.7]{$v_{Lj}$,$i_{Lj}$}
%\psfrag{v_cj}[][][0.7]{$v_{Cj}$,$i_{Cj}$}
%\psfrag{v_rj}[][][0.7]{$v_{Rj}$,$i_{Rj}$}

%\psfrag{v,i}[][][0.7]{$v,i$}

%\begin{center}
%  \includegraphics[scale=1.0]{figs/ArrowConvention.eps}
%  \caption{Conventions for measurement of voltages and currents of devices}
%  \label{fig:ArrowConvention}
%\end{center}
%\end{figure}
%The network $\mathcal{N}$, its corresponding undirected graph $\mathcal{G}$, and the directed graph $\mathcal{G}' $ will be used as running example throughout this section.

An \nw{undirected path} (of a graph) from vertex $v_{1}$ to vertex $v_{k}$ is a disjoint alternating vertex-edge sequence $v_{1},e_{l1},v_{2},e_{l2},\ldots,  e_{l(k-1)},v_{k} $, such that every edge $e_{lr}$ is incident on vertices $v_{r}$ and $v_{r+1}$. 
%For instance, in the graph $\mathcal{G}'$ of the running example, the vertex-edge sequence $a,e_5,j,e_7,d$ is an undirected path from vertex `$a$' to vertex `$d$'. 
When it is clear from the context, a path is simply denoted by its edge sequence. 
%For example, the edge sequence $e_5,e_7$, represents the undirected path $a,e_5,je_7,d$. 
A graph is said to be \nw{connected}, if there exists an undirected path between every pair of nodes. Otherwise it is said to be \nw{disconnected}.  
%The graph $\mathcal{G}'$ in the example is a connected graph. 
A disconnected graph has  \nw{connected components} which are individually connected  with no edges between the components.

%A \nw{directed path} (of a graph) from vertex $v_{1}$ to vertex $v_{k}$ is a (disjoint) sequence of edges $e_{l1},e_{l2},\ldots,e_{l(k-1)}$ such that $f^d(e_{l1})=(v_{1},\cdot)$, $f^d(e_{l(k-1)} )=(\cdot,v_{k})$ and between each pair of edges $(e_{lr},e_{l(r+1)})$ if $f^d(e_{lr})=(\cdot,v_{{r+1}})$ then $f^d(e_{l(r+1)})=(v_{{r+1}},\cdot)$ for $r=1,2,\ldots,k-2$. Note that a \nw{directed path} is a path with edges having same arrow direction. 
%The edge sequences $e_1e_2$ or $e_1e_3$ of the graph  $\mathcal{G}'$ are directed paths from `$a$' to `$c$'.

A  \nw{loop} (or a \nw{circuit}) (of any graph) is an undirected  path in which the starting and ending vertices are  the same and no other vertices or edges are repeated. 
%For instance, the vertex sequence `\emph{cbagfc}' of the graph $\mathcal{G}'$ is associated with loops $e_2e_1e_4e_9e_8$ and $e_3e_1e_4e_9e_8$. 
%Note that each edge of a loop occurs only once.  
%Thus every loop can be identified with a vector with $|E|$ components, consisting of ones and zeros. The presence of $1$ in a position indicates that the edge $e_i$ is part of the loop. This vector is called the \nw{characteristic vector} of a loop.  A loop $\mathcal{L}$ is said to be dependent on a set of loops $[\mathcal{L}_r]_{r=1}^{r=p}$, if the characteristic vector of the loop $\mathcal{L}$ can be obtained a linear combination of the characteristic vectors of the loops $[\mathcal{L}_r]_{r=1}^{r=p}$ over the Galois field of two elements $\{0,1\}$. Intuitively, the loop $\mathcal{L}$ can be obtained by choosing appropriate edges from the combined edge set of loops $[\mathcal{L}_r]_{r=1}^{r=p}$. 
%For example, the loop $\mathcal{L}=e_2e_1e_4e_9e_8$ is dependent on the loops $\mathcal{L}_1=e_3e_1e_4e_9e_8$ and $\mathcal{L}_2=e_2e_3$, because $\mathcal{L}$ can be obtained from $\mathcal{L}_1$ by deleting edge $e_3$ and inserting $e_2$ from $\mathcal{L}_2$.

%A \nw{directed loop} (or a \nw{directed circuit})   of a directed graph is also a loop but with all edges in the loop having  same arrow direction. 
%For example, in graph $\mathcal{G}'$, the edge sequence $e_{12}e_{13}e_{14}$ is a directed loop, but $e_2e_3$ is an undirected loop or just a loop of $\mathcal{G}'$.

A \nw{tree} of a graph is a sub-graph of the original graph with no loops. The edges of a tree are called \nw{branches}. A \nw{spanning tree} is a maximal tree with respect to the edges of a graph. Hence, the set obtained by adding  any other edge of the graph to a spanning tree contains a (single) loop. For this reason, a spanning tree is also called \nw{maximal circuit free set}.  The loops obtained in this manner are called \nw{fundamental circuits} associated with the tree.   
% \figref{fig:ExampleNetworkTree} shows a spanning tree $\mathcal{T}$ of the graph $\mathcal{G}'$.
%\begin{figure}[h!]
%\psfrag{a}[][][0.7]{a}
%\psfrag{b}[][][0.7]{b}
%\psfrag{c}[][][0.7]{c}

%\psfrag{d}[][][0.7]{d}
%\psfrag{f}[][][0.7]{f}
%\psfrag{g}[][][0.7]{g}
%\psfrag{h}[][][0.7]{h}
%\psfrag{j}[][][0.7]{j}
%\psfrag{k}[][][0.7]{k}
%\psfrag{l}[][][0.7]{$\ell$}
%\psfrag{m}[][][0.7]{m}

%\psfrag{1}[][][0.7]{$e_1$}
%\psfrag{2}[][][0.7]{$e_2$}
%\psfrag{3}[][][0.7]{$e_3$}
%\psfrag{4}[][][0.7]{$e_4$}
%\psfrag{5}[][][0.7]{$e_5$}
%\psfrag{6}[][][0.7]{$e_6$}
%\psfrag{7}[][][0.7]{$e_7$}
%\psfrag{8}[][][0.7]{$e_8$}
%\psfrag{9}[][][0.7]{$e_9$}
%\psfrag{10}[][][0.7]{$e_{10}$}
%\psfrag{11}[][][0.7]{$e_{11}$}
%\psfrag{12}[][][0.7]{$e_{12}$}
%\psfrag{13}[][][0.7]{$e_{13}$}
%\psfrag{14}[][][0.7]{$e_{14}$}
%\psfrag{15}[][][0.7]{$e_{15}$}
%\psfrag{16}[][][0.7]{$e_{16}$}
%\psfrag{17}[][][0.7]{$e_{17}$}
%\psfrag{18}[][][0.7]{$e_{18}$}

%\begin{center}
% \includegraphics[scale=1.3]{figs/ExampleNetworkTree.eps}
% \caption{An example of a spanning tree ($\mathcal{T}$) of the graph $\mathcal{G}'$}
% \label{fig:ExampleNetworkTree}
%\end{center}

%\end{figure}
It can be verified that every spanning tree of a connected graph with $n$ nodes consists of $n-1$ edges. Since addition of each edge to a spanning tree creates only one loop, the number of fundamental circuits (associated with a spanning tree) is $e-n+1$, where $e$ is the number of edges and $n$ the number of vertices of the graph. 
%For example, the graph $\mathcal{G}'$ has $11$ vertices and $18$ edges. Hence there are $8$ fundamental loops for the graph $\mathcal{G}'$ with respect to the tree $\mathcal{T}$.

A \nw{cotree} of a graph is defined with respect to a spanning tree. It consists of all the edges of the graph which are not in the spanning tree. The edges of a cotree are called \nw{links}. 
%For example, \figref{fig:ExampleNetworkCotree} shows the cotree  for the tree $\mathcal{T}$ of the graph $\mathcal{G}'$.
%\begin{figure}[h!]
%\psfrag{a}[][][0.7]{a}
%\psfrag{b}[][][0.7]{b}
%\psfrag{c}[][][0.7]{c}
%\psfrag{d}[][][0.7]{d}
%\psfrag{f}[][][0.7]{f}
%\psfrag{g}[][][0.7]{g}
%\psfrag{h}[][][0.7]{h}
%\psfrag{j}[][][0.7]{j}
%\psfrag{k}[][][0.7]{k}
%\psfrag{l}[][][0.7]{$\ell$}
%\psfrag{m}[][][0.7]{m}

%\psfrag{1}[][][0.7]{$e_1$}
%\psfrag{2}[][][0.7]{$e_2$}
%\psfrag{3}[][][0.7]{$e_3$}
%\psfrag{4}[][][0.7]{$e_4$}
%\psfrag{5}[][][0.7]{$e_5$}
%\psfrag{6}[][][0.7]{$e_6$}
%\psfrag{7}[][][0.7]{$e_7$}
%\psfrag{8}[][][0.7]{$e_8$}
%\psfrag{9}[][][0.7]{$e_9$}
%\psfrag{10}[][][0.7]{$e_{10}$}
%\psfrag{11}[][][0.7]{$e_{11}$}
%\psfrag{12}[][][0.7]{$e_{12}$}
%\psfrag{13}[][][0.7]{$e_{13}$}
%\psfrag{14}[][][0.7]{$e_{14}$}
%\psfrag{15}[][][0.7]{$e_{15}$}
%\psfrag{16}[][][0.7]{$e_{16}$}
%\psfrag{17}[][][0.7]{$e_{17}$}
%\psfrag{18}[][][0.7]{$e_{18}$}

%\begin{center}
% \includegraphics[scale=1.1]{figs/ExampleNetworkCotree.eps}
% \caption{The Cotree of the graph $\mathcal{G}'$ for the tree $\mathcal{T}$ }
% \label{fig:ExampleNetworkCotree}
%\end{center}

%\end{figure}

A \nw{forest} of a disconnected graph is a disjoint union of the spanning trees of its connected components. The complement of a forest is called \nw{coforest}. In this paper, the terms tree and cotree will be used to mean forest and coforest when it is clear from the context.

The number of edges in a forest of a graph is called the \nw{rank} of the graph. For a graph $\mathcal{G}$ it is denoted by $r(\mathcal{G})$.

%A concept dual to the loop of a graph is the crossing edge set. 
A \nw{crossing edge} set of a graph is the set of all edges which lie between two complementary subsets of the vertex set of the graph.
%For example, the set of edges $\{ e_{12}, e_{13}, e_{14}, e_{15}, e_{16}  \}$ is a crossing edge set of the graph $\mathcal{G}'$ which disconnects the graph into three components as shown in    \figref{fig:ExampleNetworkCrossingEdge}.
%\begin{figure}[h!]
%\psfrag{a}[][][0.7]{a}
%\psfrag{b}[][][0.7]{b}
%\psfrag{c}[][][0.7]{c}
%\psfrag{d}[][][0.7]{d}
%\psfrag{f}[][][0.7]{f}
%\psfrag{g}[][][0.7]{g}
%\psfrag{h}[][][0.7]{h}
%\psfrag{j}[][][0.7]{j}
%\psfrag{k}[][][0.7]{k}
%\psfrag{l}[][][0.7]{$\ell$}
%\psfrag{m}[][][0.7]{m}

%\psfrag{1}[][][0.7]{$e_1$}
%\psfrag{2}[][][0.7]{$e_2$}
%\psfrag{3}[][][0.7]{$e_3$}
%\psfrag{4}[][][0.7]{$e_4$}
%\psfrag{5}[][][0.7]{$e_5$}
%\psfrag{6}[][][0.7]{$e_6$}
%\psfrag{7}[][][0.7]{$e_7$}
%\psfrag{8}[][][0.7]{$e_8$}
%\psfrag{9}[][][0.7]{$e_9$}
%\psfrag{10}[][][0.7]{$e_{10}$}
%\psfrag{11}[][][0.7]{$e_{11}$}
%\psfrag{12}[][][0.7]{$e_{12}$}
%\psfrag{13}[][][0.7]{$e_{13}$}
%\psfrag{14}[][][0.7]{$e_{14}$}
%\psfrag{15}[][][0.7]{$e_{15}$}
%\psfrag{16}[][][0.7]{$e_{16}$}
%\psfrag{17}[][][0.7]{$e_{17}$}
%\psfrag{18}[][][0.7]{$e_{18}$}

%\begin{center}
% \includegraphics[scale=1.3]{figs/ExampleNetworkCrossingEdge.eps}
% \caption{An example of crossing edge set (dashed edges)  of the Graph $\mathcal{G}'$}
% \label{fig:ExampleNetworkCrossingEdge}
%\end{center}
%\end{figure}

   A \nw{cutset} is a minimal crossing edge set, i.e., a minimal subset which when deleted from the graph increases the count of connected components by one.
%For example, the set  $\{ e_{13}, e_{14}, e_{15}, e_{16}  \}$ is a cutset of the graph $\mathcal{G}'$ with components as shown in    \figref{fig:ExampleNetworkCutset}.
%\begin{figure}[h!]
%\psfrag{a}[][][0.7]{a}
%\psfrag{b}[][][0.7]{b}
%\psfrag{c}[][][0.7]{c}
%\psfrag{d}[][][0.7]{d}
%\psfrag{f}[][][0.7]{f}
%\psfrag{g}[][][0.7]{g}
%\psfrag{h}[][][0.7]{h}
%\psfrag{j}[][][0.7]{j}
%\psfrag{k}[][][0.7]{k}
%\psfrag{l}[][][0.7]{$\ell$}
%\psfrag{m}[][][0.7]{m}

%\psfrag{1}[][][0.7]{$e_1$}
%\psfrag{2}[][][0.7]{$e_2$}
%\psfrag{3}[][][0.7]{$e_3$}
%\psfrag{4}[][][0.7]{$e_4$}
%\psfrag{5}[][][0.7]{$e_5$}
%\psfrag{6}[][][0.7]{$e_6$}
%\psfrag{7}[][][0.7]{$e_7$}
%\psfrag{8}[][][0.7]{$e_8$}
%\psfrag{9}[][][0.7]{$e_9$}
%\psfrag{10}[][][0.7]{$e_{10}$}
%\psfrag{11}[][][0.7]{$e_{11}$}
%\psfrag{12}[][][0.7]{$e_{12}$}
%\psfrag{13}[][][0.7]{$e_{13}$}
%\psfrag{14}[][][0.7]{$e_{14}$}
%\psfrag{15}[][][0.7]{$e_{15}$}
%\psfrag{16}[][][0.7]{$e_{16}$}
%\psfrag{17}[][][0.7]{$e_{17}$}
%\psfrag{18}[][][0.7]{$e_{18}$}

%\begin{center}
% \includegraphics[scale=1.3]{figs/ExampleNetworkCutset.eps}
% \caption{An example of a cutset (dashed edges)   of  the graph $\mathcal{G}'$}
% \label{fig:ExampleNetworkCutset}
%\end{center}

%\end{figure}
A spanning tree of the graph can be used to systematically generate cutsets. Since in a spanning tree of a connected graph there exists only one path between any two vertices, deletion of a branch from the tree disconnects the tree into two components. The set of all edges in the graph, between these two components of the tree, forms a cutset of the graph.  Since a spanning tree of a connected graph with $n$ vertices contains $n-1$ edges, one could associate $n-1$ cutsets to it. 
%For example, the cutset associated with edge $e_7$ of the tree $\mathcal{T}$ of the graph $\mathcal{G}'$ is $\{ e_4, e_{5}, e_7, e_9,e_{13},e_{14}\}$ as shown in    \figref{fig:ExampleNetworkTreeCutset}.
%\begin{figure}[h!]
%\psfrag{a}[][][0.7]{a}
%\psfrag{b}[][][0.7]{b}
%\psfrag{c}[][][0.7]{c}
%\psfrag{d}[][][0.7]{d}
%\psfrag{f}[][][0.7]{f}
%\psfrag{g}[][][0.7]{g}
%\psfrag{h}[][][0.7]{h}
%\psfrag{j}[][][0.7]{j}
%\psfrag{k}[][][0.7]{k}
%\psfrag{l}[][][0.7]{$\ell$}
%\psfrag{m}[][][0.7]{m}
%
%\psfrag{1}[][][0.7]{$e_1$}
%\psfrag{2}[][][0.7]{$e_2$}
%\psfrag{3}[][][0.7]{$e_3$}
%\psfrag{4}[][][0.7]{$e_4$}
%\psfrag{5}[][][0.7]{$e_5$}
%\psfrag{6}[][][0.7]{$e_6$}
%\psfrag{7}[][][0.7]{$e_7$}
%\psfrag{8}[][][0.7]{$e_8$}
%\psfrag{9}[][][0.7]{$e_9$}
%\psfrag{10}[][][0.7]{$e_{10}$}
%\psfrag{11}[][][0.7]{$e_{11}$}
%\psfrag{12}[][][0.7]{$e_{12}$}
%\psfrag{13}[][][0.7]{$e_{13}$}
%\psfrag{14}[][][0.7]{$e_{14}$}
%\psfrag{15}[][][0.7]{$e_{15}$}
%\psfrag{16}[][][0.7]{$e_{16}$}
%\psfrag{17}[][][0.7]{$e_{17}$}
%\psfrag{18}[][][0.7]{$e_{18}$}
%
%\begin{center}
% \includegraphics[scale=1.3]{figs/ExampleNetworkTreeCutset.eps}
% \caption{An example of a cutset obtained from the tree $\mathcal{T}$ using edge $e_7$}
% \label{fig:ExampleNetworkTreeCutset}
%\end{center}
%
%\end{figure}
The cutsets obtained in this manner are called \nw{fundamental cutsets} of the graph with respect to the tree.

In a graph, an edge set $E_1$ is said to \nw{span} another edge set $E_2$ if for each $e_2 \in E_2,$ there exists a   circuit $L_1 \subseteq e_2 \cup E_1$ with $e_2 \in L_1$. For example, a spanning tree of any graph spans the cotree of the same graph. Dually, an edge set $E_1$ is said to \nw{cospan} another edge set $E_2$ if for each $e_2 \in E_2,$ there exists a   cutset  $C_1 \subseteq e_2 \cup E_1$ with $e_2 \in C_1$. For instance, the cotree with respect to a spanning tree of any graph cospans the same spanning tree of the graph.

%A graph is said to be \nw{$2$-connected}    if  for every pair of edges there exists a loop containing them. For example, the graph $\mathcal{G}'$ is 2-connected. The graph in \figref{fig:NotTwoConnectedGraphEx} is not 2-connected, because, for instance, there exists no loop containing both the edges $e_1$ and $e_4$.
%\begin{figure}[h!]
% \psfrag{e1}[][][0.7]{$e_1$}
% \psfrag{e2}[][][0.7]{$e_2$}
% \psfrag{e3}[][][0.7]{$e_3$}
% \psfrag{e4}[][][0.7]{$e_4$}
% \psfrag{e5}[][][0.7]{$e_5$}
% \psfrag{e6}[][][0.7]{$e_6$}

% \psfrag{v1}[][][0.7]{$v_1$}
% \psfrag{v2}[][][0.7]{$v_2$}
% \psfrag{v3}[][][0.7]{$v_3$}
% \psfrag{v4}[][][0.7]{$v_4$}
% \psfrag{v5}[][][0.7]{$v_5$}

% \begin{center}
%  \includegraphics[scale=1.0]{figs/NotTwoConnectedGraph.eps}
%  \caption{An example of a graph which is not 2-connected}
%  \label{fig:NotTwoConnectedGraphEx}
% \end{center}

%\end{figure}
%Through elementary linear  time algorithms any graph can be decomposed into $2$-connected components in minimal subgraphs with the property that no loop contains edges from two different subgraphs.

Let $\mathcal{G}$ be a graph and $E\equiv E(\mathcal{G})$ be the edge set of $\mathcal{G}$ and $T\subseteq E$. Then $\mnw{\mathcal{G} \times (E-T)}$ denotes the graph obtained by removing the edges $T$ from $\mathcal{G}$ and fusing the end vertices of the removed edges. $\mnw{\mathcal{G} \circ (E-T)}$ denotes the graph obtained by removing the edges $T$ from $\mathcal{G}$ and removing the isolated vertices.
(Note that $\times, \circ $ are also used to denote vector space operations. However, the context would make clear
whether the objects involved are graphs or vector spaces.)

We refer to the space of vectors $v_E,$ which satisfy Kirchhoff's Voltage Law (KVL) of the graph $\mathcal{G},$
by $\mnw{\V^v(\mathcal{G})}$ and to the space of vectors $i_E,$ which satisfy Kirchhoff's Current Law (KCL) of the graph $\mathcal{G},$
by $\mnw{\V^i(\mathcal{G})}.$

{\bf Tellegen's Theorem} (\cite{HNarayanan1997}) states that $\V^v(\mathcal{G})= \V^i(\mathcal{G})^{\perp}.$ The following result on vector spaces associated with graphs is useful.
$$ \V^v(\mathcal{G}\circ T)= \V^v(\mathcal{G})\circ T, \ \ \  \V^v(\mathcal{G}\times T)= \V^v(\mathcal{G})\times T,\ \ T\subseteq E(\mathcal{G});$$
$$ \V^i(\mathcal{G}\circ T)= \V^i(\mathcal{G})\times T, \ \ \  \V^i(\mathcal{G}\times T)= \V^i(\mathcal{G})\circ T,\ \ T\subseteq E(\mathcal{G}).$$

An {\bf electrical network $\mathcal{N}$} is a pair $(\mathcal{G},\mathcal{D}).$ Here $\mathcal{G}$ is a directed graph
and $\mathcal{D}$ is a collection of pairs of vector functions of time $(v(\cdot),i(\cdot)),$ where  $v(t),i(t)$ are vectors on the edge set of the graph. $\mathcal{D}$ is called the `device characteristic' of the network. 

A {\bf solution} of $\mathcal{N}$ is a pair 
 $(v(\cdot),i(\cdot)),$ satisfying 
$$v(t)\in \V^v(\mathcal{G}),\ \ i(t) \in \V^i(\mathcal{G})\ \  (KVL,KCL)\ \  and \ \ (v(\cdot),i(\cdot))\in \mathcal{D}.$$ 
\subsection{Topological transformation and multi-port decomposition}
\label{subsec:MultiportDecomposition}
There are two important ideas of network analysis, which are essentially notions of implicit linear algebra, but 
which appear to have no counterparts in classical linear algebra.
These are the methods called topological transformation and multiport decomposition (see \cite{HNarayanan1986a}, \cite{narayanan1987topological}, \cite{HNarayanan1997}). 

In the method of topological transformation,
an electrical circuit is treated as though it has a more convenient topology
but with some additional constraints. The idea is to exploit the new topology
while meeting the additional constraints. In the language of this paper,
we have two vector spaces $\Vs^1, \Vs^2$ and we need to find a linkage
$\Vsp$ and vector spaces $\Vp^1,\Vp^2,$ such that
$$\Vsp\lrar \Vp^1 = \Vs^1, \ \ \Vsp\lrar \Vp^2 = \Vs^2,\ \ 
\textup{and\   $|P|$\   is\  the\  minimum\  possible.}$$ One can show that this minimum 
size is $d(\Vs^1,\Vs^2)\equiv r(\Vs^1+\Vs^2)-r(\Vs^1\cap \Vs^2).$ A much harder problem 
is where $\Vs^1$ is given and $\Vs^2$ belongs to a class of vector spaces
on $S,$ closed under sum and intersection, and we have to pick 
$\Vs^2$ to minimize $d(\Vs^1,\Vs^2).$

In this paper we use only multiport decomposition.
A description follows.
%A well known method of dealing with electrical networks both for theoretical
%and computational purposes 
%is to first decompose the network into suitable `multi-ports'. This has been formalized in \cite{HNarayanan1986a}, \cite{HNarayanan1997} as a purely topological 
%process  (that is, it does not depend on the device characteristics). 

Let $\mathcal{G}$ be a directed graph and let $E_1$, $E_2$ be a partition of edges of the graph. Then it can be shown that we can build three graphs $\mathcal{G}_{E_1P_1}$, $\mathcal{G}_{E_2P_2}$, $\mathcal{G}_{P_1P_2}$ such that 
\begin{align*}
 \mathcal{V}^i(\mathcal{G}) = \Big(\mathcal{V}^i(\mathcal{G}_{E_1P_1}) \oplus \mathcal{V}^i(\mathcal{G}_{E_2P_2})\Big) \leftrightarrow \mathcal{V}^i(\mathcal{G}_{P_1P_2})
\end{align*}
and therefore through Tellegen's theorem  and the implicit duality theorem 
\begin{align*}
 \mathcal{V}^v(\mathcal{G}) = \Big(\mathcal{V}^v(\mathcal{G}_{E_1P_1}) \oplus \mathcal{V}^v(\mathcal{G}_{E_2P_2})\Big) \leftrightarrow \mathcal{V}^v(\mathcal{G}_{P_1P_2}),
\end{align*}
where $P_1$, $P_2$ are additional sets of branches. This process is called multi-port decomposition. The two graphs $\mathcal{G}_{E_1P_1}$, $\mathcal{G}_{E_2P_2}$
are called multiports and the graph $\mathcal{G}_{P_1P_2}$ is called 
the port connection diagram for the multiport decomposition.

Let network $\mathcal{N}$ be on directed graph  $\mathcal{G},$  with a partition $E_1$, $E_2$  of edges of the graph specified such that the device characteristics
of $E_1$, $E_2$ are independent in $\mathcal{N}.$  Let $\mathcal{G}_{E_1P_1}$,$\mathcal{G}_{E_2P_2}$,
 $\mathcal{G}_{P_1P_2}$
be a multiport decomposition of $\mathcal{G}.$ Define the networks $\mathcal{N}_{E_1P_1}$, $\mathcal{N}_{E_2P_2}$  on graphs $\mathcal{G}_{E_1P_1}$, $\mathcal{G}_{E_2P_2}$,
by retaining the device characteristics of $E_1,E_2$ as in $\mathcal{N}$
and having no device characteristic constraints on $P_1,P_2.$ Then we say that
 $\mathcal{N}_{E_1P_1}$, $\mathcal{N}_{E_2P_2}$
is a multiport decomposition of $\mathcal{N}$
with port connection diagram $\mathcal{G}_{P_1P_2}.$

We present below a linear time algorithm (\cite{HNarayanan1986a}) for decomposing a graph minimally, that is, in such a way that $|P_1|$, $|P_2|$ have minimum size, when the partition $E_1,E_2$ is  specified. We can show that this size is 
\begin{align*}
 |P_1| = |P_2|& = r(\mathcal{G}\circ E_1) - r(\mathcal{G}\times E_1) \\
 & = r(\mathcal{G}\circ E_2) - r(\mathcal{G}\times E_2).
\end{align*}

{
{\bf Algorithm minimal port decomposition}

Let $\mathcal{G}$ be a graph on edge set $E$ and let a partition $E_1,E_2$ be given.
Let $t_1, t_2$ be forests of $\mathcal{G}\circ E_1, \mathcal{G}\circ E_2,$ respectively.

Start with  $t_1$ and grow a forest $t_1\uplus t_{12}, t_{12}\subseteq t_2,$ of $\mathcal{G}.$

Start with  $t_2$ and grow another forest $t_2\uplus t_{21}, t_{21}\subseteq t_1,$ of $\mathcal{G}.$

Build the graph $\mathcal{G}\circ (E_1\uplus t_2)\times (E_1\uplus (t_2-t_{12})).$
Rename the edges $t_2-t_{12}$ as $P_1,$ and the graph as $\mathcal{G}_{E_1P_1}.$ 

Build the graph $\mathcal{G}\circ (E_2\uplus t_1)\times (E_2\uplus (t_1-t_{21})).$
Rename the edges $t_1-t_{21}$ as $P_2,$ and the graph as $\mathcal{G}_{E_2P_2}.$ 

Build the graph $\mathcal{G}_{E_1P_1}\circ (t_1\uplus P_1) \times (P_1\uplus (t_1-t_{21})).$
Rename $t_1-t_{21}$ as $P_2$ as before, and the graph as $\mathcal{G}_{P_1P_2}.$

Output $\mathcal{G}_{E_1P_1}, \mathcal{G}_{E_2P_2}$ as multiports and $\mathcal{G}_{P_1P_2}$
as the port connection diagram.

Algorithm ends.
}

When the decomposition is minimal, the sets $P_i$ will not contain circuits or cutsets in $\mathcal{G}_{E_1P_1}$, $\mathcal{G}_{P_1P_2}$, $\mathcal{G}_{P_2E_2}$.
The following result is an immediate consequence.
\begin{theorem}
\label{thm:minimalmultiport}
Let the decomposition of the directed graph $\mathcal{G}$ into $\mathcal{G}_{E_1P_1}$, $\mathcal{G}_{E_2P_2}$, $\mathcal{G}_{P_1P_2}$ be minimal. We then have the following.
\begin{enumerate}
\item The sets $P_i, \ i=1,2,$ do not contain circuits or cutsets in $\mathcal{G}_{E_1P_1}$, $\mathcal{G}_{P_1P_2}$, $\mathcal{G}_{E_2P_2}$.
\item
Let $i_E$, $v_E$ be current and voltage vectors of $\mathcal{G}$. Let $(i_{E_1}, i_{P_1})$, $(i_{E_2}, i_{P_2})$, $(i_{P_1}, i_{P_2})$ be current vectors and let $(v_{E_1}, v_{P_1})$, $(v_{E_2}, v_{P_2})$, $(v_{P_1}, v_{P_2})$ be voltage vectors of $\mathcal{G}_{E_1P_1}$, $\mathcal{G}_{E_2P_2}$, $\mathcal{G}_{P_1P_2}$, respectively. Then $i_{P_1}$, $i_{P_2}$ are uniquely determined, given $i_{E_1}$ or $i_{E_2},$ and  $v_{P_1}$, $v_{P_2}$  are uniquely determined, given $v_{E_1}$ or $v_{E_2}$.
\end{enumerate}

\end{theorem}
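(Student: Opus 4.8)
The plan is to observe that statement (2) is a routine translation of statement (1), so the substance lies in (1), and to prove (1) by reading off the explicit output of Algorithm minimal port decomposition. Two elementary facts about the cycle matroid of a graph $\mathcal{H}$ will be used throughout: first, if $F$ and $C$ are disjoint edge sets and $F\cup C$ is circuit-free in $\mathcal{H}$, then $F$ is circuit-free in $\mathcal{H}\times(E(\mathcal{H})-C)$, and moreover $r(\mathcal{H}\times(E(\mathcal{H})-C))=r(\mathcal{H})-|C|$ when $C$ is circuit-free; second, a set $A\subseteq E(\mathcal{H})$ contains no cutset of $\mathcal{H}$ if and only if $E(\mathcal{H})-A$ contains a spanning forest of $\mathcal{H}$. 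I will combine these with the spaces produced by the algorithm, recalling that there $t_1\uplus t_{12}$ and $t_2\uplus t_{21}$ are spanning forests of $\mathcal{G}$ with $t_{12}\subseteq t_2$ and $t_{21}\subseteq t_1$, that $P_1$ is the relabelling of $t_2-t_{12}$ and $P_2$ the relabelling of $t_1-t_{21}$, and that $\mathcal{G}_{E_1P_1}=\mathcal{G}\circ(E_1\uplus t_2)\times(E_1\uplus(t_2-t_{12}))$ and $\mathcal{G}_{P_1P_2}=\mathcal{G}_{E_1P_1}\circ(t_1\uplus P_1)\times(P_1\uplus(t_1-t_{21}))$, with symmetric formulas on interchanging the two blocks.

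For (1) I would verify four things. First, $P_1$ is circuit-free in $\mathcal{G}_{E_1P_1}$: since $P_1\cup t_{12}=t_2$ is circuit-free in $\mathcal{G}$, contracting $t_{12}$ leaves $P_1$ circuit-free. Second, $P_1$ carries no cutset of $\mathcal{G}_{E_1P_1}$: because $t_1\uplus t_{12}$ is a forest of $\mathcal{G}$, $t_1$ stays circuit-free after contracting $t_{12}$, and a rank count gives $r(\mathcal{G}_{E_1P_1})=|t_1|$, so $t_1\subseteq E_1$ is a spanning forest of $\mathcal{G}_{E_1P_1}$ disjoint from $P_1$. Third, $P_1$ is circuit-free in $\mathcal{G}_{P_1P_2}$: here $P_1\cup t_{21}\cup t_{12}=t_2\cup t_{21}$ is circuit-free in $\mathcal{G}$, hence $P_1\cup t_{21}$ is circuit-free in $\mathcal{G}_{E_1P_1}$, and contracting $t_{21}$ leaves $P_1$ circuit-free. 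Fourth, $P_1$ carries no cutset of $\mathcal{G}_{P_1P_2}$: $P_2=t_1-t_{21}$ is circuit-free in $\mathcal{G}_{P_1P_2}$ (contract $t_{21}$ out of the forest $t_1$) and a rank count gives $r(\mathcal{G}_{P_1P_2})=|P_2|$, so $P_2$ is a spanning forest of $\mathcal{G}_{P_1P_2}$ disjoint from $P_1$. Interchanging the two blocks, and using that by minimality $|P_1|=|P_2|=r(\mathcal{G}\circ E_1)-r(\mathcal{G}\times E_1)=r(\mathcal{G}\circ E_2)-r(\mathcal{G}\times E_2)$ so that the rank counts match, yields the corresponding statements for $P_2$ in $\mathcal{G}_{E_2P_2}$ and in $\mathcal{G}_{P_1P_2}$.

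For (2) I would translate uniqueness into a vanishing statement about contractions. Inside $\mathcal{V}^i(\mathcal{G}_{E_1P_1})$ the component $i_{P_1}$ is determined by $i_{E_1}$ iff the only KCL vector of $\mathcal{G}_{E_1P_1}$ vanishing on $E_1$ is $0$, i.e. $\mathcal{V}^i(\mathcal{G}_{E_1P_1})\times P_1=0$; since the KCL vectors supported in $P_1$ are spanned by the circuits of $\mathcal{G}_{E_1P_1}$ contained in $P_1$ (equivalently, using $\mathcal{V}^i(\mathcal{G}_{E_1P_1})\times P_1=\mathcal{V}^i(\mathcal{G}_{E_1P_1}\circ P_1)$, that graph is a forest), this is exactly "$P_1$ carries no circuit of $\mathcal{G}_{E_1P_1}$", which holds by (1). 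Dually, $v_{P_1}$ is determined by $v_{E_1}$ iff $\mathcal{V}^v(\mathcal{G}_{E_1P_1})\times P_1=0$, equivalently "$P_1$ carries no cutset of $\mathcal{G}_{E_1P_1}$", again true by (1). Running the same argument inside $\mathcal{V}^i(\mathcal{G}_{P_1P_2})$ and $\mathcal{V}^v(\mathcal{G}_{P_1P_2})$ shows $i_{P_2}$ (resp.\ $v_{P_2}$) is determined once $i_{P_1}$ (resp.\ $v_{P_1}$) is known, since $P_2$ carries neither a circuit nor a cutset of $\mathcal{G}_{P_1P_2}$; chaining the two steps gives that $i_{P_1},i_{P_2}$ are determined by $i_{E_1}$ alone, and the block-interchanged chain (through $\mathcal{G}_{E_2P_2}$ then $\mathcal{G}_{P_1P_2}$) gives that they are determined by $i_{E_2}$ alone, and likewise for $v_{P_1},v_{P_2}$.

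I expect the only delicate point to lie in part (1): one must choose the right spanning forest of $\mathcal{G}$ to grow from (starting from $t_1$ in one place, from $t_2$ in another) so that each auxiliary union that appears — $P_1\cup t_{12}$, $P_1\cup t_{21}\cup t_{12}$, $t_1$, and $P_2$ — really is a subset of a forest of the graph under consideration, and then to carry circuit-freeness and the rank of the minor correctly through the two successive restriction-then-contraction steps the algorithm performs. Once (1) is established, part (2) is merely the elementary remark that, for a graph, a KCL (cycle-space) vector supported on an edge subset corresponds to a circuit inside that subset, while a KVL (cut-space) vector supported there corresponds to a cutset inside it.
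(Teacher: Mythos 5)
Your proposal is correct and takes essentially the same route as the paper, which simply reads part (1) off the minimal (algorithmic) port decomposition and declares part (2) an immediate consequence; you have only filled in the details the paper omits, namely the forest/rank bookkeeping through the restriction-then-contraction steps of the algorithm and the identification of current (resp.\ voltage) vectors supported on $P_i$ with circuits (resp.\ cutsets) inside $P_i$ via $\mathcal{V}^i(\mathcal{G}_{E_1P_1})\times P_1=\mathcal{V}^i(\mathcal{G}_{E_1P_1}\circ P_1)$ and its dual. The only caveat, shared with the paper, is that your part (1) is established for the decomposition produced by Algorithm minimal port decomposition rather than for an arbitrary minimal decomposition, the general statement being what the paper defers to \cite{HNarayanan1986a}, \cite{HNarayanan1997}.
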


In Section \ref{sec:emu}, we will show how to build `emulators' for electrical
networks. These are based on multiport decomposition, can be built in linear time and can serve as a substitute for state equations.

\section{Linkages and maps}
\label{sec:link}
A {\bf linkage} is a vector space $\mathcal{V}_{S}$ with a partition $\{S_1\cdots S_k\}$ of $S,$  specified.

We will denote it by  $\mathcal{V}_{S_1\cdots S_k}.$ 

Observe that a map $x^TK=y^T$ can be regarded as
a linkage $\mathcal{V}_{XY},$ with a typical vector $(x^T,y^T)$,  whose basis is the set of rows of $\bbmatrix{
 I & K},$ the first set of columns of the matrix being indexed by $X$ and the second set by $Y.$

A linkage $\mathcal{V}_{S_1\cdots S_k}$ is said to be {\bf decoupled} 
iff $\mathcal{V}_{S_1\cdots S_k}\circ S_i =\mathcal{V}_{S_1\cdots S_k}\times S_i, i=1,2,\cdots ,k.$
This is equivalent to saying that it has the form $\bigoplus _i\V_{S_i}, i=1,2,\cdots ,k .$

The matched composition operation $"\leftrightarrow "$ can be regarded as a generalization of the composition operation for ordinary maps. 

Thus if $x^TK=y^T,y^TP=z^T,$ it can be seen that $\mathcal{V}_{XY}\leftrightarrow\mathcal{V}_{YZ}  $ 
would have as the basis, the rows of  $\bbmatrix{
I & KP },$ the first set of columns of the matrix being indexed by $X$ and the second set by $Z.$

\begin{remark}
\begin{enumerate}
\item When we compose matrices, the order is important. In the case of linkages, since vectors
are indexed by subsets, order does not matter. Thus $\Vsp=\Vps$ and $\Vsp\lrar \Vpq\equaln \Vpq\lrar \Vps.$

\item
$\mathcal{V}_{XY} $ mimics the map $K$ by linking 
 row vectors  $x^T,y^T$ which correspond to each other when $x^T$ is operated upon by $K,$ into 
 the long vector $(x^T,y^T).$
The collection of all such vectors forms a vector space which contains all the information that 
the map $K$ and $K^{-1}$ (if it exists)  contain and represents both of them implicitly.

But there are two essential differences: a map takes vectors in  
$\Fx$ to vectors in $\Fy,$ but takes the vector $0 _X$ only to the vector $0 _Y.$
In the case of linkages only a subspace of $\Fx$  may be involved as 
$\mathcal{V}_{XY}\circ X$ and the vector 
$0 _X$ may be linked to a nontrivial subspace of $\Fy.$
\end{enumerate}
\end{remark}
\begin{example}
\textup{Consider the equation  $Ax=b.$ In the present  frame work, we will derive the usual existence and 
uniqueness result 
i.e.,}\\ 

a solution exists iff $b$ lies in the span of columns of $A,$
equivalently $b \in (col(A))^{\perp\perp},$

and if the solution exists, it  is unique iff  columns of $A$ are independent.\\

\noindent \textup{We will associate with $A,x,b$ the vector spaces $\Vsp,\Vpq,\Vsq$ respectively,
where\\ $\Vsp$ is a space of  vectors $(f_S,g_P)$ with $f_S^TA=g_P^T,$ \\
$\Vpq$ is a space of  vectors $(g_P,h_Q)$ with $g_P^Tx=h_Q^T,$ \\
$\Vsq$ is a space of  vectors $(f_S,h_Q)$ with $f_S^Tb=h_Q^T.$\\
Observe that from  $\Vsp,\Vpq,\Vsq$ we can uniquely infer $A,x,b.$}

\textup{The equation  $Ax=b$ translates to $\Vsp \lrar \Vpq = \Vsq.$}

\textup{By Theorem \ref{thm:inverse}, we have that} 
\textup{$\Vpq$ exists such that 
$\Vsp \lrar \Vpq = \Vsq$ iff 
$$\Vsp\circ S\supseteq \Vsq\circ S\ \ \   {\textup{and}}\ \ \  \Vsp\times S\subseteq \Vsq \times S.$$} 
\textup{The first condition is automatic for maps since the domain is $\F_S$
which translates to $\Vsp\circ S=\F_S.$ The second condition is equivalent to
stating that `whenever $f_S^T A=0,$ we must also have   $f_S^T b=0,$'
which is the same as saying $b \in (col(A))^{\perp\perp}.$}

%\textup{(Existence of a solution $\Vpq $ to the equation also implies that there is another solution
%of the form $\{[I \ \ \hat{x}]\}$ (Appendix \ref{}).}

\textup{We need next to show that existence of a solution $\Vpq $ to the equation also implies that there is another solution
of the form $\{[I \ \ \hat{x}]\}$ .}
%[Proof to be included in the appendix:

\textup{We do this as follows.
If $$\Vsp\equiv \{[I\ \  A]\}, \Vsq\equiv \{[I\ \  b]\}$$ then  $$\Vpq\equiv \Vsp \lrar \Vsq = \{[A\ \  b]\}
\  \ \textup{and}\ \   \Vsp.P=\Vpq.P.$$ 
We may assume wlog rows of $ \{[A\ \  b]\}$
 to be independent.
This implies that the rows of $A$ are linearly independent (using 
`whenever $f_S^T A=0,$ we must also have   $f_S^T b=0$').
Now extend $\Vpq $ to a vector space $\Vpq'$ such that $\Vpq'.P=\F_P.$
It can be seen that $\Vsp \lrar \Vpq'$ is equal to  $\Vsp \lrar \Vpq$
and further has the form $\{[I\ \ \hat{x}]\}.$}

%\textup{The first condition is automatic for maps since the domain is $\F_S$
%which translates to $\Vsp\circ S=\F_S.$ The second condition is equivalent to
%stating that `whenever $f_S^T A=0,$ we must also have   $f_S^T b=0,$'
%which is the same as saying $b \in (col(A))^{\perp\perp}.$} 
%

\textup{Next let $\Vsp,\Vsq$ be such that there exists some $\Vpq$ satisfying $\Vsp \lrar \Vpq = \Vsq.$\\
We know by Theorem \ref{thm:inverse}, it is unique iff 
  $$\Vsp\circ P\supseteq \Vpq\circ P \ \  \textup{and}\ \  \Vsp\times P\subseteq \Vpq \times P.$$
  The second condition is trivial since $0_S^T A=0_P,$ so that $\Vsp\times P=\0_P.$\\
  Next note that since $x$  is the map $g_P^Tx=h_Q^T,$ we must have $
\Vpq\circ P=\F_P.$\\
 So the first condition implies that the range of the map
$f_S^TA=g_P^T,$  must be $\F_P.$\\
 This means that columns of $A$ are independent.}
\\Example ends
\end{example}
The `$\lrar $' operation is not inherently associative.
For instance, consider the expression 
$$(\V_{AB}\lrar \V_{BC})\lrar(\V_{CA}\lrar \V_{PQ}).$$
 Here, if we remove the brackets,
we get a subexpression $\V_{AB}\lrar \V_{BC}\lrar\V_{CA},$ where the index set becomes null
and the `$\lrar$' operation is not defined for such a situation.
 We say such an expression contains a `null' subexpression.
Let us only look at expressions which do not contain null subexpressions.
For such expressions,
there is a special case
where the brackets can be got rid of without  ambiguity.
This is when no index set occurs more than twice in the expression.
Consider for instance the expression 
$$\V_{A_1B_1}\lrar \V_{A_2B_2}\lrar 
\V_{A_1B_2},$$ 
where all the $A_i,B_j$ are mutually disjoint. It is clear that this expression
has a unique meaning, namely, the space of all $(f_{A_2},g_{B_1})$ such that
there exist $ h_{A_1},k_{B_2}$ with $$(h_{A_1},g_{B_1})\in \V_{A_1B_1},\ \ (f_{A_2},k_{B_2})\in \V_{A_2B_2},\ \ (h_{A_1},k_{B_2})\in \V_{A_1B_2}.$$ Essentially the 
terms of the kind $p_{D_i}$ survive if they occur only once and they get coupled through the terms which occur twice. These latter dont survive in the final expression. Such an interpretation is valid even if we are dealing with 
signed index sets of the kind ${- A_i}.$ (We remind the reader that
$\V_{(-A)B}$ is the space of all vectors $(-f_A,g_B)$ where
$(f_A,g_B)$ belongs to $\V_{AB}.$ ) If either ${- A_i},$
or ${ A_i}$ occurs, it counts as one occurrence of the index set $  { A_i}.$
We state this result as a theorem but omit the routine proof.
\begin{theorem}
\label{thm:notmorethantwice}
An expression of the form $\lrar _{i,j,k,\cdots}(\V_{(\pm A_i)(\pm B_j)(\pm C_k)\cdots}),$ where the index sets $\pm A_i, \pm  B_j,\pm C_k, \cdots$ are all mutually disjoint
has a unique meaning provided it contains no null subexpresssion and no index set occurs more than twice in it.
\end{theorem}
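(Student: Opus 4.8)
The plan is to prove this by induction on the number of "double" index sets, that is, index sets that occur exactly twice in the expression. The base case is when every index set occurs exactly once; then there are no $\lrar$ operations to perform (the expression is just a direct sum $\bigoplus \V_{(\pm A_i)(\pm B_j)\cdots}$ of the individual factors over pairwise disjoint index sets), so there is nothing to bracket and the meaning is unambiguous. For the inductive step, I would pick any index set, say $D$, that occurs exactly twice, in two distinct factors $\V_{(\pm D)X}$ and $\V_{(\pm D)Y}$ (they are distinct factors because the index sets inside a single factor are mutually disjoint, so $D$ cannot appear twice in one factor). The key claim is that \emph{any} fully-bracketed evaluation of the whole expression can be rewritten so that the sub-composition $\V_{(\pm D)X}\lrar\V_{(\pm D)Y}$ is performed first. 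Once that is justified, replacing those two factors by the single factor $\V_{(\pm D)X}\lrar\V_{(\pm D)Y}$ (with index set $(X\cup Y)\setminus(\text{copies of }D)$, still disjoint from all the remaining index sets) yields a strictly smaller expression with one fewer double index set, no null subexpression, and no index set occurring more than twice; by induction it has a unique meaning, and that meaning coincides with the original.

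The heart of the argument is thus a local rearrangement lemma for $\lrar$, and this is where the real work — and the main obstacle — lies. I would isolate it as: if $\Sigma$ is a fully-bracketed expression in which $D$ occurs exactly twice and there is no null subexpression, then $\Sigma$ equals the value obtained by first forming $\V_{(\pm D)X}\lrar\V_{(\pm D)Y}$ and then substituting. To prove this, note that in the bracketed evaluation tree of $\Sigma$ there is a unique smallest subexpression $\Sigma_D$ containing both factors $\V_{(\pm D)X}$ and $\V_{(\pm D)Y}$; it splits as $\Sigma_D = \Sigma_1 \lrar \Sigma_2$ with one of the two $D$-factors on each side. Since $D$ occurs only twice, $D$ is an internal (contracted-away) index set of $\Sigma_D$ but appears in the index set of each of $\Sigma_1$, $\Sigma_2$. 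Now I unfold the definition of $\lrar$ purely in terms of the underlying vectors: a vector lies in $\Sigma_D$ iff there is a consistent choice of vectors on all the internal index sets of $\Sigma_1$ and $\Sigma_2$ and a common value on $D$. The value on $D$ that ties $\Sigma_1$ to $\Sigma_2$ is exactly the value that would be matched when forming $\V_{(\pm D)X}\lrar\V_{(\pm D)Y}$; because $D$ appears in no other factor, this matching can be carried out independently of, and commute past, all the other matchings inside $\Sigma_1$ and $\Sigma_2$. One then checks that the ambient expression $\Sigma$ depends on $\Sigma_D$ only through its value (not through its bracketing), using the same "a vector is in the composite iff there exist witnesses on the internal index sets" characterization recursively. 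The sign bookkeeping — replacing $\V_{(-D)X}$ or $\V_{DX}$ consistently — is routine because, as the paper notes, $(\mathcal{K}_{X(-Y)})_{X(-Y)}=\mathcal{K}_{XY}$, so a sign on $D$ in one factor is absorbed by negating the matched variable.

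I expect the bookkeeping about which index sets "survive" and the careful tracking of internal versus external index sets across different bracketings to be the fiddly part rather than any deep idea; the conceptual content is simply that $\lrar$ is "matching on shared names", and when a name is shared by exactly two factors the order in which that match is performed is irrelevant. The no-null-subexpression hypothesis is used precisely to guarantee that every intermediate expression in every bracketing has a well-defined (nonempty) index set, so the recursion never tries to compose two objects with no common and no external coordinates. This is exactly why the statement restricts to expressions with no null subexpression and no index set occurring more than twice — with three occurrences the "which copy matches which" ambiguity genuinely produces distinct answers, and the rearrangement lemma fails. Since the paper explicitly says it omits the routine proof, I would present only the induction skeleton and the statement of the rearrangement lemma, leaving its verification to the reader.
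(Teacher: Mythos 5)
Your proposal is correct and is essentially the argument the paper has in mind (and omits as routine): the value of any bracketing is the space of vectors on the once-occurring index sets for which there exist witnesses on the twice-occurring ones satisfying all factor memberships, and this existential-witness description is exactly the content of your rearrangement lemma, with the sign and null-subexpression bookkeeping handled as you indicate. The only remark is that once that characterization is established by induction over the bracketing tree, uniqueness is immediate because the description does not mention the bracketing, so your outer induction on the number of doubly-occurring index sets is dispensable rather than wrong.
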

\subsection{Pseudoidentity}
\begin{definition}
\label{def:pseudoidentity}
Let $A'$ be a copy of $A.$
We say the linkage $\Vaadash$ is a pseudoidentity for a linkage 
$\Vab$ iff $$\Vaadash \lrar \Vab =(\Vab)_{A'B}.$$ 
\end{definition}
When $A'$ is a copy of $A,$
we remind the reader that
$\Iaa $ is   the vector space $ \{ (f_A,f_A'):f_A\in \mathcal{F}_A\}.$ 
\begin{lemma}
Let  $\Vaadash \supseteq \Iaa\bigcap (\Vaadash \circ A)$ 
and let $\Vaadash=(\Vaadash)_{A'A}.$ 
Then $\Vaadash$ satisfies the following conditions.
\begin{enumerate}
\item $(\Vaadash \circ A)_{A'}= \Vaadash \circ {A'}.$ If $f_A \in \Vaadash \circ A$ then $(f_A,f_{A'}) \in \Vaadash .$
Further,  if $(f_A,g_{A'}) \in \Vaadash ,$ then $(g_A,f_{A'}) \in \Vaadash .$ 
\item If $(f_A,g_{A'}), (g_A,h_{A'})\in \Vaadash,$ then 
$(f_A,h_{A'}) \in \Vaadash .$
\end{enumerate}
\end{lemma}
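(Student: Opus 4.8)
The plan is to unpack the two hypotheses $\Vaadash \supseteq \Iaa \cap (\Vaadash \circ A)$ and $\Vaadash = (\Vaadash)_{A'A}$ and grind through three small set-membership arguments. I would treat the conclusions in the order they are listed, since later ones lean on earlier ones. Throughout, recall that $\Iaa$ is the space of all $(f_A,f_{A'})$ where $f_{A'}$ is the copy of $f_A$, so membership of $(f_A,f_{A'})$ in $\Iaa \cap (\Vaadash\circ A)$ means precisely that $f_A \in \Vaadash \circ A$ and $f_{A'}$ is its copy; the first hypothesis then forces $(f_A,f_{A'}) \in \Vaadash$.

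For part (1): First I would establish $(\Vaadash \circ A)_{A'} = \Vaadash \circ A'$. The inclusion $\subseteq$ is the statement just made — every $f_A \in \Vaadash\circ A$ has its copy $f_{A'}$ lying in $\Vaadash$ (paired with $f_A$), hence $f_{A'} \in \Vaadash \circ A'$. For $\supseteq$, if $g_{A'} \in \Vaadash\circ A'$ then $(h_A,g_{A'}) \in \Vaadash$ for some $h_A$; applying $(\Vaadash)_{A'A} = \Vaadash$ gives $(g_A,h_{A'}) \in \Vaadash$, so $g_A \in \Vaadash\circ A$ and hence its copy $g_{A'} \in (\Vaadash\circ A)_{A'}$. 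The second sentence of (1), ``if $f_A \in \Vaadash\circ A$ then $(f_A,f_{A'}) \in \Vaadash$'', is exactly the first hypothesis applied to $\Iaa$. The third sentence, symmetry, is immediate from $\Vaadash = (\Vaadash)_{A'A}$: swapping the roles of the $A$- and $A'$-coordinates sends $(f_A,g_{A'})$ to $(g_A,f_{A'})$ and fixes the space.

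For part (2): Suppose $(f_A,g_{A'})$ and $(g_A,h_{A'})$ both lie in $\Vaadash$. From $(g_A,h_{A'}) \in \Vaadash$ and the symmetry established in (1), $(h_A,g_{A'}) \in \Vaadash$. Subtracting (the space is a vector space, so closed under subtraction), $(f_A - h_A,\, 0_{A'}) \in \Vaadash$. Also from $(g_A,h_{A'}) \in \Vaadash$ we get $h_A \in \Vaadash\circ A$, so by the second sentence of (1), $(h_A, h_{A'}) \in \Vaadash$. Adding $(f_A - h_A, 0_{A'})$ to $(h_A, h_{A'})$ yields $(f_A, h_{A'}) \in \Vaadash$, which is the claim. (Alternatively one can note $(f_A,g_{A'}),(h_A,g_{A'})\in\Vaadash$ give $(f_A-h_A,0_{A'})\in\Vaadash$ directly, then add back $(h_A,h_{A'})$.)

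I do not anticipate a genuine obstacle here; the only mild subtlety is keeping the bookkeeping of copies straight — remembering that $f_{A'}$ always denotes the copy of $f_A$ under the fixed bijection, so that a pair like $(f_A,g_{A'})$ with $g \ne f$ is \emph{not} in $\Iaa$, whereas $(f_A,f_{A'})$ is. The whole argument is a routine combination of closure of $\Vaadash$ under linear combinations, the symmetry hypothesis $(\Vaadash)_{A'A} = \Vaadash$, and the single containment $\Vaadash \supseteq \Iaa \cap (\Vaadash\circ A)$; I would present it as three short paragraphs mirroring the itemized claims.
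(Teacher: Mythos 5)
Your proposal is correct and follows essentially the same route as the paper: part (1) is read off directly from the two hypotheses, and part (2) combines the symmetry $(\Vaadash)_{A'A}=\Vaadash$, the diagonal vectors supplied by $\Vaadash\supseteq \Iaa\cap(\Vaadash\circ A)$, and closure under addition/subtraction. The only cosmetic difference is that you cancel against the diagonal vector $(h_A,h_{A'})$, whereas the paper subtracts $(0_A,g_{A'}-f_{A'})$ obtained from $(f_A,f_{A'})$ and applies the swap once more at the end.
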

\begin{proof}
\begin{enumerate}
\item These are immediate from the hypothesis.
\item  If $(f_A,g_{A'}) \in \Vaadash ,$ then since $(f_A,f_{A'}) \in \Vaadash $
and $\Vaadash $
is a vector space, we must have \\
$(0_A,g_{A'}-f_{A'})\in \Vaadash. $
Since $(g_A,h_{A'})\in \Vaadash,$ we must have 
$(h_A,g_{A'})\in \Vaadash .$\\
So $(h_A,g_{A'})- (0_A,g_{A'}-f_{A'})= (h_A,f_{A'}) \in \Vaadash .$
So $(f_A,h_{A'}) \in \Vaadash .$
\end{enumerate}
\end{proof}
\begin{definition}
We will call a linkage $\Vaadash,$ where $A'$ is a copy of $A,$ symmetric,\\
if $\Vaadash \supseteq \Iaa\bigcap (\Vaadash \circ A)$
and  $\Vaadash=(\Vaadash)_{A'A}.$
\end{definition}
We now have a simple procedure for building a symmetric pseudoidentity for 
a linkage $\Vab.$
\begin{theorem}
\label{thm:pseudoidentity}
Let $\Vaadash $ be a symmetric linkage. We then have the following.
\begin{enumerate}
\item $\Vaadash $ is a pseudoidentity for a linkage $\Vab$ iff
$\Vaadash\circ A\supseteq \Vab \circ A$ and $\Vaadash\times A\subseteq \Vab \times A.$ 
\item
$\Vap\lrar(\Vap)_{A'P},$ where $A'$ is a copy of $A$ is a symmetric 
pseudoidentity for $\Vab,$ if 
$$\Vap\circ A\supseteq \Vab \circ A, \ \ \Vap\times A\subseteq \Vab \times A.$$
\item
$\Vab\lrar(\Vab)_{A'B},$ where $A'$ is a copy of $A$ is a symmetric 
pseudoidentity for $\Vab.$
\end{enumerate}
\end{theorem}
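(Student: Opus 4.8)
The plan is to prove the three parts in sequence, using part (1) as the engine for parts (2) and (3). Throughout, write $A'$ for a copy of $A$, and recall that a symmetric linkage $\Vaadash$ satisfies $\Vaadash \supseteq \Iaa \cap (\Vaadash \circ A)$ and $\Vaadash = (\Vaadash)_{A'A}$, so by the preceding lemma it enjoys: every $f_A \in \Vaadash \circ A$ pairs with its copy, i.e.\ $(f_A,f_{A'}) \in \Vaadash$; the swap property $(f_A,g_{A'})\in\Vaadash \Rightarrow (g_A,f_{A'})\in\Vaadash$; and transitivity $(f_A,g_{A'}),(g_A,h_{A'})\in\Vaadash \Rightarrow (f_A,h_{A'})\in\Vaadash$. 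The target identity defining a pseudoidentity is $\Vaadash \lrar \Vab = (\Vab)_{A'B}$, a matched composition over the common index set $A$ (here $\Vab$ is indexed by $A\uplus B$ and $\Vaadash$ by $A\uplus A'$, so the composition lives on $A'\uplus B$).

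For part (1), I would argue both inclusions directly from the definition of matched composition. Suppose first $\Vaadash$ is a pseudoidentity for $\Vab$. Restricting the identity $\Vaadash\lrar\Vab = (\Vab)_{A'B}$ to $A'$ and using that $\Vaadash\lrar\Vab$ is built by matching on $A$, one reads off $\Vaadash\circ A \supseteq \Vab\circ A$ (otherwise some $(f_A,f_B)\in\Vab$ with $f_A\notin\Vaadash\circ A$ could not be matched, yet $(f_{A'},f_B)$ must lie in the composition); and matching against $0_A$ shows $\Vaadash\times A \subseteq \Vab\times A$. This is essentially the content of Lemma~\ref{lem:Kgenminor}(1) applied with $\KSP = \Vaadash$, $\KSQ = (\Vab)_{A'B}$, $S$ the matched index set $A$, after noting $0 \in \Vaadash$. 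Conversely, assume $\Vaadash\circ A \supseteq \Vab\circ A$ and $\Vaadash\times A \subseteq \Vab\times A$. Take $(f_A,f_B)\in\Vab$. Since $f_A \in \Vab\circ A \subseteq \Vaadash\circ A$, symmetry gives $(f_A,f_{A'})\in\Vaadash$, hence $(f_{A'},f_B)\in\Vaadash\lrar\Vab$, proving $\supseteq$. For $\subseteq$, take $(g_{A'},h_B)\in\Vaadash\lrar\Vab$: there is $g_A$ with $(g_A,g_{A'})\in\Vaadash$ and $(g_A,h_B)\in\Vab$. I want to produce $(g_{A'}, h_B)$ as a copy of some element of $\Vab$, i.e.\ to find $(g_A', h_B)\in\Vab$ whose first coordinate copies to $g_{A'}$ — but that just says $g_A$ should serve, so it suffices to show $(g_A,h_B)\in\Vab$, which we already have. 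The subtlety is matching indices correctly: $(g_A,g_{A'})\in\Vaadash$ together with the swap property gives $(g_{A'},g_A)\in\Vaadash$ read the other way; combined with $(g_A,h_B)\in\Vab$ this yields exactly $(g_{A'},h_B)\in (\Vab)_{A'B}$ once one checks that $g_A$ being linked to $g_{A'}$ forces them to be genuine copies modulo $\Vab\times A$ — and here is where $\Vaadash\times A\subseteq\Vab\times A$ is used: if $(g_A,g_{A'})\in\Vaadash$ then $g_A - (\text{its copy of }g_{A'})\in\Vaadash\times A\subseteq\Vab\times A$, so adding back an element of $\Vab\times A$ to $(g_A,h_B)$ produces the desired copied vector in $\Vab$. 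I expect this index-bookkeeping — reconciling "linked by the symmetric $\Vaadash$" with "is literally the copy of" up to $\Vab\times A$ — to be the main obstacle; everything else is formal.

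For part (2), set $\Vaadash \equiv \Vap \lrar (\Vap)_{A'P}$, the generalized minor of $\Vap$ relative to its own copy matched along $P$. First I must check $\Vaadash$ is symmetric: the equality $\Vaadash = (\Vaadash)_{A'A}$ is immediate because swapping $A \leftrightarrow A'$ in $\Vap\lrar(\Vap)_{A'P}$ just swaps the two factors, which does not change the matched composition (matched composition is commutative, Remark after Theorem~\ref{thm:notmorethantwice}); and $\Vaadash \supseteq \Iaa \cap (\Vaadash\circ A)$ holds because if $f_A\in\Vaadash\circ A$ then $f_A = \Vap$-image of some $g_P$ and $(f_A,f_{A'})$ arises by matching both copies of $\Vap$ at the same $g_P$. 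Next, by the dot-cross/restriction bookkeeping, $\Vaadash\circ A = \Vap\circ A$ and $\Vaadash\times A = \Vap\times A$ (matching $(\Vap)_{A'P}$ against $\mathcal{F}_P$ gives back $\Vap\circ A$ on $A$, and against $0_P$ gives $\Vap\times A$). Hence the hypotheses $\Vap\circ A\supseteq\Vab\circ A$ and $\Vap\times A\subseteq\Vab\times A$ translate verbatim into $\Vaadash\circ A\supseteq\Vab\circ A$ and $\Vaadash\times A\subseteq\Vab\times A$, so part (1) applies and $\Vaadash$ is a pseudoidentity for $\Vab$.

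Part (3) is the special case $P := B$ of part (2): take $\Vap := \Vab$, so $\Vaadash = \Vab\lrar(\Vab)_{A'B}$. The hypotheses of (2) become $\Vab\circ A\supseteq\Vab\circ A$ and $\Vab\times A\subseteq\Vab\times A$, which hold trivially, so $\Vab\lrar(\Vab)_{A'B}$ is a symmetric pseudoidentity for $\Vab$. I would state this as a one-line corollary of (2). The only place where care is needed is again notational: confirming that when $P$ coincides with $B$, the expression $\Vap\lrar(\Vap)_{A'P} = \Vab\lrar(\Vab)_{A'B}$ really is indexed on $A\uplus A'$ (matching eliminates $B$), which it is, so the conclusion lands in the right space.
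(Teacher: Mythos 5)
Your proposal is correct and follows essentially the same route as the paper: part (1) by double inclusion, using the symmetry properties of $\Vaadash$ together with $\Vaadash\circ A\supseteq \Vab\circ A$ and $\Vaadash\times A\subseteq \Vab\times A$ to replace the matching witness on $A$ by the genuine copy of $g_{A'}$ modulo $\Vab\times A$, and parts (2)--(3) by verifying symmetry and the identities $(\Vap\lrar(\Vap)_{A'P})\circ A=\Vap\circ A$, $(\Vap\lrar(\Vap)_{A'P})\times A=\Vap\times A$ and then invoking part (1). The momentary slip where you say the witness $g_A$ "should serve" is immediately repaired by exactly the adjustment the paper uses, so there is no real gap.
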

\begin{proof}
\begin{enumerate}
\item ({\bf only if}) Suppose $\Vaadash\circ A\not \supseteq \Vab \circ A.$

Then there exists $(f_A,g_B)\in \Vab$ such that $(f_A,f_{A'})\not\in \Vaadash.$

But then $(f_{A'},g_B) \not\in (\Vaadash \lrar \Vab),$ which contradicts the fact that $\Vaadash \lrar \Vab=(\Vab)_{A'B}.$\\
Next suppose $\Vaadash\times A\not \subseteq \Vab \times A.$

Then there exists $(f_A,0_{A'})\in \Vaadash,$ such that $(f_A,0_{B})\not \in \Vab.$ 

Now $(0_A,f_{A'})\in \Vaadash$ and $(0_A,0_{B})\in \Vab.$ 
So $(f_{A'},0_{B})\in \Vaadash \lrar \Vab=(\Vab)_{A'B},$
a contradiction.

\vspace{0.2cm}

({\bf if}) Let $(f_{A'},g_B) \in (\Vaadash \lrar \Vab).$ 

Then there exist 
$(h_A,f_{A'})\in \Vaadash,\ \  (h_A,g_B)\in \Vab.$ \\
Now $(f_A,f_{A'})\in \Vaadash$ so that  $(h_A-f_A,0_{A'})\in\Vaadash.$\\
Thus $(h_A-f_A)\in \Vaadash\times A\subseteq \Vab\times A.$\\
Since $(h_A,g_B), (h_A-f_A,0_{B})\in \Vab$ and $\Vab $ is a vector space, it follows that
$(f_A,g_B)\in \Vab.$\\
Next let $(f_A,g_B)\in \Vab.$ 

Since $f_A \in \Vab\circ A\subseteq \Vaadash\circ A,$ it follows from the symmetry of $\Vaadash$ that $(f_A,f_{A'})\in \Vaadash.$\\
But then $(f_{A'},g_B) \in (\Vaadash \lrar \Vab).$

It thus follows that $(\Vaadash \lrar \Vab)= (\Vab)_{A'B},$
i.e., $\Vaadash $ is a pseudoidentity for $\Vab.$\\

\item It is easily verified that $\Vap\lrar(\Vap)_{A'P},$ is symmetric.\\
Also $(\Vap\lrar(\Vap)_{A'P})\circ A = \Vap\circ A$
and $(\Vap\lrar(\Vap)_{A'P})\times A=\Vap\times A.$

The result now follows from the preceding section of the present theorem.\\

\item This is immediate from the preceding section of the present theorem.
\end{enumerate}
\end{proof}
\subsection{Special operations for  linkages with two block partitions}
For linkages whose partition has exactly two blocks, there are simple generalizations
of operations on maps which are available. The first is the { transpose}.
\begin{definition}
Let $\Vab$ be a linkage.  The {\bf transpose} of $\Vab, $ $ \Vabt\equiv (\Vabperp)_{A(-B)}.$
\end{definition}
For convenience we define the transpose of $\Va,$ $ (\Va)^T\equiv \Va^{\perp}.$
The following lemma is routine.
\begin{lemma}
\label{lem:transposesign}
\begin{enumerate}
\item  $(\Va)_{-A}= \Va $,
\item $(\Vab )_{A(-B)}= (\Vab )_{(-A)B)}$,
\item $((\Vab )_{A(-B)})^{\perp}= (\Vab ^{\perp})_{A(-B)},$
\item  $(\Vab)_{A(-B)}\circ A= \Vab\circ A$ and $(\Vab)_{A(-B)}\times A= \Vab\times A.$ 
\end{enumerate}
\end{lemma}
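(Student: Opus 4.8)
Lemma \ref{lem:transposesign} — plan for the proof.

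The plan is to dispatch the four items in order, each by a short direct computation from the definitions of the operations $(\cdot)_{-A}$, $(\cdot)_{A(-B)}$, $(\cdot)^\perp$, $\circ$, $\times$ introduced in Section~\ref{sec:Preliminaries}. The guiding observation is that negating the coordinates indexed by a block $A$ is an involutive linear bijection on the ambient space of vectors on the index set, so it commutes with taking spans and preserves linear independence; the only thing to track carefully is how it interacts with the dot product.

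For item (1): by definition $(\Va)_{-A}$ is obtained from $\Va$ by sending each $f_A$ to $-f_A$; since $\Va$ is a vector space it is closed under scalar multiplication by $-1$, so $\{-f_A : f_A\in\Va\}=\Va$, giving $(\Va)_{-A}=\Va$. For item (2): starting from $(\Vab)_{A(-B)}$, which consists of all $(f_A,-f_B)$ with $(f_A,f_B)\in\Vab$, apply $(\cdot)_{-A}$-type negation to \emph{both} blocks; equivalently, since $\Vab$ is a vector space, $(f_A,-f_B)\in(\Vab)_{A(-B)}$ iff $(-f_A,f_B)\in\Vab$ (multiply the representative by $-1$), which is exactly the description of $(\Vab)_{(-A)B}$. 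So both sides are the set $\{(f_A,f_B): (-f_A,f_B)\in\Vab\}$.

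For item (3): this is the assertion that negating a block commutes with $\perp$. Let $\sigma$ denote the sign-flip on the $A$-block. For $g=(g_A,g_B)$ one has $\langle \sigma f, g\rangle = \langle (-f_A,f_B),(g_A,g_B)\rangle = -\langle f_A,g_A\rangle + \langle f_B,g_B\rangle = \langle (f_A,f_B),(-g_A,g_B)\rangle = \langle f,\sigma g\rangle$. Hence $g\perp \sigma f$ for all $f\in\Vab$ iff $\sigma g\perp f$ for all $f\in\Vab$, i.e. $g\in(\Vab_{A(-B)})^\perp$ iff $\sigma g\in(\Vab^\perp)_{?}$; unwinding, $((\Vab)_{A(-B)})^\perp=(\Vab^\perp)_{A(-B)}$ (using item (2) to move the sign between blocks if one prefers the $(-A)B$ form). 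For item (4): $\circ$ and $\times$ are restriction and contraction to $A$, and these only look at which $A$-components occur (paired with \emph{some}, resp. the \emph{zero}, $B$-component); negating the $B$-block does not change whether a given $f_A$ extends into $\Vab$ by some $f_B$, nor whether $(f_A,0_B)\in\Vab$ (since $-0_B=0_B$). Hence $(\Vab)_{A(-B)}\circ A=\Vab\circ A$ and $(\Vab)_{A(-B)}\times A=\Vab\times A$.

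None of the steps presents a genuine obstacle — this is the "routine lemma" the authors flagged as such. The one place to be a little careful is item (3), where the sign bookkeeping in the dot product must be done honestly; everything else follows immediately from the fact that block negation is a linear involution and that $\Vab$ (and $\Va$) are assumed to be vector spaces. I would therefore write item (3) out in full as above and treat (1), (2), (4) as one-line observations.
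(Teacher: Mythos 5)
Your proof is correct and is exactly the routine verification from the definitions that the paper intends (the paper omits the proof, flagging the lemma as routine): items (1), (2), (4) are immediate from closure of a vector space under negation and the fact that $-0_B=0_B$, and item (3) follows from the self-adjointness and involutivity of the block sign-flip with respect to the dot product, combined with item (2). Only cosmetic clean-up is needed: in item (2) name the generic element $(g_A,g_B)$ rather than $(f_A,-f_B)$ so the membership criterion reads "$(g_A,g_B)\in(\Vab)_{A(-B)}$ iff $(-g_A,g_B)\in\Vab$", and in item (3) replace the placeholder "$(\Vab^{\perp})_{?}$" by the intended $(\Vab^{\perp})_{(-A)B}$ before invoking item (2).
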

%It is easy to see that $(\Va)_{-A}= \Va $, $(\Vab )_{A(-B)}= 
%(\Vab )_{(-A)B)}$, $(\Vab )_{A(-B)}^{\perp}= (\Vab ^{\perp})_{A(-B)}.$
%We have  $(\Vab)_{A(-B)}\circ A= \Vab\circ A$ and $(\Vab)_{A(-B)}\times A= \Vab\times A.$ 
Using the results from Subsection \ref{ssec:vspaceresults}  and Lemma \ref{lem:transposesign} we have from the definition of the transpose,
\begin{lemma}
\label{lem:transposetransposeminor}
Let $\Va, \Vab $ be  linkages. Then
\begin{enumerate}
\item $\Va^{TT}=\Va , $  $\Vab^{\transp\transp}= \Vab .$
\item  $(\Vab\circ B)^{T}= \Vab ^{\transp}\times B .$
\end{enumerate}
\end{lemma}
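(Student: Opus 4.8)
The plan is to derive both identities from three facts already available: that $(\V^{\perp})^{\perp}=\V$ for a vector space on a finite set; the dot--cross duality $(\Vab\circ B)^{\perp}=\Vabperp\times B$ recorded in Subsection \ref{ssec:vspaceresults}; and the sign-bookkeeping collected in Lemma \ref{lem:transposesign}. There is no analytic content here --- the only thing needing attention is keeping track of the sign flips introduced by the $(\cdot)_{A(-B)}$ twist --- so I would set up that bookkeeping explicitly and then let the identities fall out.

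For part (1), the one-block case is immediate: $(\Va)^{TT}=(\Va^{\perp})^{\perp}=\Va$. For $\Vab$, I would unwind the definition of the transpose twice, writing $\Vab^{\transp\transp}=\big((\Vabperp)_{A(-B)}\big)^{\transp}=\Big(\big((\Vabperp)_{A(-B)}\big)^{\perp}\Big)_{A(-B)}$, and then apply Lemma \ref{lem:transposesign}(3) with $\Vab$ replaced by $\Vabperp$ to commute the $\perp$ past the twist, obtaining $\big((\Vabperp)^{\perp}\big)_{A(-B)}=(\Vab)_{A(-B)}$ by $(\V^{\perp})^{\perp}=\V$. It then remains to note that applying $(\cdot)_{A(-B)}$ a second time undoes the first --- the special case $Y=B$ of the observation $(\mathcal{K}_{X(-Y)})_{X(-Y)}=\mathcal{K}_{XY}$ --- which yields $\Vab^{\transp\transp}=\Vab$.

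For part (2), I would begin with the definition of the transpose of a one-block linkage, $(\Vab\circ B)^{T}=(\Vab\circ B)^{\perp}$, and rewrite the right-hand side by the dot--cross duality (reading $\Vab$ as $\V_{BA}$ so that the blocks line up) as $\Vabperp\times B$. Separately, I would compute $\Vabt\times B=\big((\Vabperp)_{A(-B)}\big)\times B$ directly: a typical vector of $(\Vabperp)_{A(-B)}$ is $(f_A,-g_B)$ with $(f_A,g_B)\in\Vabperp$, so contracting onto $B$ keeps precisely the vectors $-g_B$ for which $(0_A,g_B)\in\Vabperp$; since $\Vabperp\times B$ is a vector space (closed under negation), this set equals $\Vabperp\times B$. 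Comparing the two computations gives $(\Vab\circ B)^{T}=\Vabperp\times B=\Vabt\times B$.

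I expect the main, and entirely minor, obstacle to be notational: making sure the $(\cdot)_{A(-B)}$ twists are introduced and cancelled in the right order and that each appeal to Lemma \ref{lem:transposesign} is to the intended block. Once that is pinned down, both identities are immediate consequences of $(\V^{\perp})^{\perp}=\V$ together with the dot--cross duality.
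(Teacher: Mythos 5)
Your argument is correct and is essentially the proof the paper intends: the paper states this lemma without a written proof, as a routine consequence of the definition of the transpose, the dot--cross duality and $(\V^{\perp})^{\perp}=\V$ from Subsection \ref{ssec:vspaceresults}, and the sign bookkeeping of Lemma \ref{lem:transposesign}, which is exactly the combination you spell out. The only cosmetic difference is that in part (2) you verify $\big((\Vabperp)_{A(-B)}\big)\times B=\Vabperp\times B$ by a direct negation-closure computation, where one could equally cite Lemma \ref{lem:transposesign} (parts 2 and 4 with the roles of $A$ and $B$ interchanged); both are fine.
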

The IDT (Theorem \ref{thm:idt}) could be restated as 
{\theorem{
$$\Vabt\lrar \Vbct \equaln (\Vab\lrar \Vbc )\transp.$$
\label{thm:transpose}}}
\begin{proof}
The left side of the expression in the statement of the theorem is 
$$(\Vab^{\perp})_{A(-B)} \lrar (\Vbc^{\perp})_{(-B)C}\equaln (\Vab^{\perp}) \lrar (\Vbc^{\perp}).$$
The right side of the expression is 
$$(\Vab\lrar \Vbc )^{\perp}_{A(-C)}.$$ Changing the sign of the index set $C$ on both sides,
we need to verify whether 
$$(\Vab^{\perp} \lrar \Vbc^{\perp})_{A(-C)}  \ \textup{equals}\   (\Vab\lrar \Vbc )^{\perp},$$\ i.e.,  whether
$$ \Vab^{\perp} \lrar (\Vbc^{\perp})_{B(-C)}\equaln \Vab^{\perp} \lrar (\Vbc^{\perp})_{(-B)C}\equaln (\Vab^{\perp} \rightleftharpoons \Vbc^{\perp})
\ \textup{equals}\  (\Vab\lrar \Vbc )^{\perp} $$
(we note that  $(\Vab' \lrar (\Vbc')_{(-B)C})\equaln (\Vab' \rightleftharpoons \Vbc')$).

\vspace{0.1cm}

Since this equality holds by  IDT (Theorem \ref{thm:idt}), this completes the proof.
\end{proof}
The above result generalizes the usual transpose rule for linear maps: $(FG)\transp = G\transp F\transp.$ For the `$\lrar $' operation, order does not matter
since the spaces are named with index sets.
%\subsection{Intersection-sum of Linkages}

For linkages whose partition has exactly two blocks, a useful operation which generalizes the usual sum of maps is the { intersection-sum} defined as follows.
\begin{definition}
Let $\Vab ,\Vhab$ be linkages. The {\bf intersection-sum}
$$\Vab \pb \Vhab \equiv \{(f_A,f_B):(f_A,f^1_B)\in \Vab , (f_A,f^2_B) \in \Vhab, f_B= f^1_B+f^2_B \}.$$
\end{definition}
Such a definition, which speaks both of `domain' and `range' instead of only `range' in the case of
ordinary maps, is natural because for ordinary maps the domain has the form $\F_A$ whereas
for linkages, $\Vab\circ A$ and $\Vhab \circ A $ need not be the same.

An immediate consequence of the definition is the following
\begin{lemma}
\label{lem:intsumasscomm}
The `$\pb$' operation is associative and commutative.

\end{lemma}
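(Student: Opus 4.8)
The plan is to settle commutativity by inspection and associativity by rewriting both parenthesizations in a single symmetric ``normal form'' and noting they agree. Commutativity is essentially a one-liner: a vector $(f_A,f_B)$ lies in $\Vab \pb \Vhab$ exactly when there exist $f^1_B,f^2_B$ with $(f_A,f^1_B)\in\Vab$, $(f_A,f^2_B)\in\Vhab$ and $f_B=f^1_B+f^2_B$; since $f^1_B+f^2_B=f^2_B+f^1_B$, this is literally the defining condition for membership in $\Vhab\pb\Vab$ with the two superscripts relabelled. Hence the two sets coincide.

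For associativity I would take three linkages $\Vab,\Vhab,\mathcal{U}_{AB}$ on the common pair $(A,B)$ and show both $(\Vab\pb\Vhab)\pb\mathcal{U}_{AB}$ and $\Vab\pb(\Vhab\pb\mathcal{U}_{AB})$ equal the set
$$\mathcal{T}\equiv\{(f_A,f_B): \exists\ f^1_B,f^2_B,f^3_B \text{ s.t. } (f_A,f^1_B)\in\Vab,\ (f_A,f^2_B)\in\Vhab,\ (f_A,f^3_B)\in\mathcal{U}_{AB},\ f_B=f^1_B+f^2_B+f^3_B\}.$$
Unfolding the left grouping: $(f_A,f_B)\in(\Vab\pb\Vhab)\pb\mathcal{U}_{AB}$ iff there are $g_B,f^3_B$ with $(f_A,g_B)\in\Vab\pb\Vhab$, $(f_A,f^3_B)\in\mathcal{U}_{AB}$ and $f_B=g_B+f^3_B$; and $(f_A,g_B)\in\Vab\pb\Vhab$ unfolds to the existence of $f^1_B,f^2_B$ with $(f_A,f^1_B)\in\Vab$, $(f_A,f^2_B)\in\Vhab$, $g_B=f^1_B+f^2_B$. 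Substituting $g_B$ gives exactly the condition defining $\mathcal{T}$. The right grouping unfolds symmetrically, the only genuine content being the associativity of addition on $\F_B$ (rebracketing $(f^1_B+f^2_B)+f^3_B$ as $f^1_B+(f^2_B+f^3_B)$). Together with commutativity this yields full associativity, so any finite $\pb$-expression has a value independent of bracketing and ordering.

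The only point needing a moment of care — which I would flag as the ``main obstacle'', although it is minor — is the bookkeeping of the existential quantifiers on the $A$-coordinate: one must check that the $f_A$ produced inside $\Vab\pb\Vhab$ is the very same $f_A$ matched against $\mathcal{U}_{AB}$, so that all three memberships $(f_A,f^i_B)$ share one common first block. This is automatic from the definition of $\pb$, which leaves the $A$-block untouched and only sums the $B$-blocks, but it is the step where a careless normal-form could go wrong. Finally I would remark that the argument uses nothing about the spaces being vector spaces beyond closure of addition on $\F_B$; exactly as in the Remark following Lemma~\ref{lem:Kgenminor}, the associativity and commutativity of $\pb$ persist for arbitrary collections of vectors closed under addition.
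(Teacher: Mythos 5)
Your proof is correct and is exactly the argument the paper has in mind: the lemma is stated there as an immediate consequence of the definition, and your unfolding (commutativity from symmetry of $f^1_B+f^2_B$, associativity by showing both bracketings equal the three-summand normal form with a common $A$-block) is that immediate verification. Nothing further is needed.
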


We give below a useful way of visualizing the intersection-sum operation.
%We first introduce a notation which is convenient for our present purpose.
We remind the reader that   $\{[K]\}$ denotes the space spanned by the rows of the matrix $K.$
Thus $$\{[I_A \ I_{A'}\ I_{A"}]\}$$
denotes the collection of all vectors $$f_{AA'A"},\ \textup{where} \ f_{AA'A"}/A=  (f_{AA'A"}/A')_{A}= (f_{AA'A"}/A")_{A}, \ \ (A,A',A"\ \textup{being\  copies\  of\  each\  other}).$$
\begin{align}
\left\{
\left[
\begin{matrix}
I_A & I_{A'}& 0\\
I_A & 0       & I_{A'}
\end{matrix}
\right]
\right\}
\end{align}
denotes the collection of all vectors  
 $f_{AA'A"},$ where  $f_{AA'A"}/A=  (f_{AA'A"}/A')_{A}
+ (f_{AA'A"}/A")_{A}.$

It is clear that $\Vab\lrar \{[I_A \ I_{A'}]\}= (\Vab)_{A'B}.$ 

Next consider $\Vadashbdash\bigoplus\Vhatwodashbtwodash.$ This space is a direct sum of 
copies of $\Vab,\Vhab$ built respectively on disjoint copies $A'\cup B'$ and $A"\cup B"$ of $A\cup B.$ The space
$$\Vadashbdash\bigoplus\Vhatwodashbtwodash\lrar \{[I_A \ I_{A'}\ I_{A"}]\}\bigoplus 
\left\{
\left[
\begin{matrix}
I_B & I_{B'}& 0\\
I_B & 0       & I_{B"}
\end{matrix}
\right]
\right\}
$$
is on the set $A\cup B$ and can be seen to be the collection 
$$\{(f_A,f_B):(f_A,f^1_B)\in \Vab , (f_A,f^2_B) \in \Vhab, f_B= f^1_B+f^2_B \}.$$
[The  `$\lrar $' operation with $ \{[I_A \ I_{A'}\ I_{A"}]\}\bigoplus 
\left\{
\left[
\begin{matrix}
I_B & I_{B'}& 0\\
I_B & 0       & I_{B'}
\end{matrix}
\right]
\right\}$ forces equality of the $A$ components and addition of the $B$ components.]

We thus have the following lemma.
\begin{lemma}
\label{lem:intsumvis}
$$\Vab \pb \Vhab = \Vadashbdash\bigoplus\Vhatwodashbtwodash\lrar \{[I_A \ I_{A'}\ I_{A"}]\}\bigoplus 
\left\{
\left[
\begin{matrix}
I_B & I_{B'}& 0\\
I_B & 0       & I_{B"}
\end{matrix}
\right]
\right\}.$$
\end{lemma}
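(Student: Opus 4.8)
The plan is to verify the displayed identity in Lemma~\ref{lem:intsumvis} by unwinding the right-hand side using only the definitions of direct sum, matched composition, and the ``identity-type'' spaces $\{[I_A\ I_{A'}\ I_{A''}]\}$ and $\left\{\left[\begin{matrix} I_B & I_{B'} & 0\\ I_B & 0 & I_{B''}\end{matrix}\right]\right\}$, and then comparing with the definition of $\Vab\pb\Vhab$. So first I would set up notation: write $\W_{A'B'}\equiv(\Vab)_{A'B'}$ and $\widehat{\W}_{A''B''}\equiv(\Vhab)_{A''B''}$ for the two disjoint copies, so that $\W_{A'B'}\oplus\widehat{\W}_{A''B''}$ is the space of all $(f_{A'},f^1_{B'},g_{A''},f^2_{B''})$ with $(f_{A'},f^1_{B'})$ a copy of a member of $\Vab$ and $(g_{A''},f^2_{B''})$ a copy of a member of $\Vhab$. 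I would also spell out explicitly that $\{[I_A\ I_{A'}\ I_{A''}]\}$ is the set of $(h_A,h_{A'},h_{A''})$ with all three components mutual copies of one another (so $h_A=(h_{A'})_A=(h_{A''})_A$), and that the $B$-block space is the set of $(k_B,k^1_{B'},k^2_{B''})$ with $k_B=(k^1_{B'})_B+(k^2_{B''})_B$.

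Next I would carry out the matched composition. The index set being matched on is $A'\cup A''\cup B'\cup B''$: these occur in $\W_{A'B'}\oplus\widehat{\W}_{A''B''}$ and also in $\{[I_A\ I_{A'}\ I_{A''}]\}\oplus\left\{\left[\begin{matrix} I_B & I_{B'} & 0\\ I_B & 0 & I_{B''}\end{matrix}\right]\right\}$, and the surviving index set is $A\cup B$. By the definition of matched composition, a vector $(f_A,f_B)$ lies in the composition iff there exist $f_{A'},g_{A''},f^1_{B'},f^2_{B''}$ such that $(f_{A'},f^1_{B'},g_{A''},f^2_{B''})\in\W_{A'B'}\oplus\widehat{\W}_{A''B''}$ and $(f_A,f_{A'},g_{A''},f_B,f^1_{B'},f^2_{B''})$ matches the identity-block space. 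The latter forces, from the $A$-block, $f_{A'}=(f_A)_{A'}$ and $g_{A''}=(f_A)_{A''}$ (all three copies of $f_A$), and from the $B$-block, $f_B=(f^1_{B'})_B+(f^2_{B''})_B$. Writing $f^1_B\equiv(f^1_{B'})_B$ and $f^2_B\equiv(f^2_{B''})_B$ and using that $\W_{A'B'},\widehat{\W}_{A''B''}$ are copies of $\Vab,\Vhab$, the conditions become exactly $(f_A,f^1_B)\in\Vab$, $(f_A,f^2_B)\in\Vhab$, $f_B=f^1_B+f^2_B$, which is the definition of $\Vab\pb\Vhab$. This establishes both inclusions simultaneously, since each step is an equivalence.

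There is no real obstacle here — the result is essentially bookkeeping — but the step that needs the most care is keeping the copy relations straight: one must check that the $A$-block $\{[I_A\ I_{A'}\ I_{A''}]\}$ genuinely forces the $A'$-component of the chosen vector in $\W_{A'B'}$ to be the copy of $f_A$ (and likewise for $A''$ in $\widehat{\W}_{A''B''}$), so that restricting $\W_{A'B'}$ along this constraint recovers membership in $\Vab$ rather than some relabelled variant. The only subtlety is that in the matched composition the matching is by equality of the \emph{values} of the vectors on the common index set, and since $A,A',A''$ are mutually copies, equality of values on $A'$ between the identity-block vector and the $\W$-vector is the correct notion; I would note this once and then the rest is immediate. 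I would also remark, as the paragraph preceding the lemma already does, that the bracketed spaces ``force equality of the $A$ components and addition of the $B$ components,'' which is precisely the content extracted above, so the lemma follows.
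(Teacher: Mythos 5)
Your proof is correct and follows the same route the paper takes: the paper simply asserts the identity after observing that the $\lrar$ operation with the two bracketed spaces ``forces equality of the $A$ components and addition of the $B$ components,'' which is exactly the bookkeeping you spell out. Your more explicit tracking of the copy relations fills in the routine details the paper leaves to the reader, with no substantive difference in method.
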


The following lemma is easy to see
\begin{lemma}
\label{lem:specialperps}
$\{[I_A \ I_{A'}\ I_{A"}]\}^{\perp}=$
 \begin{align}
\left\{
\left[
\begin{matrix}
I_A & -I_{A'}& 0\\
I_A & 0       &- I_{A'}
\end{matrix}
\right]
\right\}
\end{align}
\end{lemma}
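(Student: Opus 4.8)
The plan is to compute the orthogonal complement of $\{[I_A \ I_{A'}\ I_{A"}]\}$ directly from the definition of the dot product on $A \uplus A' \uplus A"$, exploiting that $A,A',A"$ are copies of each other so that all three index sets have the same cardinality, say $n$. First I would write a generic element of $\{[I_A \ I_{A'}\ I_{A"}]\}$ explicitly: by the description given just before the statement, it is the collection of all $f_{AA'A"}$ with $f_{AA'A"}/A = (f_{AA'A"}/A')_A = (f_{AA'A"}/A")_A$; equivalently, picking an arbitrary $a \in \F_A$, the vector is $(a, a_{A'}, a_{A"})$ where $a_{A'}, a_{A"}$ are the copies of $a$ on $A', A"$. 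So this space is $n$-dimensional, and its complement must be $2n$-dimensional, which matches the displayed right-hand side (two "blocks" of $n$ generators each).

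Next I would take a generic element $g_{AA'A"}$ of the claimed complement. Using the matrix notation, an element of the right-hand side is a linear combination of rows of $\bbmatrix{I_A & -I_{A'} & 0 \\ I_A & 0 & -I_{A'}}$; writing the two groups of coefficients as $b,c \in \F_A$, such an element has the form $(b + c,\ -b_{A'},\ -c_{A"})$ (again with subscripted copies). The verification then splits into the two standard inclusions. For "$\supseteq$": compute $\la (a,a_{A'},a_{A"}),\ (b+c,-b_{A'},-c_{A"})\ra = \la a,b\ra + \la a,c\ra - \la a,b\ra - \la a,c\ra = 0$, using that the dot product of copies equals the dot product of the originals; so every such $g$ is orthogonal to every such $f$. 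For "$\subseteq$": suppose $g_{AA'A"} = (p,q,r)$ (with $p$ on $A$, $q$ on $A'$, $r$ on $A"$, not necessarily copies) is orthogonal to all $(a,a_{A'},a_{A"})$. Transporting $q,r$ back to $A$ as $q_A, r_A$, orthogonality reads $\la a, p + q_A + r_A\ra = 0$ for every $a \in \F_A$, hence $p + q_A + r_A = 0$, i.e. $p = -q_A - r_A$. Setting $b \equiv -q_A$ and $c \equiv -r_A$ exhibits $g$ in the required form $(b+c, -b_{A'}, -c_{A"})$, so $g$ lies in the right-hand side.

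The only mild subtlety — and the step I would be most careful about — is bookkeeping with the copy maps: making sure that "the copy of $a$ on $A'$", the identity blocks $I_{A'}$, and the dot product (which is defined set-by-set but is invariant under the copy bijection) are all used consistently, and that the sign on the $I_{A'}$ entries is tracked correctly through the inner product. Once one fixes the convention that $\la h_A, h'_A\ra = \la (h_A)_{A'}, (h'_A)_{A'}\ra$, the computation is a one-line cancellation and the dimension count confirms there is nothing missing. Since the statement is flagged in the paper as "easy to see," I would present this as a short verification of the two inclusions plus the dimension remark rather than a lengthy argument.
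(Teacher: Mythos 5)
Your proof is correct and is exactly the direct verification the paper has in mind when it labels the lemma ``easy to see'' and omits a proof: both inclusions follow from the one-line cancellation in the dot product, and the dimension count is a harmless extra check. One minor point: you have (rightly) read the last block of the second row as the identity on $A''$, i.e.\ the generic element is $(b+c,\,-b_{A'},\,-c_{A''})$, which is the intended meaning despite the paper's typographical $-I_{A'}$ in that position.
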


By IDT we get  the following generalization of $(F+G)^T=F^T +G^T.$\\
(Note that $(\Vsp\lrar \Vp)^{\perp}=\Vsp^{\perp}\lrar \Vp^{\perp}.$)
\begin{theorem}
\label{thm:intsumtranspose}
%We have
\begin{enumerate}
\item $$(\Vab \pb \Vhab )^{\perp}=
\Vab^{\perp} \pa \Vhab^{\perp}.$$
\item 
$(\Vab \pb \Vhab )^{\transp}=\Vab^{\transp} \pa \Vhab^{\transp}.$

\end{enumerate}
\end{theorem}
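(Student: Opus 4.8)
The plan is to prove (1) first --- it carries all the content --- and then derive (2) by unwinding the definition of the transpose. For (1), I would start from the representation of the intersection-sum given in Lemma~\ref{lem:intsumvis},
$$\Vab \pb \Vhab = \big(\Vadashbdash\oplus\Vhatwodashbtwodash\big)\lrar\big(\{[I_A \ I_{A'}\ I_{A"}]\}\oplus \mathcal{E}_B\big),$$
where I abbreviate by $\mathcal{E}_B$ the ``addition on $B$'' space displayed in that lemma, and apply the complementary orthogonal operation to both sides. The left factor $\Vadashbdash\oplus\Vhatwodashbtwodash$ lives entirely on the internal index set $A'\uplus A"\uplus B'\uplus B"$, so this matched composition is a generalized minor; hence the corollary of IDT recalled in the statement, $(\Vsp\lrar\Vp)^{\perp}=\Vsp^{\perp}\lrar\Vp^{\perp}$, pushes $\perp$ through it.

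Next I would evaluate the three complements. Since $\perp$ distributes over direct sums of spaces on disjoint sets and commutes with the formation of copies, $\big(\Vadashbdash\oplus\Vhatwodashbtwodash\big)^{\perp}=(\Vabperp)_{A'B'}\oplus(\Vhabperp)_{A"B"}$. By Lemma~\ref{lem:specialperps}, $\{[I_A \ I_{A'}\ I_{A"}]\}^{\perp}$ is the ``addition on $A$'' space carrying a sign change on the copy blocks $A',A"$; applying $(\V^{\perp})^{\perp}=\V$ together with Lemma~\ref{lem:specialperps} in the $B$-variables shows $\mathcal{E}_B^{\perp}=\{[I_B \ I_{B'}\ I_{B"}]\}$ (``equality on $B$'') carrying a sign change on $B',B"$. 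Substituting, $(\Vab\pb\Vhab)^{\perp}$ becomes the generalized minor of $(\Vabperp)_{A'B'}\oplus(\Vhabperp)_{A"B"}$ against (``addition on $A$'') $\oplus$ (``equality on $B$''), with spurious sign changes sitting only on $A',A",B',B"$.

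Cleaning up those signs is the one point that needs care, though nothing deep. All the offending sign changes sit on $A'\uplus A"\uplus B'\uplus B"$, which is exactly the index set that the composition contracts away, so I may negate all of them simultaneously in both factors without altering the composition --- matching $g=h$ is the same as matching $-g=-h$. In $(\Vabperp)_{A'B'}\oplus(\Vhabperp)_{A"B"}$ this changes nothing, because negating every component of a vector space returns the same space (in particular, negating $A'$ and $B'$ together is the identity, likewise $A",B"$); in the other factor, negating $A',A"$ (resp.\ $B',B"$) a second time cancels the signs produced by Lemma~\ref{lem:specialperps}, using $(\mathcal{K}_{X(-Y)})_{X(-Y)}=\mathcal{K}_{XY}$. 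What survives is precisely the Lemma~\ref{lem:intsumvis}-type representation of $\Vab^{\perp}\pa\Vhab^{\perp}$: here $\pa$ sums on $A$ and shares $B$, which is why ``addition'' now attaches to $A$ and ``equality'' to $B$, the opposite way round to Lemma~\ref{lem:intsumvis}. This proves (1).

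Part (2) then follows formally. By the definition of transpose, $(\Vab\pb\Vhab)^{\transp}=\big((\Vab\pb\Vhab)^{\perp}\big)_{A(-B)}$, which equals $\big(\Vab^{\perp}\pa\Vhab^{\perp}\big)_{A(-B)}$ by (1). Finally, a sign change on the shared index $B$ of $\pa$ passes through the operation, $(\Vab^{\perp}\pa\Vhab^{\perp})_{A(-B)}=(\Vab^{\perp})_{A(-B)}\pa(\Vhab^{\perp})_{A(-B)}$, directly from the definition of $\pa$ (every vector occurring there simply has its $B$-component negated in lockstep), so $(\Vab\pb\Vhab)^{\transp}=(\Vab^{\perp})_{A(-B)}\pa(\Vhab^{\perp})_{A(-B)}=\Vab^{\transp}\pa\Vhab^{\transp}$. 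The only non-formal input anywhere in the argument is IDT, already in hand; the main obstacle is simply keeping the sign and copy conventions on the primed index sets straight.
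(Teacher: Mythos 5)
Your proof is correct and follows essentially the same route as the paper: represent the intersection-sum via Lemma~\ref{lem:intsumvis}, push $\perp$ through the generalized minor using IDT in the form $(\Vsp\lrar\Vp)^{\perp}=\Vsp^{\perp}\lrar\Vp^{\perp}$, evaluate the special spaces by Lemma~\ref{lem:specialperps}, and obtain part (2) by observing that the sign change on the shared index $B$ distributes over $\pa$. The only, purely cosmetic, divergence is the final sign bookkeeping in part (1): the paper identifies the composition as $(\Vab^{\perp}\pa\Vhab^{\perp})_{(-A)(-B)}$ and invokes $(\V_Z)_{-Z}=\V_Z$, whereas you cancel the signs on the matched copies $A',A'',B',B''$ before they are contracted away — both are valid.
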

\begin{proof}
\begin{enumerate}
\item Noting that $(\Vsp\lrar \Vp)^{\perp}=\Vsp^{\perp}\lrar \Vp^{\perp},$
$$(\Vab \pb \Vhab )^{\perp}$$
$$=
\Vadashbdash^{\perp}\bigoplus\Vhatwodashbtwodash^{\perp}\lrar 
\left\{
\left[
\begin{matrix}
I_A & -I_{A'}& 0\\
I_A & 0       & -I_{A"}
\end{matrix}
\right]
\right\}
\bigoplus 
\{[I_B \ -I_{B'}\ -I_{B"}]\} 
$$
$$=(\Vab^{\perp} \pa \Vhab^{\perp})_{(-A)(-B)}.$$
$$=\Vab^{\perp} \pa \Vhab^{\perp}.$$
\item $(\Vab \pb \Vhab )^{\transp}=(\Vab \pb \Vhab )^{\perp}_{A(-B)}=(\Vab^{\perp} \pa \Vhab^{\perp})_{A(-B)}$
$$=   (\Vab^{\perp})_{A(-B)}\pa(\Vhab^{\perp})_{A(-B)}= \Vab^{\transp} \pa \Vhab^{\transp}.$$

\end{enumerate}
\end{proof}
\subsection{Scalar multiplication for linkages}
Let $\Vab $ be a linkage. We define 
$$\lambda ^b\Vab \equiv \V_{A(\lambda B)}+  \Vab\times B\equiv \{(f_A,\lambda g_B): (f_A, g_B)\in \Vab \}+ \Vab\times B.$$
 Note that, when $\lambda \ne 0,$ we have $\lambda ^b\Vab \equiv \V_{A(\lambda B)}= (\Vab\lrar \{[I_B \ \ \lambda I_{B'}]\})_{AB},$
and more generally, if we include the case of $\lambda = 0,$
we have $\lambda ^b\Vab = (\Vab\lrar \{[I_B \ \ \lambda I_{B'}]\})_{AB}+ \Vab\times B.$
\begin{remark}
We note that when $\lambda \ne 0,$ the above definition could have been 
rewritten more simply as\\
 $\lambda ^b\Vab \equiv  \{(f_A,\lambda g_B): (f_A, g_B)\in \Vab \},$
since $\0_A\oplus \Vab\times B\subseteq \{(f_A,\lambda g_B): (f_A, g_B)\in \Vab \},$ when $\lambda \ne 0.$ The contraction to $B,$ of $\{(f_A,\lambda g_B): (f_A, g_B)\in \Vab \},$ when $\lambda \ne 0,$ is simply $\Vab\times B.$

However when $\lambda =0, $ the vector space $\{(f_A,\lambda g_B): (f_A, g_B)\in \Vab \}$ (which is the first term in the right side of the definition) is equal to $\{(f_A,0_B): (f_A, g_B)\in \Vab \}.$ 
This does not contain $\0_A\oplus \Vab\times B$ and its contraction to $B$ is simply $\0_B.$ 

For reasons pertaining to the form of the duality result we state below, we prefer $\Vab\times B$  to be the contraction of $\lambda ^b\Vab $ to $B$ even when $\lambda =0.$ Hence we have explicitly added the term $\Vab\times B$ 
in the definition of $\lambda ^b\Vab $ when $\lambda =0.$ 
Thus $0^{\dws}\Vwdw = \Vwdw\circ W\oplus\Vwdw \times \dw.$
\end{remark}
\begin{theorem}
\label{thm:scalarmult}
Let $\Vab $ be a linkage. Then
\begin{enumerate}
\item
$$(\lambda_1 ^b+\lambda_2 ^b)\Vab= \lambda_1 ^b\Vab\  \pb\  \lambda_2 ^b\Vab.$$
\item
$$(\lambda ^b\Vab)^T = \lambda ^a\Vab^T.$$
\end{enumerate}
\end{theorem}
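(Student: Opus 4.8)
The plan is to prove the two parts separately, with part (1) proved directly from the definitions of intersection-sum and scalar multiplication, and part (2) deduced via IDT (or equivalently via Theorem \ref{thm:intsumtranspose} and Theorem \ref{thm:transpose}) together with the explicit description of $\lambda^b\Vab$ as a matched composition with $\{[I_B\ \ \lambda I_{B'}]\}$.

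For part (1), I would first handle the case $\lambda_1,\lambda_2\neq 0$ and $\lambda_1+\lambda_2\neq 0$, where the scalar-multiplication definition simplifies to $\lambda_i^b\Vab = \{(f_A,\lambda_i g_B):(f_A,g_B)\in\Vab\}$ up to the contraction term. Unwinding the definition of $\pb$, a vector $(f_A,f_B)$ lies in $\lambda_1^b\Vab\ \pb\ \lambda_2^b\Vab$ iff there are $(f_A,g^1_B),(f_A,g^2_B)\in\Vab$ with $f_B=\lambda_1 g^1_B+\lambda_2 g^2_B$; since $\Vab$ is a vector space and both share the same $A$-component, setting $g_B := \frac{\lambda_1}{\lambda_1+\lambda_2}g^1_B + \frac{\lambda_2}{\lambda_1+\lambda_2}g^2_B$ gives $(f_A,g_B)\in\Vab$ with $(\lambda_1+\lambda_2)g_B = f_B$, so the left side contains the right; conversely scaling down a single witness in $\Vab$ gives the reverse inclusion. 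The contraction terms $\Vab\times B$ match up automatically on both sides (each $\lambda_i^b\Vab$ already contains $\0_A\oplus\Vab\times B$ when $\lambda_i\neq 0$, and $\pb$ with zero $A$-component adds $B$-components). I would then separately dispose of the degenerate cases: when some $\lambda_i=0$, the added term $\Vab\times B$ in the definition of $0^b\Vab$ is precisely what is needed to make the contraction to $B$ of the right-hand side equal to $\Vab\times B$, so the stated identity still holds; and when $\lambda_1=-\lambda_2\neq 0$, the left side is $0^b\Vab = \Vab\circ A\oplus\Vab\times B$, which one checks equals $\lambda_1^b\Vab\ \pb\ (-\lambda_1)^b\Vab$ by the same vector-space argument (taking $g^1_B=g^2_B$ forces the $B$-component to vanish while leaving the $A$-component free in $\Vab\circ A$). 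This case analysis is exactly why the term $\Vab\times B$ was built into the definition, as the preceding remark flags.

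For part (2), I would use $\lambda^b\Vab = (\Vab\lrar\{[I_B\ \ \lambda I_{B'}]\})_{AB} + \Vab\times B$ and the fact that $(\Vsp\lrar\Vp)^\perp = \Vsp^\perp\lrar\Vp^\perp$. Taking perpendiculars: $\{[I_B\ \ \lambda I_{B'}]\}^\perp = \{[\lambda I_B\ \ {-I_{B'}}]\}$ (the row space of $[\,\lambda I\ \ {-I}\,]$), which up to the copy/sign bookkeeping is the space implementing scalar multiplication by $\lambda$ on the $B$-side again. Then $(\lambda^b\Vab)^\perp$ works out, after the dot-cross duality converts the $+\Vab\times B$ term into an intersection with $\Vab^\perp\circ B$-type constraints, to $\lambda^b(\Vab^\perp)$ on the appropriately re-signed index sets; finally applying the index-set resignings in the definition $\Vab^T = (\Vab^\perp)_{A(-B)}$ and using Lemma \ref{lem:transposesign} to move the signs around yields $(\lambda^b\Vab)^T = \lambda^a\Vab^T$. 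Alternatively — and this is probably cleaner to write — I would combine part (1) with the already-proved transpose rules: write $\lambda^b\Vab$ as a matched composition, apply Theorem \ref{thm:transpose} ($(\Vab\lrar\Vbc)^T = \Vab^T\lrar\Vbc^T$), and compute the transpose of the elementary block $\{[I_B\ \ \lambda I_{B'}]\}$ directly, again reducing to bookkeeping with Lemma \ref{lem:transposesign}.

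The main obstacle I anticipate is not conceptual but notational: keeping the sign conventions and copy-of-set decorations straight through the perpendicular/transpose manipulations, particularly making sure the $\Vab\times B$ correction term is tracked correctly when $\lambda=0$ (where $\lambda^b\Vab$ is \emph{not} simply $\{(f_A,0\cdot g_B)\}$) and verifying that the transpose indeed sends the $0$ scalar to the $0$ scalar with the contraction landing on the $A$-side as $\Vab^T\times A$ rather than as $\0_A$. This is exactly the subtlety the remark preceding the theorem was setting up, so I would state that degenerate case explicitly and check it by hand rather than subsuming it in the generic argument.
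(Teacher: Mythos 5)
Your proposal is correct and essentially matches the paper: part (1) is dismissed there as routine (your direct case analysis fills in exactly that), and for part (2) the paper does precisely your ``cleaner'' alternative --- split off the $\lambda=0$ case, where the claim follows directly from $(\Vab\circ A\oplus\Vab\times B)^T=\Vabt\times A\oplus\Vabt\circ B$, and for $\lambda\neq 0$ write $\lambda^b\Vab=(\Vab\lrar\{[I_B\ \ \lambda I_{B'}]\})_{AB}$, apply Theorem \ref{thm:transpose}, and use $\{[I_B\ \ \lambda I_{B'}]\}^T=\{[\lambda I_B\ \ I_{B'}]\}$. No gaps beyond the sign/copy bookkeeping you already flagged.
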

\begin{proof}
\begin{enumerate}
\item This is routine.
\item
Let $\lambda =0.$ The LHS reduces to $(\Vab \circ A \bigoplus \Vab \times B) ^{T}= (\Vabt \times A \bigoplus \Vabt \circ B),$
which is the RHS by definition of $\lambda ^a\Vab^T.$

Next let  $\lambda \ne 0.$ We have $\lambda ^b\Vab \equiv \V_{A(\lambda b)}= (\Vab\lrar \{[I_B \ \ \lambda I_{B'}]\})_{AB}.$ \\
Therefore by Theorem \ref{thm:transpose}, 
$$(\lambda ^b\Vab)^T =( (\Vab\lrar \{[I_B \ \ \lambda I_{B'}]\})_{AB})^T=(\Vabt \lrar \{[I_B \ \ \lambda I_{B'}]\}^T)_{AB}$$
$$= (\Vabt \lrar \{[\lambda I_B \ \  I_{B'}]\})_{AB}= \lambda ^a\Vab^T,$$ where we have made use of the fact that $\{[I_B \ \ \lambda I_{B'}]\}^T= \{[\lambda I_B \ \  I_{B'}]\}.$
\end{enumerate}

\end{proof}

\subsection{Distributivity of intersection-sum over matched composition  and scalar multiplication}
For ordinary maps we have the distributivity of addition over composition and scalar multiplication.
Because this result is true we are able to develop the rich spectral theory of operators.
In subsequent sections we will be speaking of generalized operators and polynomials of
such objects. So we need to study the distributivity of intersection-sum over `$\lrar $'.
The following result is the best that can be done, but proves adequate for our purpose.
\begin{theorem}
\label{thm:distributivity}
Let $\Vab, \Vhab,\Vbc,$  be linkages, $A,B,C,$ being pairwise disjoint. 
\begin{enumerate}
\item If $\Vbc \times B\subseteq \Vab \times B,$ then
$$ \Vbc\lrar(\Vab \pa \Vhab)= ( \Vbc\lrar\Vab) \pa   (\Vbc\lrar \Vhab).$$
\item If $\Vbc \times B\subseteq \Vab ^1 \times B,\cdots \Vbc \times B\subseteq \Vab ^{k-1} \times B,$ then
$$ \Vbc\lrar(\Vab ^1\pa \cdots \pa \Vab ^{k})=( \Vbc\lrar\Vab^1)\pa \cdots \pa  (\Vbc\lrar\Vab ^k).$$  
\item Dually, if $\Vbc \circ B\supseteq \Vab \circ B,$ then
$$ \Vbc\lrar(\Vab \pb \Vhab)=  (\Vbc\lrar\Vab )\pc   (\Vbc\lrar  \Vhab).$$
\item  If $\Vbc \circ B\supseteq \Vab ^1 \circ B,\cdots \Vbc \circ B\supseteq \Vab ^{k-1} \circ B,$ then
$$ \Vbc\lrar(\Vab ^1\pb \cdots \pb \Vab ^{k})=( \Vbc\lrar\Vab^1)\pc \cdots \pc  (\Vbc\lrar\Vab ^k).$$ 

\end{enumerate}
\end{theorem}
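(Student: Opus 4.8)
The plan is to prove parts (1) and (2) from first principles by double inclusion, and then to deduce parts (3) and (4) from them by orthogonal complementation, using the implicit duality theorem (Theorem~\ref{thm:idt}) together with Theorem~\ref{thm:intsumtranspose}. For part (1) I would first unfold both sides into explicit membership conditions: a vector $(f_A,h_C)$ lies in $\Vbc\lrar(\Vab\pa\Vhab)$ precisely when there exist $g_B$ and a splitting $f_A=f^1_A+f^2_A$ with $(f^1_A,g_B)\in\Vab$, $(f^2_A,g_B)\in\Vhab$ and $(g_B,h_C)\in\Vbc$; it lies in $(\Vbc\lrar\Vab)\pa(\Vbc\lrar\Vhab)$ precisely when there are a splitting $f_A=f^1_A+f^2_A$ and \emph{two} intermediate vectors $g^1_B,g^2_B$ with $(f^1_A,g^1_B)\in\Vab$, $(g^1_B,h_C)\in\Vbc$, $(f^2_A,g^2_B)\in\Vhab$, $(g^2_B,h_C)\in\Vbc$. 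The inclusion $\subseteq$ is then immediate and uses no hypothesis: just take $g^1_B=g^2_B=g_B$.

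The content is the reverse inclusion. There the only obstruction is that $g^1_B$ and $g^2_B$ need not coincide. Since $\Vbc$ is a subspace, $(g^1_B,h_C),(g^2_B,h_C)\in\Vbc$ forces $g^1_B-g^2_B\in\Vbc\times B$, and this is where the hypothesis enters: $\Vbc\times B\subseteq\Vab\times B$ gives $(0_A,g^1_B-g^2_B)\in\Vab$, whence $(f^1_A,g^2_B)=(f^1_A,g^1_B)-(0_A,g^1_B-g^2_B)\in\Vab$, so $g_B:=g^2_B$ serves as a common intermediate vector and places $(f_A,h_C)$ in the left-hand side. Part (2) runs identically once the $k$-fold intersection-sums are unfolded (legitimate by Lemma~\ref{lem:intsumasscomm}): given intermediate vectors $g^1_B,\dots,g^k_B$ on the right-hand side, I would normalise all of them to $g^k_B$, using $g^i_B-g^k_B\in\Vbc\times B\subseteq\Vab^i\times B$ for $i<k$; the index $k$ needs no hypothesis, which is exactly why the statement asks for the condition only on $\Vab^1,\dots,\Vab^{k-1}$.

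For parts (3) and (4) I would apply parts (1)/(2) after replacing $\Vbc$ by $(\Vbc^{\perp})_{(-B)C}$, $\Vab$ by $\Vab^{\perp}$, $\Vhab$ by $\Vhab^{\perp}$ (and each $\Vab^i$ by $(\Vab^i)^{\perp}$). Two checks make this work. First, the hypothesis translates: by the dot-cross duality $(\mathcal{V}_{BC}\times B)^{\perp}=\mathcal{V}_{BC}^{\perp}\circ B$, the condition $\Vbc\circ B\supseteq\Vab\circ B$ of part (3) is equivalent to $\Vbc^{\perp}\times B\subseteq\Vab^{\perp}\times B$, and $(\Vbc^{\perp})_{(-B)C}\times B=\Vbc^{\perp}\times B$, so the $\times$-hypothesis of part (1) holds for the substituted spaces. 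Second, I would take $^{\perp}$ of both sides of the identity from part (1): by IDT each $(\,\cdot\lrar\cdot\,)^{\perp}$ becomes a skewed composition $\rightleftharpoons$; by Theorem~\ref{thm:intsumtranspose} each $(\,\cdot\pa\cdot\,)^{\perp}$ becomes an intersection-sum on the complementary block (so $\pa$ on $A\cup B$ turns into $\pb$ and $\pa$ on $A\cup C$ into $\pc$); and $((\Vbc^{\perp})_{(-B)C})^{\perp}=(\Vbc)_{(-B)C}$ by Lemma~\ref{lem:transposesign}. Finally, since $(\Vbc)_{(-B)C}\rightleftharpoons\mathcal{W}=\Vbc\lrar\mathcal{W}$ for any linkage $\mathcal{W}$ on $A\cup B$, the two sign flips cancel, and what remains is exactly part (3); part (4) follows from part (2) the same way.

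I expect the main obstacle to be the reverse inclusion in parts (1)/(2): the point is to recognise that the mismatch between the two presentations of the two sides is measured \emph{exactly} by $\Vbc\times B$, and that the stated hypothesis is precisely what is needed to absorb that mismatch into (the contractions of) the $\Vab^i$. The passage to (3)/(4) is then mechanical, but it does require care with the signs on the index set $B$ so that the skewed compositions produced by IDT collapse back to the matched compositions appearing in the theorem.
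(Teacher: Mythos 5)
Your proposal is correct and follows essentially the same route as the paper: the double-inclusion argument for part (1), with the mismatch $g^1_B-g^2_B\in\Vbc\times B\subseteq\Vab\times B$ absorbed into $\Vab$, is exactly the paper's proof, and your deduction of parts (3)/(4) by dualizing via IDT, the dot-cross duality and Theorem \ref{thm:intsumtranspose} matches the paper's transpose-based argument up to bookkeeping with signs. The only cosmetic difference is in part (2), where you normalise all intermediates $g^i_B$ to $g^k_B$ directly (correctly noting no hypothesis is needed on $\Vab^k$) while the paper invokes induction on the number of terms; the underlying mechanism is identical.
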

\begin{proof}
\begin{enumerate}
\item Let $(f_A,h_C)\in \Vbc\lrar(\Vab \pa \Vhab)$. 
Then there exist vectors $$(g_B,h_C)\in \Vbc , (f^1_A,g_B)\in \Vab,
(f^2_A,g_B)\in \Vhab , \ \textup{ such\  that \ } f_A= f^1_A+f^2_A.$$ 
This means $(f^1_A,h_C)\in 
 (\Vbc\lrar\Vab )$ and $(f^2_A,h_C)\in  (\Vbc\lrar\Vhab )$ and therefore
 $$(f_A,h_C)\in( \Vbc\lrar\Vab) \pa   (\Vbc\lrar \Vhab).$$
Next let $(f_A,h_C)\in( \Vbc\lrar\Vab) \pa   (\Vbc\lrar \Vhab).$  This means 
there exist 
vectors $$(f^1_A,h_C),  (f^2_A,h_C)\in  (\Vbc\lrar\Vhab ),
 (\Vbc\lrar\Vab ) \ \textup{with}\  f_A= f^1_A+f^2_A.$$
Therefore there exist
 vectors $(f^1_A,f^1_B), (f^1_B,h_C)\in \Vab, \Vbc$
respectively and vectors \\
$(f^2_A,f^2_B), (f^2_B,h_C)\in \Vhab, \Vbc,$
respectively. \\Therefore $(f^1_B-f^2_B,0_C)\in \Vbc,$ i.e.,
$f^1_B-f^2_B\in \Vbc\times B\subseteq \Vab\times B.$ \\Hence, $(0_A,f^1_B-f^2_B)\in \Vab.$ Now $(f^1_A,f^1_B)\in \Vab.$ 
Therefore $(f^1_A,f^2_B)\in \Vab.$ \\
We already have that $(f^2_A,f^2_B)\in \Vhab .$ \\
Hence $$(f^1_A+f^2_A,f^2_B)= (f_A,f^2_B)\in (\Vab \pa \Vhab).$$
We know that $(f^2_B,h_C)\in \Vbc.$
%respectively. Therefore $(f^1_B-f^2_B,0_C)\in \Vbc,$ i.e.,
 Therefore $$(f_A,h_C)\in \Vbc\lrar(\Vab \pa \Vhab).$$
\item The proof is by induction on the number of terms within the bracket on the left hand side.
\item By the first part of the theorem  we have 
 if $\Vbc^{\transp} \times B\subseteq \Vab^{\transp} \times B,$ then
$$ \Vbc^{\transp}\lrar(\Vab^{\transp} \pa \Vhab^{\transp})= ( \Vbc^{\transp}\lrar\Vab^{\transp}) \pa   (\Vbc^{\transp}\lrar \Vhab^{\transp}).
$$
Hence, 
if $(\Vbc^{\transp} \times B)^{\transp}\supseteq (\Vab^{\transp} \times B)^{\transp},$ then
$$ (\Vbc^{\transp}\lrar(\Vab^{\transp} \pa \Vhab^{\transp}))^{\transp}= (( \Vbc^{\transp}\lrar\Vab^{\transp}) \pa   (\Vbc^{\transp}\lrar \Vhab^{\transp}))^{\transp},
$$
i.e., if 
$\Vbc^{\transp\transp} \circ B\supseteq \Vab^{\transp\transp}  \circ B,$ then
$$ \Vbc^{\transp\transp} \lrar(\Vab^{\transp\transp}  \pb \Vhab^{\transp\transp} )= ( \Vbc^{\transp\transp
} \lrar\Vab^{\transp\transp} ) \pc   (\Vbc^{\transp\transp} \lrar \Vhab^{\transp\transp} ),  
$$ i.e, if  $\Vbc \circ B\supseteq \Vab \circ B,$ then
$$\Vbc\lrar(\Vab \pb \Vhab)=  (\Vbc\lrar\Vab )\pc   (\Vbc\lrar  \Vhab).$$
\item The proof is by induction on the number of terms within the bracket on the
 left hand side.
\end{enumerate}
\end{proof}
\begin{remark}
To see that the statement of the theorem fails without the `dot-cross' conditions,
take $\Vbc$ to be spanned by $(g^1_B,f_C), (g^2_B, f_C),$   $\Vab$ to be spanned by $(h^1_A,g^1_B),$ $\Vhab$ to be spanned by $(h^2_A, g^2_B),$ and take $g^1_B\ne g^2_B, h^1_A + h^2_A \ne 0_A.$

It can be seen that in this case 
$$ \Vbc\lrar(\Vab \pa \Vhab)\ne ( \Vbc\lrar\Vab) \pa   (\Vbc\lrar \Vhab).$$

\end{remark}
The corresponding result for scalar multiplication is straight forward
and we state it but skip its proof.

\begin{theorem}
$$ \lambda ^b(\Vab \pb \Vhab)=  (\lambda ^b\Vab )\pb (\lambda ^b\Vhab),$$
%$$ \lambda ^A(\Vab \pb \Vhab)=  (\lambda ^A\Vab )\pb (\lambda ^A\Vhab).$$
\end{theorem}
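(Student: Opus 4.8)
The plan is to verify the claimed set equality directly from the definitions, treating the cases $\lambda\ne 0$ and $\lambda=0$ separately, exactly as in the proof of part (2) of Theorem \ref{thm:scalarmult}.

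For $\lambda\ne 0$, I would invoke the simplification recorded in the Remark preceding Theorem \ref{thm:scalarmult}: in this case $\lambda^b\Vab=\{(f_A,\lambda g_B):(f_A,g_B)\in\Vab\}$, and similarly for $\Vhab$ and for the vector space $\Vab\pb\Vhab$. A vector $(f_A,h_B)$ then lies in $\lambda^b(\Vab\pb\Vhab)$ iff $h_B=\lambda f_B$ for some $(f_A,f_B)\in\Vab\pb\Vhab$, i.e.\ iff there are $g^1_B,g^2_B$ with $(f_A,g^1_B)\in\Vab$, $(f_A,g^2_B)\in\Vhab$ and $h_B=\lambda g^1_B+\lambda g^2_B$. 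On the other side, $(f_A,h_B)\in(\lambda^b\Vab)\pb(\lambda^b\Vhab)$ iff $h_B=h^1_B+h^2_B$ with $(f_A,h^1_B)\in\lambda^b\Vab$ and $(f_A,h^2_B)\in\lambda^b\Vhab$; since $\lambda\ne 0$ this says $h^1_B=\lambda g^1_B$, $h^2_B=\lambda g^2_B$ for some $(f_A,g^1_B)\in\Vab$, $(f_A,g^2_B)\in\Vhab$. As $\lambda g^1_B+\lambda g^2_B=\lambda(g^1_B+g^2_B)$, the two membership conditions coincide, which proves the equality in this case.

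For $\lambda=0$, I would compute both sides as direct sums over $A$ and $B$. Using the explicit definition, $0^b\Vab=(\Vab\circ A)\oplus(\Vab\times B)$, and likewise $0^b\Vhab=(\Vhab\circ A)\oplus(\Vhab\times B)$; unwinding the definition of $\pb$ then gives $(0^b\Vab)\pb(0^b\Vhab)=\bigl[(\Vab\circ A)\cap(\Vhab\circ A)\bigr]\oplus\bigl[(\Vab\times B)+(\Vhab\times B)\bigr]$, since the $A$-components of the two summands are forced to agree while the $B$-components are added freely. On the other side, $0^b(\Vab\pb\Vhab)=\bigl((\Vab\pb\Vhab)\circ A\bigr)\oplus\bigl((\Vab\pb\Vhab)\times B\bigr)$, and straight from the definition of $\pb$ one checks that $(\Vab\pb\Vhab)\circ A=(\Vab\circ A)\cap(\Vhab\circ A)$ and $(\Vab\pb\Vhab)\times B=(\Vab\times B)+(\Vhab\times B)$. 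The two expressions coincide.

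The one place that needs a little care is the $\lambda=0$ case: there one must keep the extra term $\Vab\times B$ in the definition of $0^b\Vab$ — its whole reason for being there — and correctly identify the restriction and contraction of an intersection-sum; apart from that the argument is a mechanical unwinding of the definitions, which is why the statement is left unproved in the text.
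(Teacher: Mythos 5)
Your proof is correct, and it is precisely the routine definitional verification that the paper alludes to when it states the result and skips the proof: the $\lambda\ne 0$ case follows from the simplified form $\lambda^b\Vab=\{(f_A,\lambda g_B):(f_A,g_B)\in\Vab\}$, and the $\lambda=0$ case reduces both sides to $\bigl[(\Vab\circ A)\cap(\Vhab\circ A)\bigr]\oplus\bigl[(\Vab\times B)+(\Vhab\times B)\bigr]$, consistent with Lemma \ref{lem:intsumrestcont} and Lemma \ref{lem:decoupledcomposition}. No gaps.
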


\section{Dynamical Systems}
\label{subsec:DynSys}

%In the present paper we recognize the fact that in the usual physical instances of linear systems such as an %electrical network, the derivative of a dynamical variable is accessible in terms of some other physical %variable which could be thought of as `additional' (e.g., in a network, for a capacitor $C$ we have $C\mydot{v}_C = %i_C$, so that $\mydot{v}_C$ is accessible through the additional variable $i_C$). Secondly, whether or not we %can write state equations for the given system, our descriptions of the relevant spaces and our algorithms %will not assume that the state equations are explicitly available. This has obvious computational advantages %since often building the state equation for the system will be more expensive in terms of time and space than %solving a problem associated with the system such as to reach close to a desired state or to observe it. Our %definitions of the basic constructs such
%as that of a dynamical system are in line with this thinking.
%We will try to show that this approach also leads to conceptual simplifications.
The algebraic operations we define are natural for dynamical systems and will be illustrated for them. While the motivation is through solutions of linear constant 
coefficient differential equations, the algebraic results are true for 
vectors over arbitrary (including finite) fields.
 The preliminary definitions from
\cite{HNPS2013} are listed below.

A generalized dynamical system with additional variables (GDSA) is a vector space $\mathcal{V}_{W\mydot{W}AM }$ on the set $W\uplus\mydot{W}  \uplus  A \uplus M$ where $W,\mydot{W}$,  are copies of each other (intended to take care of the dynamical variables and their derivatives), and $A$, $M$, the additional variables and the manifest variables  respectively. Thus $w(t): W \rightarrow \mathbb{R}$, $\mydot{w}(t): \mydot{W} \rightarrow \mathbb{R}$, $a(t): A \rightarrow \mathbb{R}$, $m(t): M \rightarrow \mathbb{R}$. We write $w(t)$ etc., simply as $w$ etc. The manifest variables are the `external variables' of the system. Usually a GDSA would be available to us in the form of the solution space of constraint equations
\begin{align}
\label{eqn:GenDynSys}
\bbmatrix{ S_{\mydot{W}}  & S_W & S_A & S_M
}
\ppmatrix{
        \mydot{w} \\ w\\ a\\ m
} = 0.
\end{align}
Here $(w(t), \mydot{w}(t),a(t),m(t))$ is a real vector which is a solution
to the algebraic equation \ref{eqn:GenDynSys}. That this arises from a set of linear constant coefficient differential equations with solutions
of the kind $(w(\cdot), \mydot{w}(\cdot),a(\cdot),m(\cdot))$
is not used strongly in the present paper except in the examples and
in the definition of `regular generalized dynamical system' given below.

We write $\mathcal{V}_{W\mydot{W}AM }$ alternatively as $\mathcal{V}(w,\mydot{w},a,m)$.
%In general we will study such systems relative a specified $\mathcal{V}_M$.
%This can be illustrated as follows. Let $\mathcal{V}_M\equiv 0_{mu} \oplus \mathcal{F}_{my}$. In this case this amounts to setting the sources to zero and the output free. If we work with $\mathcal{V}_M^\perp \equiv \mathcal{F}_{mu} \oplus 0_{my}$ it amounts to keeping the sources free but the output zero. In this case note that the output branches have both current and voltage zero (`nullator').

We define a generalized dynamical system (GDS) as a vector space $\mathcal{V}_{W\mydot{W}M }$ on the set $W\uplus \mydot{W} \uplus  M$ where $W,\mydot{W},$ are copies of each other. This is obtained from a GDSA by suppressing the additional variables that is, 
\begin{align*}
 \mathcal{V}_{W\mydot{W}M } = \mathcal{V}_{W\mydot{W}AM }\circ W\mydot{W}  M .
\end{align*}

We study  a GDS (or a GDSA) `relative to' a vector space $\mathcal{V}_M$. This vector space is expected to capture (and is a generalization of) the usual partition of the manifest variables into `input' variables $m_u$ and `output' variables $m_y$. 
We define a generalized autonomous system ({\bf genaut}) as a vector space $\mathcal{V}_{W\mydot{W} }$ on the set $W\mydot{W} $ where $W,\mydot{W}$, are copies of each other.
A genaut $\Vwdw$ could be obtained from a GDS by taking a generalized minor
with respect to $\Vm,$ i.e., $\Vwdw=\Vwdwm \lrar \Vm.$
Often GDSs arise with a natural partition $\{M_u,M_y\}$ of the index set $M, $ corresponding to the input variables
$m_u$ and the output variables $m_y.$ 
We will call such a GDS, an input- output GDS. 
An example of such an input-output GDS would be the  system $\Vwdwmumy$ defined by
\begin{subequations}
\label{eqn:StateEqnsDynSysL}
\begin{align}
 L\mydot{w} &= Aw + Bm_u \\
 m_y &= Cw + Dm_u
\end{align}
\end{subequations}
If we allow the variables $m_u ,m_y$ to be free we would get the 
genaut $\Vwdw \equiv \Vwdwmumy\circ W \dw .$
\begin{definition}
We call a GDS $\Vwdwm,$ {\bf regular}, iff it satisfies\\
\begin{subequations}
\label{eqn:regularGDS}
\begin{align}
\Vwdwm\circ W &\supseteq (\Vwdwm\circ \dw)_W\\
\Vwdwm\times W &\supseteq (\Vwdwm\times \dw)_W
\end{align}
\end{subequations}
We call an input-output GDS $\Vwdwmumy,$ {\bf regular}, iff 
 $\Vwdwm , \ \ M\equiv M_u\uplus M_y,$ is regular.
\end{definition}
\begin{remark}
The conditions in Equations \ref{eqn:regularGDS} arise in the case 
of infinitely differentiable solutions to linear constant coefficient 
differential equations. Suppose $(w(\cdot),\mydot{w}(\cdot),m(\cdot))$ is a solution 
to the equations. Then clearly so would $(\mydot{w}(\cdot),\mydot{\mydot{w}}(\cdot),\mydot{m(\cdot)})$ be a solution. From this it is clear that whenever 
$\mydot{w}(t)\in  (\Vwdwm\circ \dw)_W$ we also have that $\mydot{w}(t)\in 
\Vwdwm\circ W.$ The first condition now follows because we can take any 
arbitrary real vector $x$ on $W$ or $\dw$ to be the vector  $\mydot{w}(t).$ 
%The
%proof for the second condition is similar  taking $m(\cdot)$ to be the zero function.
The second condition arises if we insist that the adjoint (see Section \ref{sec:adjointGDS}
 for a description)
of a regular system be regular. This is desirable from duality considerations.
\end{remark}
%The system $\Vwdwm$ defined in Equation \ref{eqn:StateEqnsDynSysL}
%would be regular iff $$\lambda ^T (A|B)=0 \ always\  implies \ \lambda ^T (L|B)=0 \ 
% and\  L\sigma =0 \  always\  implies\  A\sigma =0.$$
\begin{definition}
A genaut $\Vwdw $ is a generalized operator ({\bf genop}) iff
\begin{equation}
\label{eqn:genop}
\Vwdw\circ W \supseteq (\Vwdw\circ \dw)_W \ and \ \Vwdw\times W \supseteq (\Vwdw\times \dw)_W.
\end{equation}

A genaut that satisfies $\Vwdw\circ W \supseteq (\Vwdw\circ \dw)_W$ is called an upper semi genop or {\bf USG} for short and 
if it satisfies
 $\Vwdw\times W \supseteq (\Vwdw\times \dw)_W,$ is called a lower semi genop or {\bf LSG} for short.  

\end{definition}
%Consider the genaut $\Vwdw$ defined by 
%\begin{equation}
%\label{eqn:genaut}
% L\mydot{w} = Aw .
%\end{equation}
%This would be a genop iff $$\lambda ^T A=0 \ always\  implies \ \lambda ^T L=0 \ 
% and\  L\sigma =0 \  always\  implies\  A\sigma =0.$$
As one can see, however, genops are weaker than the usual operators.
Equation \ref{eqn:genop}
does not force the collection of permissible $w$ vectors
to be $\F_W$ and $w=0$ does not imply $\dwsmall =0.$
We show below that genops occur naturally. One can eliminate variables
and write state equations for such systems but this would be computationally expensive, destroy natural (topological ) structure making subsequent computations also expensive.
Fortunately, however, for all practical purposes, genops are sufficient.
In particular, we can define polynomials of genops in a manner similar to
those of operators, we can compute minimal annihilating polynomials etc.
as we show in Section \ref{sec:genoppoly}
.

%Working with such a partition, as far as the results of this paper are concerned, amounts to working with the space $\mathcal{V}_M \equiv \mathcal{F}_{mu} \oplus 0_{my}$ or $\mathcal{V}_M\equiv 0_{mu} \oplus \mathcal{F}_{my}$.

%{\color{blue}
An example of a  dynamical system with additional variables 
is  a linear electrical network with dynamic devices (capacitors ($C$) and inductors ($L$)), static devices (resistors and controlled sources ($r$)), voltage and current sources ($E$ and $J$). We will call such a network an $RLCEJ$ network. The variables associated with the devices are  $v_C$, $i_C$, $v_L$, $i_L$, $v_r$, $i_r$, $v_E$, $i_E$, $v_J$, $i_J$. In addition we can introduce output variables. Voltage outputs are obtained by introducing open circuit branches across existing nodes and sensing the voltage across them. Current outputs are obtained by splitting an existing node into two, introducing a short circuit between the split nodes and sensing the current through the short circuit. Example of such a network is given in Figure \ref{fig:circuit7}.

If all the $R,L,C$ are positive  (symmetric positive definite matrix for 
$L$ in general) and voltage sources and current sensors do not form 
loops and current sources and voltage sensors do not form cutsets,
then one can show that for arbitrary values of  sources and consistent initial conditions (capacitor voltages according to capacitor loops and inductor currents according to inductor cutsets), the solution 
is unique whether or not the sources are zero. From this, it follows that Equation \ref{eqn:regularGDS} (b) is satisfied trivially.
 The constraints on $W$ variables are the initial condition constraints and  are only topological
whereas the constraints on $\dw$ variables include the above topological conditions and additionally also other network conditions.
Therefore the corresponding $GDS$ satisfies Equation \ref{eqn:regularGDS} (a)
and thus the system 
is regular.
This argument is also sufficient to show that the $RLCEJ$ network (with the sources present) is regular.
(The adjoint of this system will satisfy Equation \ref{eqn:regularGDS} (a) trivially but Equation \ref{eqn:regularGDS} (b)
nontrivially.)
%More generally, even in an $RLCEJ$ network, the $\dw$ variables
%have to satisfy the constraints on $W$ variables, so that 
%Equations \ref{eqn:regularGDS}
%are satisfied and the dynamical system is regular.
\begin{figure}[ht!]
\psfrag{C1}[][][1]{$C_1$}
\psfrag{C2}[][][1]{$C_2$}
\psfrag{C3}[][][1]{$C_3$}

\psfrag{L13}[][][1]{$L_{13}$}
\psfrag{L14}[][][1]{$L_{14}$}
\psfrag{L15}[][][1]{$L_{15}$}

\psfrag{R4}[][][1]{$R_4$}
\psfrag{R6}[][][1]{$R_6$}
\psfrag{R10}[][][1]{$R_{10}$}
 
% \psfrag{+}[][][.7]{$+$}
% \psfrag{-}[][][.7]{$-$}
%
%\psfrag{yv11}[][][1]{$y_{v11}$}
%\psfrag{yi12}[][][1]{$y_{i12}$}
%
%\psfrag{i5}[][][1]{$i_5$}
%
%\psfrag{u7}[][][1]{$u_7$}
%\psfrag{u8}[][][1]{$u_8$}
%
%\psfrag{v9}[][][1]{$v_9=ri_5$}
%
\begin{center}
\includegraphics[width=6in]{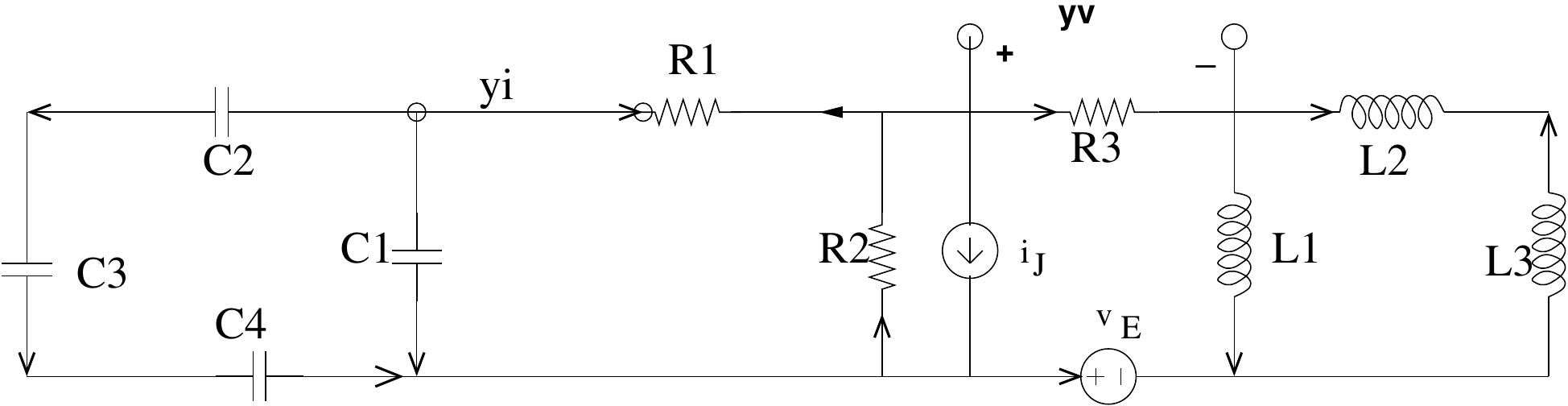}
\caption{An $RLCEJ$ Network 
}
\label{fig:circuit7}
\end{center}
\end{figure}

%\begin{figure}[ht!]
%\psfrag{C1}[][][1]{$C_1$}
%\psfrag{C2}[][][1]{$C_2$}
%\psfrag{C3}[][][1]{$C_3$}
%
%\psfrag{L13}[][][1]{$L_{13}$}
%\psfrag{L14}[][][1]{$L_{14}$}
%\psfrag{L15}[][][1]{$L_{15}$}
%
%
%\psfrag{R4}[][][1]{$R_4$}
%\psfrag{R6}[][][1]{$R_6$}
%\psfrag{R10}[][][1]{$R_{10}$}
% 
%% \psfrag{+}[][][.7]{$+$}
%% \psfrag{-}[][][.7]{$-$}
%
%\psfrag{yv11}[][][1]{$y_{v11}$}
%\psfrag{yi12}[][][1]{$y_{i12}$}
%
%\psfrag{i5}[][][1]{$i_5$}
%
%\psfrag{u7}[][][1]{$u_7$}
%\psfrag{u8}[][][1]{$u_8$}
%
%\psfrag{v9}[][][1]{$v_9=ri_5$}
%
%\begin{center}
% \includegraphics[width=6in]{figs/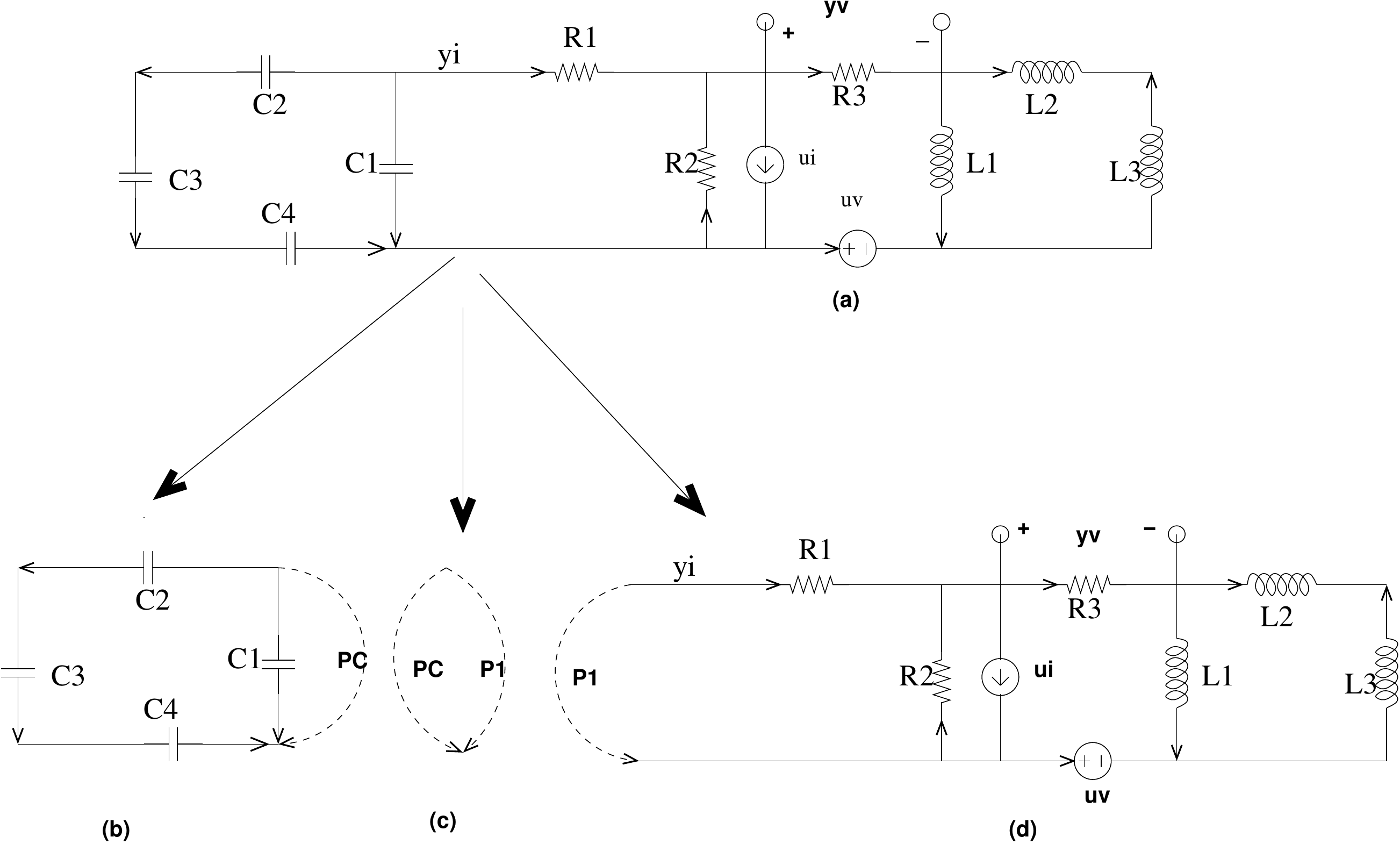}
% \caption{An $RLCEJ$ Network and its multiport decomposition 
%into capacitive multiport and $RL$ multiport}
% \label{fig:circuit2}
%\end{center}
%\end{figure}

%\begin{figure}
%\begin{center}
% \includegraphics[width=6in]{figs/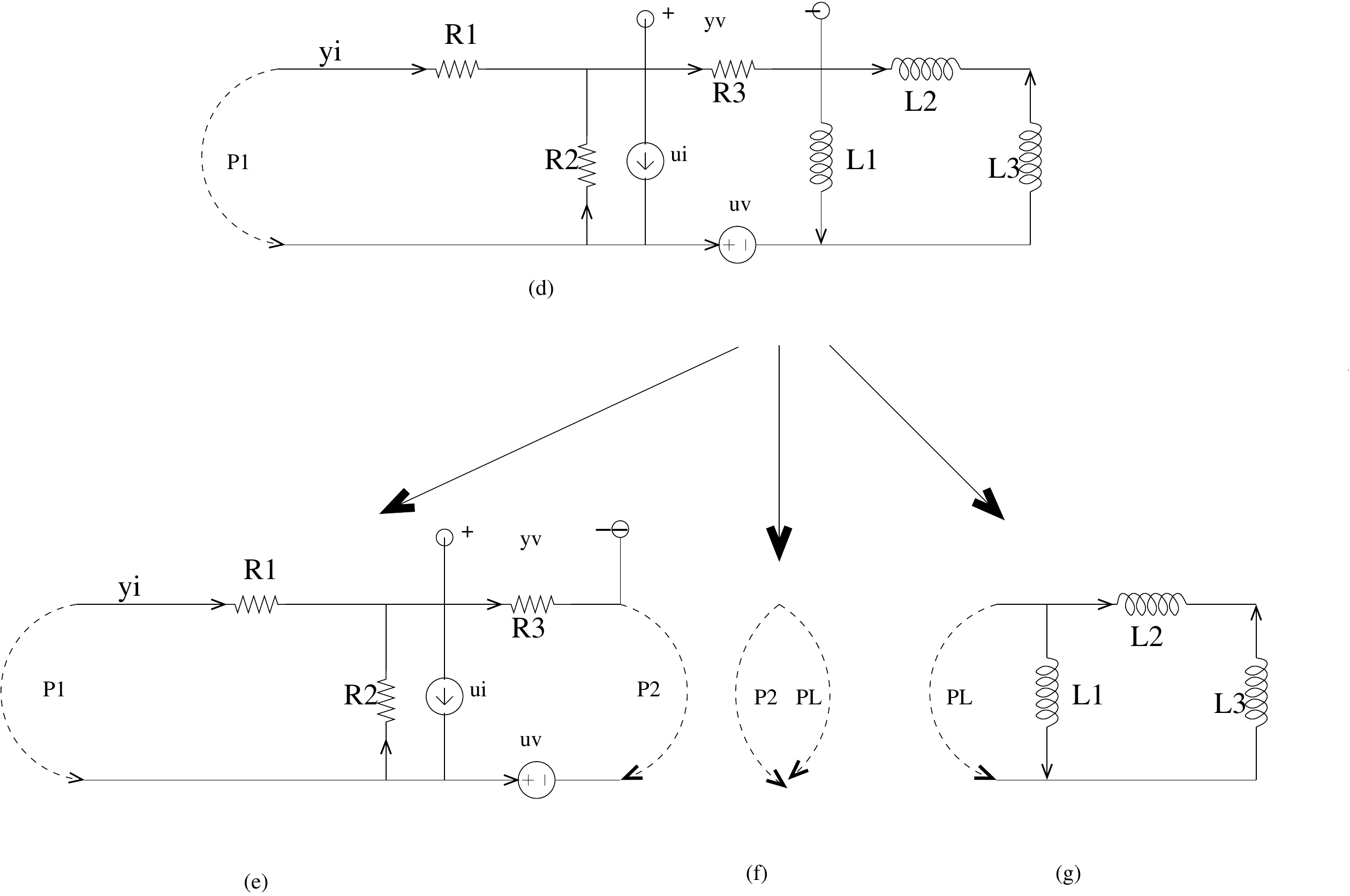}
% \caption{Further
%multiport decomposition  of $RL$ multiport 
%into inductive multiport and static multiport}
% \caption{Example $RLCEJ$ Network and its multiport decomposition
%into capacitive multiport and $RL$ multiport}
% \label{fig:circuit3}
%\end{center}
%\end{figure}

%Figure \ref{fig:circuit2} shows (a) an $RLCEJ$ Network $\mathcal{N},$
%its multiport decomposition into (b) the capacitive multiport $\mathcal{N}_{CP_C},$ 
%(c) the port connection diagram $\mathcal{G}_{P_CP_1},$
%and (d) the $RL$ multiport $\mathcal{N}_{2P_1}.$
%
%
%In Figure \ref{fig:circuit3}, the $RL$ multiport (d) is further decomposed into (e) the static multiport $ \mathcal{N}_{SP_2},$  (f) the  port connection diagram $\mathcal{G}_{P_2P_L},$
%and (g) the inductive multiport $\mathcal{N}_{LP_L}.$

The  $RLCEJ$ network of Figure \ref{fig:circuit7}, if treated as a GDSA $\mathcal{V}(w,\mydot{w},l_{\mydot{w}},l_r,m)$  ($l_{\mydot{w}},l_r$ being the additional variables), has the variables $w$, $\mydot{w}$, $l_{\mydot{w}}$, $l_r$, $m$ as shown below:
\begin{align*}
  {w} &\equiv (v_{C1},v_{C2},v_{C3},v_{C4}, i_{L1},i_{L2},i_{L3}),\qquad  {\mydot{w} } \equiv (\mydot{v}_{C1},\mydot{v}_{C2},\mydot{v}_{C3}, \mydot{v}_{C4},\frac{di_{L1}}{dt},\frac{di_{L2}}{dt},\frac{di_{L3}}{dt}), \\
  {l}_r &\equiv (i_{R1},i_{R2},i_{R3},i_E,i_{yv},v_{yi},v_{R1},v_{R2},v_{R3},v_J), \\
  {l}_{\mydot{w}} &\equiv (i_{C1},i_{C2},i_{C3},i_{C4},v_{L1},v_{L2},v_{L3}), \\
  {m} &\equiv ( m_{u}, m_{y}), \qquad  m_{u} \equiv (i_J,v_E), \qquad
  m_{y} \equiv (yi,yv).
\end{align*}
Note that $l_r$ also includes the variables 
associated with sources and outputs  other than the manifest variables, such as, for instance, currents of voltage sources or voltages of current sensors.

Now 
\begin{align*}
\mathcal{V}(w,\mydot{w},l_{\mydot{w}},l_r,m) =  \mathcal{V}^{top}(w,\mydot{w},l_{\mydot{w}},l_r,m) \cap \mathcal{V}^{dev}(\mydot{w},l_{\mydot{w}},l_r) 
\end{align*}
where
\begin{align*}
 \mathcal{V}^{top}(w,\mydot{w},l_{\mydot{w}},l_r,m) \equiv \mathcal{V}^i(\mathcal{G}) \oplus \mathcal{V}^v(\mathcal{G}) \oplus \mathcal{V}^{top}_{\mydot{w}},
\end{align*}
$\mathcal{V}_i(\mathcal{G})$ and $\mathcal{V}^v(\mathcal{G})$ denote the current and voltage spaces of $\mathcal{G}$ and $\mathcal{V}^{top}_{\mydot{w}}$ is the space of vectors $ {\mydot{w}}$ which satisfy the topological conditions on $\mydot{w}$. In general the vector $ {\mydot{v}}_C$ satisfies, just as ${v}_C$ does, the KVL conditions of the graph obtained by open circuiting all branches other than capacitor branches and the vector $ \frac{di_L}{dt}$ satisfies, just as ${i}_L$ does, the  KCL conditions of the graph obtained by short circuiting all branches other than the inductor branches. The device characteristics constraints are
\begin{align*}
 \mathcal{V}^{dev}(\mydot{w},l_{\mydot{w}},l_r)  = \mathcal{V}^{dev}(\mydot{w},l_{\mydot{w}})  \oplus \mathcal{V}^{dev}(l_r),
\end{align*}
where $\mathcal{V}^{dev}(\mydot{w},l_{\mydot{w}})$ is the solution space of the equations $i_C = C\mydot{v}_C$, $v_L = L\frac{di_L}{dt}$, $C$, $L$ being symmetric positive definite matrices. (Observe $w$ variables are not involved in these equations). $\mathcal{V}^{dev}(l_r)$ is the solution space of the equations (in the case of the present network)
\begin{align*}
  {v}_R = R {i}_R,\qquad v_{yi} = 0, \qquad i_{yv} = 0.
\end{align*}
It is clear that the device characteristics constraints on $(\mydot{w},l_{\mydot{w}})$ do not involve $l_r$ and vice versa.
%\end{example}

Studying this network relative to a specified $\mathcal{V}_M$
can be illustrated as follows: Let $\mathcal{V}_M\equiv 0_{mu} \oplus \mathcal{F}_{my}$. In this case this amounts to setting the sources to zero and the outputs free. If we work with $\mathcal{V}_M^\perp \equiv \mathcal{F}_{mu} \oplus 0_{my}$ it amounts to keeping the sources free but the output zero. In this case, note that the output branches have both current and voltage zero.
\section{Duality}
\label{sec:Duality1}
\subsection{Perpendicular duality}
\label{sec:perp_dual}
%One of the attractive features of classical multi-variable control theory is the duality that underlies the development. If anything, duality ideas are even more natural in our way of handling dynamical systems. 
Duality arises in implicit linear algebra in a very natural manner.
The key idea for us is the following: Let
\begin{align*}
 \epsilon(\mathcal{V}_1,\ldots,\mathcal{V}_k,+,\cap,\oplus ,\circ , \times ,\leftrightarrow,\rightleftharpoons,\supseteq,\subseteq, =)
\end{align*}
be called the `primal' statement.. 
%We assume that the connective `$\leftrightarrow$' is between spaces of the form $\mathcal{V}_S$, $\mathcal{V}_T$ where $S\supset T$.
Consider the second statement
\begin{align*}
\epsilon^2(\mathcal{V}_1,\ldots,\mathcal{V}_k,+,\cap,\oplus ,\circ , \times ,\leftrightarrow,\rightleftharpoons,\supseteq,\subseteq, =)\ \ 
 \equiv \ \  \epsilon(\mathcal{V}_1^\perp,\ldots,\mathcal{V}_k^\perp,\cap,+,\oplus ,\times , \circ , \rightleftharpoons,\leftrightarrow,\subseteq,\supseteq, =) 
\end{align*}
which is obtained by replacing $\mathcal{V}_i$ by $\mathcal{V}_i^\perp$,\ \ \   $+$ by $\cap$,\ \ \    $\cap$ by $+$,\ \ \   $\oplus $ by $\oplus ,$\ \ \  $\circ $ by $\times ,$\ \ \  $\times$ by $\circ,$ interchanging
$\leftrightarrow,\rightleftharpoons ,$ interchanging $\supseteq$, $\subseteq,$  and retaining $=$ as  $=.$  
Any left and right brackets that occur
in the primal statement are left unchanged in the second. 
%(that is, whenever $+$, $\cap$, $\subseteq$ occur in $\epsilon^1$, they are replaced by $\cap$, $+$, $\supseteq$ to get $\epsilon^2$). 
%This fact is a routine consequence of the basic results $(\mathcal{V}^\perp)^\perp = \mathcal{V}$, $(\mathcal{V}_1+\mathscr{V}_2)^\perp = \mathscr{V}_1^\perp \cap \mathscr{V}_2^\perp$, $(\mathscr{V}_1 \leftrightarrow \mathscr{V}_2)^\perp = \mathscr{V}_1^\perp \leftrightarrow \mathscr{V}_2^\perp$.
We have \\ \indent 
$$\epsilon(\mathcal{V}_1,\ldots,\mathcal{V}_k,+,\cap,\oplus ,\circ , \times ,\leftrightarrow,\rightleftharpoons,\supseteq,\subseteq,=)$$
is true \ \ iff \ \ 
 $$\epsilon(\mathcal{V}_1^\perp,\ldots,\mathcal{V}_k^\perp,\cap,+,\oplus ,\times , \circ , \rightleftharpoons,\leftrightarrow,\subseteq,\supseteq,=)$$ 
is true.\\
(If we assume that the connective `$\leftrightarrow$' is between spaces of the form $\mathcal{V}_S$, $\mathcal{V}_T$ where $S\supset T$, then there is no need to introduce `$\rightleftharpoons$'. We can simply replace `$\leftrightarrow$' by
`$\leftrightarrow$' in the second statement.)

This fact is a routine consequence of the basic results $$ (\mathcal{V}^\perp)^\perp = \mathcal{V};\ \  \ (\mathcal{V}_1+\mathcal{V}_2)^\perp = \mathcal{V}_1^\perp \cap \mathcal{V}_2^\perp;\ \  \ (\mathcal{V}_1 \leftrightarrow \mathcal{V}_2)^\perp = (\mathcal{V}_1^\perp \rightleftharpoons \mathcal{V}_2^\perp).$$

%Now suppose that the primal statement is a general statement about vector 
%spaces (using the permissible symbols) which is true even if each $\V_i$ were replaced
%by $\V_i^\perp .$

%In this case the statement 
%$$\epsilon(\mathcal{V}_1,\ldots,\mathcal{V}_k,\cap,+,\oplus ,\times , \circ , \rightleftharpoons,\leftrightarrow,\subseteq,\supseteq,=),$$
The statement
$$\epsilon(\mathcal{V}_1,\ldots,\mathcal{V}_k,\cap,+,\oplus ,\times , \circ , \rightleftharpoons,\leftrightarrow,\subseteq,\supseteq,=),$$
obtained by replacing each $\V_i$
by $\V_i^\perp $ in the second statement
$$\epsilon^2(\mathcal{V}_1,\ldots,\mathcal{V}_k,+,\cap,\oplus ,\circ , \times ,\leftrightarrow,\rightleftharpoons,\supseteq,\subseteq, =)$$
is called the `$\perp -$ dual' statement.

%This fact is a routine consequence of the basic results $$(\mathcal{V}^\perp)^\perp = \mathcal{V}, \ (\mathcal{V}_1+\mathcal{V}_2)^\perp = \mathcal{V}_1^\perp \cap \mathcal{V}_2^\perp, \ (\mathcal{V}_1 \leftrightarrow \mathcal{V}_2)^\perp = \mathcal{V}_1^\perp \rightleftharpoons \mathcal{V}_2^\perp.$$

\begin{example}
The second statement  of 
\begin{equation}
\label{eqn:emulatorcondinv}
[(\Vonewp \bigoplus \Vtwodwdp)\lrar \Vwdw ] \lrarn  [\Vonewp\lrar\V_W]_{\mydot{P}}\ \supseteqn \ [\Vonewp\lrar\Vw],
\end{equation}
is 
\begin{equation}
\label{eqn:emulatorcontinv}
[((\Vonewp) ^{\perp}\bigoplus (\Vtwodwdp)^{\perp})\lrar \Vwdw ^{\perp}] \lrar  [(\Vonewp)^{\perp}\lrar\V_W^{\perp}]_{\mydot{P}}\ \subseteqn \ [(\Vonewp)^{\perp}\lrar\Vw^{\perp}].
\end{equation}
The $\perp -$ dual statement is 
\begin{equation}
\label{eqn:emulatorcontinv2}
[(\Vonewp \bigoplus \Vtwodwdp)\lrar \Vwdw ] \lrarn  [\Vonewp\lrar\V_W]_{\mydot{P}}\ \subseteqn\  [\Vonewp\lrar\Vw].
\end{equation}
\end{example}
In the above example, it is not necessary to replace $\lrar$ of the primal with `$\rightleftharpoons$' in the second or dual, since
one of the index sets of vector spaces, linked by the operation, 
is contained in the other.

Now suppose that the primal statement is a general statement about vector
spaces (using the permissible symbols) which is true even if each $\V_i$ were replaced
by $\V_i^\perp .$
Then,  the `$\perp -$ dual' statement is true iff the primal statement is true.
Clearly, this also follows from the basic results mentioned before.

What we have described is the most basic form of duality. Often for convenience 
one makes additional changes on the dual to retain some desirable property.
A good example is the `transpose'. Recall that $\Vabt\equiv(\Vab^{\perp})_{(-A)B}.$ 
The adjoint defined in the next subsection is an extension of the same idea.
\subsection{Adjoint duality}
\label{sec:adjointGDS}
Adjoint dynamical systems have been  natural constructs in mathematics
and have been an important component of mutivariable control theory,
ever since Kalman's seminal work  \cite{kalman}.
In our notation all the essential features of the notion of adjoint
come through in a neat way. We begin with the definition
in this section and will describe the duality properties at appropriate
places.
In what follows we have omitted the brackets in a term like $(\Vab)_{(-A)B}$ and written it
more simply as $(\Vab)_{-AB}.$
\begin{definition}
Let $\mathcal{V}_{W\mydot{W}M }$ be a GDS. Let the set $M$ be partitioned as $M_u\uplus \M_y.$
We define the adjoint of $\mathcal{V}_{W\mydot{W}M_uM_y}$ as
$$(\mathcal{V}_{{W}\mydot{{W} }M_uM_y })^a\equiv\mathcal{V}^{a}_{{W'}\mydot{{W'} }M'_uM'_y }\equiv
(\mathcal{V}^\perp_{W\mydot{W}M_uM_y })_{-\mydot{{W'} }{W'}-M'_yM'_u },
$$
where  $W',\mydot{{W'} }$ are disjoint sets which are copies of $W$ and $M'_u ,M'_y$ are disjoint
sets \\
(also disjoint from $W',\mydot{{W'} }$ ), which are copies respectively of $M_y,M_u.$
The adjoint 
$(\mathcal{V}^{a}_{{W'}\mydot{{W'} }M'_uM'_y })^a$ of $\mathcal{V}^{a}_{{W'}\mydot{{W'} }M'_uM'_y }$ is defined to be on $W\uplus \mydot{{W} }\uplus M_u\uplus M_y,$ with the same correspondence between primed and unprimed quantities as before.

Let $\mathcal{V}_{W\mydot{W} }$ be a generalized autonomous system.
We define its adjoint as
$$(\mathcal{V}_{{W}\mydot{{W} } })^a\equiv
\mathcal{V}^{a}_{{W'}\mydot{{W'} } }\equiv
(\mathcal{V}^\perp_{W\mydot{W} })_{-\mydot{{W'} }{W'} }.
$$
\end{definition}
The following theorem  lists some routine properties of the adjoint
which one expects should be true if the name `adjoint' is to be appropriate.
%is immediate from the definition of the adjoint and
%the IDT (Theorem \ref{thm:idt}). 

\begin{theorem}
\label{thm:adjointprop}
Let $\mathcal{V}_{W\mydot{W}M_uM_y }$ be a GDS, 
%with the adjoint $\mathcal{V}^{a}_{W'\mydot{{W'} }M }$ 
let $\mathcal{V}_{W\mydot{W} }$ be a vector space of the dynamic variables and let $\V_{M_uM_y }$ be a vector space of
the manifest variables.
%Let  $\mathcal{V}_{W\mydot{W} }\equiv \mathcal{V}_{W\mydot{W}M_uM_y } \lrar \V_{M_uM_y }.$
We have 
\vspace{0.25cm}
\begin{enumerate}

\item
$\mathcal{V}_{W\mydot{W}M_uM_y  } = (\mathcal{V}^{a}_{{W'}\mydot{{W'} }M'_uM'_y })^a.$

\vspace{0.25cm}
\item
$$\mathcal{V}_{W\mydot{W} }\equiv \mathcal{V}_{W\mydot{W}M_uM_y } \lrar \V_{M_uM_y }$$
iff
$$\mathcal{V}^a_{W'\mydot{W'} }=\mathcal{V}^{a}_{{W'}\mydot{{W'} }M'_uM'_y }\lrar (\V_{M_uM_y}^{\perp} )_{-M'_yM'_u} .$$

\vspace{0.25cm}
\item
$\mathcal{V}_{W\mydot{W} } = (\mathcal{V}^{a}_{{W'}\mydot{{W'} } })^a.$

%\item
%$\mathcal{V}_{W\mydot{W} }$ is a USG iff  $\mathcal{V}^{\perp}_{{W}\mydot{{W} } }$ is an LSG.
\vspace{0.25cm}
\item
$\mathcal{V}_{W\mydot{W} }$ is a USG iff  $\mathcal{V}^{a}_{{W'}\mydot{{W'} } }$ is an LSG.\\
\item $\mathcal{V}_{W\mydot{W} }$ is a genop iff  $\mathcal{V}^{a}_{{W'}\mydot{{W'} } }$ is a genop.

\vspace{0.25cm}
\item $\mathcal{V}_{W\mydot{W} }$ is decoupled iff $\mathcal{V}^{a}_{{W'}\mydot{{W'} } }$ is decoupled.
\end{enumerate}
\end{theorem}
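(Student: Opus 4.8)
The plan is to prove Theorem~\ref{thm:adjointprop} part by part, working directly from the definition of the adjoint and leaning heavily on the basic duality facts established earlier: $(\V^\perp)^\perp=\V$, the dot-cross duality $(\V_{XY}\circ X)^\perp=\V_{XY}^\perp\times X$ and $(\V_{XY}\times X)^\perp=\V_{XY}^\perp\circ X$, and the IDT (Theorem~\ref{thm:idt}). The recurring technical point is simply that passing to the adjoint is the composition of three involutive operations on index sets and vectors: taking $\perp$, swapping $W\lrar\dw$ (and $M_u\lrar M_y$ if present), and negating certain blocks; since each is an involution and they commute appropriately, the whole map is an involution up to the obvious relabelling. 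I would state this observation once and reuse it.

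First, for part (1) (and similarly part (3)): applying the adjoint twice gives $((\V^\perp_{W\dw M_uM_y})_{-\dwd W' -M'_yM'_u})$ and then taking $\perp$, swapping back, and negating the now-doubly-negated blocks. Using $(\V^\perp)^\perp=\V$ and Lemma~\ref{lem:transposesign}(1)--(2) (which tells us that negating an index block is an involution and $(-)\circ(-)$ cancels), one checks the two sign changes on $\dwd$ and on each of $M'_u,M'_y$ cancel, and the two swaps cancel, landing back on $\V_{W\dw M_uM_y}$ with the intended correspondence between primed and unprimed names. This is bookkeeping, not mathematics. Part (3) is the same computation with the $M$-blocks absent.

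For part (2), the statement $\V_{W\dw}=\V_{W\dw M_uM_y}\lrar \V_{M_uM_y}$ is an instance of matched composition; I would apply $\perp$ to both sides and invoke IDT in the form $(\V_{XY}\lrar\V_{YZ})^\perp=\V_{XY}^\perp\rightleftharpoons\V_{YZ}^\perp$, then unwind $\rightleftharpoons$ via its definition $\K_X\rightleftharpoons\K_Y=\K_{(X\setminus Y)(-X\cap Y)}\lrar\K_Y$. The sign that $\rightleftharpoons$ introduces on the shared index set $M_uM_y$ is exactly absorbed into the $-M'_yM'_u$ relabelling built into the definition of the adjoint on the manifest side (and into the $(\V_{M_uM_y}^\perp)_{-M'_yM'_u}$ on the right-hand side of the claimed equivalence), so the two sides match; the converse direction is the same argument run backwards using $(\V^\perp)^\perp=\V$ and the involutivity from part~(1)/(3). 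For parts (4)--(6) I would expand the genaut conditions. A USG is $\V_{W\dw}\circ W\supseteq(\V_{W\dw}\circ\dw)_W$; taking $\perp$, the dot-cross duality turns $\circ W\supseteq$ into $\times W\subseteq$ on the left, $\circ\dw$ into $\times\dw$ on the right, and the relabelling $W\lrar\dw$ in the adjoint definition then rewrites this as exactly the LSG condition $\V^a_{W'\dwd}\times W'\supseteq(\V^a_{W'\dwd}\times\dwd)_{W'}$ (reversing the inclusion because we took complements on both sides). Part (5) follows by combining part (4) with its mirror (USG $\leftrightarrow$ LSG both ways), since a genop is precisely a USG that is also an LSG, and the adjoint swaps the two halves of that conjunction. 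Part (6): $\V_{W\dw}$ decoupled means $\V_{W\dw}\circ W=\V_{W\dw}\times W$ and likewise for $\dw$; applying $\perp$ and dot-cross duality turns each equality into the corresponding equality for $\V^a$ with the roles of restriction and contraction and of $W',\dwd$ interchanged, hence $\V^a$ is decoupled, and the converse is immediate from involutivity.

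The main obstacle is not conceptual but notational: keeping the three ingredients (perp, swap, negation on specific blocks) straight so that the signs and the primed/unprimed labels line up, and in part~(2) correctly matching the sign that $\rightleftharpoons$ contributes against the deliberate $-M'_yM'_u$ convention in the definition of the adjoint. I would handle this by treating the manifest block $M$ and the dynamic block $W\dw$ uniformly (the adjoint negates and swaps within each pair), reducing everything to the three displayed identities above and Lemma~\ref{lem:transposesign}; no genuinely new argument is needed beyond IDT and dot-cross duality.
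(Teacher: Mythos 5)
Your proposal is correct and follows essentially the same route as the paper's proof: unwinding the definition of the adjoint with $(\V^\perp)^\perp=\V$ and the involutivity of the sign-change/relabelling for parts (1) and (3), applying IDT (in the generalized-minor case) and absorbing the $-M'_yM'_u$ signs for part (2), and using dot-cross duality together with the $W\leftrightarrow\dwd$, $\dw\leftrightarrow W'$ relabelling for parts (4)--(6). The only cosmetic difference is in part (6), where you argue via the $\circ=\times$ characterization of decoupledness rather than the paper's one-line appeal to $(\Vw^1\oplus\Vdw^2)^\perp=(\Vw^1)^\perp\oplus(\Vdw^2)^\perp$; both are immediate.
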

\begin{proof}
\begin{enumerate}
\item We have
$$(\mathcal{V}^{a}_{{W'}\mydot{{W'} }M'_uM'_y })^a
= ((\mathcal{V}^\perp_{W\mydot{W}M_uM_y })_{-\mydot{{W'} }{W'}-M'_yM'_u })^\perp_{-W -\mydot{{W} }-M_u-M_y }$$
$$= (\mathcal{V}_{W\mydot{W}M_uM_y  })_{-W -\mydot{{W} }-M_u-M_y }= \mathcal{V}_{W\mydot{W}M_uM_y  },$$
(noting that $(\V_Z)_{-Z}=\V_Z)$).
\item We have
$$\mathcal{V}^a_{W'\mydot{W'} }\equiv (\mathcal{V}^\perp_{W\mydot{W}})_{-\mydot{{W'} }{W'}}.$$
So $$\mathcal{V}_{W\mydot{W} }\equiv \mathcal{V}_{W\mydot{W}M_uM_y } \lrar \V_{M_uM_y }$$
iff
$$\mathcal{V}^a_{W'\mydot{W'} } = (\mathcal{V}_{W\mydot{W}M_uM_y } \lrar \V_{M_uM_y })^\perp_{-\mydot{{W'} }{W'}}$$
$$=(\mathcal{V}_{W\mydot{W}M_uM_y } )^\perp_{-\mydot{{W'} }{W'}-M'_yM'_u  }\lrar (\V_{M_uM_y })^\perp_{-M'_yM'_u }$$
i.e., iff
$$\mathcal{V}^a_{W'\mydot{W'} }=\mathcal{V}^{a}_{{W'}\mydot{{W'} }M_u'M'_y }\lrar (\V_{M_uM_y}^{\perp} )_{-M'_yM'_u} .$$\\
\item Follows from the first part if we omit the $M$ variables.\\
\item 
 $\Vwdw $ is USG
iff
$$\Vwdw \circ W\ \supseteq \  (\Vwdw \circ \dw)_W$$
i.e., iff
$$ ((\Vwdw\circ W)^\perp)_{\mydot{W'}}\subseteq((\Vwdw\circ \mydot{W})^\perp_W)_{\mydot{W'}}$$
i.e., iff
$$(\mathcal{V}^\perp_{W\mydot{W}}\times W )_{\mydot{W'} }\subseteq((\mathcal{V}^\perp_{W\mydot{W} }\times \mydot{W})_W)_{\mydot{W'} }$$
i.e., iff
$$(\mathcal{V}^\perp_{W\mydot{W} })_{-\mydot{{W'} }{W'}  }\times \mydot{W'}\subseteq ((\mathcal{V}^\perp_{W\mydot{W} })_{-\mydot{{W'} }{W'}  }\times {W'})_{\mydot{W'}}$$
i.e., iff
$$\mathcal{V}^a_{W'\mydot{W'} }\times \mydot{W'}\subseteq(\mathcal{V}^a_{W'\mydot{W'} }\times W')_{ \mydot{W'}}.$$

%$$\mathcal{V}^a_{W'\mydot{W'} }\times \mydot{W'}=  (\mathcal{V}^\perp_{W\mydot{W} })_{-\mydot{{W'} }{W'}  }\times \mydot{W'}= (\mathcal{V}^\perp_{W\mydot{W}}\times W )_{\mydot{W'} }= ((\Vwdw\circ W)^\perp)_{\mydot{W'}}$$
%$$\subseteq ((\Vwdw\circ \mydot{W})^\perp_W)_{\mydot{W'}}= ((\mathcal{V}^\perp_{W\mydot{W} }\times \mydot{W})_W)_{\mydot{W'} }=((\mathcal{V}^\perp_{W\mydot{W} })_{-\mydot{{W'} }{W'}  }\times {W'})_{\mydot{W'}}$$
%$$=(\mathcal{V}^a_{W'\mydot{W'} }\times W')_{ \mydot{W'}},$$
i.e., iff $\mathcal{V}^a_{W'\mydot{W'} }$ is LSG.
\vspace{0.25cm}

\item Immediate from the previous part.\\
%The LSG to USG proof is essentially the same based on the duality of `$\times$' and `$\circ$', that of $\subseteq $' and `$\supseteq $',  and the interchange of
%$W$ with $\mydot{W'}$ and $\mydot{W}$ with $W'$.
\item Follows immediately from the fact that $(\Vw^1\oplus\Vdw^2)^\perp= (\Vw^1)^\perp\oplus (\Vdw^2)^\perp.$
\end{enumerate}
\end{proof}
\begin{remark}
\label{rem:duality_proof}
{\textup{Statements about dynamical systems are naturally dualized using the adjoint 
operator which yields an adjoint system when applied to a dynamical system.\\
Parts $1$ and $3$ of Theorem \ref{thm:adjointprop}
 are basic for 
this purpose. \\ Parts $2$ and $4$ are typical instances of the dualization 
process. \\In part $2,$ we are given that through the `$\lrar $' operation with a suitable linkage, an input-output  GDS  
is converted into 
 a GDS without manifest variables.  
Then, according to the result,  through the `$\lrar $' operation with an appropriate  `dual' linkage,  the adjoint
input-output  GDS is converted into the adjoint GDS
without manifest variables. 
This makes `$\lrar , \lrar $'  a dual pair if we construct
dual statements through the operation of taking adjoint, whereas, 
if the dual were constructed through the `$\perp$' operation, then 
  `$\lrar, \rightleftharpoons$' would form 
a dual pair.}}
\end{remark}

\subsubsection{Building the adjoint dual statement}
\label{sec:adjointdual}
The adjoint dual statement is built by constructing
the adjoint based second statement
and converting it later to  the adjoint dual statement.

We construct the {\it adjoint based} second statement
by first building the $\perp -$ based second statement 
as outlined in Subsection \ref{sec:perp_dual}.

Next we replace the index set 

$W$ by $-\dwd, $

$\dw$ by $W',$ 

$M_u$ by $-M'_y,$ 

$M_y$ by $M'_u.$ 

(If there are additional dynamical variables 
with index sets, say $P, Q$ etc. we would replace $P$ by $-\dPd, $
$\dP$ by $P',$ etc..) 

The result is the {\it adjoint based} second statement.

\vspace{0.1cm}

Next we replace.
in the {\it adjoint based} second statement,
each occurrence of 

\vspace{0.1cm}

a term                 
such as $\V^a_{W'\dwd M'_uM'_y}$  by $\Vwdwmumy,$ 

\vspace{0.1cm}

a term 
such as $(\V_{WP}^\perp )_ {-\dwd -\dPd} $  by $\Vdwdp,$ 

\vspace{0.1cm}

a term such as $(\V_{\dw\dP}^\perp )_ {W' P'}$  by $\Vwp,$ 

\vspace{0.1cm}

a term such as $(\Vwmu)^\perp_{- \dwd -M'_y}$  by $\Vdwmy ,$ 

\vspace{0.1cm}

a term such as  $(\Vdwmy)^\perp_{W' M'_u}$  by $\Vwmu .$

\vspace{0.1cm}

Similar actions are performed if there are additional dynamical variables
with index sets, say $P, Q$ etc.

The result is the {\it adjoint dual} of the primal statement.
\vspace{0.1cm}

\begin{example}
The  adjoint based second statement  of
\begin{equation}
\label{eqn:emulatorcondinv3}
((\Vonewp \bigoplus \Vtwodwdp)\lrar \Vwdw ) \lrarn  (\Vonewp\lrar\V_W)_{\mydot{P}}\ \supseteqn \ \Vonewp\lrar\Vw;
\ \ \ \ \Vonewp \supseteq (\Vtwodwdp)_{WP}.
\end{equation}
is
\begin{equation}
\label{eqn:emulatorcontinv4}
\{[(\Vonewp )^\perp_{-\dwd -\dPd}\bigoplus (\Vtwodwdp)^{\perp}_{W'P'}]\lrar (\Vwdw ^{\perp})_{-\dwd W'}\} \lrarn  \{[(\Vonewp)^{\perp}_{-\dwd-\dPd}\lrar(\V_W)_{-\dwd}^{\perp}]_{-\dP}\}_{P'}$$
$$\subseteqn \{[(\Vonewp)^{\perp}_{-\dwd -\dPd}\lrar(\Vw)^{\perp}_{-\dwd}];
$$
$$    (\Vonewp)^\perp_{-\dwd -\dPd} \subseteqn \{[(\Vtwodwdp)^\perp_{W'P'}]_{WP}\}_{-\dwd -\dPd} .\end{equation}
The adjoint based second statement can be rewritten as 
\begin{equation}
\label{eqn:emulatorcontinv5}
\{[(\Vonewp )^\perp_{\dwd \dPd}\bigoplus (\Vtwodwdp)^{\perp}_{W'P'}]\lrar \V ^{a}_{W'\dwd}\} \lrarn  [(\Vonewp)^{\perp}_{\dwd\dPd}\lrar(\V_W)_{\dwd})^{\perp}]_{P'}$$
$$\subseteqn [(\Vonewp)^{\perp}_{\dwd \dPd}\lrar(\Vw)^{\perp}_{\dwd}];
$$
$$    (\Vonewp)^\perp_{\dwd \dPd} \subseteqn [(\Vtwodwdp)^\perp_{W'P'}]_{\dwd \dPd} .\end{equation}

The adjoint  dual statement is
\begin{equation}
\label{eqn:emulatorcontinv2}
[(\Vonedwdp \bigoplus \Vtwowp)\lrar \Vwdw ] \lrarn  [\Vonedwdp\lrar(\V_W)_{\dw}]_{{P}}\subseteq (\Vonedwdp\lrar(\Vw)_{\dw});
\ \ \ \ \Vonedwdp \subseteq (\Vtwowp)_{\dw\dP}
\end{equation}
\end{example}

\subsubsection{Proof through duality}
\label{sec:proof_dual}
Suppose that the primal statement is true for arbitrary dynamical systems. Then the {\it adjoint based} second statement is also true for arbitrary
dynamical systems and so will the adjoint dual be.

{\textup{ Typical ways of proving theorems through duality would be as described below.}}

\vspace{0.2cm}

Let $A,B,C,D$ be statements of the kind 
\begin{align*}
 \epsilon(\mathcal{V}_1,\ldots,\mathcal{V}_k,+,\cap,\oplus ,\circ , \times ,\leftrightarrow,\rightleftharpoons,\supseteq,\subseteq,=)
\end{align*}
Suppose we have a proof that $A \implies B.$ 
This is a statement always true for arbitrary vector spaces.

\vspace{0.2cm}

Let $A,C'$ form a primal-adjoint second pair
and so also $B,D'.$ Let $A,C$ form a primal - adjoint dual pair and so also $B,D.$\\
%In the dual, every occurrence of $\V_i$ in the second is replaced by a $\V_i^\perp$ (.  For instance, in this manner,
%we obtain the $C,$ which is the dual of $A,$ from  $C',$ which is the second of $A.$
\\
We then have a proof that $C \implies D,$ 
which is as follows :

\vspace{0.1cm}

% Let $C'$ be the second statement to $A$ and let $D'$ be the second statement to $D.$  
$$ C' \ iff \ A;\ \  A\implies B; \ \ B\ {\textup{iff}} \ D'; \ So\ \ C'\implies D'.$$

\vspace{0.1cm}

%Now $C' \implies D'$ is a statement true for arbitrary vector spaces. So it will remain 
%true if $C'$ is replaced by the adjoint dual $C$ of $A$ and $D'$ by the adjoint dual $D$ of $B,$ the dual
%in each case being built as described earlier in this subsection.

Now $C' \implies D'$ is a statement true for arbitrary vector spaces. So it will remain
true if, in $C'$ and $D',$  every $\V$ is replaced by $\V^a$ resulting in the statement  
$C\implies D,$ which is therefore proved to be true from the proof of $A \implies B.$ 
\begin{remark}
\label{rem:plain_dual}
Henceforth, we will denote {\bf adjoint dual}  by plain {\bf dual} since it is the only kind of duality
needed in dealing with dynamical systems. Similarly the {\bf adjoint based} second statement will be denoted by plain
{\bf second}.
\end{remark}

\section{Invariant subspaces}
\label{sec:Invsub}
Consider the system $\Vwdwmumy$ defined through the equations
\begin{align*}
 \mydot{w} &= Aw + Bm_u \\
 m_y &= Cw + Dm_u.
\end{align*}
We say that a subspace of states $\Vw$ is conditioned invariant 
in this system if, once having entered it, we cannot leave it.
We say it is controlled invariant in the system if, once it has entered it,
we can prevent the system from leaving it.\\

\noindent In our framework we talk in terms of a genaut $\Vwdw=\Vwdwmumy\lrar \V_{M_uM_y}.$\\

%Let $\Vwdw$ be a generalized autonomous system.
\noindent We say $\V_W$ is {\bf conditioned invariant} in $\Vwdw$ iff
$$\Vwdw \lrar \V_W\subseteq (\V_W)_{\mydot{W}}.$$
We say $\V_W$ is {\bf controlled invariant} in $\Vwdw$ iff
$$\Vwdw \lrar (\V_W)_{\mydot{W}}\supseteq \V_{{W}}.$$
If $\V_W$ is both controlled and conditioned invariant, it is said to be
{\bf invariant} in $\Vwdw.$

The terms controlled invariant and conditioned invariant probably first  appeared in \cite{basile1969controlled}.  These ideas appeared as $(A,B)$ invariance and its dual in  \cite{wonham1970decoupling}. 
Generalization of controllability and observability using invariant spaces is given in \cite{molinari1976extended}, \cite{molinari1976aStrong} \cite{molinari1976zeros}.
The concepts of conditioned and controlled invariants  are also covered in the books \cite{Wonham1978}, \cite{Trentelman2001} and \cite{basile1992controlled}. Controlled invariants have  appeared in the context of bisimulation of linear dynamical systems and hybrid dynamical systems, cf. \cite{schaft2004equivalence}, \cite{pappas2003bisimilar}, \cite{van2004bisimulation}, \cite{van2004equivalence}.

For a detailed discussion of the ideas in this section illustrating how they are 
consistent with the conventional definitions of `controlled' and `conditioned' invariant subspaces as given in say \cite{Trentelman2001}, please see \cite{HNPS2013}.

We present, in the next couple of paragraphs, a quick summary of the main ideas concerning such subspaces.\\

Given a subspace $\Vdw^{small} \subseteq \Vwdw \circ \dw,$ there is a unique minimal
conditioned invariant subspace of $\Vwdw,$ that also contains $(\Vdw^{small})_W.$
The algorithm for the construction of this subspace is simple.\\

Take $\Vw^1\equiv (\Vdw^{small})_W.$ If $\Vwdw \lrar \Vw^1\subseteq \Vdw^{small},$
stop and take $\Vw^1$ to be the desired space.\\

Otherwise compute $$\Vw^2 \equiv (\Vwdw \lrar \Vw^1)_W+\Vw^1$$ and so on
until at some stage we have $$\Vwdw \lrar \Vw^j\subseteq \Vdw^{j}.$$

Let us call this subspace $\Vw^{final}.$\\

The algorithm will terminate because as long as the stopping criterion is not
satisfied, the dimension of the subspaces $ \Vw^j$ keeps increasing.\\
At every stage $(\Vw^j)_{\dw}\subseteq \Vwdw\circ \dw.$\\
If the space $\Vwdw$ is USG, then $\Vw^{final}$
 will
also be a subspace of $\Vwdw\circ W.$ \\
This is sufficient for the controlled 
invariance condition to hold, namely
$$\Vwdw \lrar (\Vw^{final})_{\dw}\supseteq \Vw^{final}.$$
When $\Vwdwmumy$ is regular, we have that  $\Vwdw \equiv \Vwdwmumy\circ W\dw$ is USG and if we take $\Vdw^{small}\equiv \Vwdw \times \dw,$
then the invariant space $\Vw^{final}$ is the controllability subspace of $(\Vwdw\circ \dw)_W.$

The dual ideas are as follows.
Given a subspace $\Vw^{large} \supseteq \Vwdw \times W,$ there is a unique maximal
controlled invariant subspace of $\Vwdw$ that is also  contained in  $\Vw^{large}.$
The algorithm for the construction of this subspace is simple.\\

Take $\Vdw^1\equiv (\Vw^{large})_{\dw}.$ If $\Vwdw \lrar \Vdw^1\supseteq \Vw^{large},$
stop and take $\Vdw^1$ to be the desired space.\\

Otherwise compute $\Vw^2 \equiv (\Vwdw \lrar \Vdw^1)\bigcap\Vw^1$ and so on
until at some stage we have $$\Vwdw \lrar \Vdw^j\supseteq \Vw^{j}.$$

Let us call this subspace $\Vw^{final}.$\\

The algorithm will terminate because as long as the stopping criterion is not
satisfied, the dimension of the subspaces $ \Vdw^j$ keeps decreasing.\\
At every stage $\Vw^j\supseteq \Vwdw\times W.$\\
If the space $\Vwdw$ is LSG, then $\Vdw^{final}$
 will
also be a superspace of $\Vwdw\times \dw.$ \\
This is sufficient for the conditioned
invariance condition to hold, namely
$$\Vwdw \lrar \Vw^{final}\subseteq \Vdw^{final}.$$
When $\Vwdwmumy$ is regular, we have that $\Vwdw \equiv \Vwdwmumy\times W\dw$  is LSG and  if we take $\Vw^{large}\equiv \Vwdw \circ W,$
then the invariant space $\Vw^{final}$ is the observabilty superspace of 
$\Vwdw\times W.$\\
Conventionally, this would be a quotient space over the unobservable subspace
of the system.\\

We need the following results for the development of the present paper.

Theorem \ref{thm:controlledconditioneddual}
brings out the duality between conditioned and controlled invariance.

Theorem \ref{thm:conditionallyinvsum}
stresses a necessary condition that conditioned invariant subspaces must satisfy
and points out that a genop arises naturally from a conditioned invariant space in a USG.

Theorem \ref{thm:controlledinvintersection}
states the dual.

These results are counterparts of the controllable subspace and observable quotient space
of multivariable control theory.
\begin{theorem}
\label{thm:controlledconditioneddual}
$\V_W $ is conditioned  (controlled) invariant in $\Vwdw $
iff $\ \V ^{\perp}_{{W'}}$ is controlled (conditioned) invariant in 
$\mathcal{V}^{a}_{W'\mydot{{W'} } }.$
\end{theorem}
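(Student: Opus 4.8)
The plan is to unwind both the definitions of conditioned and controlled invariance and the definition of the adjoint, and show that under the transformation $\V \mapsto \V^a$ the two conditions swap. Recall that $\V_W$ is conditioned invariant in $\Vwdw$ iff $\Vwdw \lrar \V_W \subseteq (\V_W)_{\dw}$, and controlled invariant iff $\Vwdw \lrar (\V_W)_{\dw} \supseteq \V_W$. Also recall $\mathcal{V}^a_{W'\dwd} \equiv (\Vwdw^{\perp})_{-\dwd W'}$, and that $(\V_W)^T$-style operations: we will be taking $\V_W^{\perp}$ and copying it onto $W'$. The key algebraic facts I will use are: the dot-cross duality and the IDT (Theorem \ref{thm:idt}), which give $(\Vwdw \lrar \V_W)^{\perp} = \Vwdw^{\perp} \leftrightarrow \V_W^{\perp}$ (here one of the index sets, $W$, is contained in the other, so the skew composition collapses to matched composition, exactly as noted after Theorem \ref{thm:idt}); the fact that $(\A \subseteq \B) \iff (\B^{\perp} \subseteq \A^{\perp})$ for vector spaces on a common finite set; and that $((\V_W)_{\dw})^{\perp} = (\V_W^{\perp})_{\dw}$, i.e. perping commutes with relabelling onto a copy.

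First I would start from ``$\V_W$ is conditioned invariant in $\Vwdw$'', i.e. $\Vwdw \lrar \V_W \subseteq (\V_W)_{\dw}$. Both sides are vector spaces on $\dw$, so this is equivalent to $((\V_W)_{\dw})^{\perp} \subseteq (\Vwdw \lrar \V_W)^{\perp}$ on $\dw$. Rewrite the right-hand side by IDT as $\Vwdw^{\perp} \leftrightarrow \V_W^{\perp}$, and the left-hand side as $(\V_W^{\perp})_{\dw}$. So conditioned invariance of $\V_W$ is equivalent to
\[
(\V_W^{\perp})_{\dw} \ \subseteq\ \Vwdw^{\perp} \leftrightarrow \V_W^{\perp}.
\]
Now I relabel via the adjoint bijection: $W \to \dwd$ (with a sign), $\dw \to W'$. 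Under this relabelling $\Vwdw^{\perp}$ becomes $(\Vwdw^{\perp})_{-\dwd W'} = \mathcal{V}^a_{W'\dwd}$, the space $\V_W^{\perp}$ (on $W$) becomes $(\V_W^{\perp})_{-\dwd}$, i.e.\ $(\V_{W'}^{\perp})_{\dwd}$ up to the sign flip which I'll track carefully (matched composition is insensitive to an overall sign on a summed-out index, but I should state the bookkeeping precisely), and $(\V_W^{\perp})_{\dw}$ becomes $\V_{W'}^{\perp}$ on $W'$. The displayed inclusion then reads
\[
\V_{W'}^{\perp} \ \subseteq\ \mathcal{V}^a_{W'\dwd} \leftrightarrow (\V_{W'}^{\perp})_{\dwd},
\]
which is exactly the statement ``$\V_{W'}^{\perp}$ is controlled invariant in $\mathcal{V}^a_{W'\dwd}$.'' This proves the ``conditioned $\Rightarrow$ controlled (for the adjoint)'' direction; the converse direction is identical run backwards, using $(\V^{\perp})^{\perp} = \V$ and $(\mathcal{V}^a)^a = \V$ (Theorem \ref{thm:adjointprop}, part 3). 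The controlled-invariant $\Leftrightarrow$ conditioned-invariant-for-the-adjoint half is obtained by the same computation with the two definitions interchanged, or simply by applying the first half to $\mathcal{V}^a_{W'\dwd}$ in place of $\Vwdw$ and using involutivity of the adjoint.

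The main obstacle I anticipate is purely the sign and copy-label bookkeeping: one must verify that applying the adjoint relabelling (which inserts a minus sign on the $W$-block and swaps $W \leftrightarrow \dw$) to the perped inclusion really does send ``$\subseteq$ on $\dw$'' to ``$\subseteq$ on $W'$'' with the roles of $\V_W$ and $(\V_W)_{\dw}$ correctly interchanged, and that the minus signs introduced on summed-out index sets are harmless (they are, because $\mathcal{V}_{(-B)Z} \leftrightarrow \mathcal{V}_{(-B)Z'}$-type expressions are unaffected by flipping the sign of the matched block $B$, as used repeatedly in the proof of Theorem \ref{thm:transpose}). Once this relabelling dictionary is set up cleanly — and it is exactly the dictionary in Subsection \ref{sec:adjointdual} — the proof is a short chain of equivalences: perp the inclusion, apply IDT, relabel by the adjoint map, and read off the dual condition.
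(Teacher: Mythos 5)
Your proposal is correct and follows essentially the same route as the paper's own proof: perp the conditioned-invariance inclusion (reversing it), apply the IDT to convert $(\Vwdw \lrar \V_W)^{\perp}$ into $\Vwdw^{\perp}\lrar \V_W^{\perp}$, relabel via the adjoint dictionary $W\mapsto -\dwd$, $\dw\mapsto W'$ (the sign being harmless), and obtain the controlled-invariance condition for $\V^{\perp}_{W'}$ in $\mathcal{V}^{a}_{W'\dwd}$, with the other half following from involutivity of $\perp$ and of the adjoint. No gaps; the sign/copy bookkeeping you flag is exactly what the paper does.
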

\begin{proof}
By Theorem \ref{thm:idt} we have that 
$$\Vwdw \lrar \V_W\subseteq \V_{\mydot{W}}$$ iff $$\Vwdw^{\perp} \lrar \V_W^{\perp}\supseteq \V_{\mydot{W}}^{\perp},$$
i.e., iff
$$(\Vwdw)^{\perp}_{-\dwd W'} \lrar (\V_W)^{\perp}_{-\dwd} \supseteq (\V_{\mydot{W}}^{\perp})_{W'},$$
i.e., iff
$$\mathcal{V}^{a}_{W'\mydot{{W'} } }\lrar 
(\V^{\perp}_{W'})_{\dwd} \supseteq \V_{{W'}}^{\perp}.$$

Thus 
$\V_W $ is conditioned   invariant in $\Vwdw $
iff $\ \V ^{\perp}_{{W'}}$ is controlled  invariant in
$\mathcal{V}^{a}_{W'\mydot{{W'} } }.$

The `controlled in primal' to `conditioned in dual' part of the theorem follows from the fact that for any vector space $\V,$ $(\V)^{\perp\perp}= \V,$
and for any $\Vwdw,$ we have $$\Vwdw = (\mathcal{V}^{a}_{W'\mydot{{W'} } })^a_{W\mydot{{W} } }.$$
\end{proof}
\begin{theorem}
\label{thm:conditionallyinvsum}
Let $\Vwdw $ be a genaut and 
let $\V_W $ be conditioned  invariant in $\Vwdw .$
Then
\begin{enumerate}
\item $\V_W\supseteq (\Vwdw \times \dw )_W.$ 
\item If  $\Vwdw $ is a USG, we must have $\Vwdw + (\V_W \bigoplus (\V_W)_{\mydot{W}})$ is a genop.
\end{enumerate}
\end{theorem}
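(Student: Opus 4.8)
The plan is to prove the two parts separately, with part (1) being a short deduction from the definition of conditioned invariance and the dot-cross duality, and part (2) being the real content, where the aim is to verify directly that $\Vwdw + (\V_W \oplus (\V_W)_{\dw})$ satisfies both genop conditions \eqref{eqn:genop}. Write $\W \equiv \Vwdw + (\V_W \oplus (\V_W)_{\dw})$ for brevity.

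For part (1): conditioned invariance says $\Vwdw \lrar \V_W \subseteq (\V_W)_{\dw}$. I would take any $f_W \in (\Vwdw \times \dw)_W$, so that $(0_W, f_{\dw}) \in \Vwdw$ where $f_{\dw}$ is the copy of $f_W$. Wait — that is the wrong direction; instead take $g_{\dw} \in \Vwdw \times \dw$, i.e.\ $(0_W, g_{\dw}) \in \Vwdw$. The claim $\V_W \supseteq (\Vwdw \times \dw)_W$ is about $W$-components of $\dw$-vectors killed on $W$, so I should instead argue: if $h_W \in (\Vwdw\times\dw)_W$ then $(0_W,h_{\dw})\in\Vwdw$, hence $(0_W, h_{\dw}) \in \Vwdw\lrar\V_W$ trivially whenever $0_W \in \V_W$, giving $h_{\dw}\in(\V_W)_{\dw}$ by conditioned invariance, i.e.\ $h_W\in\V_W$. (Here $\Vwdw\lrar\V_W$ is the matched composition over the $W$-index; since $0_W\in\V_W$ always, the pair $(0_W,h_W)\in\Vwdw$ with $0_W\in\V_W$ contributes $h_{\dw}$.) This gives $(\Vwdw\times\dw)_W\subseteq\V_W$.

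For part (2): I must show (a) $\W\circ W \supseteq (\W\circ\dw)_W$ and (b) $\W\times W \supseteq (\W\times\dw)_W$. Since $\Vwdw$ is a USG we already have $\Vwdw\circ W \supseteq (\Vwdw\circ\dw)_W$; but $\W$ is larger, so I need to control the new $\dw$-components introduced by the summand $\V_W\oplus(\V_W)_{\dw}$. The point is that $\W\circ\dw = \Vwdw\circ\dw + (\V_W)_{\dw}$, while $\W\circ W = \Vwdw\circ W + \V_W$. So $(\W\circ\dw)_W = (\Vwdw\circ\dw)_W + \V_W \subseteq \Vwdw\circ W + \V_W = \W\circ W$, where the inclusion uses the USG property of $\Vwdw$ and $\V_W\subseteq\V_W$. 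This settles (a). For (b): $\W\times\dw$ consists of $g_{\dw}$ with $(0_W,g_{\dw})\in\W$, i.e.\ $g_{\dw}=h_{\dw}+k_{\dw}$ where $(f_W,h_{\dw})\in\Vwdw$, $f_W\in\V_W$, $k_W\in\V_W$ (the copy of $k_{\dw}$), and $f_W+k_W=0_W$, so $f_W=-k_W\in\V_W$. Then $(f_W,h_{\dw})\in\Vwdw$ with $f_W\in\V_W$ means $h_{\dw}\in\Vwdw\lrar\V_W\subseteq(\V_W)_{\dw}$ by conditioned invariance, so $h_W\in\V_W$; hence $g_W = h_W+k_W \in \V_W + \V_W = \V_W$. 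But also $g_W\in\V_W$ means $(g_W,g_{\dw})$ — with $g_W$ the copy of $g_{\dw}$ — lies in $\V_W\oplus(\V_W)_{\dw}\subseteq\W$, and $(0_W, g_W)\in$ ... here I need $\V_W\subseteq\W\times W$, which holds because $(g_W,0_{\dw})$: is $g_W\in\W\times W$? We need $(g_W,0_{\dw})\in\W$; since $g_W\in\V_W$ and conditioned invariance forces... actually part (1) is exactly what is needed here in reverse. Let me instead observe $\V_W\oplus 0_{\dw}\subseteq\W$ iff $\V_W\subseteq\W\times W$; and $\Vwdw\times W\subseteq\W\times W$ trivially; so it remains to check $(\W\times\dw)_W\subseteq\Vwdw\times W + \V_W$, which the computation above gives once I also confirm the reverse membership $(g_W, 0_{\dw})\in\W$ by writing $g_W = (g_W \text{ from } \V_W\oplus(\V_W)_{\dw}) - (0_W, g_{\dw} \text{ from } \W\times\dw)$, a difference of two elements of $\W$. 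This closes (b).

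The main obstacle I anticipate is the careful bookkeeping in part (2)(b): one must repeatedly translate between a vector $g_{\dw}$ on $\dw$, its copy $g_W$ on $W$, and membership in the various summands, and the key leverage — that conditioned invariance upgrades "$(f_W,h_{\dw})\in\Vwdw$ with $f_W\in\V_W$" to "$h_W\in\V_W$" — must be applied at exactly the right moment. I would also double-check that the two genop conditions for $\W$ are genuinely both needed (the USG hypothesis feeds (a), conditioned invariance feeds (b)), mirroring the asymmetry already present in Theorem~\ref{thm:adjointprop}. Everything else is routine manipulation of $+$, $\circ$, $\times$ and the matched composition $\lrar$, using only results already established in Sections~\ref{sec:Preliminaries} and~\ref{sec:link}.
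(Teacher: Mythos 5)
Your proposal is correct and follows essentially the same route as the paper: part (1) from $0_W\in\V_W$ plus conditioned invariance, part (2)(a) by computing $\W\circ W$ and $\W\circ\dw$ and invoking the USG property, and part (2)(b) by decomposing an element of $\W\times\dw$, using conditioned invariance to place its $W$-copy in $\V_W$, and then noting $(g_W,0_{\dw})\in\V_W\oplus(\V_W)_{\dw}\subseteq\W$. The only blemish is notational: in (2)(b) the $W$-component of the summand from $\V_W\oplus(\V_W)_{\dw}$ need not be the copy of its $\dw$-component, but since both lie in $\V_W$ your conclusions go through unchanged.
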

\begin{proof}
\begin{enumerate}
\item By conditioned invariance of $\V_W $ in $\Vwdw ,$ and by Theorem \ref{thm:inverse}, it follows that $$\V_{\mydot{W}}\supseteq \Vwdw \lrar \V_W \supseteq \Vwdw \times \dw. $$ Therefore we have $$\V_W\supseteq (\Vwdw \times \dw )_W.$$
\item We have, $$[\Vwdw + (\V_W \bigoplus (\V_W)_{\mydot{W}})]\circ W=
\Vwdw \circ W+\Vw $$ and $$[\Vwdw + (\V_W \bigoplus (\V_W)_{\mydot{W}})]\circ
{\mydot{W}}= \Vwdw \circ {\mydot{W}}+ \V_{\mydot{W}}.$$
Since $\Vwdw $ is a USG  it follows that $(\Vwdw \circ W)_{\dw}\supseteq \Vwdw \circ {\mydot{W}}.$ Hence 
$$([\Vwdw + (\V_W \bigoplus (\V_W)_{\mydot{W}})]\circ W)_{\dw}\ \ 
\supseteq\ \  [\Vwdw + (\V_W \bigoplus (\V_W)_{\mydot{W}})]\circ {\mydot{W}}.$$ 
Next we will show that 
$$([\Vwdw + (\V_W \bigoplus (\V_W)_{\mydot{W}})]_W\times W)_{\dw} \supseteq 
[\Vwdw + (\V_W \bigoplus (\V_W)_{\mydot{W}})]\times \dw. $$ 
Let $g_{\dw}$ belong to the RHS of the above containment.
This means there exists 

$(f_W,g'_{\dw})\in \Vwdw$ and 
$(-f_W,g"_{\dw})\in (\V_W \bigoplus (\V_W)_{\mydot{W}}),$
such that $g'_{\dw}+g"_{\dw}=g_{\dw}.$ 
\vspace{0.25cm}

Thus $f_W\in \Vw, g"_{\dw}\in (\Vw)_{\mydot{W}}$ and $g'_{\dw}\in \Vwdw \lrar \Vw \subseteq (\Vw)_{\mydot{W}}.$  

Thus $g_{\dw}\in (\Vw)_{\mydot{W}}$ and therefore $g_{W}\in \Vw.$  
%since
%$\V_{\mydot{W}}\supseteq\Vwdw \times \dw . $
\vspace{0.25cm}

Now, if  $g_{W}\in  \V_W$ then 
$(g_W,0_{\dw})\in \Vwdw + (\V_W \bigoplus (\V_W))_{\dw}.$ 
%and therefore
%$g_{\dw}$ clearly also belongs to $\V_W \bigoplus (\V_W)_{\mydot{W}}.$

\vspace{0.1cm}

We conclude that $g_{W}\in [\Vwdw + (\V_W \bigoplus (\V_W)_{\dw})]\times {{W}}.$
%= (\V_W)_{\mydot{W}}.$
%We next note that $[\Vwdw + (\V_W \bigoplus (\V_W)_{\mydot{W}})]\times W
%\supseteq \V_W \bigoplus (\V_W)_{\mydot{W}}\times W= \Vw.$

\vspace{0.25cm}

Hence, it is clear that 
$$([\Vwdw + (\V_W \bigoplus (\V_W)_{\mydot{W}})]_W\times W)_{\dw} \supseteq 
[\Vwdw + (\V_W \bigoplus (\V_W)_{\mydot{W}})]\times \dw, $$
%$$[\Vwdw + (\V_W \bigoplus (\V_W)_{\mydot{W}})]\times W \supseteq 
%([\Vwdw + (\V_W \bigoplus (\V_W)_{\mydot{W}})]\times \dw)_W, $$
and therefore that $\Vwdw + (\V_W \bigoplus (\V_W)_{\mydot{W}})$
is a genop.

\end{enumerate}
\end{proof}

The next result is the dual of Theorem \ref{thm:conditionallyinvsum}.
A detailed proof illustrating the use of duality is given in \ref{sec:controlledinvintersection}.
\begin{theorem}
\label{thm:controlledinvintersection}
Let $\Vwdw $ be a genaut and
let $\V_W $ be controlled  invariant in $\Vwdw .$
Then
\begin{enumerate}
\item $\V_W\subseteq \Vwdw \circ W.$
\item If $\Vwdw $ is an LSG, $\Vwdw \bigcap (\V_W \oplus (\V_W)_{\mydot{W}})$ is a genop.
\end{enumerate}
\end{theorem}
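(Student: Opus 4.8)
The plan is to derive Theorem \ref{thm:controlledinvintersection} from Theorem \ref{thm:conditionallyinvsum} by the adjoint-duality machinery set up in Subsection \ref{sec:proof_dual}, rather than repeating the computation by hand; this is exactly the kind of situation that the dualization apparatus is built for. Concretely, I would first check that each clause of Theorem \ref{thm:conditionallyinvsum} is a statement of the permissible form $\epsilon(\mathcal{V}_1,\ldots,\mathcal{V}_k,+,\cap,\oplus,\circ,\times,\leftrightarrow,\rightleftharpoons,\supseteq,\subseteq,=)$ about arbitrary vector spaces, so that the adjoint-dual is legitimate. Part (1) of Theorem \ref{thm:conditionallyinvsum}, $\V_W\supseteq(\Vwdw\times\dw)_W$, dualizes: $\circ\leftrightarrow\times$, $\supseteq\leftrightarrow\subseteq$, and under the adjoint index-swap $W\mapsto -\dwd$, $\dw\mapsto W'$, it becomes $\V_{\dwd}^{\perp}\!\leftarrow$ a statement about $\mathcal{V}^a_{W'\dwd}$, which after the standard renaming (following the recipe in \ref{sec:adjointdual}) reads $\V_W\subseteq(\Vwdw\circ W)$ — precisely clause (1) of the theorem to be proved. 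Similarly the USG hypothesis dualizes to the LSG hypothesis (this is literally Part 4 of Theorem \ref{thm:adjointprop}), ``is a genop'' is self-dual (Part 5 of Theorem \ref{thm:adjointprop}), and $\Vwdw + (\V_W\oplus(\V_W)_{\dw})$ dualizes to $\Vwdw\cap(\V_W\oplus(\V_W)_{\dw})$ since $+\leftrightarrow\cap$ and $\oplus\leftrightarrow\oplus$.

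Next I would assemble the duality argument in the schematic form of Subsection \ref{sec:proof_dual}: let $A$ be the conjunction ``$\Vwdw$ is a genaut, $\V_W$ controlled invariant in $\Vwdw$, $\Vwdw$ LSG'' written with a $\Vwdw$ that happens to equal $(\mathcal{V}^a_{W'\dwd})^a$, let $B$ be ``$\Vwdw\cap(\V_W\oplus(\V_W)_{\dw})$ is a genop and $\V_W\subseteq\Vwdw\circ W$.'' Form the primal-second pair $(A,C')$ and $(B,D')$ by the $\perp$-then-index-swap procedure; by Theorem \ref{thm:conditionallyinvsum} applied to the adjoint system $\mathcal{V}^a_{W'\dwd}$, we have $C'\implies D'$, a statement true for arbitrary vector spaces, and replacing every $\V$ by $\V^a$ throughout yields $A\implies B$, which is the desired theorem once one uses $(\mathcal{V}^a)^a=\V$ (Part 3 of Theorem \ref{thm:adjointprop}) to shed the primes. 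One must be a little careful that ``controlled invariant in $\Vwdw$'' is exactly the $\perp$-dual-then-adjoint-image of ``conditioned invariant in $\mathcal{V}^a$'': this is Theorem \ref{thm:controlledconditioneddual}, which tells us $\V_W$ is controlled invariant in $\Vwdw$ iff $\V_{W'}^{\perp}$ is conditioned invariant in $\mathcal{V}^a_{W'\dwd}$, so the correspondence of hypotheses is already in hand.

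For concreteness, and because a reader may want to see it without unwinding the whole adjoint formalism, I would also be prepared to give the direct one-line verifications. Clause (1): since $\V_W$ is controlled invariant, $\Vwdw\lrar(\V_W)_{\dw}\supseteq\V_W$, hence by Theorem \ref{thm:inverse} (monotonicity of $\lrar$ in restriction terms) $\Vwdw\circ W\supseteq\Vwdw\lrar(\V_W)_{\dw}\supseteq\V_W$. Clause (2), the genop property of $\Vwdw\cap(\V_W\oplus(\V_W)_{\dw})$, splits into the $\circ$-condition and the $\times$-condition; the $\circ$-condition follows from $\V_W\subseteq\Vwdw\circ W$ together with the LSG hypothesis essentially by mirroring the contraction argument in the proof of Theorem \ref{thm:conditionallyinvsum}, and the $\times$-condition is the verbatim dual of the ``Let $g_{\dw}$ belong to the RHS\ldots'' paragraph there, with intersections replacing sums and membership of $g_W$ being forced into $\V_W$ by controlled invariance ($g_W\in\Vwdw\lrar(\V_W)_{\dw}$ composed against a vector already in $(\V_W)_{\dw}$). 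The main obstacle, and the only place demanding real care, is bookkeeping: making sure that the $\perp$/adjoint substitution sends each $\leftrightarrow$ and each $\circ/\times$ to the right partner, that no $\leftrightarrow$ secretly needs to become $\rightleftharpoons$ (here it does not, because in every matched composition appearing one index set contains the other, per the parenthetical remark after the $\perp$-dual definition), and that the index relabelling $W\mapsto-\dwd,\ \dw\mapsto W'$ is applied consistently so that $(\V_W)_{\dw}$ in the primal lands on $(\V_{W'}^{\perp})_{\dwd}$-type terms and then correctly renames back. Once that dictionary is pinned down, the proof is essentially mechanical, which is the point of the framework; accordingly I would present the duality derivation as the main proof and relegate the direct computation to a remark or to the appendix (as the excerpt indeed does, pointing to \ref{sec:controlledinvintersection}).
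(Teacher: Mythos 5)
Your proposal is correct and takes essentially the same route as the paper: its appendix proof likewise derives part (1) by dualizing part (1) of Theorem \ref{thm:conditionallyinvsum} through Theorem \ref{thm:controlledconditioneddual}, and part (2) by taking the adjoint of the genop $\Vwdw + (\V_W \oplus (\V_W)_{\mydot{W}})$ using Theorem \ref{thm:adjointprop}, so the substance matches yours (your supplementary direct verification is fine but not what the paper records).
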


\section{$wm_u-$ Feedback and $m_y\mydot{w}-$ Injection}
\label{sec:feedback_injection}
In this section we present the central results about the genops that
can be obtained by state feedback and output injection from a dynamical system
$\mathcal{V}_{W\mydot{W}M_uM_y  }.$ The two operations are denoted by $wm_u-$ feedback and 
$m_y\mydot{w} -$ injection. Essentially, we characterize in a simple manner, which genaut $\Vwdw$ can be obtained
from a given dynamical system $\mathcal{V}_{W\mydot{W}M_uM_y  }$ by $wm_u-$ feedback and $m_y\mydot{w}-$  injection.
For comparing with a standard treatment of this subject the reader may consult \cite{Wonham1978}, \cite{Trentelman2001}.
\begin{definition}
Let $\mathcal{V}_{W\mydot{W}M_uM_y  }$ be a GDS and $\Vwmu $ be a linkage.
We say that the generalized autonomous system $\Vwdw$ is obtained from
 $\Vwdwmumy$ by $wm_u-$ feedback through  $\Vwmu $ iff
$$\Vwdw = (\Vwdwmumy\bigcap \Vwmu)\circ \wdw.$$
\end{definition}
\begin{definition}
Let $\Vwdwmumy$ be a GDS and $\Vmydw $ be a linkage.
We say that the generalized autonomous system $\Vwdw$ is obtained from
 $\Vwdwmumy$ by $m_y\mydot{w}-$ injection through  $\Vmydw $ iff
$$\Vwdw = (\Vwdwmumy+ \Vmydw)\times \wdw.$$
\end{definition}
We relate these ideas to the corresponding notions in classical control.
Consider the system described by
%\begin{equations*}
%\label{eqn:StateEqnsDynSysLfeedback}
\begin{align*}
 \mydot{w} &= Aw + Bm_u \\
 m_y &= Cw + Dm_u.
\end{align*}
%\end{equations}
The space of solutions $\Vwdwmumy$ is (taking $\lambda_1,\lambda_2$ to be free vectors of appropriate dimension)
\begin{align*}
w&=\lambda_1\\
 \mydot{w} &= A\lambda_1 + B\lambda_2 \\
m_u&=\lambda_2\\
 m_y &= C\lambda_1 + D\lambda_2.
\end{align*}
The $wm_u-$ feedback (state feedback) equations are 
\begin{align*}
m_u &= Fw,
\end{align*}
with the space of solutions $\Vwmu$ given by
\begin{align*}
m_u &= F\lambda_1\\
w &= \lambda_1.
\end{align*}
The space  $(\Vwdwmumy\bigcap \Vwmu ) \circ W\dw $ is 
\begin{align*}
w&=\lambda_1\\
 \mydot{w} &= (A+BF)\lambda_1
\end{align*}
The $m_y\mydot{w}-$ injection (output injection)  equations are
\begin{align*}
\mydot{w} &= Km_y,
\end{align*}
with the space of solutions $\Vdwmy$ given by
\begin{align*}
m_y &= \sigma\\
 \mydot{w} &= K\sigma .
\end{align*}
The space  $(\Vwdwmumy+ \Vdwmy ) \times W\dw $ is obtained as
\begin{align*}
w&=\lambda_1\\
 \mydot{w} &= A\lambda_1 + B\0 +  K\sigma \\
m_u&=\0\\
 m_y &=\0= C\lambda_1 + D\0+\sigma ,
\end{align*}
which simplifies to 
\begin{align*}
%w&=\lambda_1\\
 \mydot{w} &= (A-KC)w.
\end{align*}

The natural problems that arise through these operations are\\

`given $\Vwdwmumy$ and $\Vwdw,$ can we find $\Vwmu $ so that
$\Vwdw = (\Vwdwmumy\bigcap \Vwmu)\circ \wdw$ and, assuming it exists, under what conditions is it unique?'\\

`given $\Vwdwmumy$ and $\Vwdw,$ can we find $\Vmydw $ so that
$\Vwdw = (\Vwdwmumy+ \Vmydw)\times \wdw$ and, assuming it exists, under what conditions is it unique?'\\

We solve these problems in Theorem \ref{thm:statefeedback}
and Theorem \ref{thm:outputinjection}.

We begin with a result which states that $wm_u-$ feedback and $m_y\mydot{w}-$ injection
are duals.

\begin{theorem}
\label{thm:feedback_injection_duality}
We have
%\begin{enumerate}
%\item
  $$\Vwdw = (\Vwdwmumy\bigcap \Vwmu)\circ \wdw$$
iff
 
$$\mathcal{V}^{a}_{{W'}\mydot{{W'} } }=(\mathcal{V}^{a}_{{W'}\mydot{{W'} }M'_uM'_y }+(\Vwmu)^{\perp}_{\mydot{{W'} }M'_y})\times W'\mydot{{W'} };$$

%\item  
%$$\Vwdw = (\Vwdwmumy+ \Vmydw)\times (\wdw)$$
%iff
%
%$$\mathcal{V}^{a}_{{W'}\mydot{{W'} } }=(\mathcal{V}^{a}_{{W'}\mydot{{W'} }M'_yM'_u }\bigcap(\Vmydw)^{\perp}_{M_u'W'})\circ (W'\mydot{{W'} }).$$
%\end{enumerate}
\end{theorem}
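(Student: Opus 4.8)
The plan is to recognize Theorem~\ref{thm:feedback_injection_duality} as a direct instance of the duality machinery set up in Section~\ref{sec:adjointGDS}, in particular Theorem~\ref{thm:adjointprop} and the implicit duality theorem (Theorem~\ref{thm:idt}). The statement asserts that ``$wm_u$-feedback through $\Vwmu$'' and ``$m_y\dw$-injection through $(\Vwmu)^{\perp}_{\dwd M'_y}$'' produce adjoint genauts of each other. I would prove it by a chain of iff-steps, starting from the left-hand equation and repeatedly applying $(\cdot)^{\perp}$, the perp-laws for $\cap$, $+$, $\circ$, $\times$ from Subsection~\ref{ssec:vspaceresults}, and finally the relabeling that defines the adjoint.

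First I would write $\Vwdw = (\Vwdwmumy \cap \Vwmu)\circ W\dw$, where on the right $\Vwmu$ is understood with $\F$ filling in the $M_u,M_y$ coordinates it does not constrain, so the intersection lives on $W\uplus\dw\uplus M_u\uplus M_y$. Taking complementary orthogonal spaces on both sides and using $(\V_{XY}\circ X)^{\perp}=\V_{XY}^{\perp}\times X$ together with $(\V_1\cap\V_2)^{\perp}=\V_1^{\perp}+\V_2^{\perp}$, the equation becomes
$$\Vwdw^{\perp} = \bigl(\Vwdwmumy^{\perp} + (\Vwmu)^{\perp}\bigr)\times W\dw,$$
where now $(\Vwmu)^{\perp}$ has $0$ in the $M_u,M_y$ slots, i.e. it is (the direct-sum-with-zeros embedding of) a space on $W\uplus M_u$. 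The dot-cross duality of Subsection~\ref{ssec:vspaceresults} is exactly what licenses converting $(\cdot\circ)^{\perp}$ into $(\cdot^{\perp})\times$ here, so this step is routine.

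Next I would perform the adjoint relabeling: replace the index set $W$ by $-\dwd$, $\dw$ by $W'$, $M_u$ by $-M'_y$, $M_y$ by $M'_u$, exactly as in Subsection~\ref{sec:adjointdual}. Under this relabeling $\Vwdw^{\perp}$ becomes $\Vadjwdw$ by definition of the adjoint of a genaut; $\Vwdwmumy^{\perp}$ (with the sign conventions $-\dwd,W',-M'_y,M'_u$) becomes $\mathcal{V}^{a}_{W'\dwd M'_uM'_y}$; and $(\Vwmu)^{\perp}$, which was a space on $W\uplus M_u$ with zeros elsewhere, becomes $(\Vwmu)^{\perp}_{\dwd M'_y}$ sitting on $\dwd\uplus M'_y$ (the two sign changes from $W\mapsto -\dwd$ and $M_u\mapsto -M'_y$ cancel against each other in a space that was already defined up to the sign ambiguity $(\V_{AB})_{(-A)B}=(\V_{AB})_{A(-B)}$, cf. Lemma~\ref{lem:transposesign}). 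The operation $\times W\dw$ becomes $\times W'\dwd$. Assembling these, the relabeled equation reads precisely
$$\mathcal{V}^{a}_{{W'}\dwd } = \bigl(\mathcal{V}^{a}_{{W'}\dwd M'_uM'_y} + (\Vwmu)^{\perp}_{\dwd M'_y}\bigr)\times W'\dwd,$$
which is the right-hand assertion of the theorem. Since every step was an ``iff'', the equivalence follows; and because $(\V^{\perp})^{\perp}=\V$ and $(\mathcal{V}^{a})^{a}=\mathcal{V}$ (Theorem~\ref{thm:adjointprop}(1),(3)), the converse direction is automatic.

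The main obstacle is purely bookkeeping: getting the signs and the index-set bijections right so that $(\Vwmu)^{\perp}$ lands on $\dwd\uplus M'_y$ with the correct sign, and verifying that the placeholder $\F$'s used to extend $\Vwmu$ to the full index set dualize to the $0$'s needed so that ``$+\,(\Vwmu)^{\perp}_{\dwd M'_y}$'' on the right genuinely means a sum of a space on $W'\uplus\dwd$ with a space on $\dwd\uplus M'_y$ inside the ambient set $W'\uplus\dwd\uplus M'_u\uplus M'_y$. I would handle this by carrying the full index sets explicitly through the first two displays before relabeling, invoking $(\V_{XY}\oplus\F_Z)^{\perp}=\V_{XY}^{\perp}\oplus 0_Z$ to track how the filler coordinates transform, and citing Lemma~\ref{lem:transposesign} for the sign identities. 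No genuinely hard idea is needed beyond Theorem~\ref{thm:idt} and the dot-cross duality; the content is that feedback (an intersection-then-restrict) and injection (a sum-then-contract) are exchanged under $\perp$, which the adjoint packages cleanly.
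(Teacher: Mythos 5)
Your proposal is correct and follows essentially the same route as the paper's own proof: apply $\perp$, convert $(\cap)^{\perp}$ to $+$ and $(\circ)^{\perp}$ to $\times$ via the dot-cross duality, then perform the adjoint relabeling with the observation that the two sign changes on $(\Vwmu)^{\perp}$ cancel; the only difference is cosmetic (the paper relabels first and expands after, you perp first and relabel after), and your iff-chain plus $(\V^{\perp})^{\perp}=\V$ handles both directions exactly as the paper does.
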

\begin{proof}
\item 
We have $$\Vwdw = (\Vwdwmumy\bigcap \Vwmu)\circ \wdw;$$
iff
$$\mathcal{V}^{a}_{{W'}\mydot{{W'} } }=(\Vwdw^{\perp} )_{-\mydot{{W'} }W' }=((\Vwdwmumy\bigcap \Vwmu)\circ \wdw)^{\perp}_{-\mydot{{W'} }W' },$$ i.e., iff
$$\mathcal{V}^{a}_{{W'}\mydot{{W'} } } = ((\Vwdwmumy\bigcap \Vwmu)_{-\mydot{{W'} }W'-M'_yM'_u }^{\perp}\times {\mydot{{W'} }W' },$$

$$= (\Vwdwmumy^{\perp}+ \Vwmu^{\perp})_{-\mydot{{W'} }W'-M'_yM'_u } \times {\mydot{{W'} }W' },$$  i.e., iff

$$\mathcal{V}^{a}_{{W'}\mydot{{W'} } }= ((\Vwdwmumy^{\perp})_{-\mydot{{W'} }W'-M'_yM'_u }+ (\Vwmu^{\perp})_{-\mydot{{W'} }-M'_y }) \times {\mydot{{W'} }W' },$$  i.e.,  iff

$$\mathcal{V}^{a}_{{W'}\mydot{{W'} } }=(\mathcal{V}^{a}_{{W'}\mydot{{W'} }M'_yM'_u }+(\Vwmu)^{\perp}_{\mydot{{W'} }M'_y})\times W'\mydot{{W'} }.$$

%\item This is dual to the previous part of the theorem.
\end{proof}

We will next show that both the above, feedback and injection operations,  can be reduced to
the `$\lrar$' operation through a simple trick. We remind the reader that 
$\Iww$ refers to  the space of all $(f_W,f_{W'}),$ where $W,W'$ are copies of each other.
\begin{lemma}
\label{lem:feedbackinjectionlrar}
\begin{enumerate}
\item $$ (\Vwdwmumy\cap \Vwmu)\circ \wdw=[(\Vwdwmumy\cap \Iww)\lrar (\Vwmu\oplus \F_{my})]_{\wdw}.$$
\item
$$ (\Vwdwmumy +\Vmydw)\times \wdw=[(\Vwdwmumy+ \Iwminusw)\lrar (\Vmydw\oplus \0_{mu})]_{\wdw}.$$
\end{enumerate}
\end{lemma}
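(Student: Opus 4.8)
\textbf{Proof plan for Lemma \ref{lem:feedbackinjectionlrar}.}

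The plan is to verify both identities directly from the definitions of $\cap$, $+$, $\lrar$, and the copy operation, treating part (1) explicitly and obtaining part (2) by duality. For part (1), I would unwind the left-hand side: a vector $(f_W, g_{\dw})$ lies in $(\Vwdwmumy \cap \Vwmu)\circ \wdw$ iff there exist $h_{M_u}, k_{M_y}$ with $(f_W, g_{\dw}, h_{M_u}, k_{M_y}) \in \Vwdwmumy$ and $(f_W, h_{M_u}) \in \Vwmu$ (note $\Vwmu$ is a linkage on $W \uplus M_u$, so the intersection forces agreement on $W$ and $M_u$ while leaving $\dw$ and $M_y$ free on the $\Vwmu$ side; here I must be careful that the definition of $\cap$ pads each space with $\mathcal{F}$ on the missing coordinates, which is exactly what makes $M_y$ and $\dw$ unconstrained by $\Vwmu$). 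Then I would unwind the right-hand side: the role of $\Iww$ is to create a second copy $W'$ of $W$; intersecting $\Vwdwmumy$ with $\Iww$ produces vectors carrying a duplicate coordinate $f_{W'} $ equal to $f_W$; composing via $\lrar$ with $\Vwmu \oplus \mathcal{F}_{M_y}$ — where I read $\Vwmu$ as living on the $W'$ copy so that the matched composition identifies the $W'$ coordinate and the $M_u$ coordinate — glues $f_{W'}$ to the $W$-slot of $\Vwmu$ and $h_{M_u}$ to its $M_u$-slot, while $\mathcal{F}_{M_y}$ absorbs $M_y$; finally restricting (contracting to) $\wdw$ discards the $W'$, $M_u$, $M_y$ coordinates. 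Matching the two descriptions of which $(f_W, g_{\dw})$ survive gives the identity.

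For part (2), I would either repeat this bookkeeping with $+$ in place of $\cap$ and $\times$ in place of $\circ$ (so that $\Iwminusw$, being $I_{\dw(-\mydot{W'})}$, supplies a negated copy appropriate to the ``sum then contract'' pattern, and $\Vmydw \oplus \0_{M_u}$ forces $M_u = 0$ just as $\Vwdwmumy + \Vmydw$ followed by $\times \wdw$ does), or — cleaner — invoke the adjoint/duality machinery of Section \ref{sec:Duality1}: part (2) is the adjoint-dual statement of part (1), since $wm_u$-feedback and $m_y\dw$-injection are duals (Theorem \ref{thm:feedback_injection_duality}), $\cap \leftrightarrow +$, $\circ \leftrightarrow \times$, and $\lrar$ is self-dual under the adjoint construction (Remark \ref{rem:duality_proof}), with $\Iww$ dualizing to $\Iwminusw$ and $\Vwmu \oplus \mathcal{F}_{M_y}$ dualizing to $\Vmydw \oplus \0_{M_u}$. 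Using IDT (Theorem \ref{thm:idt}) and the dot-cross duality, this transfers part (1) to part (2) with no further computation.

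The main obstacle is purely notational: keeping the index sets straight when the copy $W'$ is introduced by $\Iww$, in particular making sure that (a) $\Vwmu$ in the right-hand expression is understood on the primed copy so the $\lrar$ identifies the correct coordinates, and (b) the padding conventions in the definitions of $\cap$ and $+$ (filling missing coordinates with $\mathcal{F}$ resp. $\0$) genuinely leave $\dw$ and $M_y$ (resp. $M_u$) free in the way the feedback/injection definitions require. Once the coordinate identifications are pinned down, the set-membership chase is routine, so I would present it compactly as a ``$(f_W,g_{\dw})$ belongs to LHS iff $\ldots$ iff $\ldots$ iff it belongs to RHS'' chain rather than as two separate inclusions.
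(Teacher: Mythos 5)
Your proposal is correct and follows essentially the same route as the paper: part (1) by a direct membership chase unwinding the definitions of $\cap$, $\lrar$ and the copy space $\Iww$, and part (2) by dualizing part (1) (the paper carries out exactly this perp-and-rename computation in its appendix, so "no further computation" is a slight understatement, but the idea is the same). One small notational point: in the right-hand side of (1), $\Vwmu$ is matched against the unprimed $W$, so the composition survives on $W'\uplus\dw$ and the subscript $[\cdot]_{\wdw}$ is a renaming of $W'$ to $W$ rather than a restriction; since $\Iww$ forces the two copies to agree, your alternative reading yields the same space and the chase goes through unchanged.
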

\begin{proof}
\begin{enumerate}
\item Let $(f_W,g_{\dw})\in LHS.$ \\

Then there exist $h_{Mu}, h_{My}$ such that
$(f_W,g_{\dw},h_{Mu}, h_{My})\in \Vwdwmumy,(f_W,h_{Mu})\in \Vwmu.$ \\

Hence we must have
$$(f_W,g_{\dw},h_{Mu}, h_{My})\in \Vwdwmumy,(f_W,f_{W'})\in \Iww, (f_W,h_{Mu})\in \Vwmu,$$
so that $$(f_{W'},g_{\dw})\in [(\Vwdwmumy\cap \Iww)\lrar (\Vwmu\oplus \F_{my})]$$ and therefore
$$(f_W,g_{\dw})\in [(\Vwdwmumy\cap \Iww)\lrar (\Vwmu\oplus \F_{my})]_{\wdw}= RHS.$$

Next, let $(f_W,g_{\dw})\in RHS.$\\

 Then $$(f_{W'},g_{\dw})\in [(\Vwdwmumy\cap \Iww)\lrar (\Vwmu\oplus \F_{my})]$$
and therefore there exist  $h_{Mu},  h_{My}$ such that $$(f_W,g_{\dw},h_{Mu}, h_{My})\in \Vwdwmumy,(f_W,f_{W'})\in \Iww, (f_W,h_{Mu})\in \Vwmu.$$
Thus $$(f_W,g_{\dw},h_{Mu}, h_{My})\in \Vwdwmumy,(f_W,h_{Mu})\in \Vwmu,$$ so that
$(f_W,g_{\dw})\in LHS.$
\item This is dual to the previous part of the present lemma.
Detailed proof given in \ref{sec:controlledinvintersection}.

%Note that 
%$$ ((\Vwdwmumy\gcap \Vwmu)\circ W\dw)^{\perp}=(((\Vwdwmumy\bigcap \Iww)\lrar (\Vwmu\oplus \F_{M_y}))_{W\dw})^{\perp},$$
%which reduces to
%$$ (\Vwdwmumy^{\perp}+\Vwmu^{\perp})\times W\dw = ((\Vwdwmumy^{\perp}+I_{W -W'})\lrar (\Vwmu^{\perp}\oplus \0_{M_y}))_{W\dw},$$
%which is equivalent to 
%$$((\Vwdwmumy^{\perp})_{-\mydot{W"}W"-M"_yM"_u}+(\Vwmu^{\perp})_{-\mydot{W"}-M"_y})\times \mydot{W"}W"$$
%$$= ((\Vwdwmumy^{\perp})_{-\mydot{W"}W"-M"_yM"_u }+(I_{W -W'})_{\mydot{W"}\mydot{W'}})\lrar ((\Vwmu^{\perp})_{ {-\mydot{W"}-M"_y}}\oplus (\0_{my})_{M"_y})_{W"\mydot{W"}}.$$
%This can be interpreted as $$ (\mathcal{V}^{a}_{{W"}\mydot{{W"} }M"_yM"_u }+\V_{\mydot{{W"}}M"_y})\times  W"\mydot{W"}
%$$ $$= (\mathcal{V}^{a}_{{W"}\mydot{{W"} }M"_yM"_u }+ I_{\mydot{W"}-\mydot{W'}})\lrar(\V_ {\mydot{W"}M"_y}\oplus \0_{M"_y})_{W"\mydot{W"}},$$ where we have denoted by $(\Vwmu^{\perp})_{ {-\mydot{W"}-M"_y}}$ by $\V_ {\mydot{W"}M"_y}.$
%Since any dynamical system is the adjoint of its adjoint and for any space $\V,$ we have $(\V^{\perp})^{\perp},$ this proves this part of the lemma.
\end{enumerate}
\end{proof}
\begin{remark}
\label{rem:feedback_injection}
One may visualize 
$(\Vwdwmumy\cap \Iww)$ as the collection of all $(f_W,f_{W'},g_{\dw},h_{m_u},k_{m_y}),$
where $(f_W,g_{\dw},h_{m_u},k_{m_y})\in \Vwdwmumy.$
Thus $$(\Vwdwmumy\cap \Iww)\circ W\dw M_uM_y= \Vwdwmumy,$$
$$(\Vwdwmumy\cap \Iww)\times W'\dw M_uM_y= \0_{W'}\oplus \Vwdwmumy\times \dw M_uM_y.$$
$$(\Vwdwmumy\cap \Iww)\times W'\dw M_y = \0_{W'}\oplus \Vwdwmumy\times \dw M_y.$$
$$(\Vwdwmumy\cap \Iww)\circ  WW'\dw M_u\times W'\dw= (\Vwdwmumy\cap \Iww)\times W'\dw M_y \circ W'\dw $$
$$= \0_{W'}\oplus \Vwdwmumy\times \dw M_y\circ \dw
= \0_{W'}\oplus \Vwdwmumy\circ W\dw M_u\times \dw.$$
%---------------------------------------------------------
%$\V_{WW'\dw M_uM_y}\circ  WW'\dw M_u\times W'\dw = \V_{WW'\dw M_uM_y}\times W'\dw M_y\circ W'\dw.$
\end{remark}
\begin{theorem}
\label{thm:statefeedback}
Given $\Vwdwmumy, \Vwdw,$
there exists $\Vwmu$ such that
$$ (\Vwdwmumy\cap \Vwmu)\circ \wdw\equaln \Vwdw $$
%((\Vwdwm\igcap \Iww)\lrar \Vwm)_{\wdw}$$
iff
\begin{equation}
\label{eqn:statefeedback}
\Vwdwmumy\circ {\wdw}\supseteqn \Vwdw;\ \ \ \ \ 
\Vwdwmumy\circ W\dw M_u\times {\dw}\subseteqn \Vwdw \times \dw .
\end{equation}
Further, if such $\Vwmu $ exists, it is unique under the condition
\begin{equation}
\label{eqn:statefeedback2}
\Vwdwmumy\circ {WM_u}\supseteqn \Vwmu; \ \ \ \ \ 
\Vwdwmumy \circ W\dw M_u\times {M_u}\subseteqn \Vwmu \times M_u 
\end{equation}
 and is equal to 
%$ ((\Vwdwmumy\bigcap \Iww\circ WW'\dw M_u)\lrar \Vwdw)_{WM_u}= 
$$(\Vwdwmumy\cap   \Vwdw)\circ WM_u.$$
\end{theorem}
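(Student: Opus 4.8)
The strategy is to reduce the feedback operation to the matched composition `$\lrar$' via Lemma~\ref{lem:feedbackinjectionlrar}(1), and then to apply the implicit inversion theorem (Theorem~\ref{thm:inverse}) in the form already packaged as Theorem~\ref{thm:Vgenminor}, together with the computations of restrictions and contractions of $(\Vwdwmumy\cap \Iww)$ collected in Remark~\ref{rem:feedback_injection}. Write $\W \equiv (\Vwdwmumy\cap \Iww)$ for brevity. By Lemma~\ref{lem:feedbackinjectionlrar}(1),
$$(\Vwdwmumy\cap \Vwmu)\circ \wdw \equaln [\W\lrar (\Vwmu\oplus \F_{my})]_{\wdw},$$
so the asserted existence/uniqueness statement is about solving $\W\lrar \X_{WW'M_uM_y} = (\Vwdw)_{\dw W'}$ (after suitably renaming index sets so that the `$W$-side' survives as $W'$), where $\X$ ranges over linkages of the form $\Vwmu\oplus \F_{my}$; equivalently, over arbitrary linkages on $WM_uM_y$ once one checks that the $M_y$ component can always be taken free.

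First I would set up the index bookkeeping carefully: the output of $\W\lrar (\cdot)$ lives on $W'\uplus \dw$, and I want it to equal $(\Vwdw)_{W'\dw}$; so the target space on the `$S$'-side (in the notation of Theorem~\ref{thm:inverse}, with $S = W'\dw$ and $P = WM_uM_y$) is $(\Vwdw)_{W'\dw}$. Applying part (1) of Theorem~\ref{thm:inverse}, a solution $\X$ exists iff
$$\W\circ (W'\dw) \supseteqn (\Vwdw)_{W'\dw}\circ (W'\dw), \qquad \W\times (W'\dw) \subseteqn (\Vwdw)_{W'\dw}\times (W'\dw).$$
Now I plug in the identities from Remark~\ref{rem:feedback_injection}. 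The first: $\W\circ (W'\dw)$ is (a copy of) $\Vwdwmumy\circ W\dw M_uM_y$ restricted to $W\dw$, which is $\Vwdwmumy\circ W\dw$; so the first condition reads $\Vwdwmumy\circ \wdw \supseteqn \Vwdw$, the first half of \eqref{eqn:statefeedback}. The second: $\W\times W'\dw$ — here the $W$-variables are forced to zero while $W'$ is free, and the $M_u,M_y$ are quantified out by the contraction — equals $\0_{W'}\oplus (\Vwdwmumy\times \dw M_uM_y\circ \dw)$, which by the dot-cross/commutation identity in Remark~\ref{rem:feedback_injection} equals $\0_{W'}\oplus (\Vwdwmumy\circ W\dw M_u\times \dw)$; comparing with $(\Vwdw)_{W'\dw}\times W'\dw = \0_{W'}\oplus (\Vwdw\times \dw)$ gives exactly the second half of \eqref{eqn:statefeedback}. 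This establishes the `iff' for existence. For the uniqueness clause I would invoke part (2)/(3) of Theorem~\ref{thm:inverse} with $P = WM_uM_y$: the solution is unique under $\W\circ (WM_uM_y)\supseteqn \X\circ(WM_uM_y)$ and $\W\times(WM_uM_y)\subseteqn \X\times(WM_uM_y)$; using $\W\circ WM_uM_y = \Vwdwmumy\circ W M_uM_y$ and the contraction identities, these translate (after dropping the free $M_y$ block) into \eqref{eqn:statefeedback2}, and the canonical solution supplied by Theorem~\ref{thm:inverse} is $\W\lrar (\Vwdw)_{W'\dw}$, whose restriction to $WM_uM_y$ — with $M_y$ free — is $(\Vwdwmumy\cap \Vwdw)\circ WM_uM_y$ as claimed.

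The step I expect to be the main obstacle is the clean passage between `arbitrary linkage $\X$ on $WM_uM_y$' and `linkage of the restricted form $\Vwmu\oplus \F_{my}$', i.e., showing that when a solution exists one may always choose $\X$ with the $M_y$-component free, and that the uniqueness conditions \eqref{eqn:statefeedback2} (which only constrain the $WM_u$ part) are the correct reading of the general $P$-side conditions from Theorem~\ref{thm:inverse}. This parallels the manoeuvre in the worked Example (extending $\Vpq$ to $\Vpq'$ with $\Vpq'\cdot P = \F_P$ so as to land in the form $\{[I\ \hat x]\}$), and I would handle it the same way: extend any solution $\X$ to $\X'$ by making its $M_y$-block free, check via the generalized-minor identities of Section~\ref{ssec:vspaceresults} that $\W\lrar \X' = \W\lrar \X$ because $\F_{my}$ is annihilated in the composition (the $M_y$ variables occur only in $\W$ and are contracted away), and then observe $\X' = (\Vwmu)\oplus \F_{my}$ with $\Vwmu \equiv \X'\circ WM_u$. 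The remaining work is the purely mechanical verification that each contraction/restriction identity from Remark~\ref{rem:feedback_injection} lines up with the corresponding clause of \eqref{eqn:statefeedback}–\eqref{eqn:statefeedback2}, which I would present as a short sequence of displayed equalities rather than prose.
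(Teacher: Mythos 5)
Your high-level route (reduce feedback to $\lrar$ via Lemma~\ref{lem:feedbackinjectionlrar}, then invoke the implicit inversion theorem and the identities of Remark~\ref{rem:feedback_injection}) is the paper's, but the way you invoke IIT --- with the unknown an \emph{arbitrary} linkage $\X$ on $W\uplus M_u\uplus M_y$ --- opens a genuine gap at exactly the step you flagged, and your proposed repair does not close it. Two concrete problems. First, with $S=W'\uplus\dw$ and $P=W\uplus M_u\uplus M_y$, the second IIT condition involves $(\Vwdwmumy\cap\Iww)\times W'\dw$, and in this contraction $M_u$ and $M_y$ are \emph{set to zero}, not ``quantified out''; the space equals $\0_{W'}\oplus(\Vwdwmumy\times\dw)$, not $\0_{W'}\oplus(\Vwdwmumy\circ W\dw M_u\times\dw)$. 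Your intermediate expression $\Vwdwmumy\times \dw M_uM_y\circ\dw$ (with $M_u$ surviving the contraction) is likewise not what Remark~\ref{rem:feedback_injection} supplies --- the remark's identity has only $M_y$ surviving before the final restriction. Consequently the condition you would actually derive is $\Vwdwmumy\times\dw\subseteq\Vwdw\times\dw$, strictly weaker than the second half of Equation~\ref{eqn:statefeedback}. Second, the bridge from an arbitrary solution $\X$ to one of the restricted form $\Vwmu\oplus\F_{M_y}$ is not mechanical: enlarging $\X$ by freeing its $M_y$-block only yields $(\Vwdwmumy\cap\Iww)\lrar\X'\supseteq(\Vwdwmumy\cap\Iww)\lrar\X$, and the reverse containment fails in general because $\Vwdwmumy$ still couples $m_y$ to the remaining variables: a general $\X$ can achieve the target by constraining $m_y$, which no $wm_u$-feedback can do. That is precisely the content the stronger second condition of Equation~\ref{eqn:statefeedback} is there to capture, so the gap is not a bookkeeping issue.

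The fix is the paper's first move, which you skipped: since the unknown has the form $\Vwmu\oplus\F_{M_y}$, one has the identity $(\Vwdwmumy\cap\Iww)\lrar(\Vwmu\oplus\F_{M_y})=\bigl((\Vwdwmumy\cap\Iww)\circ WW'\dw M_u\bigr)\lrar\Vwmu$, i.e., composing with the full free space on $M_y$ is the same as restricting $M_y$ away \emph{before} IIT is applied. One then applies Theorem~\ref{thm:inverse} with $S=W'\dw$ and $P=WM_u$ to this restricted space, where the unknown really is unrestricted, so no ``restricted form'' passage is needed. With this choice, Remark~\ref{rem:feedback_injection} gives $(\Vwdwmumy\cap\Iww)\circ WW'\dw M_u\times W'\dw=\0_{W'}\oplus\,\Vwdwmumy\circ W\dw M_u\times\dw$ for existence and $(\Vwdwmumy\cap\Iww)\circ WW'\dw M_u\times WM_u=\0_{W}\oplus\,\Vwdwmumy\circ W\dw M_u\times M_u$ for uniqueness, which produce exactly Equations~\ref{eqn:statefeedback} and~\ref{eqn:statefeedback2}, with the canonical solution $(\Vwdwmumy\cap\Vwdw)\circ WM_u$.
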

\begin{proof}
Let us denote  $\Vwdwmumy\cap \Iww$ by $\V_{WW'\dw M_uM_y},$
and $(\Vwdw\cap \Iww)\circ W'\dw$ by $\V_{W'\dw}.$
\vspace{0.1cm}

Thus, by Lemma \ref{lem:feedbackinjectionlrar},
 we need to examine the conditions under which there exists $\Vwmu$ such that
$$\V_{WW'\dw M_uM_y}\lrar (\Vwmu\oplus \F_{M_y})\equaln \V_{W'\dw},\ 
i.e.,\  \V_{WW'\dw M_uM_y}\circ WW'\dw M_u\lrar \Vwmu \equaln \V_{W'\dw}.$$
By Theorem \ref{thm:inverse}, this happens iff \\
$$(a)\  \V_{WW'\dw M_uM_y}\circ WW'\dw M_u\circ W'\dw\supseteqn \V_{W'\dw}, \  i.e.,\   \V_{WW'\dw M_uM_y}\circ W'\dw\supseteqn \V_{W'\dw};$$
$$ i.e., \Vwdwmumy\circ W\dw \supseteqn \Vwdw.$$
and 
$$(b)\  \V_{WW'\dw M_uM_y}\circ  WW'\dw M_u\times W'\dw\subseteqn \V_{W'\dw}.$$

%Considering (a)  $\V_{WW'\dw M_uM_y}\circ W'\dw\supseteqn \V_{W'\dw}$
%iff $\Vwdwmumy\circ W\dw \supseteqn \Vwdw.$
The condition (a) is one of the conditions in Equation \ref{eqn:statefeedback}.

We now examine (b).

It is clear from Remark \ref{rem:feedback_injection} that  $$\V_{WW'\dw M_uM_y}\circ WW'\dw M_u\times W'\dw\equaln
\0_{W'}\oplus \V_{WW'\dw M_uM_y}\circ  W\dw M_u\times \dw\equaln0_{W'}\oplus \Vwdwmumy\circ W\dw M_u\times \dw.$$

Therefore,  $$\V_{WW'\dw M_uM_y} \circ WW'\dw M_u\times W'\dw\subseteqn \V_{W'\dw}$$ 
$$  {\textup{iff}}  
\ \
  \0_{W}\oplus \Vwdwmumy \circ W\dw M_u\times \dw\subseteqn \0_W\oplus \V_{W\dw}\times \dw, \ i.e., \  
{\textup{iff}}\ \   \Vwdwmumy \circ W\dw M_u\times \dw\subseteqn \V_{W\dw}\times \dw,$$
yielding the second condition in Equation \ref{eqn:statefeedback}.
\vspace{0.1cm}

Next, again by Theorem \ref{thm:inverse}, if such $\Vwmu$ exists, it is uniquely obtained as
$$ (((\Vwdwmumy\cap \Iww)\circ WW'\dw M_u)\lrar \Vwdw)_{WM_u},$$
 under the conditions
$$\V_{WW'\dw M_uM_y}\circ WM_u\supseteqn \Vwmu \ \  {\textup{and}} \ \ \V_{WW'\dw M_uM_y}\circ WW'\dw M_u\times WM_u\subseteqn \Vwmu .$$
The first condition is equivalent to $\Vwdwmumy\circ {WM_u}\supseteqn \Vwmu.$

\vspace{0.1cm}

We simplify  the second condition noting that
  $$\V_{WW'\dw M_uM_y}\circ WW'\dw M_u\times WM_u\equaln\0_{W}\oplus \V_{WW'\dw M_uM_y}\circ W'\dw M_u\times M_u\equaln\0_{W}\oplus \Vwdwmumy\circ {W\dw M_u\times M_u}.$$
Thus,  
$$\V_{WW'\dw M_uM_y}\circ WW'\dw M_u\times WM_u\subseteqn \Vwmu ,$$

%$\V_{WW'\dw M}\times WM\subseteqn \Vwm ,$ 

iff $$\0_{W}\oplus \Vwdwmumy\circ W\dw M_u\times M_u
\subseteqn \0_{W}\oplus \Vwmu \times M_u,$$ i.e., 
iff
$$\Vwdwmumy\circ W\dw M_u\times M_u\subseteqn \Vwmu \times M_u.$$
\end{proof}
The next theorem is dual to the previous result.
\begin{theorem}
\label{thm:outputinjection}
Given $\Vwdwmumy, \Vwdw,$
there exists $\Vdwmy$ such that
$$ (\Vwdwmumy+ \Vdwmy)\times \wdw\equaln \Vwdw $$
%((\Vwdwm\bigcap \Iww)\lrar \Vwm)_{\wdw}$$
iff
\begin{equation}
\label{eqn:outputinjection}
\Vwdwmumy\times {W\dw}\subseteqn \Vwdw; \ \ \\
\Vwdwmumy\times W\dw M_y\circ {W}\supseteqn \Vwdw \circ W.
\end{equation}
Further, if such $\Vdwmy $ exists, it is unique under the condition
\begin{equation}
\label{eqn:outputinjection2}
\Vwdwmumy\times {\dw M_y}\subseteqn \Vdwmy; \ \ \\
\Vwdwmumy \times W\dw M_y\circ {M_y}\supseteqn \Vdwmy \circ M_y 
\end{equation}
 and is equal to $ (((\Vwdwmumy+ \Iwminusw)\times W\dw \mydot{W'} M_y)\lrar \Vwdw)_{\mydot{W'}M_y}\equaln  (\Vwdwmumy+\Vwdw)\times \mydot{W} M_y.$
\end{theorem}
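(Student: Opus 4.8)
The plan is to obtain Theorem \ref{thm:outputinjection} as the adjoint dual of Theorem \ref{thm:statefeedback}, invoking the duality scheme of Section \ref{sec:proof_dual} together with Theorem \ref{thm:feedback_injection_duality} and the adjoint properties of Theorem \ref{thm:adjointprop}. First I would apply Theorem \ref{thm:statefeedback} not to $\Vwdwmumy,\Vwdw$ but to the adjoint pair $\mathcal{V}^{a}_{W'\dwd M'_uM'_y},\mathcal{V}^{a}_{W'\dwd}$: there exists a linkage $\mathcal{V}_{W'M'_u}$ with $(\mathcal{V}^{a}_{W'\dwd M'_uM'_y}\bigcap \mathcal{V}_{W'M'_u})\circ W'\dwd\equaln\mathcal{V}^{a}_{W'\dwd}$ exactly when the two dot--cross conditions of Equation \ref{eqn:statefeedback} hold for the adjoint system, and, when it exists, $\mathcal{V}_{W'M'_u}$ is unique under Equation \ref{eqn:statefeedback2} and equals $(\mathcal{V}^{a}_{W'\dwd M'_uM'_y}\bigcap \mathcal{V}^{a}_{W'\dwd})\circ W'M'_u$ (equivalently the IIT form from Theorem \ref{thm:inverse}).

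Second, I would translate the feedback equation for the adjoint into the injection equation for the original via Theorem \ref{thm:feedback_injection_duality}, read with the roles of system and adjoint interchanged — legitimate because the adjoint is an involution by Theorem \ref{thm:adjointprop}(1),(3). This turns $(\mathcal{V}^{a}_{W'\dwd M'_uM'_y}\bigcap \mathcal{V}_{W'M'_u})\circ W'\dwd\equaln\mathcal{V}^{a}_{W'\dwd}$ into $(\Vwdwmumy+\Vdwmy)\times \wdw\equaln\Vwdw$, with $\Vdwmy\equaln(\mathcal{V}_{W'M'_u})^{\perp}_{\dw M_y}$.

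Third, I would push the existence and uniqueness conditions, and the formula for the solution, through the adjoint using only dot--cross duality and the definition of the adjoint. Under the renaming $W\leftrightarrow-\dwd$, $\dw\leftrightarrow W'$, $M_u\leftrightarrow-M'_y$, $M_y\leftrightarrow M'_u$, the term $\mathcal{V}^{a}_{W'\dwd M'_uM'_y}\circ W'\dwd$ corresponds to $(\Vwdwmumy\times \wdw)^{\perp}$, so the first condition of Equation \ref{eqn:statefeedback} becomes $\Vwdwmumy\times\wdw\subseteqn\Vwdw$; similarly $\mathcal{V}^{a}_{W'\dwd M'_uM'_y}\circ W'\dwd M'_u\times\dwd$ corresponds to $(\Vwdwmumy\times W\dw M_y\circ W)^{\perp}$, so the second condition becomes $\Vwdwmumy\times W\dw M_y\circ W\supseteqn\Vwdw\circ W$, which is Equation \ref{eqn:outputinjection}; Equation \ref{eqn:statefeedback2} dualizes to Equation \ref{eqn:outputinjection2}; and $(\mathcal{V}^{a}_{W'\dwd M'_uM'_y}\bigcap\mathcal{V}^{a}_{W'\dwd})\circ W'M'_u$, after perping and renaming, becomes $(\Vwdwmumy+\Vwdw)\times\dw M_y$. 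Finally, applying Lemma \ref{lem:feedbackinjectionlrar}(2) (or dualizing the IIT form of the solution in Theorem \ref{thm:statefeedback}) produces the equivalent expression $(((\Vwdwmumy+\Iwminusw)\times W\dw \mydot{W'} M_y)\lrar\Vwdw)_{\mydot{W'}M_y}$.

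The main obstacle I expect is purely bookkeeping: keeping the copy-sets and the sign flips ($\Iww$ versus $\Iwminusw$, and the minus signs on $\dwd$ and $M'_y$ inside the adjoint) consistent at every step, and in particular verifying that the IIT-derived formula for the unique $\Vdwmy$ genuinely simplifies to $(\Vwdwmumy+\Vwdw)\times\dw M_y$ — this last reduction is the one place where a short direct computation is unavoidable, and it is exactly the perpendicular dual of the computation hidden inside the proof of Theorem \ref{thm:statefeedback}, using the duals of the identities in Remark \ref{rem:feedback_injection}. A fully direct proof is also available, mirroring the proof of Theorem \ref{thm:statefeedback} line by line with $\cap\to+$, $\circ\to\times$, $\Iww\to\Iwminusw$, Lemma \ref{lem:feedbackinjectionlrar}(1)$\to$(2), and Remark \ref{rem:feedback_injection}$\to$ its perpendicular dual; I would fall back on that if the renaming in the duality route becomes unwieldy.
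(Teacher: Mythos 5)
Your proposal is correct and follows essentially the same route as the paper: the paper proves Theorem \ref{thm:outputinjection} by identifying the statement pairs as adjoint duals of those in Theorem \ref{thm:statefeedback} and invoking the general dualization schema of Subsection \ref{sec:proof_dual}, which is exactly the argument you carry out explicitly (apply the feedback theorem to $\mathcal{V}^{a}_{W'\dwd M'_uM'_y}$, translate via Theorem \ref{thm:feedback_injection_duality} and dot-cross duality, and dualize the IIT formula for the solution). Your version merely spells out the bookkeeping that the paper leaves implicit, and the direct line-by-line dual proof you mention as a fallback is also consistent with the paper's framework.
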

\begin{proof}
We will merely identify dual pairs of statements for the existence case.
The uniqueness is handled similarly.

Let statement $A$ denote 

Given $\Vwdwmumy, \Vwdw,$
there exists $\Vwmu$ such that
$$ (\Vwdwmumy\cap \Vwmu)\circ \wdw= \Vwdw .$$

Let statement $B$ denote
%((\Vwdwm\bigcap \Iww)\lrar \Vwm)_{\wdw}$$

\begin{equation}
\label{eqn:statefeedback}
\Vwdwmumy\circ {\wdw}\supseteq \Vwdw; \ \ \\
\Vwdwmumy\circ W\dw M_u\times {\dw}\subseteq \Vwdw \times \dw .
\end{equation}

Let statement $C$ denote 

Given $\Vwdwmumy, \Vwdw,$
there exists $\Vdwmy$ such that
$$ (\Vwdwmumy+ \Vdwmy)\times \wdw\equaln \Vwdw $$
%((\Vwdwm\bigcap \Iww)\lrar \Vwm)_{\wdw}$$

Let statement $D$ denote

\begin{equation}
\label{eqn:outputinjection}
\Vwdwmumy\times {W\dw}\subseteqn \Vwdw; \ \ \\
\Vwdwmumy\times W\dw M_y\circ {W}\supseteqn \Vwdw \circ W.
\end{equation}

It can be seen that $(A,C)$ is a dual pair of statements
and so also $(B,D).$

We have $A\implies B. $    
%$A \ iff \ C$  and $B \ iff \ D.$

%So  we have $C \implies A\implies B\implies D,$
By the argument in Subsection \ref{sec:proof_dual}, $C \implies D.$

\end{proof}

\section{Polynomials of generalized autonomous systems}
\label{sec:genoppoly}
\subsection{Motivation}
In this section we present the main definitions and results concerning polynomials of 
generalized operators (genops). It is possible to define polynomials even for the more 
general case of generalized autonomous systems  (genauts). So we begin with this definition.
But for genauts  the usual  
factorisation of polynomials and the remainder theorem do not hold.
Fortunately, however, these ideas go through neatly for genops and in Theorem \ref{thm:minimalannpoly},
we present  such a
result and conclude from it the analogue, for generalized operators, of the uniqueness of the minimal annihilating 
polynomial of an operator. Some unusual notions are needed - for instance  a zero
is essentially a decoupled linkage (recall $\Vab$ is decoupled iff $\Vab=\Vab \circ A\oplus \Vab \circ B$).
%$\Vwdwm \Vdppam , \Vpdpm, \bigcap, \Vdppam, \dwwam , \pa,\pdw, \pwh, \pdwh$
Theorems \ref{USG}, \ref{LSG} contain the analogues of the usual 
results related to controllability and unobservability spaces in classical
multivariable control theory.

We give a sketch of the results in terms of a state variable description.
Let $\mathcal{V}_{W\mydot{W}M_uM_y}$ be the dynamical system given below.
\begin{subequations}
\label{eqn:StateEqnsDynSysL2}
\begin{align}
 \mydot{w} &= Aw + Bm_u \\
 m_y &= Cw + Dm_u.
\end{align}
\end{subequations}
Let $\Vonewdw \equiv \mathcal{V}_{W\mydot{W}M_uM_y}\circ W\dw.$
We can see that $\Vonewdw\circ W$ is $\F_W$ and $\Vonewdw\circ \dw$ is the range
of the matrix $A.$ Therefore $\Vonewdw$ is USG.
On the other hand,  $\Vonewdw\times W$ is the inverse image under $A$ of the range
of the matrix $B$  while $\Vonewdw\times \dw$ is the range
of the matrix $B.$ Thus, in general  $\Vonewdw\times W$ does not contain
 $(\Vonewdw\times \dw)_W.$ So $\Vonewdw$ may not be a genop.
If one performs state feedback on this system we will get a system
$\Vwdw$ governed by
\begin{subequations}
\label{eqn:StateEqnsDynSysL3}
\begin{align}
 \mydot{w} &= (A+BF) w.  
\end{align}
\end{subequations}
Clearly, $\Vwdw$ is a genop. Classical multivariable theory tells us that
there are two important maps associated with $\Vwdw.$ The first is obtained by 
restricting the map $A+BF$ to the controllability space $\Vw$ (in general
any invariant space containing $\Vonewdw \times \dw$). This is the genop
$\Vwdw\cap (\Vw\oplus (\Vw)_{\dw})$ in Theorem \ref{USG}.
The second is a map obtained by quotienting the map $A+BF$  over $\Vw.$
This map depends only  on $\Vonewdw$ and does not change under state feedback.
This is the genop $\Vonewdw +(\Vw)_{\dw}$ of Theorem \ref{USG}.

Theorem \ref{LSG} talks of the dual situation. Let us begin with 
$\Vtwowdw \equiv \mathcal{V}_{W\mydot{W}M_uM_y}\times W\dw.$
We can see that $\Vtwowdw\times \dw$ is $\0_{\dw}$ and $\Vtwowdw\times W$ is the null space
of the matrix $A.$ Therefore $\Vtwowdw$ is LSG.
On the other hand  $\Vtwowdw\circ  \dw$ is the image under $A$ of the null space of $C$
while $\Vtwowdw\circ W$ is the null space
of the matrix $C.$ Thus, in general  $(\Vtwowdw\circ \dw)_W$ is not contained in
 $\Vtwowdw\circ W.$ So $\Vtwowdw$ may not be a genop.
If one performs output injection on this system we will get a system
$\Vwdw$ governed by
\begin{subequations}
\label{eqn:StateEqnsDynSysL4}
\begin{align}
 \mydot{w} &= (A-KC) w.  
\end{align}
\end{subequations}
Clearly, $\Vwdw$ is a genop. Classical multivariable theory tells us that
there are two important maps associated with $\Vwdw.$ The first is obtained by
quotienting  the map $A-KC$ over the unobservability space $\Vw$ (in general
any invariant space contained in  $\Vtwowdw \circ W.$) This is the genop
$\Vwdw+  (\Vw\oplus (\Vw)_{\dw})$ in Theorem \ref{LSG}. 
The second is a map obtained by restricting  the map $A-KC$  to $\Vw.$
This map depends only  on $\Vtwowdw$ and does not change under output injection.
This is the genop $\Vtwowdw \cap \Vw$ of Theorem \ref{LSG}.

We reiterate that our aim is to talk of the above situation without making explicit 
use of state equations. Later in Section \ref{sec:emu}, we define the notion of emulators
which will very efficiently play the role of state equations. We will illustrate 
the use of these notions through RLC networks.
%We know that we have full control over the minimal annihilating polynomial
%of this map if we vary the feedback matrix $F.$  
\subsection{Polynomials of genauts and genops}
To define the notion of polynomials of genauts we require the definition
of sum, scalar multiplication and `operator' multiplication.
These are captured respectively by `$\pdw,$' $\ \  $`$\ldw ,$'\ \   and `$*$' operations.
The `$*$' operation is built in terms of `$\lrar$' operation and is defined 
below.

\begin{definition}
%$\Vwdw$ be  a genop. 
Let $\Vwdw  ,\Vwdw  '$ be generalized autonomous systems. We define
$$\Vwdw   *\Vwdw  '  \equiv (\Vwdw  ^{} )_{WW_1}\lrar (\Vwdw  ' )_{W_1\dw}.$$
\end{definition}
In the case of an operator  $A,$ if $\Vwdw$ is the collection of all $(x^T,y^T)$ such that 
$x^TA=y^T,$  then  $\Vwdw^{(2)}$ is the collection of all $(x^T,y^T)$ such that 
$x^TA^2=y^T.$ 

Note that in the above definition, in the case of $\Vwdw,$ the `dummy' set $W_1$ is treated as a
copy of $\dw,$ and in the case of $\Vwdw  ',$ as a copy of $W.$
This operation is associative, if we introduce disjoint dummy sets whenever required.
Consider the expression
$$\Vwdw^{1}*\Vwdw^{2}*\cdots\Vwdw^{i-1}* \Vwdw^{i}*\cdots*\Vwdw^{n-1}*\Vwdw^{n}.$$
This could be treated as 
$$(\Vwdw^{1})_{WW_1}\lrar (\Vwdw^{2})_{W_1W_2}\lrar \cdots(\Vwdw^{i-1})_{W_{i-2}W_{i-1}}\lrar
(\Vwdw^{i})_{W_{i-1}W_{i-2}}\ \cdots\lrar(\Vwdw^{n-1})_{W_{n-2}W_{n-1}}\lrar (\Vwdw^{n})_{W_{n-1}\dw},$$
where all the $W_i$ can be treated as disjoint from each other and from each of $W,\dw.$ 
It then follows from Theorem \ref{thm:notmorethantwice}, that the meaning of the expression with `$*$' is independent
of where brackets are introduced, i.e., the `$*$' operation is associative.

We now define polynomials of genauts. 

\begin{definition}
Let $\Vwdw$ be a genaut.
We define 
%$$\Vwdw^{(0)}\equiv [\Iwdw \bigcap [\Vwdw \circ W \bigoplus \Vwdw \circ \dw ]]+[\Vwdw \times W \bigoplus \Vwdw \times \dw ]$$
%$$=  [\Iwdw + \Vwdw \times W \bigoplus \Vwdw \times \dw ]\bigcap  [\Vwdw \circ W \bigoplus \Vwdw \circ \dw ].$$
$$\Vwdw^{(0)}\equiv \Iwdw \pdw [\Vwdw \circ W \oplus \Vwdw \times \dw ].$$
$$\Vwdw^{(1)}\equiv \Vwdw .$$
$$\Vwdw^{(k)}\equiv (\Vwdw^{(k-1)})*(\Vwdw),\ \ k=2, \cdots . $$
(By associativity of $*$ operation $\Vwdw^{(k)}\equiv (\Vwdw^{(k-j)})*(\Vwdw)^{(j)}, \ \ j=1,2, \cdots , k-1.$)
\end{definition}
\begin{remark}
Note that
$\Vwdw^{(0)}\times W$ does not in general contain $\Vwdw\times W$ and  $\Vwdw^{(0)}\circ  \dw$ is not in general contained in  $\Vwdw\circ \dw .$

In general, for a genaut $\Vwdw,$ it would not be true that $\Vwdw *\Vwdw^{(0)}$ is equal to $\Vwdw.$ But this would be true for genops
(part 5 of Lemma \ref{lem:genoppoly2}), which is adequate for our purposes.
\end{remark}

%\begin{remark}
The definition of $\Vwdw^{(0)}$ is as given
so that, while being similar to the identity operator, it  shares some desirable properties with other polynomials of $\Vwdw.$ 
The characteristic properties of $\Vwdw^{(0)},$ for the most important situation, are listed in the following lemma.
\begin{lemma}
\label{lem:powerzeroprop}
%\noindent (a) $ \Vwdw\circ W= \Vwdw^{(0)}\circ W = (\Vwdw^{(0)}\circ \dw)_W,\ \ \  \Vwdw\times {\dw}= (\Vwdw^{(0)}\times W)_{\dw}= \Vwdw^{(0)}\times \dw.$\\
%Let $(\Vwdw\circ W)_{\dw}\supseteqn \Vwdw\times \dw.$ Then
%\item  $$ \Vwdw\circ W= \Vwdw^{(0)}\circ W = (\Vwdw^{(0)}\circ \dw)_W,\ \ \  \Vwdw\times {\dw}= (\Vwdw^{(0)}\times W)_{\dw}= \Vwdw^{(0)}\times \dw.$$
%\item  $\Vwdw^{(0)}\times W= (\Vwdw\times \dw)_W, \Vwdw^{(0)}\circ  \dw= (\Vwdw\circ W)_{\dw}.$

  \hspace{1.3cm}   Let $(\Vwdw\circ W)_{\dw}\supseteqn \Vwdw\times \dw.$ Then
\begin{enumerate}
\item $ \Vwdw\circ W= \Vwdw^{(0)}\circ W = (\Vwdw^{(0)}\circ \dw)_W,\ \ \  \Vwdw\times {\dw}= (\Vwdw^{(0)}\times W)_{\dw}= \Vwdw^{(0)}\times \dw.$\\

\item
$$(f_W,g_{\dw}) \in \Vwdw^{(0)}\  \ {\textup{iff}}\ \   f_W \in \Vwdw\circ W,\ \   g_{\dw}\in (f_{\dw}+\Vwdw\times \dw)
,$$
 i.e., iff $$g_{\dw} \in (\Vwdw\circ W)_{\dw} ,\ \  f_{W}\in (g_{\dw}+\Vwdw\times \dw)_W.$$
\item  $\Vwdw^{(0)}$ is a genop.\\
\item $(\Vwdw^{(0)})^a= (\V_{W'\dwd}^{a})^{(0)}.$
%\ \ \   iff\ \ \  $\Vwdw\circ W\supseteq (\Vwdw\times \dw)_W.$
\end{enumerate}
\end{lemma}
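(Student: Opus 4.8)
## Proof proposal for Lemma \ref{lem:powerzeroprop}

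The plan is to prove the four items essentially in order, since each relies on the preceding one; the standing hypothesis $(\Vwdw\circ W)_{\dw}\supseteqn \Vwdw\times \dw$ will be used throughout. First I would unwind the definition $\Vwdw^{(0)}\equiv \Iwdw \pdw [\Vwdw \circ W \oplus \Vwdw \times \dw ]$. By definition of the intersection-sum `$\pdw$', a pair $(f_W,g_{\dw})$ lies in $\Vwdw^{(0)}$ iff there exist $g^1_{\dw},g^2_{\dw}$ with $(f_W,g^1_{\dw})\in\Iwdw$, $(f_W,g^2_{\dw})\in\Vwdw \circ W \oplus \Vwdw \times \dw$, and $g_{\dw}=g^1_{\dw}+g^2_{\dw}$. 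The first condition says $f_W\in\Vwdw\circ W$ (so that $f_{\dw}$, its copy, makes sense) and $g^1_{\dw}=f_{\dw}$; the second says $f_W\in\Vwdw\circ W$ again and $g^2_{\dw}\in\Vwdw\times\dw$. Hence $(f_W,g_{\dw})\in\Vwdw^{(0)}$ iff $f_W\in\Vwdw\circ W$ and $g_{\dw}\in f_{\dw}+\Vwdw\times\dw$. This is precisely the first half of item (2). For the `i.e.' reformulation, I would use the hypothesis: if $g_{\dw}\in f_{\dw}+\Vwdw\times\dw$ with $f_W\in\Vwdw\circ W$, then $g_{\dw}\in(\Vwdw\circ W)_{\dw}$ since $f_{\dw}\in(\Vwdw\circ W)_{\dw}$ and $\Vwdw\times\dw\subseteq(\Vwdw\circ W)_{\dw}$; conversely, given $g_{\dw}\in(\Vwdw\circ W)_{\dw}$, pick the copy $g_W\in\Vwdw\circ W$, note $f_W-g_W$ has copy $f_{\dw}-g_{\dw}\in\Vwdw\times\dw$, and so $f_W\in g_W+(\Vwdw\times\dw)_W\subseteq\Vwdw\circ W$, giving the symmetric description.

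With item (2) in hand, item (1) is read off directly: $\Vwdw^{(0)}\circ W=\{f_W:\exists g_{\dw},(f_W,g_{\dw})\in\Vwdw^{(0)}\}=\Vwdw\circ W$, since for any $f_W\in\Vwdw\circ W$ the pair $(f_W,f_{\dw})$ works. Similarly $\Vwdw^{(0)}\times\dw=\{g_{\dw}:(0_W,g_{\dw})\in\Vwdw^{(0)}\}=0_{\dw}+\Vwdw\times\dw=\Vwdw\times\dw$ (here $0_W\in\Vwdw\circ W$ and its copy is $0_{\dw}$). For $\Vwdw^{(0)}\circ\dw$: from item (2), $g_{\dw}$ occurs in some pair iff $g_{\dw}\in(\Vwdw\circ W)_{\dw}$, so $\Vwdw^{(0)}\circ\dw=(\Vwdw\circ W)_{\dw}$, whence $(\Vwdw^{(0)}\circ\dw)_W=\Vwdw\circ W$. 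Finally $\Vwdw^{(0)}\times W=\{f_W:(f_W,0_{\dw})\in\Vwdw^{(0)}\}$, which by item (2) means $0_{\dw}\in f_{\dw}+\Vwdw\times\dw$, i.e.\ $f_{\dw}\in\Vwdw\times\dw$, i.e.\ $f_W\in(\Vwdw\times\dw)_W$; so $(\Vwdw^{(0)}\times W)_{\dw}=\Vwdw\times\dw$. That establishes all four equalities of item (1), and item (3) is then immediate: the genop conditions $\Vwdw^{(0)}\circ W \supseteq (\Vwdw^{(0)}\circ \dw)_W$ and $\Vwdw^{(0)}\times W \supseteq (\Vwdw^{(0)}\times \dw)_W$ become $\Vwdw\circ W\supseteq\Vwdw\circ W$ and $(\Vwdw\times\dw)_W\supseteq(\Vwdw\times\dw)_W$, both trivially true.

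For item (4), I would compute both sides against the definition of adjoint, $(\mathcal{V}_{W\mydot{W}})^a\equiv(\mathcal{V}^\perp_{W\mydot{W}})_{-\mydot{W'}W'}$. The right-hand side $(\V^a_{W'\dwd})^{(0)}$ is, by the same unwinding as above applied to the genaut $\V^a_{W'\dwd}$ (which satisfies the dual standing hypothesis because $\Vwdw$ being an arbitrary genaut with $(\Vwdw\circ W)_{\dw}\supseteq\Vwdw\times\dw$ makes its adjoint satisfy the analogous inclusion — I would check this via dot-cross duality), the space of $(f_{W'},g_{\dwd})$ with $f_{W'}\in\V^a_{W'\dwd}\circ W'$ and $g_{\dwd}\in f_{\dwd}+\V^a_{W'\dwd}\times\dwd$. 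The left-hand side is $(\Vwdw^{(0)})^a=((\Vwdw^{(0)})^\perp)_{-\dwd W'}$. The cleanest route is to observe that $\Vwdw^{(0)}$ is built from $\Vwdw$ by the operations $\Iwdw$, $\oplus$, $\circ$, $\times$, $\pdw$, and to apply the perpendicular-duality dictionary of Subsection \ref{sec:perp_dual} together with $(\V_{XY}\pb\V_{XY}')^\perp=\V_{XY}^\perp\pa\V_{XY}'^\perp$ (Theorem \ref{thm:intsumtranspose}), $\Iwdw^\perp=\I_{W(-\dw)}$, and the dot-cross duality $(\Vwdw\circ W)^\perp=\Vwdw^\perp\times W$, $(\Vwdw\times\dw)^\perp=\Vwdw^\perp\circ\dw$. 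Tracking the index-set renaming $W\mapsto-\dwd$, $\dw\mapsto W'$ through these identities converts $((\Vwdw^{(0)})^\perp)_{-\dwd W'}$ term by term into $\Iwdwd\pdwd[\V^a_{W'\dwd}\circ W'\oplus\V^a_{W'\dwd}\times\dwd]$, which is exactly $(\V^a_{W'\dwd})^{(0)}$. I expect item (4) to be the main obstacle: the bookkeeping of signs and copies in the intersection-sum term under adjoint is the one place where a naive application of the duality dictionary can go wrong, so I would verify it instead by the more concrete route — characterize $(\Vwdw^{(0)})^a$ elementwise using item (2) and the adjoint definition, characterize $(\V^a_{W'\dwd})^{(0)}$ elementwise the same way, and match the two descriptions directly, using only $(\V^\perp)^\perp=\V$ and the dot-cross relations between $\circ,\times$ of $\Vwdw$ and those of $\Vwdw^\perp$.
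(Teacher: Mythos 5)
Your proof is correct. The paper in fact states Lemma \ref{lem:powerzeroprop} without proof (it is treated as routine), so there is nothing to compare against; your argument --- unwinding the definition of the intersection-sum to get the coset description $(f_W,g_{\dw})\in\Vwdw^{(0)}$ iff $f_W\in\Vwdw\circ W$, $g_{\dw}\in f_{\dw}+\Vwdw\times\dw$, reading off the four restrictions/contractions, and matching elementwise descriptions of $(\Vwdw^{(0)})^a$ and $(\V^a_{W'\dwd})^{(0)}$ via dot-cross duality --- is exactly the natural argument and fills the gap soundly. One harmless slip: membership in $\Iwdw$ does not by itself force $f_W\in\Vwdw\circ W$ (that comes from the second summand $\Vwdw\circ W\oplus\Vwdw\times\dw$), but your conclusion is unaffected; also note that your elementwise route for part (4) goes through even without re-verifying the hypothesis for $\V^a_{W'\dwd}$, since the first characterization in part (2) needs no hypothesis.
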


%\begin{remark}
%Note that,   
%$\Vwdw^{(0)}\times W$ does not in general contain $\Vwdw\times W$ and  $\Vwdw^{(0)}\circ  \dw$ is not in general contained in  $\Vwdw\circ \dw .$
%\end{remark}
\begin{definition}
\label{def:poly}
Let $\Vwdw$ be a genaut and let $p(s)\equiv \Sigma_0^n \alpha_is^i$ be a polynomial in $s.$ We define
$$p(\Vwdw)\equiv \Sigma_0^n \alpha_i^{\dW}(\Vwdw^{(i)}),$$ 
where 
$$\Sigma_0^n \alpha_i^{\dW}(\Vwdw^{(i)})\equiv \alpha_0^{\dW}(\Vwdw^{(0)})\pdw \alpha_1^{\dW}(\Vwdw^{(1)})\pdw \cdots \pdw\alpha_i^{\dW}(\Vwdw^{(i)})\pdw \cdots \pdw \alpha_n^{\dW}(\Vwdw^{(n)}).$$

We define
$zero (\Vwdw)\equiv \Vwdw\circ W \oplus  
\Vwdw \times \dw.$  A polynomial $p(s)$ {\bf annihilates} $\Vwdw$ iff
$p(\Vwdw)$ is decoupled.
\end{definition}
The next theorem corresponds to the result that, for a matrix $A,$ $(p(A))^T=p(A^T).$

\begin{theorem}
\label{thm:LSG}
Let  $\Vwdw$ be a genaut and let $p(s)$ be a polynomial in $s.$
Then
 $$(p(\Vwdw))^a= p(\Vadjwdw).$$
Therefore,  $p(s)$ annihilates $\Vwdw$ iff it annihilates $\Vadjwdw.$
\end{theorem}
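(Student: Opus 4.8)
The plan is to reduce the claim to three facts already established in the excerpt: (i) the adjoint commutes with scalar multiplication, i.e. $(\alpha^{\dW}\Vwdw)^a = \alpha^{w'}\Vadjwdw$ with the appropriate re-indexing, which is the dynamical-system version of Theorem \ref{thm:scalarmult}(2); (ii) the adjoint turns the intersection-sum $\pdw$ into $\pw$ (the intersection-sum on the $W'$ side), which is the dynamical analogue of Theorem \ref{thm:intsumtranspose}; and (iii) the adjoint commutes with the power operation, $(\Vwdw^{(k)})^a = (\Vadjwdw)^{(k)}$. Granting these, the proof is a one-line computation: applying $a$ to $p(\Vwdw) = \Sigma_0^n \alpha_i^{\dW}(\Vwdw^{(i)})$ and pushing $a$ through the outer $\pdw$'s (by (ii)), through each scalar multiple (by (i)), and through each power (by (iii)) yields exactly $\Sigma_0^n \alpha_i^{w'}((\Vadjwdw)^{(i)}) = p(\Vadjwdw)$. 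The final sentence of the theorem is then immediate: $p(\Vwdw)$ decoupled means it has the form $\Vw^1 \oplus \Vdw^2$, and by Theorem \ref{thm:adjointprop}(6) its adjoint is decoupled iff it is, so $p$ annihilates one iff it annihilates the other.

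First I would dispatch (iii), which I expect to be the heart of the argument. The base cases $k=0$ and $k=1$ come from Lemma \ref{lem:powerzeroprop}(4) and the definition of the adjoint of a genaut respectively. For the inductive step I would use $\Vwdw^{(k)} = (\Vwdw^{(k-1)}) * \Vwdw = (\Vwdw^{(k-1)})_{WW_1} \lrar (\Vwdw)_{W_1\dw}$, and compute the adjoint via IDT (Theorem \ref{thm:idt}, in the form of Theorem \ref{thm:transpose}): the adjoint of a $\lrar$ of two linkages is the $\lrar$ of their adjoints, once the index-set renamings and sign conventions ($W \mapsto -\dwd$, $\dw \mapsto W'$) are tracked carefully. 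The dummy set $W_1$ must be handled consistently — it plays the role of a copy of $\dw$ on the left factor and a copy of $W$ on the right — and under the adjoint it should be renamed to a dummy $W_1'$ that is a copy of $W'$ on the (new) left factor and of $\dwd$ on the (new) right factor. One then recognizes the result as $(\Vadjwdw)^{(k-1)} * \Vadjwdw = (\Vadjwdw)^{(k)}$, using the induction hypothesis. The potential subtlety, and the step I'd be most careful with, is that $*$ for genauts (unlike for genops) need not satisfy $\Vwdw * \Vwdw^{(0)} = \Vwdw$, so I must make sure the inductive unfolding of $\Vwdw^{(k)}$ uses only the definitional recursion $\Vwdw^{(k)} = \Vwdw^{(k-1)} * \Vwdw$ and the associativity of $*$ guaranteed by Theorem \ref{thm:notmorethantwice}, never any genop-specific identity.

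Facts (i) and (ii) are routine once the re-indexing bookkeeping is set up. For (i), since the scalar multiplication $\alpha^{\dW}\Vwdw = \V_{W(\alpha\dw)} + \Vwdw \times \dw$ is expressible (for $\alpha \neq 0$) as $(\Vwdw \lrar \{[I_{\dw}\ \alpha I_{\dw'}]\})_{W\dw}$ and, in general, with the contraction term added, one applies IDT exactly as in the proof of Theorem \ref{thm:scalarmult}(2), observing that the pseudoidentity-type block $\{[I \ \alpha I]\}$ has perp $\{[\alpha I \ I]\}$; the sign set on $W$ gets absorbed using Lemma \ref{lem:transposesign}. For (ii), the adjoint of $\Vwdw^1 \pdw \Vwdw^2$ is computed from Lemma \ref{lem:intsumvis}, which writes $\pdw$ as a $\lrar$ with explicit $\{[I\ I\ I]\}$-type blocks, and then IDT plus Lemma \ref{lem:specialperps} convert these blocks into their $\pw$-counterparts — this is verbatim the proof of Theorem \ref{thm:intsumtranspose}(1) with $A = W$, $B = \dw$, and the extra twist that the adjoint also performs the $W \leftrightarrow \dw$ swap and sign flip. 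Assembling: the only real obstacle is (iii)'s inductive step, and specifically making the dummy-set renaming under the adjoint airtight; everything else is a mechanical application of IDT and the already-proved transpose laws for scalar multiplication and intersection-sum.
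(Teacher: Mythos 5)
Your overall architecture is the paper's own: distribute the adjoint over the powers, the scalar multiples and the intersection-sums that make up $p(\Vwdw)$, using IDT together with Theorem \ref{thm:intsumtranspose}, Theorem \ref{thm:scalarmult} and Lemma \ref{lem:powerzeroprop}, and then settle the annihilation statement via Theorem \ref{thm:adjointprop}. However, your facts (i) and (ii) place the transferred operations on the wrong index set, and this is not a harmless slip because your final identification with $p(\Vadjwdw)$ depends on it. Taking `$\perp$' moves a $\dw$-side operation to the $W$-side (with $A=W$, $B=\dw$, Theorem \ref{thm:scalarmult} turns $\ldw$ into $\lw$, and Theorem \ref{thm:intsumtranspose} turns $\pdw$ into $\pw$); the adjoint then renames $W\mapsto -\dwd$ and $\dw\mapsto W'$, so these operations land on the $\dwd$-side of the adjoint, not on the $W'$-side. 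The correct identities are $(\ldw\Vwdw^{(k)})^a=\ldwd(\Vadjwdw)^{(k)}$ and $(\Vwdw^{1}\pdw\Vwdw^{2})^a=(\Vwdw^{1})^a\pdwd(\Vwdw^{2})^a$, which is exactly what the paper records.

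Why this matters: by Definition \ref{def:poly} applied to the genaut $\Vadjwdw$ (whose dotted set is $\dwd$), $p(\Vadjwdw)=\Sigma_{i=0}^{n}\,\alpha_i^{\mydot{w}'}((\Vadjwdw)^{(i)})$ with the sums taken as $\pdwd$, whereas your assembled space $\Sigma_{i=0}^{n}\,\alpha_i^{w'}((\Vadjwdw)^{(i)})$ with $\pw$-sums is a genuinely different object. Concretely, if $\Vwdw$ is the genop of an invertible matrix $A$ (the row space of $[I\ \ A]$), then $\Vadjwdw$ is the genop of $A^{T}$ and $(\ldw\Vwdw)^a$ is the genop of $\lambda A^{T}$, but $\lambda^{w'}\Vadjwdw$ is the genop of $\lambda^{-1}A^{T}$; likewise $\pw$ applied to adjoint genops does not yield the sum of the corresponding operators. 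So, as written, your one-line assembly proves a different (and in general false) identity rather than $(p(\Vwdw))^a=p(\Vadjwdw)$. Once the sides are corrected, the argument — including your treatment of (iii), where the induction on $\Vwdw^{(k)}=\Vwdw^{(k-1)}*\Vwdw$ via IDT, the handling of the dummy copy $W_1$, and the warning against genop-only identities such as $\Vwdw*\Vwdw^{(0)}=\Vwdw$ are all appropriate — coincides with the paper's proof.
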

\begin{proof}
 We note that $$(\Vwdw^{(0)})^a= (\Vadjwdw)^{(0)},$$
$$(\Vwdw^{(k)})^a= (\Vadjwdw)^{(k)},\ \ k= 1, \cdots , $$
$$(\ldw\Vwdw^{(k)})^a= \ldwd(\Vadjwdw)^{(k)},\ \ k= 0, 1, \cdots , $$
$$(\ldw_i\Vwdw^{(i)}\pdw \ldw_j\Vwdw^{(j)})^a= \ldwd_i(\Vadjwdw)^{(i)}
\pdwd \ldwd_j(\Vadjwdw)^{(j)}.
$$
These results follow from Theorem \ref{thm:idt}, Theorem \ref{thm:intsumtranspose}, Theorem \ref{thm:scalarmult} and
Lemma \ref{lem:powerzeroprop}.
Finally we need to use the fact that the complementary orthogonal space of the direct sum
$\Va\oplus\Vb$ is $\Va^{\perp}\oplus\Vb^{\perp}.$
\end{proof}

\subsection{Polynomials of genops}
The following lemmas have the ultimate purpose of showing that 
polynomials of genops behave similar to polynomials of operators (Theorem \ref{thm:minimalannpoly}).

We begin with Lemma \ref{lem:intsumrestcont} which follows directly from the definition of intersection-sum of linkages. It relates the contraction and restriction to 
$W,\dw,$ of the original genauts, to that of their intersection-sum.
Lemmas \ref{lem:genoppoly1}, \ref{lem:genoppoly2} do this task for
the `$*$' operation. Lemma \ref{lem:genoppoly3} does this for `$\ldw$' 
operation.
This is useful in showing that polynomials of a genop are also genops and further
have the same restriction to $W$ and contraction to $\dw$ as the original genop. This leads to determining when a polynomial of a genop becomes an annihilating polynomial for it.
\begin{lemma}
\label{lem:intsumrestcont}
Let $\Vwdw  ^1 , \Vwdw ^2$ be generalized autonomous systems and let $\Vwdw \equiv \Vwdw  ^1 \pdw\Vwdw  ^2 .$
 Then,
\begin{enumerate}
\item 
$$\Vwdw  \times \dw \equaln\Vwdw ^1\times \dw +  \Vwdw ^2 \times \dw$$
$$\Vwdw  \circ W\equaln \Vwdw ^1\circ W \bigcap  \Vwdw ^2 \circ W.$$
Hence if $\Vwdw  ^1 ,\Vwdw ^2,$ have the same restriction to $W$, this space would also be the same as the restriction of $\Vwdw $ to $W$.

Similarly if $\Vwdw  ^1 ,\Vwdw ^2,$ have the same contraction to $\dw$, this space would also be the same as the contraction  of $\Vwdw  $ to $\dw$.
\item 
$$\Vwdw  \times W\supseteqn \Vwdw ^1\times W \bigcap  \Vwdw ^2 \times W$$
$$\Vwdw  \circ \dw  \subseteqn \Vwdw ^1\circ \dw +  \Vwdw ^2 \circ \dw.$$
\item 
If $\Vwdw  ^1 ,\Vwdw ^2$ are USG which have the same restriction $\Vw$ to $W,$
then  $\Vwdw  $ is a USG
with 

\vspace{0.2cm}

$\Vwdw\circ W = \Vw.$  
\item 
If $\Vwdw  ^1 ,\Vwdw ^2$ are LSG which have the same contraction $\Vdw'$ to $\dw,$
then  $\Vwdw  $ is a LSG
with 

\vspace{0.2cm}

$\Vwdw\times \dw = \Vdw'.$
\item
If $\Vwdw  ^1 ,\Vwdw ^2$ are genops which have the same restriction $\Vw$ to $W$
and 
have the same contraction $\Vdw '$ to $\dw,$ then  $\Vwdw  $ is a genop
with 

$$\Vwdw\circ W = \Vw \ \ {\textup{and}}\ \  \Vwdw\times \dw= \Vdw '. $$ 
\end{enumerate}
\end{lemma}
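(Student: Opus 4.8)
\textbf{Proof plan for Lemma \ref{lem:intsumrestcont}.}

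The plan is to prove the five parts in order, since each later part builds on the earlier ones. Parts (1) and (2) are pure unwinding of the definition of the intersection-sum $\pdw$. For part (1), take $g_{\dw}\in \Vwdw\times\dw$; this means $(0_W,g_{\dw})\in\Vwdw^1\pdw\Vwdw^2$, so there exist $g^1_{\dw},g^2_{\dw}$ with $(0_W,g^i_{\dw})\in\Vwdw^i$ and $g_{\dw}=g^1_{\dw}+g^2_{\dw}$, giving $g_{\dw}\in\Vwdw^1\times\dw+\Vwdw^2\times\dw$; the reverse is equally direct. For the restriction-to-$W$ identity, $f_W\in\Vwdw\circ W$ iff there is some $g_{\dw}$ with $(f_W,g_{\dw})\in\Vwdw^1\pdw\Vwdw^2$, iff there exist $g^1_{\dw}\in\Vwdw^1$-compatible and $g^2_{\dw}\in\Vwdw^2$-compatible, i.e. $f_W\in\Vwdw^1\circ W$ and $f_W\in\Vwdw^2\circ W$; hence $\Vwdw\circ W=\Vwdw^1\circ W\cap\Vwdw^2\circ W$. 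The ``same restriction/contraction'' corollaries are immediate since $\Vw\cap\Vw=\Vw$ and $\Vdw'+\Vdw'=\Vdw'$. For part (2), a vector $(f_W,0_{\dw})\in\Vwdw$ forces $(f_W,g^1_{\dw})\in\Vwdw^1$, $(f_W,g^2_{\dw})\in\Vwdw^2$ with $g^1_{\dw}+g^2_{\dw}=0$; this need not have $g^i_{\dw}=0$, so we only get the inclusion $\Vwdw\times W\supseteq\Vwdw^1\times W\cap\Vwdw^2\times W$ (if $(f_W,0_{\dw})$ lies in both, take $g^1_{\dw}=g^2_{\dw}=0_{\dw}$). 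Dually, $(f_W,g_{\dw})\in\Vwdw$ with $g_{\dw}=g^1_{\dw}+g^2_{\dw}$, $g^i_{\dw}\in\Vwdw^i\circ\dw$, gives $\Vwdw\circ\dw\subseteq\Vwdw^1\circ\dw+\Vwdw^2\circ\dw$.

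For parts (3)--(5) I would combine (1) with the dot-cross relations and the USG/LSG conditions. For (3): assume $\Vwdw^1,\Vwdw^2$ are USG with common restriction $\Vw$ to $W$. By part (1), $\Vwdw\circ W=\Vw$. I must check $(\Vwdw\circ W)_{\dw}\supseteq\Vwdw\circ\dw$. Using part (2), $\Vwdw\circ\dw\subseteq\Vwdw^1\circ\dw+\Vwdw^2\circ\dw$, and each $\Vwdw^i\circ\dw\subseteq(\Vwdw^i\circ W)_{\dw}=(\Vw)_{\dw}$ by the USG hypothesis; since $(\Vw)_{\dw}+(\Vw)_{\dw}=(\Vw)_{\dw}$, we get $\Vwdw\circ\dw\subseteq(\Vw)_{\dw}=(\Vwdw\circ W)_{\dw}$, so $\Vwdw$ is USG with $\Vwdw\circ W=\Vw$. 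Part (4) is the exact dual: either repeat the argument with $\circ\!/\!\times$ and $W\!/\!\dw$ interchanged, or invoke Theorem \ref{thm:adjointprop}(4) together with Theorem \ref{thm:intsumtranspose}(1) (adjoints convert $\pdw$ appropriately and swap USG with LSG). Part (5) is then just the conjunction of (3) and (4): a genop is simultaneously USG and LSG, and the common restriction and contraction hypotheses let us apply both halves, yielding $\Vwdw\circ W=\Vw$ and $\Vwdw\times\dw=\Vdw'$, hence $\Vwdw$ is a genop.

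The only mildly delicate point — the ``main obstacle'' if there is one — is bookkeeping the direction of inclusions in part (2) and making sure it is \emph{strong enough} to drive part (3): one needs exactly the upper bound $\Vwdw\circ\dw\subseteq\Vwdw^1\circ\dw+\Vwdw^2\circ\dw$ (not an equality, which is false in general) and the fact that $(\Vw)_{\dw}$ is a subspace so its self-sum collapses. Everything else is a routine unwinding of $\pdw$, and I would state parts (1)--(2) with full element-chasing detail and then derive (3)--(5) in a couple of lines each, flagging the dual route for (4) via the adjoint to avoid repeating the computation.
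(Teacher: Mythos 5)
Your proposal is correct and matches the paper's intent: the paper gives no explicit proof of this lemma, stating only that it "follows directly from the definition of intersection-sum," and your element-chasing for parts (1)--(2) plus the combination with the USG/LSG conditions for (3)--(5) is exactly that routine unwinding. The only caveat is that your alternative adjoint route for part (4) needs the (easily checked) fact that the adjoint of an intersection-sum is the intersection-sum of the adjoints, which Theorem \ref{thm:intsumtranspose} gives only after the index relabelling $W\mapsto -\dwd$, $\dw\mapsto W'$; your direct dual argument avoids this entirely and is the cleaner choice.
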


The next lemma is immediate from the definition of the `$*$' operation.
\begin{lemma}
\label{lem:genoppoly1}
Let $\Vwdw \equiv \Vwdw  ^1 *\Vwdw  ^2 .$
Then 
\begin{align}
\Vwdw\circ W \subseteq \Vwdw  ^1 \circ W \\
\Vwdw\circ \dw \subseteq \Vwdw  ^2\circ \dw \\
\Vwdw\times W \supseteq \Vwdw  ^1 \times W \\
\Vwdw\times \dw \supseteq \Vwdw  ^2 \times \dw .
\end{align}
\end{lemma}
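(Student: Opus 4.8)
The statement to prove is Lemma~\ref{lem:genoppoly1}, which asserts four containments for the composition $\Vwdw \equiv \Vwdw^1 * \Vwdw^2 = (\Vwdw^1)_{WW_1} \lrar (\Vwdw^2)_{W_1\dw}$. The plan is to unfold the definitions of the `$*$' operation and of matched composition, restriction, and contraction, and then track a generic vector through the construction. Recall that $(\Vwdw^1)_{WW_1}$ is the copy of $\Vwdw^1$ with $\dw$ relabelled as $W_1$, and $(\Vwdw^2)_{W_1\dw}$ is the copy of $\Vwdw^2$ with $W$ relabelled as $W_1$; the matched composition then identifies the $W_1$-components and the surviving index set is $W \uplus \dw$.

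First I would prove the restriction inclusions. For $\Vwdw\circ W \subseteq \Vwdw^1\circ W$: take $f_W \in \Vwdw\circ W$. By definition of restriction there is $g_{\dw}$ with $(f_W,g_{\dw})\in\Vwdw$, hence by the definition of matched composition there is $h_{W_1}$ with $(f_W,h_{W_1})\in (\Vwdw^1)_{WW_1}$ and $(h_{W_1},g_{\dw})\in(\Vwdw^2)_{W_1\dw}$. Then $(f_W,h_{W_1})\in (\Vwdw^1)_{WW_1}$ says, after undoing the relabelling, that $f_W\in\Vwdw^1\circ W$. The inclusion $\Vwdw\circ \dw \subseteq \Vwdw^2\circ \dw$ is symmetric, using the second factor instead of the first.

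Next the contraction inclusions, which go the other way. For $\Vwdw\times W \supseteq \Vwdw^1\times W$: take $f_W\in\Vwdw^1\times W$, so $(f_W,0_{W_1})\in(\Vwdw^1)_{WW_1}$. Since $\Vwdw^2$ is a vector space (a genaut), $0_{W_1\dw}\in(\Vwdw^2)_{W_1\dw}$, in particular $(0_{W_1},0_{\dw})\in(\Vwdw^2)_{W_1\dw}$. Matching the $W_1$-components (both zero) gives $(f_W,0_{\dw})\in\Vwdw$, i.e.\ $f_W\in\Vwdw\times W$. Dually, for $\Vwdw\times\dw\supseteq\Vwdw^2\times\dw$: if $g_{\dw}\in\Vwdw^2\times\dw$ then $(0_{W_1},g_{\dw})\in(\Vwdw^2)_{W_1\dw}$, and since $(0_W,0_{W_1})\in(\Vwdw^1)_{WW_1}$, matching the zero $W_1$-components yields $(0_W,g_{\dw})\in\Vwdw$.

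This lemma is essentially bookkeeping, so I do not anticipate a genuine obstacle; the only point requiring a little care is keeping the relabelling conventions ($\dw \leftrightarrow W_1$ in the first factor, $W \leftrightarrow W_1$ in the second) straight, and remembering that all the spaces involved are vector spaces so they contain the relevant zero vectors — that is exactly what powers the two contraction inclusions. I would therefore present the four arguments compactly, grouped as the two restriction cases followed by the two contraction cases, noting that each pair is dual to the other under swapping the roles of the two factors.
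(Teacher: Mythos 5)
Your proof is correct and is exactly the routine unfolding the paper has in mind: the paper gives no explicit argument, stating only that the lemma "is immediate from the definition of the `$*$' operation," and your verification (restrictions via the definition of matched composition, contractions via the zero vector of the other factor) is that immediate argument written out, with the relabelling conventions handled correctly.
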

Using this lemma and the definition of the `$*$' operation we get the next lemma.
\begin{lemma}
\label{lem:genoppoly2}
Let $\Vwdw  ^1,\Vwdw  ^2$ be genauts and
let $\Vwdw \equiv \Vwdw  ^1 *\Vwdw  ^2 .$
\begin{enumerate}
\item 
If $\Vwdw  ^1 ,\Vwdw ^2$ are USG which have the same restriction $\Vw$ to $W,$
then  $\Vwdw  $ is a USG
with $$\Vwdw\circ W = \Vw.$$  
\item 
If $\Vwdw  ^1 ,\Vwdw ^2$ are LSG which have the same contraction $\Vdw'$ to $\dw,$
then  $\Vwdw  $ is a LSG
with $$\Vwdw\times \dw = \Vdw'.$$  
\item
If $\Vwdw  ^1 ,\Vwdw ^2$ are genops which have the same restriction $\Vw$ to $W$
and have the same contraction $\Vdw '$ to $\dw$, then  $\Vwdw  $ is a genop
with $$\Vwdw\circ W = \Vw \ {\textup{and}}\  \Vwdw\times \dw= \Vdw '. $$ 
\item If  $\Vwdw$ is a genop, then so is $\Vwdw^{(k)},\ \ k= 1, \cdots, $ with the same
restriction to $W$ and contraction to $\dw $ as $\Vwdw .$
Further, $$\Vwdw^{(k)}\circ \dw \subseteq \Vwdw\circ \dw ,\ \ k=1, \cdots  
\ {\textup{ and}}\  \Vwdw^{(k)}\times W \supseteq \Vwdw\times W ,\ \ k=1 \cdots .$$
\item If  $\Vwdw$ is a genop, then $$\Vwdw^{(0)}*\Vwdw^{(j)}=\Vwdw^{(j)}*\Vwdw^{(0)}=\Vwdw^{(j)}, \ \ j=0, 1, \cdots .$$
%($\Vwdw^{(0)}\equiv \Iwdw \pdw [\Vwdw \circ W \bigoplus \Vwdw \times \dw ].$)
\end{enumerate}
\end{lemma}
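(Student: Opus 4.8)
The plan is to handle the five parts in order, with the first four being quick consequences of Lemma~\ref{lem:genoppoly1} and the definition of a (semi-)genop, and the fifth carrying the real weight.

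For part~1, recall that $\Vwdw=(\Vwdw^1)_{WW_1}\lrar(\Vwdw^2)_{W_1\dw}$, with $W_1$ a copy of $\dw$ for the left factor and of $W$ for the right. One inclusion, $\Vwdw\circ W\subseteq\Vwdw^1\circ W=\Vw$, is immediate from Lemma~\ref{lem:genoppoly1}. For the reverse, given $f_W\in\Vw$, note that since $\Vwdw^1$ is a USG we have $\Vwdw^1\circ\dw\subseteq(\Vwdw^1\circ W)_{\dw}=(\Vw)_{\dw}$, so whatever $g_{W_1}$ extends $f_W$ through $(\Vwdw^1)_{WW_1}$ lies in $(\Vw)_{W_1}=(\Vwdw^2\circ W)_{W_1}=(\Vwdw^2)_{W_1\dw}\circ W_1$, hence extends through the right factor to some $h_{\dw}$; thus $f_W\in\Vwdw\circ W$ and $\Vwdw\circ W=\Vw$. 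Then $\Vwdw\circ\dw\subseteq\Vwdw^2\circ\dw\subseteq(\Vwdw^2\circ W)_{\dw}=(\Vwdw\circ W)_{\dw}$, so $\Vwdw$ is a USG. Part~2 is the exact dual: one may either repeat the computation with $\circ\leftrightarrow\times$, $W\leftrightarrow\dw$ and USG$\leftrightarrow$LSG interchanged, or take adjoints and invoke Theorem~\ref{thm:adjointprop}(4), using that $*$ is built from $\lrar$ and so is compatible with the adjoint. Part~3 is then immediate, since a genop is exactly a space that is both a USG and an LSG: parts~1 and~2 supply the two halves, with restriction $\Vw$ and contraction $\Vdw'$.

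Part~4 is an induction on $k$; the base case $k=1$ is $\Vwdw^{(1)}=\Vwdw$. For the step, write $\Vwdw^{(k)}=\Vwdw^{(k-1)}*\Vwdw$; by the inductive hypothesis $\Vwdw^{(k-1)}$ is a genop with the same restriction to $W$ and the same contraction to $\dw$ as $\Vwdw$, so part~3 applies and $\Vwdw^{(k)}$ is a genop with that same restriction and contraction. The two remaining inclusions drop out of Lemma~\ref{lem:genoppoly1}: applied to $\Vwdw^{(k)}=\Vwdw*\Vwdw^{(k-1)}$ it gives $\Vwdw^{(k)}\times W\supseteq\Vwdw\times W$, and applied to $\Vwdw^{(k)}=\Vwdw^{(k-1)}*\Vwdw$ it gives $\Vwdw^{(k)}\circ\dw\subseteq\Vwdw\circ\dw$.

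Part~5 is the crux and, I expect, the main obstacle. Since $\Vwdw$ is a genop it satisfies $(\Vwdw\circ W)_{\dw}\supseteq\Vwdw\times\dw$, so Lemma~\ref{lem:powerzeroprop} applies and gives not only that $\Vwdw^{(0)}$ is a genop with $\Vwdw^{(0)}\circ W=\Vwdw\circ W$ and $\Vwdw^{(0)}\times\dw=\Vwdw\times\dw$, but also the explicit description $(f_W,g_{\dw})\in\Vwdw^{(0)}$ iff $f_W\in\Vwdw\circ W$ and $g_{\dw}-f_{\dw}\in\Vwdw\times\dw$. Fix $j\ge0$; by part~4 (and Lemma~\ref{lem:powerzeroprop} itself for $j=0$), $\Vwdw^{(j)}$ is a genop with $\Vwdw^{(j)}\circ W=\Vwdw\circ W$, $\Vwdw^{(j)}\times\dw=\Vwdw\times\dw$ and $\Vwdw^{(j)}\times W\supseteq\Vwdw\times W\supseteq(\Vwdw\times\dw)_W$. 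To prove $\Vwdw^{(0)}*\Vwdw^{(j)}=\Vwdw^{(j)}$, unwind the definition: $(f_W,h_{\dw})$ is in the left side iff there is $g_{W_1}$ with $f_W\in\Vwdw\circ W$, $g_{W_1}-f_{W_1}\in(\Vwdw\times\dw)_{W_1}$ and $(g_W,h_{\dw})\in\Vwdw^{(j)}$; but $g_W-f_W\in(\Vwdw\times\dw)_W\subseteq\Vwdw^{(j)}\times W$ forces $(g_W-f_W,0_{\dw})\in\Vwdw^{(j)}$, so subtracting yields $(f_W,h_{\dw})\in\Vwdw^{(j)}$, while conversely, given $(f_W,h_{\dw})\in\Vwdw^{(j)}$, the choice $g_{W_1}=f_{W_1}$ exhibits it on the left. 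The identity $\Vwdw^{(j)}*\Vwdw^{(0)}=\Vwdw^{(j)}$ follows by the mirror-image argument, this time using $\Vwdw^{(j)}\times\dw=\Vwdw\times\dw$ to absorb the difference coming from the right factor and $\Vwdw^{(j)}\circ\dw\subseteq\Vwdw\circ\dw\subseteq(\Vwdw\circ W)_{\dw}$ to place the intermediate vector in $\Vwdw\circ W$. The delicate point throughout part~5 is that $\Vwdw^{(0)}$ is \emph{not} the literal pseudoidentity $\Iwdw$: it carries the extra decoupled summand $\Vwdw\circ W\oplus\Vwdw\times\dw$, and the computation closes only because part~4 guarantees that $\Vwdw^{(j)}$ inherits enough genop structure from $\Vwdw$ (the same restriction to $W$, and a contraction to $W$ large enough to absorb $(\Vwdw\times\dw)_W$) to swallow that summand.
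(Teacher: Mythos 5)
Your proof is correct, and it is essentially the argument the paper intends: the paper omits the proof as a routine consequence of Lemma \ref{lem:genoppoly1}, the associativity of `$*$', and (for part 5) Lemma \ref{lem:powerzeroprop}, and your verification fills in exactly those steps, including the key observation that for a genop the hypothesis $(\Vwdw\circ W)_{\dw}\supseteq \Vwdw\times \dw$ of Lemma \ref{lem:powerzeroprop} holds, so its explicit description of $\Vwdw^{(0)}$ is available. One small caveat: for $j=0$ your intermediate chain $\Vwdw^{(0)}\times W\supseteq \Vwdw\times W$ is not what Lemma \ref{lem:powerzeroprop} gives (it gives $\Vwdw^{(0)}\times W=(\Vwdw\times \dw)_W$, which may be strictly smaller than $\Vwdw\times W$), but your argument only ever uses $(\Vwdw\times \dw)_W\subseteq \Vwdw^{(j)}\times W$, which does hold for $j=0$, so nothing breaks.
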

\begin{remark}
\label{rem:sumint_genops}
Sum and intersection of genops are not in general genops.
It is easy to see that sum of USGs is a USG and intersection of LSGs is an LSG.
But intersection of USGs is not in general a  USG and sum of LSGs is not in general
an LSG. 
%However, if one of the genops is decoupled then the sum is easily seen to be a 
%genop.
\end{remark}
The next lemma follows from the definition of scalar multiplication of linkages.
We remind the reader that $$0^{\dws}\Vwdw\equiv \Vwdw\circ W\oplus\Vwdw \times \dw.$$ 
\begin{lemma}
\label{lem:genoppoly3}
%Let $\lambda ^{\dw}\Vwdw \equiv  \V_{W(\lambda \dw)}+\Vwdw\times \dw .$
Let $\Vwdw $ be a generalized autonomous system.\\
Then $$(\lambda ^{\dws}\Vwdw)\circ W= \Vwdw \circ W,\ \ \ (\lambda ^{\dws}\Vwdw)\times \dw= \Vwdw \times \dw.$$ 
$$(\ldws\Vwdw)  \circ \dw\subseteq  \Vwdw \circ \dw , \ \ \ 
(\ldws\Vwdw)  \times W\supseteq  \Vwdw \times W .$$

Hence, 
\begin{itemize}
\item $\ldw\Vwdw^{(j)}\ \pdw \ 0^{\dws}\Vwdw^{(0)}\equaln\ldw\Vwdw^{(j)}.$
\item
if  $\Vwdw$  is USG, so is $\lambda ^{\dws}\Vwdw $
with the same restriction to $W$  as $\Vwdw ,$ 
\item
if $\Vwdw$  is LSG, so is $\lambda ^{\dws}\Vwdw $
with the same contraction  to $\dw$  as $\Vwdw ,$   
\item
 if $\Vwdw$  is a genop, so is $\lambda ^{\dws}\Vwdw $
with the same restriction to $W$ and contraction to $\dw $ as $\Vwdw .$ 
\end{itemize}
\end{lemma}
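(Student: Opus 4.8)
The plan is to unwind the definition of scalar multiplication for linkages, namely $\lambda^{\dws}\Vwdw \equiv \V_{W(\lambda\dw)} + \Vwdw\times\dw$, and read off the restriction and contraction to $W$ and to $\dw$ by a short case analysis on whether $\lambda$ is zero. When $\lambda\neq 0$, the map $g_{\dw}\mapsto \lambda g_{\dw}$ is a bijection on vectors on $\dw$, so $\V_{W(\lambda\dw)}$ is just a relabelled copy of $\Vwdw$ with the $\dw$-component scaled; hence its restriction to $W$ is $\Vwdw\circ W$ and its contraction to $\dw$ is $(\lambda)(\Vwdw\times\dw) = \Vwdw\times\dw$. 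Adding the term $\0_A\oplus\Vwdw\times\dw$ (here $A=W$) changes nothing in this case since $\Vwdw\times\dw$ is already the contraction. When $\lambda=0$, the first term $\{(f_W,0_{\dw}):(f_W,g_{\dw})\in\Vwdw\}$ equals $\Vwdw\circ W\oplus\0_{\dw}$, and adding $\0_W\oplus\Vwdw\times\dw$ produces exactly $\Vwdw\circ W\oplus\Vwdw\times\dw = 0^{\dws}\Vwdw$; its restriction to $W$ is $\Vwdw\circ W$ and its contraction to $\dw$ is $\Vwdw\times\dw$, matching the claim (and recovering the already-noted fact $0^{\dws}\Vwdw = \Vwdw\circ W\oplus\Vwdw\times\dw$). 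So the two displayed equalities $(\lambda^{\dws}\Vwdw)\circ W = \Vwdw\circ W$ and $(\lambda^{\dws}\Vwdw)\times\dw = \Vwdw\times\dw$ hold for every $\lambda$.

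For the two containments, I would argue directly. For $(\ldws\Vwdw)\circ\dw\subseteq\Vwdw\circ\dw$: any $g_{\dw}$ in the left side arises either as $\lambda g'_{\dw}$ with $(f_W,g'_{\dw})\in\Vwdw$ for some $f_W$, hence $g'_{\dw}\in\Vwdw\circ\dw$ and so $\lambda g'_{\dw}\in\Vwdw\circ\dw$ since $\Vwdw\circ\dw$ is a subspace, or it comes from the term $\Vwdw\times\dw\subseteq\Vwdw\circ\dw$; either way it lies in $\Vwdw\circ\dw$. For $(\ldws\Vwdw)\times W\supseteq\Vwdw\times W$: if $f_W\in\Vwdw\times W$ then $(f_W,0_{\dw})\in\Vwdw$, so $(f_W,\lambda 0_{\dw})=(f_W,0_{\dw})$ belongs to $\V_{W(\lambda\dw)}\subseteq\ldws\Vwdw$, whence $f_W\in(\ldws\Vwdw)\times W$.

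The ``Hence'' clauses are then immediate corollaries. The first bullet, $\ldw\Vwdw^{(j)}\pdw 0^{\dws}\Vwdw^{(0)} = \ldw\Vwdw^{(j)}$, follows from Lemma \ref{lem:intsumrestcont}(1) together with Lemma \ref{lem:powerzeroprop}(1): $0^{\dws}\Vwdw^{(0)}$ has restriction $\Vwdw\circ W$ and contraction $\Vwdw\times\dw$, which by the already-proved equalities coincide with the restriction and contraction of $\ldw\Vwdw^{(j)}$; one checks from the definition of $\pdw$ that intersection-summing with a linkage having the same restriction to $W$ and a contraction to $\dw$ contained in the other's adds nothing. The remaining three bullets are: if $\Vwdw$ is a USG then $(\Vwdw\circ W)_{\dw}\supseteq\Vwdw\circ\dw$, and since $(\ldws\Vwdw)\circ W = \Vwdw\circ W$ while $(\ldws\Vwdw)\circ\dw\subseteq\Vwdw\circ\dw$, we get $((\ldws\Vwdw)\circ W)_{\dw}\supseteq(\ldws\Vwdw)\circ\dw$, i.e.\ USG with the same restriction; dually for LSG using the contraction equality and the $\times W$ containment; and the genop case is the conjunction of the two. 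The main obstacle, such as it is, is just bookkeeping care in the $\lambda=0$ case, where the first term of the definition genuinely fails to contain $\0_W\oplus\Vwdw\times\dw$ and the added term is what makes the contraction come out right — everything else is a routine unwinding of definitions, so I would state the lemma's proof tersely with the $\lambda=0$ subtlety flagged.
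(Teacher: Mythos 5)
Your proof is correct and follows the same route the paper intends: the paper gives no argument at all, stating only that the lemma ``follows from the definition of scalar multiplication of linkages,'' and your case split on $\lambda=0$ versus $\lambda\neq 0$, with the observation that for $\lambda\neq 0$ the added term $\0_W\oplus\Vwdw\times\dw$ is already absorbed, is exactly the routine unwinding being alluded to; the USG/LSG/genop bullets are then immediate, as you say.

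One small correction on the first bullet: the restriction and contraction of $\ldw\Vwdw^{(j)}$ do \emph{not} in general coincide with those of $0^{\dws}\Vwdw^{(0)}$ --- by Lemma \ref{lem:genoppoly1} one only has $\Vwdw^{(j)}\circ W\subseteq \Vwdw\circ W$ and $\Vwdw^{(j)}\times \dw\supseteq \Vwdw\times\dw$ --- and Lemma \ref{lem:powerzeroprop} cannot be quoted here since its hypothesis $(\Vwdw\circ W)_{\dw}\supseteq\Vwdw\times\dw$ is not granted for a general genaut. Neither point is fatal: the identities $\Vwdw^{(0)}\circ W=\Vwdw\circ W$ and $\Vwdw^{(0)}\times\dw=\Vwdw\times\dw$ follow directly from the definition of $\Vwdw^{(0)}$ without that hypothesis, so $0^{\dws}\Vwdw^{(0)}=\Vwdw\circ W\oplus\Vwdw\times\dw$, and the absorption $\ldw\Vwdw^{(j)}\pdw 0^{\dws}\Vwdw^{(0)}=\ldw\Vwdw^{(j)}$ is then exactly Lemma \ref{lem:decoupledcomposition}(2) applied with the decoupled term, using the containments above rather than equalities.
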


The next lemma is a useful fact about composing or performing intersection-sum
operations with decoupled linkages.
It is immediate from the  definitions of the relevant operations and of decoupled linkages.
\begin{lemma}
\label{lem:decoupledcomposition}
\begin{enumerate}
\item

If $\Vab$ is decoupled, then so is $\Vab\lrar \Vbc.$ 
Further,
%If further $\Vab\circ B\subseteq \Vbc\circ B$ and $\Vab\times B\subseteq \Vbc\times B,$ then 
$$(\Vab\lrar \Vbc)\circ A = (\Vab\lrar \Vbc)\times  A = \Vab\circ A, \ \ (\Vab\lrar \Vbc)\circ C = (\Vab\lrar \Vbc)\times  C = \Vbc\times  C.$$
\item If  $\V^1_{AB}$ is decoupled with $$\V^1_{AB}\circ A\supseteq \V^2_{AB}\circ A,\ \ 
\V^1_{AB}\times B\subseteq \V^2_{AB}\times B, $$ then  
$$\V^1_{AB}\pb\V^2_{AB}= \V^2_{AB}\pb\V^1_{AB}=\V^2_{AB}.$$
\item If $\Vab^1,\Vab^2$ are decoupled, then  $$\Vab^1+\Vab^2= (\Vab^1\circ A +\Vab^2\circ A)\oplus  (\Vab^1\circ B +\Vab^2\circ B)$$ and $$\Vab^1\pb\Vab^2=   (\Vab^1\circ A \cap \Vab^2\circ A)\oplus  (\Vab^1\circ B +\Vab^2\circ B).$$ 
\end{enumerate}
\end{lemma}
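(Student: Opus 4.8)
The statement to prove is Lemma \ref{lem:decoupledcomposition}, which has three parts about decoupled linkages interacting with matched composition and intersection-sum. All three are asserted to be ``immediate from the definitions,'' so the plan is essentially to unwind each definition carefully and check set equalities.

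\textbf{Plan for part (1).} Here $\Vab$ decoupled means $\Vab = \Vab\circ A \oplus \Vab\circ B$. I would first recall that $\Vab\circ A = \Vab\times A$ and $\Vab\circ B = \Vab\times B$ precisely because decoupling says exactly that. Now take $(f_A,h_C)\in\Vab\lrar\Vbc$; by definition there is $g_B$ with $(f_A,g_B)\in\Vab$ and $(g_B,h_C)\in\Vbc$. Since $\Vab$ is a direct sum, $(f_A,g_B)\in\Vab$ iff $f_A\in\Vab\circ A$ and $g_B\in\Vab\circ B$; in particular $(f_A,0_B)\in\Vab$. So any $f_A$ arising on the $A$ side is exactly an element of $\Vab\circ A$, and moreover one can always choose $g_B=0$, so $f_A$'s membership is decoupled from the constraint coming through $B$. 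This shows $(\Vab\lrar\Vbc)\circ A = \Vab\circ A$; the same argument with $(f_A,0_B)\in\Vab$ shows $(\Vab\lrar\Vbc)\times A = \Vab\circ A$ as well (take $h_C=0$ if needed, using $0_B\in\Vbc\circ B$... actually one needs $0_C$ to be reachable, which it is since $0_{BC}\in\Vbc$). Symmetrically on the $C$ side. Finally, to see $\Vab\lrar\Vbc$ is itself decoupled: given $(f_A,h_C)$ in it, we produced $(f_A,0_B)\in\Vab$ and separately $(0_B,h_C)\in\Vbc$ requires $h_C\in\Vbc\times C$ — but $h_C\in(\Vab\lrar\Vbc)\circ C = \Vbc\times C$ by what we just showed, so indeed $(0_A,h_C)$ and $(f_A,0_C)$ both lie in $\Vab\lrar\Vbc$, giving the direct-sum decomposition. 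I should be a little careful that the restrictions/contractions I equate are literally the same subspace; the cleanest route is to show both $\subseteq$ and $\supseteq$ for each of the four claimed equalities.

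\textbf{Plan for parts (2) and (3).} For part (2), $\V^1_{AB}$ decoupled means $\V^1_{AB}=\V^1_{AB}\circ A\oplus\V^1_{AB}\circ B$. Using the visualization Lemma \ref{lem:intsumvis} or just the raw definition: $\V^1_{AB}\pb\V^2_{AB}$ consists of $(f_A,f_B)$ with $(f_A,f^1_B)\in\V^1_{AB}$, $(f_A,f^2_B)\in\V^2_{AB}$, $f_B=f^1_B+f^2_B$. Because $\V^1_{AB}$ is a direct sum, $(f_A,f^1_B)\in\V^1_{AB}$ iff $f_A\in\V^1_{AB}\circ A$ and $f^1_B\in\V^1_{AB}\circ B$. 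The hypothesis $\V^1_{AB}\circ A\supseteq\V^2_{AB}\circ A$ guarantees that whenever $(f_A,f^2_B)\in\V^2_{AB}$ (so $f_A\in\V^2_{AB}\circ A\subseteq\V^1_{AB}\circ A$) we can indeed find a matching $\V^1_{AB}$ vector over the same $f_A$; and the hypothesis $\V^1_{AB}\times B\subseteq\V^2_{AB}\times B$, i.e. $\V^1_{AB}\circ B\subseteq\V^2_{AB}\times B$ (using decoupling of $\V^1$), lets us adjust the $B$-component: given any $(f_A,f^2_B)\in\V^2_{AB}$ and any $f^1_B\in\V^1_{AB}\circ B$, we have $f^1_B\in\V^2_{AB}\times B$ so $(f_A,f^2_B+f^1_B)\in\V^2_{AB}$, showing every element of $\V^2_{AB}$ is realized. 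Conversely any element of the intersection-sum projects into $\V^2_{AB}$ after absorbing the $\V^1$ part the same way. Hence the intersection-sum equals $\V^2_{AB}$; commutativity is Lemma \ref{lem:intsumasscomm}. Part (3) is pure bookkeeping: when both are decoupled, a vector $(f_A,f_B)$ in $\Vab^1+\Vab^2$ decomposes as a sum of a $\V^1$ vector and a $\V^2$ vector, each of which splits across $A$ and $B$, so the $A$- and $B$-components vary independently, giving $\Vab^1+\Vab^2 = (\Vab^1\circ A+\Vab^2\circ A)\oplus(\Vab^1\circ B+\Vab^2\circ B)$; for the intersection-sum, the $A$-components must coincide (forcing $f_A\in\Vab^1\circ A\cap\Vab^2\circ A$) while the $B$-components still add freely, giving the stated formula.

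\textbf{Expected main obstacle.} None of the three parts is deep, but the one place requiring care is keeping track of when a restriction equals a contraction: the decoupling hypothesis is exactly what collapses that distinction, and in parts (1) and (2) I must invoke it at the right moment (e.g.\ rewriting $\V^1_{AB}\times B$ as $\V^1_{AB}\circ B$ in part (2), and noting $\Vab\circ A=\Vab\times A$ in part (1)) rather than silently assuming it. A secondary nuisance is checking the ``$0$ is reachable'' facts needed to push the $A$-side and $C$-side claims of part (1) through contraction rather than just restriction — but these follow since $0_{BC}\in\Vbc$ and $0_{AB}\in\Vab$ always. So the proof is a careful but routine unwinding of definitions, best organized as a short chain of $\subseteq/\supseteq$ inclusions for each displayed equality.
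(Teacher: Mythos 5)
Parts (2) and (3) of your plan are sound, and so is the $A$-side of part (1). The genuine gap is in part (1) on the $C$ side. The claimed identity $(\Vab\lrar\Vbc)\circ C=\Vbc\times C$ is not the mirror image of the $A$-side identity: the decoupling hypothesis sits on $\Vab$, not on $\Vbc$, and the $A$-side formula uses a restriction of $\Vab$ while the $C$-side formula uses a contraction of $\Vbc$, so "symmetrically" does not deliver it. What decoupling of $\Vab$ actually gives is $\Vab\lrar\Vbc=(\Vab\circ A)\oplus(\Vbc\lrar(\Vab\circ B))$: the $A$-component is unconstrained by the $B$-witness, and the $C$-component ranges over the image of $\Vab\circ B$ under $\Vbc$. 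From this decoupledness of the composition and the $A$-side identities follow at once, but the restriction to $C$ is $\Vbc\lrar(\Vab\circ B)$, which always contains $\Vbc\times C$ yet need not equal it. Concretely, with $A,B,C$ singletons, $\Vab=\F_{AB}$ (decoupled) and $\Vbc=\{(g_B,h_C):g_B=h_C\}$, one gets $\Vab\lrar\Vbc=\F_{AC}$, whose restriction to $C$ is $\F_C$, while $\Vbc\times C=\0_C$. The equality you assert needs an extra hypothesis such as $\Vab\circ B\subseteq\Vbc\times B$ (which does hold wherever the paper invokes this clause, e.g. when the decoupled factor is $zero(\Vwdw)=\Vwdw\circ W\oplus\Vwdw\times \dw$ composed with $p(\Vwdw)$ for a genop $\Vwdw$); it cannot be recovered from decoupledness of $\Vab$ alone, which is exactly the restriction-versus-contraction care you flagged but then skipped.

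A second, smaller defect: your proof that $\Vab\lrar\Vbc$ is decoupled routes through that unproved $C$-side equality ("$h_C\in(\Vab\lrar\Vbc)\circ C=\Vbc\times C$"), so as written it inherits the gap. Decoupledness is true and should be argued directly: if $(f_A,h_C)$ is in the composition with witness $g_B$, then $g_B\in\Vab\circ B=\Vab\times B$ by decoupling, so $(0_A,g_B)\in\Vab$ and hence $(0_A,h_C)$ lies in the composition with the same witness; together with $(f_A,0_C)$, obtained from $(f_A,0_B)\in\Vab$ and $0_{BC}\in\Vbc$, this yields the direct-sum form without any appeal to the $C$-side formula. For comparison, the paper offers no proof at all (it declares the lemma immediate from the definitions), so there is no authorial argument to measure your route against; but your write-up should either prove the $C$-side identity under the additional dot-cross condition or note explicitly that decoupledness alone gives only $(\Vab\lrar\Vbc)\circ C=(\Vab\lrar\Vbc)\times C=\Vbc\lrar(\Vab\circ B)\supseteq\Vbc\times C$.
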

The next lemma does preliminary work on polynomials of genauts and
also states that polynomials of genops 
are themselves genops.

%It is to be noted that these
%are true even if the polynomial has a constant term.
%But another, more  useful, result stated by the lemma on contractions and restrictions to $W,\dw$ of $p(\Vwdw)$
%is true only for polynomials without constant term.
%Again, multiplying by a decoupled genaut will always result in
%a decoupled space, but, in general, $$(\Vwdw^{(0)}*zero(\Vwdw))\ne zero(\Vwdw)\ \  {\textup{and}} \ \  (\Vwdw^{(0)}\pdw zero(\Vwdw))\ne \Vwdw^{(0)}.$$

\begin{lemma}
\label{lem:newgenoppoly}
Let
$\Vwdw$ be  a genaut.
%Let $zero (\Vwdw)\equiv \Vwdw\circ W \bigoplus  
%\Vwdw \times \dw.$ 
Let $p(s)$ be  a polynomial in $s.$
We have the following.
\begin{enumerate}
\item Let  $p(s)=p_1(s)+p_2(s),$ where $p_1(s),p_2(s)$ are polynomials in $s.$ Then
$p(\Vwdw)= p_1(\Vwdw)\pdw \ p_2(\Vwdw). $
\item
\begin{itemize}
\item $$p(\Vwdw)\circ W\subseteq \Vwdw\circ W,\ \ \ \   p(\Vwdw)\times \dw\supseteq  \Vwdw \times \dw .$$
\item
If $\Vwdw $ is USG, then $$p(\Vwdw)\circ W= \Vwdw\circ W$$ and, if $\Vwdw $ is LSG, then   $$p(\Vwdw)\times \dw= \Vwdw \times \dw.$$
Therefore if $\Vwdw$ is respectively USG, LSG, genop, then so is
$p(\Vwdw).$
Further, when $\Vwdw $ is a genop, $p(\Vwdw)$ has the same restriction
to $W$ and contraction to $\dw$ as $\Vwdw. $
\item When  $p(s) $ has no constant term,
$$p(\Vwdw)\circ \dw \subseteq \Vwdw\circ \dw, \ \ \ \  p(\Vwdw)\times W\supseteq \Vwdw \times W.$$
\end{itemize}
\item If $\Vwdw$ is a genop,  $zero(\Vwdw)\equivn \Vwdw\circ W \oplus \Vwdw\times \dw,$
then
$$p(\Vwdw)\pdw zero(\Vwdw)= p(\Vwdw),$$
$$ (p(\Vwdw))*(zero(\Vwdw))=
(zero(\Vwdw))* (p(\Vwdw))= zero(\Vwdw).
$$
%\end{itemize}
\item
If $\Vwdw$ is a genop and
 if $p(\Vwdw)$ is decoupled
then $$p(\Vwdw)= \Vwdw\circ W \oplus  
\Vwdw \times \dw= zero (\Vwdw).$$
%\item When  $p(s) $ has no constant term,
%$$p(\Vwdw)\circ \dw \subseteq \Vwdw\circ \dw, \ \ \ \  p(\Vwdw)\times W\supseteq \Vwdw \times W.$$
\end{enumerate}
\end{lemma}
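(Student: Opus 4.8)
The plan is to prove the four parts in order, since each relies on the previous ones, and to lean heavily on the auxiliary lemmas already established (Lemmas \ref{lem:intsumrestcont}, \ref{lem:genoppoly1}, \ref{lem:genoppoly2}, \ref{lem:genoppoly3}, \ref{lem:powerzeroprop}, \ref{lem:decoupledcomposition}) together with Theorem \ref{thm:distributivity}. For part (1), I would simply unfold the definition of $p(\Vwdw)$ as an iterated intersection-sum $\pdw$ of the terms $\alpha_i^{\dW}(\Vwdw^{(i)})$; since $\pdw$ is associative and commutative (Lemma \ref{lem:intsumasscomm}), the terms can be regrouped into the block coming from $p_1$ and the block coming from $p_2$, giving $p(\Vwdw)=p_1(\Vwdw)\pdw p_2(\Vwdw)$ directly. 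Here one must be a little careful about the degree-zero term: $\alpha_0^{\dW}\Vwdw^{(0)}$ appears in exactly one of $p_1,p_2$ (or is split), but because $0^{\dws}\Vwdw^{(0)}$ is the $\pdw$-"near-identity" element in the relevant sense (Lemma \ref{lem:genoppoly3}, first bullet: $\ldw\Vwdw^{(j)}\pdw 0^{\dws}\Vwdw^{(0)}=\ldw\Vwdw^{(j)}$), adding a zero constant term in one block changes nothing.

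For part (2), the containments $p(\Vwdw)\circ W\subseteq \Vwdw\circ W$ and $p(\Vwdw)\times\dw\supseteq\Vwdw\times\dw$ follow by combining three facts: each power $\Vwdw^{(k)}$ for $k\ge 1$ satisfies $\Vwdw^{(k)}\circ W\subseteq\Vwdw\circ W$ and $\Vwdw^{(k)}\times\dw\supseteq\Vwdw\times\dw$ by iterating Lemma \ref{lem:genoppoly1}; scalar multiplication preserves restriction to $W$ and contraction to $\dw$ and can only shrink the $\dw$-contraction (\emph{sic}: Lemma \ref{lem:genoppoly3} gives equality for $\circ W$ and $\times\dw$, and $\subseteq$, $\supseteq$ respectively for $\circ\dw$, $\times W$); and the intersection-sum behaves as in Lemma \ref{lem:intsumrestcont}(1),(2), so $\circ W$ of a $\pdw$ is an intersection (hence a subspace of each summand) and $\times\dw$ of a $\pdw$ is a sum (hence contains each summand). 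The $\Vwdw^{(0)}$ term requires Lemma \ref{lem:powerzeroprop}(1), which gives $\Vwdw^{(0)}\circ W=\Vwdw\circ W$ and $\Vwdw^{(0)}\times\dw=\Vwdw\times\dw$ \emph{under the USG/LSG hypothesis that $(\Vwdw\circ W)_{\dw}\supseteq\Vwdw\times\dw$}; this is precisely why the second bullet of part (2) is stated only for USG/LSG/genop. For the USG case one then gets $p(\Vwdw)\circ W=\Vwdw\circ W$ by Lemma \ref{lem:intsumrestcont}(3) (all the $\alpha_i^{\dW}\Vwdw^{(i)}$ are USGs with the same restriction $\Vwdw\circ W$ to $W$, using Lemmas \ref{lem:genoppoly2}(1),(4) and \ref{lem:genoppoly3}); dually for LSG; and for a genop both hold, giving the genop conclusion via Lemma \ref{lem:intsumrestcont}(5). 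The "no constant term" sub-bullet drops the $\Vwdw^{(0)}$ summand, so now \emph{every} surviving term is $\alpha_i^{\dW}\Vwdw^{(i)}$ with $i\ge 1$, and iterating Lemma \ref{lem:genoppoly1} plus Lemma \ref{lem:genoppoly3} gives $\circ\dw\subseteq\Vwdw\circ\dw$, $\times W\supseteq\Vwdw\times W$ for each; Lemma \ref{lem:intsumrestcont}(2) then propagates these through the $\pdw$.

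For part (3), since $\Vwdw$ is a genop, $zero(\Vwdw)=\Vwdw\circ W\oplus\Vwdw\times\dw$ is decoupled and by Lemma \ref{lem:powerzeroprop}(3) it is a genop with the same restriction/contraction as $\Vwdw$; in fact $zero(\Vwdw)$ is exactly $0^{\dws}\Vwdw^{(0)}$. Then $p(\Vwdw)\pdw zero(\Vwdw)=p(\Vwdw)$ follows from Lemma \ref{lem:decoupledcomposition}(2): $zero(\Vwdw)$ is decoupled, $zero(\Vwdw)\circ W=\Vwdw\circ W\supseteq p(\Vwdw)\circ W$ and $zero(\Vwdw)\times\dw=\Vwdw\times\dw\subseteq p(\Vwdw)\times\dw$ by part (2), so the hypotheses of that lemma hold and the $\pdw$ absorbs the decoupled term. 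The composition statement $p(\Vwdw)*zero(\Vwdw)=zero(\Vwdw)*p(\Vwdw)=zero(\Vwdw)$ follows from Lemma \ref{lem:decoupledcomposition}(1): composing any linkage with a decoupled one yields a decoupled linkage whose restriction/contraction are read off from the decoupled factor; one checks the resulting restriction to $W$ and contraction to $\dw$ match those of $zero(\Vwdw)$, again using that $p(\Vwdw)$ has the same $\circ W$ and $\times\dw$ as $\Vwdw$. For part (4), suppose $p(\Vwdw)$ is decoupled, so $p(\Vwdw)=p(\Vwdw)\circ W\oplus p(\Vwdw)\circ\dw$. By part (2), $p(\Vwdw)\circ W=\Vwdw\circ W$. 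It remains to show $p(\Vwdw)\circ\dw=\Vwdw\times\dw$. One inclusion: for a decoupled space $\circ\dw=\times\dw$, and by part (2), $p(\Vwdw)\times\dw=\Vwdw\times\dw$, so $p(\Vwdw)\circ\dw=p(\Vwdw)\times\dw=\Vwdw\times\dw$ — this actually closes it immediately. Hence $p(\Vwdw)=\Vwdw\circ W\oplus\Vwdw\times\dw=zero(\Vwdw)$.

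The main obstacle I expect is bookkeeping around the degree-zero term $\Vwdw^{(0)}$: it does \emph{not} satisfy the naive containments $\Vwdw^{(0)}\times W\supseteq\Vwdw\times W$ or $\Vwdw^{(0)}\circ\dw\subseteq\Vwdw\circ\dw$ in general (the Remark after its definition warns of exactly this), so every step of part (2) has to treat $i=0$ separately and invoke the USG/LSG hypothesis via Lemma \ref{lem:powerzeroprop}. Once that is handled cleanly, parts (1), (3), (4) are essentially formal consequences of associativity/commutativity of $\pdw$ and the absorption properties of decoupled linkages.
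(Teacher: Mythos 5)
Your proposal follows the paper's own route: the same chain of auxiliary results (Lemma \ref{lem:intsumasscomm} for part (1); Lemmas \ref{lem:intsumrestcont}, \ref{lem:genoppoly1}, \ref{lem:genoppoly2}, \ref{lem:genoppoly3} and \ref{lem:powerzeroprop} for part (2); Lemma \ref{lem:decoupledcomposition} for part (3); and the immediate argument for part (4)), assembled in the same order, and your treatment of parts (2)--(4) is correct and in fact more detailed than the paper's terse proof.

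The one step that does not go through as written is part (1). Associativity and commutativity of $\pdw$ let you regroup distinct terms, but when $p_1$ and $p_2$ both have a nonzero coefficient at the same power $s^j$ (the generic case; e.g.\ $p_1=p_2=s$, $p=2s$), $p(\Vwdw)$ contains the single term $(\lambda_1+\lambda_2)^{\dws}\Vwdw^{(j)}$ while $p_1(\Vwdw)\pdw p_2(\Vwdw)$ contains $\lambda_1^{\dws}\Vwdw^{(j)}\pdw\lambda_2^{\dws}\Vwdw^{(j)}$; identifying these is not a regrouping but the scalar-addition identity $(\lambda_1+\lambda_2)^{\dws}\Vwdw^{(j)}=\lambda_1^{\dws}\Vwdw^{(j)}\pdw\lambda_2^{\dws}\Vwdw^{(j)}$ (Theorem \ref{thm:scalarmult}, part 1), which is exactly the fact the paper's one-line proof cites, whereas you only discussed splitting of the degree-zero term. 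Insert that identity and part (1) closes. A minor side remark on part (2): the USG/LSG hypothesis in the second bullet is needed chiefly so that the powers $\Vwdw^{(i)}$, $i\ge 1$, have restriction to $W$ (respectively contraction to $\dw$) equal, not merely contained in, that of $\Vwdw$ (Lemma \ref{lem:genoppoly2}); the $\Vwdw^{(0)}$ term has $\Vwdw^{(0)}\circ W=\Vwdw\circ W$ and $\Vwdw^{(0)}\times\dw=\Vwdw\times\dw$ directly from its definition and Lemma \ref{lem:intsumrestcont}, so your attribution of the hypothesis to that term is slightly off, though your actual USG/LSG argument via Lemmas \ref{lem:genoppoly2} and \ref{lem:intsumrestcont} is the right one.
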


\begin{proof}
\begin{enumerate}
\item This follows from Lemma \ref{lem:intsumasscomm} and the fact that 
$\ldw_1\Vwdw^{(j)}\pdw \ldw _2\Vwdw^{(j)}\equaln (\lambda_1+\lambda _2)^{\dws}\Vwdw^{(j)}.$
%Theorem \ref{thm:scalarmult}.
\item We combine terms of the kind $\ldw\Vwdw^{(j)}$ through $\pdw$ intersection-sum.
We need results of the kind `when $\Vwdw$ has a relevant property,  then so do  $\Vwdw^{(j)},j\ne 0, $ $\ldw_j\Vwdw^{(j)},j\ne 0,$ 
$\pdw\{\ldws_j\Vwdw^{(j)},j\ne 0\}.$'
These results are available in 
Lemmas \ref{lem:intsumrestcont}, \ref{lem:genoppoly2}, \ref{lem:genoppoly3}. 

Note that, in general, $\Vwdw^{(0)}\circ \dw $ is not a subspace of $\Vwdw\circ \dw $ 
and $\Vwdw^{(0)}\times W $ is not a superspace of $\Vwdw\times  W. $      

\item From the previous part of the present lemma, we have that 
 $$p(\Vwdw)\circ W= \Vwdw\circ W, \ \ \ \ p(\Vwdw)\times \dw= \Vwdw \times \dw.$$
Therefore, $$zero(\Vwdw)\equaln p(\Vwdw)\circ W \oplus p(\Vwdw)\times \dw.$$

The result now follows from Lemma \ref{lem:decoupledcomposition}.
\item Follows from the fact that, when $\Vwdw$  is a genop and $p(\Vwdw)$ is decoupled, $$p(\Vwdw) \equaln  p(\Vwdw)\circ W \oplus p(\Vwdw)\times \dw\equaln  \Vwdw\circ W\oplus  \Vwdw \times \dw.$$ 
%\item This follows from Theorem \ref{thm:scalarmult} and Lemma \ref{lem:intsumasscomm}.
\end{enumerate}
\end{proof}
The next theorem  establishes that polynomials of genops behave similarly to
polynomials of operators, viz., they have minimal annihilating polynomials.

\begin{theorem}
\label{thm:minimalannpoly}
\begin{enumerate}
\item
%Let `$(p_1(\Vwdw))( q(\Vwdw))\equiv (p_1(\Vwdw))_{WW_1}\lrar ( q(\Vwdw))_{W_1\dw}$.
Let  $\Vwdw$ be  a genop.
Suppose $p(s) \equaln p_1(s)q(s).$ Then $$p(\Vwdw)\equaln  (p_1(\Vwdw))*( q(\Vwdw)). $$
%($(p_1(\Vwdw))( q(\Vwdw))\equiv (p_1(\Vwdw))_{WW_1}\lrar ( q(\Vwdw))_{W_1\dw}$).
\item Let  $\Vwdw$ be  a genop and let
$p(s)\equaln p_1(s)q(s)+a(s).$  Then \\
$$p(\Vwdw)\equaln (( p_1(\Vwdw))*(q(\Vwdw)) \pdw a(\Vwdw).$$
\item 
%If $p(s)$ annihilates $\Vwdw ,$ then $$p(\Vwdw)= zero(\Vwdw)\equiv \Vwdw\circ W\oplus \Vwdw\times  \dw .$$
Within nonzero multiplying factor, there is a unique polynomial 
of minimum degree $(\geq 1)$ which annihilates a genop $\Vwdw $.

%then we must have $\hat{p}(s)=\lambda p(s),\lambda \ne 0.$ 
\end{enumerate}

\end{theorem}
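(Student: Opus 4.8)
\textbf{Proof proposal for Theorem~\ref{thm:minimalannpoly}.}

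The plan is to prove the three parts in order, with parts~1 and~2 doing the real work and part~3 following by a short Euclidean-algorithm argument. For part~1, write $p_1(s)=\Sigma_i\alpha_is^i$ and $q(s)=\Sigma_j\beta_js^j$, so that $p(s)=\Sigma_{i,j}\alpha_i\beta_j s^{i+j}$. I would first establish that the $*$-operation distributes over $\pdw$ on genops, i.e. $(\oVonewdw\pdw\oVtwowdw)*\Vwdw=(\oVonewdw*\Vwdw)\pdw(\oVtwowdw*\Vwdw)$ whenever the pieces are genops sharing a common restriction to $W$; this is exactly an instance of the distributivity Theorem~\ref{thm:distributivity}, applied after unwinding $*$ into $\lrar$ with dummy index sets, since the relevant dot-cross conditions ($\Vbc\times B\subseteq\Vab\times B$ and $\Vbc\circ B\supseteq\Vab\circ B$) are guaranteed by Lemma~\ref{lem:genoppoly2} (parts~1--3) and Lemma~\ref{lem:genoppoly3}: all the $\Vwdw^{(k)}$ and their scalar multiples have the same restriction to $W$ and contraction to $\dw$ as the genop $\Vwdw$ itself. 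Using associativity of $*$ (Theorem~\ref{thm:notmorethantwice}) one has $\Vwdw^{(i)}*\Vwdw^{(j)}=\Vwdw^{(i+j)}$, and also $\ldws_i\Vwdw^{(i)}*\ldws_j\Vwdw^{(j)}=(\alpha_i\beta_j)^{\dws}\Vwdw^{(i+j)}$ after checking how scalar multiplication interacts with $*$ (Theorem~\ref{thm:scalarmult}, Lemma~\ref{lem:genoppoly3}). Combining these and reassembling via $\pdw$ gives $(p_1(\Vwdw))*(q(\Vwdw))=\Sigma_{i,j}(\alpha_i\beta_j)^{\dws}\Vwdw^{(i+j)}$. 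One then checks this equals $p(\Vwdw)=\Sigma_k(\Sigma_{i+j=k}\alpha_i\beta_j)^{\dws}\Vwdw^{(k)}$, using $(\mu^{\dws}+\nu^{\dws})\V=\mu^{\dws}\V\pdw\nu^{\dws}\V$ (Theorem~\ref{thm:scalarmult}(1)) and the fact that the $\Vwdw^{(0)}$-term absorbs correctly ($\ldw\Vwdw^{(j)}\pdw 0^{\dws}\Vwdw^{(0)}=\ldw\Vwdw^{(j)}$, Lemma~\ref{lem:genoppoly3}, together with $\Vwdw^{(0)}*\Vwdw^{(j)}=\Vwdw^{(j)}$, Lemma~\ref{lem:genoppoly2}(5)).

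Part~2 is then immediate: $p(\Vwdw)=(p_1q)(\Vwdw)\pdw a(\Vwdw)$ by Lemma~\ref{lem:newgenoppoly}(1) (additivity of $p\mapsto p(\Vwdw)$ over $\pdw$), and $(p_1q)(\Vwdw)=(p_1(\Vwdw))*(q(\Vwdw))$ by part~1.

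For part~3, let $d(s)$ be a monic polynomial of minimum degree $\ge 1$ annihilating $\Vwdw$ — such a polynomial exists because, $\Vwdw$ being finite-dimensional, the Cayley--Hamilton-type argument for genops gives at least one annihilating polynomial (one may invoke here that $\Vwdw^{(k)}$ eventually becomes dependent in the appropriate sense, or simply that some polynomial annihilates the genop obtained from the underlying operator description). Suppose $d_1(s)$ is another monic annihilating polynomial of the same minimal degree. Write $d_1(s)=d(s)+r(s)$ with $\deg r<\deg d$ (or $r\equiv 0$). By Lemma~\ref{lem:newgenoppoly}(1), $d_1(\Vwdw)=d(\Vwdw)\pdw r(\Vwdw)$. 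Now $d(\Vwdw)$ and $d_1(\Vwdw)$ are both decoupled and, by Lemma~\ref{lem:newgenoppoly}(4), each equals $zero(\Vwdw)=\Vwdw\circ W\oplus\Vwdw\times\dw$. Since $r(\Vwdw)$ is a genop with the same restriction to $W$ and contraction to $\dw$ as $\Vwdw$ (Lemma~\ref{lem:newgenoppoly}(2)), Lemma~\ref{lem:decoupledcomposition}(2) applied to the decoupled $zero(\Vwdw)$ gives $zero(\Vwdw)\pdw r(\Vwdw)=r(\Vwdw)$; hence $r(\Vwdw)=d_1(\Vwdw)=zero(\Vwdw)$, i.e. $r(\Vwdw)$ is decoupled and equals $zero(\Vwdw)$, so $r(s)$ annihilates $\Vwdw$. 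By minimality of $\deg d$, we must have $r\equiv 0$, so $d_1=d$. This proves uniqueness up to a nonzero scalar factor (the monic normalization being just a choice of that factor).

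\textbf{Expected main obstacle.} The delicate point is part~1: verifying that $*$ genuinely distributes over $\pdw$ in this setting. The subtlety is that Theorem~\ref{thm:distributivity} requires dot-cross inequalities between the composed spaces, and these must be matched against the correct ``side'' ($W$ vs $\dw$, and via the copy $W_1$) once $*$ is unwound into $\lrar$ with dummy index sets; one must be careful that the genop hypothesis on $\Vwdw$ — and the fact, from Lemmas~\ref{lem:genoppoly2} and~\ref{lem:genoppoly3}, that all the $\Vwdw^{(k)}$ and their scalar multiples inherit the same restriction to $W$ and contraction to $\dw$ — is exactly what is needed so that Theorem~\ref{thm:distributivity}(2),(4) apply at every stage of the expansion. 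The bookkeeping of scalar factors through $*$ (that $\ldws_i\Vwdw^{(i)}*\ldws_j\Vwdw^{(j)}=(\alpha_i\beta_j)^{\dws}\Vwdw^{(i+j)}$, including the cases where some $\alpha$ or $\beta$ is zero) also needs the explicit form $\lambda^b\Vab=(\Vab\lrar\{[I_B\ \lambda I_{B'}]\})_{AB}+\Vab\times B$ and the handling of the $\Vwdw^{(0)}$ term, but this is routine once the distributivity step is secured.
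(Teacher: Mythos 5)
Your proposal is correct and takes essentially the same route as the paper: part 1 by expanding $p_1(\Vwdw)*q(\Vwdw)$ and invoking the distributivity Theorem \ref{thm:distributivity}, with the required dot-cross conditions supplied by the genop hypothesis together with the facts that powers and scalar multiples of $\Vwdw$ retain its restriction to $W$ and contraction to $\dw$ (Lemmas \ref{lem:genoppoly2}, \ref{lem:genoppoly3}, \ref{lem:newgenoppoly}); part 2 by additivity; and part 3 by a division/remainder argument in which $zero(\Vwdw)$ is absorbed via Lemma \ref{lem:newgenoppoly}. Your part 3 subtracts two monic minimal-degree polynomials (quotient $1$) where the paper performs a general division $p(s)=p_1(s)q(s)+a(s)$ and cites part 2, but this is the same Euclidean mechanism, and, like the paper, you leave implicit both the existence of some annihilating polynomial and the treatment of a possible nonzero constant remainder.
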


\begin{proof}
\begin{enumerate}
\item Let  $p_1(s)\equiv \Sigma_{j=0}^n \alpha_js^j,\ \     q(s)\equiv \Sigma_{i=0}^m \beta_is^i.$ Then 
$$p_1(\Vwdw)\equaln \Sigma_{j=0}^n \alpha_j^{\dws}(\Vwdw^{(j)}),\ \  q(\Vwdw)\equaln \Sigma_{i=0}^m \beta_i^{\dws}(\Vwdw^{(i)}).$$
We are given that $p(s)\equaln p_1(s)q(s).$ In order to show that $p(\Vwdw)\equaln (p_1(\Vwdw))*( q(\Vwdw)), $ we will merely show that distributivity holds in the product
$$(\Sigma_{j=0}^n \alpha_j^{\dws}(\Vwdw^{(j)}))* (\Sigma_{i=0}^m \beta_i^{\dws}(\Vwdw^{(i)})).$$
We have $$(p_1(\Vwdw))*( q(\Vwdw))\equaln (p_1(\Vwdw))*(\Sigma_{i=0}^m \beta_i^{\dws}(\Vwdw^{(i)}))\equaln (p_1(\Vwdw))_{WW_n}\lrar(\Sigma_{i=0}^m \beta_i^{\dws}(\Vwdw^{(i)}))_{W_n\dw}.$$
Since $\Vwdw$ is a genop, we have that $$(\Vwdw \times \dw)_W\subseteqn  \Vwdw \times W.$$
From the 
second part of Lemma \ref{lem:newgenoppoly},
we know that 
$$p_1(\Vwdw)\times \dw \equaln \Vwdw \times \dw \ \ \textup{and}\ \  
\Vwdw \times W\subseteqn  \beta_i^{\dws}\Vwdw^{(i)}\times W ,\ i\ne 0.$$ 
It follows
%second part of Lemma \ref{lem:newgenoppoly},
that $$(p_1(\Vwdw))_{WW_n}\times W_n \subseteqn  \beta_i^{\dws}(\Vwdw^{(i)})_{W_n\dw}\times W_n,\ i\ne 0.$$

%Since $\Vwdw$ is a genop, we have that $$(\Vwdw \times \dw)_W\subseteqn  \Vwdw \times W.$$
%It follows  
%%second part of Lemma \ref{lem:newgenoppoly},
%that $$(p_1(\Vwdw))_{WW_n}\times W_n \subseteqn  \beta_i^{\dws}(\Vwdw^{(i)})_{W_n\dw}\times W_n,\ i\ne 0.$$
Therefore by Theorem \ref{thm:distributivity},
 it follows that 
$$ (p_1(\Vwdw))_{WW_n}\lrar(\Sigma_{i=0}^m \beta_i^{\dws}(\Vwdw^{(i)}))_{W_n\dw}\equaln 
\Sigma_{i=0}^m (p_1(\Vwdw))_{WW_n}\lrar(\beta_i^{\dws}(\Vwdw^{(i)}))_{W_n\dw}$$
[In expanded form the equation reads
$$(p_1(\Vwdw))_{WW_n}\lrar[\beta_0^{\dws}(\Vwdw^{(0)}))_{W_n\dw}\pdw \cdots\pdw\beta_m^{\dws}(\Vwdw^{(m)}))_{W_n\dw}]$$
$$(p_1(\Vwdw))_{WW_n}\lrar(\beta_0^{\dws}(\Vwdw^{(0)}))_{W_n\dw}\pdw \cdots \pdw (p_1(\Vwdw))_{WW_n}\lrar(\beta_m^{\dws}(\Vwdw^{(m)}))_{W_n\dw}].$$
Thus,\hspace{2.5cm} $(p_1(\Vwdw))*(\Sigma_{i=0}^m \beta_i^{\dws}(\Vwdw^{(i)}))\equaln 
\Sigma_{i=0}^m (p_1(\Vwdw))*(\beta_i^{\dws}(\Vwdw^{(i)})).$

\vspace{0.6cm}

Next consider the expression 
$$(p_1(\Vwdw))*(\beta_i^{\dws}(\Vwdw^{(i)}))\equaln  (p_1(\Vwdw))_{WW_n}\lrar(\beta_i^{\dws}(\Vwdw^{(i)}))_{W_n\dw}\equaln 
(\Sigma_{j=0}^n \alpha_j^{\dws}(\Vwdw^{(j)})      )_{WW_n}\lrar(\beta_i^{\dws}(\Vwdw^{(i)}))_{W_n\dw}.$$
From the 
second part of Lemma \ref{lem:newgenoppoly},
we know that $$ \alpha_j^{\dws}(\Vwdw^{(j)}) \circ \dw \subseteqn  \Vwdw \circ \dw, \ j\ne 0 \ \ \textup{and}\ \  \beta_i^{\dws}(\Vwdw^{(i)})\circ W \equaln  \Vwdw \circ W.$$
Since $\Vwdw$ is a genop, we have that $$(\Vwdw \circ \dw)_W\subseteqn  \Vwdw \circ W.$$

It follows that $$ \alpha_j^{\dws}(\Vwdw^{(j)})) _{WW_k}\circ {W_k}\subseteqn  (\beta_i^{\dws}(\Vwdw^{(i)}))_{W_k\dw}\circ W_k,\ j\ne 0.$$
From Theorem \ref{thm:distributivity},
 we then have that $$[\Sigma_{j=0}^n \alpha_j^{\dws}(\Vwdw^{(j)})      ]_{WW_k}\lrar(\beta_i^{\dws}(\Vwdw^{(i)}))_{W_k\dw}\equaln \Sigma_{j=0}^n [(\alpha_j^{\dws}(\Vwdw^{(j)}))_{WW_k}
\lrar (\beta_i^{\dws}(\Vwdw^{(i)}))_{W_k\dw}].$$ 
[In expanded form the equation reads
$$ [\alpha_0^{\dws}(\Vwdw^{(0)})\pdw \cdots \pdw \alpha_n^{\dws}(\Vwdw^{(n)})]_{WW_k}\lrar(\beta_i^{\dws}(\Vwdw^{(i)}))_{W_k\dw}$$
$$=(\alpha_0^{\dws}(\Vwdw^{(0)}))_{WW_k}\lrar (\beta_i^{\dws}(\Vwdw^{(i)}))_{W_k\dw} \pdw \cdots \pdw (\alpha_n^{\dws}(\Vwdw^{(n)}))_{WW_k}\lrar (\beta_i^{\dws}(\Vwdw^{(i)}))_{W_k\dw}].
$$
Thus, $$(p_1(\Vwdw))*(\beta_i^{\dws}(\Vwdw^{(i)}))\equaln \Sigma_{j=0}^n [\alpha_j^{\dws}(\Vwdw^{(j)}))] *(\beta_i^{\dws}(\Vwdw^{(i)}))\equaln \Sigma_{j=0}^n[(\alpha_j^{\dws}(\Vwdw^{(j)}))*(\beta_i^{\dws}(\Vwdw^{(i)}))].$$ 
Thus 
$$(\Sigma_{j=0}^n \alpha_j^{\dws}(\Vwdw^{(j)}))* (\Sigma_{i=0}^m \beta_i^{\dws}(\Vwdw^{(i)}))
\equaln  \Sigma_{i=0}^m \Sigma_{j=0}^n[(\alpha_j^{\dws}(\Vwdw^{(j)}))*(\beta_i^{\dws}(\Vwdw^{(i)}))]
.$$\\
\item  This follows from  the 
first part of Lemma \ref{lem:newgenoppoly}
and the
previous part of the present theorem.\\
\item From the 
fourth part of Lemma \ref{lem:newgenoppoly},
we have that if $\Vwdw$ is a genop and
$p(s)$ annihilates it, then $$p(\Vwdw)\equaln  zero(\Vwdw).$$ If $p(s),q(s)$ both have the minimum degree among all polynomials that annihilate $\Vwdw,$ and do not differ only by a multiplying factor,
then we can write $$p(s)\equaln p_1(s)q(s)+a(s), \ \ \textup{where}\ \  degree(a(s))<degree(q(s)).$$
But then by the previous part of the present theorem,  we will have 
$$zero(\Vwdw)\equaln  p(\Vwdw)\equaln  (( p_1(\Vwdw))*(zero(\Vwdw))) \pdw a(\Vwdw)\equaln  a(\Vwdw),$$
which violates the minimality of degree of $p(s),q(s).$

\end{enumerate}
\end{proof}
\begin{remark}
\begin{itemize}
\item
Just as in the case of the usual operators, in general for two 
genops $\Vwdw ^1, \Vwdw ^2$ it will not be true that 
$(\Vwdw ^1)*(\Vwdw ^2)$ equals $(\Vwdw ^2)*(\Vwdw ^1),$
but for two polynomials $p(s),q(s)$ it will be true that 
$(p(\Vwdw))*(q(\Vwdw))\equaln  (q(\Vwdw))*(p(\Vwdw)).$
\item
In the above proof of Theorem \ref{thm:minimalannpoly}, note that, although it is not required, 
it is true that\\ 
$(p_1(\Vwdw))_{WW_n}\times W_n \subseteqn  \beta_0^{\dws}(\Vwdw^{(0)})_{W_n\dw}\times W_n \ \ $
and 
$ \ \ \alpha_0^{\dws}(\Vwdw^{(0)})) _{WW_n}\circ {W_n}\subseteqn  (\beta_0^{\dws}(\Vwdw^{(0)}))_{W_n\dw}\circ W_n.$
\end{itemize}
\end{remark}
%\begin{theorem}
%Let $\Vonewdw $ be a USG and let $\Vwdw$ be a genop such that 
%$\Vwdw\subseteq \Vonewdw $ and $\Vwdw\circ W= \Vonewdw\circ W.$
%Let $\Vw $ be invariant in $\Vonewdw .$ Then
%\begin{enumerate}
%\item $\Vonewdw +(\Vw)_{dw} = \Vonewdw + (\Vw \bigoplus (\Vw)_{dw})$ and therefore is a genop.
%\item  if $p(\Vwdw) $ is decoupled, so is $p(\Vonewdw +(\Vw)_{dw})$
%and $p(\Vonewdw \bigcap \Vw).$
%\end{enumerate}
%
%\end{theorem}

%\begin{lemma}
%Let $p(s)$ be a polynomial and let $\Vwdw $ be a genop.
%\begin{enumerate}
%\item Then $\Vwdw\circ W= p(\Vwdw)\circ W, \Vwdw\times \dw= p(\Vwdw)\times \dw.$
%\item If $p(\Vwdw)$ is decoupled then $p(\Vwdw)= \Vwdw\circ W \bigoplus  
%\Vwdw \times \dw.$
%\end{enumerate}

%\end{lemma}
A commonly occurring situation in multivariable control,
when translated into the language of this paper,  is that one genaut contains another
but has the same restriction or contraction to some set.

For instance, in the case of the dynamical system $\Vwdwmumy,$ defined by Equation \ref{eqn:StateEqnsDynSysL2}, the genop $$\Vwdw \equiv \Vwdwmumy \lrar (\F_{M_y}\oplus \0_{M_u})$$
is contained in $\Vwdwmumy\circ W\dw$ and contains $\Vwdwmumy\times W\dw.$
In the former case it has the same restriction to $W$ as the genaut containing it and, in the latter case, the same contraction to $\dw$
as the genaut that it contains.
This is also true of every genop which can be obtained from $\Vwdwmumy$ 
by $wm_u-$ feedback or by $m_y\dW -$ injection. 

The fundamental 
questions about annihilating polynomials of such genops, dealt with in the
next section, need the next two theorems which are duals of each other.
We prove the first in detail through a series of lemmas but skip
the  proof of the dual result.

\begin{theorem}
\label{USG}
Let $\Vonewdw $ be a USG and let $\Vwdw$ be a genop such that\\ 

$\Vwdw\subseteqn  \Vonewdw $ and $\Vwdw\circ W\equaln  \Vonewdw\circ W.$
Let $\Vw $ be invariant in $\Vonewdw .$ Then

\begin{enumerate}
\item $\Vonewdw +(\Vw)_{\dw} \equaln  \Vwdw + (\Vw \oplus (\Vw)_{\dw})$ and is a genop.\\
\item $\Vw $ is invariant in $\Vwdw$ and therefore $\Vwdw \cap \Vw \equaln  \Vwdw \bigcap (\Vw \oplus (\Vw)_{\dw})$ and $\Vwdw \cap \Vw $ is a genop.\\ 
\item   $p(\Vonewdw+(\Vw)_{\dw}) \equaln  p(\Vwdw)+(\Vw \oplus (\Vw)_{\dw}).$\\ 
\item 
$p(\Vwdw\cap \Vw) \equaln  p(\Vwdw)\bigcap \Vw\equaln  p(\Vwdw)\bigcap (\Vw \oplus (\Vw)_{\dw}).$ \\
\item  If $p(\Vwdw) $ is decoupled, so are $p(\Vonewdw +(\Vw)_{\dw})$
and $p(\Vwdw \cap \Vw).$\\
\item If $p_1(\Vonewdw +(\Vw)_{\dw}),\ \  p_2(\Vwdw \cap \Vw)$
are decoupled, then  $p_1p_2(\Vwdw) $ is decoupled.
\end{enumerate}
\end{theorem}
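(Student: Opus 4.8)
\textbf{Proof proposal for Theorem \ref{USG}.}

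The plan is to establish the six parts essentially in sequence, leaning heavily on the fact that $\Vwdw$ and $\Vonewdw$ share the same restriction to $W$, that $\Vw$ is invariant (both controlled and conditioned) in $\Vonewdw$, and on the earlier structural lemmas (Lemma \ref{lem:intsumrestcont}, Lemma \ref{lem:genoppoly2}, Lemma \ref{lem:newgenoppoly}, Lemma \ref{lem:decoupledcomposition}) together with Theorems \ref{thm:conditionallyinvsum} and \ref{thm:controlledinvintersection}. First I would prove part 1. The inclusion $\Vwdw + (\Vw\oplus(\Vw)_{\dw}) \subseteq \Vonewdw + (\Vw)_{\dw}$ is immediate from $\Vwdw\subseteq\Vonewdw$ and $(\Vw)_W\subseteq\Vonewdw\circ W$ (conditioned invariance forces $\Vw\subseteq\Vonewdw\circ W$ when we invoke controlled invariance via Theorem \ref{thm:controlledinvintersection}(1); actually $\Vw\subseteq\Vonewdw\circ W$ directly since $\Vw$ is controlled invariant). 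For the reverse inclusion, take $(f_W,g_{\dw})\in\Vonewdw$; since $\Vonewdw\circ W = \Vwdw\circ W$, there is $(f_W,g'_{\dw})\in\Vwdw$, and I must show $g_{\dw}-g'_{\dw}\in(\Vw)_{\dw} + \Vwdw\text{-relevant piece}$. Here is where conditioned invariance of $\Vw$ in $\Vonewdw$ combined with $g_{\dw},g'_{\dw}$ both lying over $f_W$ should be used: the difference lies over $0_W$, and I expect to show it lies in $(\Vw)_{\dw}$ using that $\Vonewdw\times\dw$ is controlled appropriately — this is the step that needs care. Then the genop claim follows from Theorem \ref{thm:conditionallyinvsum}(2) since $\Vonewdw$ is a USG and $\Vw$ (being invariant, in particular conditioned invariant) is a conditioned invariant subspace; that theorem gives exactly that $\Vonewdw + (\Vw\oplus(\Vw)_{\dw})$ is a genop.

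Part 2: I would show $\Vw$ is controlled and conditioned invariant in $\Vwdw$. Conditioned invariance ($\Vwdw\lrar\Vw\subseteq(\Vw)_{\dw}$) follows from $\Vwdw\subseteq\Vonewdw$ and conditioned invariance of $\Vw$ in $\Vonewdw$. Controlled invariance ($\Vwdw\lrar(\Vw)_{\dw}\supseteq\Vw$) is the substantive direction: given $f_W\in\Vw\subseteq\Vwdw\circ W$ (equality of restrictions), there is $g_{\dw}$ with $(f_W,g_{\dw})\in\Vwdw\subseteq\Vonewdw$, and by conditioned invariance $g_{\dw}\in(\Vw)_{\dw}$, hence $f_W\in\Vwdw\lrar(\Vw)_{\dw}$. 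The equality $\Vwdw\cap\Vw = \Vwdw\cap(\Vw\oplus(\Vw)_{\dw})$ and the genop property then follow from the analogue argument used for Theorem \ref{thm:controlledinvintersection}(2) applied with the LSG that $\Vwdw\cap(\text{something})$ produces — more precisely, I would argue $\Vwdw$ restricted to $\Vw$ inherits the genop conditions because $\Vwdw$ is a genop and $\Vw$ is invariant, checking the two dot–cross conditions directly on $\Vwdw\cap(\Vw\oplus(\Vw)_{\dw})$.

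Parts 3 and 4 are the distributivity-of-polynomials statements. For part 3, I would proceed by induction on the degree of $p$, reducing to the generators $\pdw$, $\ldw(\cdot)$, and $*$; the key inputs are Lemma \ref{lem:intsumrestcont} (intersection-sum of genops with equal restriction/contraction), Lemma \ref{lem:genoppoly2} (powers and $*$-products), Lemma \ref{lem:genoppoly3} (scalar multiples), and crucially Theorem \ref{thm:distributivity}, whose dot–cross hypotheses are met because all the systems in sight are genops sharing restriction $\Vonewdw\circ W = \Vwdw\circ W$. The identity $\Vonewdw + (\Vw)_{\dw} = \Vwdw + (\Vw\oplus(\Vw)_{\dw})$ from part 1 lets me pass freely between the two descriptions at each inductive step. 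Part 4 is dual-flavoured: it uses that $p(\Vwdw)$ still has the invariant subspace $\Vw$ (which follows from part 2 plus the fact that polynomials of a genop keep the same restriction to $W$, Lemma \ref{lem:newgenoppoly}(2)), so intersecting with $\Vw$ commutes with forming $p$; concretely $p(\Vwdw\cap\Vw) = p(\Vwdw)\cap\Vw$ by the same distributivity machinery, and the second equality $p(\Vwdw)\cap\Vw = p(\Vwdw)\cap(\Vw\oplus(\Vw)_{\dw})$ is immediate from $\Vw$ being invariant in the genop $p(\Vwdw)$. Part 5 is then a one-line consequence: if $p(\Vwdw)$ is decoupled, then by Lemma \ref{lem:decoupledcomposition} and parts 3–4 both $p(\Vonewdw+(\Vw)_{\dw})$ and $p(\Vwdw\cap\Vw)$ are obtained from a decoupled linkage by $+$ or $\cap$ with a decoupled linkage (since $\Vw\oplus(\Vw)_{\dw}$ is decoupled), hence decoupled. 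For part 6, write $p_1p_2(\Vwdw) = (p_1(\Vwdw))*(p_2(\Vwdw))$ by Theorem \ref{thm:minimalannpoly}(1); then decompose $p_2(\Vwdw) = (p_2(\Vwdw)\cap\Vw) \pdw (\text{quotient part})$ using part 4 and the invariance, so that $p_1p_2(\Vwdw)$ splits as a $*$-product/intersection-sum in which one factor involves $p_1(\Vonewdw+(\Vw)_{\dw})$ and the other $p_2(\Vwdw\cap\Vw)$, both decoupled by hypothesis; composing/intersection-summing decoupled linkages stays decoupled by Lemma \ref{lem:decoupledcomposition}. The main obstacle I anticipate is part 1's reverse inclusion — pinning down exactly why the difference $g_{\dw}-g'_{\dw}$ lies in $(\Vw)_{\dw}$ rather than merely in $\Vonewdw\times\dw$ — and, relatedly, making sure the hypotheses of Theorem \ref{thm:distributivity} are verified cleanly at every inductive step in parts 3 and 4; everything else is bookkeeping with the structural lemmas.
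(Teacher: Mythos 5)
Your overall architecture for parts 1--5 matches the paper's, but two steps are not sound as written. First, the step you yourself flag in part 1 is easier than you fear, and your worry reveals a missed inclusion: knowing $g_{\dw}-g'_{\dw}\in \Vonewdw\times \dw$ is already enough, because conditioned invariance of $\Vw$ in $\Vonewdw$ gives $\Vonewdw\times \dw \subseteq \Vonewdw\lrar \Vw\subseteq (\Vw)_{\dw}$ (the contraction is what $0_W\in\Vw$ links to). This is exactly how the paper closes the reverse inclusion (it phrases it via Lemma \ref{lem:twogenasrestcont}: $\Vwdw+\Vonewdw\times\dw=\Vonewdw$, then absorbs $\Vonewdw\times\dw$ into $(\Vw)_{\dw}$); the same observation is also needed to make your ``immediate'' forward inclusion actually immediate. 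So part 1 is fixable in one line, but as written you leave it open and misjudge what is missing. (Your route to the genop claim via Theorem \ref{thm:conditionallyinvsum}(2) is legitimate and slightly different from the paper, which instead computes the four restrictions/contractions of the common space and inherits the genop property from $\Vwdw$; your part 2 controlled-invariance argument is also fine and a bit more direct than the paper's.)

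The genuine gap is part 6. You propose to decompose $p_2(\Vwdw)$ as $(p_2(\Vwdw)\cap\Vw)\ \pdw\ (\textup{quotient part})$ and to split $p_1p_2(\Vwdw)$ into a product/intersection-sum whose factors are $p_1(\Vonewdw+(\Vw)_{\dw})$ and $p_2(\Vwdw\cap\Vw)$. No such splitting exists: the situation is ``block triangular,'' not ``block diagonal,'' and the paper never claims one. The correct mechanism is different: decoupledness of $p_1(\Vonewdw+(\Vw)_{\dw})=p_1(\Vonewdw)+(\Vw)_{\dw}$ forces $p_1(\Vonewdw)\circ\dw\subseteq(\Vw)_{\dw}$, hence $p_1(\Vwdw)\circ\dw\subseteq(\Vw)_{\dw}$; this alone justifies replacing the second factor in $p_1p_2(\Vwdw)=p_1(\Vwdw)*p_2(\Vwdw)$ by $p_2(\Vwdw)\cap\Vw = p_2(\Vwdw\cap\Vw)$ (only the part of $p_2(\Vwdw)$ sitting over $\Vw$ can be reached through the composition), and then Lemma \ref{lem:decoupledcomposition}(1) gives that composing with the decoupled $p_2(\Vwdw\cap\Vw)$ yields a decoupled space. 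Without this ``range of $p_1(\Vwdw)$ lies in $(\Vw)_{\dw}$'' argument, your part 6 does not go through. A smaller point: for parts 3--4 the heavy lifting is done by Lemmas \ref{lem:condcontinpoly}, \ref{lem:condandops}/\ref{lem:contandops} and \ref{lem:invinpoly} (invariance and $+(\Vw)_{\dw}$, $\cap\,\Vw$ commute with forming polynomials), together with part 1 applied to the pair $p(\Vonewdw), p(\Vwdw)$; Theorem \ref{thm:distributivity} is not the relevant tool there, so your induction would in effect have to reprove those lemmas, including persistence of invariance under $*$, $\pdw$ and $\ldw$, which your sketch omits.
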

The proof of the theorem needs the following lemmas.

\vspace{0.1cm}

%This has useful consequences which are outlined in the following lemma.
\begin{lemma}
\label{lem:twogenasrestcont}
Let $\oVonewdw, \oVtwowdw$ be generalized autonomous systems 
with $\oVonewdw \supseteqn  \oVtwowdw .$\\
\begin{enumerate}
\item If $p(s)$ is any polynomial in $s,$ then $p(\oVonewdw)\supseteqn   p(\oVtwowdw).$\\
\item Let $\oVonewdw\circ W \equaln \oVtwowdw\circ W.$ Then 
$\oVtwowdw +(\oVonewdw \times \dw) \equaln \oVonewdw .$\\
\item  Let $\oVonewdw\times \dw \equaln \oVtwowdw\times \dw.$ Then
$\oVonewdw \bigcap (\oVtwowdw \circ W) \equaln \oVtwowdw .$\\

\item Let $\oVonewdw, \oVtwowdw$ be USG and let $\oVonewdw\circ W\equaln \oVtwowdw\circ W.$ If $p(\oVtwowdw)$ is decoupled then so is $p(\oVonewdw).$\\
\item Let $\oVonewdw, \oVtwowdw$ be LSG and let $\oVonewdw\times \dw\equaln \oVtwowdw\times \dw.$ If $p(\oVonewdw)$ is decoupled then so is $p(\oVtwowdw).$
\end{enumerate}
\end{lemma}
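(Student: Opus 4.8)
The plan is to handle the five parts in order, since later parts lean on earlier ones. For part (1), I would argue term-by-term: $p(\oVonewdw)$ and $p(\oVtwowdw)$ are built from the pieces $\Vwdw^{(k)}$, $\lambda^{\dws}\Vwdw^{(k)}$ via the operations $*$ and $\pdw$. From the definition of $*$ as a matched composition of copies, and of $\pdw$ as an intersection-sum, both operations are monotone in each argument with respect to $\subseteq$; also $\Iwdw$ and $[\Vwdw\circ W\oplus\Vwdw\times\dw]$ are monotone in the relevant senses. Hence $\oVonewdw\supseteq\oVtwowdw$ propagates to $\Vwdw^{(k)}$ for every $k$ (by induction on $k$ using the $*$-definition), then to the scalar multiples, then to the intersection-sum that assembles $p(\cdot)$. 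So $p(\oVonewdw)\supseteq p(\oVtwowdw)$.

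For part (2), I would use the intersection-sum visualization (Lemma \ref{lem:intsumrestcont}(1)): $\oVtwowdw+(\oVonewdw\times\dw)$ is the ordinary vector-space sum (the index sets coincide, so $\pdw$ here is just $+$). We have $[\oVtwowdw+(\oVonewdw\times\dw)]\supseteq\oVtwowdw$ trivially, and it is contained in $\oVonewdw$ since $\oVtwowdw\subseteq\oVonewdw$ and $\oVonewdw\times\dw\subseteq\oVonewdw$. For the reverse containment, take $(f_W,g_{\dw})\in\oVonewdw$. Since $f_W\in\oVonewdw\circ W=\oVtwowdw\circ W$, there is $h_{\dw}$ with $(f_W,h_{\dw})\in\oVtwowdw$; then $(0_W,g_{\dw}-h_{\dw})\in\oVonewdw$, i.e.\ $g_{\dw}-h_{\dw}\in\oVonewdw\times\dw$, so $(f_W,g_{\dw})=(f_W,h_{\dw})+(0_W,g_{\dw}-h_{\dw})\in\oVtwowdw+(\oVonewdw\times\dw)$. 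Part (3) is the exact dual of (2): replace $\circ W$ by $\times\dw$, $+$ by $\cap$, $\times\dw$ by $\circ W$, and run the same argument (or invoke the duality mechanism of Subsection \ref{sec:perp_dual} / \ref{sec:adjointdual} with the adjoint, using Theorem \ref{thm:adjointprop} and the fact that USG/LSG interchange under adjoint).

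Parts (4) and (5) are the substantive ones, and I expect (4) to be the main obstacle. The idea: given USGs $\oVonewdw\supseteq\oVtwowdw$ with equal restriction to $W$, part (2) gives $\oVonewdw=\oVtwowdw+(\oVonewdw\times\dw)$. Apply a polynomial $p$: by part (1) of Lemma \ref{lem:newgenoppoly}, $p$ distributes over $\pdw$-sums of powers, but here the summands are not of that canonical shape, so I would instead argue as follows. Write $\oVonewdw=\oVtwowdw\pdw\,\Z$ where $\Z\equiv 0_W\oplus(\oVonewdw\times\dw)$ is a decoupled linkage. Now $p(\oVonewdw)$ is built from $\oVonewdw^{(k)}=(\oVtwowdw\pdw\Z)^{(k)}$; using Lemma \ref{lem:intsumrestcont}(3) (sum of USGs with common restriction is a USG with that restriction) together with the distributivity Theorem \ref{thm:distributivity} — whose dot-cross hypotheses hold because $\oVtwowdw$ is USG and $\Z$ has contraction to $\dw$ equal to $\oVonewdw\times\dw$ which by the USG condition contains $(\oVonewdw\times W)_{\dw}$ type data — one shows inductively that $\oVonewdw^{(k)}=\oVtwowdw^{(k)}\pdw\Z^{(k)}$ modulo absorbing $\Z^{(k)}$ into $zero$-type terms, and crucially $\Z^{(k)}$ stays decoupled. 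Consequently $p(\oVonewdw)=p(\oVtwowdw)\pdw(\text{decoupled piece})$. If $p(\oVtwowdw)$ is decoupled, then by Lemma \ref{lem:decoupledcomposition}(3) the $\pdw$ of two decoupled linkages is decoupled, hence $p(\oVonewdw)$ is decoupled. Part (5) is the dual statement and follows by the same adjoint-duality transfer as in (3), using Theorem \ref{thm:adjointprop}(4) and Theorem \ref{thm:LSG} (adjoint commutes with $p$), so that decoupledness of $p(\oVonewdw)$ transfers to $p((\oVonewdw)^a)=p(\oVonewdw^a)$ and back down.

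The delicate point throughout part (4) is verifying the dot-cross side conditions of Theorem \ref{thm:distributivity} at each application and checking that the "decoupled remainder" really does stay decoupled under $*$ and $\pdw$ — this is where Lemma \ref{lem:decoupledcomposition} does the heavy lifting, and I would be careful that the hypothesis $\oVonewdw\circ W=\oVtwowdw\circ W$ (not merely $\supseteq$) is used exactly where it is needed, namely to get equality rather than containment in the decomposition $\oVonewdw=\oVtwowdw+(\oVonewdw\times\dw)$.
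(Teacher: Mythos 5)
Your parts (1), (2), (3) and (5) are correct and follow essentially the same route as the paper: termwise monotonicity for (1), the element-chase using $\oVonewdw\circ W=\oVtwowdw\circ W$ together with $\oVonewdw\times\dw$ for (2), and duality for (3) and (5). The gap is in part (4). First, your decomposition is mis-stated: with $\Z\equiv 0_W\oplus(\oVonewdw\times\dw)$ one does \emph{not} have $\oVonewdw=\oVtwowdw\pdw\Z$, because the intersection-sum forces the two summands to share the same $W$-component, and $\Z\circ W=\0_W$, so $\oVtwowdw\pdw\Z$ collapses to $\0_W\oplus(\oVtwowdw\times\dw+\oVonewdw\times\dw)$; what part (2) actually gives is the ordinary sum $\oVonewdw=\oVtwowdw+\Z$. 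Second, and more seriously, your inductive claim that $\oVonewdw^{(k)}$ decomposes as $\oVtwowdw^{(k)}$ plus a decoupled remainder is not established and does not follow from Theorem \ref{thm:distributivity}, which distributes $\lrar$ over the intersection-sum, not the $*$ operation over ordinary sums; pushing the decomposition $\oVtwowdw+\Z$ through powers and then through $p(\cdot)$ is exactly the hard part, and the sketch does not supply it.

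The paper avoids this entirely by applying parts (1) and (2) not to $\oVonewdw,\oVtwowdw$ but to $p(\oVonewdw),p(\oVtwowdw)$: by part (1), $p(\oVonewdw)\supseteq p(\oVtwowdw)$, and since both genauts are USG, Lemma \ref{lem:newgenoppoly} gives $p(\oVonewdw)\circ W=\oVonewdw\circ W=\oVtwowdw\circ W=p(\oVtwowdw)\circ W$; then part (2) applied to this pair yields
\begin{equation*}
p(\oVonewdw)\ =\ p(\oVtwowdw)\ +\ \bigl(\0_W\oplus\, p(\oVonewdw)\times\dw\bigr),
\end{equation*}
a sum of two decoupled spaces once $p(\oVtwowdw)$ is decoupled, hence decoupled by Lemma \ref{lem:decoupledcomposition}. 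No induction on powers, no distributivity hypotheses, and no tracking of remainders is needed; I recommend replacing your part (4) argument with this two-line reduction.
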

\begin{proof}
\begin{enumerate}
\item If $\oVonewdw \supseteqn  \oVtwowdw ,$ it is clear that 
$$(\oVonewdw)^{(k)} \supseteqn  (\oVtwowdw)^{(k)},\ \  
\lambda ^{\dws}(\oVonewdw)^{(k)}\supseteqn \lambda ^{\dws} (\oVtwowdw)^{(k)},$$
$$\lambda ^{\dws}(\oVonewdw)^{(k)}\pdw\cdots \pdw \sigma ^{\dws}
(\oVonewdw)^{(m)}\supseteqn  \lambda ^{\dws}(\oVtwowdw)^{(k)}\pdw\cdots \pdw \sigma ^{\dws}
(\oVtwowdw)^{(m)}.$$ The result follows.\\
\item Clearly RHS contains LHS. 
Let $(f_W,g_{\dw})\in \oVonewdw .$ 

Since  $\oVonewdw\circ W\equaln \oVtwowdw\circ W,$
for some $g'_{\dw},\ \  (f_W,g'_{\dw})\in \oVtwowdw\subseteqn  \oVonewdw .$ 

Therefore, $g_{\dw}-g'_{\dw}\in \oVonewdw \times \dw,\ $ so that $$\ (f_W,g'_{\dw})+(0_W,g_{\dw}-g'_{\dw})\equaln   (f_W,g_{\dw})\ \ \in \ \ \oVtwowdw +\oVonewdw \times \dw.$$

Thus LHS contains RHS as required.\\
\item This is dual to the previous part.\\

\item  From the first part of the present lemma, we have that  $p(\oVonewdw)\supseteqn   p(\oVtwowdw).$

Since  $\oVonewdw, \oVtwowdw$ are USG, by 
the second part of Lemma \ref{lem:newgenoppoly},
we have that 
$$p(\oVonewdw)\circ W\equaln \oVonewdw\circ W,\ \ p(\oVtwowdw)\circ W\equaln \oVtwowdw\circ W.$$ 
%and
%$p(\Vonewdw)\times \dw= \Vonewdw\times \dw,p(\Vtwowdw)\times \dw= \Vtwowdw\times \dw.$  

By the second part of the present lemma, we have that 
$$p(\oVtwowdw) +p(\oVonewdw) \times \dw \equaln p(\oVtwowdw) + (0_{W}\oplus \oVonewdw \times \dw) \equaln p(\oVonewdw) .$$
By Lemma \ref{lem:decoupledcomposition}, the sum of decoupled linkages is decoupled. The result follows.\\

\item This is dual to the previous part.
\end{enumerate}
\end{proof}
\begin{lemma}
\label{lem:condcontinpoly}
Let $\oVonewdw,\oVtwowdw$ be genauts and let $\Vw$ be conditioned (controlled) invariant in them.
Then
\begin{enumerate}
\item $\Vw$ is conditioned (controlled) invariant in $\oVonewdw*\oVtwowdw,$ \\
\item $\Vw$ is conditioned (controlled) invariant in $\oVonewdw\pdw\oVtwowdw,$\\ 
\item $\Vw$ is conditioned (controlled) invariant in $\lambda^{\dws}\oVonewdw,$ \\
\item if $p(s)$ is a polynomial in $s,$ then $\Vw$ is conditioned (controlled) invariant in $p(\oVonewdw).$
\end{enumerate}
\end{lemma}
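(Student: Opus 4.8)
The plan is to prove the four parts of Lemma \ref{lem:condcontinpoly} in order, using the fact that conditioned (controlled) invariance of $\Vw$ in a genaut $\oVonewdw$ is defined by a single containment, namely $\oVonewdw\lrar\Vw\subseteq(\Vw)_{\dw}$ for conditioned invariance and $\oVonewdw\lrar(\Vw)_{\dw}\supseteq\Vw$ for controlled invariance. Since controlled invariance in $\oVonewdw$ is equivalent, via Theorem \ref{thm:controlledconditioneddual} and Theorem \ref{thm:LSG}, to conditioned invariance in the adjoint (and polynomials commute with adjoints by Theorem \ref{thm:LSG}), it suffices in each part to prove only the conditioned-invariant case; the controlled case then follows by duality. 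So the real work is four containment verifications for the conditioned case.

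For part (1), write $\oVonewdw*\oVtwowdw=(\oVonewdw)_{WW_1}\lrar(\oVtwowdw)_{W_1\dw}$. I would chase an element: if $(f_W,g_{\dw})\in(\oVonewdw*\oVtwowdw)\lrar\Vw$, then there is $h_{W_1}$ with $(f_W,h_{W_1})\in(\oVonewdw)_{WW_1}$ and $(h_{W_1},g_{\dw})\in(\oVtwowdw)_{W_1\dw}$, together with $f_W\in\Vw$. From conditioned invariance of $\Vw$ in $\oVonewdw$ and $f_W\in\Vw$ we get $h_{W_1}\in(\Vw)_{W_1}$, i.e. $h_{W_1}$ is a copy of a vector of $\Vw$; then feeding this into $(\oVtwowdw)_{W_1\dw}$ and using conditioned invariance of $\Vw$ in $\oVtwowdw$ yields $g_{\dw}\in(\Vw)_{\dw}$, as required. (Care is needed because $W_1$ plays the role of $\dw$ for $\oVonewdw$ and of $W$ for $\oVtwowdw$, so the two invariance hypotheses really are both used, once as ``$\oVonewdw\lrar\Vw\subseteq(\Vw)_{\dw}$'' and once again in the copied form on $W_1$.) For part (2), using the definition of $\pdw$, an element of $(\oVonewdw\pdw\oVtwowdw)\lrar\Vw$ comes from $(f_W,g^1_{\dw})\in\oVonewdw$, $(f_W,g^2_{\dw})\in\oVtwowdw$ with $g_{\dw}=g^1_{\dw}+g^2_{\dw}$ and $f_W\in\Vw$; conditioned invariance in each summand gives $g^1_{\dw},g^2_{\dw}\in(\Vw)_{\dw}$, hence $g_{\dw}\in(\Vw)_{\dw}$ since $(\Vw)_{\dw}$ is a subspace. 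For part (3), $\lambda^{\dws}\oVonewdw=\{(f_W,\lambda g_{\dw}):(f_W,g_{\dw})\in\oVonewdw\}+\oVonewdw\times\dw$; an element of $(\lambda^{\dws}\oVonewdw)\lrar\Vw$ with $f_W\in\Vw$ is of the form $\lambda g_{\dw}+k_{\dw}$ with $g_{\dw}\in\oVonewdw\lrar\Vw\subseteq(\Vw)_{\dw}$ and $k_{\dw}\in\oVonewdw\times\dw\subseteq\oVonewdw\lrar\Vw\subseteq(\Vw)_{\dw}$ (using that $\oVonewdw\times\dw\subseteq\oVonewdw\lrar\Vw$ because $0_W\in\Vw$), so again the result lands in $(\Vw)_{\dw}$.

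Part (4) is then immediate by assembling the previous three: $p(\oVonewdw)$ is built from the powers $(\oVonewdw)^{(k)}$ via $*$ (iterating part (1), noting $(\oVonewdw)^{(0)}$ is covered since $\Vw\supseteq\oVonewdw\times\dw$ makes $\Vw$ conditioned invariant in the zeroth power, which one checks directly from its definition), then scaled by $\lambda^{\dws}$ (part (3)) and combined by $\pdw$ (part (2)), so conditioned invariance of $\Vw$ is preserved through every operation used to form $p(\oVonewdw)$. The main obstacle I anticipate is purely bookkeeping in part (1): keeping straight which of the several copy-sets ($W$, $\dw$, $W_1$) is ``input'' versus ``output'' of each factor, and correctly transporting the invariance condition $\oVonewdw\lrar\Vw\subseteq(\Vw)_{\dw}$ to its copied form on the intermediate set $W_1$ — everything else is a short subspace argument. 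The controlled-invariant halves of all four parts then need no separate argument: apply the already-proved conditioned statement to the adjoint systems, invoke Theorem \ref{thm:LSG} to pull polynomials through the adjoint, and Theorem \ref{thm:controlledconditioneddual} to translate back.
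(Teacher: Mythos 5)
Your proposal is correct and follows essentially the same route as the paper: prove the conditioned case by direct verification for the three building operations (your element chases for $*$, $\pdw$ and $\lambda^{\dws}$ are just the elementwise form of the paper's containment chains, and your uniform treatment of $\lambda^{\dws}$ via $\lambda^{\dws}\oVonewdw=\{(f_W,\lambda g_{\dw})\}+\oVonewdw\times\dw$ together with $\oVonewdw\times\dw\subseteq(\Vw)_{\dw}$ is equivalent to the paper's $\lambda\ne 0$ / $\lambda=0$ split), assemble these for part (4) (including the easy check for $\oVonewdw^{(0)}$), and obtain the controlled case by adjoint duality, exactly as the paper does. The only blemishes are notational: $(\oVonewdw*\oVtwowdw)\lrar\Vw$ lives on $\dw$ alone (not on $W\uplus\dw$), and the zeroth-power condition should read $(\Vw)_{\dw}\supseteq\oVonewdw\times\dw$ rather than $\Vw\supseteq\oVonewdw\times\dw$.
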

\begin{proof}
We will consider only the conditioned invariant case, the controlled invariant 
case following by duality.
Let  $\Vw$ be conditioned invariant in $\oVonewdw,\oVtwowdw.$ 
\begin{enumerate}
\item 
We have $$\Vw\lrar (\oVonewdw*\oVtwowdw)$$
$$ \equaln \Vw\lrar [(\oVonewdw)_{WW_1}\lrar (\oVtwowdw )_{W_1\dw}]$$
$$\equaln [\Vw\lrar (\Vonewdw)_{WW_1}]\lrar (\oVtwowdw )_{W_1\dw}$$
$$\subseteqn  (\Vw)_{W_1}\lrar (\oVtwowdw )_{W_1\dw}\subseteqn  (\Vw)_{\dw}.$$
\item We have 
$$ \Vw\lrar (\oVonewdw\pdw\oVtwowdw) \subseteqn  (\Vw\lrar \oVonewdw)+(\Vw\lrar \oVtwowdw)\subseteqn  (\Vw)_{\dw}.$$
\item When $\lambda  \ne 0,$ it is clear that
$$\Vw\lrar \lambda^{\dws}\oVonewdw\equaln \Vw\lrar \oVonewdw \subseteqn   (\Vw)_{\dw}.$$
Next, to consider the case where $\lambda  \equaln 0,$ note that $\Vw$ being
conditioned invariant in $\oVonewdw$ implies that $$(\Vw)_{\dw}\supseteqn \Vw\lrar \oVonewdw \supseteqn  \oVonewdw\times \dw.$$ We know that
$$0^{\dw}\oVonewdw\equivn \oVonewdw\circ W\oplus  \oVonewdw\times \dw,$$ so that 
$$\Vw\lrar 0^{\dw}\oVonewdw \equaln\Vw \lrar (\oVonewdw\circ W\oplus  \oVonewdw\times \dw)
\equaln\oVonewdw\times \dw\subseteqn  (\Vw)_{\dw}.$$\\
\item This follows from the previous parts of the present lemma.
\end{enumerate}
\end{proof}
\begin{lemma}
\label{lem:condintcontplus}
Let $\Vwdw$ be a genaut.
Then
\begin{enumerate}
\item
if $\Vw$ is conditioned invariant in $\Vwdw,$ $\Vw$ is also conditioned invariant in $\Vwdw \cap \Vw ;$ \\

if $\Vwdw$ is LSG(USG) then  $\Vwdw \cap \Vw $
is LSG(USG);\\
\item
if $\Vw$ is controlled invariant in $\Vwdw,$ $\Vw$ is also controlled invariant in $\Vwdw + (\Vw)_{\dw} ;$\\

  if $\Vwdw$ is LSG(USG) then  $\Vwdw + (\Vw)_{\dw} $
is LSG(USG).\\
\end{enumerate}
\end{lemma}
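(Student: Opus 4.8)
\textbf{Proof plan for Lemma~\ref{lem:condintcontplus}.}
The plan is to treat the two items as duals of one another (via Theorem~\ref{thm:idt} / the adjoint), so I would prove item~(1) in full and obtain item~(2) by the dualization recipe of Subsection~\ref{sec:proof_dual}. For item~(1), the first task is invariance of $\Vw$ in $\Vwdw\cap\Vw$. By definition, conditioned invariance of $\Vw$ in $\Vwdw$ says $\Vwdw\lrar\Vw\subseteq(\Vw)_{\dw}$. Since $(\Vwdw\cap\Vw)\subseteq\Vwdw$, matched composition with $\Vw$ is monotone, so $(\Vwdw\cap\Vw)\lrar\Vw\subseteq\Vwdw\lrar\Vw\subseteq(\Vw)_{\dw}$; that gives conditioned invariance of $\Vw$ in $\Vwdw\cap\Vw$ immediately. (Here I am reading $\Vw$, when it appears as the second argument of $\lrar$, as the subspace on $W$; and $\Vwdw\cap\Vw$ means the intersection in the extended sense, i.e.\ $\Vwdw\cap(\Vw\oplus\F_{\dw})$.)

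The more substantive half of item~(1) is the semi-genop claim. Suppose $\Vwdw$ is an LSG, i.e.\ $(\Vwdw\times\dw)_W\subseteq\Vwdw\times W$; I must show $((\Vwdw\cap\Vw)\times\dw)_W\subseteq(\Vwdw\cap\Vw)\times W$. First, $(\Vwdw\cap\Vw)\times W=(\Vwdw\times W)\cap\Vw$ directly from the definitions of intersection and contraction. Next, $(\Vwdw\cap\Vw)\times\dw$ consists of those $g_{\dw}$ with $(0_W,g_{\dw})\in\Vwdw\cap\Vw$; but $(0_W,g_{\dw})\in\Vw\oplus\F_{\dw}$ automatically (since $0_W\in\Vw$), so $(\Vwdw\cap\Vw)\times\dw=\Vwdw\times\dw$. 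Therefore $((\Vwdw\cap\Vw)\times\dw)_W=(\Vwdw\times\dw)_W\subseteq\Vwdw\times W$ using that $\Vwdw$ is LSG; and I still need this to land inside $\Vw$, which it does because conditioned invariance of $\Vw$ (via part~1 of Theorem~\ref{thm:conditionallyinvsum}, or directly from $\Vwdw\lrar\Vw\subseteq(\Vw)_{\dw}$ applied with the zero vector on $W$) forces $(\Vwdw\times\dw)_W\subseteq\Vw$. Hence $((\Vwdw\cap\Vw)\times\dw)_W\subseteq(\Vwdw\times W)\cap\Vw=(\Vwdw\cap\Vw)\times W$, so $\Vwdw\cap\Vw$ is an LSG. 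For the USG case, suppose $(\Vwdw\circ W)_{\dw}\supseteq\Vwdw\circ\dw$; here one needs $((\Vwdw\cap\Vw)\circ W)_{\dw}\supseteq(\Vwdw\cap\Vw)\circ\dw$, and I expect this to follow by combining $(\Vwdw\cap\Vw)\circ W\supseteq$ something controllable together with the conditioned-invariance chain $\Vwdw\circ\dw\supseteq\Vwdw\lrar(\Vwdw\circ W)$ — this is the step I expect to be the main obstacle, because restriction does not distribute over intersection as cleanly as contraction does, so I would lean on the invariance hypothesis (both conditioned and the implicit controlled side, since the relevant $\Vw$'s in these applications are invariant, not merely conditioned invariant) and possibly on Theorem~\ref{thm:conditionallyinvsum}.

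Finally, item~(2) is obtained by duality: $\Vw$ controlled invariant in $\Vwdw$ is, by Theorem~\ref{thm:controlledconditioneddual}, equivalent to $\Vw^{\perp}$ conditioned invariant in $\Vadjwdw$, and $(\Vwdw+(\Vw)_{\dw})^{a}$ relates to $\Vadjwdw\cap\Vw^{\perp}$ by IDT (Theorem~\ref{thm:idt}) applied to the identities $\Vwdw+(\Vw)_{\dw}$ dualizing to $\Vwdw^{\perp}\cap(\Vw)_{\dw}^{\perp}$, together with part~4 of Theorem~\ref{thm:adjointprop} swapping USG and LSG. Thus item~(1) applied to $\Vadjwdw$ and $\Vw^{\perp}$ yields item~(2) for $\Vwdw$ and $\Vw$, after translating back through the adjoint; I would present this as a one-paragraph appeal to Subsection~\ref{sec:proof_dual} rather than re-deriving the chain by hand.
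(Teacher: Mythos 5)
Your treatment of the conditioned-invariance transfer and of the LSG half of item (1) is correct and essentially the paper's argument: the paper even gets the equality $\Vw\lrar(\Vwdw\cap\Vw)=\Vw\lrar\Vwdw\subseteq(\Vw)_{\dw}$, and its LSG chain $(\Vwdw\cap\Vw)\times W=(\Vwdw\times W)\cap\Vw\supseteq(\Vwdw\times\dw)_W\supseteq((\Vwdw\cap\Vw)\times\dw)_W$ is the same computation you give. Item (2) by duality also matches the paper, which disposes of it in one line.

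The genuine gap is the USG half of item (1), which you leave as "the main obstacle" and hedge by proposing to import controlled (i.e.\ full) invariance. That hedge is both unnecessary and harmful: the lemma is later invoked (e.g.\ in Lemma \ref{lem:invinpoly}) with \emph{only} conditioned invariance available, so strengthening the hypothesis would break the downstream use. The step you feared does in fact go through cleanly from conditioned invariance alone, because the intersection constraint involves only the $W$-variables, so restriction to $W$ distributes exactly: $(\Vwdw\cap\Vw)\circ W=(\Vwdw\circ W)\cap\Vw$. On the $\dw$ side one does not need any distributivity, only the identification $(\Vwdw\cap\Vw)\circ\dw=\Vw\lrar\Vwdw\subseteq(\Vw)_{\dw}$, which is exactly conditioned invariance. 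Hence $((\Vwdw\cap\Vw)\circ\dw)_W\subseteq\Vw$, and also $((\Vwdw\cap\Vw)\circ\dw)_W\subseteq(\Vwdw\circ\dw)_W\subseteq\Vwdw\circ W$ by the USG property of $\Vwdw$; intersecting the two containments gives $((\Vwdw\cap\Vw)\circ\dw)_W\subseteq(\Vwdw\circ W)\cap\Vw=(\Vwdw\cap\Vw)\circ W$, i.e.\ $\Vwdw\cap\Vw$ is USG. This is the paper's argument; with it inserted, your proof is complete.
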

\begin{proof}
\begin{enumerate}
\item Let $\Vw$ be conditioned invariant in $\Vwdw.$ Then, 
$$\Vw\lrar (\Vwdw \cap \Vw)
\equaln\Vw\lrar \Vwdw \subseteqn  (\Vw)_{\dw},$$ so that $\Vw$ is conditioned invariant in $\Vwdw\cap \Vw.$

Next let $\Vwdw$ be LSG i.e., 
%$\Vwdw \times \dw \subseteq \Vw\lrar \Vwdw \subseteq (\Vw)_{\dw}$
%$(\Vwdw \times \dw )_W\subseteq \Vw$ and
$(\Vwdw \times \dw )_W\subseteqn \Vwdw\times W.$\\

We know that
$$\Vwdw \times \dw \subseteqn \Vw\lrar \Vwdw \subseteqn (\Vw)_{\dw}.$$
So $$(\Vwdw\cap\Vw)\times W\equaln (\Vwdw\times W)\bigcap \Vw\supseteqn  (\Vwdw \times \dw )_W\bigcap \Vw\equaln(\Vwdw \times \dw)_W\supseteqn(\Vwdw\cap\Vw)\times \dw.$$
Thus $\Vwdw\cap\Vw$ is LSG.\\

Next let $\Vwdw$ be USG.

Since by conditioned invariance of $\Vw,$
$$\Vw\lrar \Vwdw\equaln (\Vwdw\cap \Vw )\circ \dw\subseteqn (\Vw)_{\dw},$$
we have
 $$(\Vwdw\cap\Vw)\circ W \equaln(\Vwdw\circ W)\bigcap \Vw$$
$$\supseteqn
 (\Vwdw \circ \dw )_W\bigcap \Vw\supseteqn((\Vwdw \cap  \Vw)\circ \dw)_W\bigcap \Vw\equaln ((\Vwdw\cap\Vw)\circ \dw)_W.$$ 
%since by conditioned invariance of $\Vw,\Vw\lrar \Vwdw= (\Vwdw\bigcap \Vw )\circ \dw\subseteq (\Vw)_{\dw}.$
Thus $\Vwdw \cap\Vw$ is USG.\\
%Thus $ \Vwdw \bigcap \Vw\times W= \Vwdw\times W\bigcap \Vw \supseteq (\Vwdw \times \dw )_W.$
%Thus $  (\Vwdw \times \dw )_W\subseteq \Vwdw\times W\bigcap \Vw =(\Vwdw \bigcap \Vw)\times W.$
\item
This follows by duality from the previous part of the present lemma.
\end{enumerate}

\end{proof}
\begin{lemma}
\label{lem:condandops}
Let $\oVonewdw,\oVtwowdw$ be genauts and let $\Vw$ be conditioned  invariant in them.
Then
\begin{enumerate}
\item  $(\oVonewdw*\oVtwowdw)\cap \Vw\equaln (\oVonewdw\cap \Vw)*(\oVtwowdw\cap \Vw).$\\
\item $(\oVonewdw\pdw\oVtwowdw)\cap \Vw \equaln  (\oVonewdw\cap \Vw)\pdw(\oVtwowdw\cap\Vw).$\\
\item  $(\lambda^{\dws}\oVonewdw)\cap \Vw\equaln \lambda^{\dws}(\oVonewdw\cap \Vw).$\\
\item if $p(s)$ is a polynomial in $s$ then  $p(\oVonewdw)\cap \Vw \equaln p(\oVonewdw\cap \Vw) .$
\end{enumerate}
\end{lemma}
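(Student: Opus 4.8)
The plan is to prove all four parts by induction on the structure of the polynomial $p(s)$, reducing everything to the three building-block operations $*$, $\pdw$ and $\ldws$, and then invoking the corresponding distributivity statements for $\cap$ over these operations. More precisely, since $\Vw$ is conditioned invariant in $\oVonewdw$ and $\oVtwowdw$, by Lemma \ref{lem:condcontinpoly} it is conditioned invariant in $\oVonewdw * \oVtwowdw$, in $\oVonewdw \pdw \oVtwowdw$, in $\lambda^{\dws}\oVonewdw$, and in any $p(\oVonewdw)$; this is what allows the $\cap \Vw$ operation to interact nicely with these constructs. So the real content is parts (1)--(3), and part (4) will follow by writing $p(\Vwdw) = \Sigma_i \alpha_i^{\dws}(\Vwdw^{(i)})$, expanding each $\Vwdw^{(i)}$ as an iterated $*$-product, and applying parts (1)--(3) term by term.

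First I would prove part (1): $(\oVonewdw * \oVtwowdw)\cap \Vw = (\oVonewdw \cap \Vw)*(\oVtwowdw \cap \Vw)$. Unfolding the definition of $*$, the left side is $[(\oVonewdw)_{WW_1}\lrar (\oVtwowdw)_{W_1\dw}]\cap \Vw$. The key is to push $\cap \Vw$ through the $\lrar$ on the $W$-side. A vector $(f_W, g_{\dw})$ lies in the left side iff $f_W \in \Vw$ and there is $h_{W_1}$ with $(f_W, h_{W_1})\in (\oVonewdw)_{WW_1}$ and $(h_{W_1}, g_{\dw})\in (\oVtwowdw)_{W_1\dw}$. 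For the $\supseteq$ direction the argument is direct; for the harder $\subseteq$ direction I would use conditioned invariance of $\Vw$ in $\oVonewdw$ to show that the ``intermediate'' vector $h_{W_1}$ can be taken inside $(\Vw)_{W_1}$: since $f_W\in \Vw$ and $\Vw\lrar\oVonewdw\subseteq(\Vw)_{\dw}$, the image $h_{W_1}$ of $f_W$ under $\oVonewdw$ lies in $(\Vw)_{W_1}$, so in fact $(f_W,h_{W_1})\in(\oVonewdw\cap\Vw)_{WW_1}$ and $(h_{W_1},g_{\dw})\in(\oVtwowdw\cap\Vw)_{W_1\dw}$. This mirrors exactly the computation already carried out in the proof of Lemma \ref{lem:condcontinpoly}(1), so it should be routine.

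Next, part (2) is immediate from the definitions: $(\oVonewdw\pdw\oVtwowdw)\cap\Vw$ consists of all $(f_W,g_{\dw})$ with $f_W\in\Vw$, $g_{\dw} = g^1_{\dw}+g^2_{\dw}$, $(f_W,g^1_{\dw})\in\oVonewdw$, $(f_W,g^2_{\dw})\in\oVtwowdw$; restricting $f_W$ to $\Vw$ on both terms and using conditioned invariance of $\Vw$ (which guarantees both $g^i_{\dw}\in(\Vw)_{\dw}$ whenever $f_W\in\Vw$) gives exactly $(\oVonewdw\cap\Vw)\pdw(\oVtwowdw\cap\Vw)$. Part (3) is similar: for $\lambda\neq 0$ one has $(\lambda^{\dws}\oVonewdw)\cap\Vw = \lambda^{\dws}(\oVonewdw\cap\Vw)$ directly since scalar multiplication on the $\dw$ side commutes with fixing the $W$-component; for $\lambda = 0$ one uses $0^{\dws}\oVonewdw = \oVonewdw\circ W \oplus \oVonewdw\times\dw$ together with $\oVonewdw\times\dw\subseteq(\Vw)_{\dw}$ (a consequence of conditioned invariance, as noted in Lemma \ref{lem:condcontinpoly}) and $\oVonewdw\circ W \cap \Vw = (\oVonewdw\cap\Vw)\circ W$.

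The main obstacle, and the step I would treat most carefully, is part (4): establishing $p(\oVonewdw)\cap\Vw = p(\oVonewdw\cap\Vw)$. Writing $p(\oVonewdw) = \alpha_0^{\dws}(\oVonewdw^{(0)})\pdw\cdots\pdw\alpha_n^{\dws}(\oVonewdw^{(n)})$, I would first handle $\oVonewdw^{(k)}\cap\Vw = (\oVonewdw\cap\Vw)^{(k)}$ by induction on $k$ using part (1) (with $\Vw$ conditioned invariant in every $\oVonewdw^{(j)}$, which comes from Lemma \ref{lem:condcontinpoly}), being slightly careful about the base case $k=0$ where $\oVonewdw^{(0)} = \Iwdw\pdw(\oVonewdw\circ W\oplus\oVonewdw\times\dw)$ — here one checks directly that intersecting with $\Vw$ and forming $(\oVonewdw\cap\Vw)^{(0)}$ give the same space, again leaning on $\oVonewdw\times\dw\subseteq(\Vw)_{\dw}$. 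Then scalar-multiplying via part (3) and assembling the $\pdw$-sum via part (2) finishes it. The subtle point throughout is that at each intermediate stage one must know $\Vw$ is conditioned invariant in the relevant subexpression so that the $\cap\Vw$ does not clash with the $\dw$-components it is being combined with; but this is precisely what Lemma \ref{lem:condcontinpoly} supplies, so the induction goes through cleanly.
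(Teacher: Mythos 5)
Your proof is correct and takes essentially the same route as the paper: part (1) by pushing $\cap\,\Vw$ through the matched composition on the $W$-side and then using conditioned invariance of $\Vw$ in $\oVonewdw$ to place the intermediate $W_1$-vector in $(\Vw)_{W_1}$, parts (2)--(3) by direct membership/routine checks, and part (4) assembled from the earlier parts together with Lemma \ref{lem:condcontinpoly}, which is exactly what the paper means by ``consequence of the previous parts.'' One small remark: the paper points out that part (2) requires no conditioned invariance at all, so your appeal to it there (and to $\oVonewdw\times\dw\subseteq(\Vw)_{\dw}$ in the $\lambda=0$ case of part (3)) is harmless but unnecessary.
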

\begin{proof}
\begin{enumerate}
\item We have 
$$(\oVonewdw*\oVtwowdw)\cap \Vw\equivn ((\Vwdw^{1})_{WW_1}\lrar (\Vwdw^{2})_{W_1\dw})\cap \Vw\equaln (\Vwdw^{1}\cap \Vw)_{WW_1}\lrar (\Vwdw^{2})_{W_1\dw}.$$
Now by conditioned invariance of $\Vw$ in $\oVonewdw,$  $$\Vw\lrar (\oVonewdw)_{WW_1}\equaln ((\Vwdw^{1})_{WW_1}\cap\Vw )\circ W_1\subseteqn   (\Vw)_{W_1}.$$
So $$ (\Vwdw^{1}\cap \Vw)_{WW_1}\lrar (\Vwdw^{2})_{W_1\dw}\equaln
(\Vwdw^{1}\cap \Vw)_{WW_1}\lrar ((\Vwdw^{2})_{W_1\dw}\cap\V_{W_1})_{W_1\dw})\equaln
(\oVonewdw\cap \Vw)*(\oVtwowdw\cap \Vw).$$
\item (This does not require conditioned invariance of $\Vw.$) \\
That RHS is contained in LHS is immediate from the definition of the $\pdw$ operation.\\
Let $(f_W,g_{\dw})\inn LHS.$\\
 Then $$f_W\inn (\oVonewdw\circ W)\cap (\oVtwowdw\circ W)\cap\Vw.$$ \\
Further we must have, for some $g^1_{\dw},g^2_{\dw},$
that $$g^1_{\dw}+g^2_{\dw}\equaln g_{\dw},\ \ (f_W,g^1_{\dw})\inn \oVonewdw,\ \ (f_W,g^2_{\dw})\inn \oVtwowdw.$$ But then it is clear that $$(f_W,g^1_{\dw})\inn \oVonewdw\cap \Vw,\ \  (f_W,g^2_{\dw})\inn \oVtwowdw\cap\Vw.$$
Thus $$(f_W,g_{\dw})\inn (\oVonewdw\cap \Vw)\pdw(\oVtwowdw\cap\Vw)\equaln RHS.$$\\
\item This is routine. It is clear that it works out right also for
$\lambda \equaln 0.$ \\
\item This is a consequence of the previous parts of this lemma.
\end{enumerate}
\end{proof}
The next lemma is the dual of the above lemma and therefore its 
proof is omitted.
\begin{lemma}
\label{lem:contandops}
Let $\oVonewdw,\oVtwowdw$ be genauts and let $\Vw$ be controlled  invariant in them.
Then
\begin{enumerate}
\item  $(\oVonewdw*\oVtwowdw)\plusn (\Vw)_{\dw}\equaln (\oVonewdw+(\Vw)_{\dw})*(\oVtwowdw+(\Vw)_{\dw}).$\\
\item $(\oVonewdw\pdw\oVtwowdw)+(\Vw)_{\dw} \equaln  (\oVonewdw+(\Vw)_{\dw})\pdw(\oVtwowdw+(\Vw)_{\dw}).$\\
\item  $(\lambda^{\dws}\oVonewdw)+(\Vw)_{\dw}\equaln \lambda^{\dws}(\oVonewdw+(\Vw)_{\dw}).$\\
\item if $p(s)$ is a polynomial in $s$ then  $p(\oVonewdw)+(\Vw)_{\dw} \equaln p(\oVonewdw+(\Vw)_{\dw}) .$
\end{enumerate}
\end{lemma}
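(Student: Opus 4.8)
\textbf{Proof plan for Lemma \ref{lem:contandops}.}

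The statement is explicitly labelled as the dual of Lemma \ref{lem:condandops}, so the plan is to invoke the duality machinery set up in Subsection \ref{sec:proof_dual} and Remark \ref{rem:plain_dual} rather than to redo the calculations. First I would observe that each of the four identities in Lemma \ref{lem:condandops} is a statement of the form $\epsilon(\V_1,\ldots,\V_k,+,\cap,\oplus,\circ,\times,\leftrightarrow,\rightleftharpoons,\supseteq,\subseteq,=)$, once the defined operations $*$, $\pdw$ and $\lambda^{\dws}(\cdot)$ are unfolded into their definitions in terms of $\lrar$, $\oplus$, $+$ and $\times$ (recall $\oVonewdw*\oVtwowdw \equiv (\oVonewdw)_{WW_1}\lrar(\oVtwowdw)_{W_1\dw}$, that $\pdw$ is expressible via $\lrar$ with the pseudoidentity-type spaces $\{[I_W\ I_{W'}\ I_{W''}]\}$ and $\{[I_{\dw}\ I_{\dw'}\ 0],[I_{\dw}\ 0\ I_{\dw''}]\}$ by Lemma \ref{lem:intsumvis}, and that $\lambda^{\dws}\Vwdw$ is $(\Vwdw\lrar\{[I_{\dw}\ \lambda I_{\dw'}]\})_{W\dw}+\Vwdw\times\dw$). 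The hypothesis ``$\Vw$ conditioned invariant in $\oVonewdw,\oVtwowdw$'' is itself such a statement, namely $\Vwdw\lrar\Vw\subseteq(\Vw)_{\dw}$ for each of the two genauts, and by Theorem \ref{thm:controlledconditioneddual} its adjoint dual is precisely ``$\V_{W'}^{\perp}$ controlled invariant in $\mathcal{V}^a_{W'\dwd}$'', while $\Vwdw\cap\Vw$ dualizes to $\Vwdw^a+(\Vw^{\perp})_{\dwd}$ and $*$, $\pdw$, $\lambda^{\dws}$ dualize to $*$, $\pdwd$, $\ldwd$ respectively (the first by Theorem \ref{thm:adjointprop}(2) together with associativity of $\lrar$, the second by Theorem \ref{thm:intsumtranspose}, the third by Theorem \ref{thm:scalarmult}, and all three also via Lemma \ref{lem:powerzeroprop} for the $(0)$-power term).

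The second step is to check that each identity in Lemma \ref{lem:condandops} is proved there for \emph{arbitrary} genauts $\oVonewdw,\oVtwowdw$ and arbitrary $\Vw$ with the stated conditioned-invariance hypothesis, and in particular that its proof goes through unchanged if every space appearing is replaced by its complementary orthogonal space — this is automatic because the proof uses only the permissible connectives and the three basic identities $(\V^{\perp})^{\perp}=\V$, $(\V_1+\V_2)^{\perp}=\V_1^{\perp}\cap\V_2^{\perp}$, $(\V_1\lrar\V_2)^{\perp}=\V_1^{\perp}\rightleftharpoons\V_2^{\perp}$ (IDT, Theorem \ref{thm:idt}), which are invariant under $\perp$. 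Hence, taking $A$ to be ``$\Vw$ conditioned invariant in $\oVonewdw,\oVtwowdw$'' and $B$ the conjunction of the four equalities of Lemma \ref{lem:condandops}, we have a proof of $A\implies B$ valid for all vector spaces. Forming the adjoint-based second statement and then replacing every $\V$ by $\V^a$, as in Subsection \ref{sec:adjointdual}, converts $A$ into ``$\Vw$ controlled invariant in $\oVonewdw,\oVtwowdw$'' and $B$ into exactly the conjunction of the four identities claimed in Lemma \ref{lem:contandops}; the argument of Subsection \ref{sec:proof_dual} then yields $A\implies B$ in the dualized form, which is the assertion.

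The only real work, and the step I expect to be the main obstacle, is the bookkeeping needed to confirm that the dual of $\oVonewdw*\oVtwowdw$ really is $(\oVonewdw+(\Vw)_{\dw})$-compatible with $*$ on the adjoint side — i.e.\ that the copy-set relabelling in the definition of $*$ (the dummy $W_1$ treated once as a copy of $\dw$ and once of $W$) dualizes consistently, and similarly that the $\pdw$-to-$\pdwd$ and $\ldws$-to-$\ldwd$ translations match the index-set substitutions $W\mapsto-\dwd$, $\dw\mapsto W'$ prescribed in Subsection \ref{sec:adjointdual}. Once one verifies (as the paper already does in the proof of Theorem \ref{thm:LSG}, where $(\Vwdw^{(k)})^a=(\Vadjwdw)^{(k)}$, $(\ldw\Vwdw^{(k)})^a=\ldwd(\Vadjwdw)^{(k)}$ and $(\ldw_i\Vwdw^{(i)}\pdw\ldw_j\Vwdw^{(j)})^a=\ldwd_i(\Vadjwdw)^{(i)}\pdwd\ldwd_j(\Vadjwdw)^{(j)}$ are all recorded) that these translations are the standard ones, the rest is a direct appeal to the duality template. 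I would therefore present the proof simply as: ``This is the dual of Lemma \ref{lem:condandops}; apply the dualization procedure of Subsection \ref{sec:proof_dual}, using Theorem \ref{thm:controlledconditioneddual} to dualize the invariance hypothesis and Theorems \ref{thm:intsumtranspose}, \ref{thm:scalarmult} and \ref{thm:idt} to dualize $\pdw$, $\ldws$ and $*$.''
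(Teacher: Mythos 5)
Your proposal is correct and follows exactly the route the paper takes: the paper omits the proof of Lemma \ref{lem:contandops} precisely because it is the line-by-line (adjoint) dual of Lemma \ref{lem:condandops}, which is the dualization argument you spell out via Theorem \ref{thm:controlledconditioneddual} and the duality facts for $*$, $\pdw$ and $\ldws$ (Theorems \ref{thm:idt}, \ref{thm:intsumtranspose}, \ref{thm:scalarmult}, as also recorded in the proof of Theorem \ref{thm:LSG}). Your additional bookkeeping on how the dummy copy sets and the substitution $W\mapsto-\dwd$, $\dw\mapsto W'$ interact is exactly the verification the paper leaves implicit, so the two arguments coincide in substance.
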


%\begin{lemma}
%Let $\Vwdw$ be a genaut and let $\Vw$ be conditioned invariant in $\Vwdw.$
%Then
%\begin{enumerate}
%\item 
%$\Vw$ is conditioned invariant in $\Vwdw \bigcap \Vw .$ 
%\item
%$\Vw$ is conditioned invariant in  $\Vwdw^{(k)}$ and $\Vwdw^{(k)}\bigcap\Vw = (\Vwdw\bigcap\Vw)^{(k)} $ for all nonnegative integers 
%$k.$
%\item 
%$\Vw$ is conditioned invariant in  $(\lambda ^{\dw}\Vwdw)$ and $(\lambda ^{\dw}\Vwdw)\bigcap\Vw= \lambda ^{\dw}(\Vwdw\bigcap\Vw).$
%\item
%$\Vw$ is conditioned invariant in $(\lambda ^{\dw}\Vwdw^{(k)}\pdw \sigma ^{\dw}\Vwdw^{(m)})$ and $(\lambda ^{\dw}\Vwdw^{(k)}\pdw \sigma ^{\dw}\Vwdw^{(m)})\bigcap\Vw = 
%\lambda ^{\dw}(\Vwdw\bigcap \Vw)^{(m)} \pdw \sigma ^{\dw}(\Vwdw\bigcap\Vw)^{(m)} .$ 
%%(\lambda ^{\dw}(\Vwdw\bigcap\Vw).$
%\end{enumerate}
%\end{lemma}
%\begin{lemma}
%Let $\Vwdw$ be a genop.
%\begin{enumerate}
%\item Let $\Vw$ be controlled invariant in $\Vwdw.$
%Then 
%$\Vwdw + (\Vw)_{\dw} $ is a genop and  $\Vw$ is controlled invariant  and  $\Vw$ is controlled invariant in it. 
%Further, 
%\begin{enumerate}
%\item
%$\Vwdw^{(k)}+ (\Vw)_{\dw} = (\Vwdw+ (\Vw)_{\dw})^{(k)} $ for all nonnegative integers 
%$k.$
%\item 
%$(\lambda ^{\dw}\Vwdw)+ (\Vw)_{\dw}= \lambda ^{\dw}(\Vwdw+ (\Vw)_{\dw}).$
%\item
%$(\lambda ^{\dw}\Vwdw^{(k)}\pdw \sigma ^{\dw}\Vwdw^{(m)})+ (\Vw)_{\dw} = 
%\lambda ^{\dw}(\Vwdw+ (\Vw)_{\dw})^{(m)} \pdw \sigma ^{\dw}(\Vwdw+ (\Vw)_{\dw})^{(m)} .$ 

%%(\lambda ^{\dw}(\Vwdw\bigcap\Vw).$
%
%\end{enumerate}
%\end{enumerate}
%\end{lemma}
The Lemmas \ref{lem:condintcontplus}, \ref{lem:condandops}
,\ref{lem:contandops}
 lead immediately to the following lemma.
\begin{lemma}
\label{lem:invinpoly}
Let $\Vwdw$ be a genaut. 
\begin{enumerate}
\item Let $\Vw$ be conditioned invariant in $\Vwdw.$
Then 
%$\Vwdw \bigcap \Vw $ is a genop and 
$p(\Vwdw\cap \Vw) \equaln  p(\Vwdw)\bigcap \Vw.$\\
Further, if $\Vwdw$ is a genop, then so is 
$p(\Vwdw\cap \Vw).$\\
\item Let $\Vw$ be controlled invariant in $\Vwdw.$
Then 
%$\Vwdw +( \Vw)_{\dw} $ is a genop and 
$p(\Vwdw +( \Vw)_{\dw}) \equaln  p(\Vwdw)\plusn (\Vw)_{\dw}.$\\
Further, if $\Vwdw$ is a genop, then so is 
$p(\Vwdw+( \Vw)_{\dw}).$
\end{enumerate}

\end{lemma}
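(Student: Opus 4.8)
The plan is to assemble the statement directly from the three preceding lemmas, treating the conditioned‐invariant case as primary and obtaining the controlled‐invariant case by the duality machinery of Section \ref{sec:proof_dual}. First I would establish part (1). Suppose $\Vw$ is conditioned invariant in $\Vwdw$. The key identity is $p(\Vwdw\cap\Vw)=p(\Vwdw)\cap\Vw$, and this is exactly the content of Lemma \ref{lem:condandops}, part (4), applied with $\oVonewdw=\Vwdw$: since $\Vw$ is conditioned invariant in $\Vwdw$, we get $p(\Vwdw)\cap\Vw=p(\Vwdw\cap\Vw)$. So the identity requires nothing new; it is a one‑line citation. The remaining assertion of part (1) is that if $\Vwdw$ is a genop then $p(\Vwdw\cap\Vw)$ is again a genop. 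Here I would chain two facts: by Lemma \ref{lem:condintcontplus}, part (1), if $\Vwdw$ is a genop (hence both USG and LSG, and $\Vw$ conditioned invariant in it), then $\Vwdw\cap\Vw$ is both USG and LSG, i.e.\ a genop; moreover $\Vw$ remains conditioned invariant in $\Vwdw\cap\Vw$. Then by Lemma \ref{lem:newgenoppoly}, part (2), a polynomial of a genop is a genop. Applying this to the genop $\Vwdw\cap\Vw$ gives that $p(\Vwdw\cap\Vw)$ is a genop, completing part (1).

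For part (2) the cleanest route is duality. The statement ``$\Vw$ controlled invariant in $\Vwdw$ $\implies$ $p(\Vwdw+(\Vw)_{\dw})=p(\Vwdw)+(\Vw)_{\dw}$, and $p(\Vwdw+(\Vw)_{\dw})$ is a genop when $\Vwdw$ is'' is the adjoint dual of the statement proved in part (1): controlled invariance dualizes to conditioned invariance (Theorem \ref{thm:controlledconditioneddual}), the operation $\Vwdw+(\Vw)_{\dw}$ dualizes to $\Vwdw^a\cap \Vw^\perp$ (since $+$ dualizes to $\cap$ and the index manipulations match up as in the construction of Section \ref{sec:adjointdual}), $p(\cdot)$ commutes with the adjoint by Theorem \ref{thm:LSG}, and ``is a genop'' is self‑dual by Theorem \ref{thm:adjointprop}, part (5). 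Hence, by the ``proof through duality'' principle of Subsection \ref{sec:proof_dual}, part (2) follows from part (1) with no further work. Alternatively, and just as quickly, one can cite Lemma \ref{lem:contandops}, part (4), for the identity and Lemma \ref{lem:condintcontplus}, part (2), together with Lemma \ref{lem:newgenoppoly}, part (2), for the genop claim — the controlled‑invariant analogues are already laid out there.

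In short, the lemma is essentially a corollary: it is a repackaging of Lemmas \ref{lem:condintcontplus}, \ref{lem:condandops} and \ref{lem:contandops}, glued with Lemma \ref{lem:newgenoppoly}, part (2). There is no real obstacle; the only point requiring a moment's care is checking that the hypotheses needed to invoke Lemma \ref{lem:condandops}(4) and Lemma \ref{lem:condintcontplus}(1) simultaneously are met — namely that $\Vw$ stays conditioned invariant in $\Vwdw\cap\Vw$, which is itself the first assertion of Lemma \ref{lem:condintcontplus}(1), so the bookkeeping closes up. I would write the proof as: ``Part (1): the identity is Lemma \ref{lem:condandops}(4); for the genop claim combine Lemma \ref{lem:condintcontplus}(1) with Lemma \ref{lem:newgenoppoly}(2). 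Part (2): dual to part (1) via the procedure of Subsection \ref{sec:proof_dual}, or directly from Lemmas \ref{lem:contandops}(4), \ref{lem:condintcontplus}(2) and \ref{lem:newgenoppoly}(2).''
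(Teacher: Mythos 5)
Your proposal is correct and is essentially the paper's own route: the paper gives no separate proof, stating only that Lemmas \ref{lem:condintcontplus}, \ref{lem:condandops} and \ref{lem:contandops} ``lead immediately'' to the result, which is exactly the assembly you carry out (with Lemma \ref{lem:newgenoppoly}(2) supplying the polynomial-of-a-genop step). Your optional duality derivation of part (2) is also consistent with the paper's general dualization machinery, so nothing further is needed.
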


We are now ready for the proof of Theorem \ref{USG}.
%---------------------------------------------------
%\begin{theorem}
%\label{USG}
%Let $\Vonewdw $ be a USG and let $\Vwdw$ be a genop such that\\
%
%$\Vwdw\subseteqn  \Vonewdw $ and $\Vwdw\circ W\equaln  \Vonewdw\circ W.$
%Let $\Vw $ be invariant in $\Vonewdw .$ Then
%
%\begin{enumerate}
%\item $\Vonewdw +(\Vw)_{\dw} \equaln  \Vwdw + (\Vw \oplus (\Vw)_{\dw})$ and is a genop.\\
%\item $\Vw $ is invariant in $\Vwdw$ and therefore $\Vwdw \cap \Vw \equaln  \Vwdw \bigcap (\Vw \oplus (\Vw)_{\dw})$ and $\Vwdw \cap \Vw$ is a genop.\\
%\item   $p(\Vonewdw+(\Vw)_{\dw}) \equaln  p(\Vwdw)+(\Vw \oplus (\Vw)_{\dw}).$\\
%\item   $p(\Vwdw\cap \Vw) \equaln  p(\Vwdw)\bigcap \Vw\equaln  p(\Vwdw)\bigcap (\Vw \oplus (\Vw)_{\dw}).$ \\
%\item  If $p(\Vwdw) $ is decoupled, so are $p(\Vonewdw +(\Vw)_{\dw})$
%and $p(\Vwdw \cap \Vw).$\\
%\item If $p_1(\Vonewdw +(\Vw)_{\dw}),\ \  p_2(\Vwdw \cap \Vw)$
%are decoupled, then  $p_1p_2(\Vwdw) $ is decoupled.
%\end{enumerate}
%\end{theorem}
%
%
%
%
%--------------------------------------------------------
\begin{proof}
Proof of Theorem \ref{USG}
\begin{enumerate}
\item 
%It is clear that LHS is contained in RHS. 
% Let $(f_W,g_{\dw})\inn  \Vwdw + (\Vw \oplus (\Vw)_{\dw}) .$\\ 
%Then there exist $(f^1_W,g^1_{\dw})\in \Vwdw$ and  $(f^2_W,g^2_{\dw})\in  (\Vw \bigoplus (\Vw)_{\dw}),$
%so that $(f^1_W,g^1_{\dw})+(f^2_W,g^2_{\dw})=(f_W,g_{\dw}).$
%By Lemma \ref{}, we know that 
%$\Vwdw + \Vonewdw\times \dw=\Vonewdw.$ 
Since $\Vw$  is controlled invariant in $\Vonewdw,$
$$\Vonewdw\circ W \supseteqn \Vonewdw\lrar (\Vw)_{\dw} \supseteqn \Vw,$$
and since $\Vw$  is conditioned invariant in $\Vonewdw,$
$$\Vonewdw\times\dw \subseteqn  \Vonewdw\lrar \Vw \subseteqn (\Vw)_{\dw}.$$
Now let $(f_W,g_{\dw})\inn  \Vwdw + (\Vw \oplus (\Vw)_{\dw}) .$\\
We are given that, $\Vonewdw\circ W\equaln\Vwdw\circ W.$
and we saw above that $\Vonewdw\circ W \supseteqn \Vw,$\\
Hence there exists $(f_W,g'_{\dw})\in \Vonewdw.$
%Since $\Vw$  is conditioned invariant in $\Vonewdw,$
%$$\Vonewdw\times\dw \subseteqn \Vonewdw\lrar \Vw \subseteqn (\Vw)_{\dw}.$$
so that  $g_{\dw}-g'_{\dw}\inn \Vonewdw\times\dw\subseteqn(\Vw)_{\dw} .$\\ 
Therefore, $(f_W,g_{\dw})\inn \Vonewdw +(\Vw)_{\dw}.$ \\
Thus,
$$\Vwdw + (\Vw \oplus (\Vw)_{\dw})\subseteqn \Vonewdw +(\Vw)_{\dw}.$$ 

\vspace{0.2cm}

Next, since $\Vonewdw\circ W\equaln\Vwdw\circ W$ and $\Vonewdw\supseteqn\Vwdw,$\\ 
 by Lemma \ref{lem:twogenasrestcont}, we know that 
 $\Vwdw + \Vonewdw\times \dw\equaln\Vonewdw.$\\
 Since 
$\Vonewdw\times\dw \subseteqn  (\Vw)_{\dw},$
$$\Vonewdw +(\Vw)_{\dw}\subseteqn \Vwdw + (\Vw \oplus (\Vw)_{\dw}).$$
Thus
$$\Vonewdw +(\Vw)_{\dw}\equaln \Vwdw + (\Vw \oplus (\Vw)_{\dw}).$$

Let  $$\oVtwowdw\equivn \Vonewdw +(\Vw)_{\dw}\equaln  \Vwdw + (\Vw)_{\dw} \equaln \Vwdw + (\Vw \oplus (\Vw)_{\dw}).$$
%It can be seen that the sum of a genop with a decoupled genop 
%is another genop as follows.
It is clear that $$\oVtwowdw\circ W\equaln\Vwdw \circ W+\Vw ,\ \  \oVtwowdw\circ \dw\equaln \Vwdw \circ \dw + (\Vw)_{\dw},$$
$$ \oVtwowdw\times W\equaln\Vwdw \times W+\Vw ,\ \ 
\oVtwowdw\times \dw\equaln \Vwdw\times \dw + (\Vw)_{\dw},$$ and it follows that $\oVtwowdw$ is a genop since $\Vwdw$ is one.
\\
\item  
From the conditioned invariance of $\Vw$ in $\Vonewdw$ it follows that $$\Vw\lrar \Vwdw\subseteqn \Vw\lrar \Vonewdw \subseteqn (\Vw)_{\dw}.$$
So $\Vw$ is conditioned invariant in $\Vwdw.$
Next we examine controlled invariance.
We will show that $$\Vdw \lrar \Vwdw \equaln \Vdw \lrar \Vonewdw.$$ The controlled invariance of $\Vw$ in $\Vwdw$ will follow
from its controlled invariance in $\Vonewdw.$\\

Since $\Vwdw\subseteq \Vonewdw,$ clearly $$\Vdw \lrar \Vwdw \subseteqn \Vdw \lrar \Vonewdw.$$
To see the reverse containment, let $f_W\inn  \Vdw \lrar \Vonewdw.$ \\
 Then there exist $(f_W,g_{\dw})\inn \Vonewdw,\ \  g_{\dw}\in \Vdw.$\\
%By controlled invariance of $\Vdw$ in $\Vonewdw,$ we have $\Vonewdw\circ W\supseteq \Vw.$
We know that $\Vwdw\circ W\equaln \Vonewdw \circ W.$\\
So there exists $(f_W,g'_{\dw})\inn \Vwdw\subseteqn \Vonewdw.$\\
It follows that $(g_{\dw}-g'_{\dw}) \inn  \Vonewdw \times \dw\subseteqn \Vdw.$\\
So $g'_{\dw}\inn \Vdw,$ so that $f_W\inn \Vdw \lrar \Vwdw .$\\
Thus $$\Vdw \lrar \Vonewdw\subseteqn \Vdw \lrar \Vwdw,$$ and the equality between the two sides follows.

\vspace{0.4cm}

From the conditioned invariance of
$\Vw$ in $\Vwdw,$ 
it follows that 
 $$\Vwdw \cap \Vw \equaln  \Vwdw \bigcap (\Vw \oplus (\Vw)_{\dw}).$$

Let  $$\oVtwowdw\equivn  \Vwdw \cap \Vw \equaln   \Vwdw \bigcap (\Vw \oplus (\Vw)_{\dw}).$$
%It can be seen that the sum of a genop with a decoupled genop 
%is another genop as follows.
It is clear that $$\oVtwowdw\circ W\equaln\Vwdw \circ W\bigcap   \Vw ,\ \  \oVtwowdw\circ \dw\equaln \Vwdw \circ \dw \bigcap (\Vw)_{\dw},$$
$$ \oVtwowdw\times W\equaln\Vwdw \times W\bigcap \Vw ,\ \ 
\oVtwowdw\times \dw\equaln \Vwdw\times \dw \bigcap (\Vw)_{\dw},$$ and it follows that $\oVtwowdw$ is a genop since $\Vwdw$ is one.
\\

%The adjoint of a genop is a genop (Theorem \ref{thm:adjointprop}).\\
%By Theorem \ref{thm:controlledconditioneddual}, $\Vw$ is invariant in $\Vwdw$ iff $(\Vw^{\perp})_{W'}$ is invariant 
%in $\V^{a}_{W'\dwd}.$
%
% The adjoint of $\Vw \oplus (\Vw)_{\dw}$ is
%$(\Vw^{\perp})_{W'}\oplus (\Vw^{\perp})_{\dwd}.$\\
%We saw earlier that $\Vwdw \plusn (\Vw \oplus (\Vw)_{\dw})$ is a genop,
%whenever  $\Vwdw$ is a genop and $\Vw$ is invariant in it. \\
%We therefore have that $\V^{a}_{W'\dwd}\bigcap [(\Vw^{\perp})_{W'}\oplus (\Vw^{\perp})_{\dwd}]$
%is a genop whenever $\V^{a}_{W'\dwd}$
%is a genop and $(\Vw^{\perp})_{W'}$ is invariant in it.
%
%This is equivalent to the statement that
%$\Vwdw \bigcap (\Vw \oplus (\Vw)_{\dw})$
%must be a genop when $\Vwdw$ is a genop and $\Vw$ is invariant in it.\\
% The RHS is the intersection of a genop with a decoupled genop. \\
%We saw that the sum of two such genops 
%is another genop and by duality  the intersection of a genop with a decoupled genop is  a genop.
\item
%We have  $p(\Vonewdw)\supseteq p(\Vwdw)$ and, by Lemma \ref{lem:newgenoppoly}, $p(\Vonewdw)\circ W\equaln  p(\Vwdw)\circ W.$\\
We  have, 
$\Vonewdw, \Vwdw$ are USG, genop respectively and therefore by Lemma \ref{lem:newgenoppoly},
$p(\Vonewdw), p(\Vwdw)$ are also respectively, USG, genop.

$\Vw$ is invariant in $\Vonewdw, \Vwdw$ and therefore also in $p(\Vonewdw)$ and in $p(\Vwdw)$ (Lemma \ref{lem:condcontinpoly}).\\
From Lemma \ref{lem:contandops}, $$p(\Vonewdw+(\Vw)_{\dw}) \equaln  p(\Vonewdw)+(\Vw)_{\dw}.$$\\
We have  $p(\Vonewdw)\supseteq p(\Vwdw)$ and, by Lemma \ref{lem:newgenoppoly}, $$p(\Vonewdw)\circ W\equaln  p(\Vwdw)\circ W.$$\\
From the first part of the present theorem,
 $$p(\Vonewdw+(\Vw)_{\dw}) \equaln p(\Vonewdw)+(\Vw)_{\dw}\equaln p(\Vwdw)+(\Vw \oplus (\Vw)_{\dw}).$$ 
\item By Lemma \ref{lem:invinpoly}, we have that $p(\Vwdw\cap \Vw)\equaln p(\Vwdw)\bigcap \Vw.$ \\
Since $\Vw$ is invariant 
in $\Vwdw,$ it is also invariant in $p(\Vwdw)$ (Lemma \ref{lem:condcontinpoly}).\\
By a previous 
part of the present theorem we have that 
$$p(\Vwdw\bigcap \Vw)\equaln p(\Vwdw)\bigcap \Vw\equaln p(\Vwdw)\bigcap (\Vw\oplus (\Vw)_{\dw}).
$$
\item 
If $p(\Vwdw)$ is decoupled, so are $$p(\Vwdw)\plusn (\Vw \oplus (\Vw)_{\dw}),\ \ p(\Vwdw)\bigcap (\Vw\oplus (\Vw)_{\dw}).$$
The result follows since $$p(\Vonewdw+(\Vw)_{\dw})\equaln p(\Vwdw)\plusn (\Vw \oplus (\Vw)_{\dw}) \ \ \textup{and}\ \ 
p(\Vwdw\bigcap \Vw)\equaln p(\Vwdw)\bigcap (\Vw\oplus (\Vw)_{\dw}).$$
\item 
%If $p_1(\Vonewdw+(\Vw)_{\dw}),$ is decoupled so is $p_1(\Vonewdw)+(\Vw)_{\dw}.$ \\
We have,  
$\Vw$ is conditioned invariant in $p_1(\Vonewdw)$ and therefore $p_1(\Vonewdw)\times \dw \subseteqn  (\Vw)_{\dw}.$\\
Therefore,   
$$(p_1(\Vonewdw+(\Vw)_{\dw}))\times \dw \equaln  (p_1(\Vonewdw)+(\Vw)_{\dw})\times \dw \equaln p_1(\Vonewdw)\times \dw +(\Vw)_{\dw}\equaln (\Vw)_{\dw}.$$
If $p_1(\Vonewdw+(\Vw)_{\dw})$ is decoupled so is $p_1(\Vonewdw)+(\Vw)_{\dw}.$ \\
Since $p_1(\Vonewdw)+(\Vw)_{\dw}$ is decoupled, 
$$(p_1(\Vonewdw)+(\Vw)_{\dw})\times \dw\equaln (p_1(\Vonewdw)+(\Vw)_{\dw})\circ \dw
\equaln p_1(\Vonewdw)\circ \dw +(\Vw)_{\dw}\equaln  (\Vw)_{\dw}.$$ 
It follows that
$ p_1(\Vonewdw)\circ \dw\subseteqn  (\Vw)_{\dw}.$ Therefore, since $p_1(\Vwdw)\subseteqn p_1(\Vonewdw),$ 
$$p_1(\Vwdw)\circ \dw\subseteqn  p_1(\Vonewdw)\circ \dw\subseteqn  (\Vw)_{\dw}.$$ 

Next $$p_1p_2(\Vwdw)\equiv p_1(\Vwdw)*p_2(\Vwdw)\equaln p_1(\Vwdw)*(p_2(\Vwdw)\bigcap \Vw)\equaln p_1(\Vwdw)*(p_2(\Vwdw\cap \Vw)).$$
Since $p_2(\Vwdw\cap \Vw)$ is decoupled, so is $p_1(\Vwdw)*(p_2(\Vwdw\cap \Vw)) $  and therefore $p_1p_2(\Vwdw).$
\end{enumerate}

\end{proof}
The next result is a line by line dual of the previous theorem and therefore its proof is omitted.

\begin{theorem}
\label{LSG}
Let $\Vtwowdw $ be a LSG and let $\Vwdw$ be a genop such that\\ 
$\Vwdw\supseteq \Vtwowdw $ and $\Vwdw\times \dw\equaln  \Vtwowdw\times \dw.$
Let $\Vw $ be invariant in $\Vtwowdw .$ Then
\begin{enumerate}
\item $\Vtwowdw \bigcap \Vw \equaln  \Vwdw \bigcap (\Vw \oplus (\Vw)_{\dw})$ and is a genop.\\
\item $\Vw $ is invariant in $\Vwdw$ and therefore $\Vwdw \plusn (\Vw)_{\dw} \equaln  \Vwdw \plusn (\Vw \oplus (\Vw)_{\dw})$ and\\ $\Vwdw \plusn (\Vw)_{\dw}$ is a genop. \\
\item   $p(\Vtwowdw\cap \Vw) \equaln  p(\Vwdw)\bigcap (\Vw \oplus (\Vw)_{\dw}).$ \\
\item   $p(\Vwdw+ (\Vw)_{\dw}) \equaln  p(\Vwdw)\plusn (\Vw)_{\dw}\equaln  p(\Vwdw)\plusn (\Vw \oplus (\Vw)_{\dw}).$ \\
\item  If $p(\Vwdw) $ is decoupled, so are $p(\Vtwowdw \cap \Vw)$
and $p(\Vwdw + (\Vw)_{\dw}).$\\
\item If $p_1(\Vtwowdw \cap \Vw), p_2(\Vwdw + (\Vw)_{\dw})$
are decoupled, then  $p_1p_2(\Vwdw) $ is decoupled.
\end{enumerate}
\end{theorem}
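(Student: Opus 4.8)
The plan is to obtain Theorem \ref{LSG} as the \emph{line-by-line dual} of Theorem \ref{USG}, exactly as the remark preceding the statement promises, so that essentially no fresh work is required beyond bookkeeping. Concretely, I would proceed as follows. First, I would observe that Theorem \ref{USG} is a statement of the form $\epsilon(\V_1,\dots,\V_k,+,\cap,\oplus,\circ,\times,\lrar,\rightleftharpoons,\supseteq,\subseteq,=)$ which has been proved \emph{for arbitrary} genauts, USGs, genops and invariant subspaces. By the machinery of Subsection \ref{sec:adjointdual} (and Remark \ref{rem:plain_dual}, which licenses calling the adjoint dual simply the ``dual''), such a universally true statement has a dual statement that is also universally true, with the correspondences: a USG dualizes to an LSG (Theorem \ref{thm:adjointprop}, part 4), a genop to a genop (part 5), ``decoupled'' to ``decoupled'' (part 6), $\circ$ interchanges with $\times$, $+$ with $\cap$, $\supseteq$ with $\subseteq$, ``conditioned invariant'' with ``controlled invariant'' (Theorem \ref{thm:controlledconditioneddual}), and $p(\Vwdw)$ with $p(\Vadjwdw)$ (Theorem \ref{thm:LSG}).

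The key step is then to check that applying this dictionary to Theorem \ref{USG} produces \emph{precisely} the six assertions of Theorem \ref{LSG}. Tracing through: the hypothesis ``$\Vonewdw$ a USG, $\Vwdw$ a genop, $\Vwdw\subseteq\Vonewdw$, $\Vwdw\circ W=\Vonewdw\circ W$'' dualizes to ``$\Vtwowdw$ an LSG, $\Vwdw$ a genop, $\Vwdw\supseteq\Vtwowdw$, $\Vwdw\times\dw=\Vtwowdw\times\dw$''; the hypothesis ``$\Vw$ invariant in $\Vonewdw$'' (i.e.\ both conditioned and controlled invariant) is self-dual and becomes ``$\Vw$ invariant in $\Vtwowdw$''. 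Item (1) of \ref{USG}, $\Vonewdw+(\Vw)_{\dw}=\Vwdw+(\Vw\oplus(\Vw)_{\dw})$ and is a genop, dualizes — using $+\leftrightarrow\cap$ and the fact that $(\V_W\oplus(\V_W)_{\dw})^\perp$, after the adjoint relabelling, is again of the form $\Vw\oplus(\Vw)_{\dw}$ — to $\Vtwowdw\cap\Vw=\Vwdw\cap(\Vw\oplus(\Vw)_{\dw})$ and is a genop, which is item (1) of \ref{LSG}. Item (2) dualizes similarly, swapping $\cap\leftrightarrow +$ and conditioned $\leftrightarrow$ controlled, giving $\Vwdw+(\Vw)_{\dw}=\Vwdw+(\Vw\oplus(\Vw)_{\dw})$ a genop. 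Items (3) and (4) swap with each other under the dictionary (the $+(\Vw)_{\dw}$ construction and the $\cap\Vw$ construction are adjoint-dual, and $p(\cdot)$ commutes with the adjoint by Theorem \ref{thm:LSG}), and item (5), being phrased purely in terms of ``decoupled'', is self-dual. Item (6), $p_1p_2(\Vwdw)$ decoupled, is likewise self-dual once one notes that which of $p_1,p_2$ is attached to the ``$+(\Vw)_{\dw}$'' factor and which to the ``$\cap\Vw$'' factor is interchanged.

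Having verified the correspondence, I would invoke the principle of Subsection \ref{sec:proof_dual}: take $A\equiv$ (hypotheses of \ref{USG}), $B\equiv$ (conclusions of \ref{USG}); the proof of \ref{USG} establishes $A\implies B$ for arbitrary vector spaces; let $C,D$ be the adjoint duals of $A,B$, so that $C\equiv$ (hypotheses of \ref{LSG}), $D\equiv$ (conclusions of \ref{LSG}); then $C'\iff A$, $A\implies B$, $B\iff D'$ gives $C'\implies D'$, a statement true for arbitrary vector spaces, and replacing every $\V$ by $\V^a$ yields $C\implies D$, which is Theorem \ref{LSG}. The main obstacle — and the only place genuine care is needed — is the bookkeeping in the dualization of the \emph{compound} index-set expressions such as $\Vw\oplus(\Vw)_{\dw}$ and $(\Vw)_{\dw}$ under the adjoint relabelling $W\mapsto-\dwd$, $\dw\mapsto W'$: one must confirm that $(\Vw\oplus(\Vw)_{\dw})^\perp$, after relabelling and after replacing $\V^\perp$'s by the corresponding unprimed spaces per the recipe in Subsection \ref{sec:adjointdual}, comes out as $\Vw^{*}\oplus(\Vw^{*})_{\dw}$ for the appropriate space $\Vw^{*}$ (here $\Vw^{*}=\Vw^{\perp}$ of the dual system), rather than something subtly different; this is routine given $(\V_X\oplus\V_Y)^\perp=\V_X^\perp\oplus\V_Y^\perp$ for disjoint $X,Y$ and the fact that copies commute with $\perp$, but it is the step most likely to hide a sign or a mislabelled set, so I would carry it out explicitly once and then suppress it as ``routine'' for the remaining items.
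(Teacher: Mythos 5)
Your proposal is correct and is exactly the route the paper takes: the paper dismisses the proof with ``line by line dual of the previous theorem,'' relying on the adjoint-duality machinery of Section \ref{sec:Duality1} (Theorems \ref{thm:adjointprop}, \ref{thm:controlledconditioneddual}, \ref{thm:LSG} and the $A\implies B$, hence $C\implies D$ principle of Subsection \ref{sec:proof_dual}), which is precisely the dictionary and verification you spell out. Your only added content is carrying out the relabelling bookkeeping for terms like $(\Vw)_{\dw}$ and $\Vw\oplus(\Vw)_{\dw}$ explicitly, which the paper leaves implicit.
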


%\begin{definition}
%Let $\Vwdw $ be a genop. We say a polynomial $p(s)$ is an  annihilating
%polynomial of $\Vwdw$ iff $p(\Vwdw)$ is decoupled. 
%\end{definition}
%\begin{definition}
%Let $\Vwdw $ be a genop. We say a polynomial $p(s)$ is a characteristic 
%polynomial of $\Vwdw$ iff 
%\begin{enumerate}
%\item  $p(s)$ annihilates $\Vwdw .$
%\item $k\equiv degree(p(s))= r(\Vwdw ) - r(\Vwdw\times W\bigcap (\Vwdw\times \dw)_W).$
%\item there exists a set of polynomials $p_1(s), \cdots , p_t(s)$ \\
%and a family of sequences of vectors $x_1^0, \cdots , x_1^{k_1};
%x_2^0, \cdots , x_2^{k_2}; \cdots; x_t^0, \cdots , x_t^{k_t},$ where
%\begin{itemize}
%\item $p_1(s) \times \cdots \times  p_t(s)=p(s),$
%\item $x_j^i \in \Vwdw , i= 0, 1, \cdots , k_j, j=1, \cdots . t,$
%\item $x_j^i \in \Vwdw \lrar x_j^{i-1}, i=  1, \cdots , k_j,$
%\item $x_1^0, \cdots , x_1^{k_1-1}$ is an independent set 
%\item $x_j^0, \cdots , x_j^{k_j-1}$ is an independent set modulo 
%$sp\{x_1^0,\cdots x_{j-1}^{t_{j-1}}\}, j=2, \cdots t.$
%\item $x_1^k+\lambda _{1,{k_j-1}}x_1^{k_1-1}+ \cdots , +\lambda _{1,{0}}x_1^0=0,$
%\item $x_j^k+\lambda _{j,{k_j-1}}x_j^{k_j-1}+ \cdots , +\lambda _{j,{0}}x_j^0
%\in sp\{x_1^0,\cdots x_{j-1}^{t_{j-1}} \},j=2, \cdots t,$
%\end{itemize}
%where $p_j(s)= s^k + \lambda _{j,{k_j-1}}s^{k-1}+ \cdots , +\lambda _{j,{0}}s^0.$
%\end{enumerate}
%\end{definition}
\section{Eigenvalue problem through annihilating polynomials}
\label{sec:eigen_ann}
A fundamental problem in control theory is to `place poles'
in a desired region. The classical result here is that
if the system is fully controllable, the poles can be placed where we wish
using state feedback and dually, if the system is fully observable, the poles can be placed where we wish
using output injection.
There are neat refinements of this result when the system is not fully
controllable or fully observable. In this section we derive implicit
versions of this result. 
%We also address, in a limited way, the output feedback problem,

%where we have to shift poles to desired locations using only the input and output.

\subsection{Pole placement through state feedback and output injection}
\label{subsec:statefeedback_ouputinjection}
We will work with regular dynamical systems (RGDS), i.e., $$\Vwdwmumy\circ W\supseteq (\Vwdwmumy\circ \dw)_W, \ \ \  \Vwdwmumy\times W\supseteq (\Vwdwmumy\times \dw)_W.$$
This implies that $\Vwdwmumy\circ \wdw$ is a USG and $\Vwdwmumy \times \wdw$
is an LSG. Therefore ideas from Theorems \ref{USG} and \ref{LSG} are applicable.

%The first question is:
We now present the basic questions and remark upon their answers.
\begin{question}
Given an  RGDS $\Vwdwmumy$ what are the genops which can be obtained by
$wm_u-$ feedback and $m_y\mydot{w}-$ injection?
\end{question}

This question has been answered for general autonomous spaces in
Theorems \ref{thm:statefeedback} and \ref{thm:outputinjection} and therefore also for genops. In brief the answer is: \\
The genop $\Vwdw$  can be obtained by $wm_u-$ feedback iff  
it satisfies \\ $\Vwdwmumy\circ {\wdw}\supseteq \Vwdw; \ \ 
\Vwdwmumy\circ W\dw M_u\times {\dw}\subseteq \Vwdw \times \dw .$ \\ 
The genop $\Vwdw$  can be obtained by $m_y\mydot{w}-$ injection iff  
it satisfies\\
$\Vwdwmumy\times {\wdw}\subseteq \Vwdw; \ \ 
\Vwdwmumy\times W\dw M_y\circ W\supseteq \Vwdw \circ W.$ \\

\begin{question}
What polynomials can be made into minimal annihilating polynomials for 
genops obtainable by $wm_u-$ feedback and for those obtainable
by $m_y\mydot{w}-$ injection from the RGDF $\Vwdwmumy?$
\end{question}

We will only answer this question in a manner sufficient for 
pole placement purposes in order to simplify the discussion.

\vspace{0.4cm}

{\it $wm_u-$ feedback}

Let $\Vonewdw \equiv \Vwdwmumy\circ \wdw.$ We saw earlier that $\Vonewdw$ is a USG.
If $\Vonewdw$ is already a genop, it cannot be altered by state feedback.
We will therefore assume that $\Vonewdw\times W$ does not contain $(\Vonewdw\times \dw)_W.$ 

Let $\Vw^{com}\equiv \Vonewdw\times W\bigcap (\Vonewdw\times \dw)_W.$

Let $\Vw $ be the minimal invariant space of $\Vonewdw$ containing
$(\Vonewdw\times \dw)_W.$
In the case of $wm_u-$ feedback, the answer is :
\begin{enumerate}
\item A necessary condition is that the given polynomial must contain
as a factor, the minimal annihilating polynomial $p_u(s)$ of the genop 
$\Vonewdw+(\Vw)_{\dw} .$
\item Let $p_2(s)$ be any polynomial of degree equal to $r(\Vw)-r(\Vw^{com}).$
Then, through $wm_u-$ feedback, a genop $\Vwdw$ can be obtained from
$\Vwdwmumy$ for which  $p_up_2(s)$ is an annihilating polynomial.  
\end{enumerate}
In other words, the poles which are roots of $p_u(s)$ cannot be shifted
but we have full control over the remaining poles.

\vspace{0.4cm}

{\it $m_y\mydot{w}-$ feedback}

Let $\Vtwowdw \equiv \Vwdwmumy \times \wdw.$ We saw earlier that $\Vtwowdw$ is an LSG. 
If $\Vtwowdw$ is already a genop, it cannot be altered by output injection.
We will therefore assume that $\Vtwowdw\circ \dw$ is not  contained in  $(\Vtwowdw\circ W)_{\dw}.$\\
Let $\Vw^{com2}\equiv (\Vtwowdw\circ W)+(\Vtwowdw\circ \dw)_W.$

Let $\Vw $ be the maximal invariant space of $\Vtwowdw$ contained in
$\Vtwowdw\circ W.$
 In the case of $m_y\mydot{w}-$ injection, the answer is :
\begin{enumerate}
\item A necessary condition is that the given polynomial must contain
as a factor, the minimal annihilating polynomial $p_l(s)$ of the genop
$\Vtwowdw\bigcap \Vw .$
\item Let $p_1(s)$ be any polynomial of degree equal to $r(\Vw^{com2})-r(\Vw).$
Then, through $m_y\mydot{w}-$ injection, a genop $\Vwdw$ can be obtained from 
$\Vwdwmumy$ for which  $p_1p_l(s)$ is an annihilating polynomial.
\end{enumerate}
In other words, the poles which are roots of $p_l(s)$ cannot be shifted
but we have full control over the remaining poles.

\subsection{Constructing a genop with desired annihilating polynomial}
\label{sec:genopconsruction}

\vspace{0.4cm}

Next, let us consider 
\begin{question}
Let $\Vw$ 
be  the minimal invariant space of $\Vonewdw$ containing
$(\Vonewdw\times \dw)_W.$
How should we choose $\Vwdw $ in such a way that 
$\Vwdw\bigcap \Vw $
has a desired annihilating polynomial?
\end{question}

The solution is well known
in traditional control theory. We will give a simple solution in our frame work.

\subsubsection{Building the genop $\Vwdw^{start}$}
\label{sec:start_genop}

\vspace{0.4cm}

Our construction involves the idea of a `basic sequence' which is analogous 
to the sequence $$x^0, Ax^0, \cdots , A^kx^0, \cdots ,$$ for an operator $A.$
We will begin by building a genop $\Vwdw^{start}$ (Algorithm \ref{alg:basic_sequence})
with the annihilating polynomial
of degree equal to $r(\Vw)- r(\Vw^{com}),$ where $\Vw^{com}\equiv \Vonewdw\times W\bigcap (\Vonewdw\times \dw)_W.$
Next, we modify the basic sequence of $\Vwdw^{start}$ suitably (Equation \ref{eqn:xy}) to obtain 
another sequence whose genop $\Vwdw^{end}$
has the desired minimal annihilating polynomial.

\begin{definition}
Let $\Vonewdw$ be a USG and let $\Vw$ be invariant in it.
We say $x^0_W , x^1_W  ,\cdots  , x^k_W,$ is a basic sequence in $\Vonewdw\bigcap \Vw,$ iff 
$$x^j_W \in \Vw, \ \ j=0,1, \cdots , k$$  
$$\Vw \equaln sp\{x^j_W ,j=0,1,2,\cdots k-1\}+\Vw^{com}.$$
$$x^{j+1}_{\dw}\inn (x^j_W\lrar \Vonewdw),\ \ j=0,1, \cdots , k-1  
\ \ \textup{and}\ \   x^k_W \in \Vw. $$
%We say this is a basic sequence for a genop $\Vwdw\supseteqn (\Vw^{com}
%\oplus ( \Vw^{com})_{\dw})$ iff  $$\Vwdw \subseteq\Vonewdw \bigcap \Vw, \ \  \Vwdw\circ W = \Vw  \ \ \textup{and}\ \ \Vwdw\circ \dw = (\Vw)_{\dw}.  
%.$$  

\end{definition}

We first construct a basic sequence in $\Vonewdw\bigcap \Vw,$ 
and from  it build a genop $\Vwdw^{start},$ for which it is a basic sequence.

\begin{algorithm}
{\bf Algorithm Basic sequence}\\
\label{alg:basic_sequence}
Input: genaut $\Vonewdw$\\
Output: 
\begin{enumerate}
\item  A basic sequence $x^0_W , x^1_W  ,\cdots  , x^k_W,$
\item $\Vw ,$
%\equiv  sp\{x^j_W ,j=0,1,2,\cdots k-1\}+\Vw^{com},$
the minimal space invariant in $\Vonewdw,$ containing $(\Vonewdw \times \dw)_W$
%$(x^j_W,x^{j+1}_{\dw}) \in  \Vwdw\bigcap \Vw , j=0,1, \cdots k-1.$
\item The genop $\Vwdw^{start}.$
%\equiv sp\{(x^j_W,x^{j+1}_{\dw}) ,j=0,1,2,\cdots k-1\}+(\Vw^{com}
%$
%\bigoplus (\Vw^{com})_{\dw})
%.$
\end{enumerate}

Step 1. Pick $x^0_W\inn [(\Vonewdw \times \dw)_W-\Vonewdw \times W].$

\vspace{0.2cm}

Step 2. Pick, if possible $x^{j+1}_{\dw}\inn (x^j_W\lrar \Vonewdw),$ and 
$x^0_W , x^1_W  ,\cdots  , x^{j+1}_W$ independent modulo $\Vw^{com}
.$

\vspace{0.2cm}

STOP if $ x^{j}_W\lrar \Vonewdw\subseteq (sp\{x^j_W ,j=0,1,2,\cdots j\}+\Vw^{com})_{\dw}.$

\vspace{0.1cm}

 $k\leftarrow j+1.$

\vspace{0.2cm}

$\Vw \leftarrow  sp\{x^j_W ,j=0,1,2,\cdots k-1\}+\Vw^{com}.$

\vspace{0.2cm}

$\Vwdw^{start}\leftarrow sp\{(x^j_W,x^{j+1}_{\dw}) ,j=0,1,2,\cdots k-1\}+(\Vw^{com}
\oplus (\Vw^{com})_{\dw})
.$

\vspace{0.2cm}

Algorithm ends.
\end{algorithm}

Note that the algorithm terminates, the first time the affine space
$ x^{j}_W\lrar \Vonewdw$ is contained in \\
$(sp\{x^i_W ,i=0,1,2,\cdots j\}+\Vw^{com})_{\dw}.$
This is when $j= k-1.$ Thus we have,\\
$(\Vw')_{\dw} \equiv  
(sp\{x^j_W ,j=0,1,2,\cdots k-1\}+\Vw^{com})_{\dw}$
contains $x^{k-1}_{\dw}+\Vonewdw \times \dw$ and therefore also 
$\Vonewdw \times \dw.$  \\
For $j< k-1,$ this does not happen.

Every vector in $x^{j+1}_{\dw}+\Vonewdw \times \dw=x^j_W\lrar \Vonewdw, j=0,1,2,\cdots k-1,$ belongs to $(\Vw')_{\dw}$ and 
$$\Vw^{com}\lrar\Vonewdw\subseteqn  \Vwdw\times \dw \subseteqn (\Vw')_{\dw}.$$
Thus $$\Vonewdw \lrar \Vw' \subseteq(\Vw')_{\dw}.$$
Now $\Vonewdw\circ W \supseteq \Vw',$ so that $$\Vonewdw \lrar (\Vw')_{\dw} \supseteq\Vw'.$$

Thus $\Vw'$ is invariant in $\Vonewdw.$

Next any invariant space in $\Vonewdw$ containing $(\Vonewdw \times \dw)_W $
must contain $x^0_W,$ \\
therefore $(x^{j}_W\lrar \Vonewdw)_W, \ \ j=0, \cdots , k-1.$\\
It follows therefore that it must contain $\Vw'.$\\
Thus, $sp\{x^j_W ,j=0,1,2,\cdots k-1\}+\Vw^{com} =\Vw,$ the minimal invariant space of
$\Vonewdw$\\
 containing $(\Vonewdw \times \dw)_W .$

\vspace{0.4cm}

It is clear that $\Vwdw^{start}$ is USG since $\Vwdw^{start}\circ W =\Vw$ and $\Vw$ is invariant in  $\Vonewdw .$

\vspace{0.1cm}

Next the vectors $x^0_W , x^1_W  ,\cdots  , x^{k}_W$ are independent modulo $\Vw^{com}.$

\vspace{0.1cm}

So the only way in which vectors
in $\{(x^j_W,x^{j+1}_{\dw}) ,j=0,1,2,\cdots k-1\}$ can be linearly combined to yield a vector of the kind $(f_W,g_{\dw}),$ with $f_W \in \Vw^{com},$ is through the trivial linear combination,\\
 which means that
$g_{\dw}\in (\Vw^{com})_{\dw}.$\\
Therefore, $\Vwdw^{start}\times \dw \subseteq (\Vw^{com})_{\dw}.$\\ 
It is LSG since $\Vwdw^{start}\times W \supseteq \Vw^{com}=(\Vwdw^{start}\times \dw )_W. $

Thus  $\Vwdw^{start}$ is a genop.
\begin{definition}
We refer to  $r(\Vw")- r(\Vw^{com}),$ as the essential rank of the invariant space
$\Vw"$ of $\Vonewdw .$ For a genop $\Vwdw$ the essential rank $er(\Vwdw),$ is defined similarly
to be $r(\Vwdw\circ W)- r(\Vwdw\times \dw).$ 
\end{definition}
(Note that for a genop
$\Vwdw \times W \supseteq (\Vwdw \times \dw)_W.$) 
\begin{definition}
The basic sequence $\{ x^j_W ,j=0,1,2,\cdots k \}$ 
output by the algorithm is referred to as a basic sequence for the genop 
$\Vwdw^{start}$ in the genaut $\Vonewdw ,$
while  the genop 
$$ sp\{(x^j_W,x^{j+1}_{\dw}) ,j=0,1,2,\cdots k-1\}+(\Vw^{com}
\oplus (\Vw^{com})_{\dw})$$ is the unique genop in the genaut $\Vonewdw $ for which $ \{x^j_W ,j=0,1,2,\cdots k\}$
is a basic sequence. We will refer to this genop as the genop of the basic sequence.
\end{definition}
\begin{remark}
Every genop does not have a basic sequence. A genop having a basic sequence is analogous to  a matrix $A$
for which there exists a vector $x$ in its domain which is annihilated by the minimal annihilating polynomial of $A.$
\end{remark}

\begin{claim}
\label{cl:annihilate_start}
Let $x^k_W+b _{k-1} x^{k-1}_W + \cdots + b _0   x^0_W\in \Vw^{com}.$
The polynomial 
$p^{start}(s) \equiv s^k +b_{k-1}s^{k-1}+\cdots b_0$
is the minimal annihilating polynomial for $\Vwdw^{start}.$
\end{claim}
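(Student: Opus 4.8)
\textbf{Proof plan for Claim \ref{cl:annihilate_start}.}

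The plan is to show two things: first, that $p^{start}(s)$ annihilates $\Vwdw^{start}$, i.e. $p^{start}(\Vwdw^{start})$ is decoupled; and second, that no polynomial of smaller degree does so, which will follow from the independence (modulo $\Vw^{com}$) of the basic sequence vectors. Throughout I will use that $\Vwdw^{start}$ is a genop, which was established just before the claim, together with the description of polynomials of genops from Section \ref{sec:genoppoly}, in particular Lemma \ref{lem:newgenoppoly} (a polynomial of a genop is a genop with the same restriction to $W$ and contraction to $\dw$) and Lemma \ref{lem:decoupledcomposition}.

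First I would track how the $*$ operation acts on the basic sequence. The key computation is that for $j = 0, 1, \ldots, k-1$, one has $(x^j_W, x^{j+1}_{\dw}) \in \Vwdw^{start}$, so by the definition of $\Vwdw^{start(m)}$ as an $m$-fold $*$-power, a straightforward induction shows $(x^0_W, x^m_{\dw}) \in \Vwdw^{start(m)}$ for $m \le k$ (where $x^m$ is the vector produced at stage $m$ of the algorithm, for $m < k$, and for $m = k$ we use $x^k_W + \sum_{i<k} b_i x^i_W \in \Vw^{com}$). More generally, since $\Vwdw^{start} \circ W = \Vw = sp\{x^j_W : j < k\} + \Vw^{com}$ and $\Vwdw^{start} \times \dw \subseteq (\Vw^{com})_{\dw}$, the genop $\Vwdw^{start}$ and all its powers are determined (modulo the decoupled piece $\Vw^{com} \oplus (\Vw^{com})_{\dw}$) by their action on the basis $x^0_W, \ldots, x^{k-1}_W$. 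Then $p^{start}(\Vwdw^{start}) = \sum_{i=0}^k b_i^{\dw}((\Vwdw^{start})^{(i)})$ (with $b_k = 1$), and evaluating the intersection-sum on $x^0_W$ gives, by the genop identity $\Vwdw^{start(0)} * \Vwdw^{start(j)} = \Vwdw^{start(j)}$ and Lemma \ref{lem:newgenoppoly}, the vector $(x^0_W, (x^k_{\dw} + b_{k-1}x^{k-1}_{\dw} + \cdots + b_0 x^0_{\dw}))$ up to $\Vwdw^{start}\times\dw$. Since $x^k_W + \sum_{i<k} b_i x^i_W \in \Vw^{com}$, this means $x^0_W$ is linked in $p^{start}(\Vwdw^{start})$ only to $(\Vw^{com})_{\dw}$; applying the same reasoning to each $x^j_W$ (shifting the sequence) shows every vector in $\Vwdw^{start}\circ W$ is linked only into $(\Vw^{com})_{\dw} + \Vwdw^{start}\times \dw = \Vwdw^{start}\times\dw$. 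Hence $p^{start}(\Vwdw^{start}) \circ W$ is mapped into $p^{start}(\Vwdw^{start})\times \dw$, which together with $p^{start}(\Vwdw^{start})\circ W = \Vw$ and $p^{start}(\Vwdw^{start})\times\dw = \Vwdw^{start}\times\dw$ forces $p^{start}(\Vwdw^{start}) = \Vw \oplus (\Vwdw^{start}\times\dw)_{\dw}$-type decoupling, i.e. it equals $zero(\Vwdw^{start})$ and is decoupled.

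For minimality, suppose $q(s)$ of degree $d < k$ annihilates $\Vwdw^{start}$; then by Theorem \ref{thm:minimalannpoly}(4)-type reasoning $q(\Vwdw^{start}) = zero(\Vwdw^{start})$. Evaluating $q(\Vwdw^{start})$ on $x^0_W$ as above produces the vector $(x^0_W, \sum_{i=0}^{d} c_i x^i_{\dw} \bmod \Vwdw^{start}\times\dw)$, and decoupling forces $\sum_{i=0}^d c_i x^i_W \in \Vw^{com}$, i.e. a nontrivial dependence of $x^0_W, \ldots, x^d_W$ modulo $\Vw^{com}$ (nontrivial since the leading coefficient $c_d \neq 0$). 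This contradicts the independence guaranteed by Step 2 of Algorithm \ref{alg:basic_sequence}. Hence $p^{start}(s)$ has minimum degree among annihilating polynomials, and by Theorem \ref{thm:minimalannpoly}(3) it is the minimal annihilating polynomial up to a scalar; being monic, it is exactly the minimal annihilating polynomial.

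The main obstacle I anticipate is the bookkeeping in the first part: one must carefully justify that evaluating the intersection-sum $\sum_i b_i^{\dw}(\Vwdw^{start(i)})$ "on a vector $x^j_W$" behaves linearly, i.e. that the distributivity results of Theorem \ref{thm:distributivity} apply here. The relevant dot-cross hypotheses hold precisely because $\Vwdw^{start}$ is a genop (so $(\Vwdw^{start}\times\dw)_W \subseteq \Vwdw^{start}\times W$ and $(\Vwdw^{start}\circ\dw)_W \subseteq \Vwdw^{start}\circ W$) and because all powers share the same restriction to $W$ and contraction to $\dw$; this is exactly the setting where Lemma \ref{lem:newgenoppoly} and the argument in the proof of Theorem \ref{thm:minimalannpoly} were designed to operate, so the claim reduces to transcribing that machinery to the concrete basic-sequence generators.
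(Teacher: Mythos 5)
Your route is the paper's own: establish annihilation by evaluating $p^{start}(\Vwdw^{start})$ on the basic-sequence generators of $\Vw$ modulo $\Vw^{com}$ (using, via Lemma \ref{lem:newgenoppoly}, that any polynomial of the genop $\Vwdw^{start}$ keeps the same restriction to $W$ and contraction to $\dw$), and get minimality from the independence of $x^0_W,\ldots,x^{k-1}_W$ modulo $\Vw^{com}$; your minimality argument coincides with the paper's.

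The one step that does not go through as you state it is ``applying the same reasoning to each $x^j_W$ (shifting the sequence).'' For $j\geq 1$ a literal shift would require the terms $x^{j+1}_{\dw},\ldots,x^{j+k}_{\dw}$, but the basic sequence stops at $x^k$, and the relation $x^k_W+b_{k-1}x^{k-1}_W+\cdots+b_0x^0_W\in\Vw^{com}$ cannot simply be ``pushed forward $j$ steps,'' since the genop acts as a relation rather than a map (the identity $\Vwdw^{(0)}*\Vwdw^{(j)}=\Vwdw^{(j)}$ you invoke is not the one doing the work here). The paper closes exactly this point with commutativity of polynomial products of a genop (part 1 of Theorem \ref{thm:minimalannpoly}): setting $p_j(s)=s^jp^{start}(s)$ one has
$$p^{start}(\Vwdw^{start})\lrar x^j_W\ \subseteq\ p_j(\Vwdw^{start})\lrar x^0_W\ =\ (\Vwdw^{start})^{(j)}\lrar\bigl[p^{start}(\Vwdw^{start})\lrar x^0_W\bigr]_W\ \subseteq\ (\Vwdw^{start})^{(j)}\lrar\Vw^{com}\ \subseteq\ (\Vw^{com})_{\dw},$$
where the last containment uses $\Vw^{com}\subseteq(\Vwdw^{start})^{(j)}\times W$ and $(\Vwdw^{start})^{(j)}\times\dw=(\Vw^{com})_{\dw}$, and only the case $j=0$ (which you did treat) uses the hypothesis $x^k_W+\sum_{i<k}b_ix^i_W\in\Vw^{com}$. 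With that substitution for your ``shift,'' the rest of your outline — linear combinations of the $x^j_W$, the piece $\Vw^{com}\lrar p^{start}(\Vwdw^{start})=(\Vw^{com})_{\dw}$, and hence decoupling — matches the paper's proof.
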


To prove the claim, we need to show that $p^{start}(\Vwdw^{start})$ is decoupled
and if $p'(s)$ has degree lower than that of $p^{start}(s),$ then  
$p'(\Vwdw^{start})$ is not decoupled.

\vspace{0.2cm}

Since $\Vwdw^{start}$ is a genop, by Lemma \ref{lem:newgenoppoly}
, we have that
for any polynomial $p(s),$ $$p(\Vwdw^{start})\circ W\equaln  \Vwdw^{start}\circ W\equaln \Vw, \ \ \ p(\Vwdw^{start})\times \dw \equaln \Vwdw^{start}\times \dw\equaln  (\Vw^{com})_{\dw}.$$

This means that  $p(\Vwdw^{start})$ is decoupled if $$
p(\Vwdw^{start})\circ W\lrar  p(\Vwdw^{start})\equaln \Vw\lrar p(\Vwdw^{start})\subseteqn  p(\Vwdw^{start})\times \dw\equaln \Vwdw^{start}\times \dw\equaln  (\Vw^{com})_{\dw}.$$

We first show that for any vector $x^j_W,\ \ \  j=0, 1, \cdots , k-1,$ we must have  
$$x^j_W\lrar p^{start}(\Vwdw^{start})\subseteqn (\Vw^{com})_{\dw}.
$$
We have, $$(x^j_W)_{\dw}\in ((\Vwdw^{start})^{(j)}\lrar x^0_W).$$
Let $p_j(s)\equiv p^{start}(s)s^j.$
So $$p^{start}(\Vwdw^{start})\lrar x^j_W\subseteqn p^{start}(\Vwdw^{start})\lrarn((\Vwdw^{start})^{(j)}\lrar x^0_W)\equaln  p^{start}(\Vwdw^{start})* (\Vwdw^{start})^{(j)}\lrar x^0_W\equaln  p_j(\Vwdw^{start})\lrar x^0_W$$ 
$$=    [(\Vwdw^{start})^{(j)} * p^{start}(\Vwdw^{start})]\lrar x^0_W
\equaln (\Vwdw^{start})^{(j)}\lrar [p^{start}(\Vwdw^{start})\lrar x^0_W
]_{W}
\subseteqn  (\Vwdw^{start})^{(j)} \lrar\Vw^{com}\subseteqn (\Vw^{com})_{\dw}.$$
Since every $x^j_W, j=1, \cdots , k-1,$ satisfies $x^j_W\lrar p^{start}(\Vwdw^{start})\subseteq (\Vw^{com})_{\dw},$ it follows that for every linear combination $x_W$ of these vectors
we must have that $x_W\lrar p^{start}(\Vwdw^{start})\subseteq (\Vw^{com})_{\dw}.$\\
Next for any polynomial $p(s),$ we have $$(p(\Vwdw^{start})\times W) \lrarn p(\Vwdw^{start})\equaln p(\Vwdw^{start})\times \dw\equaln \Vwdw ^{start}\times \dw\equaln (\Vw^{com})_{\dw}.$$
Now $p(\Vwdw^{start})\times W\supseteq \Vwdw^{start}\times W\supseteq \Vw^{com}.$

Hence, $\Vw^{com}\lrarn p(\Vwdw^{start})\equaln  (\Vw^{com})_{\dw}.$\\
Since $sp\{x^j_W ,j=0,1,2,\cdots k-1\}+\Vw^{com} =\Vw,$
we therefore have  $$[p^{start}(\Vwdw^{start})
\circ W]\lrarn p^{start}(\Vwdw^{start})\equaln \Vw \lrar p^{start}(\Vwdw^{start})
\subseteqn (\Vw^{com})_{\dw}\equaln  p^{start}(\Vwdw^{start})
\times \dw,$$ 
%Now $(p(\Vwdw^{start})\times W)\supseteq \Vwdw^{start}\times W\supseteq \Vw^{com}.$

i.e.,  $p^{start}(s)$ annihilates $\Vwdw^{start}.$

\vspace{0.2cm}

Next, let $p(s)\equiv \sum_{i=0}^t c_is^i$ and let $t\leq k-1.$
We have, \ \ $x^{t}_W \ \ \notin  \ \ sp\{x^0_W,x^1_W  ,\cdots  , x^{t-1}_W\}+\Vw^{com}.$ 

Let us temporarily, for the next couple of lines, denote $\Vwdw^{start}$ by $\Vwdw$ to simplify 
the notation.

Now $x^{t}_{\dw}\in \Vwdw^{(t)}\lrar x^0_W.$ It follows that 
$$(\Vwdw^{(t)}\pdw c_{t-1}^{\dws}\Vwdw^{(t-1)}\pdw \cdots \pdw  c_{1}^{\dws}\Vwdw\pdw c_{0}^{\dws}\Vwdw^{(0)})\lrar x^0_W  \not\subseteq (\Vw^{com})_{\dw}.$$
Thus $p(s)\equiv \sum_i^t c_is^i$ does not annihilate $\Vwdw.$

It follows that $p^{start}(s)$ is a polynomial of minimal degree that annihilates
$\Vwdw^{start}.$ This proves the claim.
\\

\subsubsection{Building the genop $\Vwdw^{end}$}
\label{sec:end_genop}

\vspace{0.4cm}

%no linear combination
%of vectors in $\Vwdw^{(i+1)}\lrar x^0_W$ can 
%ow $x^0_W , x^1_W  ,\cdots  , x^{k-1}_W$ are all annihilated 
%by $p^{start}(s),$ so that every linear combination of these vectors 
%is also annihilated by $p^{start}(s).$ Thus $\Vw \lrar p^{start}(\Vwdw^{start})
%\subseteq (\Vw^{com})_{\dw},$
%which is the same as saying that $\Vwdw^{start}$ is annihilated by $p^{start}(s).$ 
%Now no polynomial of degree less than that of $p^{start}(s),$ can annihilate
%$x^0_W$ because $x^0_W , x^1_W  ,\cdots  , x^{k-1}_W$ are linearly independent
%modulo $\Vw^{com}.$ From this it follows that $p^{start}(s) $is the minimal annihilating polynomial for $\Vwdw^{start}.$
%
Let $p^{end}(s)\equiv s^k +c_{k-1}s^{k-1}+\cdots c_0,$
be a specified polynomial. We will show how to modify $\Vwdw^{start}$
to $\Vwdw^{end}$ so that the latter has $p^{end}(s)$ as its minimal annihilating polynomial.
We do this by building a basic sequence for $\Vwdw^{end}.$

Let  $y^0_W , y^1_W  ,\cdots  , y^{k}_W$
be defined through 
\begin{equation}
\label{eqn:xy}
\begin{matrix}
y^0_W  &&=&&& x^0_W \\
y^1_W  &&=& &&x^1_W+\lambda _1x^0_W\\ 
    & &\vdots&& & \\
y^k_W &&=& &&x^k_W+\lambda _1 x^k_W+\cdots + \lambda _k x^0_W
\end{matrix}
\end{equation}
We will adjust the $\lambda _i$ so that we have 
$$y^k_W+c _{k-1} y^{k-1}_W + \cdots + c _0   y^0_W\in \Vw^{com}.$$

Consider the vector equation, treating $x^i_W$ as symbols
\begin{align}
% \label{eqn:ComputeDoubleSum}
%\ppmatrix{\lambda _0& \cdots & \lambda _k
%}
\bbmatrix{
        x^0_W & x^1_W  &\cdots  & x^k_W \\
        0     &x^0_W   & \cdots  & x^{k-1}_W \\
             &        &\vdots  &  \\
        0     &  0      &\cdots  &x^0_W  \\
       }
\ppmatrix{
  c_0 \\   \\  \vdots  \\ c_k
}
= \ppmatrix{
  \alpha_0 \\   \\  \vdots  \\ \alpha_k
}
\end{align}
%This equation is valid whether $x^i_W $ are row or column vectors.
(In the above equation $x^i_W, \alpha _i $ can be thought of as
column vectors.)
It is easy to see that this equation in the symbols $x^i_W,$ can be rewritten as 
\begin{align}
% \label{eqn:ComputeDoubleSum}
%\ppmatrix{\lambda _0& \cdots & \lambda _k
%}
\bbmatrix{
        c_0 & c_1  &\cdots  & c_k \\
         c_1     & \cdots  & c_{k}& 0 \\
         \vdots    &        &\vdots  &0  \\
        c_{k}     &  0      &\cdots  &0  \\
       }
\ppmatrix{
  x^0_W \\   \\  \vdots  \\ x^k_W 
}
=\ppmatrix{
  \alpha_0 \\   \\  \vdots  \\ \alpha_k
}.
\end{align}
(Here however the  symbols $x^i_W, \alpha _i $ are to be interpreted as row vectors.)

Now consider the expression
\begin{align}
 \label{eqn:xyc}
\ppmatrix{\lambda _0& \cdots & \lambda _k
}
\bbmatrix{
        x^0_W & x^1_W  &\cdots  & x^k_W \\
        0     &x^0_W   & \cdots  & x^{k-1}_W \\
             &        &\vdots  &  \\
        0     &  0      &\cdots  &x^0_W  \\
       }
\ppmatrix{
  c_0 \\   \\  \vdots  \\ c_k
}
= z .
%\ppmatrix{
%  \alpha_0 \\   \\  \vdots  \\ \alpha_k
%}
\end{align}
If we choose the $\lambda _i$ so that the Equation \ref{eqn:xy}
is satisfied,  Equation \ref{eqn:xyc} would reduce to 
\begin{align}
\bbmatrix{
        y^0_W & y^1_W  &\cdots  & y^k_W 
       }
\ppmatrix{
  c_0 \\   \\  \vdots  \\ c_k
}
= z.
%\ppmatrix{
%  \alpha_0 \\   \\  \vdots  \\ \alpha_k
%}
\end{align}
We would like to choose $y^i_W$ so that $z \in \V^{com}_W.$  
So we first rewrite the expression  in Equation \ref{eqn:xyc}
%\begin{align}
%% \label{eqn:xyc}
%\ppmatrix{\lambda _0& \cdots & \lambda _k
%}
%\bbmatrix{
%        x^0_W & x^1_W  &\cdots  & x^k_W \\
%        0     &x^0_W   & \cdots  & x^{k-1}_W \\
%             &        &\vdots  &  \\
%        0     &  0      &\cdots  &x^0_W  \\
%       }
%\ppmatrix{
%  c_0 \\   \\  \vdots  \\ c_k
%}
%= \delta .
%%\ppmatrix{
%%  \alpha_0 \\   \\  \vdots  \\ \alpha_k
%%}
%\end{align}
as 
\begin{align}
\ppmatrix{\lambda _0& \cdots & \lambda _k
}
\bbmatrix{
        c_0 & c_1  &\cdots  & c_k \\
         c_1     & \cdots  & c_{k}& 0 \\
         \vdots    &        &\vdots  &0  \\
        c_{k}     &  0      &\cdots  &0  \\
       }
\ppmatrix{
  x^0_W \\   \\  \vdots  \\ x^k_W 
}
=z  
\end{align}
and choose the $\lambda _i$ so that 
\begin{align}
\label{eqn:c2b}
\ppmatrix{\lambda _0& \cdots & \lambda _k
}
\bbmatrix{
        c_0 & c_1  &\cdots  & c_k \\
         c_1     & \cdots  & c_{k}& 0 \\
         \vdots    &        &\vdots  &0  \\
        c_{k}     &  0      &\cdots  &0  \\
       }=
 \ppmatrix{b_0& \cdots & b_k
}
\end{align}
This is always possible since the coefficient matrix has nonzero entries
$c_k=1$ along the skew diagonal and zero entries beneath it.
But now the expression in Equation \ref{eqn:xyc}
 reduces to 
\begin{align}
\ppmatrix{b_0& \cdots & b_k
}
\ppmatrix{
  x^0_W \\   \\  \vdots  \\ x^k_W 
}
\in \Vw^{com},
\end{align}
since $x^k_W+b _{k-1} x^{k-1}_W + \cdots + b _0   x^0_W\in \Vw^{com}.$

\vspace{0.4cm}

Thus if we choose 
\begin{equation}
\label{eqn:find_lambda}
%\begin{align}
\ppmatrix{\lambda _0& \cdots & \lambda _k
}=\ppmatrix{b_0& \cdots & b_k}
{\bbmatrix{
        c_0 & c_1  &\cdots  & c_k \\
         c_1     & \cdots  & c_{k}& 0 \\
         \vdots    &        &\vdots  &0  \\
        c_{k}     &  0      &\cdots  &0  \\
       }}^{-1},
%\end{align}
\end{equation}

\vspace{0.4cm}

and use this vector 
$({\lambda _0, \cdots , \lambda _k})$ in Equation \ref{eqn:xy},
the resulting basic sequence 
$y^0_W , y^1_W  ,\cdots  , y^{k}_W$\\
would define the genop $\Vwdw^{end}$ through
$$\Vwdw^{end}\equiv sp\{(y^j_W,y^{j+1}_{\dw}) ,j=0,1,2,\cdots k-1\}+(\Vw^{com}\oplus (\Vw^{com})_{\dw}).$$
Since
$y^k_W+c _{k-1} y^{k-1}_W + \cdots + c _0   y^0_W\in \Vw^{com},$
the polynomial 
$p^{end}(s)\equiv s^k +c_{k-1}s^{k-1}+\cdots c_0,$ annihilates $\Vwdw^{end}.$ 

We summarize the above procedure to construct a genop with the desired annihilating polynomial 
in the following algorithm.

\begin{algorithm}
\label{alg:c2g}
{\bf Algorithm Annihilating polynomial to genop }\\

Input: A genop $\Vwdw^{start}\subseteq \Vonewdw $ with annihilating polynomial $p_b(s)\equiv \sum_{i=0}^kb_is^i,$
and a desired annihilating polynomial  $\sum_{i=0}^kc_is^i.$

Output:  A genop $\Vwdw^{end}\subseteq \Vonewdw $ with annihilating polynomial $p_c(s)\equiv \sum_{i=0}^kc_is^i.$

Step 1. Compute $\lambda _i, i= 0, \cdots , k$ from $p_b(s),p_c(s)$ using Equation \ref{eqn:find_lambda}.

Step 2. Compute the basic sequence $y_W^0, \cdots , y_W^k$ from a basic sequence $x_W^0, \cdots , x_W^k$
of $\Vwdw^{start}$ using Equation \ref{eqn:xy}.

Step 3.  $\Vwdw^{end}\leftarrow sp\{(y^j_W,y^{j+1}_{\dw}) ,j=0,1,2,\cdots k-1\}+(\Vw^{com}
\oplus (\Vw^{com})_{\dw})
.$
 
Algorithm Ends.
\end{algorithm}

\subsubsection{Building the genop $\Vwdw^{new}$}
\label{sec:new_genop}

\vspace{0.4cm}

Now we could grow $\Vwdw^{end}$ to a genop $\Vwdw^{new},$ where $\Vwdw^{new}\circ W=
\Vonewdw \circ W.$ For this we use the following algorithm which starts with a basic sequence 
in $\Vonewdw \bigcap \Vw $ and outputs $\Vwdw^{new}.$
%using the following algorithm as follows.
\begin{algorithm}
\label{alg:b2g}
{\bf Algorithm  Basic2genop}\\

Input: A genop $\Vwdw^{end}\subseteq \Vonewdw .$
%A basic sequence $y^0_W , y^1_W  ,\cdots  , y^k_W,$ in $\Vw$ for $\Vonewdw.$

Output:
A genop $\Vwdw^{new},$ where $\Vwdw^{new}\circ W=
\Vonewdw \circ W$ and $\Vwdw^{new}\supseteq \Vwdw^{end}.$

%$ y^i_W \in \Vwdw^{new}\circ W, i= 0, \cdots k-1.$

Step 1. Let $t=r(\Vonewdw\circ W)-r(\Vwdw^{end}\circ W).$
Construct a sequence of vectors $u^1_W , u^2_W  ,\cdots  , u^{t}_W$
from $\Vonewdw\circ W,$ which are independent modulo $\Vw\equiv \Vwdw^{end}\circ W.$ 

(Thus if $sp\{y^0_W , y^1_W  ,\cdots  , y^{k}_W\}+\Vw^{com}= \Vwdw^{end}\circ W,$ then  $$sp\{y^0_W , y^1_W  ,\cdots  , y^{k}_W, u^1_W , u^2_W  ,\cdots  , u^{t}_W\}+\Vw^{com}= \Vonewdw\circ W.)$$

Step 2.
Pick vectors $v^1_{\dw} , v^2_{\dw}  ,\cdots  , v^{t}_{\dw}$
such that $(u^1_W,v^2_{\dw}), \cdots , (u^{t}_W,v^{t}_{\dw}) \in \Vonewdw .$\\

Step 3.
$\Vwdw^{new}\leftarrow sp\{ (u^1_W,v^1_{\dw}), \cdots , (u^{t}_W,v^{t}_{\dw})\} +  sp\{(y^j_W,y^{j+1}_{\dw}) ,j=0,1,2,\cdots k-1\}+(\Vw^{com}\oplus (\Vw^{com})_{\dw}).$

Algorithm ends.
\end{algorithm}

\subsubsection{Proof that $\Vwdw^{new}$ is a genop
with desired annihilating polynomial}
\label{sec:proof_genop_ann_poly}

\vspace{0.4cm}

We will now show that $\Vwdw^{new}$ is a genop 
and  has as an annihilating polynomial 
$p_up^{end}(s),$ where $p_u(s) $ is the minimal annihilating polynomial of
$\Vonewdw +(\Vw\oplus(\Vw)_{\dw})$ and
$p^{end}(s)\equiv
s^k +c_{k-1}s^{k-1}+\cdots c_0,$
the annihilating polynomial of $\Vwdw^{end}.$

\vspace{0.2cm}

We know that  $$\Vwdw^{new}\circ W= \Vonewdw \circ W\supseteq (\Vonewdw \circ \dw)_W \supseteq (\Vwdw^{new}\circ \dw)_W.$$
Next the the vectors $y^0_W , y^1_W  ,\cdots  , y^{k}_W, u^1_W , u^2_W  ,\cdots  , u^{t}_W$ are independent modulo $\Vw^{com}.$\\
So the only way in which vectors 
in $$\{(y^j_W,y^{j+1}_W) ,j=0,1,2,\cdots k-1\}\bigcup \{(u^1_W,v^1_W), \cdots , (u^{t}_W,v^{t}_W)\}$$ 
can be linearly combined to yield a vector of the kind $(f_W,g_{\dw}),$ with $f_W \in \Vw^{com}$ is through the trivial linear combination, which means that
$$g_{\dw}\in (\Vw^{com})_{\dw}.$$
Therefore, $\Vwdw^{new}\times \dw \subseteq (\Vw^{com})_{\dw}.$ We already know that
$\Vwdw^{new}\times \dw \supseteq (\Vw^{com})_{\dw},$ so that we must have 
$$\Vwdw^{new}\times \dw \ (\equaln \Vw^{com})_{\dw}.$$ It is clear that $\Vwdw^{new}\times W\supseteq \Vw^{com}$ and $(\Vwdw^{new}\times W)_{\dw} \supseteq (\Vw^{com})_{\dw}.$ 
Thus we conclude $$\Vwdw^{new}\times W\supseteq \Vw^{com}= (\Vwdw^{new}\times \dw)_W.$$
Hence $\Vwdw^{new}$ is a genop and has as an annihilating polynomial 
(using part 6 of Theorem \ref{USG})
$p_up^{end}(s),$ where $p_u(s) $ is the minimal annihilating polynomial of
$\Vonewdw +(\Vw\oplus(\Vw)_{\dw})$ and 
$p^{end}(s),$ 
the annihilating polynomial of $\Vwdw^{end}.$

Further, the  minimal annihilating polynomial of $\Vwdw^{new}$
contains both $p_u(s) $ and $p^{end}(s)$ as factors by Theorem \ref{USG}.
Thus the poles of the controlled dynamical system are precisely the roots of
$p_u(s) $ and $p^{end}(s).$ 

\vspace{0.4cm}

Finally recall  that any generalized autonomous system $\Vwdw$ that satisfies 
\begin{enumerate}
\item $\Vwdwmumy\circ \wdw \supseteq \Vwdw,$
\item $\Vwdwmumy\circ \wdw M_u\times \dw  \subseteq \Vwdw \times \dw,$
\end{enumerate}
can be realized by  $wm_u-$ feedback  through $\Vwmu,$ i.e.,\\ $\Vwdw=(\Vwdwmumy\bigcap \Vwmu)\circ \wdw,$ where
$\Vwmu \equiv (\Vwdwmumy\bigcap \Vwdw)\circ WM_u.$ 

The first condition states simply that $\Vonewdw\supseteq \Vwdw.$
This is clearly satisfied by the genop $\Vwdw^{new}$ constructed by the Algorithm \ref{alg:b2g}.

To satisfy the  second condition we need  that $$(\Vwdwmumy\circ \wdw M_u\times \dw)_{W}
\subseteq \Vw^{com}\equiv \Vonewdw\times W \bigcap (\Vonewdw\times\dw)_{W}.$$
In commonly encountered situations if $w(0)=0$ and the input is zero,
the derivative $\mydot{w}(t)$ will be zero for $t\geq 0$ (and therefore also for 
$t=0$), which means
$\Vwdwmumy\circ \wdw M_u\times \dw=\0_{\dw},$ so that the second condition is also
satisfied by  $\Vwdw^{new}.$ 

\begin{remark}
\label{rem:arbitrayannpoly}
%Let $\Vonewdw \equiv \Vwdwmumy\circ \wdw $
Let $\Vw$
be  the minimal invariant space of a USG $\Vonewdw$ containing
$(\Vonewdw\times \dw)_W.$ We have seen above that we can choose a genop $\Vwdw $ in such a way that
$\Vwdw\bigcap \Vw $ has any desired annihilating polynomial of degree $k,$ where
$$k\equiv r(\Vw) - r(\Vw^{com}),\ \ 
\Vw^{com}\equiv \Vonewdw\times W \bigcap (\Vonewdw\times\dw)_{W}.$$
%The space $\Vwdw^{new}\times\dw = (\Vwdw^u\times W)_{\dw} \bigcap \Vwdw^u\times \dw,$
%so that  

A consequence of part $4$  of Lemma \ref{lem:twogenasrestcont}
 is that, since $$ \Vw = \Vwdw \bigcap \Vw\circ W = \Vonewdw\bigcap \Vw\circ W, \ \   \Vwdw \bigcap \Vw \subseteq \Vonewdw\bigcap \Vw,$$ the space $\Vonewdw\bigcap \Vw,$ which is USG,  can also be annihilated by any annihilating polynomial of degree $k,$ in particular, by $s^k.$

\vspace{0.4cm}

Dually, if $\Vw$
be  the maximal invariant space of an LSG $\Vtwowdw,$ contained in
$\Vtwowdw\circ W,$\\ then  by part $5$ of Lemma \ref{lem:twogenasrestcont},

$\Vtwowdw + (\Vw)_{\dw},$ can be annihilated by 
any annihilating polynomial of degree $k,$ $$ k \equiv r[\Vtwowdw\circ  W \plusn (\Vtwowdw\circ \dw)_{W}]- r(\Vw), $$ in particular, by $s^k.$ 

\end{remark}
The foregoing algorithms  can be used indirectly to do pole placement
for $m_y\dW-$ injection.
% or for considering single output many input case with output feed back.
One does this by simply going to the adjoint and using the algorithms of the previous subsections
to obtain suitable genops, taking their adjoints and returning to the primal dynamical system.
It would, however, be of some interest to recast the algorithm in such a manner that they can be
line by line dualized. This we do in \ref{sec:ppa_dualization}.
\section{Emulators}
\label{sec:emu}
\subsection{Introduction}
Dynamical systems do not always come in a form that is suitable
for theoretical and practical studies. We usually convert the original system
into another dynamical system that is more convenient for our purposes.
The dynamical system as described by state variables is a very good example.
We call such transformed systems, which capture the original dynamical
system sufficiently well so that theoretical and computational studies 
can be carried out,
 `emulators'. A formal definition and examples follow.

%As we pointed out before, state variables are not the most natural and neither are they computationaly convenient. We will later give examples of 
%more convenient   
\begin{definition}
\label{def:emulators}
Dynamical systems $\Vwdwm,\Vpdpm$ are said to be {\bf emulators}
of each other iff 
%or, equivalently, { $\E-$ linked}, iff 
$\exists \Vonewp, \Vtwodwdp$  with $\Vonewp \supseteq (\Vtwodwdp)_{WP}$ such that 
$$\Vwdwm=\Vpdpm \lrar (\Vonewp \bigoplus \Vtwodwdp)$$ 
and
$$\Vpdpm=\Vwdwm \lrar (\Vonewp \bigoplus \Vtwodwdp).$$
In particular, when $M$ is void,  $\Vwdw,\Vpdp$ are said to be {\bf emulators}
of each other iff 
$\exists \Vonewp, \Vtwodwdp,$ with $\Vonewp \supseteq (\Vtwodwdp)_{WP},$ such that,
$$\Vwdw=\Vpdp \lrar (\Vonewp \bigoplus \Vtwodwdp)$$ 
and
$$\Vpdp=\Vwdw \lrar (\Vonewp \bigoplus \Vtwodwdp).$$
If such $\Vonewp,\Vtwodwdp$ exist, we say that $\{\Vwdwm,\Vpdpm\}$
is {\bf $\E-$ linked} through $\Vonewp,\Vtwodwdp,$ or that $\{\Vonewp,\Vtwodwdp\}$
is an {\bf $\E-$ linkage pair} for $\{\Vwdwm,\Vpdpm\}.$
%We refer to $(\Vonewp, \Vtwodwdp)$ as the {\bf emulator linkage pair} for $\{\Vwdwm,\Vpdpm\}$
\end{definition}
An immediate consequence is that elimination 
of the $M$ variables continues to result in a pair of spaces
linked through the same {\bf $\E-$ linkage pair}.
\begin{lemma}
\label{lem:emlinking}
Let $\{\Vwdwm,\Vpdpm\}$
be { $\E-$ linked} through $\Vonewp,\Vtwodwdp .$\\
Then so is $\{\Vwdwm\lrar \Vm,\Vpdpm\lrar \Vm\}$
{ $\E-$ linked} through $\Vonewp,\Vtwodwdp .$
%Let $\{\Vwdwm,\Vpdpm\}$
%}be {\bf $\E-$ linked} through $\Vonewp,\Vtwodwdp .$ 
\end{lemma}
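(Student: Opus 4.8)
The plan is to obtain the two required identities by rearranging matched compositions, invoking Theorem~\ref{thm:notmorethantwice}: a product of vector spaces in which no index set occurs more than twice and which contains no null subexpression has a value independent of the bracketing. Recall the index sets: $\Vonewp$ is on $W\uplus P$, $\Vtwodwdp$ on $\dw\uplus\dP$, $\Vwdwm$ on $W\uplus\dw\uplus M$, $\Vpdpm$ on $P\uplus\dP\uplus M$, and $\Vm$ on $M$. Since $W\uplus P$ and $\dw\uplus\dP$ are disjoint, $\Vonewp\bigoplus\Vtwodwdp=\Vonewp\lrar\Vtwodwdp$, so each composite below is an iterated `$\lrar$' of four named spaces.

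First I would analyse the product $\Vwdwm\lrar\Vm\lrar\Vonewp\lrar\Vtwodwdp$. Each of $W$, $\dw$, $M$ occurs exactly twice (in $\Vwdwm$ together with $\Vonewp$, with $\Vtwodwdp$, and with $\Vm$ respectively), while $P$ and $\dP$ occur once. A short finite check shows that no subfamily of these four spaces matches to the empty index set --- for instance $\Vwdwm\lrar\Vm=\Vwdw$ lives on $W\uplus\dw$, $\Vm\lrar\Vonewp\lrar\Vtwodwdp$ is the direct sum on $M\uplus W\uplus P\uplus\dw\uplus\dP$, and $\Vwdwm\lrar\Vonewp\lrar\Vtwodwdp$ lives on $M\uplus P\uplus\dP$ --- so there is no null subexpression and Theorem~\ref{thm:notmorethantwice} applies. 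Hence
$$(\Vwdwm\lrar\Vm)\lrar(\Vonewp\bigoplus\Vtwodwdp)
=\bigl(\Vwdwm\lrar(\Vonewp\bigoplus\Vtwodwdp)\bigr)\lrar\Vm
=\Vpdpm\lrar\Vm,$$
using $\Vpdpm=\Vwdwm\lrar(\Vonewp\bigoplus\Vtwodwdp)$.

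Symmetrically, starting from $\Vwdwm=\Vpdpm\lrar(\Vonewp\bigoplus\Vtwodwdp)$ and analysing the product $\Vpdpm\lrar\Vm\lrar\Vonewp\lrar\Vtwodwdp$, in which $P$, $\dP$, $M$ each occur twice and $W$, $\dw$ once (again no null subexpression), Theorem~\ref{thm:notmorethantwice} gives
$$(\Vpdpm\lrar\Vm)\lrar(\Vonewp\bigoplus\Vtwodwdp)
=\bigl(\Vpdpm\lrar(\Vonewp\bigoplus\Vtwodwdp)\bigr)\lrar\Vm
=\Vwdwm\lrar\Vm.$$
The side condition $\Vonewp\supseteq(\Vtwodwdp)_{WP}$ concerns only $\{\Vonewp,\Vtwodwdp\}$ and is unchanged, so the two displays say precisely that $\{\Vwdwm\lrar\Vm,\Vpdpm\lrar\Vm\}$ is $\E$-linked through $\Vonewp,\Vtwodwdp$, which is the assertion of Lemma~\ref{lem:emlinking}.

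The only place needing care is checking the hypotheses of Theorem~\ref{thm:notmorethantwice}, in particular the absence of null subexpressions; this is a purely mechanical finite verification on four spaces and is not a genuine obstacle. Everything else is an immediate application of the bracketing-independence of `$\lrar$'.
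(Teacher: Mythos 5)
Your proof is correct and is essentially the paper's own argument: the paper also proves the lemma by a single application of Theorem~\ref{thm:notmorethantwice}, noting that no index set occurs more than twice in $\Vwdwm\lrar \Vm\lrar(\Vonewp\oplus\Vtwodwdp)$, so the brackets can be rearranged to give $(\Vwdwm\lrar(\Vonewp\oplus\Vtwodwdp))\lrar\Vm$. Your version merely spells out the null-subexpression check and the symmetric second identity, which the paper leaves implicit.
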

\begin{proof}
Observe that, since no index set occurs more than twice in the expression,
by Theorem \ref{thm:notmorethantwice},
$$(\Vwdwm\lrar \Vm)\lrar (\Vwp\oplus \Vdwdp)= (\Vwdwm\lrar(\Vwp\oplus \Vdwdp))\lrar \Vm.$$
\end{proof}

Example \ref{eg:emulator}
illustrates the notion of an emulator.

The $\E-$ linkage is analogous to similarity transformation but much less
restrictive in scope. For instance, it permits linking between systems with
different numbers of variables (i.e., $|W|$ and $|P|$ need not be the same).
If one thinks of $\Vonewp\lrar \Vwdw\lrar\Vtwodwdp$ as analogous to $T^{-1}AT,$
$\Vonewp$ takes on the role of $T^{-1},$ while $\Vtwodwdp$ that of
$T.$ Some flexibility is provided by $\Vonewp, \Vtwodwdp$
to treat the $W,P$ variables differently from $\dw,\dP$ variables.
This is appropriate because the derivatives of dynamic variables
usually satisfy constraints of the dynamic variables and some additional ones.
However, the cancellation that occurs when we multiply $T^{-1}AT$ by $T^{-1}AT$
resulting in $T^{-1}A^2T$
is mimicked by $\E-$ linkage. Lemma \ref{lem:cancel} shows that $\Vonewp\lrar \Vwdw \lrar \Vtwodwdp$ when `multiplied' by itself results in $\Vonewp\lrar \Vwdw^{(2)} \lrar \Vtwodwdp.$

When systems are emulators of each other, we will show that, when we need to make computations such as finding suitable conditioned and controlled invariant
spaces   on $\Vwdwm,$ we can first go to $\Vpdpm$ through
the $\E-$ linkage, make the computations there and get back to $\Vwdwm$
again through the $\E-$ linkage. As another example, we will show that to perform
$wm_u-$ feedback we can first go to $\Vpdpm$ through
the $\E-$ linkage, do the $pm_u-$ feedback $\V_{PM_u}\equiv \Vonewp \lrar \V_{WM_u}$ and get back to $\Vwdwm$
through the linkage $\{\Vonewp, \Vtwodwdp\}.$ The effect would be the same
as performing $wm_u-$ feedback directly on $\Vwdwm.$
A similar statement holds for $m_y\dW-$ injection.

Lastly, and most importantly, spectral computations can be performed
on emulators and transferred to the original dyanmical system.
%A larger set of  such computations can be done with
%strong emulators. 

We will show that for a large class of electrical circuits,
which could be regarded as generic, some emulators can be built in near linear time and these are in fact more useful than state equations.

The following result describes an essential characteristic of emulator linkage pairs.
\begin{theorem}
\label{thm:elinkagedotcross}
Let $(\Vonewp,\Vtwodwdp)$
be an {\bf $\E-$ linkage} for $\{\Vwdwm,\Vpdpm\}.$
Then the following `dot-cross' conditions 
%for $W,\dw $ and $P,\dp $ 
hold
\begin{subequations}
\label{eqn:emulatorsW}
\begin{align}
\Vonewp\circ W&\supseteq \Vwdwm\circ W\\
\Vonewp\times W&\subseteq \Vwdwm\times W\\
\Vtwodwdp\circ \dw&\supseteq \Vwdwm\circ \dw\\
\Vtwodwdp\times \dw&\subseteq \Vwdwm\times \dw\\
%\Vonewp\circ P&\supseteq \Vpdpm\circ P\\
%\Vonewp\times P&\subseteq \Vpdpm\times P\\
%\Vtwodwdp\circ \dp&\supseteq \Vpdpm\circ \dp\\
%\Vonewp\times dp&\subseteq \Vpdpm\circ \dp\\
%\Vtwodwdp\circ \dp&\supseteq \Vpdpm\circ \dp\\
%\Vtwodwdp\times \dp&\subseteq \Vpdpm\times \dp
\end{align}
\end{subequations}
and also the same conditions replacing $W$ by $P,$
\begin{subequations}
\label{eqn:emulatorsW}
\begin{align}
\Vonewp\circ P&\supseteq \Vpdpm\circ P\\
\Vonewp\times P&\subseteq \Vpdpm\times P\\
\Vtwodwdp\circ \dotp&\supseteq \Vpdpm\circ \dotp\\
\Vtwodwdp\times \dotp&\subseteq \Vpdpm\times \dotp .
\end{align}
\end{subequations}
Further, if the above conditions hold, then 
$$\Vwdwm=\Vpdpm \lrar (\Vonewp \bigoplus \Vtwodwdp)$$
iff
$$\Vpdpm=\Vwdwm \lrar (\Vonewp \bigoplus \Vtwodwdp).$$
\end{theorem}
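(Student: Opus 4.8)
The plan is to treat Theorem~\ref{thm:elinkagedotcross} in two halves. First I would establish the ``dot-cross'' inequalities in Equations~\eqref{eqn:emulatorsW} from the defining relations of an $\E$-linkage; then I would use those inequalities, together with the implicit inversion theorem (Theorem~\ref{thm:inverse}), to prove the equivalence of the two composition equations.

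For the first half, suppose $\Vwdwm = \Vpdpm \lrar (\Vonewp \oplus \Vtwodwdp)$. Writing $\Vonewp \oplus \Vtwodwdp$ as a single linkage on $W \uplus P \uplus \dw \uplus \dP$, I would view the matched composition with $\Vpdpm$ (which lives on $P\uplus\dP\uplus M$) as eliminating the $P,\dP$ coordinates. The restriction inequalities such as $\Vonewp\circ W \supseteq \Vwdwm\circ W$ then follow directly from the definition of matched composition: any $f_W$ that survives in $\Vwdwm$ must have come from some vector of $\Vonewp$ whose $W$-part is $f_W$, so $\Vwdwm \circ W \subseteq (\Vonewp\oplus\Vtwodwdp)\circ W = \Vonewp\circ W$ (using that $W$ meets only the $\Vonewp$ summand). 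Dually, if $(f_W,0)\in\Vonewp\times W$ then since $0_{\dP}\in\Vtwodwdp\times\dP$ always and $0_{P\dP M}\in\Vpdpm$, we get $(f_W,0_{\dw},0_M)\in\Vwdwm$, i.e. $f_W\in\Vwdwm\times W$; this gives $\Vonewp\times W\subseteq\Vwdwm\times W$. The two statements about $\Vtwodwdp$ relative to $\dw$ are proved the same way with the roles of $W$ and $\dw$ (and of the two summands) interchanged, noting $\dw$ meets only $\Vtwodwdp$. The symmetric statements with $P,\dP$ in place of $W,\dw$ follow identically from the second defining relation $\Vpdpm = \Vwdwm \lrar (\Vonewp\oplus\Vtwodwdp)$, which is part of the hypothesis of being an $\E$-linkage. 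This half is essentially bookkeeping with the definitions of $\circ$, $\times$, $\oplus$, $\lrar$.

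For the second half I would appeal to Theorem~\ref{thm:inverse} (IIT) applied with the identifications $\Vsp \equiv \Vonewp\oplus\Vtwodwdp$ on $S\equiv W\uplus\dw$ and $P\equiv P\uplus\dP$, $\Vsq\equiv\Vwdwm$, $\Vpq\equiv\Vpdpm$ (up to absorbing the manifest set $M$, which plays a passive role throughout and can be carried along since it appears in no composition). The equation $\Vsp\lrar\Vpq=\Vsq$ is then exactly $\Vwdwm=\Vpdpm\lrar(\Vonewp\oplus\Vtwodwdp)$. By part (2) of Theorem~\ref{thm:inverse}, given that this equation holds, we have $\Vsp\lrar\Vsq=\Vpq$ --- that is, $\Vpdpm=\Vwdwm\lrar(\Vonewp\oplus\Vtwodwdp)$ --- precisely when $\Vsp\circ(P\uplus\dP)\supseteq\Vpq\circ(P\uplus\dP)$ and $\Vsp\times(P\uplus\dP)\subseteq\Vpq\times(P\uplus\dP)$. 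Since $\Vsp\circ(P\uplus\dP)=(\Vonewp\circ P)\uplus(\Vtwodwdp\circ\dP)$ and similarly for contraction, these four containments are exactly the four conditions in the second block~\eqref{eqn:emulatorsW} involving $P$ and $\dP$. Running the same argument with the roles of $\{W,\dw\}$ and $\{P,\dP\}$ swapped gives the converse direction, and the ``Further, if the above conditions hold'' clause is then immediate: assuming all eight dot-cross conditions, each of the two composition equations implies the other by the above, so they are equivalent.

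The main obstacle I anticipate is purely notational: carefully splitting the direct-sum linkage $\Vonewp\oplus\Vtwodwdp$ into its pieces and tracking which index set ($W$, $\dw$, $P$, $\dP$, or $M$) is shared at each matched composition, so that the hypotheses of IIT are verified with the correct sets $S$ and $P$. One must also be slightly careful that the presence of $M$ does not interfere --- it never appears in $\Vonewp$ or $\Vtwodwdp$, so it is simply appended, and Theorem~\ref{thm:notmorethantwice} (no index set occurs more than twice) guarantees the compositions are unambiguous. Once the correspondence with IIT is set up cleanly, both halves are short; there is no genuinely hard estimate or construction, only the discipline of matching the abstract statement of Theorem~\ref{thm:inverse} to the concrete objects here.
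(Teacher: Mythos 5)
Your proposal is correct and follows essentially the same route as the paper, whose proof is exactly an appeal to Theorem~\ref{thm:inverse} together with the facts $(\Vab\oplus\Vcd)\circ A=\Vab\circ A$ and $(\Vab\oplus\Vcd)\times A=\Vab\times A$ for disjoint index sets. Your only deviation is that you verify the dot-cross containments by unwinding the definition of matched composition instead of quoting the necessity half of IIT, which is the same content in expanded form.
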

\begin{proof}
The proof follows from Theorem \ref{thm:inverse} and the fact that \\
$(\Vab\oplus\Vcd) \circ A= \Vab \circ  A$
and $(\Vab\oplus \Vcd) \times A= \Vab \times A,$ when $A,B,C,D$ are mutually disjoint.
\end{proof}
The next lemma stresses an easy consequence of the definition of
$\E-$ linkage, which is nevertheless useful.
\begin{lemma}
\label{lem:elinking}
% Let $\ldw(\Vwdw)\equiv \Vwldw \pdw (\Vwdw\circ W \bigoplus \Vwdw\times \dw)
%.$ 
 Let $\{\Vwdw,\Vpdp\}$
be { $\E-$ linked} through $\Vonewp,\Vtwodwdp .$
Then
\begin{enumerate}
\item $$\Vwdw \circ W\lrar \Vonewp =\Vpdp \circ P,\ \ \ \Vwdw \times W\lrar \Vonewp =\Vpdp \times P,$$
$$\Vwdw \circ \dw \lrar \Vtwodwdp =\Vpdp \circ \dP,\ \ \ \Vwdw \times \dw \lrar \Vtwodwdp =\Vpdp \times \dP.$$
\item If either of $\Vwdw,\Vpdp$ is decoupled so would the other be.
\end{enumerate}
\end{lemma}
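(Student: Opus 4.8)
The statement to prove is Lemma~\ref{lem:elinking}: given $\{\Vwdw,\Vpdp\}$ $\E$-linked through $\Vonewp,\Vtwodwdp$, we have the four identities relating the restriction/contraction of $\Vwdw$ to $W$ (resp. $\dw$) composed with $\Vonewp$ (resp. $\Vtwodwdp$) to the restriction/contraction of $\Vpdp$, and moreover decoupledness transfers from one to the other.

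My plan is as follows. For part (1), the key observation is that $\E$-linkage gives $\Vpdp=\Vwdw\lrar(\Vonewp\oplus\Vtwodwdp)$, and by Theorem~\ref{thm:notmorethantwice} (no index set occurs more than twice; the sets $W,\dw,P,\dP$ are pairwise disjoint) the matched composition is associative, so I can regroup. To get $\Vpdp\circ P$, I compute $\Vpdp\circ P = (\Vwdw\lrar(\Vonewp\oplus\Vtwodwdp))\circ P$; since restricting to $P$ kills the $\dP$ coordinates, this equals $\Vwdw\lrar(\Vonewp\oplus(\Vtwodwdp\circ\emptyset))$-type reasoning — more cleanly, $\Vpdp\circ P = (\Vwdw\lrar(\Vonewp\oplus\Vtwodwdp))\lrar\F_P$, and one regroups so the $\dw,\dP$ part gets contracted away. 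The cleanest route: observe $\Vpdp\circ P$ is obtained from $\Vpdp$ by matched composition with $\F_P\oplus\0_{\dP}$; feeding this through the associativity and noting $(\Vonewp\oplus\Vtwodwdp)\lrar(\F_P\oplus\0_{\dP})$ reduces to a composition that first contracts the $\dw$ side, I get $\Vwdw\circ W\lrar\Vonewp$. Similarly $\Vpdp\times P$ corresponds to composing with $\0_P\oplus\F_{\dP}$ and yields $\Vwdw\times W\lrar\Vonewp$; the $\dw$-side identities are symmetric, swapping the roles of $(W,\Vonewp)$ and $(\dw,\Vtwodwdp)$. The main bookkeeping step is justifying $(\Vonewp\oplus\Vtwodwdp)\lrar(\F_P\oplus\0_{\dP}) = \Vonewp\oplus(\Vtwodwdp\times\dw)$-style simplifications — these follow from the fact that for disjoint index sets, $\lrar$ on a direct sum distributes, together with $\Vab\lrar\F_B=\Vab\circ A$ and $\Vab\lrar\0_B=\Vab\times A$.

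For part (2), suppose $\Vwdw$ is decoupled, say $\Vwdw=\Vw\oplus\Vdw$. Then $\Vpdp=\Vwdw\lrar(\Vonewp\oplus\Vtwodwdp) = (\Vw\oplus\Vdw)\lrar(\Vonewp\oplus\Vtwodwdp)$. Since $W,\dw$ are disjoint and the linkage splits as $\Vonewp$ (on $W,P$) and $\Vtwodwdp$ (on $\dw,\dP$), this matched composition separates into $(\Vw\lrar\Vonewp)\oplus(\Vdw\lrar\Vtwodwdp)$, which is manifestly decoupled as a space on $P\uplus\dP$. The converse direction is identical using $\Vwdw=\Vpdp\lrar(\Vonewp\oplus\Vtwodwdp)$, which holds by the symmetric form of the $\E$-linkage definition (or by Theorem~\ref{thm:elinkagedotcross}). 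Alternatively, and perhaps more slickly, part (2) follows directly from part (1): if $\Vwdw$ is decoupled then $\Vwdw\circ W=\Vwdw\times W$ and $\Vwdw\circ\dw=\Vwdw\times\dw$, hence by part~(1) $\Vpdp\circ P=\Vwdw\circ W\lrar\Vonewp=\Vwdw\times W\lrar\Vonewp=\Vpdp\times P$ and likewise $\Vpdp\circ\dP=\Vpdp\times\dP$, which is exactly the statement that $\Vpdp$ is decoupled.

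I expect the only real obstacle to be the careful handling of the matched-composition-with-$\F$-or-$\0$ reductions in part (1): one must be scrupulous that the regrouping permitted by Theorem~\ref{thm:notmorethantwice} applies (it does, since each of $W,\dw,P,\dP$ appears exactly twice across $\Vwdw$, $\Vonewp\oplus\Vtwodwdp$, and the auxiliary $\F/\0$ factor, and no null subexpression arises), and that direct sums interact with $\lrar$ correctly over disjoint index sets — both are routine facts established or implicit in Section~\ref{sec:Preliminaries} and in the proof of Theorem~\ref{thm:elinkagedotcross}. Given those, the computation is short; I would present part~(1) for the $W$/$\Vonewp$ case in full and note the $\dw$/$\Vtwodwdp$ case is symmetric, then derive part~(2) as the one-line corollary above.
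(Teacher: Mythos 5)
Your part (2) is fine (deriving decoupledness of $\Vpdp$ from part (1) via $\circ=\times$ on each block is exactly what the paper does), but part (1) as sketched has a real gap. The legitimate formal steps take you from $\Vpdp\circ P=(\Vwdw\lrar(\Vonewp\oplus\Vtwodwdp))\lrar \F_{\dP}$ to $\Vwdw\lrar(\Vonewp\oplus(\Vtwodwdp\circ \dw))$, but the remaining identification of this space with $(\Vwdw\circ W)\lrar\Vonewp$ is \emph{not} one of the routine $\F/\0$ reductions or an instance of associativity: it requires the containment $\Vwdw\circ\dw\subseteq\Vtwodwdp\circ\dw$ (and, for the contraction identity, $\Vtwodwdp\times\dw\subseteq\Vwdw\times\dw$; symmetrically the $W$-side containments for the other two equalities). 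These are precisely the dot-cross conditions of Theorem \ref{thm:elinkagedotcross}, and they encode the \emph{reverse} equality $\Vwdw=\Vpdp\lrar(\Vonewp\oplus\Vtwodwdp)$ of the hypothesis; your argument only ever uses the forward equality plus formal manipulation, and that is insufficient. Concretely, with $|W|=|\dw|=|P|=|\dP|=1$, $\Vwdw=\{(a,a)\}$, $\Vonewp=\{(a,a)\}$, $\Vtwodwdp=\0_{\dw\dP}$, one has $\Vwdw\lrar(\Vonewp\oplus\Vtwodwdp)=\0_{P\dP}$, whose restriction to $P$ is $\0_P$, while $\Vwdw\circ W\lrar\Vonewp=\F_P$; this pair is of course not two-way $\E$-linked, which is exactly the point --- the second direction of the linkage must enter somewhere, and your sketch never says where.

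The fix is short: invoke Theorem \ref{thm:elinkagedotcross} (or argue directly from both equalities in Definition \ref{def:emulators}) and finish with an element chase. For $\supseteq$ in the first identity: given $(f_W,g_{\dw})\in\Vwdw$ with $(f_W,h_P)\in\Vonewp$, the containment $\Vwdw\circ\dw\subseteq\Vtwodwdp\circ\dw$ supplies $k_{\dP}$ with $(g_{\dw},k_{\dP})\in\Vtwodwdp$, whence $(h_P,k_{\dP})\in\Vpdp$; for the contraction identity, given $(h_P,0_{\dP})\in\Vpdp$ with witnesses $(f_W,g_{\dw})\in\Vwdw$, $(g_{\dw},0_{\dP})\in\Vtwodwdp$, use $\Vtwodwdp\times\dw\subseteq\Vwdw\times\dw$ to get $(0_W,g_{\dw})\in\Vwdw$ and subtract, so $(f_W,0_{\dw})\in\Vwdw$ and $h_P\in\Vwdw\times W\lrar\Vonewp$ (the easy containments are immediate). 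Two smaller slips to repair: restriction of $\Vpdp$ to $P$ is matched composition with $\F_{\dP}$, not with $\F_P\oplus\0_{\dP}$ (the latter composes away \emph{all} of $P\uplus\dP$ and yields a space on the empty set); and $(\Vonewp\oplus\Vtwodwdp)\lrar(\F_P\oplus\0_{\dP})=(\Vonewp\circ W)\oplus(\Vtwodwdp\times\dw)$, not $\Vonewp\oplus(\Vtwodwdp\times\dw)$.
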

\begin{proof}
\begin{enumerate}
\item
%We have $\Vonewp\lrar \Vwdw \lrar \Vtwodwdp = \Vpdp.$
Follows directly from the definition of { $\E-$ linkage}.
\item If $\Vwdw $ is decoupled then $\Vwdw = (\Vwdw \circ W)\oplus (\Vwdw \circ \dw).$ The result now follows from the previous part of the present lemma.
\end{enumerate}
\end{proof}
%\begin{remark}
%We note that we could have taken { $\E-$ linkage} of dynamical
%systems with additional variables as our starting point.
%Definition \ref{def:emulators} could have been about  $\{\Vwdwam,\Vpdpam\},$
%related through  $$\Vwdwam=\Vpdpam \lrar (\Vonewp \bigoplus \Vtwodwdp),
%\Vpdpam=\Vwdwam \lrar (\Vonewp \bigoplus \Vtwodwdp).$$
%If then we define $\Vwdwm, \Vpdpm$ through $\Vwdwm \equiv \Vwdwam\lrar \Va,\Vpdpm \equiv \Vpdpam\lrar \Va$ then it would turn out that $\{\Vwdwam,\Vpdpam\}$
%are { $\E-$ linked} through $\Vonewp , \Vtwodwdp.$
%\end{remark}

\subsection{Motivation}
In this subsection we illustrate the idea of emulators through examples involving electrical networks. Our aim is two fold. Firstly we try to show that something
of the level of generality of emulators is needed to deal {\it implicitly} with the objects we are concerned
with.  Secondly our aim is to show that for large systems
which interlink small dynamical systems through graphs, emulators can be built 
very {\it efficiently} and can serve as convenient starting points for practical
computations.
%If such $\Vonewp,\Vtwodwdp$ exist, we say that $\{\Vwdwm,\Vpdpm\}$
%is {\bf $\E-$ linked} through $\Vonewp,\Vtwodwdp,$ or that $(\Vonewp,\Vtwodwdp)$
%is an {\bf $\E-$ linkage} for $\{\Vwdwm,\Vpdpm\}$
%(\Vwdwm,\Vpdpm).$ 
%If further, we also have
%$\Vonewp\circ P= (\Vtwodwdp\circ \dotp)_P$ and $\Vonewp\times P= (\Vtwodwdp\times \dotp)_P, $we say  $\Vpdpm$ is %a {\bf strong emulator} for $ \Vwdwm .$
 
%If we take $M$ to be the void set we speak of 
%$\E -$ linkage for the pair of generalized autonomous systems $(\Vwdw,\Vpdp)$
%using the same dot-cross conditions above. 

%\begin{remark}
%\end{remark}
%The next result is needed to bring out a stronger similarity of $\E-$ linkages to similarity transformations,
%viz., the cancellation that occurs when we multiply $P^{-1}AP$ by $P^{-1}AP.$
%
%{\color{blue}
%\begin{corollary}
%\label{cor:elinkagepseudo}
%$(\Vonewp)_{WP}\lrar (\Vonewp)_{W'P}$ is a pseudoidentity for 
%$\Vwdwm$ and for $\Vwdwm\lrar \Vm$ for any given $\Vm.$ $(\Vonewp)_{WP}\lrar (\Vonewp)_{WP'}$ is a pseudoidentity for
%$\Vpdpm$ and for $\Vpdpm\lrar \Vm$ for any given $\Vm.$ 
%
%Similarly, 
%$(\Vtwodwdp)_{\dw\dP}\lrar (\Vtwodwdp)_{\dw'\dP}$ is a pseudoidentity for
%$\Vwdwm$ and for $\Vwdwm\lrar \Vm$ for any given $\Vm.$ $(\Vtwodwdp)_{\dw \dP}\lrar (\Vtwodwdp)_{\dw\dP '}$ is a pseudoidentity for
%$\Vpdpm$ and for $\Vpdpm\lrar \Vm$ for any given $\Vm.$
%
%
%\end{corollary}
%\begin{proof}
%This follows from Theorem \ref{thm:elinkagedotcross}
% and Theorem \ref{thm:pseudoidentity}.
%\end{proof}
%}
\label{eg:emulator}
\begin{figure}
\begin{center}
 \includegraphics[width=3in]{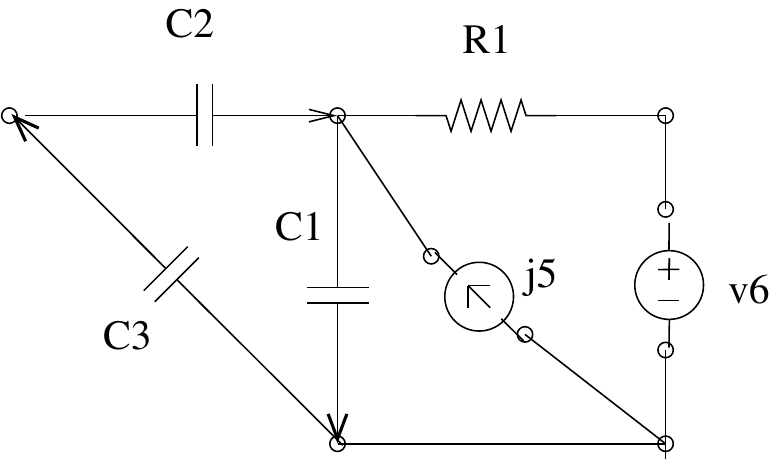}
 \caption{ An $RC$ circuit
}
% \caption{Example $RLCEJ$ Network and its multiport decomposition
%into capacitive multiport and $RL$ multiport}
 \label{fig:circuit6}
\end{center}
\end{figure}
\begin{example}
Consider the GDSA represented by the circuit $\mathcal{N}$ in Fig \ref{fig:circuit6}.
Let us take all capacitors of value $1 F$ and all resistors of value $1\Omega .$
The  corresponding GDS  $\ \Vwdwm$ is the solution space of the equations given below. Here the $W$ variables are $v_{C1},v_{C2},v_{C3},$ the $\dw$ variables are ${\mydot{v}}_{C1} ,{\mydot{v}}_{C2} ,{\mydot{v}}_{C3} ,$ and the $M$ variables
are $j_5 ,v_6,  v_5, i_6 .$
\begin{subequations}
\label{eqn:example1}
\begin{align}
 \begin{pmatrix}
  {\mydot{v}}_{C1} \\ 
  {\mydot{v}}_{C2} \\ 
  {\mydot{v}}_{C3}  
 \end{pmatrix}
 &=
 \begin{bmatrix}
 -2/3   & 0 &0 \\
  1/3 &0  &0 \\
  1/3 &0  &0 
 \end{bmatrix}
 \begin{pmatrix}
   v_{C1} \\ 
   v_{C2} \\ 
   v_{C3}  
 \end{pmatrix}
+
 \begin{bmatrix}
   2/3 \\ -1/3 \\ -1/3
 \end{bmatrix}
 v_6 
 + \begin{bmatrix}
   2/3 \\ -1/3 \\ -1/3
 \end{bmatrix}
 j_5\\
%\end{align}
%\end{equation}
0  &= 
\begin{bmatrix}
1&  1 &  1 
\end{bmatrix}
\begin{pmatrix}
  v_{C1} \\ 
  v_{C2} \\
  v_{C3} 
 \end{pmatrix}\\
\begin{pmatrix}
 i_6 \\ 
v_5
 \end{pmatrix}
 &=
 \begin{bmatrix}
  1 &0  &0 \\
  -1 &0  &0 
 \end{bmatrix}
 \begin{pmatrix}
   v_{C1} \\ 
   v_{C2} \\ 
   v_{C3}  
 \end{pmatrix}
+
 \begin{bmatrix}
  -1 \\ 0
 \end{bmatrix}
 v_6 
 + \begin{bmatrix}
  0 \\ 0 
 \end{bmatrix}
 j_5.
\end{align}
\end{subequations}
Since $P$ variables and $W$ variables have to be disjoint and
%similarly $\dw,\dp$ variables, we define the new variable 
so also the variables $\dw$ and ${\mydot{P}}$ we define the new $P,\dP$
variables
$v_{C1}' , v_{C2}',{\mydot{v}}_{C1}', {\mydot{v}}_{C2}'$
taking  $v_{C1}'=v_{C1},v_{C2}'=v_{C2},{\mydot{v}}_{C1}'={\mydot{v}}_{C1},
{\mydot{v}}_{C2}'={\mydot{v}}_{C2}.$
The GDS $\Vpdpm, $ is the   solution space of the state and output equations (of the circuit in Figure \ref{fig:circuit6})  given below:
\begin{subequations}
\label{eqn:example1state}
\begin{align}
 \begin{pmatrix}
  {\mydot{v}}_{C1}' \\ 
  {\mydot{v}}_{C2}' 
 \end{pmatrix}
 &=
 \begin{bmatrix}
 -2/3   & 0  \\
  1/3 & 0   
 \end{bmatrix}
 \begin{pmatrix}
   v_{C1}' \\ 
   v_{C2}'  
 \end{pmatrix}
+
 \begin{bmatrix}
   2/3 \\ -1/3
 \end{bmatrix}
 v_6 
 + \begin{bmatrix}
   2/3 \\ -1/3 
 \end{bmatrix}
 j_5\\
\begin{pmatrix}
 i_6 \\ 
v_5
 \end{pmatrix}
 &=
 \begin{bmatrix}
  1 &0   \\
  -1 &0   
 \end{bmatrix}
 \begin{pmatrix}
   v_{C1}' \\ 
   v_{C2}'  
 \end{pmatrix}
+
 \begin{bmatrix}
  -1 \\ 0
 \end{bmatrix}
 v_6 
 + \begin{bmatrix}
  0 \\ 0 
 \end{bmatrix}
 j_5.
\end{align}
\end{subequations}
The GDS pair $\{\Vwdwm,\Vpdpm\}$ is $\mathcal{E}$- linked and the two GDS's
are emulators of each other.\\
Next let us build a suitable $\mathcal{E}$-linkage $\{\Vonewp,\Vtwodwdp\}$ for this pair.
Observe that the constraint on $ v_{C1} , v_{C2} , v_{C3} $
is $$v_{C1} + v_{C2} + v_{C3}=0. $$
The constraints on 
$ {\mydot{v}}_{C1}, {\mydot{v}}_{C2}, {\mydot{v}}_{C3},$ 
are 
%\begin{subequations}
$$
%\begin{align}
\begin{pmatrix}
 {\mydot{v}}_{C1}+ {\mydot{v}}_{C2}+ {\mydot{v}}_{C3}\\ 
 {\mydot{v}}_{C2}/C_2- {\mydot{v}}_{C3}/C_3 
\end{pmatrix}
=
\begin{pmatrix}
0\\
0
\end{pmatrix}
.
%\end{align}
%\end{subequations}
$$
Thus the space of all vectors  $ (v_{C1} , v_{C2} , v_{C3}) $
is the space spanned by the rows of 
%\begin{subequations}
%\begin{align}
$$\begin{bmatrix}
% {\mydot{v}}_{C1}+ {\mydot{v}}_{C1}+ {\mydot{v}}_{C1}\\
% {\mydot{v}}_{C2}/C_2- {\mydot{v}}_{C3}/C_3                
2& -1& -1 \\
0& 1 & -1
\end{bmatrix}
.$$
%\end{align}
%\end{subequations}
Similarly the space of all vectors  
%$ (v_{C1} , v_{C2} , v_{C3}) $
$( {\mydot{v}}_{C1}, {\mydot{v}}_{C2}, {\mydot{v}}_{C3})$
is the space spanned by the row 
%\begin{subequations}
%\begin{align}
$$
\begin{bmatrix}
% {\mydot{v}}_{C1}+ {\mydot{v}}_{C1}+ {\mydot{v}}_{C1}\\
% {\mydot{v}}_{C2}/C_2- {\mydot{v}}_{C3}/C_3 
2& -1& -1 
\end{bmatrix}
.$$
%\end{align}
%\end{subequations}
%Since $P$ variables and $W$ variables have to be disjoint and
%similarly $\dw,\dp$ variables, we define the new variable 
%so also the variables $\dw$ and ${\mydot{P}}$ we define the new $P,\dP$
%variables
%$v_{C1}' , v_{C2}',{\mydot{v}}_{C1}', {\mydot{v}}_{C2}'$
%taking  $v_{C1}'=v_{C1},v_{C2}'=v_{C2},{\mydot{v}}_{C1}'={\mydot{v}}_{C1},
%{\mydot{v}}_{C2}'={\mydot{v}}_{C2}.$
The space $\Vonewp$ can be the space of all vectors  $ (v_{C1} , v_{C2} , v_{C3}, v_{C1}' , v_{C2}') $
spanned by the rows of
%\begin{subequations}
%\begin{align}
$$ \begin{bmatrix}
% {\mydot{v}}_{C1}+ {\mydot{v}}_{C1}+ {\mydot{v}}_{C1}\\
% {\mydot{v}}_{C2}/C_2- {\mydot{v}}_{C3}/C_3
2& -1& -1 & 2& -1\\
0& 1 & -1 & 0& 1
\end{bmatrix}
.$$
%\end{align}
%\end{subequations}

The space $\Vtwodwdp $ can be the space of all vectors
%$ (v_{C1} , v_{C2} , v_{C3}) $
$( {\mydot{v}}_{C1}, {\mydot{v}}_{C2}, {\mydot{v}}_{C3}, {\mydot{v}}_{C1}', {\mydot{v}}_{C2}')$
spanned by the row
%\begin{subequations}
%\begin{align}
$$\begin{bmatrix}
% {\mydot{v}}_{C1}+ {\mydot{v}}_{C1}+ {\mydot{v}}_{C1}\\
% {\mydot{v}}_{C2}/C_2- {\mydot{v}}_{C3}/C_3
2& -1& -1  & 2& -1
\end{bmatrix}
.$$
%\end{align}
%\end{subequations}
One can verify just by inspection that 
\begin{equation}
\label{eqn:linkagefromemulators}
\Vwdwm\lrar (\Vonewp \bigoplus \Vtwodwdp)=\Vpdpm ,
\end{equation}
and also that 
all the following `dot-cross' conditions are satisfied :
$$\Vonewp\circ W=  \Vwdwm\circ W,
\Vonewp\times W\subseteq  \Vwdwm\times W,\ \ \ 
\Vtwodwdp\circ \dw= \Vwdwm\circ \dw,
\Vtwodwdp\times \dw= \Vwdwm\times \dw,$$
$$\Vonewp\circ P=  \Vpdpm\circ P,
\Vonewp\times P\subseteq   \Vpdpm\times P,\ \ \ 
\Vtwodwdp\circ \dP= \Vpdpm\circ \dP,
\Vtwodwdp\times \dP=  \Vpdpm\times \dP.$$
%It follows that 
%$$\Vwdwm=\Vpdpm \lrar (\Vonewp \bigoplus \Vtwodwdp),
%\Vpdpm=\Vwdwm \lrar (\Vonewp \bigoplus \Vtwodwdp).$$
%[From Equation \ref{eqn:example1} it is clear that in the solution space
%$
This is equivalent to working with the reduced set of variables 
$ v_{C1} , v_{C2}, {\mydot{v}}_{C1}, {\mydot{v}}_{C2}$
when we move from $W$ to $P$ and $\dw$ to $\dP$ variables.
%Further, observe that, from the manner in which $(\Vonewp \bigoplus \Vtwodwdp)$
%is defined, it follows that,
%\begin{subequations}
%\label{eqn:emulatorsWP}
%\begin{align}
%\Vonewp\circ W&= \Vwdwm\circ W\\
%\Vonewp\times W&= \Vwdwm\times W\\
%\Vtwodwdp\circ \dw&= \Vwdwm\circ \dw\\
%\Vtwodwdp\times \dw&= \Vwdwm\times \dw\\
%\Vonewp\circ P&= \Vpdpm\circ P\\
%\Vonewp\times P&=\Vpdpm\times P\\
%\Vtwodwdp\circ \dP&= \Vpdpm\circ \dP\\
%\Vonewp\times dP&= \Vpdpm\circ \dP\\
%\Vtwodwdp\circ \dP&=\Vpdpm\circ \dP\\
%\Vtwodwdp\times \dP&= \Vpdpm\times \dP
%\end{align}
%\end{subequations}
It must be noted that other linkages would also work in this case.
For instance we could have taken both $\Vonewp$ and $\Vtwodwdp$ to be spanned
by the vectors 
%\begin{subequations}
%\begin{align}
$$\begin{bmatrix}
% {\mydot{v}}_{C1}+ {\mydot{v}}_{C1}+ {\mydot{v}}_{C1}\\
% {\mydot{v}}_{C2}/C_2- {\mydot{v}}_{C3}/C_3
2& -1& -1 & 2& -1\\
0& 1 & -1 & 0& -1
\end{bmatrix}
.$$
%\end{align}
%\end{subequations}
However, this latter linkage does not reveal as much about the relationship
between the emulators as the linkage defined earlier and in this case 
the   conditions 
%$$\Vonewp\circ W\supseteq  \Vwdwm\circ W,
%\Vonewp\times W= \Vwdwm\times W ,
$$\Vtwodwdp\circ \dw \supseteq \Vwdwm\circ \dw,\ \ \ 
%\Vtwodwdp\times \dw= \Vwdwm\times \dw,
%
%\Vonewp\circ P \supseteq \Vpdpm\circ P,
%\Vonewp\times \dP= \Vpdpm\times \dP,
%\Vtwodwdp\circ \dP\supseteq \Vpdpm\circ \dP,
\Vtwodwdp\circ \dP\supseteq \Vpdpm\circ \dP$$

are not satisfied with equality.

Next let us consider an emulator which does not correspond to state equations of the system.
It is  important because it is easy to construct algorithmically 
(near linear time) but captures the eigen spaces of the system, corresponding to 
nonzero eigen values, exactly. It is sufficient for complete solution of the network
as we show in Example \ref{eg:strongemulator}.

 We define the new $Q,\dQ$
variables
$v_{C} , {\mydot{v}}_{C},$
taking  $$v_{C}=v_{C1},\ \  v_C = -v_{C2}-v_{C3},\ \ {\mydot{v}}_{C}={\mydot{v}}_{C1}+
\frac{1}{2}({\mydot{v}}_{C1}+{\mydot{v}}_{C2}).$$
The GDS $\Vqdqm, $ is the   solution space of the equations (of the circuit in Figure \ref{fig:circuit6})  given below:

\begin{subequations}
\label{eqn:example1state}
\begin{align}
% \begin{pmatrix}
  {\mydot{v}}_{C} 
% \end{pmatrix}
 &= 
 \begin{bmatrix}
 -2/3   
 \end{bmatrix}
% \begin{pmatrix}
   v_{C} 
% \end{pmatrix}
+
 \begin{bmatrix}
   2/3 
 \end{bmatrix}
 v_6 
 + \begin{bmatrix}
   2/3 
 \end{bmatrix}
 j_5\\
\begin{pmatrix}
 i_6 \\ 
v_5
 \end{pmatrix}
 &=
 \begin{bmatrix}
  1    \\
  -1    
 \end{bmatrix}
% \begin{pmatrix}
   v_{C}  
% \end{pmatrix}
+
 \begin{bmatrix}
  -1 \\ 0
 \end{bmatrix}
 v_6 
 + \begin{bmatrix}
  0 \\ 0 
 \end{bmatrix}
 j_5.
\end{align}
\end{subequations}

The GDS pair $\{\Vwdwm,\Vqdqm\}$ is $\mathcal{E}$- linked and the two GDS's
are emulators of each other.\\
Next let us build a suitable $\mathcal{E}$-linkage $\{\Vonewq,\Vtwodwdq\}$ for this pair.
%Observe that the constraint on $ v_{C1} , v_{C2} , v_{C3} $
%is $$v_{C1} + v_{C2} + v_{C3}=0. $$
%The constraints on
%$ {\mydot{v}}_{C1}, {\mydot{v}}_{C2}, {\mydot{v}}_{C3},$
%are
%%\begin{subequations}
%$$
%\begin{align}
%\begin{pmatrix}
% {\mydot{v}}_{C1}+ {\mydot{v}}_{C2}+ {\mydot{v}}_{C3}\\ 
% {\mydot{v}}_{C2}/C_2- {\mydot{v}}_{C3}/C_3 
%\end{pmatrix}
%=
%\begin{pmatrix}
%0\\
%0
%\end{pmatrix}
%.
%\end{align}
%\end{subequations}
%$$
%Thus the space of all vectors  $ (v_{C1} , v_{C2} , v_{C3}) $
%is the space spanned by the rows of
%\begin{subequations}
%\begin{align}
%$$\begin{bmatrix}
% {\mydot{v}}_{C1}+ {\mydot{v}}_{C1}+ {\mydot{v}}_{C1}\\
% {\mydot{v}}_{C2}/C_2- {\mydot{v}}_{C3}/C_3                
%2& -1& -1 \\
%0& 1 & -1
%\end{bmatrix}
%.$$
%\end{align}
%\end{subequations}

%Similarly the space of all vectors
%$ (v_{C1} , v_{C2} , v_{C3}) $
%$( {\mydot{v}}_{C1}, {\mydot{v}}_{C2}, {\mydot{v}}_{C3})$
%is the space spanned by the row
%\begin{subequations}
%\begin{align}
%$$
%\begin{bmatrix}
% {\mydot{v}}_{C1}+ {\mydot{v}}_{C1}+ {\mydot{v}}_{C1}\\
% {\mydot{v}}_{C2}/C_2- {\mydot{v}}_{C3}/C_3 
%2& -1& -1 
%\end{bmatrix}
%.$$
%\end{align}
%\end{subequations}
%Since $P$ variables and $W$ variables have to be disjoint and
%similarly $\dw,\dp$ variables, we define the new variable 
%so also the variables $\dw$ and ${\mydot{P}}$ we define the new $P,\dP$
%variables
%$v_{C1}' , v_{C2}',{\mydot{v}}_{C1}', {\mydot{v}}_{C2}'$
%taking  $v_{C1}'=v_{C1},v_{C2}'=v_{C2},{\mydot{v}}_{C1}'={\mydot{v}}_{C1},
%{\mydot{v}}_{C2}'={\mydot{v}}_{C2}.$
The space $\Vonewq$ can be the space of all vectors  $ (v_{C1} , v_{C2} , v_{C3}, v_{C} ) $
spanned by the rows of
%\begin{subequations}
%\begin{align}
$$ \begin{bmatrix}
% {\mydot{v}}_{C1}+ {\mydot{v}}_{C1}+ {\mydot{v}}_{C1}\\
% {\mydot{v}}_{C2}/C_2- {\mydot{v}}_{C3}/C_3
2& -1& -1 &  2\\
0& 1 & -1 & 0 
\end{bmatrix}
.$$
%\end{align}
%\end{subequations}

The space $\Vtwodwdq $ can be the space of all vectors
%$ (v_{C1} , v_{C2} , v_{C3}) $
$( {\mydot{v}}_{C1}, {\mydot{v}}_{C2}, {\mydot{v}}_{C3}, {\mydot{v}}_{C})$
spanned by the row
%\begin{subequations}
%\begin{align}
$$\begin{bmatrix}
% {\mydot{v}}_{C1}+ {\mydot{v}}_{C1}+ {\mydot{v}}_{C1}\\
% {\mydot{v}}_{C2}/C_2- {\mydot{v}}_{C3}/C_3
2& -1& -1  & 2
\end{bmatrix}
.$$
%\end{align}
%\end{subequations}
One can verify just by inspection that
\begin{equation}
\label{eqn:linkagefromemulators}
\Vwdwm\lrar (\Vonewq \bigoplus \Vtwodwdq)=\Vqdqm ,
\end{equation}
and also that
all the following `dot-cross' conditions are satisfied (with equality):
$$\Vonewq\circ W= \Vwdwm\circ W,\ \ 
\Vonewq\times W= \Vwdwm\times W,\ \ 
\Vtwodwdq\circ \dw= \Vwdwm\circ \dw,\ \ 
\Vtwodwdq\times \dw= \Vwdwm\times \dw,$$
$$\Vonewq\circ Q= \Vqdqm\circ Q,\ \ 
\Vonewq\times Q= \Vqdqm\times Q,\ \ 
\Vtwodwdq\circ \dQ=\Vqdqm\circ \dQ,\ \ 
\Vtwodwdq\times \dQ= \Vqdqm\times \dQ.$$
%It follows that 
%$$\Vwdwm=\Vpdpm \lrar (\Vonewp \bigoplus \Vtwodwdp),
%\Vpdpm=\Vwdwm \lrar (\Vonewp \bigoplus \Vtwodwdp).$$
%[From Equation \ref{eqn:example1} it is clear that in the solution space
%$

This is equivalent to working with the reduced set of variables
$ v_{C} ,  {\mydot{v}}_{C}$
when we move from $W$ to $Q$ and $\dw$ to $\dQ$ variables.

%{\color{blue}{Here continue with $v_{C1}, .. $ and show that  here $\Vonewp $ and $\Vtwodwdp$
%cannot be the same.}}
%Equation \ref{eqn:linkagefromemulators} would not be satisfied. 
\end{example}
\begin{example}
\label{eg:strongemulator}
In this example we outline the topological procedure of multiport decomposition
for building an emulator.

\textup{Consider the GDSA represented by the circuit $\mathcal{N}$ in Figure \ref{fig:circuit2}(a).}
\begin{figure}
%\begin{figure}[ht!]
%\psfrag{C1}[][][1]{$C_1$}
%\psfrag{C2}[][][1]{$C_2$}
%\psfrag{C3}[][][1]{$C_3$}
%
%\psfrag{L13}[][][1]{$L_{13}$}
%\psfrag{L14}[][][1]{$L_{14}$}
%\psfrag{L15}[][][1]{$L_{15}$}
%
%
%\psfrag{R4}[][][1]{$R_4$}
%\psfrag{R6}[][][1]{$R_6$}
%\psfrag{R10}[][][1]{$R_{10}$}
% 
%% \psfrag{+}[][][.7]{$+$}
%% \psfrag{-}[][][.7]{$-$}
%
%\psfrag{yv11}[][][1]{$y_{v11}$}
%\psfrag{yi12}[][][1]{$y_{i12}$}
%
%\psfrag{i5}[][][1]{$i_5$}
%
%\psfrag{u7}[][][1]{$u_7$}
%\psfrag{u8}[][][1]{$u_8$}
%
%\psfrag{v9}[][][1]{$v_9=ri_5$}
%
%\begin{center}
% \includegraphics[width=7in]{figs/circuit7.pdf}
% \caption{An $RLCEJ$ Network
%}
% \label{fig:circuit7}
%\end{center}
%\end{figure}
\begin{center}
 \includegraphics[width=7in]{}
 \caption{An $RLCEJ$ Network and its multiport decomposition
into capacitive multiport and $(static + \ L)$ multiport}
 \label{fig:circuit2}
\end{center}
\end{figure}
\begin{figure}
\begin{center}
 \includegraphics[width=7in]{circuit3.pdf}
\caption{Multiport decomposition of $(static +\ L)$ multiport
into static multiport and inductive multiport}
 \label{fig:circuit3}
\end{center}
\end{figure}
%}
%\begin{figure}
%\begin{center}
% \includegraphics[width=7in]{figs/circuit3.pdf}
% \caption{Further
%multiport decomposition  of $RL$ multiport
%into inductive multiport and static multiport}
%% \caption{Example $RLCEJ$ Network and its multiport decomposition
%%into capacitive multiport and $RL$ multiport}
% \label{fig:circuit3}
%\end{center}
%\end{figure}
%Consider the GDSA represented by the circuit $\mathcal{N}$ in Fig \ref{fig:circuit2}(a).
%Figure \ref{fig:circuit2} shows (a) an $RLCEJ$ Network $\mathcal{N},$
%its multiport decomposition into (b) the capacitive multiport $\mathcal{N}_{CP_C},$
%(c) the port connection diagram $\mathcal{G}_{P_CP_1},$
%and (d) the $RL$ multiport $\mathcal{N}_{2P_1}.$
%
%
%In Figure \ref{fig:circuit3}, the $RL$ multiport (d) is further decomposed into (e) the static multiport $ \mathcal{N}_{SP_2},$  (f) the  port connection diagram $\mathcal{G}_{P_2P_L},$
%and (g) the inductive multiport $\mathcal{N}_{LP_L}.$

\textup{In order to efficiently build an emulator for the system we follow the procedure 
given below.
}

\textup{{\bf Topological procedure for emulator construction}\\
{\bf Step 1.}
Decompose the network $\mathcal{N}$ into  two multiports 
$\mathcal{N}_{CP_C}, \mathcal{N}_{2P_1}$ and the port connection diagram 
$\mathcal{G}_{P_CP_1}$ with minimum number of ports as in Fig \ref{fig:circuit2}.}
\textup{Here $\mathcal{N}_{CP_C}$ contains all the capacitors and $\mathcal{N}_{2P_1},$ all the other devices.
}

\textup{It can be shown that in any multiport 
decomposition with minimum number of ports, the ports will not contain cutsets or circuits of the 
graphs which figure in the decomposition.}

\textup{[A linear time algorithm for doing this, based only on the graph of the network, is given in Subsection \ref{subsec:MultiportDecomposition}. This is justified in
 \cite{HNarayanan1986a}
and \cite{HNarayanan1997}. For general vector space based systems slower decomposition algorithms are available in the same references.]
}

\textup{Figure \ref{fig:circuit2} shows (a) an $RLCEJ$ Network $\mathcal{N},$
its multiport decomposition into (b) the capacitive multiport $\mathcal{N}_{CP_C},$
(c) the port connection diagram $\mathcal{G}_{P_CP_1},$
and (d) the $(static+L)$ multiport $\mathcal{N}_{2P_1}.$
}

\vspace{0.2cm}
\noindent \textup{{\bf Step 2.}  Decompose the network $\mathcal{N}_{2P_1}$ into  two multiports
$\mathcal{N}_{P_1SP_2}, \mathcal{N}_{LP_L}$ and the port connection diagram 
$\mathcal{G}_{P_2P_L}$ with minimum number of ports as in Figure \ref{fig:circuit3}
.
Here $\mathcal{N}_{LP_L}$ contains all the inductors and $\mathcal{N}_{P_1SP_2},$
all the static devices  such as resistors, linear controlled sources, input and output ports of $ \mathcal{N},$ 
the `old' ports $P_1$ (which were present in the multiport $\mathcal{N}_{2P_1}$
during  the decomposition of $\mathcal{N}$ into $\mathcal{N}_{CP_C},$
$\mathcal{N}_{2P_1}$ and the port connection diagram $\mathcal{G}_{P_CP_1}$),
and the new ports $P_2$ that arose when $\mathcal{N}_{2P_1}$ was decomposed
into two multiports and a port connection diagram.
}

\textup{Figure \ref{fig:circuit3} shows  the $(static + \ L) $ multiport (d), its 
multiport decomposition into (e)  the static multiport $ \mathcal{N}_{P_1SP_2},$  (f) the  port connection diagram $\mathcal{G}_{P_2P_L},$
and (g) the inductive multiport $\mathcal{N}_{LP_L}.$
}

\textup{Let us denote the original GDS $\Vwdwm$
corresponding to the network $\mathcal{N}$ 
by $\V_{v_C,i_L,{\mydot{v}}_C,{\mydot{i}}_L,M}$ since the latter notation
is natural for $RLC$ networks. The set $M$ contains all the input and output variables.
In this case they are $ui,uv,yi,yv.$
}

\textup{The emulator for this GDS is $\V_{v_{PC},i_{PL},{\mydot{v}}_{PC},{\mydot{i}}_{PL},M}$
which is represented implicitly by the three multiports 
 $\mathcal{N}_{CP_C},\mathcal{N}_{P_1SP_2}, \mathcal{N}_{LP_L},$
and the port connection diagrams $\mathcal{G}_{P_CP_1},$ and
$\mathcal{G}_{P_2P_L}.$
}

\textup{The original dynamic variables are $w=(v_C,i_L)\equiv (v_{C1},v_{C2},v_{C3},v_{C4},i_{L1},i_{L2},i_{L3}).$
These variables are not state variables because of the presence of 
capacitor loops and inductor cutsets which cause the voltages to form a dependent set and similarly the currents.
A proper subset
say $v_{C1},v_{C2},v_{C3},i_{L1},i_{L2}$ can be taken as the state variables.
However, the network has capacitor cutsets and inductor loops 
which result in zero eigen values. For computational purposes,
it is more convenient to work with the dynamic variables  $(v_{P_C},i_{P_L})$
which form a smaller subset than the set of state variables  but from which the dynamics of the system can be inferred very easily.\\
 An example, in which we spell out all the spaces which implicitly go to make up the emulator,
 is given in \ref{sec:implicitemulatormultiport}.
}

\textup{One of the uses of implicit linear algebra is to make dynamical system 
computations using implicit descriptions. However, in the present case, for greater  clarity, we describe below how to obtain an explicit description of 
the dynamical system $\V_{v_{PC},i_{PL},{\mydot{v}}_{PC},{\mydot{i}}_{PL},M}$
%show that it is a strong emulator 
and show how computations made on it can be easily translated to computations 
on $\Vwdwm.$
}

\vspace{0.2cm}

\textup{{\bf An explicit description for $\V_{v_{PC},i_{PL},{\mydot{v}}_{PC},{\mydot{i}}_{PL},M}.$}\\
}

\textup{{\bf Step 1.} Using the constraints of $\mathcal{N}_{CP_C},$
obtain the relationship $i_{PC}= C_{PC}{\mydot{v}}_{PC}.$\\
The constraints are} \\

\textup{(a). $({\mydot{v}}_C,{\mydot{v}}_{PC})\in \V^v(\mathcal{G}_{CP_C}),$
where $\mathcal{G}_{CP_C}$ is the graph of $\mathcal{N}_{CP_C}.$
This is the same as satisfying the Kirchhoff voltage equations of $\mathcal{G}_{CP_C}.$}
\\

\textup{(b).  $(i_C,i_{P_C})\in \V^i(\mathcal{G}_{CP_C}).$
This is the same as satisfying Kirchhoff current equations of $\mathcal{G}_{CP_C}.$}
\\

\textup{(c). $i_{C}= C{\mydot{v}}_{C}.$  Here $C$ is symmetric positive definite.
}\\

\textup{{\bf Step 2.} Using the constraints of $\mathcal{N}_{LP_L},$
obtain the relationship $i_{PL}= L_{PL}{\mydot{i}}_{PL}.$\\
The constraints are} \\

\textup{(a). $({\mydot{i}}_L,{\mydot{i}}_{PL})\in \V^i(\mathcal{G}_{LP_L}),$
where $\mathcal{G}_{LP_L}$ is the graph of $\mathcal{N}_{LP_L}.$
This is the same as satisfying the Kirchhoff current equations of $\mathcal{G}_{LP_L}.$}
\\

\textup{(b).  $(v_L,v_{P_L})\in \V^v(\mathcal{G}_{LP_L}).$
This is the same as satisfying Kirchhoff voltage equations of $\mathcal{G}_{LP_L}.$}
\\

\textup{(c). $v_{L}= L{\mydot{i}}_{L}.
$
Here $L$ is symmetric positive definite.
}\\

\textup{{\bf Step 3.} Similarly using the constraints of $\mathcal{N}_{P_1SP_2},$
we obtain
\begin{subequations}
\label{eqn:static}
\begin{align}
 \begin{pmatrix}
  i_{P_1} \\ v_{P_2}
 \end{pmatrix}
 &=
 \begin{bmatrix}
   H_{11} & H_{12} \\
  H_{21} & H_{22}
 \end{bmatrix}
 \begin{pmatrix}
   v_{P_1} \\ i_{P_2}
 \end{pmatrix}
+
 \begin{bmatrix}
   H_{13} \\ H_{23}
 \end{bmatrix}
 u \\
y & = [ H_{31}\ H_{32}  ]
 \begin{pmatrix}
  v_{P_1} \\ i_{P_2}
 \end{pmatrix}
+
[H_{33}] u.
\end{align}
\end{subequations}
}

\textup{{\bf Step 4.} Use the topological constraints of $\mathcal{G}_{P_CP_1}$
to transform $i_{P_1}$ to $i_{P_C}$ and $v_{P_1}$ to $v_{P_C}.$ \\
(This is always possible because when the port decomposition has minimum 
number of ports, the ports cannot contain circuits or cutsets of 
the port connection diagram (Theorem \ref{thm:minimalmultiport}).
This means that the graph $\mathcal{G}_{P_CP_1}$
has $P_C,P_1$ as cospanning forests.
From this it follows that voltage (current) of $P_C$ ($P_1$)  can be written
in terms  of voltage (current) of $P_1$ ($P_C$).)
}\\

\textup{{\bf Step 5.} Similarly use the topological constraints of $\mathcal{G}_{P_LP_2}$
to transform $i_{P_2}$ to $i_{P_L}$ and $v_{P_2}$ to $v_{P_L}.$ \\
%This is always possible because when the port decomposition has minimum  
%number of ports, the ports cannot contain circuits or cutsets of 
%the port connection diagram (Theorem \ref{thm:cutsetloopfree}).
(The graph $\mathcal{G}_{P_2P_L}$
has $P_2,P_L$ as cospanning forests.)
%From this it follows that voltage (current) of $P_C$ ($P_1$)  can be wriiten
%in terms  of voltage (current) of $P_1$ ($P_C$).
}\\ 

\textup{{\bf Step 6.} The transformed version of Equation \ref{eqn:static}
now involves $i_{P_C},v_{P_C}$ in place of $i_{P_1},v_{P_1}$
and $v_{P_L},i_{P_L}$ in place of $v_{P_2},i_{P_2}.$
}\\

\textup{{\bf Step 7.}  Now substitute 
$v_{PC}= C_{PC}{\mydot{v}}_{PC}, v_{PC}= L_{PL}{\mydot{i}}_{PL}$
in the above transformed equation to get rid of the variables 
$i_{P_C},v_{P_L}$ and obtain
\begin{subequations}
\label{eqn:strongstate}
\begin{align}
 \begin{pmatrix}
  {\mydot{v}}_{P_C} \\ {\mydot{i}}_{P_L}
 \end{pmatrix}
 &=
 \begin{bmatrix}
   {\hat{H}}_{11} & {\hat{H}}_{12} \\
  {\hat{H}}_{21} & {\hat{H}}_{22}
 \end{bmatrix}
 \begin{pmatrix}
   v_{P_C} \\ i_{P_L}
 \end{pmatrix}
+
 \begin{bmatrix}
   {\hat{H}}_{13} \\ {\hat{H}}_{23}
 \end{bmatrix}
 u \\
y & = [ {\hat{H}}_{31}\ {\hat{H}}_{32}  ]
 \begin{pmatrix}
  v_{P_C} \\ i_{P_L}
 \end{pmatrix}
+
[{\hat{H}}_{33}] u.
\end{align}
\end{subequations}
}\\

\textup{This final equation only involves $v_{P_C},{\mydot{v}}_{P_C},i_{P_L},{\mydot{i}}_{P_L}$ 
and $u,y$ variables as required.
The solution space of this equation is the dynamical system 
$\V_{v_{PC},i_{PL},{\mydot{v}}_{PC},{\mydot{i}}_{PL},M}.$
}\\

\textup{We now show that $\V_{v_{PC},i_{PL},{\mydot{v}}_{PC},{\mydot{i}}_{PL},M},$
captures the essential dynamics of the network and therefore of the dynamical system $\Vwdwm,$ which it represents.
}\\

\textup{Using the constraints of the capacitive multiport $\mathcal{N}_{CP_C},$
we can write 
%$i_{PC}= C_{PC}{\mydot{v}}_{PC}$ and
${\mydot{v}}_C= T_C {\mydot{v}}_{PC}.$
(This is a network solution problem and here we are treating $({\mydot{v}}_C,{\mydot{v}}_{PC})$ as the voltage variables and $({{i}}_C,{{i}}_{PC})$
as the current variables of $\mathcal{N}_{CP_C}.$
Now in $\mathcal{G}_{CP_C},$ the set of edges $P_C$ does not  contain 
loops or cutsets (since the port decomposition is minimal).
If the matrix $C$ is symmetric positive definite such a network will have a unique solution
for any given $i_{PC}$ or ${\mydot{v}}_{PC}.$ )
%
%${\mydot{v}}_C,{\mydot{v}}_{PC})\in \V^v(\mathcal{G}_{CP_C},$
%where $\mathcal{G}_{CP_C}$ is the graph of $\mathcal{N}_{CP_C}.$
%Now in $\mathcal{G}_{CP_C},$ the set of edges $P_C$ does not  contain 
%loops or cutsets (since the port decomposition is minimal).
%Hence we can write 
\begin{align*}
%i_{P_C} &= \tilde{C}\mydot{v}_{P_C} \textrm{ and }  \mydot{v}_{C} = K_C\mydot{v}_{P_C} \\
v_C(t) &= v_C(0) + T_C \int_0^t \mydot{v}_{P_C} dt \\
   &= v_C(0) + T_C(v_{P_C}(t)-v_{P_C}(0) ).
\end{align*}}

\textup{Using the constraints of the inductive multiport $\mathcal{N}_{LP_L},$
we can write
%$i_{PC}= C_{PC}{\mydot{v}}_{PC}$ and
${\mydot{i}}_L= T_L {\mydot{i}}_{PL}.$
(This is a network solution problem and here we are treating $({\mydot{i}}_L,{\mydot{i}}_{PL})$ as the current variables and $({{v}}_L,{{v}}_{PL})$
as the voltage variables of $\mathcal{N}_{LP_L}.$
Now in $\mathcal{G}_{LP_L},$ the set of edges $P_L$ does not  contain
loops or cutsets (since the port decomposition is minimal).
If the matrix $L$ is symmetric positive definite such a network will have a unique solution
for any given $v_{PL}$ or ${\mydot{i}}_{PL}.$ )
%
%${\mydot{v}}_C,{\mydot{v}}_{PC})\in \V^v(\mathcal{G}_{CP_C},$
%where $\mathcal{G}_{CP_C}$ is the graph of $\mathcal{N}_{CP_C}.$
%Now in $\mathcal{G}_{CP_C},$ the set of edges $P_C$ does not  contain 
%loops or cutsets (since the port decomposition is minimal).
%Hence we can write 
\begin{align*}
%i_{P_C} &= \tilde{C}\mydot{v}_{P_C} \textrm{ and }  \mydot{v}_{C} = K_C\mydot{v}_{P_C} \\
i_L(t) &= i_L(0) + T_L \int_0^t \mydot{i}_{P_L} dt \\
   &= i_L(0) + T_L(i_{P_L}(t)-i_{P_L}(0) ).
\end{align*}}

\textup{We thus see that, even though $v_{PC},i_{PL}$ are not state variables,
their evolution in time along with $v_C(0),i_L(0)$ fully determines 
$v_C(t),i_L(t).$ 
The notion of emulator is meant to capture this property of
$\V_{v_{PC},i_{PL},{\mydot{v}}_{PC},{\mydot{i}}_{PL},M},$
with respect to $\Vwdwm \equiv \V_{v_C,i_L,{\mydot{v}}_C,{\mydot{i}}_L,M} .$
Usually in the process of moving from the original system to the 
emulator we could lose invariant space information about
zero eigenvalues of the system (see for instance Theorem \ref{thm:emulatorpoly}).
In the present case , working with the emulator $\V_{v_{PC},i_{PL},{\mydot{v}}_{PC},{\mydot{i}}_{PL},M}$
does not give us information about the zero eigenvectors of the original
system which has to be independently provided. But the original system  $\V_{v_C,i_L,{\mydot{v}}_C,{\mydot{i}}_L,M}$ provides complete information
about the emulator,
namely, knowing $v_C,i_L,$ we can 
uniquely obtain $v_{PC},i_{PL}.$ 
%This is the characteristic feature 
%of $\V_{V_{PC},I_{PL},{\mydot{V}}_{PC},{\mydot{I}}_{PL},M}$
% being a strong emulator of $\V_{V_C,I_L,{\mydot{V}}_C,{\mydot{I}}_L,M} .$}
}

\end{example}
%An example, in which we spell out all the spaces which implicitly go to make up the emulator,
%is given in \ref{sec:implicitemulatormultiport}.

\subsection{Transitivity and duality for emulators}
In order for the notion of emulator to be useful, we need
it to have two essential properties 
\begin{itemize}
\item emulators of emulators should continue to be 
emulators of the original dynamical system;
\item when we dualize  { $\E-$ linked} dynamical 
systems, this should result in { $\E-$ linked} dynamical
systems for which the  linkages are dual to the original.
\end{itemize}
Theorem \ref{thm:transitivity_emu} of Subsection \ref{sec:transitivity} shows that the first of these properties  does indeed hold.
The second property is handled by Theorem \ref{thm:emuadjoint} in the next subsection.
\subsubsection{Transitivity}
\label{sec:transitivity}
\begin{theorem}
\label{thm:transitivity_emu}
Let $\{\Vwdw,\Vpdp\}$
 be { $\E-$ linked} through $\Vonewp,\Vtwodwdp $
and let
$\{\Vwdw,\Vqdq\}$
 be { $\E-$ linked} through $\Vonewq,\Vtwodwdq .$
Then  $\{\Vpdp,\Vqdq\}$
is { $\E-$ linked} through $\Vonepq,\Vtwodpdq ,$\\
where $\Vonepq\equiv (\Vonewp\lrar \Vonewq), \ \Vtwodpdq\equiv (\Vtwodwdp\lrar \Vtwodwdq).$
\end{theorem}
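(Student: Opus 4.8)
The plan is to verify the two defining equations of $\E$-linkage for the pair $\{\Vpdp,\Vqdq\}$ with the proposed linkage pair $(\Vonepq,\Vtwodpdq)$, namely
$$\Vpdp = \Vqdq \lrar (\Vonepq \oplus \Vtwodpdq) \quad\text{and}\quad \Vqdq = \Vpdp \lrar (\Vonepq \oplus \Vtwodpdq),$$
together with the containment $\Vonepq \supseteq (\Vtwodpdq)_{PQ}$. First I would unwind the definitions: by hypothesis $\Vpdp = \Vwdw \lrar (\Vonewp \oplus \Vtwodwdp)$ and $\Vqdq = \Vwdw \lrar (\Vonewq \oplus \Vtwodwdq)$, with $\Vwdw = \Vpdp \lrar (\Vonewp \oplus \Vtwodwdp) = \Vqdq \lrar (\Vonewq \oplus \Vtwodwdq)$. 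Substituting one into the other gives
$$\Vqdq = \big(\Vpdp \lrar (\Vonewp \oplus \Vtwodwdp)\big) \lrar (\Vonewq \oplus \Vtwodwdq).$$
The key move is to regroup the matched compositions. Since the index sets $W, \dw$ on which the intermediate space $\Vwdw$ lives each occur exactly twice in the expression (once in $\Vonewp\oplus\Vtwodwdp$ and once in $\Vonewq\oplus\Vtwodwdq$, on disjoint copies $P,\dP$ versus $Q,\dQ$), Theorem \ref{thm:notmorethantwice} allows us to drop and reintroduce brackets freely. Hence
$$\Vqdq = \Vpdp \lrar \big((\Vonewp \oplus \Vtwodwdp) \lrar (\Vonewq \oplus \Vtwodwdq)\big).$$
Now I would show $(\Vonewp \oplus \Vtwodwdp) \lrar (\Vonewq \oplus \Vtwodwdq) = (\Vonewp \lrar \Vonewq) \oplus (\Vtwodwdp \lrar \Vtwodwdq) = \Vonepq \oplus \Vtwodpdq$; this is the elementary fact that matched composition of direct sums over disjoint index sets splits as the direct sum of the componentwise matched compositions, exactly the observation used in the proof of Theorem \ref{thm:elinkagedotcross}. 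This yields $\Vqdq = \Vpdp \lrar (\Vonepq \oplus \Vtwodpdq)$, and the symmetric substitution (expressing $\Vpdp$ via $\Vwdw$ via $\Vqdq$) gives the other equation $\Vpdp = \Vqdq \lrar (\Vonepq \oplus \Vtwodpdq)$.

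For the containment $\Vonepq \supseteq (\Vtwodpdq)_{PQ}$, I would use Theorem \ref{thm:elinkagedotcross}: being an $\E$-linkage, $(\Vonewp,\Vtwodwdp)$ satisfies the dot-cross conditions $\Vonewp \circ W \supseteq \Vtwodwdp \circ \dw$-type relations (more precisely $\Vonewp\times W\subseteq\Vwdw\times W$, $\Vtwodwdp\circ\dw\supseteq\Vwdw\circ\dw$, etc.), and similarly for $(\Vonewq,\Vtwodwdq)$. From $\Vonewp \supseteq (\Vtwodwdp)_{WP}$ and $\Vonewq \supseteq (\Vtwodwdq)_{WQ}$ one gets $\Vonewp \lrar \Vonewq \supseteq (\Vtwodwdp)_{WP} \lrar (\Vtwodwdq)_{WQ}$ by monotonicity of $\lrar$, and the right side equals $(\Vtwodwdp \lrar \Vtwodwdq)_{PQ} = (\Vtwodpdq)_{PQ}$ after renaming copies — again using that the shared index set $W$ (copied appropriately) occurs exactly twice so the regrouping is legitimate. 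Alternatively, once both $\E$-linkage equations are established, one can invoke the "further, if the above conditions hold" clause of Theorem \ref{thm:elinkagedotcross} after separately checking the dot-cross conditions for $(\Vonepq,\Vtwodpdq)$ relative to $\{\Vpdp,\Vqdq\}$, which follow by composing the dot-cross inequalities for the two given linkage pairs.

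The main obstacle I anticipate is bookkeeping with copies of index sets: $W$ and $\dw$ are each being copied onto two different pairs of sets, and one must be careful that the "at most twice" hypothesis of Theorem \ref{thm:notmorethantwice} genuinely applies and that the direct-sum-splitting of matched composition is invoked with the index sets in the right positions. There is no deep content here — no new use of IDT or IIT beyond what Theorem \ref{thm:elinkagedotcross} already packages — so the proof should be short; the care is entirely in making the regrouping steps rigorous and in verifying the dot-cross chain that delivers $\Vonepq \supseteq (\Vtwodpdq)_{PQ}$.
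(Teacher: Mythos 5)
Your proposal is correct and follows essentially the same route as the paper's proof: substitute $\Vwdw=(\Vonewp\oplus\Vtwodwdp)\lrar\Vpdp$ into $\Vqdq=(\Vonewq\oplus\Vtwodwdq)\lrar\Vwdw$, regroup via Theorem \ref{thm:notmorethantwice} so the direct sums compose componentwise, and then interchange the roles of $\Vpdp,\Vqdq$ for the second equation. Your extra verification of the containment $\Vonepq\supseteq(\Vtwodpdq)_{PQ}$ (by monotonicity of $\lrar$ applied to $\Vonewp\supseteq(\Vtwodwdp)_{WP}$ and $\Vonewq\supseteq(\Vtwodwdq)_{WQ}$) is sound and in fact fills a condition of Definition \ref{def:emulators} that the paper's proof leaves tacit.
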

\begin{proof}
By the definition of a linkage we have that 
$$\Vqdq= (\Vonewq\oplus \Vtwodwdq)\lrar \Vwdw$$
$$= (\Vonewq\oplus \Vtwodwdq)\lrar ((\Vonewp\oplus \Vtwodwdp)\lrar\Vpdp).$$
By Theorem \ref{thm:notmorethantwice} we then have
$$\Vqdq=((\Vonewq\lrar\Vonewp)\oplus ( \Vtwodwdq\lrar \Vtwodwdp))\lrar \Vpdp.$$
By the definition of a linkage we have that 
$$\Vpdp=(\Vonewp\oplus \Vtwodwdp)\lrar \Vwdw, \ \ \ \Vwdw=(\Vonewq\oplus \Vtwodwdq)\lrar\Vqdq.$$
Therefore, in the above sequence of statements, we could have interchanged the roles of $\Vpdp, \Vqdq$ so that we get
$$\Vpdp=((\Vonewq\lrar\Vonewp)\oplus ( \Vtwodwdq\lrar \Vtwodwdp))\lrar \Vqdq     .$$
\end{proof}

\subsubsection{Adjoints of emulators}
\label{sec:adjoint_emu}
%\begin{figure}
%%\begin{center}
% \includegraphics[width=7in]{figs/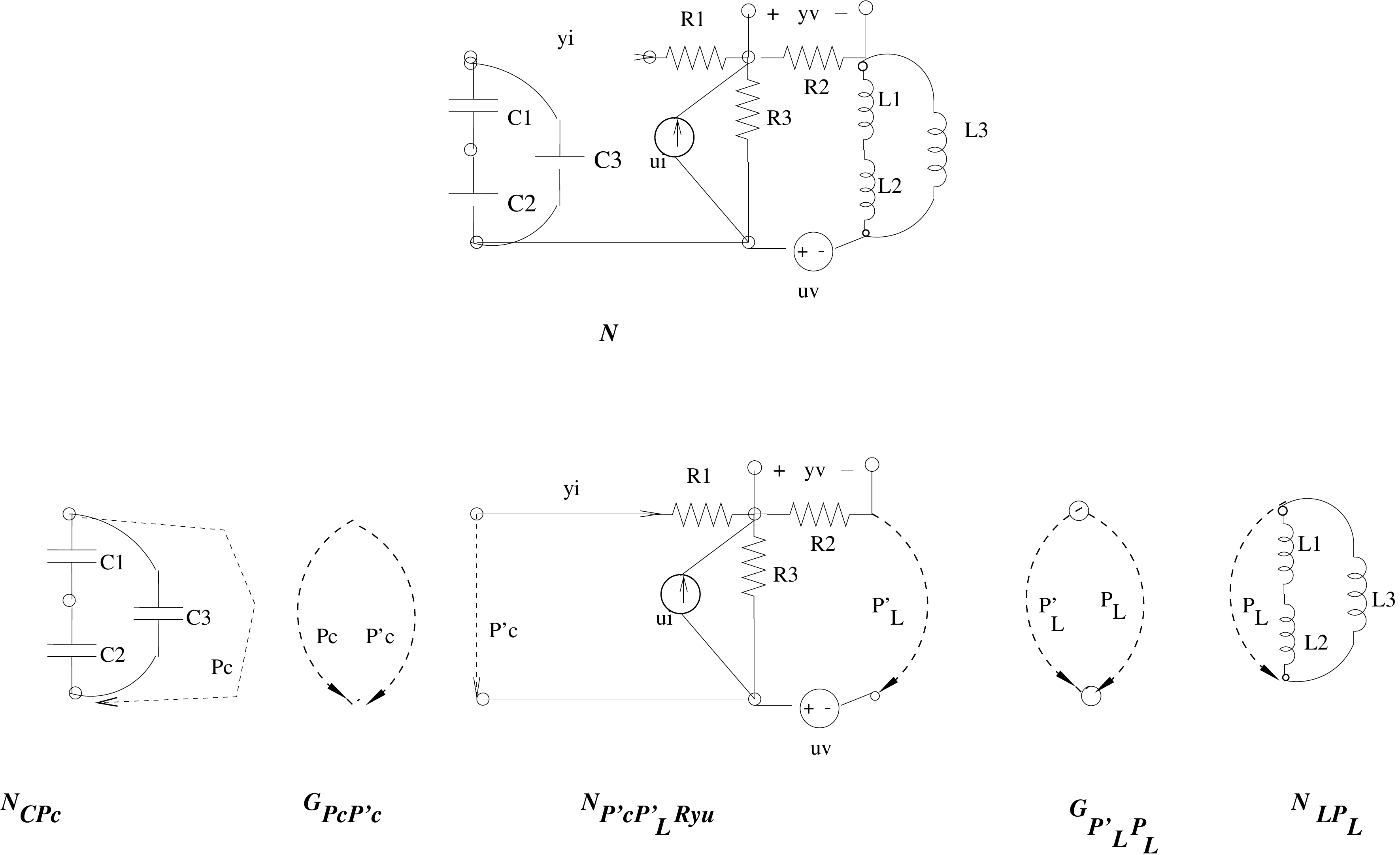}
% \caption{The primal network with port decomposition into capacitor inductor and static
%multiports}
%% \caption{Example $RLCEJ$ Network and its multiport decomposition
%%into capacitive multiport and $RL$ multiport}
% \label{fig:primal}
%%\end{center}
%\end{figure}
We show in this subsection that adjoints of emulators can be obtained through
construction of emulators of adjoints.
We introduce the following definition primarily as a typographical simplification.
\begin{definition}
\label{def:adjoint_emu}
Let $\{\Vwdwmumy,\Vpdpmumy\}$ be $\mathcal{E}-$ linked through $\Vonewp,\Vtwodwdp$ \\
and 
let $\{\Vwdw,\Vpdp\}$ also be $\mathcal{E}-$ linked through $\Vonewp,\Vtwodwdp.$

Let $$\Vtonewp \equiv (\Vtwodwdp)^a\equiv (\Vtwodwdp)^{\perp}_{-W'P'}=(\Vtwodwdp)^{\perp}_{W'-P'},$$
$$ \Vttwodwdp \equiv (\Vonewp)^a\equiv (\Vonewp)^{\perp}_{-\dwd\dPd}=(\Vonewp)^{\perp}_{\dwd -\dPd},$$
$$\Vtwdwm\equiv (\Vwdwmumy)^a\equiv  (\Vwdwmumy)^{\perp}_{(-\dwd)W'-M_y'M_u'},$$
$$\Vtpdpm\equiv (\Vpdpmumy)^a\equiv  (\Vpdpmumy)^{\perp}_{(-\dPd)P'-M_y'M_u'},$$
$$\Vtwdw\equiv (\Vwdw)^a\equiv (\Vwdw)^{\perp}_{(-\dwd)W'},\ \ \ 
\Vtpdp\equiv (\Vpdp)^a\equiv  (\Vpdp)^{\perp}_{(-\dPd)P'}.$$
\end{definition}
\begin{theorem}
\label{thm:emuadjoint}
\begin{enumerate}
\item Let $\Vpdpmumy=(\Vonewp\oplus\Vtwodwdp)\lrar \Vwdwmumy.$
Then $$\Vtpdpm=(\Vtonewp\oplus\Vttwodwdp)\lrar \Vtwdwm.$$ 
\item Let $\Vpdp=(\Vonewp\oplus\Vtwodwdp)\lrar \Vwdw.$
Then $\Vtpdp=(\Vtonewp\oplus\Vttwodwdp)\lrar \Vtwdw.$ 
\end{enumerate}
\end{theorem}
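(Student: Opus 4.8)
\textbf{Proof proposal for Theorem \ref{thm:emuadjoint}.}

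The plan is to prove part (1) directly by taking adjoints (i.e., complementary orthogonal spaces with the appropriate index-set sign changes) of both defining equations of the $\E$-linkage, and then to obtain part (2) as the special case in which the manifest index set $M=M_u\uplus M_y$ is void. The key observation is that the adjoint operator is, up to bookkeeping of which index sets get a sign flip and which get primed, nothing but the `$\perp$' operation composed with a relabelling, so that Theorem \ref{thm:idt} (IDT) — in the form $(\Vsp\lrar\Vp)^{\perp}=\Vsp^{\perp}\lrar\Vp^{\perp}$ for the case where one index set contains the other, used here with $\Vsp$ the direct sum $\Vonewp\oplus\Vtwodwdp$ and $\Vp=\Vwdwmumy$ — does all the real work. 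Concretely, I would start from
$$\Vpdpmumy=(\Vonewp\oplus\Vtwodwdp)\lrar \Vwdwmumy$$
and apply `$\perp$' to both sides. Since $W\uplus\dw\uplus M_u\uplus M_y$ is the common index set being matched away and $P\uplus\dP$ survives, the hypotheses needed for the IDT to apply in the convenient `$\lrar$ on both sides' form are exactly the dot-cross conditions of Theorem \ref{thm:elinkagedotcross}, which hold because $(\Vonewp,\Vtwodwdp)$ is an $\E$-linkage. This gives
$$\Vpdpmumy^{\perp}=(\Vonewp^{\perp}\oplus\Vtwodwdp^{\perp})\lrar \Vwdwmumy^{\perp},$$
using also $(\Va\oplus\Vb)^{\perp}=\Va^{\perp}\oplus\Vb^{\perp}$ for disjoint index sets.

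Next I would perform the relabelling prescribed by the adjoint definition: replace $W$ by $-\dwd$, $\dw$ by $W'$, $M_u$ by $-M_y'$, $M_y$ by $M_u'$, and likewise $P$ by $-\dPd$, $\dP$ by $P'$. Applying this substitution to the left side turns $\Vpdpmumy^{\perp}$ into $(\Vpdpmumy^{\perp})_{(-\dPd)P'-M_y'M_u'}=\Vtpdpm$ by Definition \ref{def:adjoint_emu}. On the right side, the substitution acts separately on each summand of the direct sum because the index sets are disjoint: the term $\Vonewp^{\perp}$, whose index set is $W\uplus P$, becomes $(\Vonewp^{\perp})_{-\dwd-\dPd}=(\Vonewp)^{a}=\Vttwodwdp$; the term $\Vtwodwdp^{\perp}$, whose index set is $\dw\uplus\dP$, becomes $(\Vtwodwdp^{\perp})_{W'P'}=(\Vtwodwdp)^{a}=\Vtonewp$; and $\Vwdwmumy^{\perp}$ becomes $(\Vwdwmumy^{\perp})_{(-\dwd)W'-M_y'M_u'}=\Vtwdwm$. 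The only subtlety is that relabelling commutes with `$\lrar$' and with `$\oplus$' — this is routine once one notes that the matching in `$\lrar$' is performed over the (now primed, possibly sign-flipped) shared index set $W'\uplus\dwd\uplus M_u'\uplus M_y'$, and the sign flips on $\dwd$ and $M_y'$ appear consistently on both the surviving-term side and the matched-away side, so they cancel in the composition exactly as in the proof of Theorem \ref{thm:transpose}. Collecting, the right side is $(\Vtonewp\oplus\Vttwodwdp)\lrar\Vtwdwm$, which is the assertion of part (1).

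For part (2), the argument is literally the same computation with $M_u,M_y$ (and all primed $M$-variables) deleted throughout; no new idea is needed, and the dot-cross hypotheses of Theorem \ref{thm:elinkagedotcross} for $\{\Vwdw,\Vpdp\}$ are again what licenses the IDT step. The main obstacle I anticipate is not conceptual but notational: verifying carefully that the chain of sign changes and primings in Definition \ref{def:adjoint_emu} is internally consistent — in particular that $(\Vtwodwdp)^a$ really lands on the index set $W'\uplus P'$ so that it can be `$\lrar$'-composed with $\Vtwdwm$ on its $W'$ block, and that $(\Vonewp)^a$ lands on $\dwd\uplus\dPd$ so that it composes on the $\dwd$ block, matching the direct-sum structure $(\Vonewp\oplus\Vtwodwdp)$ of the original linkage with the roles of the two summands swapped. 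Once that index-set bookkeeping is pinned down (using $(\V_Z)_{-Z}=\V_Z$ and $((\Vab)_{A(-B)})^{\perp}=(\Vab^{\perp})_{A(-B)}$ from Lemma \ref{lem:transposesign} to move sign flips through `$\perp$'), the theorem follows immediately from IDT, and I would present it in that order: (i) apply `$\perp$' and the direct-sum rule; (ii) apply IDT via Theorem \ref{thm:elinkagedotcross}; (iii) relabel and identify each piece with its adjoint; (iv) specialize to void $M$.
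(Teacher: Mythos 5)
Your overall architecture — apply `$\perp$', invoke IDT, then relabel via Definition \ref{def:adjoint_emu} — is the same as the paper's, but there is a genuine error at the IDT step. You assert that the ``\,$\lrar$ on both sides'' form of IDT applies here, licensed by the dot-cross conditions of Theorem \ref{thm:elinkagedotcross}, and you write the intermediate identity
\begin{equation*}
\Vpdpmumy^{\perp}=(\Vonewp^{\perp}\oplus\Vtwodwdp^{\perp})\lrar \Vwdwmumy^{\perp}.
\end{equation*}
This is not correct. IDT is unconditional and in general gives the skewed composition: $\Vpdpmumy^{\perp}=(\Vonewp^{\perp}\oplus\Vtwodwdp^{\perp})\rightleftharpoons \Vwdwmumy^{\perp}$. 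The matched form $(\V_{XY}\lrar\V_Y)^{\perp}=\V_{XY}^{\perp}\lrar\V_Y^{\perp}$ holds only because $\V_Y=\V_{(-Y)}$ when the second factor's entire index set is matched away; here both factors have surviving variables ($P\uplus\dP$ on one side, $M_u\uplus M_y$ on the other), so the skew cannot be dropped at this stage, and the dot-cross conditions of Theorem \ref{thm:elinkagedotcross} are irrelevant to this question (indeed the theorem needs no $\E$-linkage hypothesis at all — only the single stated equation — and the paper's proof never uses those conditions). The correct mechanism, which is exactly what the paper's proof makes explicit, is that the skew on the shared sets $W',\dwd$ is absorbed afterwards by the sign flips built into the adjoint definitions: one changes the sign of $W'$ on the linkage side and of $\dwd$ on the $\Vwdwmumy$ side, which is precisely why $\Vtonewp\equiv(\Vtwodwdp)^{\perp}_{-W'P'}$ and $\Vtwdwm$ carry those minus signs, turning $\rightleftharpoons$ into $\lrar$.

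Your relabelling paragraph compounds this with sign slips in the identifications: you write $(\Vonewp^{\perp})_{-\dwd-\dPd}=\Vttwodwdp$ and $(\Vtwodwdp^{\perp})_{W'P'}=\Vtonewp$, but $(\Vonewp^{\perp})_{-\dwd-\dPd}$ is just $(\Vonewp^{\perp})_{\dwd\dPd}$ (negating all coordinates of a vector space changes nothing), whereas $\Vttwodwdp=(\Vonewp)^{\perp}_{\dwd(-\dPd)}$, and $\Vtonewp=(\Vtwodwdp)^{\perp}_{(-W')P'}$, not $(\Vtwodwdp^{\perp})_{W'P'}$. Because you discarded the skew too early and then dropped these single-block sign flips, the bookkeeping does not close; the later remark that the flips ``cancel as in Theorem \ref{thm:transpose}'' is double-counting, since in your chain there is no skew left to cancel. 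The fix is straightforward and is the paper's route: keep $\rightleftharpoons$ after IDT, relabel, and only then trade the skew for the minus signs on $W'$ and $\dwd$ that Definition \ref{def:adjoint_emu} prescribes, identifying the factors as $\Vttwodwdp\oplus\Vtonewp$ and $\Vtwdwm$; part (2) then follows by deleting the $M$ variables, as you say.
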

\begin{proof}
\begin{enumerate}
\item
We have  $$\Vtpdpm\equiv (\Vpdpmumy)^a= (\Vpdpmumy)^{\perp}_{(-\dPd)P'-M_y'M_u'},$$
$$=(((\Vonewp\oplus\Vtwodwdp)\lrar \Vwdwmumy)^{\perp})_{(-\dPd)P'-M_y'M_u'},$$
$$=((\Vonewp\oplus\Vtwodwdp)^{\perp}\rightleftharpoons (\Vwdwmumy)^{\perp})_{(-\dPd)P'-M_y'M_u'},$$
$$=((\Vonewp)^{\perp}\oplus(\Vtwodwdp)^{\perp})\rightleftharpoons (\Vwdwmumy)^{\perp})_{(-\dPd)P'-M_y'M_u'},$$
$$=((\Vonewp)^{\perp}_{\dwd\dPd}\oplus(\Vtwodwdp)^{\perp}_{W'P'}\rightleftharpoons(\Vwdwmumy)^{\perp}_{\dwd W'M_y'M_u'})_{(-\dPd)P'-M_y'M_u'},$$
$$=(\Vonewp)^{\perp}_{\dwd -\dPd}\oplus(\Vtwodwdp)^{\perp}_{W'P'}\rightleftharpoons(\Vwdwmumy)^{\perp}_{\dwd W'-M_y'M_u'},$$
(now on left side of `$\rightleftharpoons$' we change sign of $W'$ and on right side, sign of $\dwd)$
$$=(\Vonewp)^{\perp}_{\dwd -\dPd}\oplus(\Vtwodwdp)^{\perp}_{-W'P'}\lrar(\Vwdwmumy)^{\perp}_{-\dwd W'-M_y'M_u'},$$
i.e.,
$$\Vtpdpm=(\Vttwodwdp\oplus\Vtonewp)\lrar \Vtwdwm=(\Vtonewp\oplus\Vttwodwdp)\lrar \Vtwdwm.$$ 
\item The proof is identical to the previous part, but omitting the $M$ variables.
\end{enumerate}
\end{proof}

An example illustrating the construction of adjoint for an $RLC$ network 
is given in \ref{sec:exadjoint}.

\subsection{Polynomials of $\E-$ linked spaces}
We will now describe, through a series of results, how operations
on a GDS $\Vwdwm$ transfer across an $\E-$ linkage to another GDS
$\Vpdpm$ and, similarly, in the case of  generalized autonomous systems
$\Vwdw,\Vpdp .$ In this subsection our primary concern is with polynomials
of linked genauts.
Our final result of this subsection, Theorem \ref{thm:emulatorpoly}, states that polynomials,
without constant terms, of linked spaces continue to be linked.
Under extra conditions this is true for even the ones with constant terms.
This implies that annihilation of a genaut by a polynomial, holds true across linkages.
So eigenvalue, eigenvector computations can be carried out through the most convenient
emulators that we decide to choose. The qualification, about not having constant terms, is not a serious impediment.
It only means that zero eigenvector computations, which any way are usually topological and therefore very inexpensive,
may have to be done directly on the original dynamical system.

The following result describes  a form of `cancellation' which makes { $\E-$ linkages} 
behave like similarity transformations. This enables the continued linkage of
polynomials of linked spaces.

\begin{lemma} 
\label{lem:cancel}
Let $\Vab,\Vbc$ be linkages and let linkages $\V_{BQ}^1, \V_{BQ}^2$
satisfy $$\V_{BQ}^1\supseteq  \V_{BQ}^2,\ \ \  
\V_{BQ}^2\circ B\supseteq \Vab\circ B,\ \ \  \V_{BQ}^1\times B\subseteq \Vbc\times B.$$
Then $$\Vab\lrar \Vbc = \Vab\lrar \V_{BQ}^2\lrar (\V_{BQ}^1)_{B_1Q}\lrar (\Vbc)_{B_1C}.$$
\end{lemma}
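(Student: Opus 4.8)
\textbf{Proof proposal for Lemma \ref{lem:cancel}.}

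The plan is to prove the identity $\Vab\lrar \Vbc = \Vab\lrar \V_{BQ}^2\lrar (\V_{BQ}^1)_{B_1Q}\lrar (\Vbc)_{B_1C}$ by showing that the `middle piece' $\V_{BQ}^2\lrar (\V_{BQ}^1)_{B_1Q}$ acts essentially as a pseudoidentity on the $B$-components, i.e.\ that composing it with $\Vab$ on the left (or $\Vbc$ on the right) changes nothing relevant. The cleanest route is to invoke Theorem \ref{thm:Vgenminor} (the $Q=\emptyset$ version of IIT). First I would observe that $(\V_{BQ}^1)_{B_1Q}$ is just a relabelled copy of $\V_{BQ}^1$ with $B$ renamed to $B_1$, so that $\V_{BQ}^2\lrar (\V_{BQ}^1)_{B_1Q}$ is a linkage on $B\uplus B_1$; call it $\W_{BB_1}$. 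The hypotheses $\V_{BQ}^1\supseteq \V_{BQ}^2$ together with $\V_{BQ}^2\circ B\supseteq \Vab\circ B$ and $\V_{BQ}^1\times B\subseteq \Vbc\times B$ are designed precisely so that the restriction/contraction conditions of Theorem \ref{thm:inverse} (or its specialisation) are met; I expect that $\W_{BB_1}\circ B\supseteq \Vab\circ B$ and $\W_{BB_1}\times B\subseteq \Vbc\times B$, and more importantly that inserting $\W_{BB_1}$ between $\Vab$ and $\Vbc$ along the $B$-index leaves the matched composition unchanged.

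The key steps, in order, are: (1) Expand the right-hand side using associativity of `$\lrar$' (Theorem \ref{thm:notmorethantwice}), legitimate here because no index set occurs more than twice once we have introduced the fresh dummy copy $B_1$: the expression reads $\Vab\lrar [\V_{BQ}^2\lrar (\V_{BQ}^1)_{B_1Q}]\lrar (\Vbc)_{B_1C}$, with the bracketed term a linkage on $B\uplus B_1$. (2) Compute the contraction and restriction of the bracketed linkage $\W_{BB_1}$ to $B$: using $\V_{BQ}^2\circ B\supseteq \Vab\circ B$ and $\V_{BQ}^1\supseteq \V_{BQ}^2$ one gets $\W_{BB_1}\circ B\supseteq \Vab\circ B$, and using $\V_{BQ}^1\times B\subseteq \Vbc\times B$ one gets $\W_{BB_1}\times B_1\subseteq$ (copy of) $\Vbc\times B$. (3) Apply Theorem \ref{thm:Vgenminor}/\ref{thm:inverse}: the condition for $\Vab \lrar \W_{BB_1}$ to `be' $(\Vab)_{B_1\cdots}$ — i.e.\ for $\W_{BB_1}$ to be a pseudoidentity-like object relative to $\Vab$ in the sense of Theorem \ref{thm:pseudoidentity} — follows from $\W_{BB_1}\circ B\supseteq\Vab\circ B$ and $\W_{BB_1}\times B\subseteq\Vab\times B$; the latter follows from $\W_{BB_1}\times B\subseteq \Vbc\times B$, but one must be slightly careful since what we need is containment in $\Vab\times B$, not $\Vbc\times B$. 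This is where I expect to need the symmetric set-up of the hypotheses and perhaps a short direct argument rather than a black-box citation. (4) Conclude that $\Vab\lrar\W_{BB_1} = (\Vab)_{B_1 A\cdots}$ (the copy of $\Vab$ with the $B$-slot relabelled to $B_1$), whence the right-hand side collapses to $(\Vab)_{AB_1}\lrar(\Vbc)_{B_1C} = \Vab\lrar\Vbc$, the last equality being just consistency of relabelling under `$\lrar$'.

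The main obstacle I anticipate is step (3)–(4): verifying cleanly that $\W_{BB_1}$ really functions as a pseudoidentity for $\Vab$ on exactly the right index slot, given that the two sided hypotheses involve \emph{different} spaces ($\V^2_{BQ}$ on the $\Vab$-side via restriction, $\V^1_{BQ}$ on the $\Vbc$-side via contraction). The interplay $\V^2_{BQ}\subseteq\V^1_{BQ}$ is what bridges the two, and I expect the argument to run: for $(f_A,h_C)\in \Vab\lrar\Vbc$ there is $g_B$ with $(f_A,g_B)\in\Vab$, $(g_B,h_C)\in\Vbc$; then $g_B\in\Vab\circ B\subseteq\V^2_{BQ}\circ B$, so some $(g_B,k_Q)\in\V^2_{BQ}\subseteq\V^1_{BQ}$, giving $(g_B,k_Q,g_{B_1})\in\V^2_{BQ}\lrar(\V^1_{BQ})_{B_1Q}$ with $g_{B_1}$ the copy of $g_B$ — hence $(f_A,h_C)$ lies in the right-hand side. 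The reverse inclusion uses that any $B$-to-$B_1$ transition through $\W_{BB_1}$ that is annihilated on $B$ is also annihilated on $B_1$ modulo $\Vbc\times B$, so the $g_B$ and $g_{B_1}$ witnesses can be reconciled using that $\Vab,\Vbc$ are vector spaces. I would write this pairing argument out explicitly rather than lean entirely on the earlier theorems, treating Theorems \ref{thm:inverse} and \ref{thm:Vgenminor} as the conceptual backbone but supplying the element-chase for safety.
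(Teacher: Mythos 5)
Your element-chase in the final paragraph is exactly the paper's proof: the forward inclusion produces the witness $k_Q$ via $\Vab\circ B\subseteq\V_{BQ}^2\circ B$ and $\V_{BQ}^2\subseteq\V_{BQ}^1$, and the reverse inclusion reconciles the two $B$-witnesses because their difference lies in $\V_{BQ}^1\times B\subseteq\Vbc\times B$, so the $\Vbc$-witness can be shifted using closure of $\Vbc$ under addition. The pseudoidentity/IIT scaffolding would not close on its own — as you yourself note, $\Vab\lrar[\V_{BQ}^2\lrar(\V_{BQ}^1)_{B_1Q}]$ need not equal $(\Vab)_{AB_1}$, since the slack is controlled only by $\Vbc\times B$ and not by $\Vab\times B$ — but since you fall back on the direct double-inclusion argument, your proof is correct and essentially identical to the paper's.
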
 
\begin{proof}
Let $(f_A, h_C)\in \Vab\lrar \Vbc.$ 
Then there exists $g_B$ such that
$$(f_A,g_B)\in \Vab,\ \  (g_B,h_C)\in \Vbc .$$ Since $\V_{BQ}^2\circ B\supseteq \Vab\circ B,$
there exists $k_Q$ such that $(g_B,k_Q)\in  \V_{BQ}^2$ and since $\V_{BQ}^1\supseteq  \V_{BQ}^2,$
it follows that $(g_B,k_Q)\in  \V_{BQ}^1.$ \\
Noting that $B_1$ is a copy of $B.$ we must have
$((g_{B_1},k_Q)\in  (\V_{BQ}^1)_{B_1Q}.$ 
Thus we have $$(f_A,g_B)\in \Vab,\ (g_B,k_Q)\in  \V_{BQ}^2,\  (g_{B_1},k_Q)\in  (\V_{BQ}^1)_{B_1Q},\ 
 (g_{B_1},h_C)\in (\Vbc)_{B_1C},$$
so that $$(f_A, h_C)\in\Vab\lrar \V_{BQ}^2\lrar (\V_{BQ}^1)_{B_1Q}\lrar (\Vbc)_{B_1C}.$$
Thus $LHS\subseteq RHS.$

Next let $(f_A, h_C)\in  \Vab\lrar \V_{BQ}^2\lrar (\V_{BQ}^1)_{B_1Q}\lrar (\Vbc)_{B_1C}.$
Then there exist $$(f_A,g_B)\in \Vab,\ (g_B,k_Q)\in  \V_{BQ}^2,\  (g_{B_1}',k_Q)\in  (\V_{BQ}^1)_{B_1Q},
\  (g_{B_1}',h_C)\in (\Vbc)_{B_1C}.$$
But this means $$(g_{B_1},k_Q),(g_{B_1}',k_Q)\in  (\V_{BQ}^1)_{B_1Q}\ \ 
(\textup{since}\  \V_{BQ}^2\subseteq \V_{BQ}^2).$$ Therefore, $$(g_{B_1}-g_{B_1}')\in (\V_{BQ}^1)_{B_1Q}\times B_1\subseteq (\Vbc)_{B_1C}\times B_1.$$ We conclude that $ (g_{B_1},h_C)\in (\Vbc)_{B_1C}$ and 
equivalently $ (g_{B},h_C)\in \Vbc.$
Thus $$(f_A,g_B)\in \Vab,\ (g_{B},h_C)\in (\Vbc).
\ \textup{So}\  (f_A, h_C)\in  \Vab\lrar\Vbc.$$ Thus $RHS\subseteq LHS.$

\end{proof}
The next lemma sets up the machinery to deal with polynomials across
linkages by dealing with the building blocks for constructing polynomials.
\begin{lemma}
\label{lem:elinking2}
% Let $\ldw(\Vwdw)\equiv \Vwldw \pdw (\Vwdw\circ W \bigoplus \Vwdw\times \dw)
%.$ 
 Let $\{\Vwdw,\Vpdp\}$
be { $\E-$ linked} through $\Vonewp,\Vtwodwdp .$
Then
\begin{enumerate}
%\item $$\Vwdw \circ W\lrar \Vonewp =\Vpdp \circ P,\Vwdw \times W\lrar \Vonewp =\Vpdp \times P,$$
%$$\Vwdw \circ \dw \lrar \Vtwodwdp =\Vpdp \circ \dP,\Vwdw \times \dw \lrar \Vtwodwdp =\Vpdp \times \dP.$$
\item 
$\{\ldws(\Vwdw),\ldp(\Vpdp)\}$ is  { $\E-$ linked} through $\Vonewp,\Vtwodwdp .$
\item
$\{\Vwdwk,\Vpdpk\}, k\ne 0$ is { $\E-$ linked} through $\Vonewp,\Vtwodwdp .$

\item 
If further, 
$$\Vwdw \circ W = (\Vwdw \circ \dw)_{W},\ \Vwdw \times W= (\Vwdw \times \dw)_W,\ \ \ 
\Vpdp \circ P = (\Vpdp \circ \dP)_{P},\ \Vpdp \times P=(\Vpdp \times \dP)_P,$$
%$\Vonewp \times W\subseteq (\Vwdw\times \dw)_W,$
% \Vonewp \times P\subseteq (\Vpdp\times \dotp)_P ,$\\
%$(\Vtwodwdp)_{WP} \circ W\supseteq \Vwdw\circ W,
%(\Vtwodwdp)_{WP} \circ P\supseteq \Vpdp\circ P ,$}\\
then we have
$\{\Vwdw^{(0)},\Vpdp^{(0)}\}$ { $\E-$ linked} through $\Vonewp,\Vtwodwdp .$
\end{enumerate}
\end{lemma}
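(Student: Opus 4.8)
\textbf{Proof plan for Lemma \ref{lem:elinking2}.}

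The three parts all rest on a single mechanism: once we know $\{\Vwdw,\Vpdp\}$ is $\E$-linked through $\{\Vonewp,\Vtwodwdp\}$, the $\E$-linkage conditions of Theorem \ref{thm:elinkagedotcross} (dot-cross relations between $\Vonewp,\Vtwodwdp$ and each of $\Vwdw$, $\Vpdp$ on the sets $W,\dw,P,\dP$) are available, and it suffices to verify that the new pair of spaces still satisfies an equation of the form $\Vpdp^{new} = \Vwdw^{new}\lrar(\Vonewp\oplus\Vtwodwdp)$ (the reverse direction then follows automatically by the ``further'' part of Theorem \ref{thm:elinkagedotcross}, provided the dot-cross conditions persist for the new spaces). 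For part (1), scalar multiplication $\ldws$ acts only on the $\dw$-block by the definition $\ldws\Vwdw = \V_{W(\lambda\dw)} + \Vwdw\times\dw$; I would push the $\lrar(\Vonewp\oplus\Vtwodwdp)$ operation through this using Theorem \ref{thm:notmorethantwice} (no index set occurs more than twice after introducing disjoint copies), together with the fact that $\Vtwodwdp$ only touches $\dw$, and that scaling commutes with matched composition on a disjoint block. The dot-cross conditions for $\ldws\Vwdw$ versus $\Vonewp,\Vtwodwdp$ follow from Lemma \ref{lem:genoppoly3}, which tells us $(\ldws\Vwdw)\circ W = \Vwdw\circ W$, $(\ldws\Vwdw)\times\dw = \Vwdw\times\dw$, $(\ldws\Vwdw)\circ\dw\subseteq\Vwdw\circ\dw$, $(\ldws\Vwdw)\times W\supseteq\Vwdw\times W$ — exactly the containments needed to preserve the four relations in Theorem \ref{thm:elinkagedotcross}. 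The same holds on the $P,\dP$ side.

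For part (2), the power $\Vwdwk$ is built by iterated $*$, and $*$ is defined via $\lrar$ with a dummy copy $W_1$. The key computation is the ``cancellation'' Lemma \ref{lem:cancel}: writing $\Vpdpk = (\Vpdp)_{PP_1}\lrar(\Vpdp)_{P_1\dP}$ and substituting $\Vpdp = (\Vonewp\oplus\Vtwodwdp)\lrar\Vwdw$ in both factors, I would insert the emulator linkage as the intermediate space $\V^1_{BQ},\V^2_{BQ}$ of Lemma \ref{lem:cancel} (the roles of $B,Q$ played by the dummy copy of $\dw$/$P_1$ and its partner), using the $\E$-linkage dot-cross inequalities to check the hypotheses $\V^2_{BQ}\circ B\supseteq\Vab\circ B$ and $\V^1_{BQ}\times B\subseteq\Vbc\times B$ of that lemma. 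This collapses the composition to $(\Vonewp\oplus\Vtwodwdp)\lrar\Vwdwk$. Then I would induct on $k$: the base case $k=1$ is the hypothesis, and the inductive step writes $\Vwdw^{(k)} = \Vwdw^{(k-1)}*\Vwdw$, uses the inductive hypothesis for $\Vwdw^{(k-1)}$, and applies the cancellation lemma once more. Throughout, the dot-cross conditions for $\Vwdwk$ relative to $\Vonewp,\Vtwodwdp$ follow from Lemma \ref{lem:genoppoly1} ($\Vwdwk\circ W\subseteq\Vwdw\circ W$, etc.) combined with the original $\E$-linkage conditions; I expect one has to be slightly careful that the containments point the right way, but they do, because Lemma \ref{lem:genoppoly1} gives exactly the monotone directions that Theorem \ref{thm:elinkagedotcross} tolerates.

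For part (3), under the stated ``decoupled restriction/contraction'' hypothesis — $\Vwdw\circ W = (\Vwdw\circ\dw)_W$ and $\Vwdw\times W = (\Vwdw\times\dw)_W$, likewise for $\Vpdp$ — Lemma \ref{lem:powerzeroprop} (whose hypothesis $(\Vwdw\circ W)_{\dw}\supseteq\Vwdw\times\dw$ is implied here) gives the explicit membership description of $\Vwdw^{(0)}$: $(f_W,g_{\dw})\in\Vwdw^{(0)}$ iff $f_W\in\Vwdw\circ W$ and $g_{\dw}\in(f_{\dw}+\Vwdw\times\dw)$, with $\Vwdw^{(0)}\circ W = \Vwdw\circ W = (\Vwdw^{(0)}\circ\dw)_W$ and $\Vwdw^{(0)}\times\dw = \Vwdw\times\dw$. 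Using this concrete form, together with the parallel statements for $\Vpdp^{(0)}$ and the first part of Lemma \ref{lem:elinking} (which relates $\Vwdw\circ W,\Vwdw\times W,\Vwdw\circ\dw,\Vwdw\times\dw$ to the corresponding $\Vpdp$-quantities via $\lrar$ with $\Vonewp$ and $\Vtwodwdp$), I would verify directly that $\Vpdp^{(0)} = \Vwdw^{(0)}\lrar(\Vonewp\oplus\Vtwodwdp)$. Concretely: an element of the right side is $(f_P,g_{\dP})$ with $(f_P,f_W)$-matching through $\Vonewp$ to some $f_W\in\Vwdw^{(0)}\circ W$ and $(g_{\dw},g_{\dP})$ matching through $\Vtwodwdp$ with $g_{\dw}\in f_{\dw}+\Vwdw\times\dw$; unwinding the definitions and invoking Lemma \ref{lem:elinking}(1) to transfer both the ``$\circ$'' part and the ``$\times$'' part across the linkage shows this is exactly the membership condition for $\Vpdp^{(0)}$. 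The reverse equation is symmetric.

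\textbf{Anticipated main obstacle.} The genuine work is all in part (2): making the iterated use of the cancellation Lemma \ref{lem:cancel} airtight. The subtlety is bookkeeping the dummy copies $W_1,W_2,\dots$ introduced by the $*$-operation and making sure, at each application of Lemma \ref{lem:cancel}, that the four spaces playing the roles of $\Vab,\Vbc,\V^1_{BQ},\V^2_{BQ}$ satisfy the three hypotheses in the correct direction — in particular that $\Vonewp$ (respectively $\Vtwodwdp$), when reindexed onto the dummy set, simultaneously ``dominates'' one genaut's restriction and is ``dominated by'' the other's contraction on the gluing block. This is where the dot-cross inequalities of Theorem \ref{thm:elinkagedotcross} and Lemmas \ref{lem:genoppoly1}, \ref{lem:genoppoly2} must be orchestrated carefully; the rest of the proof is routine manipulation of $\lrar$ via Theorem \ref{thm:notmorethantwice}.
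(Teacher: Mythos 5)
Your overall route is the paper's route, and for parts (1) and (2) it is essentially identical: the paper also proves part (2) by induction on $k$, expanding $\Vwdwk=\Vwdw^{(k-1)}*\Vwdw$ over the dummy copy and using the cancellation Lemma \ref{lem:cancel} with its hypotheses supplied exactly by the dot-cross conditions of Theorem \ref{thm:elinkagedotcross} together with Lemma \ref{lem:genoppoly1}, and part (1) by a direct verification. One small caution in part (1): your mechanism ``scaling commutes with matched composition on a disjoint block'' only covers $\lambda\ne 0$; for $\lambda=0$ the definition gives $0^{\dws}\Vwdw=\Vwdw\circ W\oplus\Vwdw\times \dw$, and the equation must instead be obtained from the restriction/contraction transfer of Lemma \ref{lem:elinking}, which is how the paper treats that case separately.

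The genuine gap is in part (3), which you dismiss as routine while flagging part (2) as the hard part; in the paper it is the other way around. The delicate step is showing $g_{\dP}-f_{\dP}\in \Vpdp\times\dP$ for an element $(f_P,g_{\dP})$ of $\Vwdw^{(0)}\lrar(\Vonewp\oplus\Vtwodwdp)$: you know $g_{\dw}-f_{\dw}\in\Vwdw\times\dw$ and $(g_{\dw},g_{\dP})\in\Vtwodwdp$, but $f$ is only paired through $\Vonewp$ at the $W$--$P$ level, so ``invoking Lemma \ref{lem:elinking}(1) to transfer the $\times$ part'' does not apply as stated — there is no $\Vtwodwdp$-partner of $f_{\dw}$ in hand to form the transferable difference. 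The missing idea is the asymmetric defining condition $\Vonewp\supseteq(\Vtwodwdp)_{WP}$: copy $(g_{\dw},g_{\dP})$ down to $(g_W,g_P)\in\Vonewp$, subtract to get $(f_W-g_W,\,f_P-g_P)\in\Vonewp$, use the part-(3) hypothesis $\Vwdw\times W=(\Vwdw\times\dw)_W$ to see $f_W-g_W\in\Vwdw\times W$, and only then transfer through $\Vonewp$ via Lemma \ref{lem:elinking} to get $f_P-g_P\in\Vpdp\times P=(\Vpdp\times\dP)_P$ (the part-(3) hypotheses are also what guarantee, via Lemma \ref{lem:powerzeroprop}, the membership description you quote). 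The paper, moreover, does not prove the reverse inclusion symmetrically by hand: it closes the equality with a monotonicity argument, showing $T(\Vwdw^{(0)})\subseteq\Vpdp^{(0)}$, $T'(\Vpdp^{(0)})\subseteq\Vwdw^{(0)}$ and $T'T=\mathrm{id}$, $TT'=\mathrm{id}$ on these spaces via Theorem \ref{thm:inverse}; alternatively your own ``one equation plus persistent dot-cross conditions'' shortcut through Theorem \ref{thm:elinkagedotcross} would also close it, but either way the forward inclusion needs the $\Vonewp\supseteq(\Vtwodwdp)_{WP}$ manoeuvre that your sketch omits.
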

\begin{proof}
\begin{enumerate}
%\item
%This follows directly from the definition of { $\E-$ linkage}.
\item When $\lambda =0,\ \ \ \ldws(\Vwdw)=\Vwdw \circ W\oplus \Vwdw \times \dw.$
From Lemma \ref{lem:elinking}, it follows that 
$$\ldws(\Vwdw)\lrar(\Vonewp \oplus \Vtwodwdp)= (\Vwdw \circ W\oplus \Vwdw \times \dw)\lrar (\Vonewp \oplus \Vtwodwdp)=\Vpdp \circ P\oplus \Vpdp \times \dP=\ldp(\Vpdp).$$

Next let $\lambda \ne 0$ and let $(f_P, \lambda g_{\dP}) \in \ldws(\Vwdw)\lrar(\Vonewp \oplus \Vtwodwdp).$ 

Then there exist $$(h_W, \lambda k_{\dw})\in \ldws(\Vwdw),\ \ (h_W, f_P) \in \Vonewp, \ \ (\lambda k_{\dw},\ \  \lambda g_{\dP})\in \Vtwodwdp .$$
Therefore,
 $$(h_W, k_{\dw})\in \Vwdw,\ \ (h_W, f_P) \in \Vonewp, \ \ ( k_{\dw},  g_{\dP})\in \Vtwodwdp,$$
so that 
$(f_P,g_{\dP})\in\Vpdp.$
It follows that $$(f_P, \lambda g_{\dP}) \in \ldp(\Vpdp), \ \textup{i.e.,}\ \   \ldws(\Vwdw)\lrar(\Vonewp \oplus \Vtwodwdp)\subseteq \ldp(\Vpdp).$$ 
The same argument, using $\Vpdp\lrar(\Vonewp \oplus \Vtwodwdp)= \Vwdw ,$ proves the 
reverse containment.
\item
We are given that $\{\Vwdw,\Vpdp\},$ is { $\E-$ linked} through $\Vonewp,\Vtwodwdp .$

Suppose $\{\Vwdwk,\Vpdpk\}, 0<k< r,$ is { $\E-$ linked} through $\Vonewp,\Vtwodwdp .$

We will show that the statement is true also for $k=r.$

\vspace{0.2cm}

By Lemma \ref{lem:genoppoly1}, we have that for any genaut $\Vwdw ,$ 
$$\Vwdwk\circ \dw \subseteq \Vwdw \circ \dw ,\ \ \  \Vwdwk\times W\supseteq \Vwdw \times W .
$$
 
We therefore have $$\Vonewp \times W\subseteq \Vwdw\times W\subseteq \Vwdwk\times W\ \textup{and}\  
\Vtwodwdp \circ \dw\supseteq \Vwdw\circ \dw\supseteq \Vwdwk\circ \dw.$$ 
We have,
$$ \Vonewp\lrar \Vwdw^{(r)}\lrar \Vtwodwdp= \Vonewp\lrar (\Vwdw^{(r-1)}*\Vwdw) \lrar \Vtwodwdp= \Vonewp\lrar (\Vwdw^{(r-1)})_{WW_1}\lrar (\Vwdw)_{W_1\dw}\lrar \Vtwodwdp .$$
Now we expand $ (\Vwdw^{(r-1)})_{WW_1}\lrar (\Vwdw)_{W_1\dw},$ using Lemma \ref{lem:cancel} as 
$$ (\Vwdw^{(r-1)})_{WW_1}\lrar (\Vtwodwdp)_{W_1P_1}\lrar (\Vonewp)_{W_1'P_1}\lrar (\Vwdw)_{W_1'\dw}.$$ 
So $\Vonewp\lrar (\Vwdw^{(r-1)}*\Vwdw) \lrar \Vtwodwdp$  can be rewritten as 

%$$=  \Vonewp\lrar (\Vwdw^{(r-1)})_{WW_1}\lrar (\Vtwodwdp)_{W_1\dP}\lrar (\Vonewp)_{W_1\dP} \lrar (\Vwdw)_{W_1\dw}\lrar \Vtwodwdp ,$$
%by Lemma \ref{lem:cancel}.
%This latter expression simplifies to 
$$ (\Vonewp\lrar (\Vwdw^{(r-1)})_{WW_1}\lrar (\Vtwodwdp)_{W_1P_1})\lrar ((\Vonewp)_{W_1'P_1} \lrar (\Vwdw)_{W_1'\dw}\lrar \Vtwodwdp ),$$
i.e., as 
$$((\Vpdp^{(r-1)})_{PP_1}\lrar (\Vpdp)_{P_1\dP}= \Vpdp^{(r-1)}*\Vpdp =\Vpdp^{(r)} .$$
%$\ldw(\Vwdw)\equiv (\Vwdw\lrar \{[I_{\dw} \ \ \lambda I_{\dw'}]\})_{W\dw}+\Vwdw \times \dw.$
%So $$\ldw(\Vwdw)\lrar (\Vonewp\oplus \Vtwodwdp)=(\Vwdw\lrar \{[I_{\dw} \ \ \lambda I_{\dw'}]\})_{W\dw}+\Vwdw \times \dw)\lrar (\Vonewp\oplus \Vtwodwdp)$$
%$$= ((\Vwdw\lrar \{[I_{\dw} \ \ \lambda I_{\dw'}]\})+\Vwdw \times \dw ')\lrar (\Vonewp\oplus \Vtwodwdp)_{WP\dw ' \dP}$$
%
%$\lambda ^B\Vab \equiv \V_{A(\lambda B)}= (\Vab\lrar \{[I_B \ \ \lambda I_{B'}]\})_{AB}+ \Vab\times B.$
\item
We need to show that
$$ \Vonewp\lrar \Vwdw^{(0)} \lrar \Vtwodwdp =  \Vpdp^{(0)}.$$
Since the conditions specified in the hypothesis are symmetric with respect to
$W,P, $ and $\dw, \dP ,$ the same arguments would be valid for showing
$$ \Vonewp\lrar \Vpdp^{(0)} \lrar \Vtwodwdp =  \Vwdw^{(0)}.$$
We will denote $ \Vonewp\lrar \Vwdw^{(0)} \lrar \Vtwodwdp ,$
by $T(\Vwdw^{(0)})$ and $ \Vonewp\lrar \Vpdp^{(0)} \lrar \Vtwodwdp$ by
$T'(\Vpdp^{(0)}).$

Let $(h_P, k_{\dP}) \in T(\Vwdw^{(0)}).$

Then there exist $(f_W, h_{P}),(f_W, g_{\dw}),(g_{\dw}, k_{\dP})$
respectively in $\Vonewp, \Vwdw^{(0)},\Vtwodwdp.$

Since $ \Vonewp\supseteq (\Vtwodwdp)_{WP}, $ we must have 
$(g_{W}, k_{P})\in \Vonewp.$

Hence, $(f_W-g_{W},h_{P}-k_{P})\in \Vonewp.$

Since $(f_W, g_{\dw})\in \Vwdw^{(0)},$ we must have $(f_{\dw}- g_{\dw})\in 
\Vwdw \times \dw = \Vwdw^{(0)}\times \dw$ (part 2 of Lemma \ref{lem:powerzeroprop}
). 

Thus $$(h_{P}-k_{P}) \in\Vonewp \lrar (\Vwdw \times \dw )_W=\Vonewp \lrar \Vwdw \times W= \Vpdp \times P= (\Vpdp \times \dP)_P= (\Vpdp^{(0)}\times \dP)_P,$$
i.e., $(h_{\dP}-k_{\dP})\in \Vpdp^{(0)}\times \dP.$

Now $h_P\in \Vonewp\lrar \Vwdw^{(0)}\circ W= \Vonewp\lrar \Vwdw\circ W= \Vpdp \circ P= \Vpdp^{(0)}\circ P.$

Therefore, $(h_{P}, h_{\dP}-(h_{\dP}-k_{\dP}))= (h_{P},k_{\dP}) \in \Vpdp^{(0)}.$

Thus $T(\Vwdw^{(0)}) \subseteq \Vpdp^{(0)}.$

Similarly $T'(\Vpdp^{(0)}) \subseteq \Vwdw^{(0)}.$

By the definition of $T(\cdot),$ it is clear that it is an 
increasing function, i.e., $T(\V)\subseteq T(\V'), \V\subseteq \V'.$
This is also true of $T'(\cdot).$

Using the definition of $\mathcal{E}-$ linkage, the hypothesis of this part of the lemma, the properties of $\Vwdw^{(0)}$ listed  in Lemma \ref{lem:powerzeroprop},
 and Theorem \ref{thm:inverse}
, it is clear that
$T'(T(\Vwdw^{(0)}))= \Vwdw^{(0)}$ and $T(T'(\Vpdp^{(0)}))= \Vpdp^{(0)}.$

Therefore $\Vpdp^{(0)}\supseteq T(\Vwdw^{(0)}) \supseteq TT'(\Vpdp^{(0)})=\Vpdp^{(0)}.$

It follows that $T(\Vwdw^{(0)}) = \Vpdp^{(0)}$
and $T'(\Vpdp^{(0)}) = \Vwdw^{(0)}.$

\end{enumerate}
\end{proof}
The next lemma continues the task begun by  the previous lemma
for dealing with polynomials by showing $\pdw$ becomes $\pdp$ across
a linkage. It also contains similar results for $+$ and $\bigcap$ 
operations. These latter are useful while dealing with invariance
across a linkage.
\begin{lemma}
\label{lem:linkagepdw}
\begin{enumerate}
\item 
Let $\{\hat{\V}_{\wdw},\hat{\V}_{P\mydot{P}}\}, \ \{\tilde{\V}_{\wdw},\tilde{\V}_{P\mydot{P}}\}$
be both { $\E-$ linked} through $\Vonewp,\Vtwodwdp .$\\ 
Then so is $\{\hat{\V}_{\wdw}\pdw\tilde{\V}_{\wdw},\hat{\V}_{P\mydot{P}}\pdp \tilde{\V}_{P\mydot{P}}\}$ 
{ $\E-$ linked} through $\Vonewp,\Vtwodwdp .$

\item Let
$\{\Vwdw,\Vpdp\},\ 
\{\Vwdw',\Vpdp'\}$
be { $\E-$ linked} through $\Vonewp,\Vtwodwdp .$\\
Then so is $\{\Vwdw+\Vwdw',\Vpdp+\Vpdp'\}$ 
{ $\E-$ linked} through $\Vonewp,\Vtwodwdp .$
\item  Let
$\{\Vwdw,\Vpdp\},\ 
\{\Vwdw',\Vpdp'\}$
be { $\E-$ linked} through $\Vonewp,\Vtwodwdp .$\\
Then so is $\{\Vwdw\bigcap\Vwdw',\Vpdp\bigcap\Vpdp'\}$
{ $\E-$ linked} through $\Vonewp,\Vtwodwdp .$
\end{enumerate}
\end{lemma}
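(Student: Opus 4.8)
\textbf{Proof plan for Lemma \ref{lem:linkagepdw}.}

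The plan is to verify each of the three claims directly from the definition of $\E-$ linkage, reducing everything to manipulations of the matched composition operation with the direct sum $\Vonewp\oplus\Vtwodwdp$, together with Theorem \ref{thm:notmorethantwice} to justify rebracketing expressions in which no index set occurs more than twice. For part (1), I would unwind $\hat{\V}_{\wdw}\pdw\tilde{\V}_{\wdw}$ using Lemma \ref{lem:intsumvis}, which expresses the intersection-sum as $(\hat{\V}_{W'\mydot{W'}}\oplus \tilde{\V}_{W''\mydot{W''}})$ composed with the fixed ``copy-and-add'' spaces $\{[I_W\ I_{W'}\ I_{W''}]\}\oplus\{[\,I_{\dw}\ I_{\dwd};\ I_{\dw}\ 0;\ \ldots]\}$ type linkage. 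Then I would compose this whole expression on the left with $\Vonewp\oplus\Vtwodwdp$. Since $W$, $\dw$ appear only in the copies-linkage and in $\Vonewp\oplus\Vtwodwdp$, and the primed/doubly-primed copies appear only once, Theorem \ref{thm:notmorethantwice} lets me push the composition with $\Vonewp\oplus\Vtwodwdp$ inside, hitting each of $\hat{\V}_{\wdw}$ and $\tilde{\V}_{\wdw}$ separately and turning them into $\hat{\V}_{P\mydot{P}}$ and $\tilde{\V}_{P\mydot{P}}$ by hypothesis; the copies-linkage then reassembles these into $\hat{\V}_{P\mydot{P}}\pdp \tilde{\V}_{P\mydot{P}}$. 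The same argument run in the reverse direction (using $\Vpdp\lrar(\Vonewp\oplus\Vtwodwdp)=\Vwdw$) gives the other half of the $\E-$ linkage condition.

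For parts (2) and (3), I would argue more elementarily, chasing vectors. For part (2): given $(f_P,g_{\dP})\in(\Vwdw+\Vwdw')\lrar(\Vonewp\oplus\Vtwodwdp)$, there is $(h_W,k_{\dw})\in\Vwdw+\Vwdw'$ with $(h_W,f_P)\in\Vonewp$, $(k_{\dw},g_{\dP})\in\Vtwodwdp$; write $h_W=h^1_W+h^2_W$, $k_{\dw}=k^1_{\dw}+k^2_{\dw}$ with the superscript-$i$ parts in the respective spaces. The obstacle here is that $(h^i_W,f_P)$ need not lie in $\Vonewp$, only the sum does. This is where I will use the dot-cross inequalities from Theorem \ref{thm:elinkagedotcross}, specifically $\Vonewp\times W\subseteq\Vwdw\times W$ (and the analogue for $\Vwdw'$, plus the $\dw$/$\dP$ versions): I can find \emph{some} $f^i_P$ and $g^i_{\dP}$ with $(h^i_W,f^i_P)\in\Vonewp$, $(k^i_{\dw},g^i_{\dP})\in\Vtwodwdp$, giving $(f^i_P,g^i_{\dP})\in\Vwdw^{(i)}$-type membership, and then correct $f^1_P+f^2_P$ to $f_P$ using the fact that the discrepancy lies in $\Vonewp\times P\subseteq\Vwdw\times P$ (or rather in the image of $\Vonewp\times W$), hence can be absorbed. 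Reassembling, $(f_P,g_{\dP})\in\Vpdp+\Vpdp'$. The reverse containment is symmetric via Theorem \ref{thm:elinkagedotcross}'s ``iff''. Part (3) is the dual: it follows from part (2) by applying the transpose/adjoint dualization machinery of Subsection \ref{sec:adjointdual}, noting that $\bigcap$ dualizes to $+$, $\times$ to $\circ$, and the $\E-$ linkage conditions of Theorem \ref{thm:elinkagedotcross} are self-dual under this operation; alternatively one can chase vectors directly, using that $\Vonewp\circ W\supseteq\Vwdw\circ W$ to lift a common $W$-component of an intersection through the linkage.

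The main obstacle I anticipate is the bookkeeping in part (2): making sure that when one splits $h_W=h^1_W+h^2_W$ the two pieces can each be ``traced through'' $\Vonewp$, which is exactly the role of the containment $\Vonewp\times W\subseteq\Vwdw\times W$ from Theorem \ref{thm:elinkagedotcross} — without it the statement is false, as the remark following Theorem \ref{thm:distributivity} already warns in a closely analogous setting. Once that point is handled, parts (1) and (3) are essentially formal, (1) by rebracketing and (3) by duality. I would present (1) via Lemma \ref{lem:intsumvis} and Theorem \ref{thm:notmorethantwice}, (2) by the vector chase just described, and (3) as ``the dual of part (2), proof omitted'' in keeping with the paper's style, or with a one-line indication of the dualization.
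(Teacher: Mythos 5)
Your part (1) has a genuine gap. You present it as ``essentially formal'': expand $\hat{\V}_{\wdw}\pdw\tilde{\V}_{\wdw}$ via Lemma \ref{lem:intsumvis}, compose with $\Vonewp\oplus\Vtwodwdp$, and invoke Theorem \ref{thm:notmorethantwice} to ``push the composition inside, hitting each of $\hat{\V}_{\wdw}$, $\tilde{\V}_{\wdw}$ separately''. Rebracketing cannot do that: $\Vonewp$ occurs only once in the expression, and after any rebracketing it composes with the copy-and-add linkage $\{[I_W\ I_{W'}\ I_{W''}]\}$, not with the two summand copies individually. Converting one pass through $\Vonewp$ into two independent passes (one per summand) is exactly where the dot-cross conditions of Theorem \ref{thm:elinkagedotcross} enter: one must split the common $W$- and $\dw$-witnesses and absorb the resulting discrepancies using $\Vonewp\times W\subseteq\hat{\V}_{\wdw}\times W$, $\Vtwodwdp\circ\dw\supseteq\hat{\V}_{\wdw}\circ\dw$, etc. Indeed, the identity $(\Vonewp\oplus\Vtwodwdp)\lrar(\hat{\V}_{\wdw}\pdw\tilde{\V}_{\wdw})=\hat{\V}_{P\dP}\pdp\tilde{\V}_{P\dP}$ is \emph{false} if one assumes only the forward equations defining $\hat{\V}_{P\dP},\tilde{\V}_{P\dP}$: take $P=\{p\}$, $W=\{w_1,w_2\}$, $\Vonewp$ the solution space of $p=w_1+w_2$, $\Vtwodwdp$ its dotted copy, and $\hat{\V}_{\wdw},\tilde{\V}_{\wdw}$ spanned respectively by the $w_1$-unit and $w_2$-unit vectors (dotted components zero); then the left side is $\0_{P\dP}$ while $\hat{\V}_{P\dP}\pdp\tilde{\V}_{P\dP}=\F_P\oplus\0_{\dP}$. (This is not a counterexample to the lemma, since these pairs are not $\E-$ linked, but it shows that no argument using only Lemma \ref{lem:intsumvis} and Theorem \ref{thm:notmorethantwice} can prove the claim.) The repair is the paper's route: apply the distributivity Theorem \ref{thm:distributivity} twice, first pulling $\Vonewp$ across $\pdw$ (hypothesis $\Vonewp\times W\subseteq\hat{\V}_{\wdw}\times W,\ \tilde{\V}_{\wdw}\times W$), then pulling $\Vtwodwdp$ across the resulting intersection-sum (hypothesis $\Vtwodwdp\circ\dw\supseteq(\Vonewp\lrar\hat{\V}_{\wdw})\circ\dw$, etc.), all supplied by Theorem \ref{thm:elinkagedotcross}; the reverse equation follows by the $W\leftrightarrow P$, $\dw\leftrightarrow\dP$ symmetry of the hypotheses.

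Your part (2), by contrast, is sound and takes a genuinely different route from the paper's: a direct vector chase, splitting $h_W=h^1_W+h^2_W$, $k_{\dw}=k^1_{\dw}+k^2_{\dw}$, pushing each piece through $\Vonewp$, $\Vtwodwdp$ separately, and absorbing the discrepancies. Two small corrections there: the splitting step uses the restriction containments $\Vonewp\circ W\supseteq\Vwdw\circ W,\ \Vwdw'\circ W$ and $\Vtwodwdp\circ\dw\supseteq\Vwdw\circ\dw,\ \Vwdw'\circ\dw$ (not the $\times$-containment you cite), and the discrepancy $f_P-f^1_P-f^2_P$ lies in $\Vonewp\times P\subseteq\Vpdp\times P$ (your ``$\subseteq\Vwdw\times P$'' does not typecheck), with $g_{\dP}-g^1_{\dP}-g^2_{\dP}\in\Vtwodwdp\times\dP\subseteq\Vpdp\times\dP$ handled likewise. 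The paper instead proves only the two easy containments $T(\Vwdw+\Vwdw')\supseteq\Vpdp+\Vpdp'$ and $T'(\Vpdp+\Vpdp')\supseteq\Vwdw+\Vwdw'$, verifies the dot-cross conditions for the summed pair, and closes a sandwich using the implicit inversion theorem together with monotonicity of $T(\cdot)=\Vonewp\lrar(\cdot)\lrar\Vtwodwdp$; your chase is more elementary, the paper's argument avoids elementwise bookkeeping by recycling IIT. Your part (3) (dual of part (2)) matches the paper.
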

\begin{proof}
\begin{enumerate}
\item
The result follows by using the distributivity of `$\lrar$' over the intersection-sum
operation for appropriate conditions (Theorem \ref{thm:distributivity}
).
We have 
$$\Vonewp\lrar(\hat{\V}_{\wdw}\pdw\tilde{\V}_{\wdw})\lrar \Vtwodwdp$$
$$= ((\Vonewp\lrar\hat{\V}_{\wdw})\pdw(\Vonewp\lrar\tilde{\V}_{\wdw}))
\lrar \Vtwodwdp$$
(since $\Vonewp\times W \subseteq \hat{\V}_{\wdw}\times W,\ \Vonewp\times W \subseteq \tilde{\V}_{\wdw}\times W$)
$$=(\Vonewp\lrar\hat{\V}_{\wdw}\lrar\Vtwodwdp)\pdp (\Vonewp\lrar\tilde{\V}_{\wdw}\lrar\Vtwodwdp)$$
(since $(\Vonewp\lrar\hat{\V}_{\wdw})\circ \dw\subseteq \hat{\V}_{\wdw}\circ \dw\subseteq  
\Vtwodwdp\circ \dw, \ (\Vonewp\lrar\tilde{\V}_{\wdw})\circ \dw\subseteq \tilde{\V}_{\wdw}\circ \dw\subseteq  
\Vtwodwdp\circ \dw$)
$$=\hat{\V}_{{P\mydot{P}}}+_{\dP}\tilde{\V}_{{P\mydot{P}}}.$$
The same argument, because of the properties of $\mathcal{E}-$ linkage being symmetric 
with respect to $W,P$ and $\dw,\dP,$  works for showing $$\Vonewp\lrar(\hat{\V}_{{P\mydot{P}}}+_{\dP}\tilde{\V}_{{P\mydot{P}}})\lrar \Vtwodwdp\equaln \hat{\V}_{\wdw}\pdw\tilde{\V}_{\wdw}.$$
\item 
It is easily seen that
$$\Vonewp\lrar (\Vwdw+\Vwdw')\lrar \Vtwodwdp\supseteqn 
(\Vonewp\lrar \Vwdw) + (\Vonewp\lrar \Vwdw')\supseteqn 
\Vpdp+\Vpdp'$$
and similarly 
$$\Vonewp\lrar (\Vpdp+\Vpdp')\lrar \Vtwodwdp\supseteqn 
\Vwdw+\Vwdw'.$$
Further, $$\Vonewp\circ W\supseteqn \Vwdw\circ W +\Vwdw' \circ W 
\equaln (\Vwdw+\Vwdw')\circ W,$$
$$ \Vonewp\times W\subseteqn \Vwdw\times W +\Vwdw' \times W \subseteqn (\Vwdw+\Vwdw')\times W,$$
$$ \Vtwodwdp\circ \dw\supseteqn \Vwdw\circ \dw +\Vwdw' \circ \dw \equaln (\Vwdw+\Vwdw')\circ
 \dw,$$
$$ \Vtwodwdp\times \dw\subseteq \Vwdw\times \dw +\Vwdw' \times \dw \subseteq (\Vwdw+\Vwdw')\times \dw.$$
A similar set of conditions is clear in terms of the $P,\dP$ variables.

Thus the dot-cross  $\mathcal{E}-$ linkage  conditions are satisfied for
$\{(\Vwdw+\Vwdw'),(\Vpdp+\Vpdp')\}.$

Let us now denote $\Vonewp\lrar (\Vwdw+\Vwdw')\lrar \Vtwodwdp$ by
$T(\Vwdw+\Vwdw')$ 

and $\Vonewp\lrar (\Vpdp+\Vpdp')\lrar \Vtwodwdp$ by
$T'(\Vpdp+\Vpdp').$ 

Using Theorem \ref{thm:inverse}
, we have 
$$T'T(\Vwdw+\Vwdw')\equaln \Vwdw+\Vwdw',\ \  TT'(\Vpdp+\Vpdp')\equaln \Vpdp+\Vpdp'.$$
It is clear that $T(\cdot), T'(\cdot)$ are increasing functions.
Therefore we have 
$$\Vpdp+\Vpdp'\equaln  TT'(\Vpdp+\Vpdp')\supseteqn T(\Vwdw+\Vwdw')\supseteqn \Vpdp+\Vpdp',$$
$$\Vwdw+\Vwdw'\equaln  T'T(\Vwdw+\Vwdw')\supseteqn T'(\Vpdp+\Vpdp')\supseteqn \Vwdw+\Vwdw'.$$

We conclude,
$$T(\Vwdw+\Vwdw')\equaln\Vpdp+\Vpdp',\ \  T'(\Vpdp+\Vpdp')\equaln\Vwdw+\Vwdw'.$$
Therefore, 
$\{\Vwdw+\Vwdw',\Vpdp+\Vpdp'\}$
is { $\E-$ linked} through $\Vonewp,\Vtwodwdp .$\\
\item This result is dual to the previous part of the present lemma.
\end{enumerate}

\end{proof}
\begin{remark}
In Lemma \ref{lem:elinking2}, the conditions about { $\E-$ linking}
$(\Vwdw^{(0)},\Vpdp^{(0)})$ that are stated  imply that polynomials 
of { $\E-$ linked} genops may not necessarily be also so linked
unless the constant term is zero. In particular if a polynomial with
constant term 
`annihilates' a genop, it may not necessarily annihilate another 
that is { $\E-$ linked} to it unless the genauts satisfy special properties. We will illustrate these ideas through an example later (see 
Example \ref{sec:implicitemulatormultiport} and Remark \ref{rem:implicitemulatormultiport} at the end of it).
\end{remark}
From Lemma \ref{lem:elinking}, Lemma \ref{lem:elinking2} and the first part of Lemma \ref{lem:linkagepdw}
we obtain immediately,
\begin{theorem}
\label{thm:emulatorpoly}
Let $p(s)$ be a polynomial with no constant term.
Let $\Vwdw,\Vpdp$ be genops such that $\{\Vwdw,\Vpdp\}$
is { $\E-$ linked} through $\Vonewp,\Vtwodwdp .$

Then so is $\{p(\Vwdw),p(\Vpdp)\}$ { $\E-$ linked} through $\Vonewp,\Vtwodwdp .$ If one of $p(\Vwdw),p(\Vpdp)$ is decoupled so would the other be.

If however,\\
$\Vwdw \circ W = (\Vwdw \circ \dw)_{W},\ \ \Vwdw \times W= (\Vwdw \times \dw)_W,$ \\
$\Vpdp \circ P = (\Vpdp \circ \dP)_{P},\ \ \Vpdp \times P=(\Vpdp \times \dP)_P,$\\
%\Vonewp \times W\subseteq (\Vwdw\times \dw)_W,$$

then since $(\Vwdw^{(0)},\Vpdp^{(0)})$ is { $\E-$ linked} through $\Vonewp,\Vtwodwdp ,$

$\{p(\Vwdw),p(\Vpdp)\}$ is { $\E-$ linked} through $\Vonewp,\Vtwodwdp ,$
even if $p(s)$ has a constant term.\\
If one of $p(\Vwdw),p(\Vpdp)$ is decoupled so would the other be.

\end{theorem}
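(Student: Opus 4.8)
\textbf{Proof plan for Theorem \ref{thm:emulatorpoly}.}

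The plan is to assemble the statement directly from the machinery already built in Lemmas \ref{lem:elinking}, \ref{lem:cancel}, \ref{lem:elinking2} and \ref{lem:linkagepdw}, together with Definition \ref{def:poly}. Recall that for a polynomial $p(s) = \sum_{i=0}^n \alpha_i s^i$ we have $p(\Vwdw) = \alpha_0^{\dW}(\Vwdw^{(0)})\pdw \alpha_1^{\dW}(\Vwdw^{(1)})\pdw \cdots \pdw \alpha_n^{\dW}(\Vwdw^{(n)})$, so $p(\Vwdw)$ is built from the pieces $\Vwdw^{(i)}$ by the operations `$\ldw\cdot$' (scalar multiplication) and `$\pdw$' (intersection-sum). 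First I would record the building-block facts: by part (2) of Lemma \ref{lem:elinking2}, $\{\Vwdw^{(k)},\Vpdp^{(k)}\}$ is $\E-$ linked through $\Vonewp,\Vtwodwdp$ for each $k\geq 1$; by part (1) of the same lemma, scalar multiples $\{\ldws(\Vwdw^{(k)}),\ldp(\Vpdp^{(k)})\}$ remain $\E-$ linked; and by part (1) of Lemma \ref{lem:linkagepdw}, the intersection-sum of two pairs that are $\E-$ linked through $\Vonewp,\Vtwodwdp$ is again $\E-$ linked through the same pair. Then a short induction on the number of nonzero terms of $p(s)$ shows that, when $p(s)$ has no constant term (so only indices $i\geq 1$ occur), the pair $\{p(\Vwdw),p(\Vpdp)\}$ is $\E-$ linked through $\Vonewp,\Vtwodwdp$.

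Next I would handle the case where $p(s)$ may have a constant term but the extra hypotheses hold, namely $\Vwdw\circ W = (\Vwdw\circ \dw)_W$, $\Vwdw\times W = (\Vwdw\times \dw)_W$ and the mirror conditions for $\Vpdp$. Under exactly these conditions, part (3) of Lemma \ref{lem:elinking2} tells us that $\{\Vwdw^{(0)},\Vpdp^{(0)}\}$ is also $\E-$ linked through $\Vonewp,\Vtwodwdp$. So now the $i=0$ term $\alpha_0^{\dW}(\Vwdw^{(0)})$ is on the same footing as the higher terms: its scalar multiple is $\E-$ linked by Lemma \ref{lem:elinking2}(1) (which covers $\lambda=0$ as well), and intersection-summing it in preserves $\E-$ linkage by Lemma \ref{lem:linkagepdw}(1). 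Hence the same induction, now including the constant term, yields that $\{p(\Vwdw),p(\Vpdp)\}$ is $\E-$ linked through $\Vonewp,\Vtwodwdp$.

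For the `decoupled' assertions I would invoke part (2) of Lemma \ref{lem:elinking}: if $\{p(\Vwdw),p(\Vpdp)\}$ is $\E-$ linked and one of the two is decoupled, then so is the other. Since $\Vwdw,\Vpdp$ are genops, Lemma \ref{lem:newgenoppoly}(2) guarantees $p(\Vwdw)$ and $p(\Vpdp)$ are genops too, so these are genuine genauts to which the decoupling transfer applies; this is just a one-line citation. I expect no serious obstacle here — the whole argument is a bookkeeping assembly of the prior lemmas. The only point that requires a little care is making sure the induction is set up cleanly: one must check that a partial sum $\alpha_{i_1}^{\dW}(\Vwdw^{(i_1)})\pdw\cdots\pdw\alpha_{i_j}^{\dW}(\Vwdw^{(i_j)})$ together with its $P$-counterpart is again $\E-$ linked through $\Vonewp,\Vtwodwdp$, so that Lemma \ref{lem:linkagepdw}(1) can be applied at the next step; this is immediate from associativity of `$\pdw$' (Lemma \ref{lem:intsumasscomm}) and a repeated application of Lemma \ref{lem:linkagepdw}(1), and is essentially the same induction-over-number-of-terms bookkeeping already used in the proof of Theorem \ref{thm:distributivity}. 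The hardest conceptual work — the `cancellation' of Lemma \ref{lem:cancel} that makes powers $\Vwdw^{(k)}$ transfer across the linkage, and the delicate $(0)$-th power argument in Lemma \ref{lem:elinking2}(3) — has already been done; here I am only gluing those results together.
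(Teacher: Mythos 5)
Your assembly is exactly the paper's own argument: the paper derives Theorem \ref{thm:emulatorpoly} "immediately" from Lemma \ref{lem:elinking}, Lemma \ref{lem:elinking2} and the first part of Lemma \ref{lem:linkagepdw}, which are precisely the building blocks (powers, scalar multiples, the $(0)$-th power under the extra hypotheses, intersection-sums, and decoupling transfer) you invoke. Your explicit induction over the number of terms merely spells out the bookkeeping the paper leaves implicit, so the proposal is correct and takes essentially the same route.
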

\subsection{Invariance across { $\E-$ linking}}
In this subsection, our concern is with showing that controlled and conditioned
invariant spaces of a genaut transfer in a natural manner to the corresponding spaces 
in a linked genaut.
%Let $\Vwdw$ be a generalized autonomous system.
%We say $\V_W$ is {\bf conditioned invariant} in $\Vwdw$ iff
%$$\Vwdw \lrar \V_W\subseteq \V_{\mydot{W}}.$$
%We say $\V_W$ is {\bf controlled invariant} in $\Vwdw$ iff
%$$\Vwdw \lrar (\V_W)_{\mydot{W}}\supseteq \V_{{W}}.$$
%If $\V_W$ is both controlled and conditioned invariant, it is said to be 
%{\bf invariant} in $\Vwdw.$
%\begin{theorem}
%Let $\V_W $ be conditioned  invariant in $\Vwdw .$
%Then 
%\begin{enumerate}
%\item $\V_W\supseteq \Vwdw \times \dw.$
%\item $\Vwdw + (\V_W \bigoplus (\V_W)_{\mydot{W}})$ is a genop.
%\end{enumerate}
%\end{theorem}
%\begin{theorem}
%Let $\V_W $ be controlled  invariant in $\Vwdw .$
%Then 
%\begin{enumerate}
%%\item $\V_W\subseteq \Vwdw \circ W.$
%\item $\Vwdw \bigcap (\V_W \bigoplus (\V_W)_{\mydot{W}})$ is a genop.
%\end{enumerate}
%\end{theorem}
\begin{theorem}
\label{thm:invlinking}
%Let $\{\Vwdwm,\Vpdpm\}$
%be { $\E-$ linked} through $\Vonewp,\Vtwodwdp .$\\
%Let $\Vpdpm $ be a strong emulator for $\Vwdwm $
%through $\Vonewp,\Vtwodwdp .$\\
%Let $\Vwdw \equiv \Vwdwm \lrar \V_M, \ \Vpdp \equiv \Vpdpm \lrar \V_M.$
%\begin{enumerate}
%\item
%Then 
Let $\{\Vwdw,\Vpdp\}$
be { $\E-$ linked} through $\Vonewp,\Vtwodwdp .$
%Let $\V_W $ be conditioned  invariant in $\Vwdw.$
%and let $\V_P\equiv\Vonewp\lrar \V_W.$\\
%Let $\{\Vwdw,\Vpdp\}$
%be { $\E-$ linked} through $\Vonewp,\Vtwodwdp .$
\begin{enumerate}
\item Let $\V_W $ be controlled  invariant in $\Vwdw.$
Then $\Vonewp\lrar\V_W, (\Vtwodwdp)_{WP}\lrar \V_W $ are controlled  invariant in $\Vpdp .$
\item Let $\V_W $ be conditioned  invariant in $\Vwdw.$
Then $\Vonewp\lrar\V_W, (\Vtwodwdp)_{WP}\lrar \V_W $ are conditioned  invariant in $\Vpdp .$
\item Let $\V_W $ be invariant in $\Vwdw.$
Then $\Vonewp\lrar\V_W, (\Vtwodwdp)_{WP}\lrar \V_W $ are invariant in $\Vpdp .$
\end{enumerate}
\end{theorem}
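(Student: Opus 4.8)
\textbf{Proof plan for Theorem \ref{thm:invlinking}.}

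The plan is to prove parts (1) and (2) directly from the definitions of controlled and conditioned invariance, using the $\E-$linkage identities in Lemma \ref{lem:elinking} together with the associativity result Theorem \ref{thm:notmorethantwice} to move the `$\lrar$' operations around freely. Part (3) is then immediate since invariance is, by definition, the conjunction of controlled and conditioned invariance. Throughout I would exploit the fact that, by the hypothesis $\Vonewp \supseteq (\Vtwodwdp)_{WP}$, we have $\Vonewp\lrar\V_W \supseteq (\Vtwodwdp)_{WP}\lrar\V_W$ and both are sensible candidate invariant spaces in $\Vpdp$; indeed I expect these two spaces to coincide in the relevant situations, but since the statement only asks that each be invariant, I would handle them in parallel, treating $\V_P^* \equiv \Vonewp\lrar\V_W$ and $\V_P^{**}\equiv(\Vtwodwdp)_{WP}\lrar\V_W$ with essentially the same argument.

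For part (2) (conditioned invariance), I must show $\Vpdp \lrar \V_P^* \subseteq (\V_P^*)_{\dP}$, i.e. $\Vpdp \lrar (\Vonewp\lrar\V_W) \subseteq (\Vonewp\lrar\V_W)_{\dP}$. Here I would expand $\Vpdp = (\Vonewp\oplus\Vtwodwdp)\lrar\Vwdw$ and use Theorem \ref{thm:notmorethantwice} to rewrite $\Vpdp\lrar(\Vonewp\lrar\V_W)$ as an iterated composition in which, after cancelling the appropriate copies of $W$ and $P$, one is left with something of the form $\Vtwodwdp \lrar (\Vwdw\lrar\V_W) \lrar (\text{a copy of }\Vonewp)$ — the key move being that $\Vonewp$ appears twice (once in $\Vpdp$, once in $\Vonewp\lrar\V_W$) and couples the $W$-side of $\Vwdw$ to the $W$-side of $\V_W$. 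Since $\V_W$ is conditioned invariant in $\Vwdw$, $\Vwdw\lrar\V_W\subseteq(\V_W)_{\dw}$, and pushing this through the remaining `$\lrar$' with $\Vtwodwdp$ (using $\Vtwodwdp\circ\dw \supseteq \Vwdw\circ\dw$ from Theorem \ref{thm:elinkagedotcross}) and a copy of $\Vonewp$ yields containment in $(\Vonewp\lrar\V_W)_{\dP} = (\V_P^*)_{\dP}$, as desired. The argument for $\V_P^{**}$ is the same with $(\Vtwodwdp)_{WP}$ in place of $\Vonewp$ at the last step. I would likely package the needed cancellation as an application of Lemma \ref{lem:cancel}, whose hypotheses are exactly the dot-cross inequalities guaranteed by Theorem \ref{thm:elinkagedotcross}.

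For part (1) (controlled invariance), I must show $\Vpdp \lrar (\V_P^*)_{\dP} \supseteq \V_P^*$, and dually for $\V_P^{**}$. This is the $\perp$-dual (or adjoint-dual) statement of part (2) in the sense of Section \ref{sec:Duality1}: conditioned invariant becomes controlled invariant, `$\subseteq$' becomes `$\supseteq$', $\circ$ and $\times$ swap, and $\Vwdw$ becomes $\Vwdw^a$ — and by Theorem \ref{thm:emuadjoint} the adjoint of an $\E-$linkage is again an $\E-$linkage (with $\Vtonewp,\Vttwodwdp$), while by Theorem \ref{thm:controlledconditioneddual} controlled invariance of $\V_W$ in $\Vwdw$ corresponds to conditioned invariance of $\V_W^\perp$ in $\Vwdw^a$. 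So the cleanest route is: prove part (2) directly, then invoke the dualization machinery of Subsection \ref{sec:proof_dual} to obtain part (1) for free. Alternatively one can give a direct proof of (1) mirroring the computation in (2), using $\Vpdp\circ P \supseteq \cdots$ type inequalities from Theorem \ref{thm:elinkagedotcross}; I would present the dual derivation as the primary argument and remark that a direct proof is available.

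\textbf{Main obstacle.} The delicate point is the bookkeeping in the cancellation step: when one expands $\Vpdp\lrar(\Vonewp\lrar\V_W)$ and wants to reach $(\Vonewp\lrar\V_W)_{\dP}$, one must check that the copy of $\Vonewp$ that gets `reused' satisfies the containment hypotheses of Lemma \ref{lem:cancel} (namely $\Vonewp\supseteq(\Vtwodwdp)_{WP}$ together with the restriction/contraction conditions on $W$ and $P$ from Theorem \ref{thm:elinkagedotcross}), so that the composition genuinely collapses rather than merely giving an inclusion in one direction. I expect this to be the place where the hypothesis $\Vonewp\supseteq(\Vtwodwdp)_{WP}$ is essential and where care is needed to keep the various copies $W, W_1, P, P_1$ straight; once the cancellation is set up correctly, everything else is a routine chase through the definitions.
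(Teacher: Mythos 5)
Your proposal is correct and follows essentially the same route as the paper: the paper also reduces everything to one direct computation (rearranging the iterated matched compositions via Theorem \ref{thm:notmorethantwice}, collapsing the doubled occurrence of the linkage with Theorem \ref{thm:pseudoidentity} — the same collapse you package via Lemma \ref{lem:cancel} applied around $\Vwdw$, whose dot-cross relations with $\Vonewp,\Vtwodwdp$ come from Theorem \ref{thm:elinkagedotcross} — and invoking $\Vonewp\supseteq(\Vtwodwdp)_{WP}$ for the second candidate space), obtains the other part by the adjoint-duality machinery, and gets part (3) as the conjunction. The only difference is the direction: the paper proves controlled invariance (part 1) directly and deduces conditioned invariance (part 2) by duality, whereas you propose the mirror image, which is equally valid.
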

\begin{proof}
\begin{enumerate}
\item We are given that 
$\Vwdw \lrar \Vdw \supseteq \Vw.$ We will first show that 
\begin{equation}
\label{eqn:emulatorcondinv}
((\Vonewp \bigoplus \Vtwodwdp)\lrar \Vwdw ) \lrar  (\Vonewp\lrar\V_W)_{\mydot{P}}\supseteq (\Vonewp\lrar\Vw).
\end{equation}
The LHS of equation \ref{eqn:emulatorcondinv} can be rewritten as 
$$ ((\Vonewp \lrar \Vtwodwdp)\lrar \Vwdw )\lrar (\Vonedwdp \lrar \Vdw),$$ 
where $\Vonedwdp\equiv (\Vonewp)_{\dwdp}$ and $ \Vdw\equiv (\Vw)_{\dw}$ 
and we note that $\Vonewp \bigoplus \Vtwodwdp$ can be written also as $\Vonewp \lrar \Vtwodwdp.$

We will rewrite the term $(\Vonedwdp \lrar \Vdw)$ as 
$(\Vonetildedwdp\lrar \Vtildedw),$ where $\Vonetildedwdp\equiv (\Vonedwdp)_{\tildedwdp}$
and $\Vtildedw\equiv (\Vw)_{\tildedw}.$
The LHS of equation \ref{eqn:emulatorcondinv} now appears as 
$$ ((\Vonewp \lrar \Vtwodwdp)\lrar \Vwdw )\lrar (\Vonetildedwdp\lrar \Vtildedw).$$
Observe that in this expression no index set appears more than twice.
Therefore, by Theorem \ref{thm:notmorethantwice}, the expression can have only a unique meaning even without brackets
and we can rearrange the terms according to convenience.
We now rearrange terms and write the expression as 
$$(  (\Vonetildedwdp \lrar \Vtwodwdp)\lrar \Vwdw ) \lrar \Vtildedw \lrar \Vonewp ).$$ 
We observe that $(\Vtwotildedwdp \lrar \Vtwodwdp)\lrar \Vwdw =(\Vwdw)_{\wtildedw},$
by Theorem \ref{thm:pseudoidentity} and  further that $\Vonetildedwdp\supseteq \Vtwotildedwdp.$
Hence $$(  (\Vonetildedwdp \lrar \Vtwodwdp)\lrar \Vwdw ) \lrar \Vtildedw \lrar \Vonewp )
\supseteq ((\Vwdw)_{\wtildedw} \lrar \Vtildedw )\lrar \Vonewp \supseteq \Vw \lrar \Vonewp. $$
This proves the required result.
\\
We will next show that 
\begin{equation}
\label{eqn:emulatorcondinv2}
((\Vonewp \bigoplus \Vtwodwdp)\lrar \Vwdw ) \lrar  (\Vtwowp\lrar\V_W)_{\mydot{P}}\supseteq (\Vtwowp\lrar\Vw),
\end{equation}
where $\Vtwowp\equivn (\Vtwodwdp)_{WP}.$\\
The LHS of equation \ref{eqn:emulatorcondinv2} can be rewritten as 
$$((\Vonewp \lrar \Vtwodwdp)\lrar \Vwdw ) \lrar  (\Vtwowp\lrar\V_W)_{\mydot{P}}).
$$
As before this expression can be rewritten as 
$$ ((\Vonewp \lrar \Vtwodwdp)\lrar \Vwdw )\lrar (\Vtwotildedwdp\lrar \Vtildedw).$$
Rearranging terms (Theorem \ref{thm:notmorethantwice}), this reduces to 
$$(  (\Vtwotildedwdp \lrar \Vtwodwdp)\lrar \Vwdw ) \lrar \Vtildedw \lrar \Vonewp ).$$ 
This simplifies, using Theorem \ref{thm:pseudoidentity}, to 
$$(\Vwdw)_{\wtildedw} \lrar \Vtildedw \lrar \Vonewp.$$
We note that $\Vonewp
\supseteq (\Vtwodwdp)_{WP}.$ Hence 
 $$(  (\Vonetildedwdp \lrar \Vtwodwdp)\lrar \Vwdw ) \lrar \Vtildedw \lrar \Vonewp )
\supseteq ((\Vwdw)_{\wtildedw} \lrar \Vtildedw )\lrar \Vtwowp \supseteq \Vw \lrar \Vtwowp. $$
This proves the required result.
\item This is dual to the previous part.
\item Follows from the preceding parts of the lemma.

\end{enumerate}

\end{proof}
%\begin{lemma}
%\label{lem:supsetelrar}
%Let $\V^1_{WP}\supseteq \V^2_{WP}.$
%\begin{enumerate}
%\item Suppose $\V^1_{WP}\circ P = \V^2_{WP}\circ P, \ \Vw \supseteq \V^1_{WP}\times W.$
%Then, $\V^1_{WP}\lrar \Vw =  \V^2_{WP}\lrar \Vw.$\\
%Hence, $(\V^1_{WP}\lrar \Vw)_{\mydot{P}} =  \Vtwodwdp\lrar (\Vw)_{\mydot{W}}.$
%
%\item Suppose 
%$\V^1_{WP}\times P = \V^2_{WP}\times P, \ \Vw \subseteq \V^2_{WP}\circ W.$
%Then, $\V^1_{WP}\lrar \Vw =  \V^2_{WP}\lrar \Vw.$\\
%Hence, $(\V^1_{WP}\lrar \Vw)_{\mydot{P}} =  (\Vtwodwdp\lrar (\Vw)_{\mydot{W}}).$
%%\item Suppose $\V^1_{WP}\circ P = \V^2_{WP}\circ P, \ \V^1_{WP}\times P = \V^2_{WP}\times P.$
%\end{enumerate}
%\end{lemma}
%\begin{proof}
%\begin{enumerate}
%\item It is clear that  $\V^1_{WP}\lrar \Vw \supseteq   \V^2_{WP}\lrar \Vw.$
%Let $g_P\in \V^1_{WP}\lrar \Vw.$  
%
%Then there exist 
%$(f_W,g_P)\in \V^1_{WP}, f_W \in \Vw.$ 
%
%Since $\V^1_{WP}\circ P = \V^2_{WP}\circ P,$ there exists $(f'_W,g_P)\in \V^2_{WP}\subseteq\V^1_{WP}.$
%
%But then $(f_W-f'_W)\in \V^1_{WP}\times W\subseteq \Vw,$ so that 
%$f'_W \in \Vw.$ Thus 
%
%$(f'_W,g_P)\in \V^2_{WP}, f'_W \in \Vw,$
%so that $g_P\in \V^2_{WP}\lrar \Vw.$  
%\item Dual to the previous result.
%\end{enumerate}
%\end{proof}
The following lemma is used in the proof of Theorem \ref{thm:controlobservelink}.

\begin{lemma}
\label{lem:supsetelrar}

Let $\V^1_{WP}\supseteq \V^2_{WP}.$
\begin{enumerate}
\item Suppose $\V^1_{WP}\circ W = \V^2_{WP}\circ W, \ \Vp \supseteq \V^1_{WP}\times P.$
Then, $\V^1_{WP}\lrar \Vp =  \V^2_{WP}\lrar \Vp.$\\
Hence, $(\V^1_{WP}\lrar \Vp)_{\mydot{W}} =  \Vtwodwdp\lrar (\Vp)_{\mydot{P}}.$\\

\item Suppose 
$\V^1_{WP}\times W = \V^2_{WP}\times W, \ \Vp \subseteq \V^2_{WP}\circ P.$
Then, $\V^1_{WP}\lrar \Vp =  \V^2_{WP}\lrar \Vp.$\\
Hence, $(\V^1_{WP}\lrar \Vp)_{\mydot{W}} =  (\Vtwodwdp\lrar (\Vp)_{\mydot{P}}).$
%\item Suppose $\V^1_{WP}\circ P = \V^2_{WP}\circ P, \ \V^1_{WP}\times P = \V^2_{WP}\times P.$
\end{enumerate}
\end{lemma}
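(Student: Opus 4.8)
\textbf{Proof plan for Lemma \ref{lem:supsetelrar}.}

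The statement is purely about matched composition of vector spaces, so the plan is to prove part (1) directly by a double-containment argument and then obtain part (2) by dualization (applying part (1) to complementary orthogonal spaces, using IDT, Theorem \ref{thm:idt}), or alternatively by the symmetric argument. First I would establish the core identity $\V^1_{WP}\lrar \Vp = \V^2_{WP}\lrar \Vp$ in part (1). The containment $\V^2_{WP}\lrar \Vp \subseteq \V^1_{WP}\lrar \Vp$ is immediate from $\V^2_{WP}\subseteq \V^1_{WP}.$ For the reverse, take $f_W \in \V^1_{WP}\lrar \Vp.$ Then there is $g_P \in \Vp$ with $(f_W,g_P)\in \V^1_{WP}.$ Since $\V^1_{WP}\circ W = \V^2_{WP}\circ W,$ there exists $g'_P$ with $(f_W,g'_P)\in \V^2_{WP}\subseteq \V^1_{WP}.$ Hence $(0_W,g_P-g'_P)\in \V^1_{WP},$ so $g_P - g'_P \in \V^1_{WP}\times P \subseteq \Vp,$ giving $g'_P \in \Vp.$ Therefore $(f_W,g'_P)\in \V^2_{WP}$ with $g'_P\in \Vp,$ so $f_W \in \V^2_{WP}\lrar \Vp.$ This proves the first assertion of part (1).

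For the ``Hence'' clause of part (1): $\Vtwodwdp$ here denotes $(\V^2_{WP})_{\mydot{W}\mydot{P}}$ in the emulator context, i.e. the copy of $\V^2_{WP}$ on the primed/dotted index sets, and $(\Vp)_{\mydot{P}}$ is the copy of $\Vp$ on $\mydot{P}.$ Copying index sets commutes with the `$\lrar$' operation and with $\circ, \times$ (this is routine from the definition of copies in Section \ref{sec:Preliminaries}), so from $\V^1_{WP}\lrar \Vp = \V^2_{WP}\lrar \Vp$ I pass to copies on $\mydot{W}\mydot{P}$ to get $(\V^1_{WP}\lrar \Vp)_{\mydot{W}} = (\V^2_{WP})_{\mydot{W}\mydot{P}}\lrar (\Vp)_{\mydot{P}} = \Vtwodwdp \lrar (\Vp)_{\mydot{P}},$ as claimed. (One must just check the naming convention matches: $\V^2_{WP}$ copied to the dotted sets is exactly the $\Vtwodwdp$ appearing in the statement.)

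Part (2) is the mirror image. I would derive it either by repeating the argument with the roles of restriction/contraction and $\circ/\times$ swapped --- take $f_W \in \V^1_{WP}\lrar \Vp,$ witnessed by $(f_W,g_P)\in \V^1_{WP},$ $g_P\in \Vp\subseteq \V^2_{WP}\circ P,$ so there is $h_W$ with $(h_W,g_P)\in \V^2_{WP}\subseteq \V^1_{WP};$ then $(f_W-h_W,0_P)\in \V^1_{WP},$ so $f_W-h_W \in \V^1_{WP}\times W = \V^2_{WP}\times W,$ hence $(f_W,g_P)=(h_W,g_P)+(f_W-h_W,0_P)\in \V^2_{WP},$ giving $f_W\in \V^2_{WP}\lrar \Vp$ --- or by applying part (1) to $(\V^1_{WP})^\perp \subseteq (\V^2_{WP})^\perp$ and $\Vp^\perp,$ using the dot-cross duality of Subsection \ref{ssec:vspaceresults} to translate the hypotheses and $(\V_{XY}\lrar \V_Y)^\perp = \V_{XY}^\perp \lrar \V_Y^\perp$ to translate the conclusion. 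The direct argument is cleaner and I would use that. I do not anticipate a genuine obstacle here; the only point requiring care is bookkeeping with the copy/primed-index notation in the ``Hence'' clauses, making sure $\Vtwodwdp$ is consistently interpreted as the dotted copy of $\V^2_{WP}$ so that the equalities typecheck.
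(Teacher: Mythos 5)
Your proposal is correct and follows essentially the same route as the paper: the same element-chasing double-containment argument for part (1) (using $\V^1_{WP}\circ W = \V^2_{WP}\circ W$ to find $g'_P$ and $\V^1_{WP}\times P \subseteq \Vp$ to conclude $g'_P\in\Vp$), with part (2) obtained as the mirror/dual argument, which the paper simply cites as ``dual to the previous result.'' Your reading of the ``Hence'' clauses --- that $\Vtwodwdp$ is the dotted copy of $\V^2_{WP}$ and the equality transfers because copying index sets commutes with $\lrar$ --- matches the intended use of the lemma in Theorem \ref{thm:controlobservelink}.
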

\begin{proof}
\begin{enumerate}
\item It is clear that  $\V^1_{WP}\lrar \Vp \supseteq   \V^2_{WP}\lrar \Vp.$
Let $g_W\in \V^1_{WP}\lrar \Vp.$ \\ 
Then there exist 
$(f_P,g_W)\in \V^1_{WP}, f_P \in \Vp.$\\ 

Since $\V^1_{WP}\circ W = \V^2_{WP}\circ W,$ there exists $(f'_P,g_W)\in \V^2_{WP}\subseteq\V^1_{WP}.$\\
But then $(f_P-f'_P)\in \V^1_{WP}\times P\subseteq \Vp,$ so that 
$f'_P \in \Vp.$ 
\\

Thus 
$(f'_P,g_W)\in \V^2_{WP}, f'_P \in \Vp,$
so that $g_W\in \V^2_{WP}\lrar \Vp.$ \\ 
\item Dual to the previous result.
\end{enumerate}
\end{proof}

In Theorem \ref{USG}, we had shown the analogue of controllable
and uncontrollable spaces in the context of USGs and in Theorem \ref{LSG},
the dual notions of the unobservable
and observable spaces in the context of LSGs.
In the former case, starting with a USG $\Vonewdw, $ a genop contained 
in it and a space $\Vw$ invariant in it, we had shown that if
$p_1(s)$ annihilates the `uncontrollable' space $\Vonewdw +(\Vw)_{\dw},$
and  $p_2(s),$ the `controllable' space $\Vwdw \bigcap \Vw,$
then $p_1p_2(s)$ annihilates $\Vwdw.$ A dual statement was proved 
for an LSG. The primal part of the following result states, essentially, that linked versions 
of the uncontrollable and controllable spaces can be computed 
in the emulator, and the result transferred to the original across
{ $\E-$ linkage}. It also contains the dual statement about 
observable and unobservable spaces.

\begin{theorem}
\label{thm:controlobservelink}
Let $\{\Vwdw,\Vpdp\}$
be { $\E-$ linked} through $\Vonewp,\Vtwodwdp. $
%and so also $\{\Vwdw',\Vpdp'\}$
%be { $\E-$ linked} through $\Vonewp,\Vtwodwdp. $
\begin{enumerate}
\item
Let $\Vp$ be invariant in $ \Vpdp $ with $
 (\Vp )_{\mydot{P}}\subseteq\Vpdp \circ \mydot{P}
,$
let $(\Vonewp\times W)_{\dw} = \Vtwodwdp\times \dw$\\
and let
$\Vw \equiv (\Vtwodwdp)_{WP}\lrar \Vp .$ 
\\Then 
\begin{enumerate}
\item $\Vw = \Vonewp\lrar \Vp $ and $\Vw$ is invariant in $\Vwdw$
with $\Vwdw\circ \dw\supseteq (\Vw)_{\dw} .$\\

\item 
%$\Vwdw\circ \dw\supseteq (\Vw)_{\dw} $ and
 $\{\Vwdw+(\Vw\bigoplus (\Vw)_{\mydot{W}}),\Vpdp+(\Vp\bigoplus (\Vp)_{\mydot{P}})\}$
is also { $\E-$ linked} through $\Vonewp,\Vtwodwdp. $
\end{enumerate}
\item
Let $\Vp$ be invariant in $ \Vpdp $ with $
 \Vp \supseteq\Vpdp \times {P}
,$
and let $(\Vonewp\circ W)_{\dw} = \Vtwodwdp\circ \dw.$\\
Let
$\Vw \equiv \Vonewp\lrar \Vp .$ 
\\Then 
\begin{enumerate}
\item $(\Vw)_{\dw} = \Vtwodwdp \lrar (\Vp)_{\dP} $ and $\Vw$ is invariant in $\Vwdw$
with $\Vwdw\times W\subseteq \Vw.$\\
\item 
%$\Vwdw\circ \dw\supseteq (\Vw)_{\dw} $ and
 $\{\Vwdw\bigcap (\Vw\bigoplus (\Vw)_{\mydot{W}}),\Vpdp\bigcap (\Vp\bigoplus (\Vp)_{\mydot{P}})\}$
is also { $\E-$ linked} through $\Vonewp,\Vtwodwdp. $
\end{enumerate}

\end{enumerate}
\end{theorem}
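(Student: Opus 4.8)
\textbf{Proof plan for Theorem \ref{thm:controlobservelink}.}

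The plan is to prove part (1) in detail and obtain part (2) by the duality mechanism of Subsection \ref{sec:proof_dual}, since the two statements form a primal--dual pair under the adjoint operation (invariance is self-dual by Theorem \ref{thm:controlledconditioneddual}, $\Vonewp$ and $\Vtwodwdp$ swap roles under adjunction by Theorem \ref{thm:emuadjoint}, and `$+$' trades with `$\cap$' while `$\circ$' trades with `$\times$'). So the substantive work is all in part (1).

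For part 1(a), I would first show $\Vw \equaln \Vonewp \lrar \Vp$ given that $\Vw$ is defined as $(\Vtwodwdp)_{WP}\lrar \Vp$. Since $\Vonewp \supseteq (\Vtwodwdp)_{WP}$, one containment is immediate: $(\Vtwodwdp)_{WP}\lrar \Vp \subseteq \Vonewp\lrar \Vp$. For the reverse, the key is Lemma \ref{lem:supsetelrar}(1) applied with $\V^1_{WP}\equiv \Vonewp$, $\V^2_{WP}\equiv (\Vtwodwdp)_{WP}$: I need $\Vonewp\circ W \equaln (\Vtwodwdp)_{WP}\circ W$, i.e. $\Vonewp\circ W \equaln \Vtwodwdp\circ \dw$ (transported to the $W$-index), and $\Vp \supseteq \Vonewp\times W$. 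The first follows from the dot--cross emulator conditions of Theorem \ref{thm:elinkagedotcross} together with the hypothesis $(\Vonewp\times W)_{\dw}\equaln \Vtwodwdp\times \dw$ --- more precisely, since $\Vonewp\circ W \supseteq \Vwdw\circ W \supseteq (\Vwdw\times \dw)_W$ and one can pin down $\Vonewp\circ W$ using the given equality of contractions on both linkages. The second, $\Vp\supseteq \Vonewp\times W$, follows because $\Vp$ is invariant in $\Vpdp$ hence $\Vp\supseteq \Vpdp\times P \equaln \Vwdw\times W\lrar \Vonewp \supseteq \Vonewp\times W$ (using Lemma \ref{lem:elinking}(1) and the fact that $\Vw$ is controlled invariant so contains $\Vwdw\times W$). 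Next, invariance of $\Vw$ in $\Vwdw$: this is precisely Theorem \ref{thm:invlinking}(3) --- $\Vp$ invariant in $\Vpdp$ forces $\Vonewp\lrar\Vp$ invariant in $\Vwdw$ (noting the roles of the two linkage halves relative to $\Vpdp$ versus $\Vwdw$ are symmetric, so Theorem \ref{thm:invlinking} applies in this direction too). The extra condition $\Vwdw\circ\dw \supseteq (\Vw)_{\dw}$ should come out of the linking identity $\Vwdw\circ\dw \lrar \Vtwodwdp \equaln \Vpdp\circ\dP$ of Lemma \ref{lem:elinking}(1), combined with $(\Vp)_{\dP}\subseteq \Vpdp\circ\dP$ and the dot--cross conditions for $\Vtwodwdp$.

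For part 1(b), the statement to prove is that $\{\Vwdw+(\Vw\oplus(\Vw)_{\dw}),\ \Vpdp+(\Vp\oplus(\Vp)_{\dP})\}$ is $\E$-linked through the same pair. I would build the space $\Vw\oplus(\Vw)_{\dw}$ as itself $\E$-linked to $\Vp\oplus(\Vp)_{\dP}$ through $\Vonewp,\Vtwodwdp$: indeed $\Vonewp\lrar\Vp \equaln \Vw$ by 1(a), $\Vtwodwdp\lrar(\Vp)_{\dP} \equaln (\Vw)_{\dw}$ by the parallel computation on the derivative side, and since direct sums of emulator-linked pieces are emulator-linked (the decoupled structure makes the dot--cross conditions automatic, cf. the proof of Theorem \ref{thm:elinkagedotcross}), $\{\Vw\oplus(\Vw)_{\dw},\ \Vp\oplus(\Vp)_{\dP}\}$ is $\E$-linked through $\Vonewp,\Vtwodwdp$. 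Then apply Lemma \ref{lem:linkagepdw}(2): the sum of two pairs each $\E$-linked through the same linkage is again $\E$-linked through it. That gives exactly the claim.

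\textbf{Main obstacle.} The delicate point is 1(a) --- nailing down $\Vw \equaln \Vonewp\lrar\Vp$, i.e. verifying the precise dot--cross hypotheses needed to invoke Lemma \ref{lem:supsetelrar}(1). The hypothesis $(\Vonewp\times W)_{\dw} \equaln \Vtwodwdp\times\dw$ is tailored for this, but one must carefully track how the emulator dot--cross inequalities of Theorem \ref{thm:elinkagedotcross} upgrade to the needed equalities on restrictions, and check that $\Vp$ genuinely contains $\Vonewp\times W$ --- this is where the controlled-invariance hypothesis on the linked side and Lemma \ref{lem:elinking}(1) do their work. Once that identity is in hand, invariance transfer is a direct citation of Theorem \ref{thm:invlinking} and the rest is bookkeeping with Lemma \ref{lem:linkagepdw}.
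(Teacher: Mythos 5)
Your overall architecture matches the paper's (prove part 1, get part 2 by adjoint duality, handle 1(b) by showing $\{\Vw\oplus(\Vw)_{\dw},\ \Vp\oplus(\Vp)_{\dP}\}$ is $\E$-linked and invoking Lemma \ref{lem:linkagepdw}, transfer invariance via Theorem \ref{thm:invlinking}), but there is a genuine gap at the one step you yourself flag as delicate, 1(a). You invoke part 1 of Lemma \ref{lem:supsetelrar}, whose hypotheses are $\Vonewp\circ W=(\Vtwodwdp)_{WP}\circ W$ and $\Vp\supseteq\Vonewp\times P$, and neither is available under the hypotheses of part 1 of the theorem. The given equality of contractions $(\Vonewp\times W)_{\dw}=\Vtwodwdp\times\dw$ does not ``pin down'' the restriction $\Vonewp\circ W$: the dot--cross conditions of Theorem \ref{thm:elinkagedotcross} only give the lower bounds $\Vonewp\circ W\supseteq\Vwdw\circ W$ and $\Vtwodwdp\circ\dw\supseteq\Vwdw\circ\dw$, and the equality of restrictions $(\Vonewp\circ W)_{\dw}=\Vtwodwdp\circ\dw$ is precisely the hypothesis of part 2 of the theorem, not of part 1. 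Your derivation of the second hypothesis also fails: invariance of $\Vp$ in $\Vpdp$ yields $\Vp\supseteq(\Vpdp\times\dP)_P$ (Theorem \ref{thm:conditionallyinvsum}), not $\Vp\supseteq\Vpdp\times P$ --- the latter is imposed as an explicit extra hypothesis in part 2 of the theorem exactly because it does not follow from invariance --- and your chain terminates in ``$\supseteq\Vonewp\times W$'', which is on the wrong index set (the lemma needs $\Vonewp\times P$). In short, you are trying to run the part-2 pairing of lemma-hypotheses under the part-1 assumptions, and the missing equalities cannot be recovered.

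The paper's route uses part 2 of Lemma \ref{lem:supsetelrar}, for which the part-1 hypotheses of the theorem are tailor-made: $(\Vonewp\times W)_{\dw}=\Vtwodwdp\times\dw$ is exactly the required equality $\Vonewp\times W=(\Vtwodwdp)_{WP}\times W$, and $\Vp\subseteq(\Vtwodwdp)_{WP}\circ P$ follows from the chain $(\Vp)_{\dP}\subseteq\Vpdp\circ\dP\subseteq\Vtwodwdp\circ\dP$ (the stated hypothesis plus the dot--cross condition). This yields $\Vonewp\lrar\Vp=(\Vtwodwdp)_{WP}\lrar\Vp=\Vw$ together with $(\Vw)_{\dw}=\Vtwodwdp\lrar(\Vp)_{\dP}$; after that your remaining steps (invariance of $\Vw$ in $\Vwdw$ by the symmetric use of Theorem \ref{thm:invlinking}, the containment $\Vwdw\circ\dw=\Vtwodwdp\lrar(\Vpdp\circ\dP)\supseteq\Vtwodwdp\lrar(\Vp)_{\dP}=(\Vw)_{\dw}$ via Lemma \ref{lem:elinking}, 1(b) via Lemma \ref{lem:linkagepdw}, and duality for part 2) go through as you describe and coincide with the paper's argument.
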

\begin{proof}
\begin{enumerate}
\item
\begin{enumerate}
\item We have, by the property of { $\E-$ linkages} (Theorem \ref{thm:elinkagedotcross})
and the hypothesis,
$$\Vtwodwdp\circ \mydot{P}\supseteqn\Vpdp \circ \mydot{P}\supseteqn(\Vp )_{\mydot{P}}.$$ Further, 
$(\Vonewp\times W)_{\dw} \equaln \Vtwodwdp\times \dw.$
So by part 2 of Lemma \ref{lem:supsetelrar}, $$\Vonewp\lrar \Vp = (\Vtwodwdp\lrar(\Vp)_{\dP})_W=\Vw.$$ By Theorem \ref{thm:invlinking}, it follows that $\Vw $ is invariant in $\Vwdw.$

Next, since $\{\Vonewp,\Vtwodwdp\} $
is an { $\E-$ linkage} for $\{\Vwdw,\Vpdp\},$
by Lemma \ref{lem:elinking},\\ 
%$\Vtwodwdp\lrar(\Vp)_{\dP}
%\subseteq (\Vtwodwdp\lrar(\Vpdp\circ \dP)= \Vwdw \circ \dw.$
$$\Vwdw \circ \dw= \Vtwodwdp\lrar(\Vpdp\circ \dP)\supseteq\Vtwodwdp\lrar(\Vp)_{\dP}=(\Vw)_{\dw}.$$\\
\item By the previous subpart of the theorem, it is clear that
$$\{\Vw\bigoplus (\Vw)_{\mydot{W}}, \Vp\bigoplus (\Vp)_{\mydot{P}}\}$$
is { $\E-$ linked} through $\Vonewp,\Vtwodwdp. $

Since $\{\Vwdw,\Vpdp\}$
is also { $\E-$ linked} through $\Vonewp,\Vtwodwdp,$
by Lemma \ref{lem:linkagepdw}, it follows that\\
 $$\{\Vwdw+(\Vw\bigoplus (\Vw)_{\mydot{W}}),\Vpdp+(\Vp\bigoplus (\Vp)_{\mydot{P}})\}$$
is { $\E-$ linked} through $\Vonewp,\Vtwodwdp. $\\
\end{enumerate}
\item This is dual to the previous result.
\end{enumerate}
\end{proof}
\begin{remark}
\label{rem:zeroeigen}
Note that, if $\Vwdw\times W= \0_W$ and $ \Vpdp\times P= \0_P,$ equivalently, if the system
and the emulator have 
no zero eigenvalue, then there is a one to one correspondence 
between invariant spaces of $\Vwdw$ and $\Vpdp.$

Even when $\Vwdw\times W\ne \0_W,$
a convenient emulator with $\Vpdp\times P= \0_P$ can usually be built.
See for instance Example \ref{sec:implicitemulatormultiport}.

Now define $$\Vwdw'\equiv ((\Vtwodwdp)_{WP}\oplus \Vtwodwdp)\lrar \Vpdp.$$
Theorem \ref{thm:controlobservelink} indicates that invariant spaces can be computed in
$\Vpdp$ and transformed through $(\Vtwodwdp)_{WP}$ to invariant subspaces
of $\Vwdw'.$ If to each of these spaces, we add invariant subspaces of $\Vwdw\times W$
we will account for all invariant subspaces of $\Vwdw.$
\end{remark}
\subsection{State feedback and output injection through { $\E-$ linkages}}
In this subsection we show that we can perform $wm_u- feedback/ m_y\dW- injection$
using $\Vwmu/\Vdwmy $
on a GDS $\Vwdwmumy $ by first moving to an emulator $\Vpdpmumy $ through the { $\E-$ linkage} $(\Vonewp \oplus \Vtwodwdp ), $ performing the operation using $(\Vwmu \lrar \Vwp)/(\Vdwmy \lrar \Vdwdp)$ on it, obtaining $\Vpdp$ and then returning to 
$\Vwdw$ through  $(\Vwp \oplus \Vdwdp ).$ 

%We begin with $WM-$ feedback. 
The following theorem essentially formalizes
the above informal statement on $wm_u-$ feedback.
\begin{theorem}
\label{thm:wmfeedback}
Let $$(\Vonewp\oplus \Vtwodwdp)\lrar \Vwdwmumy = \Vpdpmumy  \ \textup{and\ let}\   \Vonewp \times W\subseteq \Vwdwmumy\circ W\dw M_u \times W.$$
Let $\Vwmu$ be a space on $  W \uplus M_u,$  \ \ $\Vpmu\equiv \Vwmu \lrar  \Vonewp.$
Let $\Vwdw \equiv (\Vwdwmumy\cap \Vwmu)\circ W\dw$
and  let $\Vpdp \equiv  (\Vpdpmumy\cap \Vpmu)\circ P\dP.$
Then
$$(\Vonewp\oplus \Vtwodwdp)\lrar \Vwdw = \Vpdp.$$
\end{theorem}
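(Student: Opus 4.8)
The plan is to reduce the claim to a pure linkage identity by expressing both the $wm_u$-feedback on $\Vwdwmumy$ and the $pm_u$-feedback on $\Vpdpmumy$ through the `$\lrar$' operation, using Lemma~\ref{lem:feedbackinjectionlrar}, and then transport the $\mathcal{E}$-linkage through the feedback linkage with the help of the associativity result (Theorem~\ref{thm:notmorethantwice}) and the cancellation/pseudoidentity machinery. First I would record, via Lemma~\ref{lem:feedbackinjectionlrar}(1), that
$$\Vwdw \equiv (\Vwdwmumy\cap \Vwmu)\circ W\dw = \big[(\Vwdwmumy\cap \Iww)\lrar (\Vwmu\oplus \F_{M_y})\big]_{W\dw},$$
and analogously for $\Vpdp$ with $W$ replaced by $P$ and $\Vwmu$ by $\Vpmu = \Vwmu\lrar\Vonewp$. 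The goal is then to show
$$(\Vonewp\oplus\Vtwodwdp)\lrar \Vwdw = \Vpdp,$$
which, after substituting the above expressions, becomes an equality between two `$\lrar$'-expressions built from $\Vwdwmumy$, $\Vwmu$, $\Vonewp$, $\Vtwodwdp$ and various pseudoidentities $\Iww$, $\Ipp$. Since in the resulting composite expression no index set will occur more than twice (after introducing suitable disjoint copies of $W,P$ and their dotted versions), Theorem~\ref{thm:notmorethantwice} lets me drop all brackets and rearrange freely.

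The core computation I would then carry out is to push the $\mathcal{E}$-linkage past the feedback. Concretely, I expect
$$(\Vonewp\oplus\Vtwodwdp)\lrar (\Vwdwmumy\cap \Iww)$$
to be rewritten, using the hypothesis $(\Vonewp\oplus\Vtwodwdp)\lrar\Vwdwmumy=\Vpdpmumy$ together with the fact that $\Iww$ only constrains the $W$ (equivalently $P$) component, as $(\Vpdpmumy\cap \Ipp)$ modulo the extra copy bookkeeping. Here I would lean on Theorem~\ref{thm:elinkagedotcross} to get the dot--cross conditions $\Vonewp\circ W\supseteq\Vwdwmumy\circ W$, $\Vonewp\times W\subseteq \Vwdwmumy\times W$ etc., which via Theorem~\ref{thm:inverse} (IIT) guarantee the relevant compositions behave invertibly, and on Theorem~\ref{thm:pseudoidentity} to identify $\Vonewp\lrar(\Vonewp)_{W'P}$-type terms as symmetric pseudoidentities so that $\Iww$ transforms into $\Ipp$. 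The feedback factor $\Vwmu\oplus\F_{M_y}$ composed with $\Vonewp$ produces exactly $\Vpmu\oplus\F_{M_y}$ by the definition $\Vpmu\equiv\Vwmu\lrar\Vonewp$ and the triviality $\F_{M_y}\lrar(\text{anything on }M_y)=\F_{M_y}$-type manipulation. Assembling these, $(\Vonewp\oplus\Vtwodwdp)\lrar\Vwdw$ collapses to $\big[(\Vpdpmumy\cap\Ipp)\lrar(\Vpmu\oplus\F_{M_y})\big]_{P\dP}$, which is $\Vpdp$ by Lemma~\ref{lem:feedbackinjectionlrar}(1) again.

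The main obstacle I anticipate is the careful handling of the extra hypothesis $\Vonewp\times W\subseteq \Vwdwmumy\circ W\dw M_u\times W$ and its interaction with the intersection by $\Iww$: intersecting with a pseudoidentity interacts with contractions in a delicate way (see Remark~\ref{rem:feedback_injection}), and I must make sure that the $\mathcal{E}$-linkage dot--cross conditions, which are stated for $\Vwdwmumy$ itself, still apply after intersecting with $\Iww$ and after projecting away $M_u$. The given hypothesis is precisely what is needed so that $\Vonewp$'s contraction to $W$ is small enough to sit inside the contraction of the $W\dw M_u$-restriction of $\Vwdwmumy$, ensuring the cancellation in the spirit of Lemma~\ref{lem:cancel} goes through; verifying this cleanly, and checking that the analogous conditions hold on the $P$ side (which should follow from $\Vpmu=\Vwmu\lrar\Vonewp$ and the $P$-version of Theorem~\ref{thm:elinkagedotcross}), is where the real bookkeeping lies. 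The symmetry of the $\mathcal{E}$-linkage definition in $W$ versus $P$ means the reverse direction (recovering $\Vwdw$ from $\Vpdp$) needs no separate argument once the forward identity and the dot--cross conditions are in hand, by the last clause of Theorem~\ref{thm:elinkagedotcross}.
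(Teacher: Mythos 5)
Your strategy---rewrite the feedback through Lemma \ref{lem:feedbackinjectionlrar} and transport it across the $\E$-linkage---is a different route from the paper's, but the step on which it pivots is false as stated. The composite $(\Vonewp\oplus\Vtwodwdp)\lrar(\Vwdwmumy\cap\Iww)$ does not become $(\Vpdpmumy\cap\Ipp)$ ``modulo copy bookkeeping''. First, that composition only matches $W$ and $\dw$, so a $W'$ leg survives; to turn it into $P'$ you must compose with a second copy $(\Vonewp)_{W'P'}$, which you never introduce. Second, and more seriously, once you do, the resulting space is in general strictly larger than $\Vpdpmumy\cap\Ipp$: a single $f_W$ is linked by $\Vonewp$ to $P$-vectors differing by arbitrary elements of $\Vonewp\times P$, so the two $P$-legs need not be copies of each other; dually, equal $P$-components do not force equal $W$-components when $\Vonewp\times W\ne\0_W$---which is precisely the typical emulator situation (capacitor cutsets, inductor loops). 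The pseudoidentity on $W$ simply does not transform into the pseudoidentity on $P$ across an $\E$-linkage. Theorem \ref{thm:notmorethantwice} cannot rescue this: it governs rearrangement of pure `$\lrar$' expressions, whereas the whole difficulty is commuting an intersection (the feedback constraint, respectively $\Iww$) past the linkage, and distributivity of `$\lrar$' over $\cap$ fails in general; Lemma \ref{lem:cancel} concerns inserting a pseudoidentity pair into a chain of matched compositions and does not address this either.

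What is missing is the vector-level correction that the hypothesis $\Vonewp\times W\subseteq\Vwdwmumy\circ W\dw M_u\times W$ exists for, and which is the actual content of the paper's argument (Lemma \ref{lem:wm1}, whose hypothesis is exactly this with $\Vwdwmu\equiv\Vwdwmumy\circ W\dw M_u$, plus the routine Lemma \ref{lem:wm2}, assembled by composing with $\F_{M_u}$ and using Theorem \ref{thm:notmorethantwice}). Concretely, in the hard inclusion $\Vpdp\subseteq(\Vonewp\oplus\Vtwodwdp)\lrar\Vwdw$ one has two witnesses over the same $a_P$: an $f_W$ with $(f_W,g_{\dw},h_{M_u},k_{M_y})\in\Vwdwmumy$ and $(f_W,a_P)\in\Vonewp$, and an $f'_W$ with $(f'_W,h_{M_u})\in\Vwmu$, $(f'_W,a_P)\in\Vonewp$ coming from $\Vpmu=\Vwmu\lrar\Vonewp$. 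Then $f_W-f'_W\in\Vonewp\times W$, so by the hypothesis there is $(f_W-f'_W,0_{\dw},0_{M_u},k'_{M_y})\in\Vwdwmumy$, and subtracting it replaces $f_W$ by $f'_W$ without disturbing $g_{\dw}$, $h_{M_u}$, or membership in $\Vonewp$. Your outline gestures at the hypothesis but never performs this replacement, and since your intermediate identity is exactly where the discrepancy between the two witnesses lives, the gap is genuine rather than cosmetic; if you carry out this correction explicitly (directly, or by first proving the two distributivity lemmas as the paper does), the rest of your assembly does go through.
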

We need the following lemmas to prove this theorem.
\begin{lemma}
\label{lem:wm1}
Let $\Vwdwmu\equiv \Vwdwmumy \circ W\dw M_u.$
Let $\Vonewp \times W\subseteq \Vwdwmu\times W.$ 
%and let
% $\Vtwowp \times W\subseteq \Vwdwm\times \dw.$
Then,
$$\Vonewp \lrar (\Vwdwmu\cap\Vwmu)= (\Vonewp \lrar \Vwdwmu)\bigcap (\Vonewp \lrar \Vwmu).$$
\end{lemma}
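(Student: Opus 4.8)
The plan is to prove the two set inclusions separately, noting that only the direction "$\supseteq$" uses the hypothesis $\Vonewp \times W\subseteq \Vwdwmu\times W$; the opposite inclusion is a pure consequence of the definitions of matched composition and intersection. Throughout, a typical vector on the common index set $P\uplus\dw\uplus M_u$ is written $(f_P,g_{\dw},h_{M_u})$, and one should begin by confirming the easy bookkeeping that $\Vwdwmu\cap\Vwmu$, $\Vonewp \lrar (\Vwdwmu\cap\Vwmu)$, and $(\Vonewp \lrar \Vwdwmu)\bigcap (\Vonewp \lrar \Vwmu)$ are all collections of vectors on $P\uplus\dw\uplus M_u$.

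For "$\subseteq$", take $(f_P,g_{\dw},h_{M_u})\in \Vonewp \lrar (\Vwdwmu\cap\Vwmu)$. By definition of $\lrar$ there is $k_W$ with $(k_W,f_P)\in \Vonewp$ and $(k_W,g_{\dw},h_{M_u})\in \Vwdwmu\cap\Vwmu$; the latter membership yields both $(k_W,g_{\dw},h_{M_u})\in \Vwdwmu$ and (restricting to $W\uplus M_u$) $(k_W,h_{M_u})\in \Vwmu$. Hence $(f_P,g_{\dw},h_{M_u})\in \Vonewp \lrar \Vwdwmu$ and $(f_P,h_{M_u})\in \Vonewp \lrar \Vwmu$, so the vector lies in the right-hand side.

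For "$\supseteq$", take $(f_P,g_{\dw},h_{M_u})\in(\Vonewp \lrar \Vwdwmu)\bigcap (\Vonewp \lrar \Vwmu)$. Unwinding the two matched compositions gives witnesses $k_W,k'_W$ with $(k_W,f_P)\in \Vonewp$, $(k_W,g_{\dw},h_{M_u})\in \Vwdwmu$, and $(k'_W,f_P)\in \Vonewp$, $(k'_W,h_{M_u})\in \Vwmu$. Since $\Vonewp$ is a vector space, $(k_W-k'_W,\0_P)\in \Vonewp$, so $k_W-k'_W\in \Vonewp \times W\subseteq \Vwdwmu\times W$, i.e.\ $(k_W-k'_W,\0_{\dw},\0_{M_u})\in \Vwdwmu$. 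Subtracting this from $(k_W,g_{\dw},h_{M_u})\in \Vwdwmu$ shows $(k'_W,g_{\dw},h_{M_u})\in \Vwdwmu$; together with $(k'_W,h_{M_u})\in \Vwmu$ this places $(k'_W,g_{\dw},h_{M_u})$ in $\Vwdwmu\cap\Vwmu$, and since $(k'_W,f_P)\in \Vonewp$ we conclude $(f_P,g_{\dw},h_{M_u})\in \Vonewp \lrar (\Vwdwmu\cap\Vwmu)$.

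The only genuine obstacle is the alignment of the two witnesses: distributivity of $\lrar$ over $\cap$ fails in general, and it holds here precisely because the contraction hypothesis $\Vonewp \times W\subseteq \Vwdwmu\times W$ lets one replace $k_W$ by $k'_W$ on the $\Vwdwmu$ side without leaving $\Vwdwmu$. (This is the same mechanism by which the "dot--cross" side conditions enter Theorem \ref{thm:distributivity}, so one could alternatively derive this lemma from that theorem via the identities $\Vonewp \lrar \Vwdwmu=(\Vonewp \oplus \0_{\dw M_u})\cap(\Vwdwmu\oplus\0_{P})$ composed appropriately; but the direct argument above is shorter.)
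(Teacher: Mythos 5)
Your proof is correct and follows essentially the same route as the paper's: the containment $\subseteq$ is immediate from the definitions, and for $\supseteq$ the paper likewise takes the two witnesses on $W$, uses that their difference lies in $\Vonewp\times W\subseteq \Vwdwmu\times W$, and replaces one witness by the other inside $\Vwdwmu$. Your write-up just spells out the bookkeeping (index sets, the meaning of $\cap$ across different index sets) a bit more explicitly than the paper does.
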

\begin{proof}
It is easy to see that
$$\Vonewp \lrar (\Vwdwmu\cap\Vwmu)\subseteq (\Vonewp \lrar \Vwdwmu)\bigcap (\Vonewp \lrar \Vwmu).$$
Let $(f_P,g_{\dw},f_{M_u})\in $ R.H.S.

Then there exist $(f_W,f_P),(f'_W,f_P)\in \Vonewp$ such that 
$(f_W,g_{\dw},f_{M_u})\in \Vwdwmu$ and $(f'_W,f_{M_u}) \in \Vwmu.$

But this means  $$f_W-f'_W\in \Vonewp\times W\subseteq 
\Vwdwmu\times W.\ \ \textup{ It\  follows\  that}\  (f_W-f'_W,0_{\dw},\ 0_{M_u})\in \Vwdwmu$$
so that $(f'_W,g_{\dw},f_{M_u})$ belongs to $\Vwdwmu$ and therefore also belongs to 
$(\Vwdwmu\cap\Vwmu).$ 

Since $(f'_W,f_P)\in \Vonewp,$
it follows that $$(f_P,g_{\dw},f_{M_u})\in \Vonewp \lrar (\Vwdwmu\cap\Vwmu).$$

\end{proof}
The following lemma is routine and the proof is omitted.
\begin{lemma}
\label{lem:wm2}
Let $\Vpdwmu \equiv \Vonewp \lrar \Vwdwmu$ and let $\Vpmu \equiv \Vonewp \lrar \Vwmu.$ 
%Let $\Vtwodwdp \times \dw \subseteq \Vwdwm\times \dw.$
Then
$$\Vtwodwdp \lrar (\Vpdwmu\cap \Vpmu)=(\Vtwodwdp \lrar\Vpdwmu)\bigcap 
\Vpmu.$$
\end{lemma}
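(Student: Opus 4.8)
Lemma \ref{lem:wm2} asserts that $\Vtwodwdp \lrar (\Vpdwmu\cap \Vpmu)=(\Vtwodwdp \lrar\Vpdwmu)\bigcap \Vpmu,$ where $\Vpdwmu \equiv \Vonewp \lrar \Vwdwmu$ and $\Vpmu \equiv \Vonewp \lrar \Vwmu$. The plan is to prove the two containments separately, with the easy one coming for free and the hard one requiring a contraction-to-$\dw$ argument analogous to the one used in Lemma \ref{lem:wm1}. Note first the asymmetry of the two sides: $\Vtwodwdp$ is a linkage on $\dw \uplus \dP$, while $\Vpmu$ is on $P \uplus M_u$. When we write $\Vtwodwdp \lrar (\Vpdwmu\cap \Vpmu)$, the operation matches the $\dP$ coordinates of $\Vtwodwdp$ against the $\dP$ coordinates inside $\Vpdwmu$ and replaces them by $\dw$; the $P$ and $M_u$ coordinates pass through untouched. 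So the result is naturally a space on $\dw \uplus \dw' \uplus M_u$ or, more precisely, it matches the $\dP$ block, so we should think of $\Vpdwmu$ as a space on $P \uplus \dP \uplus M_u$ and of the composition as producing a space on $P \uplus \dw \uplus M_u$. On the right side, $\Vtwodwdp \lrar \Vpdwmu$ produces the same kind of object, and then we intersect with $\Vpmu$ which lives on $P \uplus M_u$; the intersection is over the shared coordinates $P, M_u$, extending freely on $\dw$.

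For the easy inclusion ($\subseteq$), take any vector $(f_P, g_{\dw}, f_{M_u})$ in the left-hand side. Then there is some $g_{\dP}$ with $(f_P, g_{\dP}, f_{M_u}) \in \Vpdwmu \cap \Vpmu$ and $(g_{\dP}, g_{\dw}) \in \Vtwodwdp$. From $(f_P, g_{\dP}, f_{M_u}) \in \Vpdwmu$ together with $(g_{\dP}, g_{\dw}) \in \Vtwodwdp$ we get $(f_P, g_{\dw}, f_{M_u}) \in \Vtwodwdp \lrar \Vpdwmu$. From $(f_P, g_{\dP}, f_{M_u}) \in \Vpmu$ — which has no $\dP$ coordinate, so really $(f_P, f_{M_u}) \in \Vpmu$ up to the harmless $\dP$-extension — we get $(f_P, f_{M_u}) \in \Vpmu$, hence $(f_P, g_{\dw}, f_{M_u})$ belongs to the free $\dw$-extension of $\Vpmu$. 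So it lies in the right-hand side. This direction uses only the definitions of $\lrar$ and $\cap$.

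For the hard inclusion ($\supseteq$) — which I expect to be the main obstacle — take $(f_P, g_{\dw}, f_{M_u})$ in the right-hand side. Then $(f_P, f_{M_u}) \in \Vpmu$, and there exists $g_{\dP}$ with $(g_{\dP}, g_{\dw}) \in \Vtwodwdp$ and $(f_P, g_{\dP}, f_{M_u}') \in \Vpdwmu$ for some $f_{M_u}'$ — but wait, we need the $M_u$ components to match up, so more carefully: there exist $g_{\dP}$ and a vector in $\Vpdwmu$ of the form $(f_P, g_{\dP}, f_{M_u}'')$ with $(g_{\dP},g_{\dw})\in\Vtwodwdp$; the $M_u$ coordinate of this witness need not be $f_{M_u}$. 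The difficulty is to adjust the witness so that its $\dP$-coordinate is compatible with a single vector that simultaneously lies in $\Vpdwmu \cap \Vpmu$ and maps onto $g_{\dw}$. I would handle this by exploiting a contraction property: since $\Vtwodwdp$ is derived from the original emulator linkage, one shows $\Vtwodwdp \times \dP$ is small enough (using the dot-cross conditions for $\E$-linkages, Theorem \ref{thm:elinkagedotcross}, or the hypothesis already baked into the surrounding construction) that any two witnesses $g_{\dP}, g_{\dP}'$ with $(g_{\dP},g_{\dw}), (g_{\dP}',g_{\dw})\in\Vtwodwdp$ differ by an element of $\Vtwodwdp\times\dP$, and this difference can be absorbed inside $\Vpdwmu$. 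Then one can pick the witness in $\Vpdwmu$ whose $M_u$-component is exactly $f_{M_u}$ (possible because $(f_P,f_{M_u})\in\Vpmu\subseteq\Vpdwmu\circ PM_u$, provided $\Vpmu \subseteq \Vpdwmu \circ PM_u$, which should follow from $\Vwmu \subseteq \Vwdwmu \circ WM_u$ transported across the linkage). With matching $M_u$-components and a reconciled $\dP$-component, the vector $(f_P, g_{\dP}, f_{M_u})$ lies in $\Vpdwmu \cap \Vpmu$, and composing with $(g_{\dP}, g_{\dw}) \in \Vtwodwdp$ gives $(f_P, g_{\dw}, f_{M_u})$ in the left-hand side. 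Since the paper states this lemma is "routine," I expect that the ambient hypotheses (in particular the dot-cross conditions available from $\Vonewp, \Vtwodwdp$ being an $\E$-linkage pair, as recorded in Theorem \ref{thm:elinkagedotcross} and Remark \ref{rem:feedback_injection}) make the reconciliation step short; the proof is essentially the mirror image of Lemma \ref{lem:wm1}, with $\Vtwodwdp$ playing the role $\Vonewp$ played there and contraction-to-$\dP$ replacing contraction-to-$W$.
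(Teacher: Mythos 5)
Your first inclusion is fine, but the ``hard inclusion'' rests on a misreading of the right-hand side, and the machinery you bring in to repair it is both unnecessary and not actually established. If $(f_P,h_{\dP},f_{M_u})\in(\Vtwodwdp\lrar\Vpdwmu)\bigcap\Vpmu$, then by the definition of matched composition the membership in $\Vtwodwdp\lrar\Vpdwmu$ already supplies a witness $g_{\dw}$ with $(g_{\dw},h_{\dP})\in\Vtwodwdp$ and $(f_P,g_{\dw},f_{M_u})\in\Vpdwmu$ \emph{with the same} $f_{M_u}$: the composition only eliminates the matched index set $\dw$ and leaves the $P$ and $M_u$ coordinates untouched, and the intersection with $\Vpmu$ constrains that same pair $(f_P,f_{M_u})$. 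There is no ``witness whose $M_u$-component need not be $f_{M_u}$,'' so there is nothing to reconcile; the extra hypotheses you invoke (dot-cross conditions on $\Vtwodwdp$, smallness of $\Vtwodwdp\times\dP$, $\Vpmu\subseteq\Vpdwmu\circ PM_u$) are not part of the lemma and are only asserted to ``follow,'' so as written the key step is a gap. (There is also an index-set slip: $\Vpdwmu=\Vonewp\lrar\Vwdwmu$ lives on $P\uplus\dw\uplus M_u$, and composing with $\Vtwodwdp$ produces a space on $P\uplus\dP\uplus M_u$, not the reverse.)

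The correct observation, and the reason the paper records the lemma as routine with no proof, is that the index set $P\uplus M_u$ of $\Vpmu$ is disjoint from the matched set $\dw$, so the constraint ``$(f_P,f_{M_u})\in\Vpmu$'' does not involve the existential witness at all and therefore commutes with the composition: both sides are exactly
\begin{equation*}
\{(f_P,h_{\dP},f_{M_u}):\ \exists\, g_{\dw}\ \textup{with}\ (g_{\dw},h_{\dP})\in\Vtwodwdp,\ (f_P,g_{\dw},f_{M_u})\in\Vpdwmu,\ \textup{and}\ (f_P,f_{M_u})\in\Vpmu\}.
\end{equation*}
Contrast this with Lemma \ref{lem:wm1}, where the intersecting space $\Vwmu$ shares the matched variable $W$ with the composition; there the commutation genuinely fails without the hypothesis $\Vonewp\times W\subseteq\Vwdwmu\times W$, and that is the situation your proposed argument is (unnecessarily) mirroring here.
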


%The following lemma is a consequence of Lemma 
%\ref{lem:wm1} and Lemma \ref{lem:wm2}.
%\begin{lemma}
%Let $(\Vonewp,\Vtwodwdp)$
%be an {\bf $\E-$ linkage} for $\{\Vwdwm,\Vpdpm\}.$
%Let $$(\Vonewp\oplus \Vtwodwdp)\lrar \Vwdwm = \Vpdpm.$$
%Let $\Vpm\equiv \Vwm \lrar  \Vonewp.$
%Let $\Vwdw \equiv (\Vwdwm\bigcap \Vwm)\circ (W\dw)$
%and  let $\Vpdp = (\Vpdpm\bigcap \Vpm)\circ (P\dP).$
%Then 
%$$(\Vonewp\oplus \Vtwodwdp)\lrar \Vwdw = \Vpdp.$$
%\end{lemma}
\begin{proof} (of Theorem \ref{thm:wmfeedback}).\\
From Lemmas \ref{lem:wm1}, \ref{lem:wm2}, we have that 
$$(\Vonewp\oplus \Vtwodwdp)\lrar (\Vwdwmu\cap\Vwmu)$$
$$=
\Vtwodwdp\lrar ((\Vonewp\lrar \Vwdwmu)\bigcap (\Vonewp\lrar \Vwmu))$$
$$=
((\Vonewp\oplus \Vtwodwdp)\lrar \Vwdwmu)\bigcap (\Vonewp\lrar \Vwmu).
$$
It follows that 
$$[(\Vonewp\oplus \Vtwodwdp)\lrar (\Vwdwmu\cap\Vwmu)]\lrar \F_{M_u}=
[((\Vonewp\oplus \Vtwodwdp)\lrar \Vwdwmu)\bigcap (\Vonewp\lrar \Vwmu)]\lrar \F_{M_u}.
$$
By Theorem \ref{thm:notmorethantwice}
 the L.H.S can be rewritten as 
$$(\Vonewp\oplus \Vtwodwdp)\lrar [(\Vwdwmu\cap\Vwmu)\lrar \F_{M_u}] =  (\Vonewp\oplus \Vtwodwdp)\lrar [(\Vwdwmumy\cap\Vwmu)\lrar \F_{M_uM_y}] $$
and is therefore equal to $(\Vonewp\oplus \Vtwodwdp)\lrar \Vwdw$
and the R.H.S. is 
$$(\Vpdpmu\cap \Vpmu)\circ P\dP= (\Vpdpmumy\cap \Vpmu)\circ P\dP= \Vpdp .$$
The theorem follows.
\end{proof}
The following result is a formalization of the statement about
performing $m\dW -$ injection through an emulator, at the beginning of this subsection.
It is the dual of Theorem \ref{thm:wmfeedback}.
%A line by line dualization of the proof of Theorem \ref{thm:wmfeedback} is given in 
% \ref{app:dwminjection}.
\begin{theorem}
\label{thm:dwminjection}
Let $$(\Vonewp\oplus \Vtwodwdp)\lrar \Vwdwmumy \equaln \Vpdpmumy, \ \textup{and\ let} \ \Vtwodwdp\circ \dw \supseteqn 
\Vwdwmumy\times W\dw M_y\circ \dw .$$
 Let $\Vdwmy$ be a space on $ \dw\uplus  M_y,$  \ \ $\Vdpmy\equiv \Vdwmy \lrar  \Vtwodwdp.$
 Let $\Vwdw \equiv (\Vwdwmumy+ \Vdwmy)\times W\dw$
 and  let $\Vpdp = (\Vpdpmumy+ \Vdpmy)\times P\dP.$
 Then
 $$(\Vonewp\oplus \Vtwodwdp)\lrar \Vwdw = \Vpdp.$$
\end{theorem}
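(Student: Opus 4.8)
\textbf{Proof plan for Theorem \ref{thm:dwminjection}.}
The statement is the exact dual of Theorem \ref{thm:wmfeedback}, so the cleanest approach is to mirror the proof of that theorem, interchanging the roles of $\circ$ and $\times$, of $+$ and $\cap$, and of the $M_u$ variables with the $M_y$ variables, exactly as prescribed by the adjoint-duality machinery of Subsection \ref{sec:proof_dual}. Concretely, I would first state and prove the duals of Lemmas \ref{lem:wm1} and \ref{lem:wm2}. The dual of Lemma \ref{lem:wm1} should read: if $\Vwdwmy\equiv \Vwdwmumy\times W\dw M_y$ and $\Vtwodwdp\circ \dw\supseteq \Vwdwmy\circ \dw$, then
$$\Vtwodwdp\lrar(\Vwdwmy+\Vdwmy)=(\Vtwodwdp\lrar\Vwdwmy)+(\Vtwodwdp\lrar\Vdwmy).$$
This is proved directly: one containment is immediate from the definitions, and for the reverse one takes $(f_W,g_{\dP},f_{M_y})$ in the right side, picks preimages, and uses $\Vtwodwdp\circ\dw\supseteq\Vwdwmy\circ\dw$ to absorb the discrepancy between the two $\dw$-components into $\Vwdwmy\circ\dw$, much as in Lemma \ref{lem:wm1} but with the sum/contraction picture replacing the intersection/restriction picture. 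The dual of Lemma \ref{lem:wm2} is the routine identity $\Vonewp\lrar(\Vwdwmy+\Vdwmy)\text{-type space}=(\Vonewp\lrar\Vwdwmy)+\Vdpmy$ where $\Vdpmy\equiv\Vdwmy\lrar\Vtwodwdp$, whose proof I would simply mention is routine (as the paper does for Lemma \ref{lem:wm2}).

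Next I would assemble these two lemmas as in the proof of Theorem \ref{thm:wmfeedback}. Using them,
$$(\Vonewp\oplus\Vtwodwdp)\lrar(\Vwdwmy+\Vdwmy)
=\Vonewp\lrar\big((\Vtwodwdp\lrar\Vwdwmy)+(\Vtwodwdp\lrar\Vdwmy)\big)
=\big((\Vonewp\oplus\Vtwodwdp)\lrar\Vwdwmy\big)+(\Vtwodwdp\lrar\Vdwmy),$$
where I am tacitly invoking Theorem \ref{thm:notmorethantwice} to freely rearrange the composition (no index set appears more than twice), and where $\Vdpmy=\Vdwmy\lrar\Vtwodwdp$. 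Then I would take the contraction to $\dw$ (i.e.\ apply $\times\,W\dw$, equivalently $\lrar\,\0_{M_y}$ on the $M_y$ block) of both sides. On the left, again by Theorem \ref{thm:notmorethantwice}, this pushes inside to give $(\Vonewp\oplus\Vtwodwdp)\lrar\big((\Vwdwmy+\Vdwmy)\times W\dw\big)=(\Vonewp\oplus\Vtwodwdp)\lrar\big((\Vwdwmumy+\Vdwmy)\times W\dw M_u M_y\big)=(\Vonewp\oplus\Vtwodwdp)\lrar\Vwdw$. On the right, it produces $(\Vpdpmumy+\Vdpmy)\times P\dP=\Vpdp$ after recognizing that $(\Vonewp\oplus\Vtwodwdp)\lrar\Vwdwmy$ together with the contraction collapses to $\Vpdpmy\equiv\Vpdpmumy\times P\dP M_y$, paralleling the identification of the R.H.S.\ in the proof of Theorem \ref{thm:wmfeedback} as $(\Vpdpmumy\cap\Vpmu)\circ P\dP$.

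The main obstacle I anticipate is bookkeeping with the manifest-variable index sets and making sure the hypothesis $\Vtwodwdp\circ\dw\supseteq\Vwdwmumy\times W\dw M_y\circ\dw$ is invoked in exactly the spot where the primal proof uses $\Vonewp\times W\subseteq\Vwdwmumy\circ W\dw M_u\times W$: in the primal, that hypothesis is what lets Lemma \ref{lem:wm1} run (it guarantees the discrepancy vector lies in $\Vwdwmu\times W$), so here I must check that the dual hypothesis plays the analogous role of guaranteeing a discrepancy vector lies in $\Vwdwmy\circ\dw$. A secondary subtlety is that $\Vwdwmumy+\Vdwmy$ need not be closed in the convenient way intersections are, so when I contract to $W\dw$ I should be careful that the identity $(\Vwdwmumy+\Vdwmy)\times W\dw M_uM_y = \big((\Vwdwmumy+\Vdwmy)\times W\dw\big)$ after eliminating $M_u,M_y$ is justified by the definition of $m_y\dW-$injection (Definition preceding Theorem \ref{thm:outputinjection}) rather than assumed. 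Apart from these two points, the argument is a mechanical dualization and I would invoke Subsection \ref{sec:proof_dual} to state that a full line-by-line proof is obtained from that of Theorem \ref{thm:wmfeedback} by the standard adjoint substitutions, writing out only the two dual lemmas and the final assembly for the reader's convenience.
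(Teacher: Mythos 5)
Your proposal is correct, but it follows a genuinely different route from the paper. You prove the theorem directly, by line-by-line dualization of the proof of Theorem \ref{thm:wmfeedback}: you state and prove the duals of Lemmas \ref{lem:wm1} and \ref{lem:wm2} (with sums and contractions replacing intersections and restrictions, the hypothesis $\Vtwodwdp\circ \dw \supseteq \Vwdwmy\circ\dw$ doing the work that $\Vonewp\times W\subseteq \Vwdwmu\times W$ did in the primal), and then reassemble them, invoking Theorem \ref{thm:notmorethantwice} to rearrange compositions and composing with $\0_{M_y}$ where the primal composed with $\F_{M_u}$; your two flagged subtleties are exactly the right ones and both check out (in the dual lemma the hypothesis lets you place the $\Vwdwmy$-component of the $\dw$ vector inside $\Vtwodwdp\circ\dw$ so the sum decomposition lifts through $\Vtwodwdp$, and the identity $(\Vwdwmumy+\Vdwmy)\times W\dw M_y = \Vwdwmy+\Vdwmy$ holds because $\Vdwmy$ has no $M_u$ component), with only a harmless slip about which containment is the "easy" one in the sum case. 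The paper instead gives a much shorter proof by adjoint duality and never reproves any lemma: it applies Theorem \ref{thm:wmfeedback} to the primal pair $\{\Vwdwmumy,\Vpdpmumy\}$, passes to adjoints via Theorem \ref{thm:emuadjoint} (so $\{\Vtwdwm,\Vtpdpm\}$ is $\E$-linked through $\Vtonewp,\Vttwodwdp$, and likewise $\{\Vtwdw,\Vtpdp\}$), and then observes that $\Vtwdw=((\Vwdwmumy\cap\Vwmu)\circ W\dw)^{\perp}_{-\dwd W'}=(\Vtwdwm+\Vtdwmy)\times W'\dwd$ and similarly for $\Vtpdp$, which is precisely the injection statement for the adjoint pair; since every GDS is the adjoint of its adjoint, the theorem follows for arbitrary systems. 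Your approach buys a self-contained, explicit argument in the primal variables (useful if one wants the dual lemmas on record for later use), at the cost of redoing the bookkeeping; the paper's approach buys brevity and illustrates the adjoint-dualization machinery of Subsection \ref{sec:proof_dual}, at the cost of requiring Theorem \ref{thm:emuadjoint} and the index relabelings. Either is acceptable; if you keep your version, do write out the dual of Lemma \ref{lem:wm1} in full, since that is where the hypothesis of Theorem \ref{thm:dwminjection} is actually consumed.
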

\begin{proof}
We will merely show that if $\{\Vwdwmumy, \Vpdpmumy\}$ satisfies Theorem \ref{thm:wmfeedback},\\
the adjoint pair $\{\Vtwdwm, \Vtpdpm\}$ satisfies the present theorem.
\\

We follow Definition \ref{def:adjoint_emu}. We have
$$\Vtonewp \equiv (\Vtwodwdp)^{\perp}_{-W'P'}, \ \ \Vttwodwdp \equiv (\Vonewp)^{\perp}_{\dwd -\dPd},$$
$$\Vtwdwm\equiv (\Vwdwmumy)^{\perp}_{-\dwd W'-M_y'M_u'},\ \ \Vtpdpm\equiv(\Vpdpmumy)^{\perp}_{-\dPd P'-M_y'M_u'},$$
$$\Vtwdw\equiv  (\Vwdw)^{\perp}_{-\dwd W'},\ \ \ 
\Vtpdp\equiv  (\Vpdp)^{\perp}_{-\dPd P'}.$$
Further, let $\Vtdwmy\equiv (\Vwmu)^\perp_{\dwd M_y'}.$

By Theorem \ref{thm:wmfeedback}, if $\{\Vwdwmumy, \Vpdpmumy\}$ is $\mathcal{E}-$ linked through $\Vonewp, \Vtwodwdp $
so will $\{\Vwdw, \Vpdp\}$ be, where 
$\Vwdw \equiv (\Vwdwmumy\bigcap \Vwmu)\circ W\dw$
 and   $\Vpdp \equiv  (\Vpdpmumy\bigcap \Vpmu)\circ P\dP.$

By Theorem \ref{thm:emuadjoint}, $\{\Vwdwmumy, \Vpdpmumy\}$ is $\mathcal{E}-$ linked through $\Vonewp, \Vtwodwdp $ \\iff $\{\Vtwdwm, \Vtpdpm\}$ is $\mathcal{E}-$ linked through $\Vtonewp, \Vttwodwdp $\\
and $\{\Vwdw, \Vpdp\}$ $\mathcal{E}-$ linked through $\Vonewp, \Vtwodwdp $ iff
$\{\Vtwdw, \Vtpdp\}$ is so linked through $\Vtonewp, \Vttwodwdp .$ 
\\

Now $$\Vtwdw\equiv (\Vwdw)^{\perp}_{-\dwd W'}= ((\Vwdwmumy\cap \Vwmu)\circ W\dw)^{\perp}_{-\dwd W'}=(\Vtwdwm +\Vtdwmy)\times W'\dwd,$$
$$\Vtpdp\equiv (\Vpdp)^{\perp}_{-\dPd P'}= ((\Vpdpmumy\cap \Vpmu)\circ P\dP)^{\perp}_{-\dPd P'}=(\Vtpdpm +\Vtdpmy)\times P'\dPd.$$

This proves the result.

%----------------------------------------------------------------------
%
%
%We will merely dualize every statement in Theorem \ref{thm:wmfeedback}.
%
%Let $$(\Vonewp\oplus \Vtwodwdp)\lrar \Vwdwmumy = \Vpdpmumy.$$
%Let $\Vwmu \equiv \Vwdwmumy \circ W M_u,  \ \ \Vpmu\equiv \Vwmu \lrar  \Vonewp.$
%Let $\Vwdw \equiv (\Vwdwmumy\bigcap \Vwmu)\circ (W\dw)$
%and  let $\Vpdp = (\Vpdpmumy\bigcap \Vpmu)\circ (P\dP).$
%Then
%$$(\Vonewp\oplus \Vtwodwdp)\lrar \Vwdw = \Vpdp.$$
%
%-----------------------
%
\end{proof}
%\begin{theorem}
%Let $\Vtwodwdp \circ \dw \supseteq \Vwdwm\circ \dw +\Vdwm\circ \dw.$ 
%%and let
%% $\Vtwowp \times W\subseteq \Vwdwm\times \dw.$
%Then,
%$$\Vtwodwdp\lrar (\Vwdwm+\Vdwm)= (\Vtwodwdp \lrar \Vwdwm)+ (\Vtwodwdp \lrar \Vdwm).$$
%\end{theorem}
%\begin{lemma}
%Let $\Vwdpm \equiv \Vtwodwdp \lrar \Vwdwm$ and let $\Vdpm \equiv \Vtwodwdp \lrar \Vdwm.$ Let $\Vonewp \circ W \supseteq \Vwdwm\circ W.$
%Then
%$$\Vonewp \lrar (\Vwdpm+ \Vdpm)=(\Vonewp \lrar\Vwdpm) +
%\Vdpm.$$
%\end{lemma}

%\begin{lemma}
%\end{lemma}
%\begin{lemma}
%\end{lemma}
%\begin{lemma}
%\end{lemma}
%\begin{figure}
%\begin{center}
% \includegraphics[width=7in]{figs/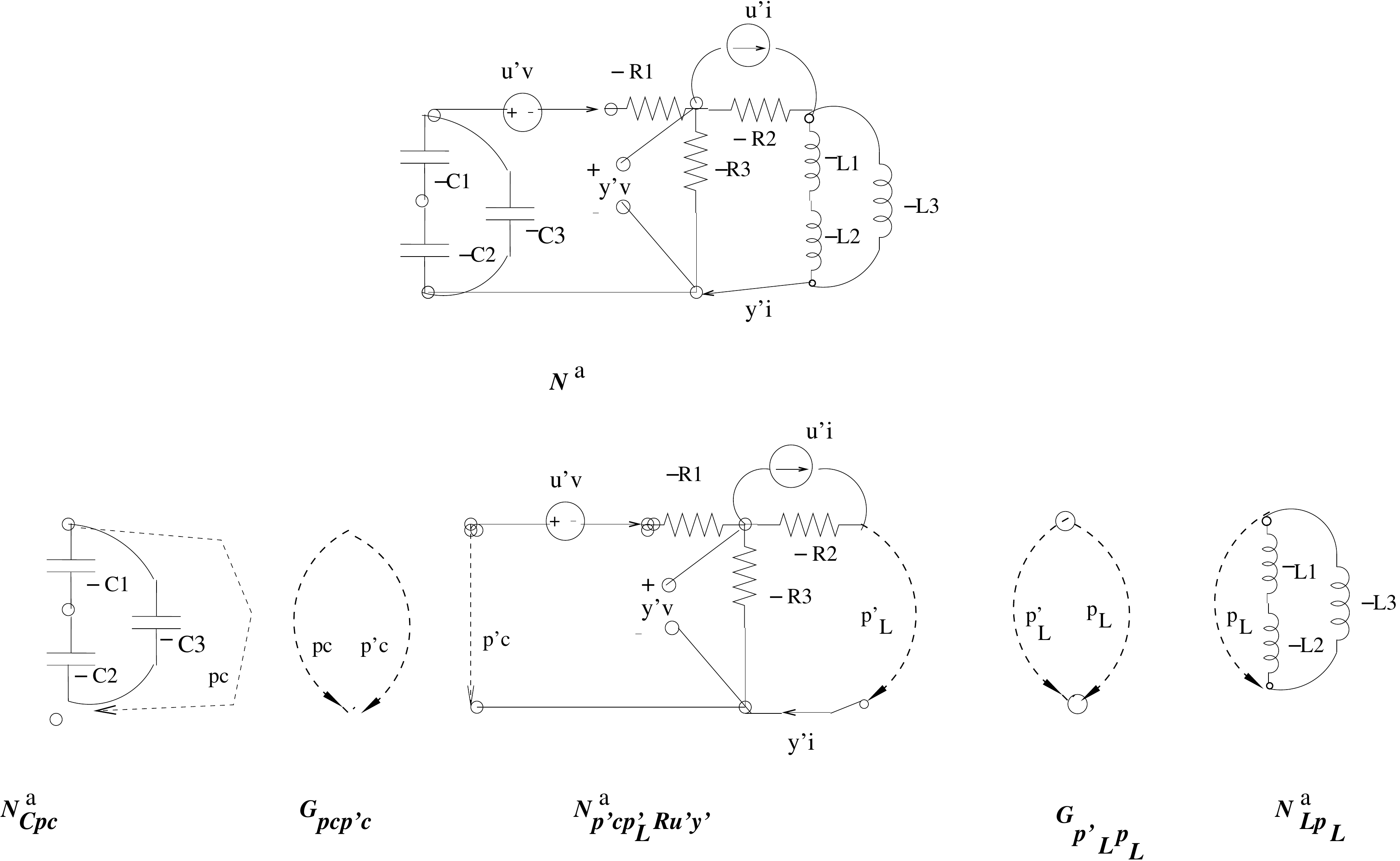}
% \caption{The adjoint network with port decomposition into capacitor inductor and static
%multiports}
%% \caption{Example $RLCEJ$ Network and its multiport decomposition
%%into capacitive multiport and $RL$ multiport}
% \label{fig:circuit3}
%\end{center}
%\end{figure}

\section{Conclusions}
\label{sec:Conclusions}
In this paper we made a beginning towards developing an implicit form of linear algebra, basing it on linear relations  (`linkages')
rather than on linear transformations.

The basic operation was the `matched composition $\lrar$' which generalized ordinary composition of maps to that of linkages.
We defined the `intersection-sum' and `scalar multiplication' for linkages which were analogues of sum and scalar multiplication of maps. 
Using  the basic results (\cite{HNarayanan1986a}) - implicit inversion theorem and implicit duality theorem -
we proved results for linkages (eg. that intersection-sum of transposes of linkages is the transpose of intersection-sum of linkages,
that, under mild conditions, matched composition is distributive under intersection-sum) analogous to those for maps.

We generalized the notion of an operator in our frame work and developed a primitive spectral theory for it 
using intersection-sum as `sum' and matched composition as `multiplication'. 

We developed  computationally efficient counterparts (`emulators') to state variables  and showed that standard computations
such as those involving invariant subspaces, feedback etc.,  and also spectral computations, could be done with  emulators.

We tried to show that our ideas are useful from a computational point of view by first systematically developing the 
standard linear multivariable control theory, as expounded for instance in \cite{Wonham1978}, in terms of linkages
and showing that all the operations can be done implicitly without eliminating variables until the very end, when it would be required.

We illustrated our ideas with a paradigmatic instance namely a resistor, inductor, capacitor circuit.
In particular we showed for such circuits that emulators could be built in linear time.

It is hoped that the `necessity' of implicit linear algebra is evident since the illustrative examples show
that operators are usually available in practice as composition of linkages
(eg. associated with the graph of a network) and more elementary operators
(eg. the device characteristics of a network). To efficiently exploit 
topology (for instance through port decompositon) we need to work in terms of linkages rather than in terms of maps. 
\appendix

\section{Computing the basic operations}
\label{app:ComputingBasicOperations}
\subsection{Representation}
A vector space $\V_A$ may be specified in one of the two following ways:
\begin{enumerate}
 \item Through generator set for $\V_A$:
\begin{align*}
 f_A = \lambda^T \begin{bmatrix}R_A\end{bmatrix},
\end{align*}
where the rows of $R_A$ generate $\V_A$ by linear combination. When the rows of $R_A$ are linearly independent $R_A$ is called a representative matrix of $\V_A$.
\item Through constraint equations:
\begin{align*}
\begin{bmatrix} S_A \end{bmatrix} f_A = 0,
\end{align*}
where the rows of $S_A$ generate $\V_A^\perp$. The solution space of this equation is $(\V_A^\perp)^\perp = \V_A$.

\end{enumerate}

One or the other of these representations might be convenient depending upon the context.

\subsection{Computing $\V_{AB}\circ A$ and $\V_{AB}\times B$  explicitly}
Let $\V_{AB}$ be defined through 
\begin{align*}
 \begin{bmatrix}
          S_A & S_B
         \end{bmatrix}
 \begin{pmatrix}
           f_A \\ f_B
 \end{pmatrix}
 = 0.
\end{align*}
We do row operations so that the coefficient matrix is in the form 
\begin{align*}
\begin{bmatrix}
         S_{1A} & 0 \\
         S_{2A} & S_{2B}
        \end{bmatrix},
\end{align*}
where the rows of $S_{2B}$ are linearly independent (or, if more convenient, columns of $S_{2A}$ are dependent on columns of $S_{2B}$ ). Then $[S_{1A}]f_A= 0$ defines $\V_{AB}\circ A$ and $[S_{2B}]f_B= 0$ defines $\V_{AB}\times B$. The statement about $\V_{AB}\circ A$ may be seen as follows: Clearly if any $(f_A,f_B) \in \V_{AB}$, then $f_A$ satisfies $[S_{1A}]f_A= 0$ so that $\V_{AB}\circ A$ is contained in the solution space of the latter. Next let $f_A$ satisfy $[S_{1A}]f_A= 0$. Since columns of $S_{2A}$ are linearly dependent on columns of $S_{2B}$, the equation
\begin{align*}
\begin{bmatrix}S_{2B}\end{bmatrix} f_B = -\begin{bmatrix}S_{2A}\end{bmatrix}f_A
\end{align*}
has a solution $\widehat{f}_B$.
Thus for any solution of $[S_{1A}]f_A = 0$, we can always find $\widehat{f}_B$ such that $(f_A,\widehat{f}_B) \in \V_{AB}$. This completes the proof that
 $\V_{AB}\circ A$ is the solution space of $[S_{1A}]f_A = 0$.

The statement about $\V_{AB}\times B$ is routine.

\subsection{Computing $\V^1_A \cap \V^2_A$ Explicitly}
Suppose $[S^1_A]f_A = 0$, $[S^2_A]f_A = 0$ define $\V^1_A$, $\V^2_A$ respectively. Let $A'$, $A''$ be copies of $A$ with $A$, $A'$, $A''$ disjoint. Build the space $\V_{AA'A''}$ defined through
\begin{align}
 \label{eqn:ComputeIntersection}
\begin{bmatrix}
        S^1_A &       &    \\
              & S^2_A &    \\
        0     & I     & -I \\
        I     & 0     & -I
       \end{bmatrix}
\begin{pmatrix}
 f^1_{A'} \\
f^2_{A''} \\
f_{A}
\end{pmatrix}
=
\begin{pmatrix}
         0 \\ 0 \\ 0
        \end{pmatrix}
\end{align}
It is clear that the vectors $f_A$ are precisely those in both $\V^1_A$ and $\V^2_A$.  Thus $\V_{AA'A''} \circ A $ is $\V^1_A \cap \V^2_A$. The explicit construction of the dot operation has been discussed above.

\subsection{Computing $\V^1_A + \V^2_A$ Explicitly}
Let $\V^1_A$ and $\V^2_A$ be defined as above. We define $\widehat{\V}_{AA'A''}$ through 
\begin{align}
 \label{eqn:ComputeSum}
\begin{bmatrix}
        S^1_A &       &    \\
              & S^2_A &    \\
        -I    & -I    & I
       \end{bmatrix}
\begin{pmatrix}
 f^1_{A'} \\
f^2_{A''} \\
f_{A}
\end{pmatrix}
=
\begin{pmatrix}
          0 \\ 0 \\ 0
\end{pmatrix}.
\end{align}
In this case again it is easy to see that $\widehat{\V}_{AA'A''}\circ A = \V^1_A + \V^2_A$ and we complete the explicit construction
as above.

Often the spaces $\V^1_A$, $\V^2_A$ may themselves be available to us only implicitly --- usually in the form of a generalized minor $\V_{AB} \leftrightarrow \V_B$. We discuss this situation below after we discuss the computation of the generalized minor.

\subsection{Computing $\V_{AB} \leftrightarrow \V_B$ Implicitly}
Let $\V_{AB}$ be defined by 
\begin{align*}
 \begin{bmatrix} 
         S_A & S_B
        \end{bmatrix}
\begin{pmatrix}
 f_A \\
 f_B
\end{pmatrix}
= 0
\end{align*}
and $\V_B$ by 
\begin{align*}
\begin{bmatrix} \widehat{S}_B \end{bmatrix} f_B = 0.
\end{align*}
Let $\widehat{\V}_{AB}$ be defined by 
\begin{align}
\label{eqn:ComputeDoubleArrow} 
 \begin{bmatrix}
  S_A & S_B \\
   0 & \widehat{S}_B
       \end{bmatrix}
\begin{pmatrix}
 f_A \\ f_B
\end{pmatrix} =
 \begin{pmatrix}
        0 \\ 0
       \end{pmatrix}.
\end{align}
Then 
\begin{align*}
 \widehat{\V}_{AB}\circ A & \equiv \{ f_A \ | \  (f_A,f_B) \in \V_{AB}, f_B\in\V_B \} \\
& = \V_{AB} \leftrightarrow \V_{B}.
 \end{align*}

\subsection{Computing $\V_{AB} \leftrightarrow \V_B$ Explicitly}
\label{sec:CompDblArrowExplictly}
In order to explicitly represent $\V_{AB}\leftrightarrow \V_{B}$ as the solution space of an equation
\begin{align*}
\begin{bmatrix}\widetilde{S}_A \end{bmatrix} f_A = 0,
\end{align*}
one has to do row operations and recast \eqref{eqn:ComputeDoubleArrow} as
\begin{align}
\label{eqn:ComputeDoubleArrowRecast}
 \begin{bmatrix}
  S'_{1A} & S'_{1B} \\
  S'_{2A} & 0 \\
   0 & \widehat{S}_B
       \end{bmatrix}
\begin{pmatrix}
 f_A \\ f_B
\end{pmatrix} =
\begin{pmatrix}
        0 \\ 0
       \end{pmatrix},
\end{align}
where the rows of $S'_{1B}$ together with those of  $\widehat{S}_B$ are independent.  The solution space of 
\begin{align*}
\bbmatrix{S'_{2A} }f_A = 0,
\end{align*}
is clearly $\widehat{\V}_{AB} \circ A$ ($\widehat{\V}_{AB}$ as in Equation \ref{eqn:ComputeDoubleArrow}) that is, $\V_{AB}\leftrightarrow \V_{B}.$ 

In general when one is performing operations such as intersection or sum with vector spaces, their representation may not be explicitly available but in the form  $\V_{AB}\leftrightarrow \V_{B}$.  

Let $\widehat{f}_B \in (\V_{AB} \cap \V_{B}) \circ B$. To find $\V_{AB} \leftrightarrow \{\widehat{f}_B\} $ we first find one solution $\widehat{f}_A$ to the equation using the first rows of \eqref{eqn:ComputeDoubleArrowRecast}
\begin{align*}
\bbmatrix{
S'_{1A}  \\ S'_{2A} 
} \widehat{f}_A = 
\bbmatrix{
S'_{1B}\widehat{f}_B  \\ 0
}.
\end{align*}
Then 
\begin{align*}
\V_{AB} \leftrightarrow \{\widehat{f}_B\} = \widehat{f}_A + \V_{AB}  \times A.
\end{align*}
$\V_{AB}  \times A$ can be seen to be the solution to 
\begin{align*}
\bbmatrix{
S'_{1A}  \\ S'_{2A} 
}\widetilde{f}_A = 
\ppmatrix{0  \\ 0
}
\end{align*}
and $\V_{AB} \leftrightarrow \{\widehat{f}_B\} $ as the collection of all solutions to 
\begin{align*}
\bbmatrix{
S'_{1A}  \\ S'_{2A} 
} f_A = 
\bbmatrix{
S'_{1B}\widehat{f}_B  \\ 0
}.
\end{align*}

\subsection{Computing $\V^1_A \cap \V^2_A$ Implicitly}
Suppose $\V^1_A \cap \V^2_A$ is to be computed where
\begin{align*}
 \V^1_A &= \V^1_{AB}\leftrightarrow \V^1_{B} \\
 \V^2_A &= \V^2_{AB}\leftrightarrow \V^2_{B}.
\end{align*}
Let $\V^1_{AB}$, $\V^2_{AB}$ be defined respectively through
\begin{align*}
\bbmatrix{
        S^1_A & S^1_B
       }
\ppmatrix{
 f^1_A \\ f^1_B
}
= 0,\\
\bbmatrix{
        S^2_A & S^2_B
       }
\ppmatrix{
 f^2_A \\ f^2_B
}
= 0.
\end{align*}
Let $\V^1_B$, $\V^2_B$ be defined  respectively through
\begin{align*}
 \bbmatrix{\widehat{S}^1_B} f_B &=0,\\
\bbmatrix{ \widehat{S}^2_B} f_B &=0.
\end{align*}
We proceed now as in \eqref{eqn:ComputeIntersection} by first copying set $A$ into $A'$, $A''$ and $B$ into $B'$, $B''$ and write the final equations as
\begin{align}
 \label{eqn:ComputeDoubleIntersection}
\bbmatrix{
        S^1_A & S^1_B & & & \\
        & \widehat{S}^1_B & & & \\
        & & S^2_A & S^2_B & \\
        & & & \widehat{S}^2_B & \\
        & & +I & & -I & \\
        I & & & & -I 
       }
\ppmatrix{
  f^1_{A'} \\ f^1_{B'} \\ f^2_{A''} \\  f^2_{B''} \\ f_A
}
= 0
\end{align}
If the above equation has the solution space $\mathcal{V}_{AA'A''B'B''}$ then we can show that 
\begin{align*}
 \mathcal{V}_{AA'A''B'B''}\circ A = \mathcal{V}^1_A \cap \mathcal{V}^2_A.
\end{align*}
To see this, note that $\mathcal{V}^1_{A'}$ is the collection of all $f^1_{A'}$ which can be part of a solution vector $(f^1_{A'},f^1_{B'})$ of the first two rows of \eqref{eqn:ComputeDoubleIntersection}. $\mathcal{V}^2_{A''}$ is the space of all vectors $f^2_{A''}$ which can be a part of the solution vector $(f^2_{A''},f^2_{B''})$ of the next two rows of \eqref{eqn:ComputeDoubleIntersection}. Now $f_A$, by the last two rows of the equation, is seen to a copy of both $f^1_{A'}$ as well as $f^2_{A''}$. Any such vector therefore belongs to $\mathcal{V}^1_A \cap \mathcal{V}^2_A$. Conversely any vector in $\mathcal{V}^1_A \cap \mathcal{V}^2_A$ can clearly be a copy of a part of some solution vector of the first two rows and therefore of the middle two rows also --- which means it is a part of a solution vector of \eqref{eqn:ComputeDoubleIntersection}.

\subsection{Computing  $\mathcal{V}^1_A + \mathcal{V}^2_A$ Implicitly}
By similar arguments we can show that we can write the constraints for $\mathcal{V}^1_A + \mathcal{V}^2_A$ as an implicit version \eqref{eqn:ComputeSum} as follows:
\begin{align}
 \label{eqn:ComputeDoubleSum}
\bbmatrix{
        S^1_A & S^1_B & & & \\
        & \widehat{S}^1_B & & & \\
        & & S^2_A & S^2_B & \\
        & & & \widehat{S}^2_B & \\
        -I & & -I & & I 
       }
\ppmatrix{
  f^1_{A'} \\ f^1_{B'} \\ f^2_{A''} \\  f^2_{B''} \\ f_A
}
= 0
\end{align}
If $\widetilde{\mathcal{V}}_{AA'A''B'B''}$ is the solution space of the above equation then 
\begin{align*}
 \mathcal{V}^1_A + \mathcal{V}^2_A = \widetilde{\mathcal{V}}_{AA'A''B'B''} \circ A.
\end{align*}

\subsection{Computing $\mathcal{V}_{\mydot{W}W}\leftrightarrow \mathcal{V}_W$, $\mathcal{V}_{\mydot{W}W}\leftrightarrow \mathcal{V}_{\mydot{W}}$ for Networks}

The computation of $\mathcal{V}_{\mydot{W}W}\leftrightarrow \mathcal{V}_W$ or $\mathcal{V}_{\mydot{W}W}\leftrightarrow \mathcal{V}_{\mydot{W}},$
$\Vonewp\lrar \Vw,$ $\Vtwodwdp\lrar (\Vw)_{\dw}.$
are important examples of the computations involved in implicit linear algebra. We describe how to perform such computations for a $RLCEJ$ network  for one of the  instances of $\mathcal{V}_M$.  The essential idea is to transform the problem to one of \nw{solving} the three multi-ports under appropriate conditions.

\subsubsection{ $\mathcal{V}_{\mydot{W}W}\leftrightarrow \mathcal{V}_{\mydot{W}}$,   $\mathcal{V}_M = 0_{mu} \oplus \mathcal{F}_{my}$}
\label{subsec:ComputeState0uFy}

For the case $\mathcal{V}_{\mydot{W}W}\leftrightarrow \mathcal{V}_{{W}}$,   $\mathcal{V}_M = 0_{mu} \oplus \mathcal{F}_{my},$
 see \cite{HNPS2013}.

To compute $\mathcal{V}_{\mydot{W}W}\leftrightarrow \mathcal{V}_{\mydot{W}}$, we begin once again by setting all sources to zero. Let $\mydot{w} = (\mydot{v}_C,\frac{di_L}{dt}) \in \mathcal{V}_{\mydot{W} }$. We will show how to find all $w$ such that $(w,\mydot{w}) \in \mathcal{V}_{\mydot{W}W}$.

The first step is to compute all $w = (v_c, i_L)$ corresponding to $\mydot{w}=0$. When the device characteristic 
matrices $C,L,$ are positive definite, the collection of all such $v_C$ is the voltage space of $\mathcal{G}_{CP_C} \times E_C$ and the collection of all such $i_L$ is the current space of $\mathcal{G}_{LP_L} \circ E_L$.
\\

Setting $\mydot{w}=0$ implies $\mydot{v}_C=0$ and $\frac{di_L}{dt}=0$. This implies $C\mydot{v}_C=0$ and $L\frac{di_L}{dt}=0$. Hence $i_C=0$, $v_L=0$. 

Therefore in the multi-ports $\mathcal{N}_{CP_C}$, $\mathcal{N}_{LP_L}$, since $P_C$, $P_L$ do not contain cutsets, $i_{P_C}=0$, $v_{P_L} =0$. 

Using KCL, KVL of $\mathcal{G}_{P_CP_C'}$, $\mathcal{G}_{P_LP_L'}$ this implies $i_{P_C'}=0$, $v_{P_L'} =0$. 

In the multi-port $\mathcal{N}_{E_rE_mP_C'P_L'}$, the sources have already been set to zero (since $\mathcal{V}_M = 0_{mu} \oplus \mathcal{F}_{my}$ ). 

In this network $P_C'$, $P_L'$ contain no loops or cutsets even when all sources are set to zero. 

Under mild genericity conditions on the static devices, this network can be shown to have a unique solution for arbitrary values of $i_{P_C'}$, $v_{P_L'}$. 

So in particular if $i_{P_C'}=0$, $v_{P_L'} =0$ we would have only the zero solution. Hence $v_{P_C'}$, $i_{P_L'}$ are zero vectors. 

This means, through KCL, KVL of $\mathcal{G}_{P_CP_C'
 }$, $\mathcal{G}_{P_LP_L'}$ that $v_{P_C}=0$, $i_{P_L}=0$.  

The corresponding space of $v_C$ (with $v_{P_C}=0$) would therefore be the space of solutions of KVL constraints of $\mathcal{G}_{CP_C}$ with $P_C$ short circuited, that is, the voltage space of $\mathcal{G}_{CP_C} \times E_C$. 

(Under  minimal multi-port decomposition this is the space of solutions of KVL constraints of $\mathcal{G}$ with all branches other than the $C$ branches short circuited). 

The corresponding space of $i_L$ (with $i_{P_L}=0$ ) can similarly be shown to be the space of KCL constraints of $\mathcal{G}_{LP_L}$ with $P_L$ open circuited, that is, the current space of $\mathcal{G}_{LP_L}\circ E_L$. 

(Under minimal multi-port decomposition this is the space of solutions of KCL constraints of $\mathcal{G}$ with all branches of $\mathcal{G}$ other than $L$ branches open circuited).
\\

Next we compute a single $w_1\in \mathcal{V}_{W}$ corresponding to a $\mydot{w}_1\in \mathcal{V}_{\mydot{W}}$.

Let $\mydot{w}_1\in \mathcal{V}_{\mydot{W}}$. Now let $\mydot{w}_1 = (\mydot{v}_C^1,\frac{di_L^1}{dt})$. 

Using $i_C^1=C\mydot{v}_C^1$, $v_L^1 = L\frac{di_L^1}{dt}$ we get $i_C^1$, $v_L^1$. 

Using KVL, KCL of $\mathcal{G}_{CP_C}$, $\mathcal{G}_{LP_L}$, since $P_C$, $P_L$ contain no loops or cutsets in these graphs we can compute $i_{P_C}^1$, $v_{P_L}^1$ (uniquely). 

Using KCL, KVL of $\mathcal{G}_{P_CP_C'}$, $\mathcal{G}_{P_LP_L'}$  we can compute $i_{P_C'}^1$, $v_{P_L'}^1$. 

In the multi-port $\mathcal{N}_{E_rE_mP_C'P_L'}$, the sources have already been set to zero (since $ \mathcal{V}_M = 0_{mu} \oplus \mathcal{F}_{my}$). 

As before we can compute $v_{P_C'}^1$, $i_{P_L'}^1$ uniquely. 

Through KCL, KVL of 
$\mathcal{G}_{P_LP_L'}$, $\mathcal{G}_{P_CP_C'}$ $v_{P_C}^1$, $i_{P_L}^1$ can be computed uniquely.
\\

%In this network let us assume $P_C'$, $P_L'$ contain no loops or cutsets even when all sources are set to zero. Under mild genericity conditions on the static devices, this network can be shown to have a unique solution for arbitrary values of $i_{P_C'}$, $v_{P_L'}$. So for $i_{P_C'}=i_{P_C'}^1$, $v_{P_L'}=v_{P_L'}^1$ we would have a unique solution $v_{P_C'}^1$, $i_{P_L'}^1$. Through KCL, KVL of $\mathcal{G}_{P_LP_L'}$, $\mathcal{G}_{P_CP_C'}$, this leads to a unique $v_{P_C}^1$, $i_{P_L}^1$. 

One solution $v_C^1$ can be obtained by extending $P_C$ to a tree $t_C$ in $\mathcal{G}_{CP_C}$ assigning arbitrary voltages to branches in $t_C-P_C$ and hence by KVL of $\mathcal{G}_{CP_C}$, computing voltages for $E_C-t_C$. 

Similarly, one solution $i_L^1$ corresponding to $i_{P_L}^1$ can be obtained by extending $P_L$ to a cotree $\bar{t}_L$ in $\mathcal{G}_{LP_L}$ assigning arbitrary currents to branches in $\bar{t}_L-P_L$ and hence by KCL of $\mathcal{G}_{LP_L}$ computing currents for $E_L-\bar{t}_L$. 

The vectors $v_C^1$, $i_L^1$ together constitute the desired $w_1\in \mathcal{V}_{W}$.
\\

The collection of all vectors $\mathcal{K}_C^1 \oplus \mathcal{K}_L^1$ corresponding to $v_{P_C}^1$, $i_{P_L}^1$ (and therefore also corresponding to $\mydot{w}_1 = (\mydot{v}_C^1$, $\frac{di_L^1}{dt})$ ) is obtained as follows: 

$\mathcal{K}_C^1 = v_C^1 + \mathcal{V}_C^0$, $\mathcal{K}_L^1 = i_L^1 + \mathcal{V}_L^0$, where $\mathcal{V}_C^0$, $\mathcal{V}_L^0$ correspond to $\mydot{v}_C = 0$, $\frac{di_L}{dt}=0$. 

In the above, the computations which involve only KCL, KVL are, being linear time, inexpensive. The significant computation is the one that involves solution of the multi-port $\mathcal{N}_{E_rE_mP_C'P_L'}$ for given $i_{P_C'}^1$, $v_{P_L'}^1$.

%Let $\mathcal{V}_C^0 \oplus \mathcal{V}_L^0$ be denoted by  $\mathcal{V}_W^0$. To build $\mathcal{V}_C^0$, $\mathcal{V}_L^0$ we recognize (essentially following the route we followed for $\mydot{v}_C^1$, $\mydot{i}_L^1$) that $\mydot{v}_C^1=0$, $\mydot{i}_L^1=0$ implies $v_{P_C}^1=0$, $i_{P_L}^1=0$. Therefore, $\mathscr{V}_ C^0$ is the space of all solutions of the KVL constraints of $\mathcal{G}_{E_CP_C}$ short circuiting (fusing end vertices and deleting) branches in $P_C$. (Under minimal multi-port decomposition this can be shown to be the space of solutions of KVL constraints of the graph obtained from $\mathcal{G}$ with all branches other than $C$ short circuited). Similarly, $\mathscr{V}_L^0$ is the space of all solutions of the KCL constraints of $\mathcal{G}_{E_LP_L}$ open circuiting (deleting leaving end vertices in place) branches in $P_L$. (Under minimal multi-port decomposition this is the space of solutions of KCL constraints of the graph obtained from $\mathcal{G}$ with all branches other than $L$ open circuited).

%The space of all $w$ corresponding to all linear  combinations of rows $\mydot{w}$ of $B_{\mydot{W}}$ is obtained by essentially computing one vector $w_1$ for each $\mydot{w}_1$. This will give a set of rows $B_{W}$. Let $\widehat{\mathscr{V}}_W$ be the space spanned by $B_{W}$. Then $\mathscr{V}_{\mydot{W}W}\leftrightarrow \mathscr{V}_{\mydot{W}}$ is $\widehat{\mathscr{V}}_W + \mathscr{V}_W^0$ where $\mathscr{V}_W^0 = (\mathscr{V}_C^0 \oplus \mathscr{V}_L^0)$.
\section{Proof of Theorem \ref{thm:controlledinvintersection}}
\label{sec:controlledinvintersection}
\begin{proof}
\begin{enumerate}
\item
By Theorem \ref{thm:controlledconditioneddual}, any $\Vwdw$ that  has $\Vw$ controlled invariant in it,
is adjoint to the genaut  $\mathcal{V}^{a}_{{W'}\mydot{{W'} } },$
which has $\V^{\perp}_{W'}\equiv (\V^{\perp}_W)_{W'}$ conditioned invariant in it.

\vspace{0.1cm}

Now $\Vwdw =(\mathcal{V}^{a}_{{W'}\mydot{{W'} } })^a,$ and 
$((\V^{\perp}_{W'})^{\perp})_W=\Vw.$ 

So it is sufficient for us to show that if $\Vwdw$ has  
$\Vw$ conditioned  invariant in it, then 
$\V^{\perp}_{W'}\subseteq\mathcal{V}^{a}_{{W'}\mydot{{W'} } }\circ W'.$

\vspace{0.25cm}

Let $\Vwdw$ be a genaut and have $\Vw$ conditioned invariant in it.

From Theorem \ref{thm:controlledconditioneddual}
, we know that
 $\V_W\supseteq (\Vwdw \times \dw )_W.$

It follows that
 $$\V^{\perp}_W\subseteq (\Vwdw \times \dw )^{\perp}_W,$$ i.e.,
 $$\V^{\perp}_W\subseteq  (\Vwdw^{\perp}\circ{\mydot{W}})_W,$$ i.e.,
$$(\V^{\perp}_W)_{W'}\subseteq  ((\Vwdw^{\perp})_{-\mydot{W'}W'}\circ{W'})_W,$$  i.e., $$\V^{\perp}_{W'} \subseteq \mathcal{V}^{a}_{{W'}\mydot{W'} }\circ W'.$$
\item
From Theorem \ref{thm:controlledconditioneddual},
 $\Vwdw$ is LSG with  $\Vw$ controlled invariant in it
 iff
  $\mathcal{V}^{a}_{{W'}\mydot{{W'} } }$
is USG and has $\V^{\perp}_{W'}$  conditioned invariant in it.

\vspace{0.1cm}

By Theorem \ref{thm:conditionallyinvsum}, when $\mathcal{V}^{a}_{{W'}\mydot{{W'} } }$
is USG, we have that 
$$\mathcal{V}^{a}_{{W'}\mydot{{W'} } }+ (\V^{\perp}_{W'}\oplus (\V^{\perp}_{W'})_{\mydot{{W'} }})$$ is a genop.

\vspace{0.1cm}

So it is sufficient to show that,  if $\Vwdw$ is USG and has $\Vw$ conditioned invariant in it,

then 
$\mathcal{V}^{a}_{{W'}\mydot{{W'} } }\bigcap (\V^{\perp}_{W'}\oplus (     \V^{\perp}_{W'})_{\mydot{{W'} }})$ is a genop.

\vspace{0.25cm}

%$((\Vwdw^{\perp})_{-\mydot{W'}W'} \bigcap ((\V_W)^{\perp}_{-\mydot{W'}} \oplus (\V_W^{ \perp})_{{W'}}))$ is a genop.

Let $\Vwdw$ be USG and have $\Vw$ conditioned invariant in it.

\vspace{0.1cm}

From Theorem \ref{thm:conditionallyinvsum}, we know that,  
 $\Vwdw + (\V_W \oplus (\V_W)_{\mydot{W}})$ is a genop.

\vspace{0.1cm}

But then by Theorem \ref{thm:adjointprop} we have that 
 $(\Vwdw + (\V_W \oplus (\V_W)_{\mydot{W}}))_{-\dwd W'} ^{\perp}$
is a genop, \\

\vspace{0.1cm}

i.e., $(\Vwdw^{\perp} \bigcap (\V_W^{\perp} \oplus (\V_W^{\perp})_{\mydot{W}}))_{-\dwd W'}$ is a genop,\\

\vspace{0.1cm}

 i.e., 
$((\Vwdw^{\perp})_{-\mydot{W'}W'} \bigcap ((\V_W)^{\perp}_{-\mydot{W'}} \oplus (\V_W^{ \perp})_{{W'}}))$ is a genop, \\

\vspace{0.1cm}

i.e., 
$\mathcal{V}^{a}_{{W'}\mydot{{W'} } }\bigcap (\V^{\perp}_{W'}\oplus (     \V^{\perp}_{W'})_{\mydot{{W'} }})$ is a genop.
\end{enumerate}
\end{proof}
\section{Proof of Lemma \ref{lem:feedbackinjectionlrar} part 2}
\begin{proof}
% This is dual to the previous part of the present lemma.
Note that 
$$ ((\Vwdwmumy\bigcap \Vwmu)\circ W\dw)^{\perp}\equaln([(\Vwdwmumy\bigcap \Iww)\lrarn  (\Vwmu\oplus \F_{M_y})]_{W\dw})^{\perp},$$
which reduces to
$$ (\Vwdwmumy^{\perp}+\Vwmu^{\perp})\times W\dw \equaln ((\Vwdwmumy^{\perp}+I_{W -W'})\lrarn  (\Vwmu^{\perp}\oplus \0_{M_y}))_{W\dw},$$
which is equivalent to 
$$((\Vwdwmumy^{\perp})_{-\mydot{W"}W"-M"_yM"_u}+(\Vwmu^{\perp})_{-\mydot{W"}-M"_y})\times \mydot{W"}W"$$
$$\equaln [(\Vwdwmumy^{\perp})_{-\mydot{W"}W"-M"_yM"_u }+(I_{W -W'})_{\mydot{W"}\mydot{W'}}]\lrarn  [(\Vwmu^{\perp})_{ {-\mydot{W"}-M"_y}}\oplus (\0_{M_y})_{M"_u}]_{W"\mydot{W"}}.$$
This can be interpreted as $$ (\mathcal{V}^{a}_{{W"}\mydot{{W"} }M"_yM"_u }+\V_{\mydot{{W"}}M"_y})\times  W"\mydot{W"}
$$ $$\equaln (\mathcal{V}^{a}_{{W"}\mydot{{W"} }M"_yM"_u }+ I_{\mydot{W"}-\mydot{W'}})\lrarn(\V_ {\mydot{W"}M"_y}\oplus \0_{M"_u})_{W"\mydot{W"}},$$ where we have denoted by $(\Vwmu^{\perp})_{ {-\mydot{W"}-M"_y}}$ by $\V_ {\mydot{W"}M"_y}.$
Since any dynamical system is the adjoint of its adjoint and for any space $\V,$ we have $(\V^{\perp})^{\perp},$ this proves this part of the lemma.
\end{proof}
\section{Dualization of pole placement algorithms}
\label{sec:ppa_dualization}
In this section we recast  the algorithms of Subsection \ref{subsec:statefeedback_ouputinjection}
  in such a manner that they can be 
line by line dualized. 

The rule is simply to work only with vector spaces (and not with vectors), avoid the `belongs to' relation
and use `contained in' instead.
We give interpretations to all the entities that occur in the algorithms of the previous subsections.

{\it Interpreting basic sequences}

$ x^0_{\dw}$ is a one dimensional subspace satisfying
$$x^0_{\dw} \subseteq \Vonewdw \times \dw,\ \ \  x^0_{\dw} \not\subseteq (\Vonewdw \times W)_{\dw}.$$
(This is possible since $\Vonewdw$ is a USG but has been assumed not to be a genop.)

All the $ x^j_{\dw}$ are one dimensional subspaces.
$$x^{j+1}_{\dw} \subseteq (\Vonewdw\cap x^{j}_{W}) \circ \dw,\ \ \ \ \ 
x^{j+1}_{\dw} \not \subseteq x^0_{\dw} + x^1_{\dw}  +\cdots  + x^{j}_{\dw}\plusn\V^{com}_{\dw}.$$
We stop when 
$(\Vonewdw\cap x^{j}_{W}) \circ \dw \subseteq 
 x^0_{\dw} + x^1_{\dw}  +\cdots  + x^{j}_{\dw}\plusn\V^{com}_{\dw},$
 i.e., at $j=k-1.$

We build the following subspaces 
%of $\Vonewdw,$ 
$$\Xwdw^{00}\equiv x^0_{W}\cap  (\Vonewdw)^{(0)}\cap x^0_{\dw}\plusn\V^{com}_{W}\oplus\V^{com}_{\dw},$$
$$\Xwdw^{01}\equiv x^0_{W}\cap  (\Vonewdw)^{(1)}\cap x^1_{\dw}\plusn\V^{com}_{W}\oplus\V^{com}_{\dw},$$
$$ \vdots $$
$$\Xwdw^{0j}\equiv x^0_{W}\cap  (\Vonewdw)^{(j)}\cap x^j_{\dw}\plusn\V^{com}_{W}\oplus\V^{com}_{\dw},$$
till $j=k-1.$
Here $$(\Vonewdw)^{(0)}\equiv (I_{W\dw}\cap \Vonewdw \circ W) +\Vonewdw \times \dw,\ \  (\Vonewdw)^{(1)}\equiv \Vonewdw ,\ \  (\Vonewdw)^{(j+1)}\equiv (\Vonewdw)^{j}*\Vonewdw .$$
At $j=k,$ we can pick $b _i$ such that
$$b _0^{\mydot{w}}\Xwdw^{00}\pdw \cdots \pdw b _{k-1}^{\mydot{w}}\Xwdw^{0(k-1)}\pdw \Xwdw^{0(k)}\subseteq  x^0_{W} \oplus \V^{com}_{W}\oplus \V^{com}_{\dw}.$$ 
We will show that this implies that the LHS is decoupled. 

Let $\Vwdw'\equiv LHS.$

Each term $b _{j}^{\mydot{w}}\Xwdw^{0j}$ satisfies 
%$b _{j}^{\dw}\Xwdw^{0j}\circ \dw \supseteq  $
$b _{j}^{\mydot{w}}\Xwdw^{0j}\times \dw\supseteq \V^{com}_{\dw}.$

Therefore $\Vwdw'\supseteq \0_W\oplus \V^{com}_{\dw},$
so that $\Vwdw'\times\dw \supseteq  \V^{com}_{\dw}.$

But  $\Vwdw'\subseteq x^0_{W} \oplus \V^{com}_{W}\oplus\V^{com}_{\dw},$ implies that 
$\Vwdw'\circ \dw \subseteq \V^{com}_{\dw}.$ 

Therefore $\Vwdw'\circ \dw\subseteq \Vwdw'\times \dw .$

Since we always have $\Vwdw'\circ \dw\supseteq \Vwdw'\times \dw ,$
this means that $\Vwdw'\circ \dw= \Vwdw'\times \dw ,$
and that $\Vwdw'$ is decoupled.

{\it Constructing genops from basic sequence analogues}

Define
$$\Xwdw^{j(j+1)}\equiv x^j_{W}\cap  \Vonewdw\cap x^{j+1}_{\dw}\plusn\V^{com}_{W}\oplus\V^{com}_{\dw},$$

 $$\Vwdw^{start}\equiv  \Xwdw^{01}+\cdots +\Xwdw^{j(j+1)}+ \cdots + \Xwdw^{(k-1)k}\plusn \V^{com}_{W}\oplus\V^{com}_{\dw}.$$
 
Observe that $\Xwdw^{0j}, \Xwdw^{j(j+1)}$ are LSGs and would be one dimensional if $\V^{com}_{\dw}=\0_{\dw}.$
The space $\Vwdw^{start},$ which is a sum of spaces $\Vwdw^{j(j+1)}$ and $\V^{com}_{W}\oplus\V^{com}_{\dw},$
is a genop since  
\begin{enumerate}
\item
 $$\Vwdw^{start}\circ W = x^0_{W}+\cdots + x^{k-1}_{W}\plusn \V^{com}_{W}\supseteq x^1_{W}+\cdots + x^{k}_{W}\plusn \V^{com}_{W}=
(\Vwdw^{start}\circ \dw)_{W},$$
\item 
$\Vwdw^{start}\circ W= x^0_{W}+\cdots + x^{k-1}_{W}\plusn \V^{com}_{W}$ and the only way of obtaining
 a vector in $\V^{com}_{W}$ as a linear combination of nonzero vectors of $x^0_{W},\cdots , x^{k-1}_{W}$
is by the trivial linear combination, so that $$\Vwdw^{start}\times \dw =  \V^{com}_{\dw}=(\V^{com}_{W})_{\dw}\subseteq (\Vwdw^{start}\times W)_{\dw}.$$
\end{enumerate}
$\Vwdw^{start}$ would be annihilated by the polynomial $b _0+ \cdots + b_{k-1}s^{k-1}+s^k.$
The proof is essentially the same as the one for  Claim \ref{cl:annihilate_start}.

{\it Transforming basic sequence analogues}

Next we build spaces which are analogues of $y^i_{W}$ by constructing a subspace version of
Equation \ref{eqn:xy}.
\begin{equation}
\begin{matrix}
\Ywdw^{00}  &&=&&& \Xwdw^{00} \\
\Ywdw^{01}  &&=& &&\Xwdw^{01}\pdw\ldw_1 \Xwdw^{00}\\ 
    & &\dots&& & \\
\Ywdw^{0k} &&=& &&\Xwdw^{0k}\pdw\ldw_1\Xwdw^{0(k-1)} \pdw\cdots \pdw \ldw_k\Xwdw^{00} 
\end{matrix}
\end{equation}

Define
$$\Ywdw^{j0}\equiv (\Ywdw^{0j})_{\dw W}, \ \ 
\Ywdw^{j(j+1)}\equiv 
\Ywdw^{j0}* \Ywdw^{0(j+1)}$$
%x^j_{W}\cap  \Vonewdw\cap y^{j+1}_{\dw}\plusn\V^{com}_{W}\oplus\V^{com}_{\dw},$$
and $$\Vwdw^{end}\equiv 
\Ywdw^{01}+\cdots +\Ywdw^{j(j+1)}+ \cdots + \Ywdw^{(k-1)k}\plusn  \V^{com}_{W}\oplus\V^{com}_{\dw}.$$

$\Vwdw^{end}$ would be annihilated by the polynomial $c _0+ \cdots + c_{k-1}s^{k-1}+s^k,$
where $(c_0, \cdots ,c_k), c_k=1$ satisfies Equation \ref{eqn:c2b}.

{\it Line by line dualization}

Once ideas have been cast in the subspace context the algorithms can be dualized line by line as follows.
%(We have simplified notions and avoided the change of sign one encounters in constructing adjoint.
%Basically we build complementary orthogonal spaces and interchange $W, \dw$ to dualize.)

Our notation is as follows:

The dual of $x^j_{W}$ is $x'^{j}_{\dw}\equiv ((x^j_{W})^{\perp})_{\dw},$ dual of $x^j_{\dw}$ is $x'^{j}_{W}\equiv  ((x^j_{\dw})^{\perp})_{W}.$

The dual of $\Vwdw$ is $\Vwdw'\equiv ((\Vwdw)^{\perp})_{-\dw W}.$

The dual of $\Vwdw^1+\Vwdw^2$ is  $(\Vwdw^1)'\cap (\Vwdw^2)',$

the dual of $\Vwdw^1\cap \Vwdw^2$ is  $(\Vwdw^1)'+ (\Vwdw^2)',$

the dual of $\ldw \Vwdw$ is  $\ldw \Vwdw'.$

The dual of $\Vwdw^1\pdw\Vwdw^2$ is  $(\Vwdw^1)'\pdw (\Vwdw^2)'.$

The dual of $\Vwdw^{\alpha}*\Vwdw^{\beta}$ is $(\Vwdw^{\alpha}*\Vwdw^{\beta})'\equiv (\Vwdw'^{\alpha})_{W'\dw}\lrar (\Vwdw'^{\beta})_{WW'}= \Vwdw'^{\beta}*\Vwdw'^{\alpha}.$

{\it Interpreting basic sequences}

We are given an LSG $\Vtwowdw\equiv (\Vonewdw)',$
$ x'^0_{W}$ is an $n-1 \ \  (n= |W|)$  dimensional subspace satisfying
$$x'^0_{W} \supseteqn \Vtwowdw \circ W, \ \ x'^0_{W}\  \not\supseteq \  (\Vtwowdw \circ \dw)_{W}.$$
(This is possible since $\Vtwowdw$ is an LSG but has been assumed not to be a genop.)

All $ x'^j_{W}$  are $n-1$ dimensional subspaces.
$$x'^{j+1}_{W} \supseteqn (\Vtwowdw +x'^{j}_{\dw}) \times W,\ \ x'^{j+1}_{W} \  \not \supseteq \ x'^0_{W} \cap x'^1_{W}  \cap\cdots  \cap x'^{j}_{W}\cap \V'^{com}_{W}.$$
($\V'^{com}_{W}\equiv  \Vtwowdw \circ W + (\Vtwowdw \circ \dw)_{W}.$) 

We stop when
$(\Vtwowdw +x'^{j}_{\dw}) \times W \supseteq 
 x'^0_{W} \cap x'^1_{W}  \cap \cdots  \cap  x'^{j}_{W}  \cap \V'^{com}_{W},$
 i.e., at $j=k-1.$

We build the following subspaces
%of $\Vtwowdw,$
$$\Xwdw'^{00}\equiv (x'^0_{W}+   (\Vtwowdw)^{(0)}+  x'^0_{\dw})\cap (\V'^{com}_{W}\oplus\V'^{com}_{\dw}),$$
$$\Xwdw'^{10}\equiv (x'^1_{W} + (\Vtwowdw)^{(1)} + x'^0_{\dw})\cap (\V'^{com}_{W}\oplus\V'^{com}_{\dw}),$$
$$ \vdots $$
$$\Xwdw'^{j0}\equiv (x'^j_{W} + (\Vtwowdw)^{(j)} +x'^0_{\dw})\cap (\V'^{com}_{W}\oplus\V'^{com}_{\dw}),$$
till $j=k-1.$

Here $$(\Vtwowdw)^{(0)}\equiv (I_{W\dw}\cap \Vtwowdw\circ W) +\Vtwowdw\times \dw, \ \ (\Vtwowdw)^{(1)}\equiv \Vtwowdw ,\ \  (\Vtwowdw)^{(j+1)}\equiv (\Vtwowdw)^{j}*\Vtwowdw .$$
(Note that we could equivalently have written $(\Vtwowdw)^{(0)}\equiv (I_{W\dw}+ \Vtwowdw\times \dw ) \bigcap \Vtwowdw\circ W.$ )

%$(\Vwdw^{\alpha}*\Vwdw^{\beta})'$ should be interpreted as $(\Vwdw'^{\alpha})_{W'\dw}\lrar (\Vwdw'^{\beta})_{WW'}.$

At $j=k,$ we can pick $b _i$ such that
$$b _0^{\mydot{w}}\Xwdw'^{00}\pdw \cdots \pdw b _{k-1}^{\mydot{w}}\Xwdw'^{(k-1)0}\pdw \Xwdw'^{k0}\supseteq  x'^0_{\dw} \oplus \V'^{com}_{W}\oplus \V'^{com}_{\dw}.$$ 

We will show that this implies that the LHS is decoupled.

 Let $\Vwdw'\equiv LHS.$
Each term $b _{j}^{\mydot{w}}\Xwdw'^{j0}$ satisfies $$ b _{j}^{\mydot{w}}\Xwdw'^{j0}\circ W\subseteq \V'^{com}_{W}.$$

Therefore $\Vwdw'\subseteq \F_{\dw}\oplus \V'^{com}_{W},$
so that $\Vwdw'\circ W \subseteq  \V'^{com}_{W}.$

But  $\Vwdw'\supseteq x'^0_{\dw} \oplus \V'^{com}_{W}\oplus \V'^{com}_{\dw},$ implies that
$\Vwdw'\times W \supseteq \V'^{com}_{W}.$ 

Since we always have $\Vwdw'\times W\subseteq \Vwdw'\circ W ,$
this means that $\Vwdw'\circ W= \Vwdw'\times W ,$
and that $\Vwdw'$ is decoupled.

{\it Constructing genops from basic sequence analogues}

Define
$$\Xwdw'^{(j+1)j}\equiv (x'^j_{\dw}+  \Vtwowdw+ x'^{j+1}_{W})\cap (\V'^{com}_{W}\oplus\V'^{com}_{\dw}),$$
%Note that 
%$$\Xwdw'^{0j}\equiv (\Xwdw'^{j0})_{\dw W}, \ \ 
%\Xwdw'^{(j+1)j}\equiv 
%\Xwdw'^{(j+1)0}* \Xwdw'^{0j}$$
 $$\Vwdw'^{start}\equiv  (\Xwdw'^{10}\cap \cdots \cap \Xwdw'^{(j+1)j}\cap  \cdots \cap  \Xwdw'^{k(k-1)})\bigcap  (\V'^{com}_{W}\oplus\V'^{com}_{\dw}).$$

Observe that $\Xwdw'^{j0}, \Xwdw'^{(j+1)j}$ are USGs and would be $|W|-1$ dimensional if $\V'^{com}_{W}=\F_{\W}.$
The space $\Vwdw'^{start},$ which is an intersection of spaces $\Vwdw'^{(j+1)j}$ and $\V'^{com}_{W}\oplus\V'^{com}_{\dw},$
is a genop since we saw earlier that its dual is a genop.
% $$\Vwdw^{start}\circ W = x'^0_{W}+\cdots + x'^{k-1}_{W}+\V'^{com}_{W}\supseteq x'^1_{W}+\cdots + x'^{k}_{W}+\V'^{com}_{W}=
%(\Vwdw^{start}\circ \dw)_{W}.$$
$\Vwdw'^{start}$ would be annihilated by the polynomial $b _0+ \cdots + b_{k-1}s^{k-1}+s^k.$

{\it Transforming basic sequence analogues}

Next we build spaces which are analogues of $y'^i_{W}$ by constructing a subspace version of
Equation \ref{eqn:xy}.
\begin{equation}
\begin{matrix}
\Ywdw'^{00}  &&=&&& \Xwdw'^{00} \\
\Ywdw'^{10}  &&=& &&\Xwdw'^{10}\pdw \ldw _1\Xwdw'^{00}\\ 
    & &\dots&& & \\
\Ywdw'^{k0} &&=& &&\Xwdw'^{k0}\pdw \ldw _1\Xwdw'^{(k-1)0} \pdw \cdots \pdw  \ldw _k\Xwdw'^{00} 
\end{matrix}
\end{equation}

Define
$$\Ywdw'^{0j}\equiv (\Ywdw'^{j0})_{\dw W}, \ \ 
\Ywdw'^{(j+1)j}\equiv 
\Ywdw'^{(j+1)0}* \Ywdw'^{0j}$$

We define $\Vwdw'^{end}\equiv 
(\Ywdw'^{10}\cap \cdots \cap \Ywdw'^{(j+1)j}\cap  \cdots \cap  \Ywdw'^{k(k-1)})\bigcap  (\V'^{com}_{W}\oplus\V'^{com}_{\dw}).$

$\Vwdw'^{end}$ would be annihilated by the polynomial $c _0+ \cdots + c_{k-1}s^{k-1}+s^k,$
where $(c_0, \cdots ,c_k), c_k=1$ satisfies Equation \ref{eqn:c2b}.
\section{Building an emulator implicitly for a network based dynamical system}
\label{sec:implicitemulatormultiport}
We give an example where we spell out all the vector spaces, which implicitly
determine the multiport based emulator.
%\begin{figure}
 %\label{fig:circuit03}
 %\includegraphics[width=7in]{figs/primal.pdf}
 %\caption{The primal network with port decomposition into capacitor inductor and static
%multiports}
%\end{figure}

\begin{example}
\label{eg:emulator_multiport_decomposition}
\begin{figure}
 \includegraphics[width=7in]{}
 \caption{The primal network with port decomposition into capacitor inductor and static
multiports}
 \label{fig:circuit03}
\end{figure}

{\textup{
The primal $\mathcal{N}$ shown in Figure \ref{fig:circuit03} (or rather its solution space $\V^{sol}(\mathcal{N})),$
is a GDSA (generalized dynamical system with additional variables)
with the variables
$$v_C,i_L,i_C,v_L,  {\mydot{v}}_C,{\mydot{i}}_L, v_R,i_R,ui,uv,yi,yv, v_{ui},i_{uv}, v_{yi}, i_{yv},$$
(each of $v_C,i_L, \cdots $ is itself a subvector, for instance $v_C \equiv (v_{C1},v_{C2},v_{C3})$).
Of these the dynamical variables are $v_C,i_L,{\mydot{v}}_C,{\mydot{i}}_L,$
the manifest variables  $M_u$ are $ui,uv,$ and the manifest variables $M_y$ are $yi,yv.$ The additional variables are
$i_C,v_L, v_R,i_R,v_{ui},i_{uv}, v_{yi}, i_{yv}.$ For greater readability, we will make this total set of variables, the index set. We also use a convenient notation
$\{x=Ky\}$ to denote the solution space $\V_{xy}$ of the equation $x=Ky.$\\
Let $\mathcal{G}$
be the graph of the network.
Let
the device characteristic constraints of the network be
$$i_C=C{\mydot{v}}_C;\ \  v_L=L{\mydot{i}}_L;\ \ v_R=Ri_R;\ \ v_{yi}=0;\ \ i_{yv}=0;\ \  v_{ui}\ free,\ \  i_{uv}\ free,$$
where the matrices $C,L,R$ are symmetric positive definite.
The topological constraints are
$$v\in \V^{v}(\mathcal{G}), \ i\in \V^{i}(\mathcal{G}), \ {\mydot{v}}_C \in \V^{v}(\mathcal{G})\circ E_C= \V^{v}(\mathcal{G}\circ E_C), \ {\mydot{i}}_L \in \V^{i}(\mathcal{G})\circ E_L= \V^{i}(\mathcal{G}\times E_L).$$
We will denote }}

\hspace{4cm}the above vector space to which ${\mydot{v}}_C$ belongs,
by $\V^{top}_{\mydot{v_C}},$ \\

\hspace{4cm}the  vector space to which ${\mydot{i}}_L$ belongs,
by $\V^{top}_{\mydot{i_L}}$
and 
$$\V^{v}(\mathcal{G})\oplus  \V^{i}(\mathcal{G})\oplus \V^{top}_{\mydot{v_C}}
\oplus \V^{top}_{\mydot{i_L}}\ \ \ \  \textup{by} \ \ \ \ \V^{top}(\mathcal{N}).$$

{\textup{The original GDS $\Vwdwmumy$ corresponding to $\mathcal{N}$
has the index set $\{v_C,i_L,{\mydot{v}}_C,{\mydot{i}}_L,ui,uv,yi,yv\}.$
We have
 $$\Vwdwmumy $$
$$ = [(\V^{top}(\mathcal{N})\lrar (\{v_R=Ri_R\}\oplus\0_ {v_{yi}i_{yv}}\oplus \F_{v_{ui}i_{uv}}))\  \bigcap\    (\{i_C=C{\mydot{v}}_C \}\oplus \{v_L=L{\mydot{i}}_L\})]
\ \ \lrar \ \ (\F_{i_C}\oplus \F_{v_L}).$$}}
%(No variable occurs more than twice in the above expression. So by Theorem \ref{thm:notmorethantwice},  the $`\lrar'$ is associative so that there is no need for brackets. We have shifted $(\{i_C=C{\mydot{v}}_C \}\oplus \{v_L=L{\mydot{i}}_L\})$ to the end because the variables ${\mydot{v}}_C ,{\mydot{i}_L}$
%do not occur in $\V^{v}(\mathcal{G})\oplus  \V^{i}(\mathcal{G}).$)
%{\color{blue}
%So
%
%$$\Vwdwmumy^{\perp} = (\V^{v}(\mathcal{G}))^{\perp}\oplus  (\V^{i}(\mathcal{G}))^{\perp}\lrar \{v_R=Ri_R\}^{\perp}\oplus\0_ {v_{yi}}^{\perp}\oplus \0_{i_{yv}}^{\perp}\oplus \F_{v_{ui}}^{\perp}\oplus \F_{i_{uv}}^{\perp}\rightleftharpoons (\{i_C=C{\mydot{v}}_C \}^{\perp}\oplus \{v_L=L{\mydot{i}}_L\}^{\perp})$$
%$$= (\V^{v}(\mathcal{G}))^{\perp}\oplus  (\V^{i}(\mathcal{G}))^{\perp}\lrar \{-Rv_R=i_R\}\oplus\F_ {v_{yi}}\oplus \F_{i_{yv}}\oplus \0_{v_{ui}}\oplus \0_{i_{uv}}\rightleftharpoons (\{-Ci_C={\mydot{v}}_C \}\oplus \{-Lv_L={\mydot{i}}_L\}).$$
%$$= (\V^{v}(\mathcal{G}))^{\perp}\oplus  (\V^{i}(\mathcal{G}))^{\perp}\lrar \{-Rv_R=i_R\}\oplus\F_ {v_{yi}}\oplus \F_{i_{yv}}\oplus \0_{v_{ui}}\oplus \0_{i_{uv}}\lrar (\{-Ci_C=-{\mydot{v}}_C \}\oplus \{-Lv_L=-{\mydot{i}}_L\}).$$
%
%}

\textup{In order to build the emulator $\Vpdpmumy,$ the network $\mathcal{N}$ is now decomposed into multiports and port connection
diagrams, with minimum number of ports, as outlined in Section \ref{subsec:MultiportDecomposition}. This is shown in Figure \ref{fig:circuit03}. The relevant multiports are 
$$\mathcal{N}_{CP_C},\ \ \ 
%$\mathcal{N}_{P_1SP_2},$
\mathcal{N}_{P'_CP'_LRuy}, \ \ \ 
\mathcal{N}_{LP_L}.$$
We denote their graphs respectively by 
$$\mathcal{G}_{CP_C},\ \ 
\mathcal{G}_{P'_CP'_LRuy},\ \ 
\mathcal{G}_{LP_L}.$$
The device characteristics of the multiport  $\mathcal{N}_{CP_C}
$ are 
$$ 
\{i_C=C{\mydot{v}}_C \}, \ \ v_C, i_{P_C}, v_{P_C} \  free,$$
those of the multiport $\mathcal{N}_{LP_C}
$ are
$$ 
\{v_L=L{\mydot{i}}_L \}, \ \ i_L, i_{P_L}, v_{P_L} \  free,$$
those of  the multiport  $\mathcal{N}_{P'_CP'_LRuy} 
$ are
$$\{v_R=Ri_R\}, \ \ v_{ui},  i_{ui},  v_{uv}, i_{uv},   v_{yv}, i_{yi} , 
,  i_{P'_C}, v_{P'_C} , i_{P'_L}, v_{P'_L} \  free,\ \ i_{yv}, v_{yi}\ zero .$$
The topological constraints of the multiport  $\mathcal{N}_{CP_C}$
are
$$(v_C,v_{P_C})\in \V^{v}(\mathcal{G}_{CP_C} ),\ \ (i_C,i_{P_C})\in \V^{i}(\mathcal{G}_{CP_C} ),\ \ 
\mydot{v}_C\in \V^{v}(\mathcal{G}_{CP_C}) \circ{E_C}= \V^{v}(\mathcal{G})\circ E_C= V^{v}(\mathcal{G}\circ E_C)=\V^{top}_{\mydot{v}_C}
 .$$
The topological constraints of the multiport  $\mathcal{N}_{LP_L}$
are
$$(v_L,v_{P_L})\in \V^{v}(\mathcal{G}_{LP_L} ),\ \ (i_L,i_{P_L})\in \V^{i}(\mathcal{G}_{LP_L} ),\ \ 
\mydot{i}_L\in \V^{i}(\mathcal{G}_{LP_L}) \circ{E_L}= \V^{i}(\mathcal{G}\times E_L)= \V^{top}_{\mydot{i}_L}
 .$$
(The statements  $\V^{v}(\mathcal{G}_{CP_C}) \circ{E_C}= \V^{v}(\mathcal{G})\circ E_C , \ \ 
\V^{i}(\mathcal{G}_{LP_L}) \circ{E_L}= \V^{i}(\mathcal{G})\circ E_L$ \\
follow from the properties of multiport decomposition.)}

\textup{The port connection diagram is $\mathcal{G}_{P_CP'_C}\oplus \mathcal{G}_{P_LP'_L}.$
For simplicity let us write for any graph $\mathcal{G'},$} 
$$\V^{top}(\mathcal{G'}) \equiv (\V^{v}(\mathcal{G'}) \oplus \V^{i}(\mathcal{G'}) ).$$ 
\textup{By the definition of multiport decomposition, we have 
$$\V^{top}(\mathcal{G}) 
=(\V^{top}(\mathcal{G}_{CP_C})\oplus \V^{top}(\mathcal{G}_{LP_L})\oplus \V^{top}(\mathcal{G}_{P'_CP'_LRuy})) 
\lrar (\V^{top}(\mathcal{G}_{P_CP'_C})\oplus \V^{top}(\mathcal{G}_{P_LP'_L}) 
).$$}
\textup{We denote the space of solutions of the device characteristic of a linear network $\mathcal{N}$
by $\V^{dev}(\mathcal{N})$ and the space of solutions of the network by
$\V^{sol}(\mathcal{N}).$
In each of the multiports, the port edges have no constraints on their currents or voltages.
The multiports do not have common variables.
Thus 
$$\V^{dev}(\mathcal{N})=$$ 
$$[\V^{dev}(\mathcal{N}_{CP_C})\circ \{v_C,i_C, {\mydot{v}}_C \}]\oplus [\V^{dev}(\mathcal{N}_{LP_L})
\circ \{v_L,i_L, {\mydot{i}}_L \}]\oplus [\V^{dev}(\mathcal{N}_{P'_CP'_LRuy})\circ\{v_R,i_R,ui,uv,yi,yv,v_{ui},i_{uv}, v_{yi}, i_{yv} \}].$$
Since the port branches have no device characteristic constraints, we can write 
$$\V^{sol}(\mathcal{N})
=(\V^{sol}(\mathcal{N}_{CP_C})\oplus \V^{sol}(\mathcal{N}_{LP_L})\oplus \V^{sol}(\mathcal{N}_{P'_CP'_LRuy}))
\lrar (\V^{top}(\mathcal{G}_{P_CP'_C})\oplus \V^{top}(\mathcal{G}_{P_LP'_L})
)$$
The dynamical system $\V_{W\dw M_uM_y}$ is the restriction of the above space to
$\{v_C,i_L, {\mydot{v}}_C,{\mydot{i}}_L, ui,uv,yi,yv\},$
with $W\equiv \{v_C,i_L\},\dw\equiv \{{\mydot{v}}_C,{\mydot{i}}_L\},
M_u\equiv \{ui,uv\}, M_y\equiv \{yi,yv\}.$
This can be rewritten as 
$$\V_{W\dw M_uM_y}=$$
$$ [((\V^{top}(\mathcal{G}_{CP_C})\oplus \V^{top}_{\mydot{v_C}})\bigcap \{i_C=C{\mydot{v}}_C\})\lrar \F_{i_c}]$$
$$\oplus [((\V^{top}(\mathcal{G}_{LP_L})\oplus \V^{top}_{\mydot{i_L}})\bigcap \{v_L=L\mydot{i}_L\})\lrar \F_{v_L}]$$
$$\oplus [\V^{top}(\mathcal{G}_{P'_CP'_LRuy})\lrar (\{v_R=Ri_R\}\oplus \0_{ v_{yi}i_{yv}}
\oplus \F_{i_{uv}v_{ui}})]
$$
$$\lrar [\V^{top}(\mathcal{G}_{P_CP'_C})\oplus \V^{top}(\mathcal{G}_{P_LP'_L})].$$}
%The adjoint dynamical system 
%$$\V^a_{W'\dw'M'_uM'_y}\equiv (\V_{W\dw M_u M_y}^{\perp})_{\dw' (-W')M'_y(-M'_u)}$$}
%Now $$(\V_{W\dw M_uM_y})^{\perp}$$
%$$=((\V^{top}(\mathcal{G}_{CP_C}))^{\perp}\lrar \{-Ci_C= {\mydot{v}}_C\})\oplus 
% ((\V^{top}(\mathcal{G}_{LP_L}))^{\perp}\lrar \{-Lv_L={\mydot{i}}_L\})\oplus ((\V^{top}(\mathcal{G}_{P'_CP'_LRuy}))^{\perp}$$
%$$\lrar \{-Rv_R=i_R;v_{yi}=\F_{yi};i_{yv}=\F_{yv}\})
%$$
%We next remark that for any graph $\mathcal{G},$ we have
%$$((\V^v(\mathcal{G})\oplus \V^{i}(\mathcal{G}))^{\perp}_{v,i})_{i',v'}=
%(\V^{i}(\mathcal{G}))_{i'}\oplus (\V^v(\mathcal{G}))_{v'}.$$
%}\\
\textup{We now describe the emulator dynamical system $\V_{P\dP M_uM_y},$ implicitly.
This has the index set of variables $\{v_{PC},i_{PL},{\mydot{v}}_{PC},{\mydot{i}}_{PL},ui,uv,yi,yv\}.$ Here $ P\equiv \{v_{PC},i_{PL}\}, \dP\equiv \{{\mydot{v}}_{PC},{\mydot{i}}_{PL}\},
M\equiv \{ui,uv,yi,yv\}.$ The variables ${\mydot{v}}_{PC},{\mydot{i}}_{PL}$ are newly introduced
and are defined by $$({\mydot{v}}_C, {\mydot{v}}_{P_C})\in \V^v(\mathcal{G}_{CP_C});\ \ 
({\mydot{i}}_L, {\mydot{i}}_{P_L})\in \V^i(\mathcal{G}_{LP_L}).$$
If $P_C,P_L$ do not contain circuits or cutsets, which happens when the port decomposition 
is minimal,\\
 ${\mydot{v}}_{P_C}$ is uniquely determined by ${\mydot{v}}_C$ and 
${\mydot{i}}_{P_L}$ is uniquely determined by ${\mydot{i}}_L$ through the above definitions.}

\vspace{0.2cm}

\textup{ Let us now modify  $\mathcal{N}_{CP_C}$ to a new object $\mathcal{N}'_{CP_C}$ 
by adding an additional constraint  involving 
$({\mydot{v}}_C, {\mydot{v}}_{P_C}) .$}

\vspace{0.2cm}

\textup{Thus the constraints of  $\mathcal{N}'_{CP_C}$  are:
$$(i_C, i_{P_C}) \in \V^i(\mathcal{G}_{CP_C}); \ \  
(v_C, v_{P_C}) \in \V^v(\mathcal{G}_{CP_C}); \ \ 
({\mydot{v}}_C, {\mydot{v}}_{P_C}) \in \V^v(\mathcal{G}_{CP_C}); \ \  i_C=C{\mydot{v}}_C.$$
Let us denote the solution space of these constraints restricted to 
$\{i_{P_C},{\mydot{v}}_{P_C}\}$
by $\V_{i_{P_C}{\mydot{v}}_{P_C}}.$
%restricted to $\{v_C, v_{P_C}\}$ by 
%$\V_{v_C, v_{P_C}},$ 
%and the solution space restricted to $\{{\mydot{v}}_C, {\mydot{v}}_{P_C}\}$ by 
%$\V_{{\mydot{v}}_C, {\mydot{v}}_{P_C}}.$
Clearly 
$$\V_{i_{P_C}{\mydot{v}}_{P_C}}=[(\V^v(\mathcal{G}_{CP_C}))_{{\mydot{v}}_{C}{\mydot{v}}_{P_C}}
\oplus \V^i(\mathcal{G}_{CP_C})]\lrar \{i_C=C{\mydot{v}}_C\}.$$
Since the port decomposition is minimal, the ports $P_C$ will contain neither circuits
nor cutsets of $\mathcal{G}_{CP_C}.$ If the matrix $C$ is positive 
definite, it can be shown that from either $i_{P_C}$ or ${\mydot{v}}_{P_C} $ we can determine all the variables
in $ \{i_C, {\mydot{v}}_C, i_{P_C}, {\mydot{v}}_{P_C} \},$ uniquely.}

\vspace{0.2cm}

\textup{Next let us modify  $\mathcal{N}_{LP_L}$ to a new object $\mathcal{N}'_{LP_L}$
by adding an additional constraint  involving
$({\mydot{i}}_L, {\mydot{i}}_{P_L}) .$
Thus the  constraints of $\mathcal{N}'_{LP_L}$ are
$$(v_L, v_{P_L}) \in \V^v(\mathcal{G}_{LP_L}); \ \  
(i_L, i_{P_L}) \in \V^i(\mathcal{G}_{LP_L}); \ \ 
({\mydot{i}}_L, {\mydot{i}}_{P_L}) \in \V^i(\mathcal{G}_{LP_L}); \ \  v_L=L{\mydot{i}}_L.$$
Let us denote the solution space of these constraints restricted to
the index set $\{v_{P_L},{\mydot{i}}_{P_L}\},$ by $\V_{v_{P_L},{\mydot{i}}_{P_L}}.$
Clearly 
$$\V_{v_{P_L}{\mydot{i}}_{P_L}}=[(\V^i(\mathcal{G}_{LP_L}))_{{\mydot{i}}_{L}{\mydot{i}}_{P_L}}
\oplus \V^v(\mathcal{G}_{LP_L})]\lrar \{v_L=L{\mydot{i}}_L\}.$$
Since the port decomposition is minimal, the ports $P_L$ will contain neither circuits
nor cutsets of $\mathcal{G}_{LP_L}.$
If the matrix $L$ is positive
definite, it can be shown that from either
$v_{P_L}$ or ${\mydot{i}}_{P_L},$ we can determine all the variables
in $ \{v_L, {\mydot{i}}_L, v_{P_L}, {\mydot{i}}_{P_L} \},$ uniquely.}

\vspace{0.2cm}

\textup{The constraints of $\mathcal{N}_{P'_CP'_LRuy}$
 are
%--------------------------------------------------
$$i_{P'_C P'_LRuy} \in \V^i(\mathcal{G}_{P'_C P'_LRuy});\ \  v_{P'_C P'_LRuy} \in \V^v(\mathcal{G}_{P'_C P'_LRuy});$$ 
$$ v_R=Ri_R;\ \ i_{uv}\ free;\ \ v_{ui}\ free;\ \ v_{yi}=0;\ \ i_{yv}=0.$$
%-----------------------------------------------------
%$$(i_{P'_C, P'_L,R,ui,uv,yi,yv}) \in \V^i(\mathcal{G}_{P'_C, P'_L,R,ui,uv,yi,yv}); (v_{P'_C, P'_L,R,ui,uv,yi,yv}) \in \V^v(\mathcal{G}_{P'_C, P'_L,R,ui,uv,yi,yv});$$ 
%$$ v_R=Ri_R;v_{ui}=0;i_{uv}=0.$$
}

\textup{Let us denote the  solution space of these constraints restricted
to the index set $\{v_{P'_C}, v_{P'_L},v_{uv},v_{yv},
i_{P'_C}, i_{P'_L},i_{ui},i_{yi}\},$  
%which is a vector space
%$\V_{v_{P'_C}, v_{P'_L},v_{uv},v_{yv},
%i_{P'_C}, i_{P'_L},i_{ui},i_{yi}},$ 
briefly by
$\V^{stat}.$
}
\\

\textup{The emulator $\V_{P\dP M_uM_y}$ on the index set 
$\{v_{PC},i_{PL},{\mydot{v}}_{PC},{\mydot{i}}_{PL},ui,uv,yi,yv\}$\\
($P\equiv \{v_{PC},i_{PL}\},\dP\equiv \{{\mydot{v}}_{PC},{\mydot{i}}_{PL}\},$
$M\equiv \{ui,uv,yi,yv\}$),
is  expressed implicitly by
$$\V_{P\dP M_uM_y}$$
$$=\V_{i_{P_C}{\mydot{v}}_{P_C}} \lrar [\V^v(\mathcal{G}_{P_CP'_C})
\oplus \V^i(\mathcal{G}_{P_CP'_C})]\lrar \V^{stat}\lrar [\V^v(\mathcal{G}_{P_LP'_L})\oplus \V^i(\mathcal{G}_{P_LP'_L})]
\lrar \V_{v_{P_L}{\mydot{i}}_{P_L}}.$$
}

\textup{ Next let us consider the linking between the primal dynamical system $\Vwdwmumy$
and its emulator 
$\Vpdpmumy,$
by examining the most natural candidates for  the spaces $\Vonewp$ and $\Vtwodwdp.$}\\
\textup{Here $W\equiv \{v_C,i_L\}, P \equiv \{v_{P_C}, i_{P_L}\},$ and $\dw\equiv \{ {\mydot{v}}_{C}, {\mydot{i}}_{L}\},\dP\equiv \{{\mydot{v}}_{P_C}, {\mydot{i}}_{P_L}\}.$} 

\vspace{0.2cm}

\textup{The relationship between $v_C, v_{P_C}$ is entirely topological, merely that  
$ (v_C, v_{P_C})\in \V^v(\mathcal{G}_{CP_C}).$ 
Similarly, the relationship between $i_L, i_{P_L}$ is  
merely that                
$ (i_L, i_{P_L})\in \V^i(\mathcal{G}_{LP_L}).$ 
Thus $$\Vonewp \equiv  (\V^v(\mathcal{G}_{CP_C})\oplus \V^i(\mathcal{G}_{LP_L})).$$
On the other hand the relationship between $\dw,\dP$ involves more constraints.
$$({\mydot{v}}_C, {\mydot{v}}_{P_C}) \in \V^v(\mathcal{G}_{CP_C});\ \ (i_C, i_{P_C}) \in \V^i(\mathcal{G}_{CP_C});\ \ i_C=C{\mydot{v}}_{C}.$$
Let us denote  the restriction of the solution space of these constraints to 
$\{{\mydot{v}}_C, {\mydot{v}}_{P_C}\}$ by $\V_{{\mydot{v}}_C {\mydot{v}}_{P_C}}.$
Thus
$$\V_{{\mydot{v}}_C {\mydot{v}}_{P_C}}
=   [(\V^v(\mathcal{G}_{CP_C})_{{\mydot{v}}_C {\mydot{v}}_{P_C}}
\oplus \V^i(\mathcal{G}_{CP_C}) )\bigcap \{i_C=C{\mydot{v}}_{C}\}]\circ \{{\mydot{v}}_C, {\mydot{v}}_{P_C}\} .$$
Next,
$$({\mydot{i}}_L, {\mydot{i}}_{P_L}) \in \V^i(\mathcal{G}_{LP_L});\ \ (v_L, v_{P_L}) \in \V^v(\mathcal{G}_{LP_L});\ \ v_L=L{\mydot{i}}_{L}.$$
Let us denote  the restriction of the solution space of these constraints to $\{{\mydot{i}}_L, {\mydot{i}}_{P_L}\}$ by $\V_{{\mydot{i}}_L {\mydot{i}}_{P_L}}.$
Thus
$$\V_{{\mydot{i}}_L {\mydot{i}}_{P_L}}
=   [(\V^i(\mathcal{G}_{LP_L})_{{\mydot{i}}_L, {\mydot{i}}_{P_L}}
\oplus \V^v(\mathcal{G}_{LP_L}) )\bigcap \{v_L=L{\mydot{i}}_{L}\}]\circ \{{\mydot{i}}_L, {\mydot{i}}_{P_L}\}.$$
We then have
$$\Vtwodwdp\equiv \V_{{\mydot{v}}_C {\mydot{v}}_{P_C}}\oplus \V_{{\mydot{i}}_L {\mydot{i}}_{P_L}}.$$
}

\textup{ From the construction of  $ \Vonewp ,\Vtwodwdp,$ it is clear that $$\Vpdpmumy= (\Vonewp\oplus \Vtwodwdp)\lrar \Vwdwmumy .$$ 
From the properties of the multiport decomposition,
it follows that  
$$\Vonewp \circ W= \Vwdwmumy \circ W,\ \  \Vonewp \times W= \Vwdwmumy \times  W.$$ 
From the construction of  $\Vtwodwdp,$ it follows that 
$$\Vtwodwdp \circ \dw= \Vwdwmumy \circ \dw,\ \   \Vtwodwdp \times \dw= \Vwdwmumy \times  \dw.$$ 
By Theorem \ref{thm:inverse}, it follows that 
 $$\Vwdwmumy= (\Vonewp\oplus \Vtwodwdp)\lrar \Vpdpmumy .$$
Note that, in general, $(\Vtwodwdp)_{WP}\subset \Vonewp .$ Indeed knowing
$\dP$ variables we can uniquely fix $\dw $ variables if the 
$C,L$ matrices are symmetric positive definite, 
%and in the graphs $\mathcal{G}_{CP_C},\mathcal{G}_{LP_L}, $ the port branches $P_C,P_L$ 
%do not contain loops or cutsets
while  knowing
$P$ variables we cannot uniquely fix $W $ variables 
if there are capacitor cutsets and inductor loops in the original
circuit.}
%%$$(i_C, i_{P_C}) \in \V^i(\mathcal{G}_{CP_C}); (v_C, v_{P_C}) \in \V^v(\mathcal{G}_{CP_C});({\mydot{v}}_C, {\mydot{v}}_{P_C}) \in \V^v(\mathcal{G}_{CP_C}); i_C=C{\mydot{v}}_C.$$
%%
%%This relationship is 
%%%$$\Vtwodwdp\equiv 
%%}
\begin{remark}
\label{rem:implicitemulatormultiport}
Let $\Vwdw \equiv \Vwdwmumy\lrar (\0_{M_u}\oplus \F_{M_y}),
\Vpdp \equiv \Vpdpmumy\lrar (\0_{M_u}\oplus \F_{M_y}).$
Let us examine the emulator pair $\{\Vwdw,
\Vpdp\}$ ${\mathcal{E}}-$ linked through $\{\Vonewp,\Vtwodwdp\}.$
When the port decomposition is minimal, in the three multiports,
the ports $P_C,P'_C,P_L,P'_L$ will not contain cutsets or circuits.
The generic situation for the static multiport, when $P'_C,P'_L$
are thus, is to have a unique solution 
for arbitrary
values of $i_{P'_C},v_{P'_L}.$
If in addition, $C,L$ matrices are symmetric positive definite, 
it is easy to show that  if ${\mydot{p}}=0,$ we must have $p=0.$
This means $\Vpdp\times P = \0_P,$  and equivalently, that $\Vpdp$
possesses no zero eigenvalue. However, in the present example,
it is clear that $\Vwdw \times W\ne \0_W.$ This happens because 
of the presence of capacitor cutsets and inductor loops. (Pick 
$v_{C3}=0$ and $v_{C1},v_{C2}$ consistently, pick $i_{L1},i_{L2},i_{L3}$ so that they constitute a circulating current within the inductor loop.
The rest of the circuit will not be affected.)
This is  an instance where the minimal annihilating polynomial $p(s)$ for
$\Vpdp,$ which has a constant term, does not annihilate $\Vwdw.$
But on the other hand, as it should follow from Theorem \ref{thm:emulatorpoly}, the polynomial $sp(s)$ annihilates both $\Vwdw$ and $\Vpdp.$
\end{remark}

%\begin{figure}
% \includegraphics[width=7in]{figs/primal.pdf}
% \caption{The primal network with port decomposition into capacitor inductor and static
%multiports}
% \label{fig:circuit03}
%\end{figure}
\end{example}
\section{Implicit construction of adjoint and its emulator}
\label{sec:exadjoint}
\begin{example}
%$\mathcal{N}$
%$\mathcal{N}_{CP_C}, \mathcal{N}_{2P_1}$
%$\mathcal{G}_{P_CP_1}$ $\mathcal{N}_{CP_C}$
%$\mathcal{N}_{CP_C}$
%$\mathcal{N}_{P'_CP'_LRuy}$
%$\mathcal{N}_{2P_1}$
%$\mathcal{N}_{CP_C},$

{\bf Building the adjoint implicitly for a network based dynamical system}\\

\textup{We have seen in Examples \ref{eg:strongemulator} and \ref{eg:emulator_multiport_decomposition},
 that in order to study the network $\mathcal{N}$
one can use an emulator constructed implicitly through a multiport decomposition. We first build the adjoint network $\mathcal{N}^a$
and then show that the 
 (multiport decomposition based) emulator of the adjoint 
can be built by constructing adjoints of the multiport decomposition of 
$\mathcal{N}.$ We have, however, simplified the description so that the adjoint has negative resistors and capacitors requiring less 
changes of sign when replacing variables of the primal by the corresponding ones in the adjoint.}

\textup{
The primal $\mathcal{N}$ (or rather its solution space $\V^{sol}(\mathcal{N}))$, shown in Figure \ref{fig:circuit03},
is a GDSA (generalized dynamical system with additional variables)
with the variables 
$$v_C,i_L,i_C,v_L,  {\mydot{v}}_C,{\mydot{i}}_L, v_R,i_R,ui,uv,yi,yv, v_{ui},i_{uv}, v_{yi}, i_{yv}.$$
Of these the dynamical variables are $v_C,i_L,{\mydot{v}}_C,{\mydot{i}}_L,$
the manifest variables  $M_u$ are $ui,uv,$ and the manifest variables $M_y$ are $yi,yv.$ The additional variables are
$i_C,v_L, v_R,i_R,v_{ui},i_{uv}, v_{yi}, i_{yv}.$ For greater readability, we will make this total set of variables, the index set.}

\vspace{0.2cm}

\textup{Let $\mathcal{G}$
be the graph of the network.} 
\textup{Let 
the device characteristic constraints of the network be
$$i_C=C{\mydot{v}}_C;\ \  v_L=L{\mydot{i}}_L;\ \ v_R=Ri_R;\ \ v_{yi}=0;\ \ i_{yv}=0;\ \  v_{ui}\ free,\ \  i_{uv}\ free,$$
where the matrices $C,L,R$ are symmetric positive definite.
The original GDS $\Vwdwmumy $ corresponding to $\mathcal{N}$
has the index set $\{v_C,i_L,{\mydot{v}}_C,{\mydot{i}}_L,ui,uv,yi,yv\}.$ 
We have, as we saw before,
 $$\Vwdwmumy $$
$$ = [(\V^{top}(\mathcal{N})\lrar (\{v_R=Ri_R\}\oplus\0_ {v_{yi}i_{yv}}\oplus \F_{v_{ui}i_{uv}}))\  \bigcap\    (\{i_C=C{\mydot{v}}_C \}\oplus \{v_L=L{\mydot{i}}_L\})]
\ \ \lrar \ \ \F_{i_Cv_L},$$
where we denote
$$\V^{v}(\mathcal{G})\oplus  \V^{i}(\mathcal{G})\oplus \V^{top}_{\mydot{v_C}}
\oplus \V^{top}_{\mydot{i_L}}\ \ \ \  \textup{by} \ \ \ \ \V^{top}(\mathcal{N}).$$
%(No variable occurs more than twice in the above expression. So by Theorem \ref{thm:notmorethantwice},  the $`\lrar'$ is associative so that there is no need for brackets. We have shifted $(\{i_C=C{\mydot{v}}_C \}\oplus \{v_L=L{\mydot{i}}_L\})$ to the end because the variables ${\mydot{v}}_C ,{\mydot{i}_L}$
%do not occur in $\V^{v}(\mathcal{G})\oplus  \V^{i}(\mathcal{G}).$)
So
$$\Vwdwmumy^{\perp}$$
$$ = [(\V^{v}(\mathcal{G})\oplus  \V^{i}(\mathcal{G})\oplus  \V^{top}_{\mydot{v_C}}
\oplus \V^{top}_{\mydot{i_L}})^{\perp}\lrar (\{v_R=Ri_R\}^{\perp}\oplus\ \0_ {v_{yi}i_{yv}}^{\perp}\oplus \F_{v_{ui}i_{uv}}^{\perp}))\ \  +\ \  (\{i_C=C{\mydot{v}}_C \}^{\perp}\oplus \{v_L=L{\mydot{i}}_L\}^{\perp})]
 \lrar  \ \0_{i_Cv_L}.$$
$$= [(\V^{v}(\mathcal{G}))\oplus  \V^{i}(\mathcal{G}) \oplus (\V^{top}_{\mydot{v_C}}
\oplus \V^{top}_{\mydot{i_L}})^{\perp}\lrar \{-Rv_R=i_R\}\oplus\F_ {v_{yi}i_{yv}}\oplus \ \0_{v_{ui}i_{uv}}\ \ +\ \  (\{-Ci_C={\mydot{v}}_C \}\oplus \{-Lv_L={\mydot{i}}_L\})]\ \ \lrar \ \ \0_{i_Cv_L}.$$
%$$= (\V^{v}(\mathcal{G}))^{\perp}\oplus  (\V^{i}(\mathcal{G}))^{\perp}\lrar \{-Rv_R=i_R\}\oplus\F_ {v_{yi}}\oplus \F_{i_{yv}}\oplus \0_{v_{ui}}\oplus \0_{i_{uv}}\lrar (\{-Ci_C=-{\mydot{v}}_C \}\oplus \{-Lv_L=-{\mydot{i}}_L\}).$$}
}
\textup{
Next we make the following changes of variables:
replace $v$ by $-i"$, $i$ by $-v"$, ${\mydot{v}}_C $ by $q_C,$
${\mydot{i}}_L$ by $\psi_L.$ We also make use of $(\V^{v}(\mathcal{G}))^{\perp}=\V^{i}(\mathcal{G});\ \ (\V^{i}(\mathcal{G}))^{\perp} = \V^{v}(\mathcal{G}).$
So the above expression reduces to
$$ [((\V^{i}(\mathcal{G}))_{i"}\oplus  (\V^{v}(\mathcal{G}))_{v"} \oplus (\V^{top}_{\mydot{v_C}})^{\perp}_{q_C}
\oplus (\V^{top}_{\mydot{i_L}})^{\perp}_{\psi_L}\lrar \{-Ri"_R=v"_R\}\oplus\F_ {i"_{yi}v"_{yv}}\oplus \ \0_{i"_{ui}v"_{uv}}) \ + \   (\{Cv"_C=q_C \}\oplus \{Li"_L=\psi_L\})]$$  $$ \lrar \ \ \0_{v"_Ci"_L}.$$
We remind the reader that $\V^{top}_{\mydot{v_C}}= \V^v(\mathcal{G}\circ E_C),\ \  \V^{top}_{\mydot{i_L}}=\V^i(\mathcal{G}\times E_L)$
so that $$(\V^{top}_{\mydot{v_C}})^{\perp}_{q_C}= (\V^i(\mathcal{G}\circ E_C))_{q_C},\ \ (\V^{top}_{\mydot{i_L}})^{\perp}_{\psi_L}= (\V^v(\mathcal{G}\times E_L))_{\psi_L}.$$
We will denote $(\V^i(\mathcal{G}\circ E_C))_{q_C}\ \ \textup{by}\ \ \V^{top}_{{q_C}}\ \  \textup{and}\ \  (\V^v(\mathcal{G}\times E_L))_{\psi_L}
\ \ \textup{by} \ \ \V^{top}_{{\psi_L}}.$\\
\vspace{0.1cm}\\
Since $(\V_{ABC}+\V_{BC})\times AC= (\V_{ABC}\cap\V_{-BC})\circ AC,$ the above expression reduces to\\ 
$$ [((\V^{i}(\mathcal{G}))_{i"}\oplus  (\V^{v}(\mathcal{G}))_{v"}\oplus \V^{top}_{{q_C}}\oplus \V^{top}_{{\psi_L}}\lrar \{-Ri"_R=v"_R\}\oplus\F_ {i"_{yi}v"_{yv}}\oplus \ \0_{i"_{ui}v"_{uv}}) \ \bigcap \   (\{-Cv"_C=q_C \}\oplus \{-Li"_L=\psi_L\})]$$ $$\lrar  \F_{v"_Ci"_L}.$$
Next 
%rename $i'_C$ as ${\mydot{q}}_C,$ $v'_L$ as ${\mydot{\psi}}_L$ and 
change the names of the edges $yi,yv,ui,uv$ respectively to
$u"v,u"i,y"v,y"i$ respectively so that the expression reduces to
$$ [((\V^{i}(\mathcal{G}))_{i"}\oplus  (\V^{v}(\mathcal{G}))_{v"}\oplus \V^{top}_{{q_C}}\oplus \V^{top}_{{\psi_L}}\lrar \{-Ri"_R=v"_R\}\oplus\F_ {i"_{u"v}v"_{u"i}}\oplus \ \0_{i"_{y"v}v"_{y"i}}) \ \bigcap \   (\{-Cv"_C=q_C \}\oplus \{-Li"_L=\psi_L\})]$$
$$ \lrar  \F_{v"_Ci"_L}.$$
This space is on the variables (which we also treat as index set)
$i"_C,v"_L, q_C,\psi _L, u"v,u"i,y"i,y"v.$
We now rename $i"_C$ as ${\mydot{q}}_C,$ $v"_L$ as ${\mydot{\psi}}_L$ and
rename  $-y"v,-y"i$ as $y'v,y'i$ (which can be done by reversing the arrows of $y"v,y"i),$  
and rename 
$$v"_C,i"_L, i"_R, v"_R,u"_v,u"_i, y"i,y"v,v"_{u"i},i"_{u"v},i"_{y"v},v"_{y"i},\ \  \textup{respectively\  as}$$ 
$$v'_C,i'_L, i'_R, v'_R,u'_v,u'_i,-y'i,-y'v,v'_{u'i},i'_{u'v},-i'_{y'v},-v'_{y'i}.$$}
\textup{Let  us call the graph obtained from $\mathcal{G}$ by reversing the arrows of the edges
corresponding to $y"v,y"i,$ the graph  $\mathcal{G}'.$
This results in the (final!) expression
$$ [((\V^{i}(\mathcal{G}'))_{i'}\oplus  (\V^{v}(\mathcal{G}'))_{v'}\oplus \V^{top}_{{q_C}}\oplus \V^{top}_{{\psi_L}}\lrar \{-Ri'_R=v'_R\}\oplus\F_ {i'_{u'v}v'_{u'i}}\oplus \ \0_{i'_{y'v}v'_{y'i}}) \ \bigcap\   (\{-Cv'_C=q_C \}\oplus \{-Li'_L=\psi_L\})]$$
$$ \lrar  \F_{v'_Ci'_L}.$$
The above constraints are those corresponding to the adjoint network shown in Figure \ref{fig:circuitA3}
. Note that the constraints on the initial
conditions on $q_C$ are according to the space $\V^{top}_{{q_C}}$ which is the current space  $\V^{i}(\mathcal{G}\circ E_C)$
and those on $\psi_L$ are according to the space $\V^{top}_{{\psi_L}}$ which is the voltage space  $\V^{v}(\mathcal{G}\times E_L),$ whereas in the original network the initial
conditions on $v_C$ are according to the voltage space  $\V^{v}(\mathcal{G}\circ E_C)$
and those on $i_L$ are according to the current space  $\V^{i}(\mathcal{G}\times E_L).$ \\
The resulting space corresponding to the above final expression is $$(\V_{W\dw M_uM_y}^{\perp})_{-\dwd W'-M'_yM'_u}=\V^a_{W'{\mydot{W'}} M'_uM'_y}.$$
Here 
$W' \equiv \{q_C,\psi_L\}, \dwd\equiv \{{\mydot{q}}_C,{\mydot{\psi}}_L\},
M'_u\equiv \{u'v,u'i\}, M'_y\equiv \{y'i,y'v\}.$
}
%So we can rearrange the terms in the expression as follows:
% $$\Vwdwm^{\perp} = \V^{v}(\mathcal{G})\oplus  \V^{i}(\mathcal{G})\lrar (\{i_C=C{\mydot{v}}_C \}\oplus \{v_L=L{\mydot{i}}_L\}\oplus \}\oplus \{v_R=Ri_R\}\oplus(\0_ {v_{yi}}\oplus \0_{i_{yv}}\}\oplus \F_{v_{ui}}\oplus \F_{i_{uv}})$$
%$\Vwdwm = \V^{sol} (\mathcal{N}) \lrar \{v_R=-Ri_R\}\oplus \F_{i_{uv},v_{ui}}\oplus \0_{i_{yv},v_{yi}}$
%Here $W\equiv \{v_C,i_L\}, \dw \equiv \{{\mydot{v}}_C,{\mydot{i}}_L\}, M\equiv \{ui,uv,yi,yv\}
%.$
%Next 
%%rename $i'_C$ as ${\mydot{q}}_C,$ $v'_L$ as ${\mydot{\psi}}_L$ and 
%change the names of the edges $yi,yv,ui,uv$ respectively to
%$u'v,u'i,y'i,y'v$ respectively so that the expression reduces to
%}
%}

\vspace{1cm}

\textup{{\bf Building the emulator implicitly for the adjoint}}\\
\begin{figure}
%\begin{center}
 \includegraphics[width=7in]{}
 \caption{The adjoint network with port decomposition into capacitor inductor and static
multiports}
% \caption{Example $RLCEJ$ Network and its multiport decomposition
%into capacitive multiport and $RL$ multiport}
 \label{fig:circuitA3}
%\end{center}
\end{figure}

\vspace{0.2cm}

\textup{To build the emulator $\V^a_{P'\dPd  M'_u M'_y} $  of $\V^a_{W'\dwd M'_u M'_y}, $
we could start with the latter and build its emulator, or equivalently,
build the emulator $\V_{P\dP M_u M_y}$ for the primal 
and then construct the adjoint of the primal emulator.
We will follow the latter procedure.
We have $$\V^a_{P'\dPd  M'_u M'_y}\equiv (\V_{P\dP M_u M_y}^{\perp})_{-\dPd P' -M'_y M'_u}.$$
The primal emulator is given to us in an implicit form in terms of
the multiport decomposition. We will build its adjoint also implicitly.
We begin with the expression for $\V_{P\dP M_u M_y}$ in terms of the multiport decomposition, rearranging terms and
expanding $\V^{top}(\mathcal{G}')$ for each occurrence of $\mathcal{G}':$ 
$$\V_{P\dP M_u M_y}$$
$$=(\V_{i_{P_C}{\mydot{v}}_{P_C}} \oplus  \V^{stat}
\oplus  \V_{v_{P_L}{\mydot{i}}_{P_L}})\lrar (\V^v(\mathcal{G}_{P_CP'_C})
\oplus \V^i(\mathcal{G}_{P_CP'_C})).
%\lrar \V^{stat}\lrar (\V^v(\mathcal{G}_{P_LP'_L})\oplus \V^i(\mathcal{G}_{P_LP'_L}))
%\lrar \V_{v_{P_L},{\mydot{i}}_{P_L}}
$$
We remind the reader that  
$$\V_{i_{P_C}{\mydot{v}}_{P_C}}=[(\V^v(\mathcal{G}_{CP_C}))_{{\mydot{v}}_{C},{\mydot{v}}_{P_C}}
\oplus \V^i(\mathcal{G}_{CP_C})]\lrar \{i_C=C{\mydot{v}}_C\},$$
$$\V_{v_{P_L}{\mydot{i}}_{P_L}}=[(\V^i(\mathcal{G}_{LP_L}))_{{\mydot{i}}_{L},{\mydot{i}}_{P_L}}
\oplus \V^v(\mathcal{G}_{LP_L})]\lrar \{v_L=L{\mydot{i}}_L\},$$
$$\V^{stat}\equiv (\V^v(\mathcal{G}_{P'_C, P'_L,R,ui,uv,yi,yv})\oplus \V^i(\mathcal{G}_{P'_C, P'_L,R,ui,uv,yi,yv}))
\lrar (\{v_R=Ri_R\}\oplus \ \0_{v_{yi}i_{yv}}
\oplus \F_{i_{uv}v_{ui}}),
$$
\hspace{3.5cm}index set for $\V^{stat}$ is $\{v_{P'_C},i_{P'_C},v_{P'_L},i_{P'_L},v_{uv},i_{ui}, v_{yv},i_{yi} \},$
$$P\equiv\{ v_{P_C}, i_{P_L}\}, \dP\equiv \{{\mydot{v}}_{P_C}, {\mydot{i}}_{P_L}\},
M_u\equiv \{uv,ui\}, M_y\equiv \{yv,yi\}.$$
$$(\V_{P\dP M_u M_y})^{\perp}$$
$$=(\V_{i_{P_C}{\mydot{v}}_{P_C}})^{\perp} \oplus  (\V^{stat})^{\perp}
\oplus  (\V_{v_{P_L}{\mydot{i}}_{P_L}})^{\perp}\lrar (\V^v(\mathcal{G}_{P_CP'_C})
\oplus \V^i(\mathcal{G}_{P_CP'_C}))^{\perp}$$
%\lrar \V^{stat}\lrar (\V^v(\mathcal{G}_{P_LP'_L})\oplus \V^i(\mathcal{G}_{P_LP'_L}))
%\lrar \V_{v_{P_L},{\mydot{i}}_{P_L}}.$$
%$$=(\V_{i_{P_C},{\mydot{v}}_{P_C}})^{\perp} \lrar (\V^v(\mathcal{G}_{P_CP'_C})
%\oplus (\V^i(\mathcal{G}_{P_CP'_C})))^{\perp}\lrar \V^{stat}\lrar (\V^v(\mathcal{G}_{P_LP'_L})\oplus \V^i(\mathcal{G}_{P_LP'_L}))
%\lrar \V_{v_{P_L},{\mydot{i}}_{P_L}}.$$
}

\textup{
We will now expand each of these terms and change the variables as follows:\\
$$v, i, {\mydot{v}}_C, {\mydot{v}}_{P_C}, {\mydot{i}}_{L}, {\mydot{i}}_{P_L},   \ \ \textup{respectively} \ \  \textup{to}\ \ -i', -v',q_C,q_{P_C}, \psi_{L}, \psi_{P_L},$$  
$$i'_C, i'_{P_C},  v'_{L}, v'_{P_L},  \ \ \textup{respectively} \ \  \textup{to}\ \ {\mydot{q}}_{C},{\mydot{q}}_{P_C},{\mydot{\psi}}_{L}, {\mydot{\psi}}_{P_L},$$
% \ \ \textup{to}\ \  i',\ \ \  i \ \ \textup{to}\ \  v',\ \ \  
%{\mydot{v}}_C \ \ \textup{to}\ \  q_C,\ \ \  
%{\mydot{v}}_{P_C} \ \ \textup{to}\ \  q_{P_C},\ \ \ {\mydot{i}}_{P_L}
%\ \ \textup{to}\ \  \psi_{P_L},\ \ \ i'_C \ \ \textup{to}\ \  {\mydot{q}}_{C},$$
% $$i'_{P_C} \ \ \textup{to}\ \  {\mydot{q}}_{P_C},\ \ \ v'_{L} \ \ \textup{to}\ \  {\mydot{\psi}}_{L},\ \ \    v'_{P_L} \ \ \textup{to}\ \  {\mydot{\psi}}_{P_L},$$   
$$ \ \ \ \ \ \ ui,uv,yi,yv \ \ \ \textup{respectively \ \ \textup{to}\  }\   -y'v,-y'i,u'v,u'i.$$
We have,
$$\V_{{i_{P_C} {\mydot{v}}_{P_C}}}^{\perp}
=   ((\V^v(\mathcal{G}_{CP_C}))^{\perp}_{{\mydot{v}}_C, {\mydot{v}}_{P_C}}
\oplus (\V^i(\mathcal{G}_{CP_C}) )^{\perp}_{i_Ci_{P_C}})\lrar \{i_C=C{\mydot{v}}_{C}\}^{\perp}$$
So
%\times \{{\mydot{v}}_C, {\mydot{v}}_{P_C}\}] .$$
$$(\V_{i_{P_C} {\mydot{v}}_{P_C}}^{\perp})_{-v'_{P_C}{{q}}_{P_C}}= ((\V^i(\mathcal{G}_{CP_C}))_{{{q}}_C {{q}}_{P_C}}
\oplus (\V^v(\mathcal{G}_{CP_C}) )_{-v'_C -v'_{P_C}}) \lrar \{-Ci_C ={\mydot{v}}_{C}\}_{-v'_Cq_C}  .$$
Therefore,
$$(\V_{i_{P_C} {\mydot{v}}_{P_C}}^{\perp})_{v'_{P_C}{{q}}_{P_C}}= ((\V^i(\mathcal{G}_{CP_C}))_{{{q}}_C {{q}}_{P_C}}
\oplus (\V^v(\mathcal{G}_{CP_C}) )_{v'_Cv'_{P_C}}) \lrar \{-Ci_C ={\mydot{v}}_{C}\}_{v'_Cq_C}  .$$
So, denoting $(\V_{i_{P_C} {\mydot{v}}_{P_C}}^{\perp})_{v'_{P_C}{{q}}_{P_C}}$ by $\V_{v'_{P_C}{{q}}_{P_C}},$ 
%$$\V_{
$$\V_{v'_{P_C}{{q}}_{P_C}}= ((\V^i(\mathcal{G}_{CP_C}))_{{{q}}_C {{q}}_{P_C}}
\oplus (\V^v(\mathcal{G}_{CP_C}) )_{v'_Cv'_{P_C}} \lrar \{-Cv'_C ={{q}}_{C}\}  .$$
%$$((\V_{i_{P_C}, {\mydot{v}}_{P_C}})^{\perp})_{v'_{P_C},{{q}}_{P_C}}=
%((\V^i(\mathcal{G}_{CP_C}))_{q_C,q_{P_C}}\oplus (\V^v(\mathcal{G}_{CP_C}))_{v'_C,v'_{P_C}}
%$$
%\begin{remark}
%Note that our way of building the adjoint requires that $W$ of the primal changes to $-{\mydot{W}}$ of 
%of the adjoint. 
%%This means ${\mydot{v}}_C$  changes to $-q_C$ in the adjoint.
%Now $v_C$ changes to $i'_C$ which must then change to $-{\mydot{q}}_C$ in the adjoint. 
%We choose to keep  the reference arrows of $v,i$ of the primal and $v',i'$ of the adjoint
%the same.
%We conclude  therefore the reference arrow
%associated with ${\mydot{q}}_C$ must be opposite that of $v_C$ and $i_C.$ Equivalently, we can
%make the reference arrows for both $q_C$ and $v_C$ the same in the graphs associated with the primal and that associated with the adjoint and change  ${\mydot{v}}_C$  to  $q_C$ instead of $-q_C$ in the adjoint.
%These remarks are also pertinent to the changes we are making below, from ${\mydot{v}}_L$ to 
%$\psi_L.$
%Finally note that $(q_C,q_{P_C})$ is a current vector of 
%$\mathcal{G}_{CP_C}$ and $(\psi_L,\psi_{P_L})$ is a voltage vector of
%$\mathcal{G}_{LP_L}.$
%\end{remark}
Proceeding analogously and denoting 
$$(\V_{v_{P_L} {\mydot{i}}_{P_L}}^{\perp})_{i'_{P_L}{{\psi}}_{P_L}} \ \ \textup{by}\ \  \V_{i'_{P_L}{{\psi}}_{P_L}},$$ 
we get
$$\V_{i'_{P_L}{{\psi}}_{P_L}}= ((\V^v(\mathcal{G}_{LP_L}))_{{{\psi}}_L {{\psi}}_{P_L}}
\oplus (\V^i(\mathcal{G}_{LP_L}) )_{i'_Li'_{P_L}} \lrar \{-Li'_L ={{\psi}}_{L}\}  .$$
Similarly in the case of $\V^{stat},$ denoting\\ 
$$((\V^{stat})^{\perp})_{i"_{P'_C},v"_{P'_C},i"_{P'_L},v"_{P'_L},i"_{uv},v"_{ui}, i"_{yv},v"_{yi}}
\ \ \textup{by}\ \  \V_{i"_{P'_C},v"_{P'_C},i"_{P'_L},v"_{P'_L},i"_{uv},v"_{ui}, i"_{yv},v"_{yi}}$$\\
(we have replaced $v_{P'_C},i_{P'_C},v_{P'_L},i_{P'_L},v_{uv},i_{ui}, v_{yv},i_{yi} $\\ 
respectively by $-i"_{P'_C},-v"_{P'_C},-i"_{P'_L},-v"_{P'_L},-i"_{uv},-v"_{ui}, -i"_{yv},-v"_{yi}$),
and the graph $$\mathcal{G}_{P'_C, P'_L,R,ui,uv,yi,yv} \ \ \textup{by}\ \  \mathcal{G}^{stat}$$
we get 
$$\V_{i"_{P'_C},v"_{P'_C},i"_{P'_L},v"_{P'_L},i"_{uv},v"_{ui}, i"_{yv},v"_{yi}}$$
$$=(\V^i(\mathcal{G}^{stat}))_{i"_{P'_C}, i"_{P'_L},i"_R,i"_{uv},i"_{ui},i"_{yv},i"_{yi}}\oplus (\V^v(\mathcal{G}^{stat}))_{v"_{P'_C}, v"_{P'_L},v"_R,v"_{uv},v"_{ui},v"_{yv},v"_{yi}}$$
$$\lrar (\{-Ri"_R=v"_R\}\oplus \ \0_{v"_{uv}i"_{ui}}
\oplus \F_{i"_{yi}v"_{yv}})).
$$
This space is on the variables $ i"_{P'_C},v"_{P'_C},i"_{P'_L},v"_{P'_L} , i"_{uv},v"_{ui},v"_{yi},i"_{yv}.$\\
Next we replace $i"$ variables by $i'$ except in the case of $i"_{uv},i"_{ui},$
where we replace them respectively by $-i'_{uv},-i'_{ui}.$\\
Similarly we replace $v"$ variables by $v'$ except in the case of $v"_{uv},v"_{ui},$
where we replace them respectively by $-v'_{uv},-v'_{ui}.$ 
\\
We change the names of the branches $uv,ui,yv,yi$ respectively to $y'i,y'v,u'i,u'v,$
and reverse the arrows of the branches $y'i,y'v.$ This causes the required sign change - $\ $
$uv$ becoming $-y'i$ and $ui$ becoming $-y'v.$
.}\\

\textup{
Thus the graph $\mathcal{G}^{stat}_{P'_C, P'_L,R,ui,uv,yi,yv}$
is modified to $\mathcal{G}^{'stat}$}
\textup{on the set of branches $P'_C, P'_L,R,y'i,y'v,u'i,u'v,$  }\\

\textup{respectively, with the arrows
$yi,yv$ reversed in $u'v,u'i.$}\\

\textup{The resulting space is on the variables $ i'_{P'_C},v'_{P'_C},i'_{P'_L},v'_{P'_L} , i'_{y'i},v'_{y'v},v'_{u'v},i'_{u'i}$ and will be denoted by $\V^{astat}.$
%$\V_{i'_{P'_C},v'_{P'_C},i'_{P'_L},v'_{P'_L} , i'_{y'i},v'_{y'v},v'_{u'v},i'_{u'i}}.$
We therefore have 
%$$\V_{i'_{P'_C},v'_{P'_C},i'_{P'_L},v'_{P'_L} , i'_{y'i},v'_{y'v},v'_{u'v},i'_{u'i}}$$
$$\V^{astat}$$
$$=(\V^i(\mathcal{G}^{'stat}))_{i'_{P'_C}, i'_{P'_L},i'_R,i'_{u'v},i'_{u'i},i'_{y'v},i'_{y'i}}\oplus (\V^v(\mathcal{G}^{'stat}))_{v'_{P'_C}, v'_{P'_L},v'_R,v'_{u'v},v'_{u'i},v'_{y'v},v'_{y'i}}$$
$$\lrar (\{-Ri'_R=v'_R\}\oplus \ \0_{v'_{y'i}i'_{y'v}}
\oplus \F_{i'_{u'v}v'_{u'i}}).
$$
Thus $$\V^a_{P'\dPd  M'_u M'_y}= (\V_{P\dP M_u M_y})^{\perp}_{-\dPd P'-M'_y M'_u}$$ 
$$=\V_{v'_{P_C}{{q}}_{P_C}}\lrar \V^{astat} \lrar \V_{i'_{P_L}{{\psi}}_{P_L}}.$$
Here $$P'\equiv \{{{q}}_{P_C},{{\psi}}_{P_L}\}, \dPd \equiv  \{{\mydot{q}}_{P_C},{\mydot{\psi}}_{P_L}\},M'_u\equiv \{v'_{u'v},i'_{u'i}\}, M'_y \equiv \{i'_{y'i},v'_{y'v}\}.$$}
\textup{The multiport decomposition corresponding to the emulator of the adjoint which is also the adjoint of the emulator of the original
dynamical system is shown in Figure \ref{fig:circuitA3}.}

\textup{Next let us consider the linking between the adjoint dynamical system $\V^a_{W'\dw' M_u'M_y'}$
and its emulator
$\V^a_{P'\dPd M'_uM'_y}.$
We have by Theorem \ref{thm:emuadjoint},
$$\V^1_{W'P'}=((\Vtwodwdp)^{\perp})_{-W'P'}$$
$$ =((\V_{{\mydot{v}}_C {\mydot{v}}_{P_C}}\oplus \V_{{\mydot{i}}_L {\mydot{i}}_{P_L}})^{\perp})_{-q_Cq_{P_C} -\psi_L \Psi_{P_L}}.$$
$$ =(\V_{{\mydot{v}}_C {\mydot{v}}_{P_C}}^{\perp})_{-q_Cq_{P_C}}\oplus (\V_{{\mydot{i}}_L {\mydot{i}}_{P_L}}^{\perp})_{-\psi_L \Psi_{P_L}}.$$
When $C,L$ are symmetric positive definite and the port branches do not contain loops or cutsets, as mentioned before, we have
unique ${\mydot{v}}_{C}$ for arbitrary choice of ${\mydot{v}}_{P_C}$ and unique ${\mydot{i}}_{L}$ for arbitrary choice of ${\mydot{i}}_{P_L}.$ 
Thus we have ${\mydot{v}}_{C}=K_C{\mydot{v}}_{P_C}$ and
${\mydot{i}}_{L}=K_L{\mydot{i}}_{P_L},$
 for some $K_C,K_L.$
In this case, in the adjoint, we will have 
${{q}}_{P_C}=K_C^T{{q}}_{C}$ and
${{\psi}}_{P_L}=K_L^T{{\psi}}_{L}.$\\
Again, by Theorem \ref{thm:emuadjoint},
$$\V^{2}_{\dwd\dPd}=({\Vonewp}^{\perp})_{-\dwd\dPd }$$
%$$\Vtwodw=(\Vonewp^{\perp})_{\dw '\dP '}$$
$$= (\V^v(\mathcal{G}_{CP_C})\oplus \V^i(\mathcal{G}_{LP_L}))^{\perp}_{-\dwd \dPd }
=(\V^i(\mathcal{G}_{CP_C}))_{-\mydot{q}_C\mydot{q}_{P_C}}
\oplus (\V^v(\mathcal{G}_{LP_L}))_{-\mydot{\psi}_L\mydot{\psi}_{P_L}}.$$
}

\end{example}

%\begin{align}
% \label{eqn:ComputeDoubleSum}
%\ppmatrix{\lambda _0& \cdots & \lambda _k
%}
%\bbmatrix{
%        x^0_W & x^1_W  &\cdots  & x^k_W \\
%        0     &x^0_W   & \cdots  & x^{k-1}_W \\
%             &        &\vdots  &  \\
%        0     &  0      &\cdots  &x^0_W  \\
%       }
%\ppmatrix{
%  b_0 \\   \\  \vdots  \\ b_k
%}
%= 0
%\end{align}
%$x^0_W , x^1_W  ,\cdots  , x^k_W$
% $y^0_W , y^1_W  ,\cdots  , y^k_W$
%$(x^j_W,x^{j+1}_W)$
%
%$s^n +b_{n-1}s^{n-1}+\cdots b_0$
%
%$\Vwdw^{start}, \Vwdw^{end}$
%
%$\Vw^{com}\equiv \Vonewdw\times W\bigcap (\Vonewdw\times \dw)_W$
%
%\begin{equation}
%\begin{matrix}
%y^0_W  &&=&&& x^0_W \\
%y^1_W  &&=& &&x^1_W+\lambda _1x^0_W\\ 
%    & &\vdots&& & \\
%y^0_W &&=& &&x^k_W+\lambda _1 x^k_W+\cdots + \lambda _k x^0_W\\
%y^0_W &&=& &&x^k_W+\lambda _1 x^k_W+\cdots + \lambda _k x^0_W
%\end{matrix}
%\end{equation}
%
%%\end{align}
%

%% The Appendices part is started with the command \appendix;
%% appendix sections are then done as normal sections
%% \appendix

%% \section{}
%% \label{}

%% References
%%
%% Following citation commands can be used in the body text:
%% Usage of \cite is as follows:
%%   \cite{key}         ==>>  [#]
%%   \cite[chap. 2]{key} ==>> [#, chap. 2]
%%

%% References with bibTeX database:

%% Authors are advised to submit their bibtex database files. They are
%% requested to list a bibtex style file in the manuscript if they do
%% not want to use elsarticle-num.bst.

%% References without bibTeX database:

% \begin{thebibliography}{00}

%% \bibitem must have the following form:
%%   \bibitem{key}...
%%

% \bibitem{}

% \end{thebibliography}

%\appendix
\bibliographystyle{elsarticle1-num}
\bibliography{references}

\end{document}